\newtheorem*{theorem*}{Theorem}
\newcommand\myitem[1][]{\item[#1]\def\@currentlabel{#1}}
\renewcommand{\Cat}{\mathsf{Cat}}
\newcommand{\cSet}{\mathsf{cSet}}
\renewcommand{\Gpd}{\mathsf{Gpd}}
\renewcommand{\Set}{\mathsf{Set}}
\newcommand{\sSet}{\mathsf{sSet}}
\newcommand{\ihom}{\underline{\operatorname{hom}}}
\newcommand{\Ho}{\mathsf{Ho}}
\newcommand{\Hosharp}{\Ho^\sharp}
\newcommand{\Map}{\mathrm{Map}}
\newcommand{\Hom}{\mathrm{Hom}}
\newcommand{\rmco}{\mathrm{co}}
\newcommand{\coop}{{\operatorname{co-op}}}
\newcommand{\bd}{\partial}
\newcommand{\anod}{\mathsf{anod}}
\newcommand{\naive}{\mathsf{n.f.}}
\newcommand{\tBox}{\Box_\sharp} 
\renewcommand{\Im}{\mathrm{Im}}
\newcommand{\adjoint}{\dashv}
\newcommand{\Nerve}{\mathsf{N}_\Box}
\newcommand{\core}{\mathsf{core}}
\newcommand{\thetabar}{\overline{\theta}}
\newcommand{\pushoutcorner}[1][dr]{\save*!/#1+1.2pc/#1:(1,-1)@^{|-}\restore}
\newcommand{\pshf}[1]{\mathsf{Set}^{ #1 ^{\mathrm{op}}}}
\begin{document}
\title{Cubical models of $(\infty, 1)$-categories}
\author[B.~Doherty, K.~Kapulkin, Z.~Lindsey, C.~Sattler]{Brandon Doherty \and Krzysztof Kapulkin \and Zachery Lindsey \and Christian Sattler}
\maketitle

\begin{abstract}
  We construct a model structure on the category of cubical sets with connections whose cofibrations are the monomorphisms and whose fibrant objects are defined by the right lifting property with respect to inner open boxes, the cubical analogue of inner horns.
  We show that this model structure is Quillen equivalent to the Joyal model structure on simplicial sets via the triangulation functor.
  As an application, we show that cubical quasicategories admit a convenient notion of a mapping space, which we use to characterize the weak equivalences between fibrant objects in our model structure as DK-equivalences.
\end{abstract}

\section*{Introduction}
\thispagestyle{empty}
The category $\sSet$ of simplicial sets carries two canonical model structures: the Kan-Quillen model structure \cite{quillen:book}, presenting the homotopy theory of $\infty$-groupoids, and the Joyal model structure \cite{joyal:theory-of-quasi-cats}, presenting the homotopy theory of $(\infty,1)$-categories.
Both of these model structures have monomorphisms as their cofibrations and their fibrant objects are defined by a more or less restrictive lifting condition, depending on whether the 1-simplices of a fibrant object are  invertible.

The category $\cSet$ of cubical sets is also known to carry a model structure, called the Grothendieck model structure and constructed by Cisinski \cite{CisinskiAsterisque,CisinskiUniverses}, presenting the theory of $\infty$-groupoids.
This model structure is completely analogous to the Kan-Quillen model structure, but with open boxes replacing horns in the definition of fibrant objects.
The goal of the present work is provide a cubical analogue of the Joyal model structure, thus filling the bottom right corner in the table:

\begin{center}
  \begin{tikzpicture}
  \matrix (magic) [matrix of nodes,nodes={minimum width=3cm,minimum height=1cm,draw,very thin},draw,inner sep=0]
  {
  category $\setminus$ theory & $\infty$-groupoids & $(\infty,1)$-categories \\
  $\sSet$ & \cite{quillen:book} & \cite{joyal:theory-of-quasi-cats} \\
  $\cSet$ & \cite{CisinskiUniverses} & present work \\
  };
  \end{tikzpicture}
\end{center}

Our main theorem (cf.~\cref{cubical-joyal-ms,fibrant-objects-unmarked,T-Quillen-equivalence,fundamental-theorem}) states
\enlargethispage{11pt}
\begin{theorem*}
  The category $\cSet$ of cubical sets carries a model structure in which:
  \begin{itemize}
    \item the cofibrations are the monomorphisms;
    \item the fibrant objects are defined by having fillers for all inner open boxes;
    \item the weak equivalences between fibrant objects are the DK-equivalences.
  \end{itemize}
  \enlargethispage{11pt}
  Moreover, this model structure is Quillen equivalent to the Joyal model structure on the category $\sSet$ of simplicial sets via the triangulation functor $T \colon \cSet \to \sSet$.
\end{theorem*}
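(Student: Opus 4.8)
The plan is to construct the model structure by transferring or recognizing it via Cisinski's theory of model structures on presheaf categories, then to establish the Quillen equivalence with the Joyal model structure by analyzing the triangulation functor $T \colon \cSet \to \sSet$ and its right adjoint. Since the cofibrations are to be the monomorphisms and the category $\cSet$ is a presheaf category, the natural approach is to apply Cisinski's machinery: one specifies a suitable class of anodyne maps generated by the inner open box inclusions (together with whatever elementary cubical homotopical data is needed — pushout-products with endpoint inclusions, connection-compatibility, etc.), checks that this class satisfies the closure conditions making it a class of anodyne extensions, and obtains a model structure whose fibrant objects are exactly those with the right lifting property against inner open boxes. The first real task is therefore to pin down the correct generating set of anodyne maps so that (a) the resulting fibrant objects are the cubical quasicategories as defined, and (b) the class is rich enough to force the homotopy theory to match the Joyal one.

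Next I would turn to the triangulation functor. Triangulation $T$ is a left adjoint (it is determined by sending the $n$-cube to the $n$-fold product of $\Delta^1$, or the appropriate cubical-to-simplicial prescription), with right adjoint $U$ given by $U(X)_n = \sSet(T(\Box^n), X)$. To show this is a Quillen pair for the two model structures I would verify that $T$ sends monomorphisms to monomorphisms (clear, since $T$ preserves colimits and one checks it on generators) and sends the generating anodyne maps — in particular the inner open box inclusions — to Joyal-anodyne maps, i.e. to inner-anodyne or at least Joyal trivial cofibrations in $\sSet$. This reduces, by the standard adjunction argument, to checking finitely many explicit cases: that $T$ of an inner open box inclusion is a trivial cofibration in the Joyal model structure, which one can attack by exhibiting it as built from inner horn inclusions via pushouts, retracts, and compositions, or by a direct deformation-retract argument on the triangulated cubes.

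For the Quillen equivalence itself I would show the derived unit and counit are weak equivalences. The cleanest route is: (1) show the right adjoint $U$ takes quasicategories to cubical quasicategories (automatic from the Quillen pair) and reflects/preserves enough structure; (2) prove that for a quasicategory $X$ the counit $T U X \to X$ is a Joyal equivalence, and for a cubical quasicategory $Y$ the unit $Y \to U T Y$ is a weak equivalence in the cubical model structure — here one leverages the characterization of weak equivalences between fibrant objects as DK-equivalences, which is exactly the third bullet of the main theorem and is available to us. Concretely, the mapping-space machinery for cubical quasicategories lets one compare $\Map_Y(x,y)$ with $\Map_{UTY}(x,y) \simeq \Map_{TY}^{\sSet}(Tx,Ty)$ and check the comparison is a homotopy equivalence of Kan complexes, and similarly compare homotopy categories; DK-equivalence then gives the weak equivalence. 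The main obstacle, I expect, is precisely this last comparison of mapping spaces — controlling $T$ and $U$ on hom-objects and showing triangulation does not distort them up to homotopy — since cubical mapping spaces are built from a cubical path/cylinder construction whose triangulation must be reconciled, up to coherent homotopy, with the simplicial $\Hom$-complexes (e.g.\ the $\Hom^L$, $\Hom^R$, or Kan's $\Hom$); establishing that reconciliation, together with the requisite combinatorics of triangulated open boxes in step two, is where the bulk of the work lies.
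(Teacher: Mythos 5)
Your overall scaffolding is reasonable, and two of the three stages are essentially correct in outline, but the third stage — establishing the Quillen equivalence via direct unit/counit arguments for $T \dashv U$ — is precisely the route the paper deliberately avoids, and there is a real gap there.

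On the construction of the model structure: you propose applying Cisinski theory directly to $\cSet$ with the inner open box inclusions as generating anodynes. This does work, and the paper confirms it as an afterthought (the proposition identifying the cubical Joyal model structure with a Cisinski model structure $\cSet_K$). However, the paper's primary construction is a three-step detour through marked cubical sets ($\cSet'$, built via Jeff Smith's theorem) and structurally marked cubical sets ($\cSet''$, built via Cisinski), from which the $\cSet$ model structure is then \emph{left-induced} along the minimal-marking functor. The reason for the detour is precisely the subtlety you gloss over in ``pin down the correct generating set of anodyne maps'': the inner open box $\widehat{\sqcap}^n_{i,\varepsilon}$ is a quotient (critical edge collapsed to a point), and much of the combinatorial control needed later — characterizing fibrant objects, fibrations between them, and the DK-equivalence theorem — is cleaner with the marked bookkeeping in hand. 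Your route would eventually need to redevelop most of this.

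On showing $T \dashv U$ is a Quillen pair: your plan (check $T$ preserves monomorphisms and sends generating anodynes to Joyal trivial cofibrations; reduce the latter to explicit decompositions into inner-horn and outer-horn-with-degenerate-edge fillings) matches the paper's argument in spirit and substance.

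The genuine gap is in the Quillen equivalence step. You propose to prove that the derived counit $TUX \to X$ and derived unit $Y \to UTY$ are weak equivalences, leaning on the DK-equivalence characterization and on a comparison $\Map_Y(x,y) \simeq \Map^{\sSet}_{TY}(Tx,Ty)$. You correctly identify this mapping-space comparison as the crux. But this is exactly what the paper says is intractable to do directly: there is no workable description of $UTY$ for a cubical quasicategory $Y$, and no natural map you can write down between the cubical and simplicial hom-complexes that you could hope to prove is an equivalence without already knowing $T$ is a Quillen equivalence. Worse, in the paper the comparison of cubical and simplicial mapping spaces (e.g.\ $U\Map^W_X(x_0,x_1) \cong \Hom_{UX}(x_0,x_1)$ and the $\int$-versions) is a \emph{consequence} of the Quillen equivalence, obtained at the very end of the paper; using it as input to prove the Quillen equivalence is circular. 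The paper's key maneuver, which your proposal is missing, is to introduce a second left adjoint $Q \colon \sSet \to \cSet$ (straightening-over-the-point, built from the cone monad $C$) whose counit $Q\int X \hookrightarrow X$ is a \emph{monomorphism} — indeed an identifiable subcomplex of $X$ — and then, using the combinatorics of cones and a ``coherent family of composites,'' to exhibit that inclusion as a transfinite composite of inner open box fillings when $X$ is a cubical quasicategory. Together with an explicit natural weak equivalence $TQ \Rightarrow \mathrm{id}_{\sSet}$ (induced by the poset map $F \colon [1]^n \to [n]$), this shows $LQ$ is an equivalence and $LT$ is its inverse, so $T$ is a Quillen equivalence — without ever analyzing $TU$ or $UT$. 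Your proposal, as written, has no mechanism to replace this: it attempts to cross the river exactly where the paper says the bridge is out.
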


A few comments are in order.

First, there are many different notions of a cubical set, depending on the choice of maps in the indexing category $\Box$, called the \emph{box category}.
Here, we are taking as agnostic a view as possible and most of our theorems apply to a number of different such notions.

Specifically, in addition to the usual box category with face and degeneracy maps, as studied in \cite{CisinskiAsterisque,jardine:categorical-homotopy-theory}, we are also considering the box category with one or both connections.
The case of one connection, specifically the negative (a.k.a.~maximum) connection, was studied in \cite{CisinskiUniverses,maltsiniotis:connections-strict-test-cat,kapulkin-lindsey-wong}, and the case with both connections has been, since the present paper first appeared, investigated in \cite{campion-kapulkin-maehara}.
All of these are EZ-Reedy categories and test categories, and in the case when at least one connection is present in the box category, they are in fact \emph{strict} test categories, making them convenient to work with.
All of the box categories under consideration are displayed in the diagram at the end of \cref{sec:background}.

All of our results are true for the box category with both connections and the paper can be read with this category in mind.
However, the construction of the model structures and characterization of weak equivalences using cubical mapping spaces work for all four box categories mentioned above.
The Quillen equivalence with the Joyal model structure on simplicial sets requires at least one connection.
Finally, the comparison between cubical and simplicial mapping spaces requires both connections, although weaker comparison results hold for the other three box categories.

One particularly interesting aspect of our proof of the comparison between cubical and simplicial models of $(\infty,1)$-categories is that we do not work with the triangulation functor directly.
Instead, we rely on a result of \cite{kapulkin-lindsey-wong} exhibiting the category of simplicial sets as a co-reflective subcategory of the category of cubical sets with the negative connection via the straightening-over-the-point functor $Q \colon \sSet \to \cSet$, an instance of a more general construction of straightening, studied in \cite{kapulkin-voevodsky:cubical-straightening}.
While at first glance the straightening-over-the-point functor seems more involved, it ends up being significantly more convenient to work with than the triangulation functor.
And so, in order to show that $T$ is a Quillen equivalence, we first prove that $Q$ is a Quillen equivalence and establish that the derived functors of $T$ and $Q$ are each other's inverses.

Lastly, the concept of an \emph{inner open box} appearing in the statement of our main theorem is the cubical analogue of the notion of an inner horn in simplicial sets.
Its definition is somewhat subtle, which is the reason behind our taking a slight detour in the construction of the model structure on cubical sets.
At this point however, we shall simply note this subtlety here and give a precise definition in \cref{sec:joyal-cset}.

In order to establish a model structure on $\cSet$, we consider first a model structure on marked cubical sets.
A \emph{marked cubical set} is a cubical set with a distinguished subset of edges (to be thought of as ``equivalences''), containing all degenerate ones.
We then use the minimal marking functor, taking a cubical set to a marked cubical set in which the marked edges are precisely the degeneracies, to left-induce (in the sense of \cite{hkrs:transfer-thm}) a model structure on cubical sets.

In order to establish that the triangulation functor is a Quillen equivalence between our model structure on cubical sets and the Joyal model structure on simplicial sets, we introduce a cubical theory of cones, which generalizes the straightening-over-the-point construction.
Our cubical cones serve as a convenient way of relating simplicial and cubical shapes, and we believe that these tools will find applications beyond present work.

This paper is organized as follows.
In \cref{sec:background}, we collect the necessary results on model categories, cubical sets, and marked cubical sets.
Trying to keep the exposition as self-contained as possible, we include statements of frequently used results and those that may be harder to find in the existing literature.

In \cref{section:marked}, we construct the model structure on the category of marked cubical sets, using Jeff Smith's theorem \cite{beke:sheafifiable}.
While this model structure could be constructed using Olschok's theory \cite{olschok:thesis}, we decided to spell out the details of the construction to obtain a better understanding of the resulting classes of maps.
Although described here only as means to an end, we believe that this model structure is of independent interest and provides another convenient model for the theory of $(\infty,1)$-categories.

Then, in \cref{sec:structurally-marked} we show that this model structure is right-induced by a model structure on the category of structurally marked cubical sets, a presheaf category containing marked cubical sets as a reflective subcategory, constructed using the Cisinski theory.

In \cref{sec:joyal-cset}, we use the minimal marking functor to construct the desired model structure on the category of cubical sets.
We then analyze the resulting classes of maps, characterizing weak equivalences and fibrations between fibrant objects.

In \cref{sec:cones}, we develop the theory of cones and coherent families of composites, which is then used in \cref{sec:relation} to show that the model structure of \cref{sec:joyal-cset} is Quillen equivalent with the Joyal model structure.
This last argument is fairly combinatorial and includes a number of routine computations involving cubical identities.
For clarity of exposition, most of these computations are therefore relegated to Appendix \ref{appendix:calculations}.
Once again, we expect the tools used here to be of independent interest, e.g., in developing category theory in the context of our cubical quasicategories.

We conclude in \cref{sec:fund-thm}, by defining mapping spaces in cubical quasicategories, showing that they are Kan complexes, and characterizing weak equivalences between fibrant objects in our model structure on $\cSet$ as DK-equivalences, i.e., maps inducing equivalences on homotopy categories and equivalences of mapping spaces.
We further establish a relation between these cubical mapping spaces and their simplicial analogs via the triangulation and straightening adjunctions, and give a new proof of the characterization of categorical equivalences in the Joyal model structure as DK-equivalences.

Regarding the dependence on choice of the box category:
\begin{itemize}
  \item the results of \cref{section:marked,sec:structurally-marked,sec:joyal-cset} and the main results of \cref{sec:fund-thm}, in particular \cref{fundamental-theorem} work for all categories displayed in the diagram \cref{eq:cube-cat-inclusions} at the end of \cref{sec:background};
  \item the results of \cref{sec:cones,sec:relation} require at least one connection;
  \item the results of \cref{sec:fund-thm} relating cubical and simplicial mapping spaces require both connections.
\end{itemize}

\textbf{Acknowledgements.} During the work on this paper the first three authors were supported by the Natural Sciences and Engineering Research Council (NSERC) of Canada in a variety of forms, including the Canada Graduate Scholarship awarded to the first-named author and the Discovery Grant awarded to the second-named author.
The fourth author was supported by Swedish Research Council grant 2019-03765.
This material is based upon work supported by the National Science Foundation under Grant No.~1440140, while the second-named author was in residence at the Mathematical Sciences Research Institute in Berkeley, California, during Spring 2020.

\setcounter{tocdepth}{1}
\tableofcontents

\section{Cubical sets and marked cubical sets} \label{sec:background}

\subsection{Model categories}\label{Sec1Cisinski}
Here we will review various general results from the theory of model categories which we will use throughout subsequent sections. We begin with a result which allows us to construct model structures having specified classes of cofibrations and weak equivalences.

\begin{theorem}[Jeff Smith's Theorem, {\cite[Thm.~1.7, Prop.~1.15, 1.19]{beke:sheafifiable}}]\label{Jeff-Smith's-theorem}
Let $\mathsf{C}$ be a locally presentable category. Let $W$ be a class of morphisms forming an accessibly embedded, accessible subcategory of $\mathsf{C}^{\to}$, and $I$ a set of morphisms in $\mathsf{C}$. Suppose that the following conditions are satisfied.

\begin{itemize}
\item $W$ satisfies the two-out-of-three axiom.
\item $W$ contains all maps having the right lifting property with respect to the maps in $I$.
\item The intersection of $W$ with the saturation of $I$ is closed under pushouts and transfinite composition.
\end{itemize}

Then $\mathsf{C}$ admits a cofibrantly generated model structure with weak equivalences $W$ and generating cofibrations $I$.
\end{theorem}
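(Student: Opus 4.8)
The plan is to assemble the model structure from two weak factorization systems, following the standard recognition argument for cofibrantly generated model structures. Write $\mathrm{rlp}(S)$ and $\mathrm{llp}(S)$ for the classes of maps with the right, resp.\ left, lifting property against every member of $S$, and set $\mathrm{cof} = \mathrm{llp}(\mathrm{rlp}(I))$, the saturation of $I$. We take the cofibrations of the model structure to be $\mathrm{cof}$, the weak equivalences to be $W$, and the fibrations to be $\mathrm{rlp}(\mathrm{cof}\cap W)$. Since $\mathsf{C}$ is locally presentable and $I$ is a set, the small object argument applies and yields that $(\mathrm{cof},\ \mathrm{rlp}(I))$ is a weak factorization system; what then remains is to produce the weak factorization system $(\mathrm{cof}\cap W,\ \mathrm{rlp}(\mathrm{cof}\cap W))$ and to verify that these three classes satisfy the model category axioms.

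The technical heart of the argument, and the only place the accessibility of $W$ is genuinely needed, is the construction of a \emph{set} $J$ of generating trivial cofibrations: a set $J$ of cofibrations contained in $W$ with $\mathrm{cof}\cap W = \mathrm{sat}(J)$. To build $J$, fix a regular cardinal $\kappa$ such that $\mathsf{C}$ is locally $\kappa$-presentable, the domains and codomains of the maps in $I$ are $\kappa$-presentable, and $W$ is closed under $\kappa$-filtered colimits in $\mathsf{C}^{\to}$ and generated under these by a set of arrows between $\kappa$-presentable objects; such a $\kappa$ exists precisely because $W$ is an accessibly embedded accessible subcategory of $\mathsf{C}^{\to}$. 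Let $J$ be a set of representatives for the cofibrations lying in $W$ whose domain and codomain are $\kappa$-presentable. The inclusion $\mathrm{sat}(J)\subseteq \mathrm{cof}\cap W$ is formal: $\mathrm{cof}$ is saturated and contains $J$, while $\mathrm{cof}\cap W$ is closed under pushout and transfinite composition by hypothesis — hence also under coproducts — and closed under retracts because $W$, being accessible, is idempotent-complete. The reverse inclusion $\mathrm{cof}\cap W\subseteq\mathrm{sat}(J)$ is the crux: given a cofibration $f$ in $W$, one presents its codomain as a $\kappa$-filtered colimit of $\kappa$-presentable subobjects and reorganizes the resulting ``cell attachments'' into a transfinite composite of pushouts of maps in $J$, a solution-set-style argument that relies essentially on the accessibility of $W$. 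Granting $\mathrm{cof}\cap W = \mathrm{sat}(J)$, the small object argument applied to $J$ produces the second weak factorization system $(\mathrm{sat}(J),\ \mathrm{rlp}(J)) = (\mathrm{cof}\cap W,\ \mathrm{rlp}(\mathrm{cof}\cap W))$.

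With both weak factorization systems in hand the rest is formal. First, $\mathrm{rlp}(I)\subseteq W$ by hypothesis, and $\mathrm{rlp}(I) = \mathrm{rlp}(\mathrm{cof}) \subseteq \mathrm{rlp}(\mathrm{cof}\cap W)$, so every trivial fibration is a fibration and a weak equivalence. Conversely, if $p\in\mathrm{rlp}(\mathrm{cof}\cap W)\cap W$, factor $p = q\circ i$ with $i\in\mathrm{cof}$ and $q\in\mathrm{rlp}(I)$; then $q\in W$, so two-out-of-three gives $i\in\mathrm{cof}\cap W$, whence $i$ lifts against $p$ and the usual retract argument exhibits $p$ as a retract of $q$, so $p\in\mathrm{rlp}(I)$. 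Thus the trivial fibrations are exactly $\mathrm{rlp}(I)$ and the trivial cofibrations exactly $\mathrm{cof}\cap W$. Combined with the two factorizations, the two induced classes of lifting problems, the assumed two-out-of-three property of $W$, and closure of $W$ under retracts, this is precisely the data of a model structure on $\mathsf{C}$, cofibrantly generated with generating cofibrations $I$ and generating trivial cofibrations $J$.

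The main obstacle is thus the middle step — manufacturing $J$ and showing it generates $\mathrm{cof}\cap W$ — which is exactly where the hypothesis that $W$ is an accessibly embedded accessible subcategory of $\mathsf{C}^{\to}$, rather than merely a replete class closed under two-out-of-three, does its work; this is the content of the cited results of Smith, and everything else is a routine consequence of the small object argument together with retract and two-out-of-three bookkeeping.
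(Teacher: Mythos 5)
The paper does not prove this theorem; it is imported verbatim from Beke with a citation, so there is no ``paper's own proof'' to compare against. Your sketch does correctly reproduce the high-level architecture of the Beke/Smith argument: set up the two weak factorization systems, with the second generated by a set $J$ of trivial cofibrations whose existence is the genuine content, and then verify the model axioms by routine lifting and two-out-of-three bookkeeping. The closing ``formal'' paragraph is fine as written.

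The description of the crucial step, however, is not how the argument actually goes, and as stated it would not work. You claim that a trivial cofibration $f \colon X \to Y$ can be shown to lie in $\mathrm{sat}(J)$ by ``presenting its codomain as a $\kappa$-filtered colimit of $\kappa$-presentable subobjects and reorganizing the resulting cell attachments into a transfinite composite of pushouts of maps in $J$.'' Two problems. First, even if such a reorganization succeeded, it would exhibit $f$ as a relative $J$-cell complex, and not every map in $\mathrm{sat}(J)$ is a $J$-cell complex --- you need retracts, and they enter essentially, not as an afterthought. Second, and more seriously, there is no reason the intermediate stages of such a direct filtration of $Y$ should again be trivial cofibrations, nor that the successor steps should be pushouts of maps in $J$; knowing that $f$ is a $\kappa$-filtered colimit \emph{in the arrow category} of small arrows in $W$ does not give a cellular decomposition, because $\mathrm{sat}(J)$ is not closed under arbitrary filtered colimits in $\mathsf{C}^{\to}$. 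The actual proof (Beke Prop.~1.19) is indirect: one first proves a solution-set lemma saying that every square from an $i \in I$ into a trivial cofibration factors through some $j \in J$, uses this to show that a map in $\mathrm{rlp}(J) \cap W$ already lies in $\mathrm{rlp}(I)$, and only then obtains $\mathrm{cof} \cap W \subseteq \mathrm{sat}(J)$ by factoring $f$ through the $(\mathrm{sat}(J),\mathrm{rlp}(J))$ system and applying the retract argument --- the closure hypotheses on $\mathrm{cof} \cap W$ are used to see that the $J$-cellular half of that factorization is a trivial cofibration, not to decompose $f$ directly. A smaller point: your justification that $W$ is closed under retracts ``because $W$, being accessible, is idempotent-complete'' conflates idempotent-completeness of $W$ as an abstract category with closure of $W$ under retracts as a subclass of $\mathsf{C}^{\to}$; the latter is what you need, and while it does hold (a splitting of an idempotent can be computed as a $\lambda$-filtered colimit, and $W$ is accessibly embedded), it is also a standard formal consequence of the other axioms via the retract trick you already use for fibrations, so it would be cleaner to derive it that way.
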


Next we review some of the machinery of Cisinski theory \cite{CisinskiAsterisque}, which allows for the easy construction of model structures on presheaf categories having monomorphisms as cofibrations and weak equivalences defined in terms of homotopy with respect to a cylinder functor.

\begin{definition}\label{CylinderFunctor}
Let $\mathsf{C}$ be a small category. A \emph{cylinder functor} on $\mathsf{C}$ consists of an endofunctor $I$ on the presheaf category $\pshf{\mathsf{C}}$, together with natural transformations $\bd^{0}, \bd^{1} \colon \mathrm{id} \to I$, $\sigma \colon I \to \mathrm{id}$, such that:

\begin{itemize}
\item $\bd^{0}$ and $\bd^{1}$ are sections of $\sigma$;
\item For all $X \colon \mathsf{C}^{\mathrm{op}} \to \mathsf{Set}$, the map $(\bd^{0}_{X}, \bd^{1}_{X}) \colon X \sqcup X \to IX$ is a monomorphism;
\item $I$ preserves small colimits and monomorphisms;
\item For all monomorphisms $j \colon X \to Y$ in $\pshf{\mathsf{C}}$ and all $\varepsilon \in \{0,1\}$, the following square is a pullback:

\centerline{
\xymatrix{
X \ar[d]_{\bd^{\varepsilon}} \ar[r]^{j} & Y \ar[d]^{\bd^{\varepsilon}} \\
IX \ar[r]^{Ij} & IY \\
}
}
\end{itemize}
\end{definition}

In what follows, let $\mathsf{C}$ be a small category equipped with a cylinder functor $I \colon \pshf{\mathsf{C}} \to \pshf{\mathsf{C}}$.

\begin{definition}
Let $f, g \colon X \to Y$ be maps of presheaves on $\mathsf{C}$. An \emph{elementary homotopy} from $f$ to $g$ is a map  $H \colon IX \to Y$ such that $H \bd^{0} = f, H \bd^{1} = g$. A \emph{homotopy} is a zig-zag of elementary  homotopies. The set of maps from $X$ to $Y$ modulo the relation of homotopy is denoted $[X,Y]$. 
\end{definition}

Since our notion of homotopy is given by a cylinder functor, pre- and post-composition by a fixed map preserve this relation; thus a map $X \to Y$ induces maps $[Z,X] \to [Z,Y]$ and $[Y,Z] \to [X,Z]$ for any $Z$.

\begin{definition}
A \emph{cellular model} for $\pshf{\mathsf{C}}$ is a set $M$ of monomorphisms in $\pshf{\mathsf{C}}$ whose saturation is precisely the class of monomorphisms of $\pshf{\mathsf{C}}$.
\end{definition}

Let $M$ be a cellular model for $\pshf{\mathsf{C}}$, and $S$ a set of monomorphisms in $\pshf{\mathsf{C}}$. The set of morphisms $\Lambda(S)$ is defined by the following inductive construction. For a monomorphism $X \to Y$ in $\pshf{\mathsf{C}}$ and $\varepsilon \in \{0,1\}$, let $IX \cup_{\varepsilon} Y$ and $IX \cup (Y \sqcup Y)$ be defined by the following pushout squares:

\centerline{
\xymatrix{
X \ar[r] \ar[d]_{\bd^{\varepsilon}} & Y \ar[d] \ar[d]  & & X \sqcup X \ar[d] \ar[r] & Y \sqcup Y \ar[d] \\
IX \ar[r] & IX \cup_{\varepsilon} Y \pushoutcorner   & & IX \ar[r] & IX \cup (Y \sqcup Y) \pushoutcorner 
}
} 
We now define a set of monomorphisms $\Lambda(S)$ by an inductive construction. We begin by setting:

$$
\Lambda^{0}(S) = S \cup \{IX \cup_{\varepsilon} Y \to IY | X \to Y \in M, \varepsilon \in \{0,1\}\} 
$$

Now, given $\Lambda^{n}(S)$, we define:

$$
\Lambda^{n+1}(S) = \{IX \cup (Y \sqcup Y) \to IY | X \to Y \in \Lambda^{n}(S)\}
$$

Finally, we let $\Lambda(S) = \bigcup\limits_{n \geq 0} \Lambda^{n}(S)$. We now define several distinguished classes of maps and objects in $\pshf{\mathsf{C}}$.

\begin{itemize}
\item A \emph{cofibration} is a monomorphism; a \emph{trivial fibration} is a map having the right lifting property with respect to the cofibrations.
\item An \emph{anodyne map} is a map in the saturation of $\Lambda(S)$; a \emph{naive fibration} is a map having the right lifting property with respect to the anodyne maps.
\item A \emph{fibrant object} is a presheaf $X$ such that the map from $X$ to the terminal presheaf is a naive fibration.
\item A \emph{weak equivalence} is a map $X \to Y$ such that the induced map $[Y,Z] \to [X,Z]$ is a bijection for any fibrant $Z$.
\item A \emph{trivial cofibration} is a map which is both a cofibration and a weak equivalence; a \emph{fibration} is a map having the right lifting property with respect to the trivial cofibrations.
\end{itemize}

\begin{theorem}\label{CisinskiMS}
The classes above define a cofibrantly generated model structure on $\pshf{\mathsf{C}}$, in which a map between fibrant objects is a fibration if and only if it is a naive fibration.
\end{theorem}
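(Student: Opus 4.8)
The plan is to verify the hypotheses of Jeff Smith's theorem (\cref{Jeff-Smith's-theorem}) for the locally presentable category $\pshf{\mathsf{C}}$, taking the cellular model $M$ as the set of generating cofibrations --- so that its saturation is, by definition of a cellular model, the class of all monomorphisms --- and taking $W$ to be the class of weak equivalences defined above. The two-out-of-three axiom for $W$ is automatic, since bijections of sets satisfy it. To see that $W$ contains every map with the right lifting property against $M$, i.e.\ against all monomorphisms --- that is, every trivial fibration --- I would check that such a $p \colon X \to Y$ is a homotopy equivalence: it has a section $s$ (a lift against $\emptyset \to Y$), and a lift of $(\bd^0_X, \bd^1_X) \colon X \sqcup X \to IX$ against $p$ with lower edge $p\sigma_X$ provides an elementary homotopy $sp \sim \mathrm{id}_X$; homotopy equivalences induce bijections on every $[-,Z]$, hence lie in $W$.

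It then remains to establish that $W$ is an accessible, accessibly embedded subcategory of the arrow category of $\pshf{\mathsf{C}}$, and that the trivial cofibrations --- the intersection of $W$ with the monomorphisms --- are closed under pushout and transfinite composition. For accessibility I would run the small object argument on the set $\Lambda(S)$ to obtain a functorial factorization of every map into an anodyne map followed by a naive fibration, hence an accessible fibrant replacement functor $R$; then one shows that $f$ is a weak equivalence if and only if $Rf$ is a homotopy equivalence, using that anodyne maps are weak equivalences (established next) and that a weak equivalence between fibrant objects is a homotopy equivalence --- the latter by a short diagram chase feeding the fibrant source and target into the defining bijection. Accessibility of $W$ then reduces to that of $R$ together with the bounded-complexity condition of being a homotopy equivalence.

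The substantive point is the closure of the trivial cofibrations under pushout and transfinite composition, and this is where the cylinder functor and the inductive construction of $\Lambda(S)$ are used. The first step is that every anodyne map is a trivial cofibration: anodyne maps are monomorphisms, and for $Z$ fibrant the functor $[-,Z]$ carries anodyne maps to bijections --- surjectivity on $\Hom(-,Z)$ is immediate from $Z \to 1$ being a naive fibration, and injectivity on $[-,Z]$ holds because for $j \colon A \to B$ anodyne the map $IA \cup (B \sqcup B) \to IB$ is again anodyne (the class of monomorphisms with this property is saturated, by the defining properties of the cylinder functor, and contains $\Lambda(S)$ by the very definition of $\Lambda^{n+1}(S)$), which lets one extend an elementary homotopy on $A$ to one on $B$ into any fibrant target. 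From here, the closure of the trivial cofibrations under pushout and transfinite composition is obtained by a delicate analysis --- passing to fibrant replacements, exploiting the anodyne--naive-fibration factorization, and a Dold-style study of naive fibrations that are weak equivalences --- so as to fall back on the corresponding, automatic properties of the saturation of the set $\Lambda(S)$. I expect this analysis, with its attendant bookkeeping of elementary homotopies and the verification that the various ``box'' operations occurring in the $\Lambda$-construction are stable under the saturated-class operations, to be the main obstacle; it is the technical core of Cisinski's theory.

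Granting the hypotheses of \cref{Jeff-Smith's-theorem}, we obtain the cofibrantly generated model structure. For the final assertion: any fibration has the right lifting property against all trivial cofibrations, in particular against the anodyne maps, hence is a naive fibration. Conversely, if $p \colon X \to Y$ is a naive fibration with $X$ and $Y$ fibrant, factor it in the model structure as a trivial cofibration $i \colon X \to X'$ followed by a fibration $q \colon X' \to Y$; then $X'$ is fibrant (it maps by fibrations to $Y$ and on to the terminal object), so $i$ is a weak equivalence between fibrant objects and hence a homotopy equivalence. A Dold-style argument --- improving a homotopy inverse of $i$ to an honest section over $Y$ by lifting homotopies along the naive fibration $p$, using that the cylinder endpoint inclusions $\bd^\varepsilon_X \colon X \to IX$ are anodyne (they arise as the maps $I\emptyset \cup_\varepsilon X \to IX$) --- then exhibits $p$ as a retract of $q$, so $p$ is a fibration.
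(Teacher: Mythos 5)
The paper's own proof is a bare citation to Cisinski's Ast\'erisque (Thm.~1.3.22 for existence, Thm.~1.3.36 for the characterization of fibrations between fibrant objects), so there is no in-paper argument to measure yours against; what you have written is a reconstruction from first principles. The organization is sound: apply Jeff Smith's theorem with $M$ generating the cofibrations; check trivial fibrations land in $W$ via the homotopy-equivalence argument you give; establish accessibility of $W$ through the accessible anodyne/naive-fibration factorization; and correctly isolate closure of trivial cofibrations under pushout and transfinite composition as the technical heart. Your argument that anodyne maps are trivial cofibrations --- using the saturation of the class of monomorphisms $j$ for which $IA \cup (B \sqcup B) \to IB$ is anodyne, seeded precisely by the $\Lambda^{n+1}(S)$ step of the construction --- is correct and is exactly the mechanism that makes the rest run. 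Two points to sharpen if you flesh this out. First, accessibility of $W$ is delivered not merely by ``accessibility of the replacement functor plus a bounded-complexity condition'' but by realizing $W$ as a pullback in $\Cat$ along the replacement functor of an accessible category of trivial fibrations; this is precisely the scheme the present paper deploys for marked cubical sets in \cref{tfib-accessible} and \cref{we-accessible}, and those arguments transpose directly to the presheaf setting. Second, the Dold-style lemma your retract argument rests on --- a naive fibration between fibrant objects that is a weak equivalence is a trivial fibration --- is essentially the content of Cisinski's Theorem 1.3.36 itself, and you also need it, unstated, in the closure-under-pushout step (via the factor-and-retract argument showing trivial cofibrations between fibrant objects are anodyne); stating it once as a standalone lemma would make the logical dependencies explicit and dispel the appearance of circularity. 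These are omissions natural to a sketch, not gaps of conception.
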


\begin{proof}
The existence of the model structure is \cite[Thm. 1.3.22]{CisinskiAsterisque}; the characterization of fibrant objects is \cite[Thm. 1.3.36]{CisinskiAsterisque}.
\end{proof}

\begin{corollary}\label{CisinskiHo}
  The homotopy category of $\pshf{\mathsf{C}}$ with the model structure of \cref{CisinskiMS} can be described as follows:
  \begin{itemize}
    \item its objects are the fibrant presheaves;
    \item the maps from $X$ to $Y$ are given by $[X, Y]$. \qed
  \end{itemize}
\end{corollary}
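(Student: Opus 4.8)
The plan is to deduce this from the standard Quillen description of the homotopy category of a model category, once the homotopy relation $[-,-]$ has been matched with the model-categorical one.

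First I would note that in the model structure of \cref{CisinskiMS} every presheaf $X$ is cofibrant, since the unique map $\emptyset \to X$ from the initial presheaf is a monomorphism and hence a cofibration. By Quillen's theorem on homotopy categories \cite{quillen:book}, $\Ho(\pshf{\mathsf{C}})$ is therefore equivalent to --- and so may be described as --- the category whose objects are the fibrant presheaves and whose morphisms are maps modulo the (left, equivalently right) homotopy relation of the model structure. It thus remains to identify, for $X$ cofibrant and $Y$ fibrant, that homotopy relation with the relation defining $[X,Y]$.

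For this, I would check that for every presheaf $X$ the object $IX$, equipped with $(\bd^{0}_{X},\bd^{1}_{X}) \colon X \sqcup X \to IX$ and $\sigma_{X} \colon IX \to X$, is a cylinder object for $X$: the map $(\bd^{0}_{X},\bd^{1}_{X})$ is a monomorphism --- hence a cofibration --- by the cylinder-functor axioms, and its composite with $\sigma_{X}$ is the fold map; and $\sigma_{X}$ is a weak equivalence, being a retraction of $\bd^{0}_{X}$, which is anodyne (hence a trivial cofibration). The anodyneness of $\bd^{\varepsilon}_{X}$ is a basic property of the Cisinski construction underlying \cref{CisinskiMS}: the map $IA \cup_{\varepsilon} B \to IB$ is anodyne for every monomorphism $A \to B$, and one specializes to $A = \emptyset$, using that $I$ preserves colimits. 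Consequently an elementary homotopy $H \colon IX \to Y$ in our sense is exactly a left homotopy via the cylinder object $IX$; since $X$ is cofibrant, the relation generated by such elementary homotopies is already an equivalence relation, coincides with left homotopy computed via any cylinder object, and hence, for $Y$ fibrant, with the model-categorical homotopy relation. Therefore $[X,Y] = \Ho(\pshf{\mathsf{C}})(X,Y)$ whenever $X$ and $Y$ are fibrant.

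The only substantive input is the anodyneness of $\bd^{\varepsilon}_{X}$ (equivalently, that $\sigma_{X}$ is a weak equivalence), which uses the precise form of the generating set $\Lambda(S)$ and the closure properties of anodyne maps from \cite{CisinskiAsterisque}; alternatively, one could obtain the corollary directly from Cisinski's own description of $\Ho(\pshf{\mathsf{C}})$. The remainder is the formal theory of Quillen's homotopy category.
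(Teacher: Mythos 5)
Your proof is correct, and it supplies exactly the standard argument that the paper leaves implicit by stamping the corollary with $\qed$ after citing Cisinski: every object is cofibrant, $IX$ together with $(\bd^0_X,\bd^1_X)$ and $\sigma_X$ is a genuine cylinder object, and the Quillen description of the homotopy category then reduces the claim to identifying elementary homotopies with left homotopies via this cylinder. The one nontrivial step you rightly isolate is the anodyneness of $\bd^\varepsilon_X$, and your justification is the correct one: the assignment $(A \to B) \mapsto (IA \cup_\varepsilon B \to IB)$ carries the cellular model $M$ into $\Lambda^0(S)$ and is compatible with pushout, transfinite composition, and retract, so it carries the saturation of $M$ (all monomorphisms) into the saturation of $\Lambda^0(S)$ (anodyne maps); taking $A = \varnothing$ and using that $I$ preserves colimits gives $\bd^\varepsilon_B$ anodyne. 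Combined with the fact (from Cisinski's theory) that anodyne maps are weak equivalences, this shows $\sigma_X$ is a weak equivalence as a retraction, finishing the cylinder check. The remaining identification of $[X,Y]$ with the Quillen homotopy classes for $X$ cofibrant and $Y$ fibrant is the standard collapse of zigzags to a single elementary homotopy, as you note. This is the same route the paper has in mind; you have simply written it out.
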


\begin{example}\label{sSet-J}
Let $J$ denote the simplicial set depicted below:

\[
\xymatrix{
  1 \ar[r] \ar@{=}[dr] & 0 \ar[d] \ar@{=}[dr] &  \\
    & 1 \ar[r] & 0 }
\]

Taking the product with $J$ defines a cylinder functor on $\sSet$, with the natural transformations $\bd^0, \bd^1$ given by taking the product with the endpoint inclusions $\{0\} \hookrightarrow J, \{1\} \hookrightarrow J$. Applying \cref{CisinskiMS} with this cylinder functor, the cellular model $M = \{\bd \Delta^n \to \Delta^n | n \geq 0\}$, and $S = \{\Lambda^{n}_{i} | n \geq 2, 1 < i < n\}$ (the set of \emph{inner horn inclusions}), we obtain the \emph{Joyal model structure} on $\sSet$, characterized as follows:

\begin{itemize}
\item Cofibrations are monomorphisms;
\item Fibrant objects are \emph{quasicategories}, simplicial sets having fillers for all inner horns;
\item Fibrations between fibrant objects are characterized by the right lifting property with respect to the inner horn inclusions and the endpoint inclusions $\{\varepsilon\} \hookrightarrow J, \varepsilon \in \{0,1\}$;
\item Weak equivalences are \emph{weak categorical equivalences}, maps $X \to Y$ inducing bijections $[Y,Z] \to [X,Z]$ for all quasicategories $Z$.
\end{itemize}

Homotopy equivalences between quasicategories are referred to as \emph{categorical equivalences}. For more on the Joyal model structure, see \cite{joyal:theory-of-quasi-cats}; for the details of its construction as a Cisinski model structure, see \cite[Sec. 3.3]{cisinski:higher-categories-book}.
\end{example}

Next we review a theorem which allows us to induce one model structure from another using an adjunction between their respective categories.

\begin{definition}
Let $F : \mathsf{C} \rightleftharpoons \mathsf{D} : U$ be an adjunction between model categories. The model structure on $\mathsf{C}$ is \emph{left induced} by $F$ if $F$ preserves and reflects cofibrations and weak equivalences. Likewise, the model structure on $\mathsf{D}$ is \emph{right induced} by $U$ if $U$ preserves and reflects weak equivalences and fibrations.
\end{definition}

\begin{remark}
Note that for a given adjunction $\mathsf{C} \rightleftharpoons \mathsf{D}$ and a given model structure on $\mathsf{D}$, the left-induced model structure is unique, if one exists, since the definition determines the cofibrations and weak equivalences of $\mathsf{C}$. Likewise, for a given model structure on $\mathsf{C}$, the right-induced model structure is unique, if one exists.
\end{remark}

\begin{theorem}[{\cite[Thm.~2.2.1]{hkrs:transfer-thm}}]\label{transfer-theorem}
Let $F : \mathsf{C} \rightleftarrows \mathsf{D} : U$ be an adjunction between locally presentable categories such that $\mathsf{D}$ carries a cofibrantly generated model structure with all objects cofibrant. If, for every object $X \in \mathsf{C}$, the co-diagonal map admits a factorization $X \sqcup X \xrightarrow{i_{X}} IX \xrightarrow{p_{X}} X$, such that $Fi_{X}$ is a cofibration and $Fp_{X}$ is a weak equivalence, then $\mathsf{C}$ admits a model structure left-induced by $F$ from that of $\mathsf{D}$. \qed
\end{theorem}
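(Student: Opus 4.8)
Since left induction forces the cofibrations and weak equivalences of $\mathsf{C}$ to be exactly the maps $f$ with $Ff$ a cofibration, respectively a weak equivalence, of $\mathsf{D}$, the model structure we want is completely determined once we know it exists. Write $\mathcal{C}$ and $\mathcal{W}$ for these two classes. The plan is to produce a set $I$ of morphisms of $\mathsf{C}$ whose saturation is $\mathcal{C}$ and then verify the three hypotheses of \cref{Jeff-Smith's-theorem} for the pair $(\mathcal{W}, I)$; the cofibrantly generated model structure it outputs then has cofibrations $\mathcal{C}$ and weak equivalences $\mathcal{W}$, hence is left-induced by $F$ by construction. (This is the route via Jeff Smith's theorem; an alternative, closer to the cited source, is to left-induce the two relevant weak factorization systems directly, using the theory of accessible functorial factorizations.)

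To obtain $I$: since $F$ preserves colimits and retracts, $\mathcal{C}$ inherits from the cofibrations of $\mathsf{D}$ closure under pushout, transfinite composition, and retracts; and since $\mathsf{D}$ is combinatorial, its cofibrations form an accessible, accessibly embedded subcategory of the arrow category $\mathsf{D}^{\to}$, so --- $F^{\to}\colon\mathsf{C}^{\to}\to\mathsf{D}^{\to}$ being accessible --- the preimage $\mathcal{C}$ is such a subcategory of $\mathsf{C}^{\to}$. By the theory of accessible weak factorization systems on locally presentable categories, $\mathcal{C}$ is then cofibrantly generated; fix a generating set $I$. The same reasoning identifies $\mathcal{W}$ as an accessible, accessibly embedded subcategory of $\mathsf{C}^{\to}$ (using that the weak equivalences of the combinatorial model category $\mathsf{D}$ form one), and $\mathcal{W}$ satisfies two-out-of-three because the weak equivalences of $\mathsf{D}$ do. Finally, $\mathcal{C}\cap\mathcal{W}$ is exactly the preimage under $F$ of the trivial cofibrations of $\mathsf{D}$, hence --- as $F$ preserves pushouts and transfinite composites and trivial cofibrations are closed under them --- it is closed under pushouts and transfinite composition.

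The last hypothesis of \cref{Jeff-Smith's-theorem}, that every map with the right lifting property against $I$ --- equivalently, against all of $\mathcal{C}$ --- lies in $\mathcal{W}$, is the only place the hypothesis on $F$ is used. Let $f\colon X\to Y$ be such a map. Because $F$ preserves the initial object and all objects of $\mathsf{D}$ are cofibrant, all objects of $\mathsf{C}$ are cofibrant; so $\emptyset\to Y$ lies in $\mathcal{C}$, and lifting $f$ against it yields a section $s$ with $fs=\mathrm{id}_Y$. Now take the hypothesized factorization $X\sqcup X\xrightarrow{i_X}IX\xrightarrow{p_X}X$ of the codiagonal: the square with top edge $(\mathrm{id}_X, sf)\colon X\sqcup X\to X$, left edge $i_X$, right edge $f$, and bottom edge $fp_X$ commutes, and $i_X\in\mathcal{C}$ by hypothesis, so it admits a lift $H\colon IX\to X$, which is a homotopy from $\mathrm{id}_X$ to $sf$ relative to the cylinder $IX$. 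Applying $F$ --- and using that $F$ preserves coproducts, that $Fi_X$ is a cofibration, and that $Fp_X$ is a weak equivalence --- we see that $FIX$ is a cylinder object for the cofibrant object $FX$ and that $FH$ is a left homotopy from $\mathrm{id}_{FX}$ to $Fs\circ Ff$. The two structure maps $FX\to FIX$ are weak equivalences by two-out-of-three against $Fp_X$, so $FH$, and hence $Fs\circ Ff$, is a weak equivalence; combined with $Ff\circ Fs=\mathrm{id}_{FY}$ this exhibits $Ff$ as a retract of $Fs\circ Ff$ in the arrow category of $\mathsf{D}$, so $Ff$ is a weak equivalence and $f\in\mathcal{W}$.

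\Cref{Jeff-Smith's-theorem} then furnishes a cofibrantly generated model structure on $\mathsf{C}$ with generating cofibrations $I$ and weak equivalences $\mathcal{W}$, whose cofibrations are the saturation of $I$, namely $\mathcal{C}$; thus $F$ preserves and reflects cofibrations and weak equivalences, and the model structure is left-induced by $F$. I expect the main obstacle to be the category-theoretic input of the second paragraph --- upgrading ``$\mathcal{C}$ is an accessible class of maps closed under the saturation operations'' to ``$\mathcal{C}$ is cofibrantly generated'' --- which rests on the theory of accessible weak factorization systems; the conceptual heart of the proof is by contrast the short cylinder-object argument of the third paragraph, the sole user of the hypothesis on $F$.
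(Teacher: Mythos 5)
The paper itself offers no proof for this statement---it is a direct citation of the HKRS transfer theorem, hence the \qed immediately after the statement---so there is no ``paper's proof'' to compare against line by line. Judged on its own merits, your sketch is essentially sound and captures the actual mathematical content: the cylinder-object argument in your third paragraph is correct and complete as a verification of the acyclicity condition (the commuting square you write down does commute, $i_X$ lies in $\mathcal{C}$, the lift produces a homotopy whose image under $F$ shows $Fs\circ Ff$ is a weak equivalence by two-out-of-three against the cylinder structure maps, and the retract argument with $Ff\circ Fs = \mathrm{id}$ finishes it); and that argument is indeed the sole place the hypothesis on $I$ enters.

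The genuine divergence from the cited source is your decision to route through Jeff Smith's theorem rather than through the theory of accessible weak factorization systems, and this is also where the load-bearing subtlety lives. Jeff Smith's theorem produces a model structure whose cofibration class is, by construction, the saturation of the chosen set $I$; for the result to be the \emph{left-induced} model structure you must therefore already know that $\mathcal{C} = F^{-1}(\text{cofibrations})$ is the saturation of some set. Your justification---``accessible, accessibly embedded, closed under the saturation operations $\Rightarrow$ cofibrantly generated''---is the statement of a nontrivial theorem, not an elementary observation, and the clean path to it in this setting is precisely the machinery HKRS develops: one first left-transfers the underlying (cofibration, trivial fibration) weak factorization system as an accessible WFS and then invokes the fact that, over a locally presentable base, left transfer of a cofibrantly generated WFS along a cocontinuous functor between locally presentable categories yields a cofibrantly generated WFS. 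The HKRS route short-circuits this by never appealing to Jeff Smith at all: they construct both WFSs by transfer, check that they pair up into a model structure exactly when acyclicity holds, and then verify acyclicity by the same cylinder argument you give. Your version buys a more elementary-looking outer shell (Jeff Smith, which the paper has already stated and used) at the cost of front-loading exactly the accessibility technology that the alternative route uses anyway; the arguments are not really independent, and if you want the proof to be self-contained you should either cite the transfer-of-WFS result precisely or follow HKRS and dispense with Jeff Smith.

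One further small remark: you also need to check that $\mathcal{W}$ (not just $\mathcal{C}\cap\mathcal{W}$) is an accessibly embedded accessible subcategory of $\mathsf{C}^{\to}$; you assert this via the pseudo-pullback argument, which is correct, but it relies on knowing the weak equivalences of a combinatorial model category form such a subcategory of $\mathsf{D}^{\to}$---another standard but nontrivial input that should be attributed.
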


Finally, we review some results which allow us to easily recognize Quillen adjunctions and Quillen equivalences.

\begin{proposition}[{\cite[Prop.~7.15]{joyal-tierney:qcat-vs-segal}}]
Let $F : \mathsf{C} \rightleftarrows \mathsf{D} : U$ be an adjunction between model categories. If $F$ preserves cofibrations and $U$ preserves fibrations between fibrant objects, then the adjunction is Quillen. \qed
\end{proposition}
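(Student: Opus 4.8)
The plan is to reduce the claim to showing that $F$ preserves trivial cofibrations. Since $F$ is already assumed to preserve cofibrations and every trivial cofibration is in particular a cofibration, the adjunction will be Quillen as soon as we know that $F$ sends each trivial cofibration of $\mathsf{C}$ to a weak equivalence of $\mathsf{D}$.

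The first ingredient is a lemma internal to any model category $\mathsf{M}$: a cofibration $j \colon A \to B$ having the left lifting property against every fibration between fibrant objects is a weak equivalence. To prove it, choose a fibrant replacement $b \colon B \to \hat{B}$ (a trivial cofibration with $\hat{B}$ fibrant) and factor $b j \colon A \to \hat{B}$ as a trivial cofibration $i \colon A \to \hat{A}$ followed by a fibration $p \colon \hat{A} \to \hat{B}$; then $\hat{A}$ is fibrant, so $p$ is a fibration between fibrant objects. The square with left edge $j$, right edge $p$, top edge $i$ and bottom edge $b$ commutes (both composites equal $bj$), so the hypothesis yields a lift $\ell \colon B \to \hat{A}$ with $\ell j = i$ and $p \ell = b$. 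Passing to the homotopy category $\mathsf{Ho}(\mathsf{M})$, in which a morphism of $\mathsf{M}$ becomes invertible exactly when it is a weak equivalence, and writing $[f]$ for the class of $f$, the relations $[i] = [\ell] \circ [j]$ and $[b] = [p] \circ [\ell]$ together with the invertibility of $[i]$ and $[b]$ exhibit $[\ell]$ simultaneously as a split epimorphism and a split monomorphism, hence as an isomorphism; therefore $[j] = [\ell]^{-1} \circ [i]$ is invertible and $j$ is a weak equivalence.

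Granting the lemma, let $j \colon A \to B$ be a trivial cofibration in $\mathsf{C}$. Then $Fj$ is a cofibration, and by the lemma applied in $\mathsf{D}$ it suffices to check that $Fj$ has the left lifting property against an arbitrary fibration $p \colon X \to Y$ between fibrant objects of $\mathsf{D}$. By adjointness this amounts to $j$ having the left lifting property against $Up$. As $U$ is a right adjoint it preserves the terminal object, so the hypothesis that $U$ preserves fibrations between fibrant objects shows both that $UX$ and $UY$ are fibrant and that $Up \colon UX \to UY$ is a fibration in $\mathsf{C}$. Since $j$ is a trivial cofibration it lifts against every fibration, in particular against $Up$; transposing back gives the desired lift of $Fj$ against $p$. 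Hence $Fj$ is a weak equivalence, $F$ preserves trivial cofibrations, and the adjunction is Quillen.

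I expect the only genuinely delicate point to be the auxiliary lemma, and within it the combination of two standard facts: that a morphism which is both a split epimorphism and a split monomorphism is an isomorphism, and that the weak equivalences of a model category are precisely the morphisms sent to isomorphisms in the homotopy category. Everything else — factorizations, fibrant replacement, stability of (trivial) cofibrations, and the transposition of lifting problems across $F \dashv U$ — is routine.
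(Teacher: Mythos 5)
The paper gives no proof of this proposition; it is cited to Joyal--Tierney and immediately closed with \verb|\qed|. So there is nothing internal to compare against, and the question is simply whether your argument is correct, which it is. Your reduction to showing that $F$ preserves trivial cofibrations, the transposition of lifting problems across $F \dashv U$, and the observation that $U$ preserves the terminal object (so that $UX$ and $UY$ are fibrant) are all fine. The substantive step is your auxiliary lemma --- a cofibration with the left lifting property against all fibrations between fibrant objects is a weak equivalence --- and your proof of it is sound: the lift $\ell$ satisfies $[\ell][j]=[i]$ and $[p][\ell]=[b]$ in $\mathsf{Ho}(\mathsf{M})$, exhibiting $[\ell]$ as a split epimorphism and split monomorphism, hence an isomorphism. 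One thing worth flagging is that this leans on the fact that a morphism of a model category is a weak equivalence if and only if its image in the homotopy category is an isomorphism (``saturation'' of the weak equivalences). This is a genuine theorem --- essentially equivalent to the 2-out-of-6 property --- not merely a formality of localization, so if you were writing this out in a paper you should cite it (e.g.\ Hovey, Thm.\ 1.2.10). Some authors prove the lemma instead by invoking 2-out-of-6 directly for the composable pair $j,\ell,p$; that is the same argument in a different guise. Your proof is correct and is, as far as I know, the standard route.
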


This statement has an immediate corollary, which we will apply in practice:

\begin{corollary-qed} \label{Quillen-adj-fib-obs}
Let $F \colon \mathsf{C} \to \mathsf{D}$ be a left adjoint between model categories and suppose that fibrations between fibrant objects in $\mathsf{C}$ are characterized by right lifting against a class $S$.
If $F$ preserves cofibrations and sends $S$ to trivial cofibrations, then $F$ is a left Quillen functor.
\end{corollary-qed}

\begin{proposition}[{\cite[Cor.~1.3.16]{hovey:book}}]\label{QuillenEquivCreate-original}
Let $F : \mathsf{C} \rightleftarrows \mathsf{D} : U $ be a Quillen adjunction between model categories.
Then the following are equivalent.
\begin{enumerate}
\item $F \adjoint U$ is a Quillen equivalence.
\item $F$ reflects weak equivalences between cofibrant objects and, for every fibrant $Y$, the derived counit $F\widetilde{UY} \to  Y$ is a weak equivalence.
\item $U$ reflects weak equivalences between fibrant objects and, for every cofibrant $X$, the derived unit $X \to U (FX)'$ is a weak equivalence.
\end{enumerate}
\end{proposition}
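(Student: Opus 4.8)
The plan is to reduce the proposition to pure category theory about adjoint equivalences, applied to the total derived adjunction. First I would recall two standard facts about an arbitrary model category $\mathsf{C}$ with localization $\gamma \colon \mathsf{C} \to \Ho(\mathsf{C})$: the Whitehead-type theorem that $\gamma(f)$ is an isomorphism if and only if $f$ is a weak equivalence, and the fact that every morphism of $\Ho(\mathsf{C})$ is isomorphic to one of the form $\gamma(g)$ with $g$ a map between bifibrant objects. I would also recall (via Ken Brown's lemma, or simply cite \cite{hovey:book}) that $F \adjoint U$ induces an adjunction $\mathbb{L}F \colon \Ho(\mathsf{C}) \rightleftarrows \Ho(\mathsf{D}) \colon \mathbb{R}U$, and that $F \adjoint U$ is a Quillen equivalence exactly when this derived adjunction is an adjoint equivalence of categories. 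With these in hand, the proposition follows from the purely formal statement that an adjunction $L \adjoint R$ is an adjoint equivalence if and only if its counit is a natural isomorphism and $L$ is conservative (reflects isomorphisms), equivalently if and only if its unit is a natural isomorphism and $R$ is conservative.

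The substance lies in two translations. For the first, $\mathbb{L}F$ is conservative if and only if $F$ reflects weak equivalences between cofibrant objects: given $\varphi$ in $\Ho(\mathsf{C})$, write $\varphi = \gamma(g)$ with $g \colon X \to X'$ between bifibrant objects; then $\mathbb{L}F\varphi \cong \gamma(Fg)$, and applying the Whitehead theorem on both sides, ``$\mathbb{L}F\varphi$ invertible $\Rightarrow$ $\varphi$ invertible'' says exactly ``$Fg$ a weak equivalence $\Rightarrow$ $g$ a weak equivalence'' for $g$ between bifibrant (hence cofibrant) objects; a routine fibrant-replacement-and-$2$-out-of-$3$ argument, using that $F$ is left Quillen and that fibrant replacement via trivial cofibrations preserves cofibrancy, upgrades this to all cofibrant domains and codomains. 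Dually, $\mathbb{R}U$ is conservative if and only if $U$ reflects weak equivalences between fibrant objects. For the second translation I would identify the derived unit and counit concretely: for $Y$ fibrant and $q \colon \widetilde{UY} \to UY$ a cofibrant replacement, the derived counit is $F\widetilde{UY} \xrightarrow{Fq} FUY \xrightarrow{\varepsilon_Y} Y$, whose transpose along $F \adjoint U$ is precisely $q$ (by the triangle identity and naturality of the unit); since $\widetilde{UY}$ is cofibrant, $Y$ is fibrant, and $q$ is always a weak equivalence, the definition of Quillen equivalence makes this derived counit a weak equivalence, and --- using that every object of $\Ho(\mathsf{D})$ is isomorphic to a fibrant one and that the counit is natural --- ``derived counit a weak equivalence at every fibrant object'' is the same as ``the counit of $\mathbb{L}F \adjoint \mathbb{R}U$ is a natural isomorphism''. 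The derived unit at a cofibrant $X$ is the transpose of a fibrant-replacement weak equivalence $FX \to (FX)'$, so being a weak equivalence at every cofibrant $X$ is the same as the unit of the derived adjunction being a natural isomorphism.

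Putting it together, $1 \Leftrightarrow 2$ is the formal criterion in the form ``counit an isomorphism and $\mathbb{L}F$ conservative'', with the two clauses decoded by the previous paragraph, and $1 \Leftrightarrow 3$ is the criterion in the form ``unit an isomorphism and $\mathbb{R}U$ conservative'' (or, if one prefers, $1 \Leftrightarrow 3$ follows from $1 \Leftrightarrow 2$ applied to the Quillen equivalence $U^{\mathrm{op}} \adjoint F^{\mathrm{op}}$ of opposite model categories, since all notions involved are self-dual).

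The main obstacle is not a single deep step but the bookkeeping: making ``the derived unit/counit'' well-defined independently of the chosen (co)fibrant replacements, and matching the ``reflects weak equivalences between (co)fibrant objects'' hypotheses precisely with conservativity of the derived functors on the \emph{whole} homotopy category --- this is exactly where one needs the Whitehead theorem together with the fact that bifibrant objects suffice to represent every morphism of $\Ho$. If one does not wish to black-box the equivalence ``Quillen equivalence $\iff$ derived adjunction is an adjoint equivalence'', reproving it is the other genuinely technical ingredient, and it rests on the same transpose computations as above.
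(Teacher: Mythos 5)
The paper does not give a proof of this proposition: it simply cites Hovey, Cor.~1.3.16, so there is no internal argument to compare against. Your proposal is correct and is, in essence, the standard proof that one would extract from Hovey.

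What you do slightly differently is to factor the argument through a purely categorical lemma: an adjunction $L \dashv R$ is an adjoint equivalence if and only if the counit is invertible and $L$ is conservative (equivalently, unit invertible and $R$ conservative). Hovey proves the same implications directly by chasing specific $2$-out-of-$3$ diagrams in the model categories, which is exactly the concrete shadow of your categorical lemma applied to $\mathbb{L}F \dashv \mathbb{R}U$; your organization makes the formal skeleton visible at the cost of having to verify carefully the two translations (conservativity of $\mathbb{L}F$ versus $F$ reflecting weak equivalences between cofibrant objects, and invertibility of the derived counit versus the pointwise condition at fibrant objects). Both of those translations are handled correctly: the bifibrant-representability of morphisms in $\Ho(\mathsf{C})$ together with the Whitehead theorem gives one direction of the first translation, and the fibrant-replacement-via-trivial-cofibration argument (which preserves cofibrancy, so $F$ stays controlled) upgrades ``bifibrant'' to ``cofibrant''; the transpose computation $U(\varepsilon_Y \circ Fq)\circ\eta = q$ correctly identifies the derived counit's adjoint and gives the comparison with the definition of Quillen equivalence. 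The duality step for $1 \Leftrightarrow 3$ via $U^{\mathrm{op}} \dashv F^{\mathrm{op}}$ is also valid. One small organizational remark: in the translation ``$\mathbb{L}F$ conservative $\Rightarrow$ $F$ reflects weak equivalences between cofibrant objects,'' you do not actually need the fibrant-replacement upgrade if you use directly that $\mathbb{L}F\gamma(g) \cong \gamma(Fg)$ for $g$ between cofibrant (not necessarily bifibrant) objects; the upgrade is only needed if you insist on representing $\varphi$ by a bifibrant $g$ from the start. This is a matter of taste, not a gap.
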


Again, in practice we will often apply the following corollary:

\begin{corollary}\label{QuillenEquivCreate}
Let $F : \mathsf{C} \rightleftarrows \mathsf{D} : U $ be a Quillen adjunction between model categories.

\begin{enumerate}
\item\label{QuillenEquivUnit} If $U$ preserves and reflects weak equivalences, then the adjunction is a Quillen equivalence if and only if, for all cofibrant $X \in \mathsf{C}$, the unit $X \to UFX$ is a weak equivalence.
\item\label{QuillenEquivCounit} If $F$ preserves and reflects weak equivalences, then the adjunction is a Quillen equivalence if and only if, for all fibrant $Y \in \mathsf{D}$, the counit $FUY \to Y$ is a weak equivalence. \qed
\end{enumerate}
\end{corollary}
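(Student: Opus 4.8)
The plan is to deduce both parts directly from \cref{QuillenEquivCreate-original}, using the two-out-of-three axiom to pass between the unit (resp.\ counit) and the \emph{derived} unit (resp.\ derived counit). I will carry out part \ref{QuillenEquivUnit} and obtain part \ref{QuillenEquivCounit} by the formally dual argument, interchanging the roles of $F$ and $U$, of cofibrant and fibrant, and replacing clause (3) of \cref{QuillenEquivCreate-original} by clause (2).

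For part \ref{QuillenEquivUnit}, fix a cofibrant object $X \in \mathsf{C}$ and choose a fibrant replacement $FX \to R(FX)$ in $\mathsf{D}$, so that the derived unit is the composite $X \xrightarrow{\eta_X} UFX \to UR(FX)$. Since $U$ preserves weak equivalences by hypothesis, the map $UFX \to UR(FX)$ is a weak equivalence; hence by two-out-of-three the unit $\eta_X$ is a weak equivalence if and only if the derived unit is. Moreover, $U$ reflecting \emph{all} weak equivalences in particular means it reflects weak equivalences between fibrant objects, so the first clause of \cref{QuillenEquivCreate-original}(3) holds automatically. Consequently, condition (3) of \cref{QuillenEquivCreate-original} is equivalent to the assertion that $\eta_X$ is a weak equivalence for every cofibrant $X$, and since (3) is in turn equivalent to $F \adjoint U$ being a Quillen equivalence, the claim follows.

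For part \ref{QuillenEquivCounit}, dually, fix a fibrant $Y \in \mathsf{D}$ and a cofibrant replacement $Q(UY) \to UY$ in $\mathsf{C}$; the derived counit is then $FQ(UY) \to FUY \xrightarrow{\varepsilon_Y} Y$. As $F$ preserves weak equivalences, $FQ(UY) \to FUY$ is a weak equivalence, so two-out-of-three shows $\varepsilon_Y$ is a weak equivalence exactly when the derived counit is; and $F$ reflecting all weak equivalences supplies the first clause of \cref{QuillenEquivCreate-original}(2) for free. Thus condition (2) of \cref{QuillenEquivCreate-original} reduces to the statement that $\varepsilon_Y$ is a weak equivalence for every fibrant $Y$, and the equivalence of (2) with being a Quillen equivalence finishes the argument.

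I do not expect a genuine obstacle here — the statement is a repackaging of \cref{QuillenEquivCreate-original} — but the one point deserving attention is that the two-out-of-three reductions invoke preservation of weak equivalences between objects that need not be cofibrant or fibrant (for instance $UY$ in $FQ(UY) \to FUY$ is not assumed cofibrant), so the automatic preservation of weak equivalences between cofibrant objects guaranteed by Ken Brown's lemma for a left Quillen functor would not be enough. This is precisely why the hypotheses ask that $U$ (resp.\ $F$) preserve and reflect \emph{all} weak equivalences.
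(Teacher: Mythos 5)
Your argument is correct, and it is exactly the straightforward deduction from \cref{QuillenEquivCreate-original} that the paper leaves to the reader (the corollary is stated with no written proof). Your closing remark — that the two-out-of-three step applies $U$ (resp.\ $F$) to a weak equivalence whose source (resp.\ target) need not be fibrant (resp.\ cofibrant), so Ken Brown's lemma alone would not suffice — is precisely the right thing to flag.
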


We will also have some use for the following consequence of this result, which concerns involutions of model categories. Recall that any involution of a category is self-adjoint, with the identity natural transformation as both unit and counit.

\begin{corollary}\label{QuillenEquivInvolution}
Let $\catC$ be a model category, and $F \colon \catC \to \catC$ an involution. If the adjunction $F \adjoint F$ is Quillen, then it is a Quillen equivalence.
\end{corollary}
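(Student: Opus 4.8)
The plan is to reduce everything to the counit criterion of \cref{QuillenEquivCreate}, whose only nontrivial hypothesis is that $F$ preserve and reflect weak equivalences. The first step is to observe that, since $F$ is its own left adjoint, it is equally its own right adjoint; as the given Quillen adjunction $F \adjoint F$ makes $F$ a left Quillen functor, it simultaneously makes $F$ a right Quillen functor. Hence $F$ preserves cofibrations, trivial cofibrations, fibrations, \emph{and} trivial fibrations.

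From this I would deduce that $F$ preserves all weak equivalences. Given a weak equivalence $f$, factor it as $f = p \circ i$ with $i$ a trivial cofibration and $p$ a fibration; by two-out-of-three $p$ is also a weak equivalence, hence a trivial fibration. Applying $F$, the map $F(f) = F(p) \circ F(i)$ is a composite of a trivial cofibration and a trivial fibration, both weak equivalences, so $F(f)$ is a weak equivalence. Reflection is then immediate: if $F(f)$ is a weak equivalence, so is $F(F(f)) = f$, using that $F$ is a (strict) involution.

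Finally, since $F$ preserves and reflects weak equivalences, \cref{QuillenEquivCreate} tells us that $F \adjoint F$ is a Quillen equivalence precisely when, for each fibrant $Y$, the counit $FFY \to Y$ is a weak equivalence --- but this counit is the identity, so the criterion holds vacuously. The only point requiring any thought is the very first step: recognizing that a self-adjoint left Quillen functor is also right Quillen, and therefore preserves trivial fibrations, which is exactly what makes the factorization argument go through; the rest is formal.
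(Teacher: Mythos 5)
Your proof is correct and follows essentially the same route as the paper's: both note that a self-adjoint Quillen functor preserves trivial cofibrations and trivial fibrations (hence, by factorization and two-out-of-three, all weak equivalences), that the involution property gives reflection, and that the identity unit and counit make the criterion of \cref{QuillenEquivCreate} hold trivially. The only difference is that you spell out the factorization argument that the paper compresses into a single clause.
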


\begin{proof}
If $F \adjoint F$ is Quillen, then $F$ preserves trivial cofibrations and trivial fibrations, hence all weak equivalences. The fact that $F$ is an involution thus implies that it reflects weak equivalences as well. Both the unit and counit of the adjunction are the identity natural transformation on $\catC$, thus we may apply either statement of \cref{QuillenEquivCreate} to conclude that the adjunction is a Quillen equivalence.
\end{proof}

\subsection{The box category and cubical sets}\label{subsection:cSet-basics}
We begin by defining the box category $\Box$.
The objects of $\Box$ are posets of the form $[1]^n$ and the maps are generated (inside the category of posets) under composition by the following four special classes:
\begin{itemize}
  \item \emph{faces} $\partial^n_{i,\varepsilon} \colon [1]^{n-1} \to [1]^n$ for $i = 1, \ldots , n$ and $\varepsilon = 0, 1$ given by:
  \[ \partial^n_{i,\varepsilon} (x_1, x_2, \ldots, x_{n-1}) = (x_1, x_2, \ldots, x_{i-1}, \varepsilon, x_i, \ldots, x_{n-1})\text{;}  \]
  \item \emph{degeneracies} $\sigma^n_i \colon [1]^n \to [1]^{n-1}$ for $i = 1, 2, \ldots, n$ given by:
  \[ \sigma^n_i ( x_1, x_2, \ldots, x_n) = (x_1, x_2, \ldots, x_{i-1}, x_{i+1}, \ldots, x_n)\text{;}  \]
  \item \emph{negative connections} $\gamma^n_{i,0} \colon [1]^n \to [1]^{n-1}$ for $i = 1, 2, \ldots, n-1$ given by:
  \[ \gamma^n_{i,0} (x_1, x_2, \ldots, x_n) = (x_1, x_2, \ldots, x_{i-1}, \max\{ x_i , x_{i+1}\}, x_{i+2}, \ldots, x_n) \text{.} \]
  \item \emph{positive connections} $\gamma^n_{i,1} \colon [1]^n \to [1]^{n-1}$ for $i = 1, 2, \ldots, n-1$ given by:
  \[ \gamma^n_{i,1} (x_1, x_2, \ldots, x_n) = (x_1, x_2, \ldots, x_{i-1}, \min\{ x_i , x_{i+1}\}, x_{i+2}, \ldots, x_n) \text{.} \]
\end{itemize}

These maps obey the following \emph{cubical identities}:

\begin{multicols}{2}
$\partial_{j, \varepsilon'} \partial_{i, \varepsilon} = \partial_{i+1, \varepsilon} \partial_{j, \varepsilon'}$ for $j \leq i$;

$\sigma_i \sigma_j = \sigma_j \sigma_{i+1} \quad \text{for } j \leq i$;

$\sigma_j \partial_{i, \varepsilon} = \left\{ \begin{array}{ll}
\partial_{i-1, \varepsilon} \sigma_j   & \text{for } j < i \text{;} \\
\id                                                       & \text{for } j = i \text{;} \\
\partial_{i, \varepsilon} \sigma_{j-1} & \text{for } j > i \text{;}
\end{array}\right.$

$\gamma_{j,\varepsilon'} \gamma_{i,\varepsilon} = \left\{ \begin{array}{ll} \gamma_{i,\varepsilon} \gamma_{j+1,\varepsilon'} & \text{for } j > i \text{;} \\
\gamma_{i,\varepsilon}\gamma_{i+1,\varepsilon} & \text{for } j = i, \varepsilon' = \varepsilon \text{;}\\
\end{array}\right.$

$\gamma_{j,\varepsilon'} \partial_{i, \varepsilon} =  \left\{ \begin{array}{ll}
\partial_{i-1, \varepsilon} \gamma_{j,\varepsilon'}   & \text{for } j < i-1 \text{;} \\
\id                                                         & \text{for } j = i-1, \, i, \, \varepsilon = \varepsilon' \text{;} \\
\partial_{i, \varepsilon} \sigma_i         & \text{for } j = i-1, \, i, \, \varepsilon = 1-\varepsilon' \text{;} \\
\partial_{i, \varepsilon} \gamma_{j-1,\varepsilon'} & \text{for } j > i \text{;} 
\end{array}\right.$

$\sigma_j \gamma_{i,\varepsilon} =  \left\{ \begin{array}{ll}
\gamma_{i-1,\varepsilon} \sigma_j  & \text{for } j < i \text{;} \\
\sigma_i \sigma_i           & \text{for } j = i \text{;} \\
\gamma_{i,\varepsilon} \sigma_{j+1} & \text{for } j > i \text{.} 
\end{array}\right.$
\end{multicols}

\begin{theorem}[{\cite[Thm.~5.1]{grandis-mauri}}] \label{normal-form}
  Every map in the category $\Box$ can be factored uniquely as a composite
  \[ (\partial_{c_1, \varepsilon'_1} \ldots \partial_{c_r, \varepsilon'_r})
     (\gamma_{b_1,\varepsilon_1} \ldots \gamma_{b_q,\varepsilon_q})
     (\sigma_{a_1} \ldots \sigma_{a_p})\text{,} \]
  where $1 \leq a_1 < \ldots < a_p$, $1 \leq b_1 \leq \ldots \leq b_q$, $b_i < b_{i+1}$ if $\varepsilon_{i} = \varepsilon_{i+1}$, and $c_1 > \ldots > c_r \geq 1$.   \qed
\end{theorem}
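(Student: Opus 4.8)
The statement packages two assertions: \emph{existence} of a factorization of the displayed shape, and its \emph{uniqueness}. For existence the plan is to argue by rewriting. A morphism of $\Box$ is, by definition, a finite composite of faces, degeneracies, and connections --- a word in these generators --- so it suffices to orient the cubical identities as rewrite rules that drive any such word toward the normal form. Concretely: the $\sigma\partial$ and $\gamma\partial$ identities, read so as to move a face letter leftward past a neighbouring degeneracy or connection (with the exceptional subcases $\sigma_i\partial_{i,\varepsilon} = \id$ deleting the pair, and $\gamma_{j,\varepsilon'}\partial_{i,\varepsilon} = \partial_{i,\varepsilon}\sigma_i$ trading a connection for a degeneracy); the $\sigma\gamma$ identity, read so as to move a degeneracy letter rightward past a neighbouring connection (with the exceptional subcase $\sigma_i\gamma_{i,\varepsilon} = \sigma_i\sigma_i$ turning a connection into a degeneracy); and finally the three ``sorting'' identities $\partial_{j,\varepsilon'}\partial_{i,\varepsilon} = \partial_{i+1,\varepsilon}\partial_{j,\varepsilon'}$ ($j\le i$), $\sigma_i\sigma_j = \sigma_j\sigma_{i+1}$ ($j\le i$), and $\gamma_{j,\varepsilon'}\gamma_{i,\varepsilon} = \gamma_{i,\varepsilon}\gamma_{j+1,\varepsilon'}$ ($j>i$) together with $\gamma_{i,\varepsilon}\gamma_{i,\varepsilon} = \gamma_{i,\varepsilon}\gamma_{i+1,\varepsilon}$, which put the three blocks into strictly decreasing, strictly increasing, and (suitably constrained) weakly increasing order respectively. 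Any word to which no rule applies is in the stated normal form, so existence reduces to showing the rewriting process terminates.

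For uniqueness I see two routes. The first is internal: check that the oriented system above is locally confluent by resolving all critical pairs --- the essential overlaps being the three-letter words built from two instances of the basic rules, e.g.\ $\partial\partial\partial$, $\sigma\sigma\sigma$, $\gamma\gamma\gamma$, $\gamma\gamma\partial$, $\gamma\partial\partial$, $\sigma\gamma\partial$, $\sigma\sigma\gamma$, $\sigma\gamma\gamma$ --- and then invoke Newman's lemma together with termination to conclude that normal forms are unique. The second route exploits that, by construction, $\Box$ is a (non-full) subcategory of $\mathsf{Poset}$, so the underlying-monotone-map functor is faithful: it then suffices to recover the data $(c_i,\varepsilon'_i;\, b_j,\varepsilon_j;\, a_k)$ from the underlying monotone map $f \colon [1]^m \to [1]^n$. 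The degeneracy block is determined by the set of source coordinates on which $f$ does not depend, listed increasingly; the face block by the image subcube of $[1]^n$ (which target coordinates are frozen on the image, and to which value $\varepsilon'_i$), listed decreasingly; and the connection block --- the residual surjective monotone map of cubes obtained after removing dummy source coordinates and co-restricting to the image --- by the fact that such surjections of cubes are exactly the composites of connections, with the constraint ``$b_i < b_{i+1}$ if $\varepsilon_i = \varepsilon_{i+1}$'' being precisely what pins the connection word down. Equivalently one may run a counting argument: existence already gives a surjection from normal-form words onto $\Box([1]^m,[1]^n)$, so it is enough to match the number of such words with $\lvert\Box([1]^m,[1]^n)\rvert$ computed from the monotone-map description.

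The main obstacle is twofold and lies on either side of the argument. On the existence side it is termination: the face-past-connection rule can \emph{create} degeneracy letters to the right of other faces, so neither word length nor a naive inversion count is monovariant, and one must design a lexicographic measure (roughly: first the number of non-face letters preceding some face letter, then the number of connection letters preceding some degeneracy letter, then inversion counts inside the prospective face, connection, and degeneracy blocks) and verify that every rule strictly decreases it. On the uniqueness side the obstacle is the combinatorial heart of the matter: either the full critical-pair analysis --- lengthy because the connection--face overlaps resolve through deletions and connection-to-degeneracy trades, so both resolutions of each pair must be chased to a common normal form --- or, on the decoding/counting route, the explicit description of which monotone maps $[1]^m\to[1]^n$ belong to $\Box$, in particular that every surjective monotone map between cubes is a composite of degeneracies and connections, followed by the bijective count in which the ``$b_i < b_{i+1}$ if $\varepsilon_i = \varepsilon_{i+1}$'' constraint is what makes the totals agree. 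I would expect the decoding/counting route to be the shortest to write up once the monotone-map description of $\Box$ is in hand.
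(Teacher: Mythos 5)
The paper itself does not prove this result; it cites Grandis--Mauri (Thm.~5.1) and leaves it as external. Your two routes (termination+confluence of a rewriting system, or decoding via faithfulness into $\mathsf{Poset}$) are both viable plans, and the decoding/counting route is indeed closest in spirit to what the cited source does. The acknowledgment that termination of the face-past-connection rule is the delicate point of the rewriting route is correct.

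However, the decoding route as you have described it contains a false step. You assert, as the thing one would need to prove, that ``every surjective monotone map between cubes is a composite of degeneracies and connections,'' and more specifically that the residual surjection obtained by discarding dummy source coordinates and co-restricting to the image is exactly a composite of connections. Neither version is true. The median function $\mathrm{med}\colon [1]^3 \to [1]$, $\mathrm{med}(x,y,z) = \max(\min(x,y),\min(y,z),\min(x,z))$, is a monotone surjection depending on all three source coordinates, but it is not a composite of connections: the only length-two composites of binary adjacent $\max/\min$ operations $[1]^3 \to [1]^2 \to [1]$ are the six maps $\max(x,y,z)$, $\min(x,y,z)$, $\min(\max(x,y),z)$, $\max(\min(x,y),z)$, $\min(x,\max(y,z))$, $\max(x,\min(y,z))$, and $\mathrm{med}$ is not among them. (A second counterexample: $f(x,y,z) = (\max(x,z),\,y)$ is a monotone surjection $[1]^3 \to [1]^2$ with no dummy coordinate, but $x$ and $z$ are not adjacent, and since any factorization through a lower cube would force the image to have at most two elements, $f$ is not in $\Box$ at all.) Consequently $\Box$ is a strict subcategory of the monotone maps, and the decoding from an arbitrary monotone map does not go through as stated. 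The correct ingredient is a characterization of \emph{which} monotone maps lie in $\Box$: each target coordinate must depend on an interval of non-dummy source coordinates, those intervals must partition the non-dummy coordinates in order, and on each interval the coordinate function must be realizable as an iterated binary $\max/\min$ of adjacent pairs; this is precisely what the normal form parameterizes, and supplying that characterization is the real content of the cited reference. With that lemma in place the decoding and counting routes both close, but without it the step you flag as an ``obstacle'' is not merely hard to prove --- the claim as written is false and would need to be replaced, not proved.
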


\begin{corollary}\label{Box-Reedy}
$\Box$ admits the structure of an EZ-Reedy category, in which:
\begin{itemize}
\item $\mathrm{deg}([1]^{n}) = n$;
\item $\Box_{+}$ is generated under composition by the face maps;
\item $\Box_{-}$ is generated under composition by the degeneracy and connection maps.\qed
\end{itemize}
\end{corollary}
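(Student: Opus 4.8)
The plan is to obtain everything as a consequence of the normal form of \cref{normal-form}. I would set $\mathrm{deg}([1]^n) = n$, take $\Box_+$ to be the subcategory generated under composition by the face maps and $\Box_-$ the subcategory generated under composition by the degeneracy and connection maps --- both are wide and closed under composition by construction --- and then verify the Reedy and Eilenberg--Zilber axioms one at a time. The degree conditions are immediate: a face raises degree by one and a degeneracy or connection lowers it by one, so a non-identity map of $\Box_+$ strictly raises degree while a non-identity map of $\Box_-$ strictly lowers it.

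The substantive point is existence and uniqueness of the Reedy factorization. Existence is handed to us directly: \cref{normal-form} writes an arbitrary $f$ as $f_+ f_-$ with $f_+ = \partial_{c_1,\varepsilon_1'}\cdots\partial_{c_r,\varepsilon_r'} \in \Box_+$ and $f_- = (\gamma_{b_1,\varepsilon_1}\cdots\gamma_{b_q,\varepsilon_q})(\sigma_{a_1}\cdots\sigma_{a_p}) \in \Box_-$. For uniqueness I would first record the separation lemma: the normal form of a map lying in $\Box_-$ contains no face operators, and the normal form of a map lying in $\Box_+$ contains no degeneracy or connection operators. This holds because a composite of degeneracies and connections is surjective on underlying posets, whereas a normal form containing a face is, read from the outside, a composite beginning with a non-surjective face inclusion, hence not surjective; dually, a composite of faces is injective, whereas a normal form containing a degeneracy or connection is, read from the inside, a composite beginning with a non-injective operator, hence not injective. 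Granting this, for any factorization $f = g_+ g_-$ with $g_+ \in \Box_+$ and $g_- \in \Box_-$, the normal form of $g_+$ is a word of faces and the normal form of $g_-$ is a word of degeneracies and connections, so their concatenation is a composite of exactly the shape in \cref{normal-form} satisfying all of its index conditions (which constrain the faces, the connections, and the degeneracies separately); by uniqueness it is the normal form of $f$, hence coincides with $f_+ f_-$. Since no word of faces can equal a word of degeneracies and connections, matching the two expressions block by block yields $g_+ = f_+$ and $g_- = f_-$. This establishes that $\Box$ is a Reedy category with the stated data.

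Finally I would verify the Eilenberg--Zilber conditions. Every face map $\partial^n_{i,\varepsilon}$ is injective, hence a monomorphism, and monomorphisms compose, so every map of $\Box_+$ is a monomorphism. Every degeneracy $\sigma^n_i$ is split by the face $\partial^n_{i,0}$ and every connection $\gamma^n_{i,\varepsilon}$ is split by the face $\partial^n_{i,\varepsilon}$ --- these are the cubical identities $\sigma_i\partial_{i,\varepsilon} = \mathrm{id}$ and $\gamma_{i,\varepsilon}\partial_{i,\varepsilon} = \mathrm{id}$ --- and split epimorphisms compose, so every map of $\Box_-$ is a split epimorphism; should the ambient definition of EZ-Reedy category carry any further uniqueness clause, it too is read off from the uniqueness in \cref{normal-form}. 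I expect the only real obstacle to be the separation lemma used in the uniqueness argument, i.e.\ the surjectivity/injectivity dichotomy that prevents the face part and the degeneracy--connection part of a normal form from interfering; everything past that is a mechanical reading of the cubical identities, which is why a full verification can reasonably be left implicit.
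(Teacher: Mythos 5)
Your argument is correct and matches the route the paper implicitly takes: the corollary carries only a $\qed$ precisely because it is treated as immediate from the Grandis--Mauri normal form of \cref{normal-form}, which is exactly what you invoke, and your injectivity/surjectivity separation lemma is the right device for converting uniqueness of normal forms into uniqueness of the Reedy $(\Box_+,\Box_-)$-factorization, with the split-epi witnesses $\sigma_i\partial_{i,\varepsilon}=\mathrm{id}$ and $\gamma_{i,\varepsilon}\partial_{i,\varepsilon}=\mathrm{id}$ supplied by the cubical identities as you say. The one spot where ``read off from \cref{normal-form}'' is a bit too quick is the remaining EZ axiom --- depending on the formulation in force, the absolute-pushout condition on cospans of split epis or the clause that two split epimorphisms with the same sections coincide --- which needs its own short combinatorial argument rather than following formally from normal-form uniqueness; the paper suppresses that check as well, so this is a caution rather than a deviation.
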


The category of cubical sets, i.e., contravariant functors $\Box^\op \to \Set$ will be denoted by $\cSet$. We will write $\Box^n$ for the representable cubical set, represented by $[1]^n$. We adopt the convention of writing the action of cubical operators on the right. For instance, the $(1, 0)$-face of an $n$-cube $x \colon \Box^n \to X$ will be denoted $x \partial_{1, 0}$. By a \emph{degenerate} cube of a cubical set $X$, we will understand one that is in the image of either a degeneracy or a connection.
A \emph{non-degenerate} cube is one that is not degenerate.

We will occasionally represent cubical sets using pictures.
In that, $0$-cubes are represented as vertices, $1$-cubes as arrows, $2$-cubes as squares, and $3$-cubes as cubes.

For a $1$-cube $f$, we draw
\[
\xymatrix{ x \ar[r]^f & y} \]
to indicate $x = f \partial_{1,0}$ and $y = f \partial_{1,1}$.
For a $2$-cube $s$, we draw
\[
\xymatrix{
 x
  \ar[r]^h
  \ar[d]_f
&
   y
  \ar[d]^g
\\
  z
  \ar[r]^k
&
 w
}
\]
to indicate $s\partial_{1,0} = f$,  $s\partial_{1,1} = g$, $s\partial_{2,0} = h$,  and $s\partial_{2,1} = k$.
As for the convention when drawing $3$-dimensional boxes, we use the following ordering of axes:
\[
\xymatrix{
 \cdot
  \ar[rrr]^1
  \ar[rrd]_3
  \ar[ddd]_2
& & &
   \cdot
\\
  & & \cdot
\\ & & &
\\
 \cdot & & &
}
\]
For readability, we do not label $2$- and $3$-cubes.
Similarly, if a specific $0$-cube is irrelevant for the argument or can be inferred from the context, we represent it by $\bullet$, and we omit labels on edges whenever the label is not relevant for the argument.

Lastly, a degenerate $1$-cube $x \sigma_1$ on $x$ is represented by
\[
\xymatrix{ x \ar@{=}[r] & x\text{,}} \]
while a $2$- or $3$-cube whose boundary agrees with that of a degenerate cube is assumed to be degenerate unless indicated otherwise.
For instance, a $2$-cube depicted as
\[
\xymatrix{
 x
  \ar@{=}[r]
  \ar[d]_f
&
   x
  \ar[d]^f
\\
  y
  \ar@{=}[r]
&
 y
}
\]
represents $f \sigma_1$.

We write $\partial \Box^n \to \Box^n$ for the maximal proper subobject of $\Box^n$, i.e., the union of all of its faces.
We will refer to these as the \emph{$n$-cube} and the \emph{boundary} of the $n$-cube, respectively.
The subobject of $\Box^n$ given by the union of all faces except $\bd_{i,\varepsilon}$ will be denoted $\sqcap^n_{i,\varepsilon}$ and referred to as an $(i, \varepsilon)$-open box.

In many cases, we will construct cubes in a cubical set $X$ by \emph{filling} open boxes, i.e. extending a map $\sqcap^n_{i,\varepsilon} \to X$ to $\Box^n$. When illustrating the filling of a 2-dimensional open box, the new edge obtained from the filling will be indicated with a dotted line. For instance, the diagram below illustrates the filling of a $(1,0)$-open box.

\centerline{
\xymatrix{
\bullet \ar@{..>}[d] \ar[r] & \bullet \ar[d] \\
\bullet \ar[r] & \bullet \\
}
}

From \cref{normal-form}, we obtain the following:

\begin{proposition}\label{StandForm}
Given a cubical set $X$, for any cube $x \colon \Box^{n} \to X$ there exist unique (possibly empty) sequences $a_{1} < ... < a_{p}, b_{1} \leq ... \leq b_{q}, \varepsilon_{1}, ..., \varepsilon_{q} \in \{0,1\}$, where $b_{i} < b_{i+1}$ if $\varepsilon_{i} = \varepsilon_{i+1}$, and a unique non-degenerate cube $y \colon \Box^{n-p-q} \to X$ such that $x = y\gamma_{b_{1},\varepsilon_{1}}...\gamma_{b_{q},\varepsilon_{q}}\sigma_{a_{1}}...\sigma_{a_{p}}$.  \qed
\end{proposition}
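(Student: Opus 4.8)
The plan is to deduce the statement directly from the unique normal form for morphisms in $\Box$ (\cref{normal-form}) by unwinding the Yoneda lemma. Recall that a cube $x \colon \Box^n \to X$ is the same datum as an element of $X([1]^n)$, and that $X$, being a presheaf, sends the factorization of a morphism in $\Box$ to the (reverse) composite of the induced functions on sets. So I would first establish the existence part: given $x$, consider the collection of all cubes $z \colon \Box^m \to X$ together with a map $\alpha \colon [1]^n \to [1]^m$ in $\Box_-$ (i.e. a composite of degeneracies and connections) such that $x = z\alpha$; this collection is nonempty since $(x, \id)$ belongs to it, and $m \leq n$ for every such pair, so we may pick one with $m$ minimal. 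Call it $(y, \alpha)$. I claim $y$ is non-degenerate: if $y = y'\beta$ for some $y' \colon \Box^{m'} \to X$ with $m' < m$ and $\beta \in \Box_-$, then $x = y'(\beta\alpha)$ with $\beta\alpha \in \Box_-$ (since $\Box_-$ is closed under composition, by \cref{Box-Reedy}), contradicting minimality. Now apply \cref{normal-form} to $\alpha$: since $\alpha \in \Box_-$, its normal form has no face maps, so $\alpha = \gamma_{b_1,\varepsilon_1}\ldots\gamma_{b_q,\varepsilon_q}\sigma_{a_1}\ldots\sigma_{a_p}$ with the asserted inequalities, and $m = n - p - q$. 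Translating back through the ``action on the right'' convention gives $x = y\gamma_{b_1,\varepsilon_1}\ldots\gamma_{b_q,\varepsilon_q}\sigma_{a_1}\ldots\sigma_{a_p}$, which is the desired presentation.

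For uniqueness, suppose $x = y\gamma_{b_1,\varepsilon_1}\ldots\sigma_{a_p} = y'\gamma_{b'_1,\varepsilon'_1}\ldots\sigma_{a'_{p'}}$ with both $y, y'$ non-degenerate and both index sequences satisfying the constraints of \cref{normal-form}. Write $\alpha$ and $\alpha'$ for the two $\Box_-$-morphisms so that $x = y\alpha = y'\alpha'$. The key point is that a morphism in $\Box_-$ is a split epimorphism of posets, hence admits a section $s$ in $\Box$ (any section of $\alpha \colon [1]^n \to [1]^m$ in $\Set$ that lands in $\Box$ — e.g. the iterated face map picking out a suitable corner — works, and one should check $\Box$ contains such a section; alternatively use that degeneracies and connections are split by faces via the cubical identities $\sigma_j\partial_{j,\varepsilon} = \id$ and $\gamma_{j,\varepsilon}\partial_{j,\varepsilon} = \id$). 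Then from $y\alpha = y'\alpha'$ one recovers, after composing with appropriate faces, that $y$ and $y'$ are connected by a morphism of $\Box$; a dimension and non-degeneracy count forces this morphism to be an isomorphism, and since the only automorphism considerations in $\Box$ are trivial on objects of the form $[1]^m$ in the relevant sense, $y = y'$ and the two $\Box_-$-morphisms agree. Finally, \cref{normal-form} applied to this common morphism yields that the index data coincide.

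I expect the main obstacle to be the uniqueness argument, specifically extracting $y = y'$ cleanly: the subtlety is that two different non-degenerate cubes could a priori have the same degeneration, so one genuinely needs the rigidity provided by the uniqueness clause of \cref{normal-form} together with the fact that $\Box_-$-morphisms are (split) epimorphisms in $\Box$. Once one knows any $\Box$-morphism factors uniquely as (face maps)$\circ$(connection/degeneracy maps) and that the non-degenerate representative is forced by minimality of dimension, everything else is bookkeeping with the right-action convention. An alternative, more structural route is to invoke the general theory of EZ-Reedy categories (\cref{Box-Reedy}): in any EZ-Reedy category every object of a presheaf has a unique ``Eilenberg–Zilber'' decomposition into a split epi in $\Box_-$ followed by a non-degenerate element, and then \cref{normal-form} pins down the split epi; I would mention this as the conceptual reason but carry out the hands-on argument above for self-containedness.
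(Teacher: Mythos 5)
Your existence argument is correct and is the standard one: minimize the dimension $m$ over all factorizations $x = z\alpha$ with $\alpha \in \Box_-$, observe that minimality forces non-degeneracy of the resulting $y$, and invoke \cref{normal-form} to write the resulting $\alpha$ in the displayed form.

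The uniqueness argument has a genuine gap, however, precisely at the point where you write that after establishing $y = y'$ ``the two $\Box_-$-morphisms agree.'' Your single-section computation shows something nontrivial: if $y\alpha = y\alpha'$ with $y$ non-degenerate, then every $\Box$-section $s$ of $\alpha$ satisfies $\alpha' s = \id$ (and symmetrically), because $\alpha's \colon [1]^m \to [1]^m$ has a normal form with no degeneracies (else $y$ degenerate), hence is all faces, hence the identity. But this only shows that $\alpha$ and $\alpha'$ have the same sections; it does not obviously force $\alpha = \alpha'$, and you do not supply, nor can one simply read off from \cref{normal-form}, an argument that a map in $\Box_-$ is determined by its set of sections. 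The standard Eilenberg--Zilber lemma closes this gap using the stronger EZ-Reedy axiom that any parallel pair of $\Box_-$-morphisms admits an \emph{absolute} pushout in $\Box$: applying the presheaf $X$ turns that square into a pullback, and the equation $y\alpha = y'\alpha'$ then produces a cube $z$ with $y = z\tau_1$, $y' = z\tau_2$ for the pushout legs $\tau_1, \tau_2 \in \Box_-$; non-degeneracy of $y, y'$ forces $\tau_1 = \tau_2 = \id$, whence $\alpha = \alpha'$ and $y = y' = z$. This is exactly the structural route you mention at the end; to make your proof complete you would need to actually carry that out (or prove and invoke the absolute pushout property for $\Box$), rather than leave it as a remark while presenting the hands-on argument, which as written stops short.
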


This factorization is called the \emph{standard form} of $x$.

\begin{corollary}\label{map-on-non-degen}
A map $X \to Y$ in $\cSet$ is determined by its action on the non-degenerate cubes of $X$. \qed
\end{corollary}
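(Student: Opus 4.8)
The plan is to deduce this directly from the standard form of \cref{StandForm}. First I would recall that a morphism $f \colon X \to Y$ in $\cSet$ is by definition a natural transformation of functors $\Box^{\mathrm{op}} \to \Set$, so it is compatible with the right action of all cubical operators: for every operator $\phi$ in $\Box$ and every cube $x$ of $X$ of the appropriate dimension one has $f(x\phi) = f(x)\phi$. In particular $f$ commutes with the degeneracies $\sigma_i$ and the connections $\gamma_{i,\varepsilon}$.

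Next, given an arbitrary cube $x \colon \Box^{n} \to X$, I would invoke \cref{StandForm} to write it in standard form $x = y\,\gamma_{b_{1},\varepsilon_{1}}\cdots\gamma_{b_{q},\varepsilon_{q}}\,\sigma_{a_{1}}\cdots\sigma_{a_{p}}$ with $y$ a non-degenerate cube of $X$. Applying naturality of $f$ repeatedly yields $f(x) = f(y)\,\gamma_{b_{1},\varepsilon_{1}}\cdots\gamma_{b_{q},\varepsilon_{q}}\,\sigma_{a_{1}}\cdots\sigma_{a_{p}}$, so the value of $f$ on $x$ is entirely determined by its value on the non-degenerate cube $y$. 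Hence, if two maps $f, g \colon X \to Y$ restrict to the same function on the non-degenerate cubes of $X$, they agree on every cube of $X$, i.e.\ $f = g$, which is exactly the assertion.

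There is no real obstacle here; the only point worth flagging is that the statement asserts \emph{determination} (uniqueness), not extension: we are not claiming that an assignment defined only on non-degenerate cubes, even one respecting the cubical identities, necessarily underlies a map of cubical sets, so no well-definedness verification is required. Once \cref{StandForm} is available, the argument is purely formal.
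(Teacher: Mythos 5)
Your argument is exactly the intended one: the paper marks this corollary with an immediate \qed precisely because it follows from \cref{StandForm} together with naturality, as you spell out. The remark distinguishing determination from extension is a sensible clarification and introduces no gap.
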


\begin{corollary}\label{mono-characterization}
A map $X \to Y$ in $\cSet$ is a monomorphism if and only if it maps non-degenerate cubes of $X$ to non-degenerate cubes of $Y$, and does so injectively. \qed
\end{corollary}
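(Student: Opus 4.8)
The plan is to prove both directions by analyzing how a map $f\colon X\to Y$ interacts with the standard form of cubes, as provided by \cref{StandForm}. For the ``only if'' direction, suppose $f$ is a monomorphism. If $x\colon\Box^n\to X$ is non-degenerate but $xf$ is degenerate, then $xf = z\,\gamma_{b_1,\varepsilon_1}\cdots\sigma_{a_1}\cdots$ with a nontrivial sequence of degeneracy/connection operators and $z$ non-degenerate of lower dimension; but then $x$ and $(z')\,\gamma_{b_1,\varepsilon_1}\cdots\sigma_{a_1}\cdots$ — where $z'$ is any cube of $X$ — would need to be compared. The cleaner argument: every degeneracy/connection operator $\theta$ in $\Box$ has a section (a face map $\delta$ with $\delta\theta = \id$, visible from the cubical identities $\sigma_j\partial_{i,\varepsilon}=\id$ and $\gamma_{j,\varepsilon'}\partial_{i,\varepsilon}=\id$). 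So if $xf = y'\theta$ for a composite $\theta$ of such operators with section $\delta$, then $x = xf\delta f^{-1}$... more precisely, $xf$ factors through $\delta$, and precomposing, $x(\delta\theta\text{-witness})$ shows $x = (x\,\partial)\theta$ for the appropriate face $\partial$, contradicting non-degeneracy of $x$. Hence $f$ preserves non-degeneracy. Injectivity on non-degenerate cubes is immediate since $f$ is injective on all cubes.

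For the ``if'' direction, suppose $f$ maps non-degenerate cubes to non-degenerate cubes, injectively. Given cubes $x_1,x_2\colon\Box^n\to X$ with $x_1f = x_2f$, write each in standard form: $x_i = y_i\,\gamma_{\vec b^{(i)},\vec\varepsilon^{(i)}}\,\sigma_{\vec a^{(i)}}$ with $y_i$ non-degenerate. Applying $f$ and using functoriality, $x_if = (y_if)\,\gamma_{\vec b^{(i)},\vec\varepsilon^{(i)}}\,\sigma_{\vec a^{(i)}}$; since $y_if$ is non-degenerate by hypothesis, this \emph{is} the standard form of $x_if$. By the uniqueness clause of \cref{StandForm} applied in $Y$, the two sequences $\vec a^{(1)},\vec b^{(1)},\vec\varepsilon^{(1)}$ and $\vec a^{(2)},\vec b^{(2)},\vec\varepsilon^{(2)}$ coincide and $y_1f = y_2f$. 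Then injectivity of $f$ on non-degenerate cubes gives $y_1 = y_2$, and hence $x_1 = x_2$ by reading off the (now identical) standard forms in $X$. This shows $f$ is injective on all cubes, i.e.\ a monomorphism.

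I expect the main obstacle to be the ``only if'' direction — specifically, making rigorous the claim that if $f$ is injective then it cannot send a non-degenerate cube to a degenerate one. The subtlety is that from $xf$ being degenerate one must produce, \emph{inside $X$}, a witness of degeneracy for $x$ itself; the tool for this is that the degeneracy and connection operators are split epimorphisms in $\Box$ (each admits a section among the face maps), so that $xf = y'\theta$ forces $x = (x\,\delta_\theta\,\theta') $ to likewise be in the image of $\theta$ after precomposition — one then just has to bookkeep which face map $\delta_\theta$ serves as the section, using the cubical identities listed above. Everything else is a direct application of the uniqueness in the standard-form factorization (\cref{StandForm}), together with functoriality of $f$.
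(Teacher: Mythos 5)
The paper leaves this as immediate (stating the corollary with a bare \qed following \cref{StandForm}), and your argument is correct and is the natural one: the ``if'' direction is exactly the application of uniqueness in \cref{StandForm} once you note that $f$ carries standard forms to standard forms, and the ``only if'' direction correctly uses that every degeneracy and connection is split by a face map, so from $f(x)=y\theta$ and injectivity of $f$ one deduces $x = (x\delta)\theta$, contradicting non-degeneracy. The abandoned first attempt and the stray ``$f^{-1}$'' are only notational noise; the recovered conclusion is the right contradiction.
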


For brevity, we will often say that the standard form of a cube $x$ is $zf$, or ``ends with $f$", where $f$ is some map in $\Box$; this is understood to mean that $f$ is the rightmost map in the standard form of $x$. For instance, if the standard form of $x$ is $z\sigma_{a_{p}}$, then $z = y\gamma_{b_{1},\varepsilon_{1}}...\gamma_{b_{q},\varepsilon_{q}}\sigma_{a_{1}}...\sigma_{a_{p-1}}$ in the notation of \cref{StandForm}.

\begin{definition}
The \emph{critical edge} of $\Box^{n}$ with respect to a face $\partial_{i,\varepsilon}$ is the unique edge of $\Box^{n}$ which is adjacent to $\partial_{i,\varepsilon}$ and which, together with $\partial_{i,\varepsilon}$, contains both of the vertices $(0,...,0)$ and $(1,...,1)$.
\end{definition}

More explicitly, the critical edge with respect to $\partial_{i,\varepsilon}$ corresponds to the map $f \colon [1] \to [1]^{n}$ given by $f_{i} = \mathrm{id}_{[1]}, f_{j} = \mathrm{const}_{1-\varepsilon}$ for $j \neq i$.

The assignment $([1]^m, [1]^n) \mapsto [1]^{m+n}$ defines a functor $\Box \times \Box \to \Box$. Postcomposing it with the Yoneda embedding and left Kan extending, we obtain the \emph{geometric product} functor
\[
\xymatrix@C+0.5cm{
  \Box \times \Box
  \ar[r]
  \ar@{^{(}->}[d]
&
  \cSet
\\
  \cSet \times \cSet
  \ar[ru]_{\otimes}
&
}
\]

The pointwise formula for left Kan extensions gives us the following formula for the geometric product:

$$
X \otimes Y = \colim\limits_{\substack{x \colon \Box^m \to X \\ y \colon \Box^n \to Y}} \Box^{m+n}
$$

Note that the geometric product of cubical sets does not coincide with the cartesian product. However, the geometric product implements the correct homotopy type, and is better behaved than the cartesian product -- for instance, for $m, n \geq 0$ we have $\Box^m \otimes \Box^n = \Box^{m+n}$. Furthermore, the geometric product is taken to the cartesian product by the geometric realization functor to spaces.

\begin{proposition}
The geometric product $\otimes$ defines a monoidal structure on the category of cubical sets, with the unit given by $\Box^0$. \qed
\end{proposition}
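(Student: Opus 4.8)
The plan is to recognise the geometric product as the \emph{Day convolution} monoidal structure attached to a monoidal structure on the box category $\Box$ itself. So the first step is to upgrade the functor $\Box \times \Box \to \Box$ displayed above to a \emph{strict} monoidal structure on $\Box$, with tensor $[1]^m \otimes [1]^n = [1]^{m+n}$ (concatenation of coordinates) and unit $[1]^0$. The associativity and unit constraints can be taken to be identities, since $(m+n)+p = m+(n+p)$ and $0+n = n = n+0$ and the functor is compatible with these identifications at the level of morphisms as well; concretely, concatenation sends the generator $\partial^n_{i,\varepsilon}$ to $\partial^{n+k}_{i,\varepsilon}$ when tensored on the right with $\mathrm{id}_{[1]^k}$, and to $\partial^{k+n}_{k+i,\varepsilon}$ when tensored on the left, and similarly for degeneracies and connections, so that every cubical identity is preserved because each is local in the coordinate index. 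Thus $(\Box, \otimes, [1]^0)$ is a strict monoidal category. (No symmetry is claimed, nor is one available: the coordinate swap is not a morphism of $\Box$.)

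The colimit formula displayed above then exhibits $X \otimes Y$ as the left Kan extension of the composite $\Box \times \Box \xrightarrow{\otimes} \Box \to \cSet$ (the second map being the Yoneda embedding $[1]^n \mapsto \Box^n$) along $\Box \times \Box \to \cSet \times \cSet$; equivalently, $\otimes$ is cocontinuous separately in each variable and restricts on representables to $\Box^m \otimes \Box^n = \Box^{m+n}$ (the indexing category of the defining colimit for $\Box^m \otimes \Box^n$ has a terminal object, the pair of identities). This is precisely the Day convolution of $(\Box, \otimes, [1]^0)$, so by the standard theory of Day convolution it is a monoidal structure on $\cSet = \pshf{\Box}$ with unit the presheaf represented by the monoidal unit of $\Box$, namely $\Box^0$. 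If one prefers an argument not referring to Day's theorem, the structure isomorphisms can be built directly from the colimit formula: cocontinuity in each variable together with $\Box^m \otimes \Box^n = \Box^{m+n}$ and associativity of $+$ yields natural isomorphisms $(X \otimes Y) \otimes Z \cong \colim \Box^{\ell+m+n} \cong X \otimes (Y \otimes Z)$, the colimit ranging over triples of cubes of $X$, $Y$, and $Z$; and $\Box^0 \otimes X \cong X \cong X \otimes \Box^0$ because $\colim_{\Box^n \to X} \Box^n \cong X$ and $[1]^0$ is terminal in $\Box$. The pentagon and triangle axioms then reduce, by cocontinuity and naturality, to the corresponding identities among representables, which hold strictly.

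I expect no serious obstacle here: the whole content of the statement sits in the first step, namely in checking that coordinate concatenation respects the presentation of $\Box$ by generators and cubical identities, so that $(\Box, \otimes, [1]^0)$ really is a strict monoidal category; after that the proposition is a formal consequence of Day convolution. The only remaining bookkeeping, should one avoid citing Day's theorem, is the verification of the pentagon and triangle via cocontinuity — routine, since on representables these amount to nothing more than the strict associativity and unitality of concatenation of cubes.
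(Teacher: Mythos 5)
Your proof is correct and is the standard argument the paper's authors evidently have in mind (the proposition is stated without proof): the geometric product is precisely the Day convolution on $\pshf{\Box} = \cSet$ attached to the strict monoidal structure on $\Box$ given by coordinate concatenation, with unit $[1]^0$, and the general theory of Day convolution gives the monoidal structure with unit the representable $\Box^0$. Your direct verification via cocontinuity and reduction to representables is the standard unpacking of that theory and is also fine.
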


This monoidal structure is however not symmetric. 
Indeed, the existence of a symmetry natural transformation would in particular imply that there is a non-identity bijection $[1]^2 \to [1]^2$ in $\Box$.

In particular, for any $X, Y \in \cSet$, the unique maps from $X$ and $Y$ to $\Box^0$ induce maps $\pi_{X} \colon X \otimes Y \to X, \pi_{Y} \colon X \otimes Y \to Y$.

Given a cubical set $X$, we form two non-isomorphic functors $\cSet \to \cSet$: the left tensor $- \otimes X$ and the right tensor $X \otimes -$.
As they are both co-continuous, they admit right adjoints and we write $\ihom_L(X, -)$ for the right adjoint of the left tensor and $\ihom_R(X, -)$ for the right adjoint of the right tensor. Explicitly, these functors are given by $\ihom_L(X,Y)_{n} = \cSet(\Box^n \otimes X,Y)$, $ \ihom_R(X,Y)_{n} = \cSet(X \otimes \Box^n, Y)$.
Thus the monoidal structure on $\cSet$ given by the geometric product is closed, but non-symmetric.

The construction of an arbitrary small colimit as a coequalizer of coproducts gives us the following lemma about colimts in presheaf categories.

\begin{lemma}\label{colim-factor}
Let $\mathsf{C}$ be a category  and $D$ a small diagram in $\pshf{\mathsf{C}}$. Then any map $\mathsf{C}(-,c) \to \colim D$ factors through some map in the colimit cone. \qed
\end{lemma}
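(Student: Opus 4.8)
The plan is to reduce the statement to the corresponding elementary fact about sets, using the Yoneda lemma together with the fact that colimits of presheaves are computed objectwise. First I would recall that, since $\pshf{\mathsf{C}}$ is a presheaf category, the colimit $\colim D$ is computed pointwise: for each object $c$ of $\mathsf{C}$ we have $(\colim D)(c) = \colim_j D_j(c)$, the colimit taken in $\Set$. Writing this set-level colimit as a coequalizer of coproducts, as in the comment preceding the statement, exhibits the canonical map $\coprod_j D_j(c) \to (\colim D)(c)$ as a surjection, since the quotient map onto a coequalizer in $\Set$ is surjective on underlying sets.

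Next I would invoke the Yoneda lemma: a map $\mathsf{C}(-,c) \to \colim D$ is the same datum as an element $x \in (\colim D)(c)$. By the surjectivity just noted, $x$ lifts to an element $\tilde{x} \in D_j(c)$ for some index $j$ of the diagram $D$. Transposing $\tilde{x}$ back along Yoneda yields a map $\hat{x} \colon \mathsf{C}(-,c) \to D_j$, and I would then verify that the composite of $\hat{x}$ with the colimit cone leg $\lambda_j \colon D_j \to \colim D$ is the map we began with: both maps correspond under Yoneda to the element $(\lambda_j)_c(\tilde{x}) = x$ of $(\colim D)(c)$, and so they coincide. This produces the claimed factorization through a map in the colimit cone.

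The only point requiring any care — and it is not genuinely an obstacle — is the matching-up of Yoneda transposes in the last step: one uses that the Yoneda bijection $\pshf{\mathsf{C}}(\mathsf{C}(-,c), F) \cong F(c)$ is natural in $F$, so that postcomposing a map $\mathsf{C}(-,c) \to D_j$ with $\lambda_j$ corresponds, on the element side, to applying $(\lambda_j)_c$ to the associated element of $D_j(c)$. With that naturality in hand, everything else is formal, and no hypothesis on the shape of the index category (such as filteredness) is needed.
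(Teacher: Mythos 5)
Your proof is correct and matches the approach the paper is gesturing at: the one-line hint preceding the statement ("The construction of an arbitrary small colimit as a coequalizer of coproducts...") is exactly the observation you spell out, that the canonical map from the coproduct is surjective in each degree, which combined with Yoneda and pointwise computation of presheaf colimits gives the factorization. Nothing further is needed.
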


This lemma allows us to describe the geometric product of cubical sets explicitly. 

\begin{proposition}\label{geometric-product-description}
For $X, Y \in \cSet$, the geometric product $X \otimes Y$ admits the following description.

\begin{itemize}
\item For $k \geq 0$, the $k$-cubes of $X \otimes Y$ consist of all pairs $(x \colon \Box^{m} \to X, y \colon \Box^{n} \to Y)$ such that $m + n = k$, subject to the identification $(x\sigma_{m_{1}+1},y) = (x,y\sigma_{1})$.
\item For $x \colon \Box^m \to X, y \colon \Box^n \to Y$, the faces, degeneracies, and connections of the $(m+n)$-cube $(x,y)$ are computed as follows:
\begin{itemize}
\item $(x,y)\bd_{i,\epsilon} = \begin{cases} (x\bd_{i,\epsilon},y) & 1 \leq i \leq m \\ (x,y\bd_{i-m,\epsilon}) & m + 1 \leq i \leq m + n \end{cases}$
\item $(x,y)\sigma_{i} = \begin{cases} (x\sigma_{i},y) & 1 \leq i \leq n_{1} + 1 \\ (x,y\sigma_{i-m}) & m + 1 \leq i \leq m + n + 1 \end{cases}$  
\item $(x,y)\gamma_{i,\varepsilon} = \begin{cases} (x\gamma_{i,\varepsilon},y) & 1 \leq i \leq m \\ (x,y\gamma_{i-m,\varepsilon}) & m + 1 \leq i \leq m + n \end{cases}$
\end{itemize}
\end{itemize}
\end{proposition}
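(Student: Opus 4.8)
The plan is to verify the description of $X \otimes Y$ by unfolding the colimit formula
$X \otimes Y = \colim_{x \colon \Box^m \to X,\, y \colon \Box^n \to Y} \Box^{m+n}$
using \cref{colim-factor}. First I would recall that the indexing category of this colimit is the comma-type category whose objects are pairs of cubes $(x \colon \Box^m \to X, y \colon \Box^n \to Y)$ and whose morphisms $(x,y) \to (x',y')$ are pairs of maps $\varphi \colon [1]^{m} \to [1]^{m'}$, $\psi \colon [1]^{n} \to [1]^{n'}$ in $\Box$ with $x = x'\varphi$ and $y = y'\psi$; such a morphism acts on the colimit diagram as $\varphi \otimes \psi \colon \Box^{m+n} \to \Box^{m'+n'}$. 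Since colimits of presheaves are computed pointwise, the set of $k$-cubes of $X \otimes Y$ is the colimit over this category of the sets $(\Box^{m+n})_k$. By \cref{colim-factor}, every $k$-cube of $X \otimes Y$ is represented by a pair consisting of an object $(x,y)$ with $m+n = k$ together with the top cube $\id_{[1]^{k}} \in (\Box^{k})_k$ — equivalently, by a pair $(x,y)$ with $m+n=k$ — and two such pairs represent the same $k$-cube iff they are connected by a zig-zag of morphisms in the indexing category.

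Next I would identify precisely which identifications arise. Because $m+n = k$ is fixed, any morphism in the indexing category relating two representatives of a $k$-cube must be given by a pair of maps $(\varphi, \psi)$ in $\Box$ which are identities after $\otimes$, i.e.\ $\varphi \otimes \psi = \id_{[1]^k}$ as a map $[1]^{k} \to [1]^{k}$; by \cref{normal-form} (uniqueness of the normal form of an identity) this forces $\varphi$ and $\psi$ to be identities except that we may "move a degeneracy across the tensor boundary": the only nontrivial such relation comes from the pair $(\sigma^{m+1}_{m+1}, \id)$ versus $(\id, \sigma^{n+1}_{1})$, both of which become $\sigma_{m+1}$ on $[1]^{m+n+1}$... no, more carefully, the relevant source of identifications is that $\Box^{m+1} \to \Box^m$ via $\sigma_{m+1}$ tensored with $\Box^n$ equals $\Box^m$ tensored with $\sigma_1 \colon \Box^{n+1} \to \Box^n$, so $(x\sigma_{m+1}, y)$ and $(x, y\sigma_1)$ index the same cube when we then feed in the top simplex. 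Running this through the zig-zag, one obtains exactly the stated identification $(x\sigma_{m+1}, y) = (x, y\sigma_1)$ (I am writing $\sigma_{m+1}$ for what the proposition calls $\sigma_{m_1+1}$), and one checks no further identifications occur by again invoking uniqueness of normal forms. This establishes the first bullet point.

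Finally, for the second bullet point, the face, degeneracy and connection operators on $(x,y)$ are computed by functoriality of the colimit: the operator $\bd_{i,\varepsilon} \colon \Box^{k-1} \to \Box^k$ (resp.\ $\sigma_i$, $\gamma_{i,\varepsilon}$) acts on the representative $(x,y)$ of $\Box^{m+n}$ by precomposition, and since $\Box^{m+n} = \Box^m \otimes \Box^n$ with the tensor of maps computed coordinatewise, the cubical identities show that $\bd_{i,\varepsilon}$ factors through the left tensor factor $\Box^m$ when $i \le m$ and through the right tensor factor $\Box^n$ (with index shifted by $m$) when $i > m$, and likewise for $\sigma_i$ and $\gamma_{i,\varepsilon}$ — the degeneracy case $i = m+1$ being precisely the ambiguous one reconciled by the identification from the first bullet. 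Substituting $x = x'\varphi$, etc., then gives the displayed formulas. I expect the main obstacle to be the bookkeeping in the zig-zag argument for the first bullet: one must check carefully, using \cref{normal-form}, that the transitive closure of the single relation $(x\sigma_{m+1},y) \sim (x,y\sigma_1)$ captures \emph{all} identifications in the pointwise colimit and introduces no unwanted collapse of distinct cubes, which is where the EZ-Reedy/normal-form structure of $\Box$ is really used.
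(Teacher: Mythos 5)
Your overall strategy — unfold the colimit, identify representatives, then argue there are no extra identifications — matches the paper's, and your treatment of the face, degeneracy and connection formulas by naturality of the colimit cone is essentially what the paper does (it writes $\bd_{i,\varepsilon}^{m+n}$ as $\bd_{i,\varepsilon}^m \otimes \Box^n$ or $\Box^m \otimes \bd_{i-m,\varepsilon}^n$ and invokes naturality). However, there is a genuine gap in the first bullet: the step you yourself flag as ``the main obstacle'' — that the transitive closure of the single relation $(x\sigma_{m+1},y) \sim (x,y\sigma_1)$ captures \emph{all} identifications — is asserted, not proven. Your initial attempt (that $\varphi \otimes \psi = \id$ forces $\varphi$ and $\psi$ to be identities, hence no zig-zags) is incorrect, as you notice midstream, but your replacement is just ``one checks no further identifications occur by again invoking uniqueness of normal forms,'' which is not an argument: directly analyzing zig-zags in the indexing category of the colimit is delicate, and normal-form uniqueness of maps in $\Box$ does not by itself control the equivalence relation on the colimit set, which is generated by single-arrow spans and cospans in the index category, not by endomorphisms.

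The paper sidesteps the zig-zag analysis entirely with a cleaner device you do not use: the projections $\pi_X \colon X \otimes Y \to X$ and $\pi_Y \colon X \otimes Y \to Y$ induced by the unit $\Box^0$. Given $(x,y) = (x',y')$ with (WLOG) $m \geq m'$, applying $\pi_X$ yields $x\sigma_{m+1}\cdots\sigma_{m+n} = x'\sigma_{m'+1}\cdots\sigma_{m'+n'}$ in $X$; hitting both sides with face maps peels this down to $x = x'\sigma_{m'+1}\cdots\sigma_m$ (or $x = x'$ if $m = m'$), and similarly with $\pi_Y$ one gets $y' = y\sigma_1\cdots\sigma_1$. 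This pins the pair down to the stated relation without enumerating morphisms in the index category. A secondary, smaller gap: you claim that \cref{colim-factor} directly gives a representative pair $(x,y)$ with $m+n=k$, but that lemma only produces a factorization $(x,y)\psi$ with $\psi$ an arbitrary map in $\Box$ and $m+n$ possibly different from $k$; one must first observe (as the paper does) that the set of cubes arising from pairs is closed under faces, degeneracies, and connections, and then absorb $\psi$, since those classes generate all of $\Box$.
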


\begin{proof}
We begin by noting that  for every pair $(x \colon \Box^{m} \to X, y \colon \Box^{n} \to Y)$ there is a corresponding $(m+n)$-cube $(x,y) \colon \Box^{m+n} \to X \otimes Y$ given by the colimit cone. Next we will show that faces, degeneracies and connections of these cones are computed as described in the statement.

For such an $(m+n)$-cube $(x,y)$, consider a face $(x,y)\bd_{i,\varepsilon}$ for $1 \leq i \leq m$. We can express the face map $\bd_{i,\varepsilon}^{m+n}$ as $\bd_{i,\varepsilon}^{m} \otimes \Box^{n}$; thus $(x,y)\bd_{i,\varepsilon} = (x\bd_{i,\varepsilon},y)$ by the naturality of the colimit cone.

\centerline{
\xymatrix{
\Box^{m-1} \otimes \Box^{n} \ar[rr]^-{\bd_{i,\varepsilon} \otimes \Box^n} \ar[drr]_{(x\bd_{i,\varepsilon},y)} & &  \Box^m \otimes \Box^n \ar[d]^{(x,y)} \\
& & X \otimes Y \\
}
}

Likewise, for $m+1 \leq i \leq m+n$ we have $\bd_{i,\varepsilon}^{m+n} = \Box^{m} \otimes \bd_{i-m,\varepsilon}^{n}$, implying $(x,y)\bd_{i,\varepsilon} = (x,y\bd_{i-m,\varepsilon})$. Similar proofs hold for degeneracies and connections. In particular, this implies that for any $(x,y)$ we have $(x\sigma_{m+1},y) = (x,y\sigma_{1})$, as both are equal to $(x,y)\sigma_{m+1}$.

To see that all cubes in $X \otimes Y$ are of this form, note that by \cref{colim-factor}, every cube of $X \otimes Y$ is equal to $(x,y)\psi$ for some such pair $(x,y)$ and some map $\psi$ in $\Box$. We have shown that the set of cubes arising from pairs is closed under faces, degeneracies and connections; since these classes generate all maps in $\Box$, this proves our claim.

Finally, we must show that the cubes of $X \otimes Y$ are not subject to any additional identifications, beyond the identification $(x\sigma_{m+1},y) = (x,y\sigma_{1})$ mentioned above. In other words, we must show that for each $k \geq 0$, $(X \otimes Y)_{k}$ is the quotient of the set $\{(x \colon \Box^{m} \to X, y \colon \Box^{n} \to Y)|m+n = k\}$ under the smallest equivalence relation $\sim$ such that $(x'\sigma_{m+1},y') \sim (x',y'\sigma_{1})$ for all $x' \colon \Box^{m'} \to X, y' \colon \Box^{n'} \to Y$ such that $m' + n' = k-1$.

To that end, let $x \colon \Box^m \to X, y \colon \Box^n \to Y, x' \colon \Box^{m'} \to X, y' \colon \Box^{n'} \to Y$, such that $m + n = m' + n'$ and $(x,y) = (x',y')$ in $(X \otimes Y)$. Without loss of generality, assume $m \geq m'$. We compute the image of this cube under the map $\pi_{X} \colon X \otimes Y \to X$.

\begin{align*}
\pi_{X}(x,y) & = \pi_{X}(x',y) \\
\therefore x\sigma_{m+1}\sigma_{m+2}...\sigma_{m+n} & = x'\sigma_{m'+1}...\sigma_{m'+n'} \\ 
\end{align*}

(If $n$ or $n'$ is equal to 0, we interpret the corresponding string of degeneracies to be empty.) We can apply face maps to both sides of this equation to reduce the left-hand side to $x$. If $m = m'$ then this gives the equation $x = x'$, and a similar calculation shows $y = y'$. Otherwise, we have $x = x'\sigma_{m'+1}...\sigma_{m}$. In this case, a similar calculation shows $y' = y\sigma_{1}...\sigma_{1}$, where $\sigma_{1}$ is applied $m-m'$ times on the right-hand side of the equation. From this we can see that $(x,y) \sim (x',y')$. Thus we see that quotienting the set of pairs $(x,y)$ of appropriate dimensions by $\sim$ does indeed suffice to obtain $(X \otimes Y)_{k}$.
\end{proof}

\begin{corollary}
For cubical sets $X$ and $Y$, we have $(X \otimes Y)_1 \cong (X_1 \times Y_0) \cup_{(X_0 \times Y_0)} (X_0 \times Y_1)$.  \qed
\end{corollary}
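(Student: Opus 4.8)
The plan is to extract the description of $(X \otimes Y)_1$ directly from \cref{geometric-product-description} and recognise it as the asserted pushout. By that proposition, every $1$-cube of $X \otimes Y$ is represented by a pair $(x \colon \Box^m \to X,\, y \colon \Box^n \to Y)$ with $m + n = 1$. There are exactly two possibilities: $(m,n) = (1,0)$, which contributes an element of $X_1 \times Y_0$, and $(m,n) = (0,1)$, which contributes an element of $X_0 \times Y_1$. Hence $(X \otimes Y)_1$ is a quotient of the disjoint union $(X_1 \times Y_0) \sqcup (X_0 \times Y_1)$.

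It remains to identify the equivalence relation. \cref{geometric-product-description} tells us that the only identification between pairs of total dimension $1$ is $(x'\sigma_1, y') = (x', y'\sigma_1)$, ranging over pairs $(x' \colon \Box^{m'} \to X,\, y' \colon \Box^{n'} \to Y)$ with $m' + n' = 0$, i.e.\ over $(x',y') \in X_0 \times Y_0$ (so that $m' = n' = 0$ and $\sigma_{m'+1} = \sigma_1$). Here $(x'\sigma_1, y')$ lies in the copy $X_1 \times Y_0$ and $(x', y'\sigma_1)$ in the copy $X_0 \times Y_1$. Therefore $(X \otimes Y)_1$ is the quotient of $(X_1 \times Y_0) \sqcup (X_0 \times Y_1)$ that identifies, for each $(x',y') \in X_0 \times Y_0$, the images of $(x',y')$ under the two maps
\[
X_0 \times Y_0 \xrightarrow{\ \sigma_1 \times \mathrm{id}\ } X_1 \times Y_0,
\qquad
X_0 \times Y_0 \xrightarrow{\ \mathrm{id} \times \sigma_1\ } X_0 \times Y_1 ,
\]
which is exactly the pushout $(X_1 \times Y_0) \cup_{(X_0 \times Y_0)} (X_0 \times Y_1)$. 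Moreover $\sigma_1 \colon X_0 \to X_1$ and $\sigma_1 \colon Y_0 \to Y_1$ are split monomorphisms, split by the $(1,0)$-face map, so both legs of this span are injective and the pushout is computed as a genuine union inside $(X \otimes Y)_1$, justifying the notation.

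The argument is essentially a bookkeeping exercise once \cref{geometric-product-description} is available, so there is no substantial obstacle; the only point that needs a moment's attention is the specialisation of the generating identification of \cref{geometric-product-description} --- stated there for all pairs with $m' + n' = k - 1$ --- to the case $k = 1$, where it collapses to precisely the span displayed above.
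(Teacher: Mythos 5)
Your proof is correct and takes exactly the route the paper intends: specialize \cref{geometric-product-description} to $k=1$, observe that the only generating identification with $m'+n'=0$ is $(x'\sigma_1,y')=(x',y'\sigma_1)$ over $X_0\times Y_0$, and read off the pushout. The extra remark that $\sigma_1$ is split (so the pushout is a genuine union) is a nice clarification of the notation but not a departure from the paper's argument.
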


By the \emph{pushout product} $f \hatotimes g$ of maps $f \colon A \to B$ and $g \colon X \to Y$, we will denote the canonical map from the pushout $A \otimes Y \cup_{A \otimes X} B \otimes X \to B \otimes Y$.
The following lemma, which can be verified by simple computation, allows us to express boundary inclusions and open box inclusions as pushout products with respect to this monoidal structure.

\begin{lemma}\label{open-box-pop}\leavevmode
\begin{enumerate}
  \item For $m, n \geq 0$, we have 
  \[ (\bd \Box^{m} \to \Box^{m}) \hatotimes (\bd \Box^{n} \to \Box^{n}) = (\bd \Box^{m+n} \to \Box^{m+n})\text{.}\]
  \item For $1 \leq i \leq m$ and $\varepsilon \in \{0,1\}$, the open-box inclusion $\sqcap^{n}_{i,\varepsilon} \hookrightarrow \Box^{n}$ is the pushout product 
  \[(\bd \Box^{i-1} \hookrightarrow \Box^{i-1}) \hatotimes (\{1-\varepsilon\} \hookrightarrow \Box^{1}) \hatotimes (\bd \Box^{m-i} \hookrightarrow \Box^{m-i}) \text{.} \]
\end{enumerate}
\end{lemma}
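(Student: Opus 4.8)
Both statements will be read off from the cube-by-cube description of the geometric product in \cref{geometric-product-description} together with \cref{mono-characterization}, viewing each as an identity among subobjects of a standard cube. For (1) I would first observe that $-\otimes Z$ and $Z\otimes-$ preserve monomorphisms: for $f\colon X\to Y$ mono the induced map sends a $k$-cube $(x,z)$ of $X\otimes Z$ to $(fx,z)$, and if $(fx,z)=(fx',z')$ then by \cref{geometric-product-description} --- up to swapping the roles of the two cubes --- $fx$ is obtained from $fx'$ by a string of degeneracies and $z'$ from $z$ by the complementary string, so since $f$ is mono the same string carries $x'$ to $x$, whence $(x,z)=(x',z')$ by the defining identification; the case of $Z\otimes-$ is symmetric. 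Consequently $\bd\Box^m\otimes\bd\Box^n$, $\bd\Box^m\otimes\Box^n$ and $\Box^m\otimes\bd\Box^n$ are subobjects of $\Box^m\otimes\Box^n=\Box^{m+n}$; since a pair $(x,y)$ lies in $\bd\Box^m\otimes\Box^n$ exactly when $x$ factors through $\bd\Box^m$, and in $\Box^m\otimes\bd\Box^n$ exactly when $y$ factors through $\bd\Box^n$, the first of the three is the intersection of the other two. As the pushout of two subobjects over their intersection is their union in any presheaf topos, the domain of $(\bd\Box^m\to\Box^m)\hatotimes(\bd\Box^n\to\Box^n)$ is the union $(\bd\Box^m\otimes\Box^n)\cup(\Box^m\otimes\bd\Box^n)$ inside $\Box^{m+n}$, and the pushout product is its inclusion. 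It then remains to identify the images: writing $\bd\Box^m$ and $\bd\Box^n$ as unions of faces and using that under $\Box^m\otimes\Box^n\cong\Box^{m+n}$ the map $\bd^m_{j,\delta}\otimes\Box^n$ becomes $\bd^{m+n}_{j,\delta}$ and $\Box^m\otimes\bd^n_{i,\delta}$ becomes $\bd^{m+n}_{m+i,\delta}$ (the face computation of \cref{geometric-product-description}), the union above is seen to be $\bigcup_{1\le j\le m+n,\,\delta}\Im(\bd^{m+n}_{j,\delta})=\bd\Box^{m+n}$.

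For (2) I would invoke associativity of $\hatotimes$, which follows from associativity of the (non-symmetric) monoidal structure $\otimes$, and bracket the triple pushout product as $(\bd\Box^{i-1}\to\Box^{i-1})\hatotimes\big((\{1-\varepsilon\}\hookrightarrow\Box^1)\hatotimes(\bd\Box^{n-i}\to\Box^{n-i})\big)$, running the image computation of (1) twice. The inner pushout product has domain $(\{1-\varepsilon\}\otimes\Box^{n-i})\cup(\Box^1\otimes\bd\Box^{n-i})$ inside $\Box^1\otimes\Box^{n-i}=\Box^{n-i+1}$, which consists of the face $\bd^{n-i+1}_{1,1-\varepsilon}$ together with the faces $\bd^{n-i+1}_{j,\delta}$ for $2\le j\le n-i+1$ --- that is, every face except $\bd^{n-i+1}_{1,\varepsilon}$ --- so it is the inclusion $\sqcap^{n-i+1}_{1,\varepsilon}\hookrightarrow\Box^{n-i+1}$. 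Tensoring once more on the left by $\bd\Box^{i-1}\to\Box^{i-1}$ and tracking faces as before, $\bd\Box^{i-1}\otimes\Box^{n-i+1}$ contributes the faces $\bd^n_{j,\delta}$ with $1\le j\le i-1$ while $\Box^{i-1}\otimes\sqcap^{n-i+1}_{1,\varepsilon}$ contributes the faces $\bd^n_{j,\delta}$ with $i\le j\le n$ other than $\bd^n_{i,\varepsilon}$; together these are all faces of $\Box^n$ except $\bd^n_{i,\varepsilon}$, so the triple pushout product is the open-box inclusion $\sqcap^n_{i,\varepsilon}\hookrightarrow\Box^n$. (The superscript on the open box and the dimensions of the two boundary factors are pinned down by the count $(i-1)+1+(n-i)=n$.)

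The only step with genuine content --- the ``simple computation'' referred to in the statement --- is the bookkeeping behind the isomorphism $\Box^m\otimes\Box^n\cong\Box^{m+n}$: one must check that the canonical such isomorphism carries face maps in the first tensor factor onto the first block of faces of $\Box^{m+n}$ and face maps in the second factor onto the last block shifted by $m$, and that $\otimes$ preserves monomorphisms in each variable --- both following directly from \cref{geometric-product-description}. Once these are in hand, each part of the lemma is a routine identity between unions of faces of a standard cube, and I expect no real obstacle beyond keeping the index ranges straight.
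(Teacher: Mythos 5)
Your proof is correct. The paper offers no argument here — it simply asserts the lemma ``can be verified by simple computation'' — so there is nothing to compare against; you have supplied exactly the verification that is being left to the reader. The two points that genuinely need checking are the ones you isolate: that $\otimes$ preserves monomorphisms in each variable (so that the domain of the pushout product is literally a union of subobjects of $\Box^{m+n}$ rather than some more exotic quotient), and the face bookkeeping under the identification $\Box^m\otimes\Box^n\cong\Box^{m+n}$, both of which are read off from \cref{geometric-product-description} as you do. One presentational remark: in part (2) the paper's statement has a variable clash (it quantifies $1\le i\le m$ but then writes $\sqcap^n_{i,\varepsilon}$ and $\bd\Box^{m-i}$); you have silently taken $m=n$, which is the only reading that makes the dimensions add up, and this should perhaps be said explicitly. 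You should also note in the second step of part (2) that the intersection argument from part (1) is being reused with $\sqcap^{n-i+1}_{1,\varepsilon}$ in place of $\bd\Box^{n-i+1}$, which works verbatim since $\sqcap^{n-i+1}_{1,\varepsilon}\hookrightarrow\Box^{n-i+1}$ is again a subcomplex inclusion; as written you invoke ``the image computation of (1)'' without flagging that one of the two factors has changed shape.
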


We conclude this section by defining certain functors which we will use to compare model structures.

We define two endofunctors $(-)^\rmco, (-)^\coop  \colon \Box \to \Box$ as follows:

\begin{itemize}
\item Both $(-)^\coop$ and $(-)^\rmco$ act as the identity on objects;
\item $(-)^\rmco$ acts on generating morphisms as follows:
\begin{itemize}
\item $(\bd^{n}_{i,\varepsilon})^\rmco = \bd^{n}_{n-i+1,\varepsilon}$;
\item $(\sigma^{n}_{i})^\rmco = \sigma^{n}_{n-i+1}$;
\item $(\gamma^{n}_{i,\varepsilon})^\rmco = \gamma^{n}_{(n-1)-i+1,\varepsilon}$
\end{itemize}
\item $(-)^\coop$ acts on generating morphisms as follows:
\begin{itemize}
\item $(\bd^{n}_{i,\varepsilon})^\coop = \bd^{n}_{i,1-\varepsilon}$;
\item $(\sigma^{n}_{i})^{\coop} = \sigma^{n}_{i}$;
\item $(\gamma^{n}_{i,\varepsilon})^\coop = \gamma^{n}_{i,1-\varepsilon}$.
\end{itemize}
\end{itemize}

From the definition we can see that the endofunctors $(-)^\rmco$ and $(-)^\coop$ commute; we denote their composite by $(-)^\op$.

By left Kan extension, we obtain functors $(-)^\rmco, (-)^\coop, (-)^\op \colon \cSet \to \cSet$.

\centerline{
\xymatrix{
\Box \ar@{^(->}[d] \ar[r] & \Box \ar@{^(->}[r] & \cSet \\
\cSet \ar[urr] \\
}
}

Some simple computations show:

\begin{lemma}\label{op-co-involution}
The functors $(-)^\rmco, (-)^\coop, (-)^\op$ are involutions of $\cSet$. \qed
\end{lemma}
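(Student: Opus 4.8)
The statement to prove is \cref{op-co-involution}: that $(-)^\rmco$, $(-)^\coop$, and $(-)^\op$ are involutions of $\cSet$.

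The plan is to reduce everything to the combinatorial level of the box category $\Box$, where the claim becomes a finite check on generators. First I would observe that since each of these functors is obtained by left Kan extension along the Yoneda embedding from an endofunctor of $\Box$ (itself the identity on objects), it suffices to show that the corresponding endofunctors $(-)^\rmco, (-)^\coop, (-)^\op \colon \Box \to \Box$ are involutions; left Kan extension is functorial and preserves the identity, so an involution downstairs induces an involution upstairs. Moreover, since $(-)^\op$ is defined as the composite of the commuting functors $(-)^\rmco$ and $(-)^\coop$, once those two are shown to be commuting involutions, $(-)^\op \circ (-)^\op = (-)^\rmco (-)^\coop (-)^\rmco (-)^\coop = (-)^\rmco (-)^\rmco (-)^\coop (-)^\coop = \mathrm{id}$ follows formally, so the real content is the two basic cases.

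For $(-)^\rmco$ and $(-)^\coop$ individually, since each acts as the identity on objects and is a functor, it is determined by its action on the generating maps (faces, degeneracies, connections), and to check that the square of such a functor is the identity it suffices to check that it squares to the identity on each generator — provided the functor is well-defined, which is part of what is being asserted by calling it a functor. Concretely, for $(-)^\rmco$ I would compute $((\bd^n_{i,\varepsilon})^\rmco)^\rmco = (\bd^n_{n-i+1,\varepsilon})^\rmco = \bd^n_{n-(n-i+1)+1,\varepsilon} = \bd^n_{i,\varepsilon}$, and similarly $((\sigma^n_i)^\rmco)^\rmco = (\sigma^n_{n-i+1})^\rmco = \sigma^n_{n-(n-i+1)+1} = \sigma^n_i$ (noting that $\sigma^n_i \colon [1]^n \to [1]^{n-1}$, so the reversal index uses $n$), and $((\gamma^n_{i,\varepsilon})^\rmco)^\rmco = (\gamma^n_{(n-1)-i+1,\varepsilon})^\rmco = \gamma^n_{(n-1)-((n-1)-i+1)+1,\varepsilon} = \gamma^n_{i,\varepsilon}$ (here $\gamma^n_{i,\varepsilon} \colon [1]^n \to [1]^{n-1}$ and the connection index ranges over $1,\dots,n-1$, so the reversal uses $n-1$). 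For $(-)^\coop$ the computation is even simpler: it fixes degeneracies and flips the sign $\varepsilon \mapsto 1-\varepsilon$ on faces and connections, and $1-(1-\varepsilon) = \varepsilon$. One also checks $(-)^\rmco$ and $(-)^\coop$ commute on generators — the index-reversal and the sign-flip act on independent pieces of data — which is already asserted in the text.

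The only subtlety — and the closest thing to an obstacle — is verifying that $(-)^\rmco$ and $(-)^\coop$ are genuinely well-defined endofunctors of $\Box$, i.e., that the assignment on generators respects the cubical identities listed before \cref{normal-form}; but this is either taken as given by the prior discussion (the functors have already been introduced) or follows from a straightforward term-by-term inspection of each cubical identity under index-reversal resp. sign-flip, using that these operations are order-reversing resp. sign-reversing bijections on the relevant index sets and that $\max\{x,y\}$ and $\min\{x,y\}$ are swapped by the involution $\varepsilon \leftrightarrow 1-\varepsilon$ on $[1]$. Granting well-definedness, the proof is the short generator-level computation above together with the formal argument that left Kan extension carries involutions to involutions and that a composite of commuting involutions is an involution. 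Hence the statement follows; the computations are routine, which is presumably why the authors wrote ``Some simple computations show''.

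\begin{proof}
Each of the three functors on $\cSet$ is obtained by left Kan extension along the Yoneda embedding from the corresponding endofunctor of $\Box$, and left Kan extension is functorial and preserves identities; hence it suffices to prove that $(-)^\rmco, (-)^\coop, (-)^\op \colon \Box \to \Box$ are involutions. Since $(-)^\op = (-)^\rmco \circ (-)^\coop$ with the two factors commuting, once $(-)^\rmco$ and $(-)^\coop$ are shown to be commuting involutions we get $(-)^\op \circ (-)^\op = (-)^\rmco(-)^\coop(-)^\rmco(-)^\coop = (-)^\rmco(-)^\rmco(-)^\coop(-)^\coop = \mathrm{id}$, so it remains to treat $(-)^\rmco$ and $(-)^\coop$. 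Each of these functors is the identity on objects, so it is determined by its action on the generating face, degeneracy, and connection maps, and to see that it squares to the identity it is enough to check this on generators. For $(-)^\rmco$, using that a face $\bd^n_{i,\varepsilon}$ has target $[1]^n$, a degeneracy $\sigma^n_i$ has source $[1]^n$, and a connection $\gamma^n_{i,\varepsilon}$ has source $[1]^n$ with $i$ ranging over $1,\dots,n-1$, we compute
\[
((\bd^n_{i,\varepsilon})^\rmco)^\rmco = (\bd^n_{n-i+1,\varepsilon})^\rmco = \bd^n_{n-(n-i+1)+1,\varepsilon} = \bd^n_{i,\varepsilon}\text{,}
\]
\[
((\sigma^n_i)^\rmco)^\rmco = (\sigma^n_{n-i+1})^\rmco = \sigma^n_{n-(n-i+1)+1} = \sigma^n_i\text{,}
\]
\[
((\gamma^n_{i,\varepsilon})^\rmco)^\rmco = (\gamma^n_{(n-1)-i+1,\varepsilon})^\rmco = \gamma^n_{(n-1)-((n-1)-i+1)+1,\varepsilon} = \gamma^n_{i,\varepsilon}\text{.}
\]
For $(-)^\coop$, which fixes degeneracies and replaces $\varepsilon$ by $1-\varepsilon$ on faces and connections, the claim follows from $1-(1-\varepsilon) = \varepsilon$. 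Finally, the index-reversal defining $(-)^\rmco$ and the sign-flip defining $(-)^\coop$ act on independent data of each generator, so the two functors commute on generators and hence commute. This completes the proof.
\end{proof}
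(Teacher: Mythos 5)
Your proof is correct and follows the route the paper clearly intends: reduce to the box category via left Kan extension (which is a monoidal functor $\mathrm{End}(\Box)\to\mathrm{End}(\cSet)$, so sends involutions to involutions), then verify involutivity of $(-)^\rmco$ and $(-)^\coop$ on generators and deduce the case of $(-)^\op$ formally from commutativity. The paper leaves these ``simple computations'' to the reader; you have simply written them out, with the right care about which dimension parameter ($n$ versus $n-1$) governs the index reversal for connections.
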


In particular, for $X \in \cSet$, the cubes of $X$ are in bijection with those of $X^{\rmco}$, $X^{\coop}$, and $X^{\op}$; given $x \colon \Box^n \to X$ we have corresponding cubes $x^\rmco \colon \Box^n = (\Box^n)^\rmco \to X^\rmco$, $x^\coop \colon \Box^n  = (\Box^n)^\coop \to X^\coop$, $x^\op \colon \Box^n = (\Box^n)^\op \to X^\op$. 

Let $\Box_0$ denote the subcategory of $\Box$ generated by the face, degeneracy, and negative connection maps, and let $\cSet_0$ denote the presheaf category $\pshf{\Box_0}$. This is the category of cubical sets studied in \cite{kapulkin-lindsey-wong}.

By pre-composition, the inclusion $i \colon \Box_0 \hookrightarrow \Box$ defines a functor $i^\ast \colon \cSet \to \cSet_0$. Left and right Kan extension define left and right adjoints of this functor, respectively denoted $i_!, i_{\ast} \colon \cSet_0 \to \cSet$.

We may characterize the functors $i^\ast, i_{\ast}, i_!$ as follows:

\begin{itemize}
\item For $X \in \cSet$, $n \geq 0$ we have $(i^\ast X)_n = X_n$, with structure maps computed as in $X$. However, certain degenerate cubes of $X$  become non-degenerate in $i^\ast X$, namely those which cannot be expressed as degeneracies or negative connections of any other cube.
\item For $X \in \cSet_0$, we have $(i_{\ast} X)_n = \cSet_0(i^\ast \Box^n, X)$.
\item  For $X \in \cSet_0$, $i_! X$ is obtained by freely adding positive connections to $X$. Given a map $f \colon X \to Y$ in $\cSet_0$, $i_! f$ agrees with $f$ on the non-degenerate cubes of $i_! X$; by \cref{map-on-non-degen} this is enough to determine $i_! f$.
\end{itemize}

\begin{lemma}\label{i-adjunction-non-degenerate}
Given a map $f \colon i_! X \to Y$ in $\cSet$, $f$ and the adjunct map $\overline{f} \colon X \to i^\ast Y$ agree on non-degenerate cubes. \qed
\end{lemma}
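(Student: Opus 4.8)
The plan is to unwind the adjunction $i_! \adjoint i^*$. Recall that the adjunct of $f \colon i_! X \to Y$ is the composite $\overline f = (i^* f) \circ \eta_X$, where $\eta_X \colon X \to i^* i_! X$ is the unit. So the whole statement reduces to understanding $\eta_X$ on non-degenerate cubes, together with the fact (recorded already in the excerpt) that $i^*$ acts as the identity on underlying sets of cubes and on morphisms. The first step is therefore to identify $\eta_X$ explicitly: it is the canonical map that regards each cube of $X$ as the ``same'' cube of $i_! X$. On a representable this is immediate, since $i_! \Box^n = \Box^n$ and, by the Yoneda lemma, the degree-$k$ component of $\eta_{\Box^n}$ is the inclusion of hom-sets $\Box_0([1]^k,[1]^n) \hookrightarrow \Box([1]^k,[1]^n)$ induced by $i$; the general case follows because $i^* i_!$ preserves colimits (both functors are left adjoints) and every cubical set is a colimit of representables, so $\eta_X$ is assembled from these inclusions. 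In particular, invoking the description of $i_! X$ as obtained from $X$ by freely adjoining positive connections, $\eta_X$ is precisely the identification of the cubes of $X$ with cubes of $i_! X$ that is implicit in the statement of the lemma.

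Next I would note that this identification carries non-degenerate cubes of $X$ to non-degenerate cubes of $i_! X$: by \cref{StandForm} every cube of $i_! X$ has a unique standard form $y\phi$ with $y$ a non-degenerate cube of $i_! X$ and $\phi$ a composite of degeneracies and connections, and by the description of $i_!$ the non-degenerate cubes of $i_! X$ are exactly the images of those of $X$ (the cubes whose standard form genuinely involves a positive connection are degenerate, and a cube non-degenerate in $X$ cannot acquire a shorter standard form in $i_! X$). With this in hand the conclusion is formal: for every cube $x$ of $X$ we have $\overline f(x) = (i^* f)(\eta_X(x)) = f(\eta_X(x))$ in $Y_n = (i^* Y)_n$, and when $x$ is non-degenerate, $\eta_X(x)$ is exactly the corresponding non-degenerate cube of $i_! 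X$ on which $f$ is evaluated; hence $f$ and $\overline f$ agree on non-degenerate cubes.

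I do not expect a genuine obstacle here — the content is bookkeeping around the adjunction and the explicit description of $i_!$ — and the only point that requires a little care is pinning down that the unit $\eta_X$ really is the evident inclusion of cubes, and that it matches the identification needed to make the statement meaningful. As an alternative route avoiding any discussion of $\eta_X$ on non-representables, one could instead use the triangle identity $f = \varepsilon_Y \circ i_!(\overline f)$ together with the already-established fact that $i_!$ of a map agrees with that map on non-degenerate cubes, and the description of the counit $\varepsilon_Y$; this gives the same equality $f(x) = \overline f(x)$ for non-degenerate $x$.
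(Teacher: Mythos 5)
Your argument is correct, and it fills in exactly what the paper treats as immediate (the lemma is stated with \qed): the adjunct formula $\overline f = (i^* f)\circ\eta_X$ together with the observation that $\eta_X$ is the canonical identification of cubes of $X$ with the corresponding cubes of $i_! X$ (and that $i^*$ does not change underlying sets of cubes) gives the conclusion at once. The only point that deserves the extra care you note is that $\eta_X$ carries non-degenerate cubes of $X$ to non-degenerate cubes of $i_! X$; this follows from the description of $i_! X$ as freely adjoining positive connections together with \cref{StandForm}, and your proposal invokes it appropriately.
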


Similarly, we let $\Box_1$ denote the subcategory of $\Box$ generated by the face, degeneracy, and positive connection maps, and let $\cSet_1$ denote the presheaf category $\pshf{\Box_1}$. Likewise, we let  $\Box_{\varnothing}$ denote the subcategory of $\Box$ generated by the face and degeneracy maps, and let $\cSet_{\varnothing}$ denote the presheaf category $\pshf{\Box_{\varnothing}}$. Most of our arguments will not require fixing a specific choice of box category, so that our main results will be valid in all of these categories of cubical sets. For concreteness, we will work with the category $\cSet$ of cubical sets having both positive and negative connections; at the beginning of each section we will note the extent to which that section's results apply to the other three categories of cubical sets. 

The restriction of the nerve functor defines a functor $\Box \to \sSet$; taking the left Kan extension of this functor along the Yoneda embedding, we obtain the \emph{triangulation} functor $T \colon \cSet \to \sSet$.

\[
\xymatrix@C+0.5cm{
  \Box
  \ar[r]
  \ar@{^{(}->}[d]
&
  \sSet
\\
  \cSet
  \ar[ru]_{T}
&
}
\]

The triangulation functor has a right adjoint $U \colon \sSet \to \cSet$ given by $(UX)_{n} = \sSet((\Delta^1)^n,X)$. Intuitively, we think of triangulation as creating a simplicial set $TX$ from a cubical set $X$ by subdividing the cubes of $X$ into simplices.

We now record two basic facts about triangulation. In the given references, these results are proven using a different definition of the category $\Box$, lacking connection maps, but the proofs apply equally well to the cubical sets under consideration here.

\begin{proposition}[{\cite[Ex.~8.4.24]{CisinskiAsterisque}}]\label{T-prod}
The triangulation functor sends geometric products to cartesian products; that is, for cubical sets $X$ and $Y$, there is a natural isomorphism $T(X \otimes Y) \cong TX \times TY$. \qed
\end{proposition}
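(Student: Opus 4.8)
The plan is to reduce the claim to the case of representable cubical sets by a standard cocontinuity argument. First I would record the value of $T$ on representables: by the defining universal property of the left Kan extension along the Yoneda embedding, $T\Box^{n}$ is the nerve $\mathsf{N}([1]^{n})$ of the poset $[1]^{n}$. Since the nerve functor $\mathsf{N}\colon \Cat \to \sSet$ preserves finite products and $[1]^{m+n}\cong[1]^{m}\times[1]^{n}$ as posets, this gives $T\Box^{n}\cong(\Delta^{1})^{n}$, and in fact a natural isomorphism
\[ T(\Box^{m}\otimes\Box^{n}) \;=\; T\Box^{m+n} \;\cong\; T\Box^{m}\times T\Box^{n} \]
of functors $\Box\times\Box\to\sSet$. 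Here one uses that $\Box^{m}\otimes\Box^{n}=\Box^{m+n}$ with the concatenation morphisms behaving accordingly, that the poset isomorphisms $[1]^{m+n}\cong[1]^{m}\times[1]^{n}$ (splitting a tuple of length $m+n$ into a pair) are natural in the concatenation functor $\Box\times\Box\to\Box$, and that $\mathsf{N}$ preserves products of morphisms.

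Next I would observe that both assignments $(X,Y)\mapsto T(X\otimes Y)$ and $(X,Y)\mapsto TX\times TY$ define functors $\cSet\times\cSet\to\sSet$ that preserve colimits separately in each variable. For the first: $T$ is a left adjoint, with right adjoint $U$, hence cocontinuous, and the geometric product is cocontinuous in each variable, being defined by left Kan extension (equivalently, it admits the right adjoints $\ihom_{L}$ and $\ihom_{R}$). For the second: $T$ is cocontinuous as before, and $\sSet$, being a presheaf category, is cartesian closed, so $-\times S$ is cocontinuous for every simplicial set $S$.

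I would then invoke the fact that every cubical set is canonically the colimit of the representables indexed by its category of elements, together with the principle that a functor out of a product of presheaf categories which preserves colimits in each variable separately is determined, as a colimit of its values on representables, by its restriction to products of representables. Since the two restrictions were identified in the first step, we obtain a natural isomorphism $T(X\otimes Y)\cong TX\times TY$. Concretely, writing $X\cong\colim\Box^{m}$ and $Y\cong\colim\Box^{n}$ over the respective categories of elements and distributing $T$, $\otimes$, and $\times$ through the colimits,
\[ T(X\otimes Y)\;\cong\;\colim\, T\Box^{m+n}\;\cong\;\colim\,(T\Box^{m}\times T\Box^{n})\;\cong\;(\colim\, T\Box^{m})\times(\colim\, T\Box^{n})\;\cong\;TX\times TY. \]

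I expect no serious obstacle here. The only points requiring care are the verification that the representable-level isomorphism is natural in both variables, which rests entirely on the nerve functor preserving products, and the bookkeeping of the iterated colimit; in particular, although the geometric product on $\cSet$ is not symmetric, this causes no difficulty, since the target $\sSet$ carries a genuinely symmetric cartesian product, and the two orderings appearing in $T\Box^{m+n}\cong T\Box^{m}\times T\Box^{n}$ match up consistently.
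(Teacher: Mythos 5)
Your proof is correct, and it is essentially the argument behind the citation: the paper itself gives no proof of \cref{T-prod}, deferring to Cisinski's Example 8.4.24 (with a remark that the argument there, written for the box category without connections, applies verbatim with connections). The cocontinuity-plus-representables reduction you use — identify $T(\Box^{m}\otimes\Box^{n}) = T\Box^{m+n} \cong (\Delta^{1})^{m+n} \cong T\Box^{m}\times T\Box^{n}$ via the nerve preserving products, check both sides of the desired isomorphism are cocontinuous in each variable, and conclude by density — is precisely the standard route, and all the points you flag (naturality at the representable level, commuting the product past the iterated colimit using cartesian closure of $\sSet$) are the ones that genuinely need checking.
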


\begin{corollary}\label{T-pres-pop}
Triangulation preserves pushout products; that is, for maps $f, g$ in $\cSet$ there is a natural isomorphism $T(f \hatotimes g) \cong Tf \widehat{\times} Tg$.
\end{corollary}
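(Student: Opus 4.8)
The plan is to deduce this formally from \cref{T-prod} together with the fact that $T$, being a left adjoint (to $U$), preserves all small colimits, in particular pushouts.

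Fix maps $f \colon A \to B$ and $g \colon X \to Y$ in $\cSet$. By definition $f \hatotimes g$ is the canonical map out of the pushout $P := (A \otimes Y) \cup_{A \otimes X} (B \otimes X)$ into $B \otimes Y$, induced by $f \otimes Y$ and $B \otimes g$. First I would apply $T$ to the square defining $P$; since $T$ preserves pushouts, $TP$ is the pushout of $T(B \otimes X) \leftarrow T(A \otimes X) \to T(A \otimes Y)$, and $T(f \hatotimes g) \colon TP \to T(B \otimes Y)$ is the map induced by $T(f \otimes Y)$ and $T(B \otimes g)$. Next I would invoke the natural isomorphism $T(- \otimes -) \cong T(-) \times T(-)$ of \cref{T-prod} at each of the four objects $A \otimes X$, $A \otimes Y$, $B \otimes X$, $B \otimes Y$. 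Because this isomorphism is natural in each variable separately, it carries the maps $T(A \otimes X) \to T(A \otimes Y)$, $T(A \otimes X) \to T(B \otimes X)$, $T(B \otimes X) \to T(B \otimes Y)$, $T(A \otimes Y) \to T(B \otimes Y)$ to $TA \times Tg$, $Tf \times TX$, $TB \times Tg$, $Tf \times TY$ respectively. Transporting the pushout $TP$ along these isomorphisms therefore identifies it with $(TA \times TY) \cup_{TA \times TX} (TB \times TX)$, the domain of $Tf \widehat{\times} Tg$, and identifies the induced map $TP \to T(B \otimes Y) \cong TB \times TY$ with $Tf \widehat{\times} Tg$. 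All of these identifications are natural in $f$ and $g$ because the isomorphism of \cref{T-prod} is natural.

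The only point needing care — and it is routine — is the compatibility of the comparison isomorphisms $T(X \otimes Y) \cong TX \times TY$ with the morphisms appearing in the pushout-product square; this is exactly the statement that \cref{T-prod} furnishes a natural isomorphism of \emph{bifunctors} $\cSet \times \cSet \to \sSet$, so there is no genuine obstacle. Indeed, the whole argument is an instance of the general fact that a colimit-preserving functor equipped with a natural isomorphism between the two relevant monoidal bifunctors automatically preserves pushout products; one could equally well phrase the proof at that level of generality and then specialize.
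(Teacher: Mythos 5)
Your proof is correct and is exactly the argument the paper intends; the paper compresses it to ``Immediate by \cref{T-prod} and the fact that $T$ preserves colimits as a left adjoint,'' while you spell out the transport of the pushout square along the natural isomorphism. No differences in substance.
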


\begin{proof}
Immediate by \cref{T-prod} and the fact that $T$ preserves colimits as a left adjoint.
\end{proof}

\begin{proposition}[{\cite[Lem.~8.4.29]{CisinskiAsterisque}}]\label{T-pres-cof}
The triangulation functor preserves monomorphisms. \qed
\end{proposition}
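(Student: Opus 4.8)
The plan is to reduce the general statement to the generating monomorphisms and then exploit \cref{T-pres-pop}. First I would recall that by \cref{mono-characterization} the monomorphisms of $\cSet$ are exactly the maps injective on non-degenerate cubes, so the set $M = \{\bd\Box^n \hookrightarrow \Box^n \mid n \geq 0\}$ is a cellular model: its saturation is the whole class of monomorphisms. Since $T$ is a left adjoint it preserves all colimits, in particular pushouts and transfinite composites; hence the class of maps $\{f \in \cSet^{\to} \mid Tf \text{ is a monomorphism}\}$ is saturated (monomorphisms in $\sSet$ are themselves closed under pushout and transfinite composition, being detected pointwise in $\Set$). Therefore it suffices to check that $T$ sends each boundary inclusion $\bd\Box^n \hookrightarrow \Box^n$ to a monomorphism of simplicial sets.

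For the generating inclusions I would argue by induction on $n$ using \cref{open-box-pop}(1), which expresses $(\bd\Box^n \hookrightarrow \Box^n)$ as the $n$-fold pushout product of the single map $(\bd\Box^1 \hookrightarrow \Box^1)$ with itself. By \cref{T-pres-pop}, $T$ carries this to the $n$-fold pushout product of $T(\bd\Box^1 \hookrightarrow \Box^1)$ with itself in $\sSet$. Now $T\Box^1 = \Delta^1$ and $T(\bd\Box^1) = \bd\Delta^1 = \Delta^0 \sqcup \Delta^0$, so $T(\bd\Box^1 \hookrightarrow \Box^1)$ is the inclusion $\{0\}\sqcup\{1\} \hookrightarrow \Delta^1$, a monomorphism. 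Since the pushout product of two monomorphisms of simplicial sets is again a monomorphism (monomorphisms in $\sSet$ form a class closed under pushout product, as they are the cofibrations of the Kan--Quillen/Joyal model structures, or directly because the pushout-product of pointwise injections is a pointwise injection), an easy induction on $n$ shows $T(\bd\Box^n \hookrightarrow \Box^n)$ is a monomorphism. Combining this with the saturation argument of the first paragraph completes the proof.

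The only real subtlety — and the step I would be most careful about — is the claim that the class of maps sent by $T$ to monomorphisms is closed under the saturation operations. This is where one must know both that $T$ preserves the relevant colimits (true, as a left adjoint) \emph{and} that monomorphisms in $\sSet$ are stable under pushout along arbitrary maps and under transfinite composition; the latter is standard but should be invoked explicitly, since it is what lets us pass from generators to all monomorphisms. Everything else is formal manipulation with \cref{open-box-pop} and \cref{T-pres-pop}. (One could alternatively give a direct combinatorial description of $T\Box^n$ as the nerve of the poset $[1]^n$ and of $T\bd\Box^n$ as a union of faces, and check injectivity of $T$ on non-degenerate simplices by hand; but the pushout-product route is cleaner and reuses machinery already set up in the excerpt.)
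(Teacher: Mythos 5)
Your proof is correct, and it fills a gap the paper deliberately leaves open: the paper states \cref{T-pres-cof} with a citation to Cisinski's Lemma~8.4.29 and a \qed symbol, i.e.\ it offers no in-text argument of its own, only noting that Cisinski's proof (for the box category without connections) carries over. Your route --- reduce to the generating cofibrations via the cellular model, then express $\bd\Box^n \hookrightarrow \Box^n$ as an iterated pushout product of $\bd\Box^1 \hookrightarrow \Box^1$ by \cref{open-box-pop}(1), apply \cref{T-pres-pop} to move this across $T$, and finish with the standard fact that the pushout product of monomorphisms in $\sSet$ is a monomorphism --- is a clean, self-contained argument that reuses machinery already built up in the paper, rather than the direct combinatorial analysis Cisinski carries out. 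One small misattribution: the fact that $\{\bd\Box^n \hookrightarrow \Box^n\}$ is a cellular model for $\cSet$ is \cref{cSetCellularModel} (proved via the EZ-Reedy structure of \cref{Box-Reedy}), not \cref{mono-characterization}; the latter characterizes monomorphisms via non-degenerate cubes but does not by itself give the generation statement. You also might state explicitly that the class $\{f : Tf \text{ is mono}\}$ is closed under retracts and coproducts as well as pushouts and transfinite composites --- you only name the latter two, though the first two are immediate (any functor preserves retracts, and $T$ preserves coproducts as a left adjoint). These are cosmetic; the proof is sound.
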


\subsection{Homotopy theory of cubical sets} 

\begin{lemma}\label{cSetCellularModel}
The boundary inclusions $\bd \Box^{n} \to \Box^{n}$ form a cellular model for $\cSet$. 
\end{lemma}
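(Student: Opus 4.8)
The plan is to show that the saturation of the set $\{\bd\Box^n \to \Box^n \mid n \geq 0\}$ is exactly the class of monomorphisms of $\cSet$. One inclusion is automatic: each $\bd\Box^n \to \Box^n$ is a monomorphism, and the class of monomorphisms in any presheaf category is saturated (closed under pushouts, transfinite composition, and retracts), so the saturation is contained in the monomorphisms. The substantive direction is to show every monomorphism lies in this saturation, and the standard approach is a skeletal filtration argument using the EZ-Reedy structure on $\Box$ from \cref{Box-Reedy}.

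First I would recall the skeleton functors: for a cubical set $X$, let $\sk_n X$ be the subobject generated by cubes of dimension $\leq n$, so $X = \colim_n \sk_n X$ with $\sk_{-1}X = \varnothing$. The key structural fact, valid because $\Box$ is an EZ-Reedy category, is that for each $n$ the square
\[
\xymatrix{
\coprod_{x} \bd\Box^n \ar[r] \ar[d] & \sk_{n-1}X \ar[d] \\
\coprod_{x} \Box^n \ar[r] & \sk_n X
}
\]
is a pushout, where the coproduct ranges over the non-degenerate $n$-cubes $x$ of $X$ (equivalently, over the $n$-cubes not hit by the relevant latching map); here one uses \cref{StandForm} to identify the non-degenerate cubes and to see that attaching all of them along their boundaries produces exactly $\sk_n X$. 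Consequently each inclusion $\sk_{n-1}X \hookrightarrow \sk_n X$ is a pushout of a coproduct of boundary inclusions, hence anodyne for the class generated by $M$, and $\varnothing \hookrightarrow X$ is a transfinite composite of these, hence in the saturation.

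For a general monomorphism $f\colon A \hookrightarrow B$ I would run the relative version of the same argument: set $B_n = A \cup \sk_n B$ (the union taken inside $B$), so $B_{-1} = A$ and $\colim_n B_n = B$. Then $B_{n-1} \hookrightarrow B_n$ is a pushout of $\coprod \bd\Box^n \to \coprod \Box^n$ indexed by the non-degenerate $n$-cubes of $B$ not already in $A$ — this uses \cref{mono-characterization} to know exactly which cubes of $B$ lie in the image of $A$, namely that $A$ contains a cube iff it contains its non-degenerate part, so a non-degenerate $n$-cube of $B$ is either entirely in $A$ or contributes a genuine cell. Thus $f$ is a transfinite composite of pushouts of coproducts of maps in $M$, so $f$ belongs to the saturation of $M$.

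The main obstacle is the careful verification that the pushout squares above are correct — that is, that attaching each non-degenerate $n$-cube of $B\setminus A$ along its boundary, and nothing else, reconstructs $B_n$ from $B_{n-1}$, with no overcounting from degeneracies or connections and no missing identifications. This is precisely where the standard-form decomposition of \cref{StandForm} and the monomorphism criterion of \cref{mono-characterization} do the work: every cube of $B_n$ of dimension $\leq n$ is either in $B_{n-1}$ or is (uniquely) a degeneracy/connection of a non-degenerate cube of dimension $\leq n$, and a boundary face of a non-degenerate $n$-cube, being of lower dimension, already lies in $B_{n-1}$. Once this bookkeeping is in place the result follows formally. I would also note that the argument is insensitive to which of the four box categories is used, since each is EZ-Reedy with the face maps generating $\Box_+$, so the same skeletal filtration applies verbatim.
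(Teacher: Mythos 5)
Your proof is correct, and it is essentially a self-contained unpacking of the result the paper cites. The paper's own proof is a one-line appeal to \cite[Prop.~1.5]{ara:higher-quasicats}, which records exactly this skeletal-filtration argument for presheaves over an arbitrary EZ-Reedy category; the only work the paper does explicitly is to invoke \cref{Box-Reedy} (to see that $\Box$ is EZ-Reedy) and to observe that Ara's abstract boundary objects coincide with $\partial\Box^n$. What you have written — filtering a monomorphism $A \hookrightarrow B$ by the relative skeleta $A \cup \mathrm{sk}_n B$, attaching the non-degenerate $n$-cubes of $B$ not in $A$ along their boundaries, and using the uniqueness of standard form (\cref{StandForm}) together with the monomorphism criterion (\cref{mono-characterization}) to verify the pushout squares — is precisely the content of that citation specialized to $\Box$, so the two proofs coincide in substance.
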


\begin{proof}
This follows from \cref{Box-Reedy}, which allows us to apply \cite[Prop.~1.5]{ara:higher-quasicats}. 
(Note that the definition of boundary objects for an arbitrary Reedy category appearing in \cite{ara:higher-quasicats} recovers precisely the objects $\bd \Box^{n}$ when instantiated with $\Box$.)
\end{proof}

\begin{definition}
A map of cubical sets is a \emph{Kan fibration} if it has the right lifting property with respect to all open box fillings. A cubical set $X$ is a \emph{cubical Kan complex} if the map $X \to \Box^{0}$ is a Kan fibration.
\end{definition}

The functor $\Box^{1} \otimes - \colon \cSet \to \cSet$, together with the natural transformations $\bd^{1}_{1,0} \otimes -, \bd^{1}_{1,1} \otimes - \colon \mathrm{id}  \to \Box^{1} \otimes -$, and $\pi \colon \Box^{1} \otimes - \to \mathrm{id}$, defines a cylinder functor on $\cSet$ in the sense of \cref{CylinderFunctor}. Thus, for any $X, Y \in \cSet$ we have a set $[X,Y]$ of homotopy classes of maps from $X$ to $Y$ defined by this cylinder functor.

\begin{theorem}[Cisinski] \label{Grothendieck-ms-cSet}
  The category $\cSet$ carries a cofibrantly generated model structure, referred to as the Grothendieck model structure, in which
  \begin{itemize}
    \item cofibrations are the monomorphisms;
    \item weak equivalences are maps $X \to Y$ inducing bijections $[Y,Z] \to [X,Z]$ for all cubical Kan complexes $Z$;
    \item fibrations are the Kan fibrations.
  \end{itemize}
\end{theorem}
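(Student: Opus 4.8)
The plan is to realize this as an instance of the Cisinski machinery of \cref{CisinskiMS}. All of its ingredients are already available: the cellular model $M = \{\bd\Box^n \to \Box^n\}$ of \cref{cSetCellularModel}, the cylinder functor $\Box^1 \otimes -$ introduced just above (with $\bd^0, \bd^1$ given by $\bd^1_{1,0}\otimes -$ and $\bd^1_{1,1}\otimes -$, and $\sigma$ by $\pi$), and the set $S = \{\sqcap^n_{i,\varepsilon} \hookrightarrow \Box^n\}$ of all open box inclusions. Feeding these into \cref{CisinskiMS} yields at once a cofibrantly generated model structure on $\cSet$ whose cofibrations are exactly the monomorphisms and in which a map between fibrant objects is a fibration if and only if it is a naive fibration. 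It then remains to re-express ``anodyne map'', ``fibrant object'', and ``fibration'' in the cubical terms of the statement.

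The key step is to show that $\Lambda(S)$ has the same saturation as $S$, the set of open box inclusions. The inclusion $\overline S \subseteq \overline{\Lambda(S)}$ is immediate, and for the reverse inclusion I would induct on $n$ showing $\Lambda^n(S) \subseteq \overline{S}$. For $n = 0$, the generators of $\Lambda^0(S)$ beyond $S$ itself are the pushout--products $(\{1-\varepsilon\}\hookrightarrow\Box^1) \hatotimes (\bd\Box^m \hookrightarrow \Box^m)$, which by \cref{open-box-pop} are themselves open box inclusions $\sqcap^{m+1}_{1,\varepsilon}\hookrightarrow\Box^{m+1}$, hence already lie in $S$. For the inductive step, the generators of $\Lambda^{n+1}(S)$ are the pushout--products $(\bd\Box^1 \hookrightarrow \Box^1) \hatotimes j$ with $j \in \Lambda^n(S)$, so it suffices to know that $\overline{S}$ is closed under pushout--product with boundary inclusions. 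This in turn follows from \cref{open-box-pop}: writing $\sqcap^n_{i,\varepsilon}\hookrightarrow\Box^n$ as $(\bd\Box^{i-1}\to\Box^{i-1}) \hatotimes (\{1-\varepsilon\}\to\Box^1) \hatotimes (\bd\Box^{n-i}\to\Box^{n-i})$ and absorbing $\bd\Box^k \to \Box^k$ into the first factor by part (1), one sees that $(\bd\Box^k\to\Box^k) \hatotimes (\sqcap^n_{i,\varepsilon}\to\Box^n)$ is again an open box inclusion $\sqcap^{n+k}_{k+i,\varepsilon}\hookrightarrow\Box^{n+k}$; the usual pushout--product lemma for saturated classes then upgrades this to closure of $\overline S$. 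With $\overline{\Lambda(S)} = \overline{S}$ in hand, a naive fibration is exactly a map with the right lifting property against all open box inclusions, i.e.\ a Kan fibration; a fibrant object is exactly a cubical Kan complex; and hence a weak equivalence is exactly a map $X\to Y$ inducing a bijection $[Y,Z] \to [X,Z]$ for every cubical Kan complex $Z$.

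The one remaining point, and the part I expect to be genuinely hard, is that \emph{every} fibration (not merely one between fibrant objects) is a Kan fibration, equivalently that the trivial cofibrations coincide with the anodyne maps. One inclusion is soft: anodyne maps are monomorphisms and, as part of the general theory underlying \cref{CisinskiMS}, weak equivalences, hence trivial cofibrations, so every fibration is a Kan fibration. For the converse I would use the standard retract argument — factor a trivial cofibration $j$ as an anodyne map followed by a Kan fibration $q$, observe that $q$ is a weak equivalence by two-out-of-three, and exhibit $j$ as a retract of the anodyne map once $q$ is known to be a trivial fibration — which reduces the claim to the assertion that a Kan fibration which is a weak equivalence is a trivial fibration. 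This is the input that genuinely requires Cisinski's theory (via minimal fibrations, together with the fact that $\Box$, for any choice of connections, is a test category whose associated Cisinski model structure is the one constructed here); it is established in \cite{CisinskiAsterisque} (and, for the negative connection, in \cite{CisinskiUniverses}), which is why the theorem is credited to Cisinski. The residual checks — that $\Box^1\otimes -$ satisfies \cref{CylinderFunctor}, and the pushout--product bookkeeping above — are routine given \cref{open-box-pop} and the cubical identities.
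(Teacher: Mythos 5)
Your proof is correct, and differs from the paper's in one presentational choice: the paper applies \cref{CisinskiMS} with $S = \varnothing$, so that $\Lambda(\varnothing)$ already consists of open box inclusions (at stage $n$, precisely the $\sqcap^m_{n+1,\varepsilon} \hookrightarrow \Box^m$), giving the Grothendieck model structure as the \emph{minimal} Cisinski model structure for this cylinder and cellular model. You instead take $S$ to be the full set of open box inclusions, which forces you to verify $\overline{\Lambda(S)} = \overline{S}$; your induction on $\Lambda^n(S)$ and the pushout--product bookkeeping via \cref{open-box-pop} does this cleanly and buys nothing extra, but it is perhaps more transparent in that it makes the open box inclusions the visible generators from the start rather than recovering them after the fact. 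Either way the naive fibrations, fibrant objects, and weak equivalences come out exactly as in the statement. You also correctly isolate the genuinely nontrivial claim — that \emph{all} fibrations (not just between fibrant objects) are Kan fibrations, equivalently that the trivial cofibrations are exactly the anodyne maps — and correctly attribute it to Cisinski; the paper cites the same, specifically \cite[Thm.~1.7]{CisinskiUniverses} with the observation that the argument carries over verbatim to the box category with both connections. Your retract-argument reduction of that claim to ``Kan fibration $\wedge$ weak equivalence $\Rightarrow$ trivial fibration'' is the standard framing and is what the cited result provides.
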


\begin{proof}
The existence of the model structure and characterization of the cofibrations, weak equivalences, and fibrant objects follows from applying \cref{CisinskiMS} with the cylinder functor $I = \Box^1 \otimes -$, cellular model $M = \{\bd\Box^{n} \to \Box^{n} | n \geq 0\}$, and $S = \varnothing$. The characterization of the fibrations is given in \cite[Thm. 1.7]{CisinskiUniverses}. (This proof is for cubical sets with only negative connections, but the proof for cubical sets with both kinds of connections is identical.)
\end{proof}

\begin{proposition}\label{cSet-strict-test}
The category $\Box$ is a strict test category, and the test category model structure on $\cSet$ is the Grothendieck model structure.
\end{proposition}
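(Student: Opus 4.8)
The plan is to prove the two assertions in turn. The first — that $\Box$ is a strict test category — is at bottom a pointer to existing work. By \cref{Box-Reedy}, $\Box$ is an EZ-Reedy category, and the representable $\Box^1$, equipped with the endpoint inclusions $\partial^1_{1,0},\partial^1_{1,1}\colon\Box^0\to\Box^1$ and the projection $\sigma^1_1\colon\Box^1\to\Box^0$, is a separating interval which is aspherical (the poset $[1]$ has a terminal element, hence contractible nerve). This alone already exhibits $\Box$ as a test category. The extra structure needed for \emph{strictness} is supplied by the connection maps $\gamma^2_{1,0},\gamma^2_{1,1}\colon\Box^2\to\Box^1$, which make $\Box^1$ into an interval of the multiplicative kind appearing in Maltsiniotis's criterion for strictness: for the box category with only the negative connection this is the main result of \cite{maltsiniotis:connections-strict-test-cat} (see also \cite{CisinskiUniverses,kapulkin-lindsey-wong}), and for the box category with both connections it is \cite{campion-kapulkin-maehara}. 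In particular $\Box$ is a test category, so the test category model structure on $\cSet$ is defined, and it remains to identify it.

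For the second assertion, the key observation is that both the test category model structure and the Grothendieck model structure of \cref{Grothendieck-ms-cSet} are Cisinski model structures on $\cSet$ whose cofibrations are exactly the monomorphisms. Such a model structure is determined by its class of weak equivalences (or, alternatively, by its fibrant objects, since $\Box^1\otimes-$ is an exact cylinder and $\Box^1$ is contractible in each of these model structures), so it suffices to match up one of these invariants. Now the proof of \cref{Grothendieck-ms-cSet} exhibits the Grothendieck model structure as the \emph{minimal} Cisinski model structure attached to the exact cylinder $\Box^1\otimes-$ (cellular model the boundary inclusions $\partial\Box^n\to\Box^n$, no further generating anodyne maps), and $\Box^1\otimes-$ is precisely the cylinder $I\otimes-$ associated to the aspherical separating interval $I=\Box^1$ above. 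By Cisinski's theory (\cite[\S4.2, \S8.4]{CisinskiAsterisque}), for a test category equipped with such an interval the minimal Cisinski model structure so obtained is exactly the model structure of the test category; applying this to $\Box$ identifies the two model structures. In particular, a posteriori the fibrant objects of the test category model structure are the cubical Kan complexes and its weak equivalences are the maps inverted by triangulation.

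The content genuinely used here is imported wholesale from the literature: that $\Box$ with connections is a strict test category, and — built into the test-category machinery — that the canonical ``homotopy type'' functor on $\cSet$ is computed by the cylinder $\Box^1\otimes-$ (equivalently, by triangulation or by geometric realization). Granting these, the argument is bookkeeping: recognizing the Grothendieck model structure as the minimal Cisinski model structure for this interval. The real obstacle, were one to insist on a self-contained treatment, would be reproving strictness — equivalently, that the geometric product on $\cSet$ is homotopically well behaved with respect to the test weak equivalences — which is precisely the nontrivial content of \cite{maltsiniotis:connections-strict-test-cat} (resp. \cite{campion-kapulkin-maehara}), and which we do not redo.
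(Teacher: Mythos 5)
Correct, and this is essentially the paper's own approach: both assertions are handed off to the literature (Maltsiniotis for strictness with one connection, and Cisinski's theory for identifying the test model structure with the minimal Cisinski model structure built from the aspherical interval $\Box^1$). The paper's proof cites \cite[Thm.~3]{buchholtz-morehouse:varieties-of-cubes} rather than \cite{campion-kapulkin-maehara} for the box category with both connections, and \cite[Thm.~1.7]{CisinskiUniverses} rather than the Ast\'erisque for the model-structure coincidence, but the substance — and your unpacking of why the minimality of the Cisinski model structure associated to $\Box^1\otimes-$ gives the identification — is the same.
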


\begin{proof}
For the proof that $\Box$ is a strict test category, see \cite[Thm. 4.3]{maltsiniotis:connections-strict-test-cat} and \cite[Thm. 3]{buchholtz-morehouse:varieties-of-cubes}. That the test category model structure is the Grothendieck model structure follows from \cite[Thm. 1.7]{CisinskiUniverses}.
\end{proof}

The canonical inclusion $\Box \to \Cat$ induces the adjoint pair $\tau_1 \colon \cSet \rightleftarrows \Cat : \! \Nerve$ via hom-out and the left Kan extension.
In particular, $\Nerve(\catC)_n = \Cat([1]^n, \catC)$.
The functor $\tau_1$ takes a cubical set $X$ to its \emph{fundamental category}, which is obtained as the quotient of the free category on the graph $X_1 \rightrightarrows X_0$ modulo the relations: $\sigma_1 x = \id_x$ and $gf = qp$ for every $2$-cube
\[
\xymatrix{
  \cdot
  \ar[r]^f
  \ar[d]_p
&
   \cdot
  \ar[d]^g
\\
  \cdot
  \ar[r]^q
&
 \cdot
}
\]

\subsection{Marked cubical sets}\label{Sec1Marked}
To define marked cubical sets, we need to introduce a new category $\tBox$, a slight enlargement of $\Box$.
The category $\tBox$ consists of objects of the form $[1]^n$ for $n = 0, 1, \ldots$ and an object $[1]_e$.
The maps of $\tBox$ are generated by the usual generating maps of $\Box$ along with $\varphi \colon [1] \to [1]_e$ and $\zeta \colon [1]_e \to [1]^0$ subject to an additional identity $\zeta \varphi = \sigma^1_1$.

\begin{proposition-qed} \label{tBox-Reedy}
  The category $\tBox$ is an EZ-Reedy category with the Reedy structure defined as follows:
  \begin{itemize}
    \item $\deg ([1]^0) = 0$, $\deg [1] = 1$, $\deg ([1]_e) = 2$, and $\deg ([1]^n) = n+1$ for $n \geq 2$;
    \item $(\tBox)_+$ is generated by face maps and $\varphi$ under composition;
    \item $(\tBox)_-$ is generated by degeneracy maps, connections, and $\zeta$ under composition. \qedhere
  \end{itemize}
\end{proposition-qed}

A \emph{structurally marked cubical set} is a contravariant functor $X \colon \tBox^\op \to \Set$ and a morphism of structurally marked cubical sets is a natural transformation of such functors.
We will write $\cSet''$ for the category of structurally marked cubical sets.
When working with the category of structurally marked cubical sets, we will write $X_n$ for the value of $X$ at $[1]^n$ and $X_e$ for the value of $X$ at $[1]_e$.

Structurally marked cubical sets should be thought of as cubical sets with (possibly multiple) labels on their edges such that for each vertex $x$, the degenerate edge $x\sigma_{1}$ has, in particular, a distinguished label $x\zeta$.

A \emph{marked cubical set} is a structurally marked cubical set for which the map $X_e \to X_1$ is a monomorphism.
We write $\cSet'$ for the category of marked cubical sets.
Alternatively, we may view a marked cubical set as a pair $(X, W_X)$ consisting of a cubical set $X$ together with a subset $W_X \subseteq X_1$ of edges that includes all degenerate edges and a morphism of marked cubical sets is a map of cubical sets that preserves marked edges.

The functor taking a (structurally) marked cubical set to its underlying cubical set admits both a left and a right adjoint, given by the minimal and maximal marking respectively.
The minimal marking on a cubical set $X$, denoted $X^\flat$, marks exactly the degenerate edges, whereas the maximal marking, denoted $X^\sharp$, marks all edges of $X$.
If considered as structurally marked cubical sets, the marked edges of $X^\flat$ and $X^\sharp$ are marked exactly once.
Altogether we obtain the following adjunctions
\[
\xymatrix@C+1.5cm{
  \cSet'(')
  \ar[r]
&
  \cSet
  \ar@/^1pc/[l]^{(-)^\sharp}
  \ar@/_1pc/[l]_{(-)^\flat}
}
\]
The notation $\cSet'(')$ above indicates that the same constructions can be applied to both marked and structurally marked cubical sets.
In the context of (structurally) marked cubical sets, we regard a cubical set with its minimal marking by default, writing $X$ for $X^\flat$.

There is moreover an inclusion $\cSet' \to \cSet''$. This inclusion admits a left adjoint taking $X \in \cSet''$ to $\Im X$ given by $(\Im X)_n = X_n$ and $(\Im X)_e = \varphi^*(X_e)$, i.e., the image of $X_e$ under $\varphi^* = X(\varphi)$. The inclusion is easily seen to not have a right adjoint, since it fails to preserve the pushout of $\Box^1 \to (\Box^1)^\sharp$ against itself.

Altogether we obtain the following diagram: 
\begin{equation} \label{eq:all-functors-marking} \tag{*}
\vcenter{\xymatrix@C+1.5cm{
  \cSet''
  \ar[rdd]
  \ar@/^1pc/[rr]^{\Im}
&
&
  \cSet' 
  \ar[ldd]
  \ar@{_{(}->}[ll]
\\ 
&
&
\\
&
  \cSet
  \ar@/^1pc/[luu]^{(-)^\flat}
  \ar@/_1pc/[luu]_{(-)^\sharp}
  \ar@/^1pc/[ruu]^{(-)^\flat}
  \ar@/_1pc/[ruu]_{(-)^\sharp}
&
}}
\end{equation}

A geometric product entirely analogous to that of Section \ref{subsection:cSet-basics} exists for structurally marked cubical sets. We extend $\Box \times \Box \to \cSet$ to $\Box_\sharp \times \Box_\sharp \to \cSet''$ by taking $[1]_e \otimes [n]$ to have $\Box^{n+1}$ as the underlying cubical set with edges of the form $(0,x_2,\ldots,x_{n+1}) < (1,x_2,\ldots, x_{n+1})$ uniquely marked. Similarly, let $[n] \otimes [1]_e$ have $\Box^{n+1}$ as its underlying cubical set, and marked edges those of the form $(x_1,\ldots,x_n, 0) < (x_1,\ldots,x_n,1)$. Finally, let $[1]_e \otimes [1]_e := (\Box_2)^\sharp$. The left Kan extension yields $\otimes \colon \cSet'' \times \cSet'' \to \cSet''$.

This geometric product admits a concrete description analogous to that of \cref{geometric-product-description}.

\begin{proposition}\label{geometric-product-marked}
For $X, Y \in \cSet''$, the geometric product $X \otimes Y$ admits the following description.

\begin{itemize}
\item The underlying cubical set of $X \otimes Y$ is the geometric product of the underlying cubical sets of $X$ and $Y$.
\item $(X \otimes Y)_{e}$ is the set of all pairs of the form $(\overline{x} \colon (\Box^1)^\sharp \to X,y \colon \Box^0 \to Y)$ or $(x \colon \Box^0 \to X, \overline{y} \colon (\Box^1)^\sharp \to Y)$, subject to the identification $(x\zeta,y) = (x,y\zeta)$ for $x \colon \Box^0 \to X, y \colon \Box^0 \to Y$.
\item Structure maps not arising from those of the underlying cubical set are computed as follows:
\begin{itemize}
\item $(x,y)\zeta = (x\zeta,y) = (x,y\zeta)$;
\item $(\overline{x},y)\varphi = (\overline{x}\varphi,y)$;
\item $(x,\overline{y})\varphi = (x,\overline{y}\varphi)$.
\end{itemize}
\end{itemize}
\end{proposition}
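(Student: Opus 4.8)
The statement to prove is \cref{geometric-product-marked}, the concrete description of the geometric product for structurally marked cubical sets. The plan is to mirror the proof of \cref{geometric-product-description} almost verbatim, reducing to that result on underlying cubical sets and then analyzing only the marked-edge set $X_e$ and the new structure maps $\varphi$ and $\zeta$.

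\begin{proof}[Proof sketch]
First I would establish that for every pair $(\overline{x} \colon (\Box^1)^\sharp \to X, y \colon \Box^0 \to Y)$ and every pair $(x \colon \Box^0 \to X, \overline{y} \colon (\Box^1)^\sharp \to Y)$ the colimit cone of the left Kan extension defines an element of $(X \otimes Y)_e$, by the formulas $[1]_e \otimes [0] = (\Box^1)^\sharp$ (as underlying cubical set $\Box^1$ with its non-degenerate edge uniquely marked) and $[0] \otimes [1]_e = (\Box^1)^\sharp$ in the extended functor $\tBox \times \tBox \to \cSet''$, again using naturality of the colimit cone to read off the structure maps: $(\overline{x}, y)\varphi = (\overline{x}\varphi, y)$ comes from $\varphi \otimes \Box^0 \colon [1] \otimes [0] \to [1]_e \otimes [0]$, and similarly on the other side, while $(x,y)\zeta = (x\zeta, y) = (x, y\zeta)$ follows because both sides equal $(x,y)\zeta$ (using $\zeta \otimes [0] = [0] \otimes \zeta$ up to the identification forced by $\zeta\varphi = \sigma_1$). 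That the underlying cubical set of $X \otimes Y$ is the geometric product of the underlying cubical sets is immediate, since the underlying-cubical-set functor $\cSet'' \to \cSet$ is a left adjoint (it preserves colimits) and the extended geometric product restricts to the one of \cref{subsection:cSet-basics} on the $[1]^n$'s.

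Next I would show every element of $(X \otimes Y)_e$ arises from such a pair: by \cref{colim-factor}, any map $\tBox(-, [1]_e) \to X \otimes Y$ factors through a colimit-cone component, i.e.\ through $\overline{x} \otimes y$ or $x \otimes \overline{y}$ or $\overline{x} \otimes \overline{y}$ for representables, followed by a map out of $[1]_e \otimes [1]^n$, $[1]^n \otimes [1]_e$, or $(\Box_2)^\sharp$; since the only map in $\tBox$ into $[1]_e$ from one of these apart from identities factors through $\varphi$ or involves $\zeta$, and the set of pair-cubes is closed under all the structure maps (as just checked), every marked edge is of the claimed form.

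Finally I would verify there are no further identifications beyond $(x\zeta, y) = (x, y\zeta)$. Here one applies the projections $\pi_X \colon X \otimes Y \to X$ and $\pi_Y$: since $\pi_X$ on underlying cubical sets is the one of \cref{geometric-product-description}, and on marked edges sends $(\overline{x}, y) \mapsto \overline{x}$ (a marked edge of $X$) and $(x, \overline{y}) \mapsto x\zeta$ (the distinguished marking of the degenerate edge), applying $\pi_X$ and $\pi_Y$ to a supposed coincidence of two pair-edges pins down both components up to the relation $\sim$ generated by $(x'\zeta, y') \sim (x', y'\zeta)$, exactly as in the unmarked argument. I expect the main obstacle to be bookkeeping around the identity $\zeta\varphi = \sigma^1_1$: one must check the extended functor $\tBox \times \tBox \to \cSet''$ is well-defined (respects this relation on both factors and the compatibility $[1]_e \otimes [1]_e = (\Box_2)^\sharp$ is consistent with $[1]_e \otimes [1]$ and $[1] \otimes [1]_e$), and that the marking on $(\Box_2)^\sharp$ is forced — i.e.\ that marking only the ``axis-aligned'' edges on each factor really does produce the fully marked square when both factors carry $[1]_e$, which is where the pushout-of-$\Box^1 \to (\Box^1)^\sharp$ behavior noted just before \eqref{eq:all-functors-marking} comes in. Everything else is a routine transcription of the proof of \cref{geometric-product-description}.
\end{proof}
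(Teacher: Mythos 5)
Your plan is essentially the proof the paper gives: establish that colimit-cone components give the pairs, read off structure maps by naturality, use \cref{colim-factor} to show every $[1]_e$-cell factors through such a component via a case analysis on the middle representable, and apply the projections $\pi_X, \pi_Y$ to rule out further identifications. The one place you diverge is the first bullet, where you invoke that the forgetful functor $\cSet'' \to \cSet$ preserves colimits (true) and that the extended product agrees with the unmarked one on $[1]^n$'s (also true). That argument isn't quite finished as stated: the colimit computing $U(X \otimes Y)$ is indexed by the category of $\Box_\sharp$-elements of $X$ and $Y$, while the colimit computing $UX \otimes UY$ is indexed by $\Box$-elements of $UX$ and $UY$; you still need the comparison functor (collapsing $[1]_e$-cells onto their underlying $[1]$-cells via $\varphi^*$) to be final. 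The paper sidesteps this entirely by just rerunning the direct analysis of maps $\Box^k \to X \otimes Y$ from \cref{geometric-product-description}, which is the simpler route and the one you'd fall back on anyway given your "mirror verbatim" framing. Your concern about $\zeta\varphi = \sigma_1$ and the consistency of $[1]_e \otimes [1]_e = (\Box^2)^\sharp$ is a reasonable sanity check on the \emph{definition} of the extended product, but it belongs to the setup preceding the proposition rather than its proof; the paper treats the well-definedness of $\otimes$ on $\tBox \times \tBox$ as given.
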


\begin{proof} 
To compute the underlying cubical set of $X \otimes Y$, we analyze maps $\Box^{k} \to X \otimes Y$ exactly as in the proof of \cref{geometric-product-description}.

Now we consider maps $(\Box^1)^\sharp \to X \otimes Y$. First observe that for every pair of maps $\overline{x} \colon (\Box^1)^\sharp \to X, y \colon \Box^0 \to Y$ we have a map $(x,y) \colon (\Box^1)^\sharp \cong (\Box^1)^\sharp \otimes \Box^0 \to X \otimes Y$ in the colimit cone, and the same holds for $x \colon \Box^0 \to X, \overline{y} \colon (\Box^1)^\sharp \to Y$. Once again, the stated computations of structure maps follow from the naturality of the colimit cone.

Now we will show that every map $p \colon (\Box^1)^\sharp \to X \otimes Y$ has the form described above. By \cref{colim-factor}, for every such map we have a commuting diagram

\centerline{
\xymatrix{
(\Box^{1})^\sharp \ar[r]^-{\psi} \ar[dr]_{p} & \Box^m_{(e)} \otimes \Box^n_{(e)} \ar[d] \\
& X \otimes Y \\
}
}

where $\Box^m_{(e)}, \Box^{n}_{(e)}$ denote representable presheaves, and the map $\Box^m_{(e)} \otimes \Box^n_{(e)} \to X \otimes Y$ is part of the colimit cone. We proceed by case analysis on $\Box^m_{(e)}$ and $\Box^{n}_{(e)}$.

First note that if $\psi$ factors through $\zeta$, then $p = (x,y)\zeta$ for some $x \colon \Box^0 \to X, y \colon \Box^0 \to Y$. This takes care of the case $\Box^m_{(e)} = \Box^m, \Box^n_{(e)} = \Box^n$, since any map from $(\Box^1)^\sharp$ into these objects factors through $\zeta$. 

Now assume $\psi$ does not factor through $\zeta$, implying that at least one of $\Box^m_{(e)}, \Box^n_{(e)}$ is $(\Box^1)^\sharp$; then $\Box^m_{(e)} \otimes \Box^n_{(e)}$ is either $\Box^m \otimes (\Box^{1})^\sharp$, $(\Box^1)^\sharp \otimes \Box^n$, or $(\Box^2)^\sharp$. Since every map $(\Box^1)^\sharp \to (\Box^2)^\sharp$ factors through either $\Box^1 \otimes (\Box^1)^\sharp$ or $(\Box^1)^\sharp \otimes \Box^1$, we need only consider the first two cases. If $\Box^m_{(e)} \otimes \Box^n_{(e)} = \Box^m \otimes (\Box^1)^\sharp$, then $\psi$ picks out the unique marking on an edge of the form $(x_1,\ldots,x_m, 0) < (x_1,\ldots,x_m,1)$. In other words, $\psi$ factors through the map $(x_1,\ldots,x_m) \otimes (\Box^1)^\sharp \colon \Box^0 \otimes (\Box^1)^\sharp \to \Box^m \otimes (\Box^1)^\sharp$. 

Thus we have reduced the problem to the case $\Box^m_{(e)} = \Box^0, \Box^n_{(e)} = (\Box^1)^\sharp$, implying $\Box^m_{(e)} \otimes \Box^n_{(e)} = \Box^1_{\sharp}$. Since the only endomorphism of $(\Box^1)^\sharp$ which does not factor through $\zeta$ is the identity, this implies that $p = (x,\overline{y})$ for some $x \colon \Box^0 \to X, \overline{y} \colon (\Box^1)^\sharp \to Y$. In the case $\Box^m_{(e)} = (\Box^1)^\sharp, \Box^n_{(e)} = \Box^n$, a similar analysis shows $p = (\overline{x},y)$ for some $\overline{x} \colon (\Box^1)^\sharp \to X, y \colon \Box^0 \to Y$.

To show that the elements of $(X \otimes Y)_{e}$ are subject to no further identifications, consider two pairs $(\overline{x},y), (\overline{x}',y')$ which are identified in $(X \otimes Y)_{e}$. Considering the image of the cube corresponding to these pairs under the projections $\pi_{X}$, $\pi_{Y}$, we see that $\overline{x} = \overline{x}', y = y'$. A similar proof holds for identified pairs of the form $(x,\overline{y}), (x',\overline{y}')$. Finally, if $(\overline{x},y) = (x,\overline{y})$, then applying the projections shows $\overline{x} = x\zeta, \overline{y} = y\zeta$.
\end{proof}

\begin{corollary}\label{geometric-product-marked-pushout}
For $X, Y \in \cSet''$, $(X \otimes Y)_e \cong (X_e \times Y_0) \cup_{(X_0 \times Y_0)} (X_0 \times Y_e)$. \qed
\end{corollary}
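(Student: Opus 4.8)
The plan is to read the statement off directly from \cref{geometric-product-marked}. First recall that, by the Yoneda lemma, maps $(\Box^1)^\sharp \to X$ in $\cSet''$ are in natural bijection with the elements of $X_e$; here $(\Box^1)^\sharp$ is the representable structurally marked cubical set on the object $[1]_e$, whose underlying cubical set is $\Box^1$ with its unique edge marked. Likewise maps $\Box^0 \to X$ correspond to the elements of $X_0$, and under these identifications the operator $X_0 \to X_e$, $x \mapsto x\zeta$, sends a vertex to its distinguished degenerate marked edge. The same applies to $Y$.

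Now \cref{geometric-product-marked} exhibits $(X \otimes Y)_e$ as the set of pairs $(\overline x, y)$ with $\overline x \colon (\Box^1)^\sharp \to X$ and $y \colon \Box^0 \to Y$, together with the pairs $(x, \overline y)$ with $x \colon \Box^0 \to X$ and $\overline y \colon (\Box^1)^\sharp \to Y$, subject precisely to the identifications $(x\zeta, y) = (x, y\zeta)$ for $x \in X_0$, $y \in Y_0$ (the proof of that proposition verifies that there are no further identifications). Translating through the Yoneda bijections just described, this presents $(X \otimes Y)_e$ as the quotient of $(X_e \times Y_0) \sqcup (X_0 \times Y_e)$ obtained by gluing along the two maps $X_0 \times Y_0 \to X_e \times Y_0$, $(x,y) \mapsto (x\zeta, y)$, and $X_0 \times Y_0 \to X_0 \times Y_e$, $(x,y) \mapsto (x, y\zeta)$. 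That quotient is by definition the pushout $(X_e \times Y_0) \cup_{(X_0 \times Y_0)} (X_0 \times Y_e)$, which is the claim. (If one wishes to record that this is a pushout of monomorphisms, note that $x \mapsto x\zeta$ is injective, since post-composing with $e \mapsto e\varphi$ yields $x \mapsto x\sigma_1$, which is injective.)

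The argument has essentially no obstacle: all the substance is contained in \cref{geometric-product-marked}. The only point meriting a moment's attention is matching the two legs of the cospan correctly --- that is, confirming that the pushout implicit in the notation $\cup_{(X_0 \times Y_0)}$ is formed along $(x,y) \mapsto (x\zeta, y)$ and $(x,y) \mapsto (x, y\zeta)$ rather than along some other pair of maps --- and this is immediate from the shape of the identification supplied by \cref{geometric-product-marked}.
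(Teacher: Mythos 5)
Your argument is correct and is exactly the one the paper intends: the corollary is stated with no written proof precisely because it is an immediate re-reading of \cref{geometric-product-marked} through the Yoneda identifications you spell out. Nothing to add.
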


What \cref{geometric-product-marked} shows is that for $x \colon \Box^1 \to X, y \colon \Box^0 \to Y$, the set of markings on the edge $(x,y)$ in $X \otimes Y$ is simply the set of markings on $x$ in $X$, that the analogous result holds for $x \colon \Box^0 \to X, y \colon \Box^1 \to Y$, and that for a pair of vertices $x,y$ the distinguished marking $(x,y)\zeta$ is identified with both $(x\zeta,y)$ and $(x,y\zeta)$.

\begin{proposition}
The geometric product on $\cSet''$ restricts to a monoidal product on $\cSet'$. \qed
\end{proposition}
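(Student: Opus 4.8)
The plan is to reduce everything to the single assertion that the geometric product of two marked cubical sets is again marked; the rest of the monoidal data is then formal. Indeed, the unit $\Box^0$ is already a marked cubical set, and the inclusion $\cSet' \hookrightarrow \cSet''$ is full and faithful, a morphism of marked cubical sets being precisely a natural transformation of the underlying presheaves on $\tBox$. Hence, once $\cSet'$ is known to be closed under $\otimes$, the associativity and unit isomorphisms of $\cSet''$ have components that are isomorphisms of $\cSet''$ between objects of $\cSet'$, so they lie in $\cSet'$, and the pentagon and triangle identities are inherited verbatim. Thus the entire content of the proposition is the closure claim.

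To prove closure, I would show that if $\varphi^* \colon X_e \to X_1$ and $\varphi^* \colon Y_e \to Y_1$ are monic, then so is $\varphi^* \colon (X \otimes Y)_e \to (X \otimes Y)_1$. By \cref{geometric-product-marked-pushout}, $(X \otimes Y)_e \cong (X_e \times Y_0) \cup_{X_0 \times Y_0} (X_0 \times Y_e)$, and by the (unnumbered) corollary following \cref{geometric-product-description}, $(X \otimes Y)_1 \cong (X_1 \times Y_0) \cup_{X_0 \times Y_0} (X_0 \times Y_1)$. Unwinding the structure-map formulas of \cref{geometric-product-marked} and \cref{geometric-product-description}, one checks that under these identifications $\varphi^*$ becomes the canonical comparison map between the two pushouts, induced by the map of spans which is $\mathrm{id}_{X_0 \times Y_0}$ on apices and is $\varphi^* \times \mathrm{id}_{Y_0}$ and $\mathrm{id}_{X_0} \times \varphi^*$ on the remaining objects. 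All four legs of the two spans, namely $\zeta^* \times \mathrm{id}$, $\mathrm{id} \times \zeta^*$ and $(\sigma^1_1)^* \times \mathrm{id}$, $\mathrm{id} \times (\sigma^1_1)^*$, are monomorphisms: $(\sigma^1_1)^*$ is split by $(\partial^1_{1,0})^*$, and $\zeta^*$ is monic because $\varphi^* \zeta^* = (\sigma^1_1)^*$ by the identity $\zeta\varphi = \sigma^1_1$.

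It then suffices to invoke the elementary fact that in $\Set$ the comparison map between the pushouts of two spans of monomorphisms, induced by a map of spans which is an isomorphism on apices and injective on the other two components, is itself injective — a pushout of a span of injections of sets being, up to isomorphism, the union of the two sets along their common subset, with injective legs whose images meet exactly in that subset, so that the comparison map embeds each leg into the corresponding one while keeping the images of the complementary pieces disjoint. (Alternatively, one can bypass these corollaries and check injectivity of $\varphi^*$ on $X \otimes Y$ directly from the explicit descriptions in \cref{geometric-product-marked} and \cref{geometric-product-description}, by case analysis on whether each of two identified marked edges lies in $X_e \times Y_0$ or $X_0 \times Y_e$; the only non-trivial case is the mixed one, resolved exactly by the gluing relation $(x\zeta, y) = (x, y\zeta)$.) The one step that needs care — and which I expect to be the main, though still routine, obstacle — is matching $\varphi^*_{X \otimes Y}$ with the pushout comparison map, in particular verifying that the formulas $(\bar x, y)\varphi = (\bar x \varphi, y)$ and $(x, \bar y)\varphi = (x, \bar y\varphi)$ agree on the overlap, where both give $(x\sigma_1, y) = (x, y\sigma_1)$; everything after that is a standard appeal to stability of monomorphisms under pushout in $\Set$.
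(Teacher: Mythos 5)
Correct — the paper asserts this without proof, and your argument is the natural route: once you note that $\cSet' \hookrightarrow \cSet''$ is full and that $\Box^0$ is marked, everything reduces to closure of $\cSet'$ under $\otimes$, which you correctly cast (via \cref{geometric-product-marked-pushout} and the corollary to \cref{geometric-product-description}) as monicity of the comparison map between two pushouts of spans of monomorphisms sharing the apex $X_0 \times Y_0$. Your verification that all four legs are monic (with $(\sigma^1_1)^*$ split and $\zeta^*$ monic because $\varphi^*\zeta^* = (\sigma^1_1)^*$) and the concluding set-theoretic fact about injectivity of such pushout comparison maps are both sound.
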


\begin{corollary}\label{flat-monoidal}
  All functors in the diagram (\ref{eq:all-functors-marking}) are monoidal. \qed
\end{corollary}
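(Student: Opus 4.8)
The plan is to treat the functors in the diagram \eqref{eq:all-functors-marking} in three groups, exhibiting in each case a strict (or strong) monoidal structure read off directly from the explicit description of the geometric product in \cref{geometric-product-marked}. Since a composite of monoidal functors is monoidal, it suffices to handle the two forgetful functors to $\cSet$, the inclusion $\cSet' \hookrightarrow \cSet''$, the functor $\Im$, and the two markings $(-)^\flat, (-)^\sharp \colon \cSet \to \cSet'$; the markings into $\cSet''$ are then the composites of these with the monoidal inclusion. In every case the structure maps will turn out to be identities (or the canonical isomorphisms coming from the colimit descriptions), so the associativity and unit coherence axioms will be automatic, and I will not spell them out.

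First I would dispatch the easy cases. By \cref{geometric-product-marked} the underlying cubical set of a geometric product computed in $\cSet''$ is the geometric product of the underlying cubical sets, and the monoidal unit of $\cSet''$ has underlying cubical set $\Box^0$; hence the forgetful functor $\cSet'' \to \cSet$, and its restriction $\cSet' \to \cSet$, are strict monoidal. The inclusion $\cSet' \hookrightarrow \cSet''$ is strict monoidal because, as recorded just before this corollary, the geometric product on $\cSet''$ restricts to $\cSet'$.

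Next I would handle $\Im \colon \cSet'' \to \cSet'$, which I expect to be the one genuine point of the argument. Since $\Im X$ has the same underlying cubical set as $X$, the underlying cubical sets of $\Im(X \otimes Y)$ and $\Im X \otimes \Im Y$ agree, so only the marked edges need comparison. Here I would combine \cref{geometric-product-marked-pushout}, which gives $(X \otimes Y)_e \cong (X_e \times Y_0) \cup_{X_0 \times Y_0} (X_0 \times Y_e)$, with the formulas $(\overline{x},y)\varphi = (\overline{x}\varphi, y)$, $(x,\overline{y})\varphi = (x, \overline{y}\varphi)$ and $(x,y)\zeta = (x\zeta,y) = (x,y\zeta)$ from \cref{geometric-product-marked}, to see that the image $\varphi^*\bigl((X \otimes Y)_e\bigr)$ is $(\varphi^* X_e \times Y_0) \cup_{X_0 \times Y_0} (X_0 \times \varphi^* Y_e) = (\Im X \otimes \Im Y)_e$; and $\Im$ of the unit marks the unique (degenerate) edge of $\Box^0$, hence is the unit of $\cSet'$. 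This is the only functor in the diagram that actually alters the marking data rather than transporting it, which is why it requires a (still short) computation.

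Finally, for $(-)^\flat$ and $(-)^\sharp \colon \cSet \to \cSet'$ the underlying cubical sets again match, so only markings need comparison. By \cref{geometric-product-marked} the marked edges of $X^\flat \otimes Y^\flat$ form the set $(\{\text{degenerate edges of } X\} \times Y_0) \cup_{X_0 \times Y_0} (X_0 \times \{\text{degenerate edges of } Y\})$, and a short check with the degeneracy formula of \cref{geometric-product-description} shows that a $1$-cube $(x,y)$ of $X \otimes Y$ is degenerate precisely when it lies in this set --- which is therefore the marked-edge set of $(X \otimes Y)^\flat$. For $(-)^\sharp$, \cref{geometric-product-marked} gives the marked edges of $X^\sharp \otimes Y^\sharp$ as $(X_1 \times Y_0) \cup_{X_0 \times Y_0} (X_0 \times Y_1)$, which equals $(X \otimes Y)_1$ by the description of the edges of a geometric product, i.e.\ the marked-edge set of $(X \otimes Y)^\sharp$; in both cases the point has a unique (degenerate) edge, so the unit is preserved. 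Assembling the three groups and composing with the monoidal inclusion, all functors in the diagram come out monoidal.
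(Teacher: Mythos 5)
Your proposal is correct and takes the approach the paper intends: the corollary is asserted with a bare \qed, and everything you write out is exactly the routine verification, read off from the explicit description of the marked geometric product in \cref{geometric-product-marked} and \cref{geometric-product-marked-pushout}, that the authors are silently treating as immediate. The observations that $\Im$ is the one functor genuinely altering marking data and that the two markings into $\cSet''$ factor as markings into $\cSet'$ followed by the monoidal inclusion are precisely the points worth noting, and both of your computations there are correct.
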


As in the case of cubical sets, given a marked cubical set $A$, we form two non-isomorphic functors $\cSet'(') \to \cSet'(')$: the left tensor $- \otimes A$ and the right tensor $A \otimes -$.
As they are both co-continuous, they admit right adjoints and we write $\ihom_L(A, -)$ for the right adjoint of the left tensor $- \otimes A$ and $\ihom_R(A, -)$ for the right adjoint of the right tensor $A \otimes -$.

Observe that we may extend the functors $(-)^\rmco, (-)^\coop, (-)^\op \colon \Box \to \Box$ of Section \ref{subsection:cSet-basics} to obtain involutions $(-)^\rmco, (-)^\coop, (-)^\op \colon \Box_{\sharp} \to \Box_{\sharp}$, by having these functors act as the identity on the object $[1]_{e}$ and the maps $\phi$ and $\zeta$. By left Kan extension we obtain involutions $(-)^\rmco, (-)^\coop \colon \cSet'' \to \cSet''$, which restrict to involutions of $\cSet'$. Given $X' \in \cSet'$ with underlying cubical set $X$, the underlying cubical set of $(X')^\rmco$ is $X^\rmco$, with an edge $x^\rmco \colon \Box^1 \to (X')^\rmco$ marked if and only if $x$ is marked in $X'$, and similarly for $(X')^\coop$.

\begin{proposition}[{\cite[Prop. 1.17]{campion-kapulkin-maehara}}]\label{op-co-monoidal}
The endofunctors $(-)^\rmco, (-)^\coop, (-)^\op$ on $\cSet, \cSet'$, and $\cSet''$ interact with the geometric product as follows:

\begin{itemize}
\item The functor $(-)^\rmco$ is strong anti-monoidal, i.e. $(X \otimes Y)^\rmco \cong Y^\rmco \otimes X^\rmco$;
\item The functor $(-)^\coop$ is strong monoidal, i.e. $(X \otimes Y)^\coop \cong X^\coop \otimes Y^\coop$;
\item The functor $(-)^\op$ is strong anti-monoidal, i.e. $(X \otimes Y)^\op \cong Y^\op \otimes X^\op$. \qed
\end{itemize}
\end{proposition}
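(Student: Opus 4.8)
The plan is to reduce every one of the claimed natural isomorphisms, in each of the three categories, to a finite computation at the level of generating data. The key point is that $(-)^\rmco$, $(-)^\coop$, $(-)^\op$ are defined by left Kan extension, hence are left adjoints and in particular cocontinuous, and that the geometric products on $\cSet$ and on $\cSet''$ are defined by left Kan extension along (a product of) Yoneda embeddings, hence are cocontinuous in each variable. Consequently both sides of, say, $(X \otimes Y)^\coop \cong X^\coop \otimes Y^\coop$, viewed as functors $\cSet \times \cSet \to \cSet$ (resp.\ $\cSet'' \times \cSet'' \to \cSet''$), are cocontinuous separately in $X$ and in $Y$. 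Such functors correspond to functors $\Box \times \Box \to \cSet$ (resp.\ $\tBox \times \tBox \to \cSet''$) by restriction along the Yoneda embedding, and a natural transformation between two of them is freely and uniquely determined by its components on pairs of generating objects together with naturality on the generating morphisms. So I would first record this reduction, after which it suffices to work with representables and the generators $\varphi$, $\zeta$.

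Next I would note that all three involutions act as the identity on objects of $\Box$, and (in the marked case) on $[1]_e$ and on the maps $\varphi$, $\zeta$; hence $(\Box^k)^\rmco = \Box^k = (\Box^k)^\coop = (\Box^k)^\op$ and $((\Box^1)^\sharp)^\rmco = (\Box^1)^\sharp$, etc. Combined with $\Box^m \otimes \Box^n = \Box^{m+n}$ and the descriptions of $[1]_e \otimes [1]^n$, $[1]^n \otimes [1]_e$, and $[1]_e \otimes [1]_e = (\Box^2)^\sharp$, the underlying objects of the two sides of each isomorphism agree on generators --- for $(-)^\rmco$ and $(-)^\op$ after identifying $m+n = n+m$ and swapping tensor factors --- and the candidate isomorphism is the identity on each generating object. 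What remains is naturality in the generating morphisms. Here I would use \cref{geometric-product-description} and \cref{geometric-product-marked} to rewrite the action of a generating cubical operator on a single tensor factor as one generating operator on the product (for instance $\partial^m_{i,\varepsilon} \otimes \Box^n = \partial^{m+n}_{i,\varepsilon}$ and $\Box^m \otimes \partial^n_{i,\varepsilon} = \partial^{m+n}_{m+i,\varepsilon}$, and similarly for degeneracies, connections, $\varphi$, $\zeta$), then apply the explicit formulas for $(-)^\coop$ and $(-)^\rmco$ on generators and compare. For $(-)^\coop$ this is immediate, since degeneracies are fixed and the $\varepsilon$-flip commutes with block decomposition; the content is entirely in the statement that $(-)^\coop$ is a strong monoidal endofunctor of $(\Box, +, [1]^0)$. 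For $(-)^\rmco$ one checks that the coordinate reversal $i \mapsto k - i + 1$, with the shifted variant $i \mapsto (n-1) - i + 1$ for connections, is precisely what converts the ``left factor acted on'' case into the ``right factor acted on'' case after swapping the two blocks $\{1,\dots,m\}$ and $\{m+1,\dots,m+n\}$ --- i.e.\ that $(-)^\rmco$ is strong anti-monoidal for $+$. I would also verify that the candidate isomorphism respects the identifications $(x\sigma_{m+1},y) = (x, y\sigma_1)$ in $\cSet$ and $(x\zeta, y) = (x, y\zeta)$ in $\cSet''$, which is again a one-line index check. Finally, $(-)^\op = (-)^\rmco \circ (-)^\coop$, so composing the strong-monoidal isomorphism with the strong-anti-monoidal one yields the third statement.

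For the passage between the three categories: the case of $\cSet'$ follows from that of $\cSet''$, since the geometric product and all three involutions on $\cSet'$ are the restrictions of those on $\cSet''$ along the fully faithful inclusion $\cSet' \hookrightarrow \cSet''$ and the isomorphisms constructed above have components lying in its image; the case of $\cSet$ follows similarly using that $(-)^\flat \colon \cSet \to \cSet''$ is monoidal by \cref{flat-monoidal} and commutes with $(-)^\rmco$, $(-)^\coop$, $(-)^\op$.

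The main obstacle is purely bookkeeping in the anti-monoidal cases: tracking how the coordinate-reversing relabelling of $(-)^\rmco$ interacts with the splitting of $\{1,\dots,m+n\}$ into its two blocks, getting the off-by-one in the connection indices right, and matching the marked generators $[1]_e \otimes [1]^n$ against $[1]^n \otimes [1]_e$ after the swap. None of this is conceptually hard, but it is where all the care lies --- it is exactly the flavour of cubical-identity computation relegated to the appendix elsewhere in the paper.
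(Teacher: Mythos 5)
The paper does not prove this proposition; it cites it to \cite[Prop.~1.17]{campion-kapulkin-maehara} and marks it with a \texttt{\textbackslash qed}, so there is no in-paper argument to compare against. Your proposal is a correct direct verification, and it is the natural one: reduce by cocontinuity (the involutions are left Kan extensions of their namesakes on $\Box$ resp.\ $\tBox$, and $\otimes$ is cocontinuous in each variable) to representables and generating morphisms, observe that the candidate isomorphism is the identity on each representable since the involutions fix objects, check naturality against the block-decomposition formulas of \cref{geometric-product-description} and \cref{geometric-product-marked} together with the explicit index formulas for $(-)^\rmco$ and $(-)^\coop$, obtain $(-)^\op$ by composition, and restrict along the full inclusion $\cSet' \hookrightarrow \cSet''$ and the fully faithful $(-)^\flat$. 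Two small remarks: you can handle $\cSet$ directly by the same generator argument rather than routing through $(-)^\flat$, since the involutions and $\otimes$ on $\cSet$ are defined by exactly the same Kan-extension recipe; and strictly speaking ``strong (anti-)monoidal'' asks for the coherence axioms with the associator and unitor, but since your isomorphism is literally the identity map on every representable these hold trivially, so nothing is missing.
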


Using \cref{op-co-monoidal} and the adjunctions $(-)^\rmco \adjoint (-)^\rmco, (-)^\coop \adjoint (-)^\coop$, we obtain:

\begin{corollary}\label{op-co-hom}
For $X, Y$ in $\cSet$, $\cSet'$, or $\cSet''$, we have isomorphisms, natural in $X$ and $Y$:

\begin{itemize}
\item $\ihom_{L}(X,Y)^\rmco \cong \ihom_{R}(X^\rmco,Y^\rmco)$, $\ihom_{R}(X,Y)^\rmco \cong \ihom_{L}(X^\rmco,Y^\rmco)$;
\item $\ihom_{L}(X,Y)^\coop \cong \ihom_{L}(X^\coop,Y^\coop)$, $\ihom_{R}(X,Y)^\coop \cong \ihom_{R}(X^\coop,Y^\coop)$;
\item $\ihom_{L}(X,Y)^\op \cong \ihom_{R}(X^\op,Y^\op)$, $\ihom_{R}(X,Y)^\op \cong \ihom_{L}(X^\op,Y^\op)$. \qed
\end{itemize}
\end{corollary}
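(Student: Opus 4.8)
The plan is to prove all six isomorphisms uniformly by a Yoneda-style adjunction chase, exploiting that each of $(-)^\rmco$, $(-)^\coop$, $(-)^\op$ is an equivalence of categories which is its own adjoint. It suffices to produce, for an auxiliary object $Z$, a bijection of hom-sets natural in $Z$ (and in $X$ and $Y$); the Yoneda lemma then upgrades it to the claimed natural isomorphism of objects.

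I would carry out the model computation for $\ihom_L(X,Y)^\rmco$. For $Z$ in the ambient category,
\[
\begin{aligned}
\cSet\bigl(Z,\,\ihom_L(X,Y)^\rmco\bigr)
&\cong \cSet\bigl(Z^\rmco,\,\ihom_L(X,Y)\bigr)
&&\text{(as $(-)^\rmco \adjoint (-)^\rmco$)} \\
&\cong \cSet\bigl(Z^\rmco \otimes X,\,Y\bigr)
&&\text{(definition of $\ihom_L$)} \\
&\cong \cSet\bigl((Z^\rmco \otimes X)^\rmco,\,Y^\rmco\bigr)
&&\text{($(-)^\rmco$ is an equivalence)} \\
&\cong \cSet\bigl(X^\rmco \otimes Z,\,Y^\rmco\bigr)
&&\text{(\cref{op-co-monoidal})} \\
&\cong \cSet\bigl(Z,\,\ihom_R(X^\rmco, Y^\rmco)\bigr)
&&\text{(definition of $\ihom_R$)}
\end{aligned}
\]
Each step is natural in $Z$, $X$, $Y$, so composing and applying the Yoneda lemma gives $\ihom_L(X,Y)^\rmco \cong \ihom_R(X^\rmco, Y^\rmco)$; substituting $X^\rmco, Y^\rmco$ for $X, Y$ and using that $(-)^\rmco$ is an involution yields the companion identity $\ihom_R(X,Y)^\rmco \cong \ihom_L(X^\rmco, Y^\rmco)$. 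The $(-)^\coop$ case differs only in the fourth step: since $(-)^\coop$ is strong monoidal, $(Z^\coop \otimes X)^\coop \cong Z \otimes X^\coop$ is still a \emph{left} tensor, so one lands in $\ihom_L(X^\coop, Y^\coop)$ rather than $\ihom_R$. The $(-)^\op$ identities then follow either by the same chase applied to the anti-monoidal $(-)^\op$, or simply by composing the $\rmco$ and $\coop$ isomorphisms, since $(-)^\op$ is the composite of the two commuting involutions.

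There is no real obstacle here: the argument is a formal adjunction chase. The only points needing care are bookkeeping ones — keeping straight which internal hom is right adjoint to which of the two (non-symmetric) tensors, and observing that it is precisely the order-reversal in \cref{op-co-monoidal} for the anti-monoidal functors $(-)^\rmco$ and $(-)^\op$ that interchanges $\ihom_L$ with $\ihom_R$, while the strong monoidal $(-)^\coop$ preserves the handedness. One should also note that every ingredient — the closed non-symmetric monoidal structure, the involutions on the category, and \cref{op-co-monoidal} — is available verbatim in $\cSet$, $\cSet'$, and $\cSet''$, so all six statements hold in all three settings at once.
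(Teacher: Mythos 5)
Your proof is correct and takes essentially the same route the paper has in mind: the paper's \cref{op-co-hom} is left as an immediate consequence of \cref{op-co-monoidal} together with the self-adjunctions of the involutions, and your adjunction chase simply makes that one-line argument explicit. The bookkeeping observation that anti-monoidality swaps $\ihom_L$ and $\ihom_R$ while strong monoidality preserves them is exactly the right point to flag.
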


Finally, we relate the adjunction $T \adjoint U$ to the involutions $(-)^\rmco$, $(-)^\coop$, and $(-)^\op$ of $\cSet$ and the involution $(-)^\op$ of $\sSet$.

\begin{proposition}
We have the following natural isomorphisms in $\sSet$ and $\cSet$:
\begin{multicols}{2}
\begin{enumerate}
\item\label{T-co} $T \circ (-)^\rmco \cong T$;
\item\label{T-coop} $T \circ (-)^\coop \cong (-)^\op \circ T$;
\item $T \circ (-)^\op \cong (-)^\op \circ T$;
\item $(-)^\rmco \circ U \cong U$;
\item $(-)^\coop \circ U \cong U \circ (-)^\op$;
\item $(-)^\op \circ U \cong U \circ (-)^\op$.
\end{enumerate}
\end{multicols}
\end{proposition}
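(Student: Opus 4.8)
The plan is to split the six isomorphisms into two groups. The first three, \cref{T-co}, \cref{T-coop} and the third, all compare cocontinuous functors $\cSet \to \sSet$: the functor $T$ is cocontinuous by construction, the involutions $(-)^\rmco, (-)^\coop$ of $\cSet$ are cocontinuous because an involution is self-adjoint, hence in particular a left adjoint (cf.\ \cref{op-co-involution}), and $(-)^\op$ on $\sSet$ is cocontinuous as a reindexing functor of presheaves; so all composites appearing preserve colimits. Since $\cSet = \pshf{\Box}$, a cocontinuous functor on $\cSet$ is the left Kan extension along the Yoneda embedding $y$ of its restriction $F \circ y$ to representables, so for cocontinuous $F$ and $G$ a natural isomorphism $F \circ y \cong G \circ y$ of functors $\Box \to \sSet$ induces one $F \cong G$. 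Thus it suffices to exhibit the first three isomorphisms after precomposing with $y$. There $T \circ y$ is the nerve functor $[1]^n \mapsto \mathsf{N}([1]^n) = (\Delta^1)^n$, while $(-)^\rmco \circ y$ and $(-)^\coop \circ y$ send $[1]^n$ to itself and a generating map $f$ of $\Box$ to $f^\rmco$, respectively $f^\coop$.

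The geometric input is that, viewing objects of $\Box$ as posets, the reindexings $(-)^\rmco$ and $(-)^\coop$ of morphisms are conjugation by coordinate symmetries of the cube. Let $\rho_n \colon [1]^n \to [1]^n$ be the coordinate-reversing isomorphism $(x_1, \dots, x_n) \mapsto (x_n, \dots, x_1)$ and $\kappa_n \colon [1]^n \to [1]^n$ the coordinate-complementing order-reversing bijection $(x_1, \dots, x_n) \mapsto (1 - x_1, \dots, 1 - x_n)$. A direct check on the generating faces, degeneracies, and connections, using the coordinate formulas of \cref{subsection:cSet-basics}, shows that for every $f \colon [1]^m \to [1]^n$ in $\Box$ one has
\[ f^\rmco = \rho_n \circ f \circ \rho_m \qquad\text{and}\qquad f^\coop = \kappa_n \circ f \circ \kappa_m \]
as maps of posets. (Note that $\rho_n, \kappa_n$ are not themselves morphisms of $\Box$ for $n \geq 2$, so these identities live in $\mathsf{Pos}$, not in $\Box$; only their nerves enter the simplicial statements.) Applying $\mathsf{N}$, the isomorphisms $\mathsf{N}(\rho_n) \colon (\Delta^1)^n \to (\Delta^1)^n$ assemble — by the first identity and $\rho_m^2 = \mathrm{id}$ — into a natural isomorphism $(T \circ (-)^\rmco) \circ y \cong T \circ y$, which gives \cref{T-co} by cocontinuity. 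For \cref{T-coop}, since $\kappa_n$ is order-reversing it is an isomorphism of posets $[1]^n \xrightarrow{\sim} ([1]^n)^\op$, so composing $\mathsf{N}(\kappa_n)$ with the standard natural isomorphism $\mathsf{N}(\catC^\op) \cong \mathsf{N}(\catC)^\op$ yields isomorphisms $(\Delta^1)^n \cong ((\Delta^1)^n)^\op$; the second conjugation identity shows these are natural for the functoriality of $(T \circ (-)^\coop) \circ y$ on one side and $((-)^\op \circ T) \circ y$ on the other, giving \cref{T-coop}. The third isomorphism then follows by composing \cref{T-co} and \cref{T-coop}, since $(-)^\op = (-)^\rmco \circ (-)^\coop$.

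Statements (4)--(6) are obtained by passing to right adjoints. An involution is self-adjoint with identity unit and counit, so $(-)^\rmco \adjoint (-)^\rmco$ and $(-)^\coop \adjoint (-)^\coop$ on $\cSet$, $(-)^\op \adjoint (-)^\op$ on $\sSet$, and $T \adjoint U$. The right adjoint of $T \circ (-)^\rmco$ is then $(-)^\rmco \circ U$ and that of $T$ is $U$, so \cref{T-co} yields $(-)^\rmco \circ U \cong U$, which is (4); similarly \cref{T-coop} yields $(-)^\coop \circ U \cong U \circ (-)^\op$, which is (5); and (6) follows from the third isomorphism the same way, or by composing (4) and (5).

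The only genuine work lies in the two conjugation identities $f^\rmco = \rho_n f \rho_m$ and $f^\coop = \kappa_n f \kappa_m$ on generators, which are routine from the coordinate formulas but require some care — in particular one must keep in mind that $\rho_n$ and $\kappa_n$ are not cubical operators, so the verification takes place in $\mathsf{Pos}$ and passes to $\sSet$ only after applying $\mathsf{N}$. Everything else is formal: the reduction to representables via cocontinuity, the identity $\mathsf{N}(\catC^\op) \cong \mathsf{N}(\catC)^\op$, and adjoint-mate bookkeeping.
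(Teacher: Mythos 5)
Your proof is correct and reaches the same conclusion, but it takes a genuinely different route from the paper's. The paper also reduces to representables via cocontinuity, but then invokes the monoidal structure: it observes that morphisms of $\Box$ are generated under composition \emph{and the geometric product} by the 1-dimensional maps $\partial_{1,\varepsilon}$, $\sigma_1$, $\gamma_{1,\varepsilon}$, and then uses $T(X \otimes Y) \cong TX \times TY$ together with the (anti)monoidality of $(-)^\rmco$ and $(-)^\coop$ to reduce the entire verification to those three low-dimensional generators. By contrast, you avoid the monoidal reduction and instead construct the components of the natural isomorphisms explicitly, as nerves of the coordinate-reversal and coordinate-complement maps $\rho_n$, $\kappa_n$, and verify the conjugation identities $f^\rmco = \rho_n f \rho_m$ and $f^\coop = \kappa_n f \kappa_m$ on every generator in every dimension. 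The trade-off is that the paper's argument is shorter but leaves the actual isomorphism component implicit (it is secretly the braiding/symmetry of the cartesian product in $\sSet$, which up to decoding is precisely your $\mathsf{N}(\rho_n)$), whereas yours is slightly longer but makes the natural transformation concrete, which can be useful downstream. Your explicit derivation of (3) from (1) and (2) via $(-)^\op = (-)^\rmco \circ (-)^\coop$, and of (4)--(6) by passing to right adjoints, is exactly what the paper leaves to the reader after ``it suffices to prove (i) and (ii).''

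One small point of hygiene worth flagging: the conjugation identity you want for $(-)^\coop$ is literally an equality of underlying \emph{functions}, since $\kappa_m$ is order-reversing and hence a poset isomorphism $[1]^m \cong ([1]^m)^\op$ rather than an endomorphism of $[1]^m$; you do acknowledge this later, but it would be cleaner to state up front that the identity $f^\coop = \kappa_n f \kappa_m$ is an identity in $\Set$, which then gets promoted to a commutative square in $\mathsf{Pos}$ once one inserts the correct $(-)^\op$'s, and only then to $\sSet$ via $\mathsf{N}(\catC^\op) \cong \mathsf{N}(\catC)^\op$. You should also note, for completeness, that once the conjugation identities are verified on the generating morphisms they propagate to all of $\Box$ because both sides respect composition.
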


\begin{proof}
It suffices to prove \ref{T-co} and \ref{T-coop}. As $T$ and the involutions preserve colimits, it suffices to establish the desired natural isomorphisms on the objects $\Box^n$. For this, observe that the maps between these objects are generated, under composition and the geometric product, by the maps $\bd_{1,\varepsilon} \colon [0] \to [1]$, $\sigma_1 \colon [1] \to [0]$, and $\gamma_{1,\varepsilon} \colon [1]^2 \to [1]$. By \cref{T-prod,op-co-monoidal}, it thus suffices to show that $T \circ (-)^\rmco$ and $T$ (resp. $T \circ (-)^\coop$ and $(-)^\op \circ T$) agree on these maps; this can easily be verified.
\end{proof}

For reference, the following diagrams show the inclusions of the four box categories under consideration, and the functors between the corresponding categories of cubical sets induced by pre-composition.

\begin{equation}\label{eq:cube-cat-inclusions} \tag{$\dagger$}
\vcenter{\xymatrix{
& \Box  & & & \cSet \ar[dl] \ar[dr] \\
\Box_0 \ar@{^(->}[ur] && \Box_1 \ar@{_(->}[ul] & \cSet_0 \ar[dr] && \cSet_1 \ar[dl] \\
& \Box_\varnothing \ar@{^(->}[ur] \ar@{_(->}[ul]  & & & \cSet_\varnothing\\
}}
\end{equation}

The following result shows that, regardless of our choice of box category, certain distinguished cubical sets admit connections on 1-cubes in a suitable sense (though these will be non-degenerate in general). This will allow us to freely make use of such cubes in some of our proofs, even when not working in $\cSet$.

The following result shows that marked cubical quasicategories admit connections on 1-cubes, in a suitable sense.

\begin{proposition}\label{cqcats-have-connections}
Let $X$ be a cubical set having fillers for all open boxes with the critical edge degenerate, and $x \colon \Box^1 \to X$ a 1-cube of $X$. Then there exist a pair of 2-cubes of $X$ as depicted below:

\centerline{
\xymatrix{
\bullet \ar[r]^{x} \ar[d]_{x} & \bullet \ar@{=}[d] & \bullet \ar@{=}[r] \ar@{=}[d] & \bullet \ar[d]^{x}  \\
\bullet \ar@{=}[r] & \bullet & \bullet \ar[r]^{x} & \bullet \\
}
}
\end{proposition}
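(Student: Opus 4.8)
The plan is to construct the first $2$-cube directly and to obtain the second from it via the involution $(-)^{\coop}$ of $\cSet$. For the reduction, observe that $(-)^{\coop}$ carries the open box $\sqcap^n_{i,\varepsilon}$ to $\sqcap^n_{i,1-\varepsilon}$, the critical edge with respect to $\partial_{i,\varepsilon}$ to the critical edge with respect to $\partial_{i,1-\varepsilon}$, and degenerate edges to degenerate edges; hence $X^{\coop}$ again has fillers for all open boxes with degenerate critical edge. If $x \colon a \to b$ is a $1$-cube of $X$, then $x^{\coop} \colon b \to a$ is a $1$-cube of $X^{\coop}$, and a direct computation of faces shows that the $(-)^{\coop}$-image of the first square built for $x^{\coop}$ in $X^{\coop}$ is exactly the second square for $x$ in $X$ (in $\cSet$ these two squares are the connections $x\gamma_{1,0}$ and $x\gamma_{1,1}$). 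So it suffices to produce, for every such $X$ and every $x$, a $2$-cube $t$ with $t\partial_{1,0} = t\partial_{2,0} = x$ and $t\partial_{1,1} = t\partial_{2,1} = x\partial_{1,1}\sigma_1$.

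The guiding point is that a filler for a single $2$-dimensional open box cannot produce $t$ outright: three edges can be prescribed, but the fourth edge of the filled square is new and uncontrolled. Instead I would realize $t$ as the \emph{missing} face $\partial_{i,\varepsilon}$ of a $3$-dimensional open box $\sqcap^3_{i,\varepsilon} \hookrightarrow X$. For such a box every edge of $\partial_{i,\varepsilon}$ already occurs among the five present faces, so any filler restricts on $\partial_{i,\varepsilon}$ to a square whose boundary is completely dictated by those five faces. One then chooses those faces among degeneracies of $x$, $a$ and $b$ — namely $x\sigma_1$, $x\sigma_2$ and the constant squares — together, if necessary, with auxiliary squares obtained by prior $2$-dimensional fillings, arranged so that (a) the five faces glue to a well-formed open box, (b) its critical edge works out to a degenerate edge, and (c) the induced boundary of $\partial_{i,\varepsilon}$ is the prescribed one; the hypothesis then supplies the filler.

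The main obstacle is achieving (a)--(c) simultaneously, which amounts to a careful bookkeeping of the twelve edges of the $3$-cube via the cubical identities. In particular, the naive ``all five faces degenerate'' attempt is circular: tracing the edges shows that if the missing face has the prescribed boundary then the face opposite to it is forced to be another copy of the same square — reflecting the fact that the $3$-cube presenting itself is, in $\cSet$, the degenerate cube $x\gamma_{1,0}\sigma_3$, whose two ``$\gamma$-faces'' are non-degenerate in a connection-free box category. Breaking this requires inserting, in a position off the missing face, one or more squares produced by earlier fillings of critical-edge-degenerate open boxes, and the delicate part — where I expect most of the work to lie — is choosing these auxiliary boxes so that the whole configuration still assembles into an open box with degenerate critical edge while still forcing the missing face to have exactly the boundary $[x, x, x\partial_{1,1}\sigma_1, x\partial_{1,1}\sigma_1]$.
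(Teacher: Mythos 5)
Your road map is the paper's strategy, and your diagnosis of the obstruction is right and is the key insight: if you try to realize the negative connection as the missing face $\partial_{3,0}$ of a $3$-dimensional open box whose five present faces are all degeneracies of $x$ or of its endpoints, then the edge compatibilities force the opposite face $\partial_{3,1}$ to again have boundary $[x,x,(x\partial_{1,1})\sigma_1,(x\partial_{1,1})\sigma_1]$ -- the $3$-cube you would be describing is $x\gamma_{1,0}\sigma_3$, which presupposes the very square you want. Your reduction of the second square to the first via $(-)^{\coop}$ is also correct and agrees with the paper's remark that the positive case is dual.

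The gap is that you never actually exhibit the open boxes, and this is not entirely routine: there is one genuinely non-obvious choice, namely where to put the auxiliary square. Here is what the paper does. First fill the $(1,0)$-open box $\sqcap^2_{1,0}\to X$ with $\partial_{2,0}=x$ and $\partial_{1,1}=\partial_{2,1}=(x\partial_{1,1})\sigma_1$; its critical edge is the bottom edge $\partial_{2,1}$, which is degenerate, so a filler $s\colon\Box^2\to X$ exists, whose new edge $x'=s\partial_{1,0}$ is an uncontrolled $1$-cube from $x\partial_{1,0}$ to $x\partial_{1,1}$. Now form the $3$-dimensional open box $\sqcap^3_{3,0}\to X$ with $\partial_{2,0}=\partial_{1,0}=s$ and $\partial_{1,1}=\partial_{2,1}=\partial_{3,1}=(x\partial_{1,1})\sigma_1\sigma_1$. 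These five squares agree on shared edges (top and left share the edge $x'$; every edge of the other three faces is $(x\partial_{1,1})\sigma_1$), and the critical edge for $\partial_{3,0}$ is $(1,1,0)\to(1,1,1)$, which lies in $\partial_{3,1}$ and is hence degenerate. The restriction of a filler to $\partial_{3,0}$ then has boundary $[x,(x\partial_{1,1})\sigma_1,x,(x\partial_{1,1})\sigma_1]$, which is the desired connection square. The point you should internalize from the paper's construction is that the auxiliary square $s$ must occupy \emph{two} of the five faces (the two faces that meet along the new edge $x'$, so that $x'$ cancels out of the missing face's boundary), while the remaining three faces can indeed all be the constant square on $x\partial_{1,1}$; once you see this, the bookkeeping you deferred collapses to a short check.
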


\begin{proof}
We prove the statement for the left-hand cube, i.e. the negative connection; the proof for the right-hand cube, i.e. the positive connection, is dual. In $\cSet_0$ and $\cSet$ this is trivial, but we will provide a general proof which is also valid in $\cSet_{\mathrm{\varnothing}}$ and $\cSet_1$.

We may fill an open box in dimension 2 to obtain a 2-cube as depicted below:

\centerline{
\xymatrix{
\bullet \ar[r]^{x} \ar@{..>}[d]_{x'} & \bullet \ar@{=}[d] \\
\bullet \ar@{=}[r] & \bullet \\
}
}

The edge $x'$, however, may not be equal to $x$. To correct for this possibility, we may fill the following open box in dimension 3, where the top and left faces are the 2-cube depicted above,  while the back face is missing:

 \[
\xymatrix@!C{
 \bullet
 \ar[rrr]^{x}
 \ar[ddd]_{x}
 \ar[dr]^{x'}
&&&
 \bullet
 \ar@{=}[ddd]|!{[dl];[dr]}{\hole}
 \ar@{=}[dr]
\\&
 \bullet
 \ar@{=}[rrr]
 \ar@{=}[ddd]
&&&
 \bullet
 \ar@{=}[ddd]
\\
\\
 \bullet
 \ar@{=}[rrr]|!{[uur];[dr]}{\hole}
 \ar@{=}[dr]
&&&
 \bullet
 \ar@{=}[dr]
\\&
 \bullet
 \ar@{=}[rrr]
&&&
 \bullet
}
\]

The filler for the back face is the desired 2-cube.
\end{proof}

In particular, the result above applies in the \emph{marked cubical quasicategories} to be studied in \cref{section:marked}, as well as in the \emph{cubical quasicategories} to be studied in \cref{sec:joyal-cset}.

\section{Model structure on marked cubical sets}\label{section:marked}
The goal of this section is to construct a combinatorial model category structure on the category $\cSet'$ of marked cubical sets.
One would like to do this by applying Cisinski theory, as described in Section \ref{Sec1Cisinski}, but unfortunately $\cSet'$ is not a presheaf category.
Although there exists a generalization of Cisinski theory to a non-topos case (due to Olschok \cite{olschok:thesis}), we choose to construct the model structure directly, using Jeff Smith's \cref{Jeff-Smith's-theorem} to obtain a better understanding of it as a result.
It is also worth pointing out that our language (e.g., cellular model, cylinder functor) follows the conventions of Cisinski to make the analogy with the Cisinski machinery clear.

None of the arguments in this section rely on the existence of connections, thus they are valid in all of the categories of cubical sets shown in the diagram \cref{eq:cube-cat-inclusions} at the end of \cref{sec:background}.

\subsection{Classes of maps}

To begin, we lay out the definitions of the classes of maps that will comprise the model structure.

The \emph{cofibrations} are the monomorphisms.
The \emph{trivial fibrations} are the maps with the right lifting property with respect to the cofibrations.

Using \cref{cSetCellularModel}, one obtains:

\begin{lemma} \label{cofibration-generators}
The cofibrations are the saturation of the set consisting of the boundary inclusions $\bd \Box^n \to \Box^n$ for $n \geq 0$ and the inclusion $\Box^1 \to (\Box^1)^\sharp$. \qed
\end{lemma}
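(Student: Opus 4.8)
The plan is to show that the saturation of the given set $S = \{\bd \Box^n \to \Box^n \mid n \geq 0\} \cup \{\Box^1 \to (\Box^1)^\sharp\}$ is exactly the class of monomorphisms in $\cSet'$. One inclusion is routine: all maps in $S$ are monomorphisms of marked cubical sets (the boundary inclusions are mono on underlying cubical sets and reflect markings since nothing new is marked; $\Box^1 \to (\Box^1)^\sharp$ is an identity on underlying cubical sets), and the class of monomorphisms is saturated (closed under pushout, transfinite composition, retracts, and coproducts), so $\overline{S} \subseteq \mathrm{Mono}$.

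For the reverse inclusion, I would take an arbitrary monomorphism $f \colon X \to Y$ in $\cSet'$ and build it as a transfinite composite of pushouts of maps in $S$. First factor $f$ through the marked cubical set $Y'$ having the same underlying cubical set as $Y$ but with an edge marked iff it is either in the image of a marked edge of $X$ or is degenerate; thus $f = X \xrightarrow{g} Y' \xrightarrow{h} Y$, where $g$ is a mono which is an isomorphism on underlying cubical sets, and $h$ is the identity on underlying cubical sets but marks additional edges. For $g$: since $g$ is an iso of underlying cubical sets, it is obtained by filling in, one at a time, the missing markings; each such step is a pushout of $\Box^1 \to (\Box^1)^\sharp$ along the corresponding edge $\Box^1 \to Y'$ (using that $Y'$ is marked, so attaching $(\Box^1)^\sharp$ along a marked edge does nothing, while along an unmarked edge it marks exactly that edge). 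Transfinitely composing these handles $g$. For $h$: since $h$ is the identity on markings? — no, $h$ adds markings, so symmetrically it is also a transfinite composite of pushouts of $\Box^1 \to (\Box^1)^\sharp$. Wait — more carefully, $h$ is a mono that is an iso on underlying cubical sets and only differs in the marking, so the same argument applies. Hence it suffices to handle the case where $f$ is a mono that is injective but not surjective on underlying cubical sets, with markings matching up; here the standard cell-attachment argument via \cref{cSetCellularModel} applies: well-order the non-degenerate cubes of $Y$ not in the image of $X$ by dimension, and attach them one at a time as pushouts of boundary inclusions $\bd \Box^n \to \Box^n$, exactly as in the proof that these form a cellular model for $\cSet$, noting that each attached cube carries only the minimal marking so no marking data is disturbed.

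Actually, let me restructure to avoid redundancy. I would factor $f$ as $X \to Z \to Y$ where $Z$ has underlying cubical set equal to that of $Y$ and an edge of $Z$ is marked iff the corresponding edge lies in the image of $X$ and is marked there, or is degenerate — no wait, I want $Z$ to capture the "cubical part" of $f$. The cleanest split: let $Z$ be the marked cubical set whose underlying cubical set is the image of $f$ (equivalently, a copy of $X$ sitting inside $Y$) with an edge marked iff it is marked as an edge of $Y$ that lies in the image; then $X \to Z$ is an iso on underlying cubical sets adding markings, and $Z \to Y$ is a mono that is injective-not-surjective on underlying cubical sets but an iso on markings over its image. The first is built from $\Box^1 \to (\Box^1)^\sharp$, the second from boundary inclusions.

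The main obstacle — really the only subtlety — is verifying that a pushout of $\Box^1 \to (\Box^1)^\sharp$ along an edge $e \colon \Box^1 \to W$ in $\cSet'$ behaves correctly: the pushout is taken in $\cSet'$, not in $\cSet''$, and I must confirm that the resulting marked cubical set is precisely $W$ with the edge $e$ additionally marked (and nothing else changed). This uses that $\cSet'$ is a reflective subcategory of $\cSet''$, so colimits in $\cSet'$ are computed by reflecting the colimit in $\cSet''$; since in $\cSet''$ the pushout would doubly-mark $e$ exactly when $e$ was already marked, reflecting (taking the image, as in the $\Im$ functor) collapses that to a single marking, giving the desired result. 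Once this lemma is in hand, the transfinite composition argument is entirely routine, paralleling the proof of \cref{cSetCellularModel}. \qed
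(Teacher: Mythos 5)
Your overall strategy — peel off a marking-addition piece built from $\Box^1 \to (\Box^1)^\sharp$ and a cube-attachment piece built from the boundary inclusions, with the reflective-subcategory observation to justify that pushouts of $\Box^1 \to (\Box^1)^\sharp$ in $\cSet'$ simply mark one edge — is the right approach, and it is what the paper's terse ``Using \cref{cSetCellularModel}'' proof is pointing at. The subtlety about pushouts being computed by reflecting from $\cSet''$ is a real one and you handle it correctly.

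However, your ``cleanest split'' has a genuine gap, and it inherits a slip from your first attempt. Your first decomposition through $Y'$ (same underlying cubical set as $Y$, markings given by the image of $W_X$ plus degenerates) is in fact the correct one; the slip is that $g \colon X \to Y'$ is \emph{not} an isomorphism on underlying cubical sets — its underlying map is the underlying map of $f$, a mono that need not be surjective. So $g$ is the cube-attachment piece (built from $\bd\Box^n \to \Box^n$, with markings matching up across the image automatically), and $h \colon Y' \to Y$ is the marking-addition piece; the order of operations must be cubes first, then markings. Your restructured factorization $X \to Z \to Y$ reverses this order and consequently fails: you claim $Z \to Y$ is ``built from boundary inclusions,'' but attaching cells to $Z$ only produces new edges with the minimal marking, so if $Y$ has a marked non-degenerate edge lying outside the image of $Z$, no transfinite composite of pushouts of $\bd\Box^n \to \Box^n$ can reach $Y$ — a trailing marking step using $\Box^1 \to (\Box^1)^\sharp$ is unavoidable. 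The fix is simply to discard the restructured version and run the argument with the $Y'$ decomposition, correctly describing $g$ as the underlying mono realized as a transfinite composite of boundary cell attachments via \cref{cSetCellularModel}.
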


By \cref{cofibration-generators}, we have a cofibrantly generated weak factorization system (cofibrations, trivial fibrations).

\begin{definition}\label{gen-anodyne-def} We introduce three classes of maps in $\cSet'$.
\begin{enumerate}
\item Let the \emph{marked open box inclusions} $\iota_{i,\varepsilon}^{n}$ be the marked cubical set maps whose underlying cubical set maps are the open box inclusions $\sqcap^n_{i,\varepsilon} \to \Box^n$, with the critical edge marked in each (except for the domain of $\iota_{i,\varepsilon}^{1}$, i.e. $\Box^{0}$, in which the critical edge is not present).
  \item Let $K$ be the cubical set depicted as:
\[
\xymatrix{
  1 \ar[r] \ar@{=}[d] &0 \ar[d] \ar@{=}[r] & 0 \ar@{=}[d] \\
  1 \ar@{=}[r] &1 \ar[r] &0 }
\] Let $K'$ be the marked cubical set that has the middle edge in the above marked. Define the \emph{saturation} map to be the inclusion $K \subseteq K'$.
  \item For each of the four faces of the square, let the \emph{3-out-of-4 map} associated to that face be the inclusion of $\Box^2$ with all but that face marked into $(\Box^2)^\sharp$. 
\end{enumerate}
\end{definition}

The \emph{anodyne maps} are defined as the saturation of the set of maps consisting of the marked open box inclusions, the saturation map, and the 3-out-of-4 maps.
The \emph{naive fibrations} are those maps that have the right lifting property against anodyne maps. Call an object $X$ of $\cSet'$ a \emph{marked cubical quasicategory} if the map $X \to \Box^{0}$ is a naive fibration. 

Note that the definition of a marked open box inclusion combines the intuition behind both the inner and the special outer horns from the theory of marked simplicial sets.
For instance, filling 2-dimensional marked open box amounts to composing two edges with an inverse of an equivalence, as can be seen in the following diagrams:

\centerline{
\xymatrix{
\bullet \ar[r] & \bullet \ar[d] & \bullet \ar[d] \ar[r]^{\sim} & \bullet & \bullet \ar[d] & \bullet \ar[d]^{\sim} & \bullet \ar[d]_{\sim} \ar[r] & \bullet \ar[d] \\
\bullet \ar[r]^{\sim} & \bullet & \bullet \ar[r] & \bullet & \bullet \ar[r] & \bullet & \bullet & \bullet \\
\ar@{}[r]^{\iota_{1,0}^{2}} & & \ar@{}[r]^{\iota_{1,1}^{2}}  & & \ar@{}[r]^{\iota_{2,0}^{2}}  &  & \ar@{}[r]^{\iota_{2,1}^{2}} & \\ 
}
}

\begin{remark}
Viewing marked cubical quasicategories as $(\infty,1)$-categories, the marked edges represent equivalences. The generating anodyne maps have the following $(\infty,1)$-categorical meanings.
\begin{itemize}
\item The $n$-dimensional marked open box fillings for $n \geq 2$ correspond to composition of maps and homotopies, analogous to filling inner and special horns in quasicategories.
 They also ensure that every morphism presented by a marked edge has a left and right inverse, i.e., is an equivalence.
\item The 1-dimensional marked open box fillings, $\iota_{1,\varepsilon}^{1} \colon \Box^{0} \to (\Box^{1})^{\sharp}$, are the inclusions of endpoints into the marked interval; thus marked edges may be lifted along naive fibrations, analogous to the lifting of isomorphisms along isofibrations in 1-category theory.
\item The saturation map ensures that equivalences, maps having both left and right inverses, are marked.
\item The 3-out-of-4 maps represent the principle that if three maps in a commuting square are equivalences, then so is the fourth.
 They encode a condition analogous to the two-out-of-three property.
\end{itemize}
\end{remark}

\begin{remark}
For $n \geq 1$, the representable marked cubical set $\Box^{n}$ is not a marked cubical quasicategory, as it lacks fillers for certain marked open boxes. This stands in constrast to the case of simplicial sets, in which the representables $\Delta^{n}$ are quasicategories.
\end{remark}

\begin{lemma}\label{equivalence-iff-marked}
Let $X$ be a marked cubical quasicategory, and $x \colon \Box^1 \to X$ an edge of $X$. Then $x$ is marked if and only if it factors through the inclusion of the middle edge $\Box^1 \to K$.
\end{lemma}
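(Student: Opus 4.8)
The plan is to handle the two implications separately. The ``if'' direction is essentially a one-line consequence of fibrancy, since the saturation map is anodyne; the ``only if'' direction is the substantive one, where I would reconstruct inside $X$ the two $2$-cubes comprising $K$ by filling suitable marked open boxes whose critical edge is $x$.

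For the ``if'' direction, suppose $x$ factors as $\Box^1 \xrightarrow{m} K \xrightarrow{g} X$, with $m$ the inclusion of the middle edge. The saturation map $K \hookrightarrow K'$ is a generating anodyne map, and since $X$ is a marked cubical quasicategory the map $X \to \Box^0$ is a naive fibration; hence the lifting problem with top $g$, left leg $K \hookrightarrow K'$, and bottom $K' \to \Box^0$ has a diagonal filler $g' \colon K' \to X$ extending $g$. The middle edge is marked in $K'$ and $g'$ is a map of marked cubical sets, so $x = g'(m)$ is marked in $X$.

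For the ``only if'' direction, suppose $x$ is marked and set $u = x\bd_{1,0}$, $v = x\bd_{1,1}$. First I would build the ``left'' square of $K$: the map $\sqcap^2_{2,0} \to X$ whose $\bd_{1,0}$- and $\bd_{2,1}$-faces are the degenerate edge $v\sigma_1$ and whose $\bd_{1,1}$-face is $x$ is well-defined on vertices, and since the critical edge of $\Box^2$ with respect to $\bd_{2,0}$ is precisely the $\bd_{1,1}$-edge, this is a map of marked cubical sets out of the domain of $\iota^2_{2,0}$. As $\iota^2_{2,0}$ is anodyne and $X$ is a marked cubical quasicategory, a diagonal filler gives a $2$-cube $L$ with $L\bd_{1,1} = x$ and with $L\bd_{1,0}, L\bd_{2,1}$ degenerate. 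Dually, filling $\iota^2_{2,1}$ — whose critical edge is the $\bd_{1,0}$-edge — against the map $\sqcap^2_{2,1} \to X$ sending the $\bd_{1,0}$-face to $x$ and the $\bd_{1,1}$- and $\bd_{2,0}$-faces to $u\sigma_1$ yields a $2$-cube $R$ with $R\bd_{1,0} = x$ and with $R\bd_{1,1}, R\bd_{2,0}$ degenerate.

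Finally, the non-degenerate cubes of $K$ are its two vertices, the three displayed edges, and two squares whose only non-degenerate faces besides the shared middle edge are the remaining two displayed edges, every other face being degenerate in a fixed pattern. Since $L\bd_{1,1} = x = R\bd_{1,0}$ and the degenerate faces of $L$ and $R$ match this pattern, $L$ and $R$ assemble, via the colimit presentation of $K$ as two squares glued along the middle edge, into a map $g \colon K \to X$ of (minimally marked) cubical sets with $g \circ m = x$. The main obstacle is the bookkeeping in this second half: checking that the two open boxes $\sqcap^2_{2,\varepsilon} \to X$ are consistent on vertices and that in each case the face occupied by $x$ is exactly the critical edge, so that the \emph{marked} open box inclusions $\iota^2_{2,0}, \iota^2_{2,1}$ (which are anodyne) apply, rather than the bare open box inclusions (which are not, as these are outer boxes).
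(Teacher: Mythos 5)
Your proof is correct and takes essentially the same route as the paper: the paper observes that $(\Box^1)^\sharp \to K'$ is anodyne ``as a composite of marked open box fillings'' and lifts a marked edge against it, while your ``only if'' direction explicitly performs those two fillings ($\iota^2_{2,0}$ and $\iota^2_{2,1}$) to build the map $K \to X$ by hand. The ``if'' direction, via the anodyne saturation map $K \to K'$, is identical to the paper's.
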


\begin{proof}
The inclusions $K \to K'$ and $(\Box^1)^\sharp \to K'$ are both anodyne (the latter as a composite of marked open box fillings). The stated result thus follows from the fact that $X \to \Box^0$ has the right lifting property with respect to both of these maps.
\end{proof}

\begin{lemma}\label{marked-cubical-quasicat-3-of-4}
For a marked cubical set $X$ to be a marked cubical quasicategory, it suffices for the map $X \to \Box^{0}$ to have the right lifting property with respect to marked open box fillings and the saturation map. 
\end{lemma}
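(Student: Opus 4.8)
The assertion is equivalent to the following: if $X \to \Box^0$ has the right lifting property against the marked open box inclusions and the saturation map $K \hookrightarrow K'$, then every $2$-cube $s$ of $X$ with three of its four faces marked has its fourth face marked. (One could instead establish the stronger statement that each $3$-out-of-$4$ map lies in the saturation of the marked open box inclusions together with the saturation map, but arguing directly with $2$-cubes of $X$ is more transparent.) The four possible positions of the unmarked face are carried to one another by the involutions $(-)^\rmco$ and $(-)^\coop$ of $\cSet'$, so it suffices to treat one of them; I will show that if $s = \langle f, g, h, k \rangle$, with $s\partial_{1,0} = f \colon a \to c$, $s\partial_{1,1} = g \colon b \to d$, $s\partial_{2,0} = h \colon a \to b$, $s\partial_{2,1} = k \colon c \to d$ (so that $g h = k f$ in the fundamental category), and $f, g, h$ are marked, then $k$ is marked. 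The remaining three cases are entirely analogous.

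Since $X \to \Box^0$ lifts against $K \hookrightarrow K'$, it suffices to produce a map $K \to X$ sending the middle edge of $K$ to $k$: such a map extends along the saturation map to $K' \to X$, and morphisms of marked cubical sets preserve markings. Unwinding the shape of $K$, such a map is exactly the data of an edge $r \colon d \to c$ together with a $2$-cube $\langle \mathrm{id}_d, k, r, \mathrm{id}_d \rangle$ of $X$ (a right inverse of $k$), an edge $l \colon d \to c$ together with a $2$-cube $\langle k, \mathrm{id}_c, \mathrm{id}_c, l \rangle$ (a left inverse of $k$), subject to the evident compatibility along $k$ and the degenerate edges. The plan is to construct these two $2$-cubes by filling marked open boxes. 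The naive filling that would yield a right inverse of $k$ -- the box $\sqcap^2_{2,0}$ with faces $\mathrm{id}_d$, $k$, $\mathrm{id}_d$ -- has critical edge $k$, which is precisely the edge not yet known to be marked, so it is \emph{not} a marked open box and is unavailable. One therefore routes around $k$: first fill $2$-dimensional marked open boxes whose critical edges are the \emph{known} marked edges $f, g, h$ (or degeneracies) to obtain one-sided inverses of $f, g, h$ with witnessing $2$-cubes; then compose these, filling further marked open boxes, to manufacture an edge $r$ representing the composite $f h^{-1} g^{-1}$, which the square $s$ forces to be inverse to $k$ (each composite of already-marked edges being again marked, hence usable as a critical edge downstream); and finally fill a bounded sequence of $3$-dimensional marked open boxes, built around $s$ and the inverse $2$-cubes of $f, g, h$, that successively cancels $f$, $h$, and $g$ and deposits the desired $2$-cube $\langle \mathrm{id}_d, k, r, \mathrm{id}_d \rangle$. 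The left inverse $l$ and its $2$-cube are obtained by a symmetric construction, and the two $2$-cubes are then assembled into the required map $K \to X$.

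The main obstacle is this final combinatorial step: organizing the $3$-dimensional fillings so that every critical edge met along the way is already known to be marked -- a degeneracy, one of $f, g, h$, or a previously constructed composite or inverse whose marked-ness has been secured -- and checking, via the cubical identities, that the faces of each auxiliary $3$-cube are exactly the $2$-cubes built earlier, fitted together with the correct degenerate borders. Everything else is routine once that bookkeeping is in place.
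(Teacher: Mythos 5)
Your proposal follows the same overall strategy as the paper: having established (via the saturation map) that marked edges are exactly those factoring through $K$, reduce the $3$-out-of-$4$ lifting to producing a map $K \to X$ carrying the middle edge to the unknown edge, and build that map by $2$- and $3$-dimensional marked open box fillings that route around the unknown edge through the three known marked edges. Your unwinding of a map $K \to X$ into a right-inverse $2$-cube and a left-inverse $2$-cube is correct. However, there are three issues worth flagging.

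First, the parenthetical aside that one could ``instead establish the stronger statement that each $3$-out-of-$4$ map lies in the saturation of the marked open box inclusions together with the saturation map'' is actually wrong: the paper's Remark immediately following this lemma proves precisely that the $3$-out-of-$4$ maps are \emph{not} in that saturation. So the route you chose is not merely ``more transparent'' --- the alternative would have failed. Second, the claimed symmetry reduction via $(-)^\rmco$ and $(-)^\coop$ is not sound as stated. The $(-)^\coop$ reduction is fine because $K^\coop \cong K$, so the hypothesis ``$X$ has RLP against the saturation map'' transfers to $X^\coop$. But $K^\rmco$ is not (obviously, and apparently not actually) isomorphic to $K$ --- the paper has to argue separately that $K^\rmco \to (K')^\rmco$ is a trivial cofibration in the proof of \cref{op-co-Quillen-equivalence-marked} --- so $X^\rmco$ need not satisfy your hypotheses, and you cannot conclude the $(1,\varepsilon)$-face cases from the $(2,\varepsilon)$-face cases by this device. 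The paper avoids this by simply re-running the argument in the other cases. Third, and most substantively, the entire content of the proof --- exhibiting the particular $2$- and $3$-dimensional open boxes, verifying via the cubical identities that their faces assemble correctly, and checking that every critical edge encountered is already marked --- is precisely what you defer as ``the main obstacle.'' The paper carries this out explicitly (constructing $f_L^{-1}$ and $f_R^{-1}$ by $2$-dimensional marked open box fillings whose critical edges are the known marked edges, then filling two explicit $3$-dimensional $(2,0)$-open boxes whose critical edges are degenerate); until that bookkeeping is done, the proof is incomplete.
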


\begin{proof}
Assume that $X$ has the right lifting property with respect to marked open box inclusions and the saturation map. The proof of \cref{equivalence-iff-marked} only requires lifting with respect to these maps, so the marked edges of $X$ are precisely those which factor through $K$. 

To show that $X \to \Box^0$ lifts against the 3-out-of-4 maps, we must show that, if three sides of a 2-cube in $X$ are marked, then so is the fourth. Using the fact that the three marked sides factor through $K$, we can show that the fourth does as well by an exercise in filling three-dimensional marked open boxes. We illustrate this argument for the case where the $(1,0)$-face is unmarked; the other three cases are similar.

Consider the following 2-cube in $X$:
\[
\xymatrix{
x \ar[r]^{p}_{\sim} \ar[d]_{f} & y \ar[d]^{g}_{\sim} \\
w \ar[r]^{q}_{\sim} & z \\
}
\]
To show that $f$ factors through $K$, we must construct a pair of 2-cubes as depicted below:
\[
\xymatrix{
w \ar[r]^{f^{-1}_{R}} \ar@{=}[d] & x \ar[d]^{f} \ar@{=}[r] & x \ar@{=}[d] \\
w \ar@{=}[r] & w \ar[r]^{f^{-1}_{L}} & x \\
}
\]
As we have shown that marked 1-cubes factor through $K$, we assume the existence of similar 2-cubes for $g, p,$ and $q$, with their left and right inverses denoted similarly.

We construct the left inverse $f_{L}^{-1}$ by marked open box filling, as depicted below.
\[
\xymatrix{
w \ar@{..>}[r]^{f_{L}^{-1}} \ar[d]_{q} & x \ar[d]^{p} \\
z \ar[r]^{g_{R}^{-1}} & y \\
}
\]

To obtain the 2-cube witnessing $f^{-1}_{L}$ as a left inverse for $f$,  we fill the following $(2,0)$-marked open box.
 \[
\xymatrix@!C{
 x
 \ar@{=}[rrr]
 \ar[ddd]_{p}
 \ar[dr]^{f}
&&&
 x
 \ar@{=}[ddd]|!{[dl];[dr]}{\hole}
 \ar@{=}[dr]
\\&
 w
 \ar[rrr]^{f^{-1}_{L}}
 \ar[ddd]_{q}
&&&
 x
 \ar[ddd]^{p}
\\
\\
 y
 \ar@{=}[rrr]|!{[uur];[dr]}{\hole}
 \ar[dr]_{g}
&&&
 y
 \ar@{=}[dr]
\\&
 z
 \ar[rrr]^{g^{-1}_{R}}
&&&
 y
}
\]

Similarly, we construct $f_{R}^{-1}$ by marked open box filling as follows.
\[
\xymatrix{
w \ar@{..>}[r]^{f^{-1}_{R}} \ar[d]_{q} & x \ar[d]^{p} \\
z \ar[r]^{g^{-1}_{R}} & y \\
}
\]
To obtain the 2-cube witnessing $f^{-1}_{R}$ as a right inverse for $f$,  we fill the following $(2,0)$-marked open box.

 \[
\xymatrix@!C{
 w
 \ar[rrr]^{f_{R}^{-1}}
 \ar[ddd]_{q}
 \ar@{=}[dr]
&&&
 x
 \ar[ddd]|!{[dl];[dr]}{\hole}^{p}
 \ar[dr]^{f}
\\&
 w
 \ar@{=}[rrr]
 \ar[ddd]_{q}
&&&
 w
 \ar[ddd]_{q}
\\
\\
 z
 \ar[rrr]|!{[uur];[dr]}{\hole}^{g_{R}^{-1}}
 \ar@{=}[dr]
&&&
 y
 \ar[dr]^{g}
\\&
 z
 \ar@{=}[rrr]
&&&
 z
}
\]

Thus we see that $f$ factors through $K$, and is therefore marked.
\end{proof}

\begin{remark}
In view of \cref{marked-cubical-quasicat-3-of-4}, it is natural to wonder whether omitting the 3-out-of-4 maps as generators would change the class of anodyne maps. To see that it would, observe that, using the small object argument, we can factor any three-out-of-four map as a composite of a map in the saturation of the marked open box fillings and saturation map, followed by a map having the right lifting property with respect to these maps. Examining the details of this construction, we can see that the second of these maps will not have the right lifting property with respect to the 3-out-of-4 maps. Thus the 3-out-of-4 maps are not in the saturation of the other two classes of generating anodynes.

One may further note that, without the 3-out-of-4 maps as generators, anodyne maps would not be closed under pushout product with cofibrations, e.g., $\iota^1_{1,0} \hatotimes (\bd \Box^{1} \to (\Box^1)^{\sharp})$ is a $3$-out-of-$4$ map. 
This makes them crucial for our development.
\end{remark}

\begin{definition}
Given a map $f \colon X \to Y$ of marked cubical sets, a \emph{naive fibrant replacement} of $f$ consists of a diagram as depicted below, with $\overline{X}$ and $\overline{Y}$ marked cubical quasicategories, $\iota_{X}$ and $\iota_{Y}$ anodyne, and $\overline{f}$ a naive fibration.

\centerline{
\xymatrix{
X \ar[r]^{f} \ar[d]_{\iota_{X}} & Y \ar[d]^{\iota_{Y}} \\
\overline{X} \ar[r]^{\overline{f}} & \overline{Y}
}
}
\end{definition}

We have a cofibrantly generated weak factorization system (anodyne maps, naive fibrations). 
This induces a functorial factorization of any map $X \to Y$ as
\[
\xymatrix{
  X
  \ar[rr]^{f}
  \ar[dr]_{\eta_f}^{\anod}
&&
  Y
\\&
  Mf
  \ar[ur]_{Qf}^{\naive}
}
\]
where $Q$ is an endofunctor on $(\cSet')^\to$ sending objects to naive fibrations and $\eta \co \Id \to Q$ is pointwise anodyne. Where $f$ is the unique map $X \to \Box^{0}$, we write $\eta_{X}$ for $\eta_{f}$. Given $f \colon X \to Y$, we can use this factorization to obtain a canonical naive fibrant replacement of $f$:
\[
\xymatrix@C+0.5cm{
  X
  \ar[r]^{f}
  \ar[d]_{\eta_{\eta_{Y}f}}
&
  Y
  \ar[d]^{\eta_Y}
\\
  \overline{X}
  \ar[r]^{Q(\eta_Y f)}
&
  \overline{Y}
\rlap{.}}
\]
We declare $f$ to be a \emph{weak equivalence} if $Q(\eta_Y f)$ is a trivial fibration. A \emph{trivial cofibration} is a map that is a cofibration and weak equivalence, and
a \emph{fibration} is a map that has the right lifting property against trivial cofibrations.

We now want to show that if $Y$ is a marked cubical quasicategory, so is $\ihom_L(X,Y)$. The following lemma on pushout-products helps with the proof of this fact.

\begin{lemma}\label{anodyne-pop}
The pushout product of two cofibrations is a cofibration. Furthermore, the pushout product of an anodyne map and a cofibration is anodyne.
\end{lemma}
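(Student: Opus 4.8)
\emph{Proof plan.}
The plan is to reduce both claims to pushout products of \emph{generating} maps, and then to inspect those finitely many products one at a time. Recall the standard Leibniz calculus: if $\mathcal K$ is a saturated class and $\mathcal I,\mathcal J$ are sets of maps with $f'\hatotimes g'\in\mathcal K$ for all $f'\in\mathcal I$ and $g'\in\mathcal J$, then $f\hatotimes g\in\mathcal K$ whenever $f\in\mathrm{Sat}(\mathcal I)$ and $g\in\mathrm{Sat}(\mathcal J)$, since $-\hatotimes-$ is cocontinuous in each variable. By \cref{cofibration-generators} the cofibrations are $\mathrm{Sat}(\mathcal J)$ for $\mathcal J=\{\bd\Box^m\to\Box^m \mid m\geq 0\}\cup\{\Box^1\to(\Box^1)^\sharp\}$, and, by \cref{gen-anodyne-def} and the definition of anodyne map, the anodyne maps are $\mathrm{Sat}(\mathcal I)$ for $\mathcal I$ the set of marked open box inclusions $\iota^n_{i,\varepsilon}$, the saturation map $K\to K'$, and the four $3$-out-of-$4$ maps.

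For the first claim it suffices to treat $g\hatotimes g'$ with $g,g'\in\mathcal J$. The forgetful functor $U\colon\cSet'\to\cSet$ preserves colimits (it has the right adjoint $(-)^\sharp$) and carries geometric products of marked cubical sets to geometric products of the underlying cubical sets by \cref{geometric-product-marked}; hence $U(g\hatotimes g')=Ug\hatotimes Ug'$. Since $U$ is faithful it reflects monomorphisms, so it is enough to know that the pushout product of two monomorphisms in $\cSet$ is a monomorphism; this follows from \cref{cSetCellularModel} and the same Leibniz principle, using $(\bd\Box^m\to\Box^m)\hatotimes(\bd\Box^n\to\Box^n)=(\bd\Box^{m+n}\to\Box^{m+n})$ from \cref{open-box-pop}(1).

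For the second claim I would go through the pairs $(f,g)\in\mathcal I\times\mathcal J$. Using \cref{geometric-product-marked} to see which edges become marked, almost every pair is immediate: if $g=(\bd\Box^0\to\Box^0)=(\varnothing\to\Box^0)$ then $f\hatotimes g\cong f\otimes\Box^0\cong f$; if $f=\iota^1_{1,\varepsilon}$ and $g=(\Box^1\to(\Box^1)^\sharp)$ then $f\hatotimes g$ is (isomorphic to) the $3$-out-of-$4$ map whose unmarked face is $\partial_{1,\varepsilon}$; and in the remaining pairs with $g\in\{\bd\Box^m\to\Box^m\ (m\geq 1),\ \Box^1\to(\Box^1)^\sharp\}$ the map $f\hatotimes g$ is an isomorphism, because the source of the pushout product already contains every marked cell of the target (one uses here that $\sqcap^n_{i,\varepsilon}$ contains all vertices of $\Box^n$ when $n\geq 2$). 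The one pair that needs real work is $f=\iota^n_{i,\varepsilon}$, $g=(\bd\Box^m\to\Box^m)$ with $m\geq 1$. By \cref{open-box-pop} the underlying map of $f\hatotimes g$ is the open box inclusion $\sqcap^{n+m}_{i,\varepsilon}\to\Box^{n+m}$, and by \cref{geometric-product-marked} the non-degenerate marked edges of its source and of its target both equal the set $S$ of edges $(e,v)$ with $e$ the critical edge of $\Box^n$ relative to $\partial^n_{i,\varepsilon}$ and $v$ a vertex of $\Box^m$. One checks that $S$ is a set of $2^m$ distinct direction-$i$ edges of $\Box^{n+m}$, all contained in $\sqcap^{n+m}_{i,\varepsilon}$, exactly one of which (for $v$ the all-$(1-\varepsilon)$ vertex) is the critical edge of $\Box^{n+m}$ relative to $\partial^{n+m}_{i,\varepsilon}$; it follows that $\Box^{n+m}$ with $S$ marked is the pushout of $\iota^{n+m}_{i,\varepsilon}$ along the inclusion of $\sqcap^{n+m}_{i,\varepsilon}$ with only the critical edge marked into $\sqcap^{n+m}_{i,\varepsilon}$ with $S$ marked, so that $f\hatotimes g$ is a pushout of the generating anodyne map $\iota^{n+m}_{i,\varepsilon}$.

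The main obstacle is precisely this bookkeeping: one must verify that $\Box^n$ with its critical edge marked, geometrically producted with $\Box^m$, marks exactly the translates $S$ of the critical edge and no other non-degenerate edge; that every member of $S$ lies inside $\sqcap^{n+m}_{i,\varepsilon}$ (which uses $m\geq 1$); and that the pushout square above is already valued in $\cSet'$ — no edge being marked twice — so that it need not be corrected by the reflection $\Im$. Finally, the reverse pushout product $g\hatotimes f$, needed for $\ihom_R$ in place of $\ihom_L$, is handled the same way, or deduced from the above by conjugating with the anti-monoidal involution $(-)^\rmco$ of \cref{op-co-monoidal}, once one notes that $(-)^\rmco$ permutes the generating anodyne maps and the generating cofibrations among themselves.
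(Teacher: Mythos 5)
Your proposal is correct and takes essentially the same route as the paper: reduce to generating pairs by Leibniz cocontinuity, observe that most pairs give isomorphisms because the one non-degenerate marked edge (or the middle edge of $K'$) is already carried by vertices present in the domain, identify $\iota^1_{1,\varepsilon}\hatotimes(\Box^1\to(\Box^1)^\sharp)$ as a $3$-out-of-$4$ map, and show that $\iota^n_{i,\varepsilon}\hatotimes(\bd\Box^m\to\Box^m)$ is a pushout of $\iota^{n+m}_{i,\varepsilon}$. The paper packages the isomorphism cases into a single observation (an inclusion that is a bijection on vertices, pushout-producted with an underlying isomorphism, gives an isomorphism), while you handle the cofibration half of the lemma by passing through $U\colon\cSet'\to\cSet$ and using \cref{cSetCellularModel}; both are fine. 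One caveat: your final remark that $(-)^\rmco$ permutes the generating anodyne maps among themselves is not quite right — it does permute the marked open box inclusions and the $3$-out-of-$4$ maps, but $K^\rmco\to(K')^\rmco$ is \emph{not} isomorphic to the saturation map (this is precisely why the paper has to argue separately for it in the proof of \cref{op-co-Quillen-equivalence-marked}), so the $(-)^\rmco$ shortcut for $g\hatotimes f$ has a gap; your alternative of ``handled the same way'' is the one to use.
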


\begin{proof}
Since $\otimes$ preserves colimits in each variable and anodynes are stable under pushouts and transfinite compositions, we can use induction on skeleta to show that if $S \to T$ is one of the generating cofibrations (resp. anodynes), then $(S \to T) \hat{\otimes} (\partial \Box^n \to \Box^n)$ and $(S \to T) \hat{\otimes} (\Box^1 \to (\Box^1)^\sharp)$ are cofibrations (resp. anodyne). This will show that if $i$ and $j$ are cofibrations, and $i$ is anodyne, then $i \hatotimes j$ is anodyne; the proof for the case where $j$ is anodyne is entirely analogous.

Several cases can be taken care of by the following fact: If $f \colon A \to B$ is an inclusion which is a bijection on vertices and $p \colon X \to Y$ is an isomorphism of underlying cubical sets, then $f \hatotimes p$ is an isomorphism. To see this, first observe that because the pushout product is an isomorphism of underlying cubical sets, we need only consider which edges are marked. That the sets of marked edges in $(A \otimes Y) \cup_{A \otimes X} (B \otimes X)$ and $B \otimes Y$ coincide follows from \cref{geometric-product-marked-pushout}, together with the fact that $f$ is a bijection on vertices.

This claim, along with the fact that taking the pushout product with $\varnothing \to \Box^0$ is the identity, handles all but the following pushout products:

\begin{itemize}
\item $(\bd \Box^{m} \to \Box^{m}) \hatotimes (\bd \Box^n \to \Box^n)$: this is the map $\bd \Box^{m+n} \to \Box^{m+n}$. This completes the proof of the first statement, concerning the pushout product of two cofibrations; the remaining cases complete the second statement, concerning the pushout product of a cofibration and an anodyne map.

\item $\iota^m_{i,\varepsilon} \hatotimes (\partial \Box^n \to \Box^n)$: the underlying cubical set map is the open box inclusion $\sqcap^{m+n}_{i,\varepsilon} \to \Box^{m+n}$, with edges in the codomain being marked if and only if they are present and marked in the domain. The critical edge is marked, so this is anodyne as a pushout of a marked open box inclusion.

\item $\iota^1_{i,\varepsilon} \hatotimes (\bd \Box^{1} \to (\Box^1)^{\sharp})$: this is the 3-out-of-4 map associated to the  face $(1,1-\varepsilon)$. 
\end{itemize}
\end{proof}

\begin{corollary}\label{fibrantExps}  
If $f \colon A \to B$ is a cofibration and $g \colon X \to Y$ is a naive fibration, then the pullback exponential $f \triangleright g \colon \ihom(A,Y) \to \ihom(A,X) \times_{\ihom(A,Y)} \ihom(B,Y)$ (where $\ihom$ may designate either $\ihom_{L}$ or $\ihom_{R}$) is a naive fibration. Furthermore, if $f$ is anodyne or $g$ is a trivial fibration, then $f \triangleright g$ is a trivial fibration. 

In particular, if $Y$ is a marked cubical quasicategory, then for any $X$, $\ihom(X,Y)$ is a marked cubical quasicategory.
\end{corollary}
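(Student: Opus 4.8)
The plan is to deduce this formally from \cref{anodyne-pop} via the standard adjunction between the pushout product $\hatotimes$ and the pullback exponential $\triangleright$. Recall that in any (bi)closed monoidal category, for maps $f$, $g$, $j$ there is a natural correspondence of lifting problems: a square exhibiting $j$ as having the left lifting property against $f \triangleright g$ is the same data as a square exhibiting the pushout product of $f$ with $j$ as having the left lifting property against $g$. (Here, because the geometric product on $\cSet'$ is not symmetric, one must form the pushout product with the tensor on the side corresponding to whether $\ihom = \ihom_L$ or $\ihom_R$; this is the only point requiring care, and since \cref{anodyne-pop} is symmetric in its two inputs it does not affect the outcome.) Thus $j$ lifts against $f \triangleright g$ if and only if $f \hatotimes j$ lifts against $g$.

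Granting this, the three claims follow by inspecting which classes the relevant pushout products land in. To see that $f \triangleright g$ is a naive fibration, we must check it has the right lifting property against an arbitrary anodyne map $j$; but then $f \hatotimes j$ is the pushout product of a cofibration and an anodyne map, hence anodyne by \cref{anodyne-pop}, so it lifts against the naive fibration $g$, and by the adjunction $j$ lifts against $f \triangleright g$. If moreover $f$ is anodyne, then for any cofibration $j$ the map $f \hatotimes j$ is again anodyne by \cref{anodyne-pop}, so it lifts against $g$ and hence $j$ lifts against $f \triangleright g$; as $j$ was an arbitrary cofibration, $f \triangleright g$ is a trivial fibration. The remaining case is identical except that when instead $g$ is a trivial fibration, $f \hatotimes j$ is the pushout product of two cofibrations, hence a cofibration by \cref{anodyne-pop}, against which the trivial fibration $g$ lifts.

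For the final ``in particular'' statement, apply the above with $f$ the inclusion $\varnothing \to X$, which is a monomorphism and hence a cofibration, and $g$ the map $Y \to \Box^0$, which is a naive fibration precisely because $Y$ is a marked cubical quasicategory. Since $\ihom(\varnothing, -) \cong \Box^0 \cong \ihom(X, \Box^0)$, the codomain of the pullback exponential $f \triangleright g$ is terminal, so $f \triangleright g$ is identified with the map $\ihom(X,Y) \to \Box^0$; being a naive fibration, it exhibits $\ihom(X,Y)$ as a marked cubical quasicategory.

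I do not expect a genuine obstacle here: the content is entirely in \cref{anodyne-pop}, and the rest is the formal ``Leibniz calculus'' transfer between pushout products and pullback exponentials. The only thing to be attentive to is the non-symmetry of $\otimes$, i.e.\ correctly matching $\ihom_L$ (resp.\ $\ihom_R$) with the left (resp.\ right) tensor when invoking the adjunction, which is what makes the statement valid for both internal homs simultaneously.
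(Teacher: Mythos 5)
Your proposal is correct and takes essentially the same route as the paper: both arguments transpose the lifting problem across the pushout-product/pullback-exponential adjunction and then invoke \cref{anodyne-pop}, with the same treatment of the ``in particular'' clause via $\varnothing \to X$ and $Y \to \Box^0$. Your remark about carefully matching $\ihom_L$ (resp.\ $\ihom_R$) with the corresponding tensor side is a small point of care the paper leaves implicit.
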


\begin{proof}
Let $i \colon C \rightarrow D$ be anodyne; we wish to show that $f \triangleright g$ has the right lifting property with respect to $i$. By a standard duality, it suffices to show that $g$ has the right lifting property with respect to $i \hat{\otimes} f$. This map is anodyne by \cref{anodyne-pop}, so the first statement holds. 

For the second statement, we can apply the same result with $i$ an arbitrary cofibration. Then $g$ has the right lifting property with respect to $i \hatotimes f$, either because $f$, and hence also $i \hatotimes f$, are anodyne, or because $i \hatotimes f$ is a cofibration and $g$ is a trivial fibration.

The third statement follows from the first by the fact that $\ihom(X,Y) \to \Box^{0}$ is the pullback exponential of the cofibration $\varnothing \to X$ with the naive fibration $Y \to \Box^{0}$.
\end{proof}

\subsection{Homotopies}\label{section:marked-htpy}

Next we define the closely-related concepts of connected components in a marked cubical set, and homotopies of maps between cubical sets.

\begin{definition}
For a marked cubical set $X$, let $\sim_{0}$ denote the relation on $X_{0}$, the set of vertices of $X$, given by $x \sim_{0} y$ if there is a marked edge from $x$ to $y$ in $X$. Let $\sim$ denote the smallest equivalence relation on $X_{0}$ containing $\sim_{0}$.
\end{definition}

\begin{remark}
For $x, y \in X_{0}$, one can easily see that $x \sim y$ if and only if $x$ and $y$ are connected by a zigzag of marked edges. 
\end{remark}

\begin{definition}
For a marked cubical set $X$, the \emph{set of connected components} $\pi_0(X)$ is $X_0/\sim$.
\end{definition}

We may observe that the construction of $\pi_{0}(X)$ is functorial, since maps of marked cubical sets preserve marked edges, and hence preserve the equivalence relation $\sim$.

\begin{definition} 
An \emph{elementary left homotopy} $h \co f \sim g$ between maps $f, g \co A \to B$ is a map $h \co (\Box^1)^\sharp \otimes A \to B$ such that $h|_{\braces{0} \otimes A} = f$ and $h|_{\braces{1} \otimes A} = g$.
Note that the elementary left homotopy $h$ corresponds to an edge $(\Box^1)^\sharp \to \ihom_L(A, B)$ between the vertices corresponding to $f$ and $g$.
A \emph{left homotopy} between $f$ and $g$ is a zig-zag of elementary left homotopies.
\end{definition}

A left homotopy from $f$ to $g$ corresponds to a zig-zag of marked edges in $\ihom_L(A, B)$ and so maps from $A$ to $B$ are left homotopic exactly if $f$ and $g$ are in the same connected component of $\ihom_L(A, B)$.
We write $[A, B]$ for the set of left homotopy classes of maps $A \to B$.

These induce notions of \emph{elementary left homotopy equivalence} and \emph{left homotopy equivalence}. Each of these notions has a ``right'' variant using $A \otimes (\Box^1)^\sharp$ and $\ihom_R(A,B)$. Unless the potential for confusion arises or a statement depends on the choice, we will drop the use of ``left'' and ``right''.

\begin{lemma}\label{EqRel}
In a marked cubical quasicategory $X$, the relations $\sim_{0}$ and $\sim$ coincide.
\end{lemma}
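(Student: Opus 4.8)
The plan is to show that $\sim_0$ is already an equivalence relation; since $\sim$ is by definition the smallest equivalence relation containing $\sim_0$, and $\sim_0 \subseteq \sim$ trivially, this gives $\sim_0 \;=\; \sim$. Reflexivity is immediate, as $x\sigma_1$ is a marked edge from $x$ to $x$. For symmetry and transitivity the essential tool is that, $X$ being a marked cubical quasicategory, the map $X \to \Box^0$ lifts against all the generating anodyne maps of \cref{gen-anodyne-def} --- in particular against the marked open box inclusions $\iota^n_{i,\varepsilon}$ and the 3-out-of-4 maps, the latter amounting to the principle that \emph{a $2$-cube of $X$ with three marked faces has its fourth face marked}.

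For symmetry, suppose $e \colon x \to y$ is marked. By \cref{equivalence-iff-marked}, $e$ factors through the inclusion $\Box^1 \to K$ of the middle edge, i.e.\ there is a map $K \to X$ carrying the middle edge to $e$. Inspecting the two squares that make up $K$, one of them is sent to a $2$-cube of $X$ having $e$ as one face, degenerate edges as two more faces, and some edge $h \colon y \to x$ as its fourth face. Three of its faces are then marked, so $h$ is marked by the 3-out-of-4 property, and hence $y \sim_0 x$.

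For transitivity, let $e \colon x \to y$ and $e' \colon y \to z$ be marked. We compose them by filling the marked open box $\iota^2_{2,1}$: map its domain into $X$ by sending the critical face $\partial_{1,0}$ to the (marked) degenerate edge $x\sigma_1$, the face $\partial_{2,0}$ to $e$, and the face $\partial_{1,1}$ to $e'$; a filler then yields a $2$-cube whose remaining face $\partial_{2,1}$ is an edge $k \colon x \to z$, and, three of its faces being marked, $k$ is marked by the 3-out-of-4 property, so $x \sim_0 z$. The only real work here is combinatorial bookkeeping --- choosing, in each of the two arguments, the open box (or the square inside $K$) so that the role of the critical edge is played by an edge already known to be marked and the role of the fresh edge by $h$ or $k$ --- after which the 3-out-of-4 property finishes the job; the sole external input is \cref{equivalence-iff-marked}, which itself rests on the saturation map $K \subseteq K'$ being anodyne.
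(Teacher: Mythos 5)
Your proof is correct and takes essentially the same approach as the paper's one-sentence sketch (two-dimensional open box fillers with degenerate edges plus the 3-out-of-4 property, reducing any zigzag of marked edges to a single marked edge). The only small stylistic difference is that for symmetry you route through \cref{equivalence-iff-marked} and the squares of $K$ rather than performing a direct marked open box filling with a degenerate critical edge (e.g.\ filling $\iota^2_{1,1}$ with top $e$ and the remaining known faces degenerate at $x$), which is what the paper's wording suggests; both amount to the same combination of open-box filling and 3-out-of-4.
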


\begin{proof}
Using 2-dimensional open box fillers with certain edges degenerate, and the 3-out-of-4 property, we can reduce any zigzag of marked edges connecting $x$ and $y$ in $X$ to a single marked edge from $x$ to $y$.
\end{proof}

By adjointness, we obtain the following corollary.

\begin{corollary}
If $f, g \co A \to B$ are homotopic and $B$ a marked cubical quasicategory, then $f$ and $g$ are elementarily homotopic.
Hence, between marked cubical quasicategories homotopy equivalences coincide with elementary homotopy equivalences.
\end{corollary}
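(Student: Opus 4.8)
The plan is to move everything into the exponential $\ihom_L(A,B)$ and invoke \cref{EqRel}. By the remark following the definition of elementary left homotopy, two maps $f, g \colon A \to B$ are left homotopic if and only if $f$ and $g$ lie in the same connected component of $\ihom_L(A,B)$, i.e.\ $f \sim g$ in the sense of the relation $\sim$ on the vertices of $\ihom_L(A,B)$; this is just the adjunction $- \otimes A \dashv \ihom_L(A,-)$ applied to the defining zig-zags. Now, since $B$ is a marked cubical quasicategory, \cref{fibrantExps}, applied to the cofibration $\varnothing \to A$ and the naive fibration $B \to \Box^0$, shows that $\ihom_L(A,B)$ is itself a marked cubical quasicategory. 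Hence \cref{EqRel} applies to $\ihom_L(A,B)$: there $\sim$ coincides with $\sim_0$, so $f \sim g$ is witnessed by a single marked edge from $f$ to $g$, which under the adjunction is precisely an elementary left homotopy $f \sim g$. The identical argument with the right tensor and $\ihom_R(A,B)$ handles right homotopies, so homotopic maps into a marked cubical quasicategory are elementarily homotopic.

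For the ``hence'' clause, let $f \colon A \to B$ be a homotopy equivalence between marked cubical quasicategories with homotopy inverse $g \colon B \to A$, so that $gf$ is homotopic to $\id_A$ and $fg$ is homotopic to $\id_B$. Since $A$ and $B$ are marked cubical quasicategories, the first part of the corollary upgrades each of these homotopies to an elementary one, exhibiting $f$ as an elementary homotopy equivalence with inverse $g$. The reverse containment is immediate, as an elementary homotopy equivalence is in particular a homotopy equivalence; thus the two notions agree between marked cubical quasicategories.

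I do not expect a genuine obstacle: the whole content is the observation that $\ihom_L(A,B)$ inherits the marked cubical quasicategory structure from $B$ (\cref{fibrantExps}) together with the collapse of $\sim$ to $\sim_0$ in such objects (\cref{EqRel}). The only point meriting a sentence of care is the translation between a zig-zag of elementary homotopies of maps $A \to B$ and a zig-zag of marked edges in $\ihom_L(A,B)$, but this is exactly what was already recorded in \cref{section:marked-htpy}, so it can be cited rather than re-proved.
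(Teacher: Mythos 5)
Your proof is correct and follows essentially the same route as the paper's: apply \cref{fibrantExps} to conclude $\ihom(A,B)$ is a marked cubical quasicategory, invoke \cref{EqRel} there to collapse the zig-zag to a single marked edge, and translate back through the adjunction. You simply spell out the translation and the ``hence'' clause more explicitly than the paper does.
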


\begin{proof}
By \cref{fibrantExps}, $\ihom(A,B)$ is a marked cubical quasicategory, and so $\sim_{0}$ is an equivalence relation on $\ihom(A,B)_0$ by \cref{EqRel}. Translating what this means for homotopies gives the result.
\end{proof}

\begin{lemma}\label{precomp-htpy}
If $f, g \colon X \to Y$ are left homotopic, then for any $Z$, then the induced maps $\ihom_{L}(Y,Z) \to \ihom_{L}(X,Z)$ are right homotopic.
\end{lemma}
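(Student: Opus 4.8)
The plan is to reduce to a single elementary left homotopy and then transpose it across the closed monoidal structure on $\cSet'$. Since $\ihom_L(-,Z) \colon \cSet' \to \cSet'$ is a (contravariant) functor, it carries a zig-zag of elementary left homotopies connecting $f$ and $g$ to a zig-zag of whatever we produce from each elementary piece; so it suffices to start from a single elementary left homotopy $h \colon (\Box^1)^\sharp \otimes X \to Y$ with $h|_{\{0\}\otimes X} = f$ and $h|_{\{1\}\otimes X} = g$, and show it yields an elementary right homotopy between $\ihom_L(f,Z)$ and $\ihom_L(g,Z)$.

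For the main construction I would apply $\ihom_L(-,Z)$ to $h$ to get $\ihom_L(h,Z) \colon \ihom_L(Y,Z) \to \ihom_L((\Box^1)^\sharp \otimes X, Z)$, and then use the natural isomorphism
\[ \ihom_L((\Box^1)^\sharp \otimes X, Z) \cong \ihom_L((\Box^1)^\sharp, \ihom_L(X, Z)) \]
— obtained by transposing across $-\otimes((\Box^1)^\sharp\otimes X) \dashv \ihom_L((\Box^1)^\sharp\otimes X,-)$, reassociating, and transposing back across $-\otimes X \dashv \ihom_L(X,-)$, all of which is available since $\otimes$ is monoidal and closed — to reinterpret $\ihom_L(h,Z)$ as a map $H \colon \ihom_L(Y,Z) \otimes (\Box^1)^\sharp \to \ihom_L(X,Z)$.

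Next I would check the endpoints: restricting $H$ along the inclusion $\{\varepsilon\}\hookrightarrow (\Box^1)^\sharp$ corresponds, by naturality of all the transpositions involved, to postcomposing $\ihom_L(h,Z)$ with $\ihom_L$ of the inclusion $\{\varepsilon\}\otimes X \hookrightarrow (\Box^1)^\sharp\otimes X$, which is exactly $\ihom_L(h|_{\{\varepsilon\}\otimes X}, Z)$. By hypothesis this equals $\ihom_L(f,Z)$ for $\varepsilon = 0$ and $\ihom_L(g,Z)$ for $\varepsilon = 1$, so $H$ is the desired elementary right homotopy; assembling these over a zig-zag then finishes the proof. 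Concretely, $H$ is the transpose, across $-\otimes X \dashv \ihom_L(X,-)$, of the composite of the associator $(\ihom_L(Y,Z)\otimes(\Box^1)^\sharp)\otimes X \cong \ihom_L(Y,Z)\otimes((\Box^1)^\sharp\otimes X)$, the map $\id\otimes h$, and the counit $\ihom_L(Y,Z)\otimes Y \to Z$ of $-\otimes Y \dashv \ihom_L(Y,-)$; the endpoint computations then reduce to dinaturality of this counit.

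The argument is essentially formal. The only point requiring attention is that $\otimes$ is not symmetric, so one must track variances to confirm that a \emph{left} homotopy upstairs becomes a \emph{right} homotopy downstairs, with $(\Box^1)^\sharp$ ending up on the correct side of the geometric product; this is precisely what the transposition above records, so I do not expect a genuine obstacle.
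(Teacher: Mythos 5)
Your argument is correct and is essentially the same as the paper's: both transpose the precomposition map $\ihom_L(Y,Z) \to \ihom_L((\Box^1)^\sharp\otimes X, Z)$ across the closed monoidal structure (via the associator) to land on $\ihom_L(Y,Z)\otimes(\Box^1)^\sharp \to \ihom_L(X,Z)$, then read this off as an elementary right homotopy. Your write-up is slightly more explicit about the intermediate isomorphism and the endpoint checks, but the underlying idea and decomposition are the ones the paper uses.
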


\begin{proof}
We consider the case of elementary homotopies; the general result follows from this. An elementary left homotopy $f \sim g$ is given by a map $H \colon (\Box^{1})^\sharp \otimes X \to Y$. Pre-composition with $H$ induces a map $\ihom_{L}(Y,Z) \to \ihom_{L}((\Box^1)^\sharp \otimes X,Z)$. Under the adjunction defining $\ihom_{L}$, this corresponds to a map $\ihom_{L}(Y,Z) \otimes (\Box^1)^\sharp \otimes X \to Z$, which in turn corresponds to a map $\ihom_{L}(Y,Z) \otimes (\Box^1)^\sharp \to \ihom_{L}(X,Z)$. This defines an elementary right homotopy between the pre-composition maps induced by $f$ and $g$.
\end{proof}

\subsection{Category theory in a marked cubical quasicategory}\label{sec:marked-cubical-quasicat-theory}

Let $X$ be a marked cubical quasicategory and $x,y \in X_0$.
We will write $X_1(x,y)$ for the subset of $X_1$ consisting of $1$-cubes $f$ with $f \partial_{1,0}= x$ and $f\partial_{1,1} = y$.
Define an equivalence relation relation $\sim_X$ on the set $X_1(x, y)$ of edges from $x$ to $y$ as follows: $f \sim_X g$ if and only if there is a $2$-cube in $X$ of the form 
  \[
\xymatrix{
  x
  \ar[r]^f
  \ar@{=}[d]
&
  y
  \ar@{=}[d]
\\
  x
  \ar[r]^g
&
  y
}
\]
It is straightforward to verify that this is indeed an equivalence relation: reflexivity follows from degeneracies, whereas symmetry and transitivity are given by filling $3$-dimensional open boxes.

We now define three increasingly strong refinements of the concept of a homotopy equivalence.

\begin{definition}
Let $f \colon X \to Y$ be a map in $\cSet$. Then:

\begin{itemize}
\item $f$ is a \emph{semi-adjoint equivalence} if there exist $g \colon Y \to X$ and homotopies $H \colon gf \sim \mathrm{id}_{X}$, $K \colon fg \sim \mathrm{id}_{Y}$ such that $fH \sim Kf$ as edges of $\ihom(X,Y)$;
\item $f$ is a \emph{strong homotopy equivalence} if there exist $g, H, K$ as above with $fH = Kf$;
\item a map $g \colon Y \to X$ is a \emph{strong deformation section} of $f$ if $fg = \mbox{id}_{Y}$ and there exists a homotopy $H \colon gf \sim \mathrm{id}_{X}$  such that $fH = \mathrm{id}_{f}$.
\end{itemize}
\end{definition}

Our next goal will be two show the following:

\begin{lemma}\label{htpy-equiv-iff-strong}
Let $f \colon X \to Y$ be a map of marked cubical quasicategories. The following are equivalent:

\begin{enumerate}
\item $f$ is a homotopy equivalence;
\item $f$ is a semi-adjoint equivalence.
\end{enumerate}

Furthermore, if $f$ is a naive fibration, then these are equivalent to:
\begin{enumerate}
\setcounter{enumi}{2}
\item $f$ is a strong homotopy equivalence.
\end{enumerate}
\end{lemma}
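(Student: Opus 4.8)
The implications $(3) \Rightarrow (2) \Rightarrow (1)$ are immediate: an equality $fH = Kf$ in particular exhibits a homotopy $fH \sim Kf$, and a semi-adjoint equivalence yields a homotopy equivalence by keeping $g$ and discarding the coherence data. So the content lies in $(1) \Rightarrow (2)$ and, under the naive fibration hypothesis, in $(2) \Rightarrow (3)$. Throughout I would use that the internal homs $\ihom(X,X)$, $\ihom(Y,Y)$, $\ihom(X,Y)$, $\ihom(Y,X)$ are marked cubical quasicategories by \cref{fibrantExps}, so that the category theory of \cref{sec:marked-cubical-quasicat-theory} is available in each of them — in particular marked edges admit homotopy inverses, edges compose, composites of marked edges are marked, and the requisite interchange and naturality $2$-cells exist up to homotopy — and that, by the corollary to \cref{EqRel}, any homotopy between maps of marked cubical quasicategories may be taken elementary, hence corresponds to a marked edge of the appropriate internal hom.

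For $(1) \Rightarrow (2)$, I would choose a quasi-inverse $g \colon Y \to X$ together with elementary homotopies $H_0 \colon gf \sim \mathrm{id}_X$ and $K_0 \colon fg \sim \mathrm{id}_Y$, keep $g$ and $H_0$, and replace $K_0$ by a composite $K'$ of the form $K_0 \bullet (fH_0 g) \bullet (fg K_0^{-1})$ in $\ihom(Y,Y)$ — the classical ``half-adjoint equivalence'' correction — where $K_0^{-1}$ is a homotopy inverse of the marked edge $K_0$ and $fH_0 g$, $fg K_0^{-1}$ are the evident whiskerings (images of $H_0$ and $K_0^{-1}$ under the markings-preserving composition maps between internal homs), assembled via the $\infty$-categorical composition of the marked edges $fg \to fgfg \to fg \to \mathrm{id}_Y$. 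Being a composite of marked edges, $K'$ is marked, hence is an elementary homotopy $fg \sim \mathrm{id}_Y$. One then checks, by the standard $2$-categorical triangle-identity computation now carried out up to coherent homotopy (two applications of naturality of $K_0$ and its inverse, then a cancellation of $K_0^{-1} \bullet K_0$), that $K'f$ and $fH_0$ are homotopic rel endpoints as edges of $\ihom(X,Y)$, i.e.\ $fH_0 \sim_{\ihom(X,Y)} K'f$. Thus $(g, H_0, K')$ is semi-adjoint equivalence data.

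For $(2) \Rightarrow (3)$ with $f$ a naive fibration, take semi-adjoint data $(g, H, K)$ with $H, K$ elementary, and let $\Gamma$ be a $2$-cube of $\ihom(X,Y)$ witnessing $fH \sim_{\ihom(X,Y)} Kf$, so that $\Gamma\partial_{2,0} = fH$, $\Gamma\partial_{2,1} = Kf$, while $\Gamma\partial_{1,0}$ and $\Gamma\partial_{1,1}$ are degenerate on $fgf$ and $f$. By \cref{fibrantExps}, since $f$ is a naive fibration and $\varnothing \to X$ a cofibration, the post-composition map $f_* \colon \ihom(X,X) \to \ihom(X,Y)$ is a naive fibration. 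The faces $\Gamma\partial_{2,0}$, $\Gamma\partial_{1,0}$, $\Gamma\partial_{1,1}$ lift respectively to $H$ and the degeneracies on $gf$ and $\mathrm{id}_X$ in $\ihom(X,X)$, assembling into a $(2,1)$-marked open box over $\Gamma|_{\sqcap^2_{2,1}}$ — its critical edge is the $\partial_{1,0}$-face, sent to a degenerate, hence marked, edge — which the naive fibration $f_*$ fills, yielding a $2$-cube $\widetilde\Gamma$ over $\Gamma$ with $\widetilde\Gamma\partial_{2,0} = H$. Setting $H' := \widetilde\Gamma\partial_{2,1}$ gives an edge $gf \to \mathrm{id}_X$ of $\ihom(X,X)$ with $fH' = Kf$ on the nose; and since three faces of $\widetilde\Gamma$ are marked, the $3$-out-of-$4$ property of the marked cubical quasicategory $\ihom(X,X)$ forces $H'$ to be marked, hence an elementary homotopy. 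Then $(g, H', K)$ exhibits $f$ as a strong homotopy equivalence.

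The main obstacle is $(1) \Rightarrow (2)$: promoting the $2$-categorical half-adjoint-equivalence identity to an honest rel-endpoint homotopy in $\ihom(X,Y)$ means executing composition of edges, inverses of marked edges, whiskering, and the interchange/naturality $2$-cells by hand inside the four internal homs, and carefully tracking which internal hom each ingredient inhabits before assembling them into a single $2$-cell; this is precisely what the machinery of \cref{sec:marked-cubical-quasicat-theory} is built to support. By contrast $(2) \Rightarrow (3)$ is short, resting only on \cref{fibrantExps}, one marked open box filling, and the $3$-out-of-$4$ property. (Alternatively, if one first shows that a naive fibration which is a homotopy equivalence between marked cubical quasicategories has the right lifting property against cofibrations, then $(1) \Rightarrow (3)$ follows at once by choosing a section of $f$ and deforming it over $Y$; but since the model structure on $\cSet'$ is not yet available at this point, I would argue as above.)
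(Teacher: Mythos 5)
Your proof is correct and follows essentially the same route as the paper's. For $(1)\Rightarrow(2)$ the paper packages your "classical half-adjoint correction" computation into an abstract statement about $(2,1)$-categories (\cref{grad-lemma}, the "Graduate Lemma," proved via \cref{iso-comm}) and simply applies it to the strict $(2,1)$-category $\Hosharp_2\cSet'$ constructed in \cref{sec:marked-cubical-quasicat-theory}; your correction $K' = K\circ(fHg)\circ(fgK^{-1})$ agrees with the paper's $K\circ(fHg)\circ(Kfg)^{-1}$ because $fgK = Kfg$ by \cref{iso-comm}. The advantage of the paper's organization is precisely that the "obstacle" you flag — executing whiskerings, interchange, and naturality up to coherent homotopy across four internal homs — disappears: once one has verified that $\Hosharp_2\cSet'$ is a strict $(2,1)$-category (done once, in the definition), the triangle-identity manipulation is a genuinely $2$-categorical computation in equivalence classes, not a by-hand homotopy in $\ihom(X,Y)$. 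Your proof of $(2)\Rightarrow(3)$ spells out exactly the "simple exercise in $2$-dimensional marked open box filling" that the paper leaves implicit, including the correct identification of the critical edge of the $(2,1)$-open box and the use of the $3$-out-of-$4$ property to conclude $H'$ is marked; this part is fully correct.
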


We will prove this by means of a 2-categorical argument.

We define the \emph{homotopy category} $\Ho X$ of a marked cubical quasicategory $X$ as follows:
\begin{itemize}
  \item the objects of $\Ho X$ are the $0$-cubes of $X$;
  \item the morphisms from $x$ to $y$ in $\Ho X$ are the equivalence classes of edges $X_1(x,y)/\sim_X$;
  \item the identity map on $x \in X_0$ is given by $x \sigma_1$;
  \item the composition of $f \colon x \to y$ and $g \colon y \to z$ is given by filling the open box
  \[
\xymatrix{
  x
  \ar@{..>}[r]^{gf}
  \ar[d]_{f}
&
  z
  \ar@{=}[d]
\\
  y
  \ar[r]^{g}
&
  z
}
\]
\end{itemize}


\begin{lemma}
  The above data define a category. 
\end{lemma}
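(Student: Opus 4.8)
The plan is to verify, in order: well-definedness of the composition law on $\Ho X$ (independence of both the chosen open-box filler and the chosen representatives), the two unit laws, and associativity. Throughout, every open box we fill has degenerate (hence marked) critical edge, so it admits a filler because $X$ is a marked cubical quasicategory; in particular the critical edge of the $(2,0)$-open box used to define composition is the degenerate right edge $z\sigma_1$, so composites always exist.

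For well-definedness, fix composable $f \colon x \to y$ and $g \colon y \to z$. If $s$ and $s'$ are two fillers of the open box with $\partial_{1,0} = f$, $\partial_{1,1} = z\sigma_1$, $\partial_{2,1} = g$, I would produce a $2$-cube exhibiting $s\partial_{2,0} \sim_X s'\partial_{2,0}$ by filling a $3$-dimensional open box carrying $s$ and $s'$ on two of its faces and degenerate $2$-cubes on the remaining non-missing ones---the same manipulation used to prove symmetry and transitivity of $\sim_X$. Independence of representatives then follows, using transitivity of $\sim_X$, by treating separately the replacement of $f$ by an $\sim_X$-equivalent $f'$ and of $g$ by $g'$, each handled by a similar $3$-cube filling that incorporates the witnessing $2$-cube for $f \sim_X f'$ (resp.\ $g \sim_X g'$) together with a composition-box filler. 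Hence $[g] \circ [f] := [s\partial_{2,0}]$ is a well-defined binary operation on $\Ho X$.

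For the unit laws I would exhibit explicit fillers whose top edge is already in the desired class: the degeneracy $f\sigma_2$ is a filler of the composition box for $f \circ \id_x$ with top edge exactly $f$, and the negative-connection $2$-cube on $f$ supplied by \cref{cqcats-have-connections} is a filler of the composition box for $\id_y \circ f$ with top edge exactly $f$; by the previous paragraph these compute the composites, so $[f] \circ [\id_x] = [f] = [\id_y] \circ [f]$.

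For associativity, given $f \colon x \to y$, $g \colon y \to z$, $h \colon z \to w$, fix composition witnesses $s$ for $gf$, $u$ for $hg$, and $v$ for $(hg)f$. Imitating the simplicial argument that fills the inner horn $\Lambda^3_1$ spanned by the simplicial analogues of $s$, $u$, $v$, I would assemble $s$, $u$, $v$ together with suitable degenerate $2$-cubes (over $h$ and over $w\sigma_1$) as all but one face of a $3$-dimensional open box whose critical edge is a degenerate edge over $w$; its filler is a $3$-cube one of whose faces is a $2$-cube exhibiting $(hg)f$ as \emph{also} a composite of $gf$ followed by $h$. In $\Ho X$ this gives $([h]\circ[g])\circ[f] = [hg]\circ[f] = [(hg)f] = [h]\circ[gf] = [h]\circ([g]\circ[f])$, which is associativity. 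I expect the main obstacle to be purely combinatorial: identifying the correct $3$-dimensional open boxes in the well-definedness and associativity steps---which face is omitted, which faces carry the prescribed $2$-cubes, and which degenerate cubes fill the rest---so that the cubical identities make every boundary match; once those boxes are written down, each filling is immediate.
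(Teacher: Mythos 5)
Your proposal is correct and takes essentially the same approach as the paper: the same explicit unit-law witnesses ($f\sigma_2$ and the connection square supplied by \cref{cqcats-have-connections}), and the same three-dimensional open box (with the $\Ho$-composition squares for $gf$, $hg$, and $(hg)f$, a degenerate square on $h$, and a degenerate face on $w$) for associativity. For well-definedness you split what the paper does with a single $(3,1)$-open-box filling (assembling the $\sim_X$-witnesses for $f,f'$, $g,g'$, $h,h'$ and one composition square and filling in the other) into three $(2,0)$-open-box fillings of the same cube (independence of the filler, then replacement of $f$, then replacement of $g$, composed via transitivity of $\sim_X$); the cube and the degenerate critical edge are the same, so the combinatorial content is identical, and your organization is, if anything, the more direct route to the precise statement that $[g]\circ[f]$ depends only on $[f]$ and $[g]$.
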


\begin{proof}
We must show that composition is well-defined, associative, and unital with the given identities.

To see that it is well-defined, suppose that $f \sim f', g \sim g', h \sim h'$, and $gf = h$. Then we can construct the following $(3,1)$-open box:

 \[
\xymatrix@!C{
 x
 \ar[rrr]^{h}
 \ar[ddd]_{f}
 \ar@{=}[dr]
&&&
 z
 \ar@{=}[ddd]|!{[dl];[dr]}{\hole}
 \ar@{=}[dr]
\\&
 x
 \ar[rrr]^{h'}
 \ar[ddd]_{f'}
&&&
 z
 \ar@{=}[ddd]
\\
\\
 y
 \ar[rrr]|!{[uur];[dr]}{\hole}^{g}
 \ar@{=}[dr]
&&&
 z
 \ar@{=}[dr]
\\&
 y
 \ar[rrr]^{g'}
&&&
 z
}
\]

As the critical edge is degenerate, this open box admits a filler; the $(3,1)$-face of this filler witnesses $g'f' = h'$.

To see that composition is associative, consider a composable triple of edges $f, g, h$. We can construct the following $(3,0)$-open box:

 \[
\xymatrix@!C{
 x
 \ar[rrr]^{(hg)f}
 \ar[ddd]_{gf}
 \ar[dr]^{f}
&&&
 w
 \ar@{=}[ddd]|!{[dl];[dr]}{\hole}
 \ar@{=}[dr]
\\&
 y
 \ar[rrr]^{hg}
 \ar[ddd]_{g}
&&&
 w
 \ar@{=}[ddd]
\\
\\
 z
 \ar[rrr]|!{[uur];[dr]}{\hole}^{h}
 \ar@{=}[dr]
&&&
 w
 \ar@{=}[dr]
\\&
 z
 \ar[rrr]^{h}
&&&
 w
}
\]

This open box admits a filler, since the critical edge is degenerate; the $(3,0)$-face of this filler witnesses $h(gf) = (hg)f$.

Finally, for an edge $f$ from $x$ to $y$, the equalities $(\sigma_1 y)f = f$ and $f(\sigma_1 x) = f$ are witnessed by the 2-cubes $f \gamma_{1,0}$ (or the substitute constructed in \cref{cqcats-have-connections}) and $f \sigma_2$, respectively.
\end{proof}

\begin{lemma} \label{either-dir-comp}
Let $X$ be a marked cubical quasicategory.  There is a $2$-cube of the form
 \[
\xymatrix{
  x
  \ar[r]^f
  \ar[d]_p
&
  y
  \ar[d]^g
\\
  z
  \ar[r]^q
&
  w
}
\]
if and only if $gf = qp$ in $\Ho X$.
\end{lemma}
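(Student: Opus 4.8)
The plan is to prove the two implications separately; in each case the argument is a $3$-dimensional open-box filling. Throughout, write $A$ for a chosen $2$-cube witnessing the $\Ho X$-composite of $f$ and $g$, so that $A\partial_{1,0} = f$, $A\partial_{1,1} = w\sigma_1$, $A\partial_{2,1} = g$ and $A\partial_{2,0}$ is a representative of the composite $gf$, and similarly $B$ for a chosen $2$-cube witnessing the composite of $p$ and $q$. Since composition in $\Ho X$ is well-defined and $\sim_X$ is an equivalence relation, the equation $gf = qp$ in $\Ho X$ is equivalent to the existence of a $2$-cube $C$ with $C\partial_{1,0}$ and $C\partial_{1,1}$ degenerate, $C\partial_{2,0} = A\partial_{2,0}$ and $C\partial_{2,1} = B\partial_{2,0}$.

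For the forward direction, assume we are given the $2$-cube $s$ with $s\partial_{1,0} = p$, $s\partial_{1,1} = g$, $s\partial_{2,0} = f$, $s\partial_{2,1} = q$. I would fix $A$ and $B$ and then exhibit $s$, $A$, $B$ together with suitable degenerate $2$-cubes (and, when no connections are available, the substitutes supplied by \cref{cqcats-have-connections}) as five of the six faces of a $3$-cube, arranged so that along shared edges everything matches and so that the one missing face is an open box whose critical edge is degenerate, hence marked. Filling this open box---which is possible because $X$ is a marked cubical quasicategory---yields a $2$-cube of the shape required of $C$, so $gf = qp$ in $\Ho X$. For the converse I would run the same assembly in reverse: given a witness $C$ as above, place $A$, $B$, $C$ and degenerate $2$-cubes as five faces of a $3$-cube whose remaining face is forced on its boundary to be exactly $s$, again with degenerate critical edge, and fill; the filler's missing face is the desired $2$-cube.

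The substantive part is purely combinatorial: choosing the indices $(i,\varepsilon)$ of the missing face and the orientations of the five prescribed faces in each of the two $3$-cubes so that their shared edges agree under the cubical identities and so that the critical edge of the missing face comes out degenerate. This is of the same character as---though somewhat more involved than---the $3$-cube computations already used to prove that $\Ho X$ is a category, and in the write-up each open box is best displayed as an explicit picture. Finally, I would note that neither construction depends on the choices made: for the forward direction it suffices to produce some $C$ for some choice of composites, and for the converse the output is insensitive to $A$, $B$, $C$ precisely because composition in $\Ho X$ has already been shown to be well-defined.
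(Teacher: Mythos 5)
Your overall strategy --- each implication is a $3$-dimensional open-box filling with degenerate critical edge --- is the same one the paper uses, but the specific assembly you propose cannot be realized. You place $s$, $A$, and $B$ as three faces of a single $3$-cube. The $1$-cubes $f$ and $g$ each occur in both $s$ and $A$, whereas two distinct faces of $\Box^3$ share exactly one edge if adjacent and none if opposite. If $s$ and $A$ are made adjacent along the $f$-edge --- which, up to symmetry, forces $s = T\partial_{1,0}$ and $A = T\partial_{3,0}$ --- then the three edges of the cube emanating from the vertex labelled $x$ are $f$, $p$, and $gf$; a direct check of each remaining face position shows that $B$ cannot be inserted without requiring either $gf = qp$ as $1$-cubes of $X$ (false in general) or a wrong vertex label ($y$ where $B$ has none). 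The opposite-face placement $A = T\partial_{1,1}$, and the placement where $A$, $B$ share the edge $w\sigma_1$, fail by similar checks. The underlying obstruction is that both $A$ and $B$ demand a nondegenerate edge from $x$ to $w$ ($gf$ and $qp$ respectively), while once $f$ and $p$ are accounted for there is only one edge left out of $x$.

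The paper's proof uses a single $3$-cube that omits $A$ entirely. Its six faces are: $s$ at $\partial_{1,0}$; the doubly degenerate square $w\sigma_1\sigma_1$ at $\partial_{1,1}$; a composition square for $p,q,qp$ --- essentially your $B$ --- at $\partial_{2,0}$; the negative connections $g\gamma_{1,0}$ at $\partial_{2,1}$ and $q\gamma_{1,0}$ at $\partial_{3,1}$; and, at $\partial_{3,0}$, a square with $\partial_{1,0}=f$, $\partial_{1,1}=w\sigma_1$, $\partial_{2,0}=qp$, $\partial_{2,1}=g$. This last face is a composition square exhibiting $qp$ as a composite of $f$ and $g$, and by the already-established well-definedness of composition in $\Ho X$ its existence is \emph{equivalent} to $gf = qp$ in $\Ho X$. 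Putting $qp$, not $gf$, in the $\partial_{2,0}$ slot is precisely what makes this square compatible with $s$ and $B$, avoiding the clash above. The two ``unknown'' faces are then $\partial_{1,0}$ and $\partial_{3,0}$; each of the two open boxes obtained by deleting one of them has critical edge $w\sigma_1$, which is marked, so one open-box filling handles each direction, and the separate $\sim_X$-witness $C$ is never constructed --- the well-definedness lemma absorbs that step. If you replace your $A$ by this $qp$-composition square and discard $C$, your argument becomes the paper's.
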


\begin{proof}
Consider the following 3-cube:

 \[
\xymatrix@!C{
 x
 \ar[rrr]^{qp}
 \ar[ddd]_{f}
 \ar[dr]^{p}
&&&
 w
 \ar@{=}[ddd]|!{[dl];[dr]}{\hole}
 \ar@{=}[dr]
\\&
 z
 \ar[rrr]^{q}
 \ar[ddd]_{q}
&&&
 w
 \ar@{=}[ddd]
\\
\\
 y
 \ar[rrr]|!{[uur];[dr]}{\hole}^{g}
 \ar[dr]^{g}
&&&
 w
 \ar@{=}[dr]
\\&
 w
 \ar@{=}[rrr]
&&&
 w
}
\]

The equality $gf = qp$ in $\Ho X$ is equivalent to the existence of a filler for the back face of this cube, using the fact that composition in $\Ho X$ is well-defined. Thus we want to show that there is a filler for the back face if and only if there is a filler for the left face. If we assume that either of these 2-cubes exists, then together with the remaining faces of the cube depicted above, it forms a marked open box in $X$, with critical edge $w \sigma_{1}$. Thus we can fill this open box to obtain a filler for the missing face.
\end{proof}

\begin{remark}
The above argument makes use of negative connections, as two of the faces of the 3-cube used in its proof are negative connections of 1-cubes. However, it can still be adapted to the categories $\cSet_{\varnothing}$ and $\cSet_1$ using \cref{cqcats-have-connections}. 
\end{remark}

\begin{lemma}\label{tau-ho-equiv}
  Let $X$ be the underlying cubical set of a marked cubical quasicategory $X'$. The categories $\Ho X'$ and $\tau_1 X$ are equivalent.
\end{lemma}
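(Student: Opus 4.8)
The plan is to construct an explicit functor $\tau_1 X \to \Ho X'$ and show it is fully faithful and essentially surjective (in fact bijective on objects). First I would recall the definitions. The fundamental category $\tau_1 X$ is a quotient of the free category on the graph $X_1 \rightrightarrows X_0$ by the relations $\sigma_1 x = \id_x$ and $gf = qp$ for every $2$-cube with boundary $f, g, p, q$ as in the square in \cref{subsection:cSet-basics}. The homotopy category $\Ho X'$ has the same objects, morphisms $X_1(x,y)/{\sim_{X'}}$, identities $x\sigma_1$, and composition given by open-box filling. Since both categories have the $0$-cubes of $X$ as objects, the functor I build will be the identity on objects; the content is entirely in the morphisms.

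The key steps are as follows. There is an evident map on morphisms $X_1 \to \Ho X'$ sending an edge to its $\sim_{X'}$-class; by the universal property of the free category this extends to a functor from the free category on $X_1 \rightrightarrows X_0$ to $\Ho X'$, provided we check that composites of edges in the free category are sent to composites in $\Ho X'$, which is just the definition of composition in $\Ho X'$. Next I would verify that this functor descends to the quotient $\tau_1 X$: the relation $\sigma_1 x = \id_x$ holds in $\Ho X'$ by definition of identities there; and the relation $gf = qp$ coming from a $2$-cube is exactly the content of \cref{either-dir-comp}, which says that the existence of a $2$-cube with boundary $f, g, p, q$ is equivalent to $gf = qp$ in $\Ho X'$. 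This gives a well-defined functor $F \colon \tau_1 X \to \Ho X'$, which is the identity on objects and surjective on each hom-set (since every morphism of $\Ho X'$ is represented by an edge, hence is in the image of $F$). It remains to show $F$ is injective on hom-sets, i.e. that if two morphisms of $\tau_1 X$ are represented by edges $f, g \colon x \to y$ with $f \sim_{X'} g$, then $f = g$ in $\tau_1 X$; but $f \sim_{X'} g$ means there is a $2$-cube with top and bottom $f, g$ and degenerate sides, and the defining relation of $\tau_1 X$ applied to that $2$-cube gives $(\sigma_1 y) f = g (\sigma_1 x)$ in $\tau_1 X$, i.e. $f = g$ after using $\sigma_1 y = \id_y$, $\sigma_1 x = \id_x$. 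More carefully, one should also check that a general morphism of $\tau_1 X$ — a priori a formal composite of edges — agrees in $\tau_1 X$ with a single edge; this follows because $\Ho X'$ has all composites represented by edges and $F$ is compatible with composition, or directly by filling open boxes to compose edges in $X$.

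The main obstacle, I expect, is the bookkeeping needed to make "$F$ descends to the quotient and is injective on hom-sets" fully rigorous — in particular confirming that every element of $\tau_1 X(x,y)$ is the class of a single edge (so that checking injectivity on single edges suffices), and tracking the degenerate-edge conventions (the role of $f\sigma_2$ and the negative connection $f\gamma_{1,0}$, or its substitute from \cref{cqcats-have-connections} in the connection-free cases) when identifying identities. All of these are routine consequences of open-box filling in $X'$ together with \cref{either-dir-comp}, but they are the part requiring care. Once injectivity and surjectivity on hom-sets are established, $F$ is an isomorphism of categories, hence in particular an equivalence, completing the proof.
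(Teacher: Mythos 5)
Your approach is correct and is essentially the paper's argument run in the opposite direction: the paper instead defines the inverse functor $\Ho X' \to \tau_1 X$ (identity on objects, sending an edge to the corresponding length-one string) and verifies it is faithful, essentially surjective, and full, with fullness established by exactly the open-box reduction of formal composites to single edges that you invoke, and with \cref{either-dir-comp} playing the same implicit role. One small caution: of your two proposed justifications for reducing a formal composite in $\tau_1 X$ to a single edge, the first (``because $\Ho X'$ has all composites represented by edges and $F$ respects composition'') is circular, since it would presuppose the injectivity of $F$ you are in the middle of proving; the second, directly filling a $2$-dimensional inner open box with one degenerate side and applying the defining $\tau_1$-relation to the resulting $2$-cube, is the correct one and is what the paper uses.
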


\begin{proof}
  There is a natural inclusion $\Ho X' \to \tau_1 X$, which is the identity on objects and takes a $1$-cube $f$ to a string of length $1$ consisting of $f$.
This is clearly faithful and essentially surjective.
To see that it is full, we simply fill in $2$-dimensional open boxes with one degenerate edge to reduce a sequence of arbitrary length to a sequence of length $1$.
\end{proof}

The assignment $X \mapsto \Ho X$ extends to a functor taking a marked cubical quasicategory to its homotopy category. Postcomposing this functor with $\core \colon \Cat \to \Gpd$, we obtain a groupoid $\Hosharp X$.

\begin{lemma}
The groupoid $\Hosharp X$ can be constructed directly as follows:

\begin{itemize}
\item Objects are 0-cubes of $X$;
\item Morphisms from $x$ to $y$ are equivalence classes of marked edges from $x$ to $y$;
\item Composition and identities are defined as in $\Ho X$.
\end{itemize}
\end{lemma}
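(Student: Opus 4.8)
The statement asks us to show that the groupoid $\Hosharp X = \core(\Ho X)$ has the explicit description given. I would prove this by directly comparing the claimed groupoid $G$ — with $0$-cubes as objects, equivalence classes of marked edges as morphisms, and composition inherited from $\Ho X$ — against $\core(\Ho X)$, whose objects are the $0$-cubes of $X$ and whose morphisms are the isomorphisms of $\Ho X$, i.e.\ the equivalence classes $[f] \in X_1(x,y)/\sim_X$ that are invertible in $\Ho X$. Both have the same objects, so the content is that a morphism $[f]$ of $\Ho X$ is invertible if and only if the class $[f]$ contains a marked edge, and that under this identification composition and identities match (the latter being immediate since $G$ is by definition a subcategory-like structure on the same data).

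**Key steps.** First I would check that $G$ is well-defined: if $f \sim_X f'$ and $f$ is marked, then $f'$ is marked. This uses the $3$-out-of-$4$ property applied to the $2$-cube witnessing $f \sim_X f'$ (whose two vertical edges are degenerate, hence marked), so three of its four edges are marked and therefore so is $f'$. This also shows $G$ is a full subcategory of $\Ho X$ on all objects, with morphisms exactly those classes containing a marked representative; composition and identities in $G$ are then literally those of $\Ho X$ (identities $x\sigma_1$ are degenerate, hence marked), so $G$ is automatically a category. The substantive step is then: $[f]$ is an isomorphism in $\Ho X$ $\iff$ $f$ is marked. For the ``$\Leftarrow$'' direction, if $f$ is marked then by \cref{equivalence-iff-marked} it factors through $\Box^1 \to K$, which supplies $2$-cubes exhibiting left and right inverses $f_L^{-1}, f_R^{-1}$ together with the witnessing $2$-cubes that $f_L^{-1} f = \id$ and $f f_R^{-1} = \id$ in $\Ho X$; a standard argument then shows $[f_L^{-1}] = [f_R^{-1}]$ is a two-sided inverse, so $[f]$ is invertible. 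For ``$\Rightarrow$'', suppose $[g]$ is inverse to $[f]$, so we have $2$-cubes witnessing $gf = x\sigma_1$ and $fg = y\sigma_1$ in $\Ho X$; using \cref{either-dir-comp} these give actual $2$-cubes in $X$ of the appropriate shapes, which assemble (padding with degenerate faces) into precisely the data needed to build, via marked open box filling, $2$-cubes exhibiting $f$ as factoring through $K$ — hence $f$ is marked by \cref{equivalence-iff-marked}. Finally, I would note the inclusion $G \hookrightarrow \Ho X$ lands in $\core(\Ho X)$ and is bijective on objects and fully faithful onto the invertible morphisms, hence an isomorphism of categories $G \cong \core(\Ho X) = \Hosharp X$.

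**Main obstacle.** The only real work is the equivalence ``$[f]$ invertible in $\Ho X$ $\iff$ $f$ is marked,'' and within it the ``$\Rightarrow$'' direction: translating the abstract invertibility data (two $2$-cubes witnessing the triangle identities up to $\sim_X$) into a genuine factorization through $K$. This is a three-dimensional open-box-filling argument in the spirit of the proof of \cref{marked-cubical-quasicat-3-of-4}, and the care needed is in choosing the right boxes (with degenerate critical edges, so that fillers exist). Once that is in hand, the compatibility of composition and identities is formal, since $G$ shares its underlying data with $\Ho X$, and the passage to $\core$ is definitional.
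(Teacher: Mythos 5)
Your proposal is correct and takes the same approach as the paper: identify $\Hosharp X = \core(\Ho X)$, and show that a class $[f]$ is invertible in $\Ho X$ precisely when $f$ factors through $\Box^1 \to K$ (precisely when $f$ is marked, via \cref{equivalence-iff-marked}). The paper asserts the first equivalence as essentially definitional and is otherwise terse; you usefully fill in the details — well-definedness of the morphism sets via the $3$-out-of-$4$ maps, and the ``invertible $\Rightarrow$ factors through $K$'' direction via \cref{either-dir-comp} — but the route is the same.
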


\begin{proof}
Let $X$ be a marked cubical quasicategory. By definition, an edge $f \colon \Box^{1} \to X$ is invertible in $\Ho X$ if and only if it factors through the map $\Box^{1} \to K$ which picks out the middle edge. Since the inclusions $(\Box^{1})^{\sharp} \to K'$ and $K \to K'$ are anodyne, this holds if and only if $f$ is marked.
\end{proof}

\begin{definition}
Define a strict 2-category $\Ho_2 \cSet'$ whose objects are the marked cubical quasicategories and whose mapping category from $X$ to $Y$ is 
\[\Ho_2 \cSet'(X,Y) := \Ho \ihom_L(X,Y)\text{.}\]
This means the 1-morphisms are the usual 1-morphisms $X \to Y$, and the 2-morphisms are maps $X \otimes \Box^1 \to Y$, modulo an equivalence relation. Denote the (vertical) composition in $\Ho \ihom_L(X,Y)$ with $\circ$. The (horizontal) composition

$$\Ho\ihom_L(Y,Z) \times \Ho\ihom_L(X,Y) \to \Ho\ihom_L(X,Z)$$ (which will be written by concatenation) is defined on objects by the usual composition. If $H \colon Y \otimes \Box^1 \to Z$ and $K \colon X \otimes \Box^1 \to Y$ are morphisms $K\colon g \to g'$ and $H\colon  f \to f'$, respectively, define the morphism $KH\colon  gf \to g'f'$ by choosing a filler for the open box of $\ihom_L(X,Z)$ depicted by \[
\xymatrix{
  gf
  \ar[r]^{Kf}
  \ar@{=}[d]
&
  g'f
  \ar[d]^{g'H}
\\
  gf
  \ar@{..>}[r]^{KH}
&
  g'f'
}
\] where the top edge is induced by the composite $X \otimes \Box^1 \to Y \otimes \Box^1 \to Z$ and the right edge by $X \otimes \Box^1 \to Y \to Z$. The fact that the $\ihom_L(X,Y)$ are marked cubical quasicategories ensures this defines a well-defined, associative, unital, and functorial operation. For functoriality, note that the morphism $X \otimes \Box^1 \otimes \Box^1 \stackrel{H \otimes \Box^1}{\to} Y \otimes \Box^1 \stackrel{K}{\to} Z$ yields a 2-cube $\Box^2 \to \ihom_L(X,Z)$ which can be depicted as  \[
\xymatrix{
  gf
  \ar[r]^{Kf}
  \ar[d]_{gH}
&
  g'f
  \ar[d]^{g'H}
\\
  gf'
  \ar[r]^{Kf'}
&
  g'f'
}
\] and so by \cref{either-dir-comp}, we have $(g'H)\circ(Kf) = (Kf')\circ(gH)$, which implies the interchange law.
\end{definition}

\begin{definition}
Let $\Hosharp_2 \cSet'$ denote the maximal $(2,1)$-category contained in $\Ho_{2} \cSet'$, i.e. the 2-category whose objects are marked cubical sets, with $\Hosharp_2 \cSet'(X,Y) = \Hosharp\ihom_{L}(X,Y)$, and the 2-categorical operations induced by those of $\Ho_2 \cSet'$.
\end{definition}

The $\Hosharp$ construction, together with the following general results about $(2,1)$-categories, give us
the desired result about compatibility of homotopies.

\begin{lemma}[Undergraduate Lemma] \label{iso-comm}
Let $X$ be an object in a $(2,1)$-category $\mathsf{C}$, and let $H\colon  p \sim \mbox{id}_X$ be a morphism in $\mathsf{C}(X,X)$. Then $pH = Hp$.
\end{lemma}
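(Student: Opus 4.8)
The plan is to run the standard Eckmann--Hilton-style whiskering argument. Throughout, I would read $pH$ as the left whiskering $\mathrm{id}_p \ast H$ of the $2$-morphism $H$ by the $1$-morphism $p$, and $Hp$ as the right whiskering $H \ast \mathrm{id}_p$; since $p, \mathrm{id}_X \colon X \to X$ have matching sources and targets, the strict unit identities $p \circ \mathrm{id}_X = p = \mathrm{id}_X \circ p$ let us regard both $pH$ and $Hp$ as $2$-morphisms $p \circ p \Rightarrow p$ in $\mathsf{C}(X,X)$.

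First I would form the horizontal composite $H \ast H \colon p \circ p \Rightarrow \mathrm{id}_X \circ \mathrm{id}_X = \mathrm{id}_X$. By the interchange law (equivalently, functoriality of horizontal composition of $2$-cells), this composite can be written as a vertical composite of whiskerings in two ways:
\[
(H \ast \mathrm{id}_{\mathrm{id}_X}) \circ (\mathrm{id}_p \ast H) \;=\; H \ast H \;=\; (\mathrm{id}_{\mathrm{id}_X} \ast H) \circ (H \ast \mathrm{id}_p).
\]
Whiskering a $2$-morphism by an identity $1$-morphism does nothing, so $H \ast \mathrm{id}_{\mathrm{id}_X} = H = \mathrm{id}_{\mathrm{id}_X} \ast H$, and the displayed equation becomes $H \circ (pH) = H \circ (Hp)$, an equality of $2$-morphisms $p \circ p \Rightarrow \mathrm{id}_X$.

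It then remains to cancel $H$ on the left. Since $\mathsf{C}$ is a $(2,1)$-category, $H$ admits a vertical inverse $H^{-1} \colon \mathrm{id}_X \Rightarrow p$; composing the previous equation vertically on the left with $H^{-1}$ yields $pH = Hp$, as desired.

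The argument is purely formal, so I do not expect a genuine obstacle; the only thing to keep straight is the two forms of the interchange law and the strict unit identifications that present $pH$ and $Hp$ as $2$-cells $p \circ p \Rightarrow p$. In the intended applications $\mathsf{C}$ is the $(2,1)$-category $\Hosharp_2 \cSet'$, which sits inside the strict $2$-category $\Ho_2 \cSet'$, so all composites and units are strict, these identifications are literal equalities, and no coherence data intervenes.
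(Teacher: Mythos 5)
Your argument is exactly the paper's: both use the interchange law to identify $H \circ (pH)$ and $H \circ (Hp)$ as the two decompositions of $H \ast H$, and then cancel $H$ using that $\mathsf{C}(X,X)$ is a groupoid. The extra care you take in spelling out the whiskering conventions and the strictness of units is sound but does not change the substance.
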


\begin{proof}
By the interchange law, $$H \circ (pH) = (H \mbox{id}_X) \circ (pH) = (\mbox{id}_X H) \circ (Hp) = H \circ (Hp).$$ Since $\mathsf{C}(X,X)$ is a groupoid, we can cancel $H$.
\end{proof}

\begin{lemma}[Graduate Lemma] \label{grad-lemma}
Let $X, Y$ be objects in a $(2,1)$-category $\mathsf{C}$, $f\colon X \leftrightarrows Y: g$ two morphisms between them, and $H\colon  gf \to \mbox{id}_X$ and $K\colon  fg \to \mbox{id}_Y$ two 2-cells. Then there is a 2-cell $K'\colon  fg \to \mbox{id}_Y$ for which $K'f = fH$.
\end{lemma}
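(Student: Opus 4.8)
The plan is to mimic the classical argument from 1-category theory (or equivalently the $2$-out-of-$6$/adjoint-equivalence argument) that upgrades an arbitrary equivalence to an adjoint one, carried out entirely inside the groupoid-enriched setting of a $(2,1)$-category, where every $2$-cell is invertible. So all $2$-cells below may be freely inverted and cancelled.

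First I would form the candidate $2$-cell by the usual ``triangle identity repair'' recipe. Starting from $H \colon gf \to \mathrm{id}_X$ and $K \colon fg \to \mathrm{id}_Y$, whisker and paste to produce a $2$-cell $fg \to fg$; concretely, one builds $K'$ as the composite
\[
K' \;=\; \bigl(fg \xrightarrow{\;K^{-1}fg\;} (fg)(fg) \xrightarrow{\;f H g\;} f g \xrightarrow{\;K\;} \mathrm{id}_Y\bigr)\text{,}
\]
or some reassociated version thereof; here $fHg$ denotes the horizontal composite of $H$ with identities on $f$ and $g$, using the associativity and unit coherence of the $2$-category to make sense of the bracketing. (The precise arrangement is exactly the one that makes the zig-zag identity hold, and there is essentially only one natural choice up to coherence.) Then I would verify the required identity $K'f = fH$ by a diagram chase: whisker the defining expression for $K'$ by $f$ on the right, and repeatedly apply the interchange law (the ``Undergraduate Lemma'' \cref{iso-comm} is the key special case, applied to $H$ and also to $K$ whiskered appropriately) together with invertibility of all $2$-cells in $\mathsf{C}(X,X)$ and $\mathsf{C}(Y,Y)$ to cancel the extra factors, leaving precisely $fH$.

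The main obstacle — and really the only content — is bookkeeping: getting the whiskerings, the associators, and the order of horizontal versus vertical composition exactly right so that the interchange law applications actually line up, and making sure each cancellation is legitimate (i.e.\ happens in a hom-groupoid). I expect to lean on \cref{iso-comm} at the crucial point where a factor of the form $p H$ must be rewritten as $H p$ for an endomorphism $p$ of $X$ equipped with a $2$-cell to the identity; this is what converts the a priori ``wrong-side'' whiskering into the ``right-side'' one and makes $fH$ appear. Since $\mathsf{C}$ is a $(2,1)$-category, no naturality-of-composition subtleties beyond interchange arise, and there are no coherence obstructions, so once the combinatorics of the pasting diagram are set up the verification is forced. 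I would present the argument as a short string-diagram or pasting computation rather than spelling out every associator.
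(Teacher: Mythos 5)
Your proposed $K' = K \circ (fHg) \circ (K^{-1}fg)$ is literally the paper's formula (since $(Kfg)^{-1} = K^{-1}fg$), and your verification plan — whisker by $f$, apply \cref{iso-comm} to swap $fHgf$ into $fgfH$, then cancel via interchange — is exactly the paper's three-line computation. Correct, and the same approach.
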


\begin{proof}
Define $K' := K \circ (fHg) \circ (Kfg)^{-1}.$ Now, we compute:

\begin{align*}
K'f &= Kf \circ (fHgf) \circ (Kfgf)^{-1} & \\
&= Kf \circ (fgfH) \circ (Kfgf)^{-1} & \mbox{ (by \ref{iso-comm})}  \\
&= fH & \mbox{ (by naturality/interchange)} 
\end{align*}
\end{proof}

\begin{proof}[Proof of \cref{htpy-equiv-iff-strong}]
The implications $(iii) \Rightarrow (ii) \Rightarrow (i)$ are clear. The implication $(i) \Rightarrow (ii)$ follows from applying \cref{grad-lemma} to the $(2,1)$-category $\Hosharp_{2}\cSet'$.

Now let $f$ be a naive fibration and a semi-adjoint equivalence. By \cref{fibrantExps}, the map $\ihom(X,X) \to \ihom(X,Y)$ is a naive fibration. A simple exercise in 2-dimensional marked open box filling, using this fact and the definition of a semi-adjoint equivalence, shows that there exists a homotopy $H' \colon gf \sim \mathrm{id}_{X}$ such that $fH' = Kf$.
\end{proof}

\subsection{Fibration category of marked cubical quasicategories}

\begin{lemma}\label{anodyne-htpy-equiv}
Every anodyne map between marked cubical quasicategories is a homotopy equivalence.
\end{lemma}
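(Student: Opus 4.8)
The plan is to show that an anodyne map $f \colon X \to Y$ between marked cubical quasicategories admits a section which is a strong deformation section, and then deduce that $f$ is a homotopy equivalence. First, since $f$ is anodyne and $X$ is a marked cubical quasicategory (so $X \to \Box^0$ is a naive fibration), the lifting problem given by $f$ against itself over $X \to \Box^0$ — more precisely, lifting $\mathrm{id}_X$ along $f$ against the anodyne map $f$ — produces a map $g \colon Y \to X$ with $gf = \mathrm{id}_X$. Wait, that is backwards; instead I would use that $f$ is anodyne together with naive fibrancy of $X$ to solve the lifting problem with $f$ on the left and $X \to \Box^0$ on the right, yielding $g \colon Y \to X$ with $gf = \mathrm{id}_X$. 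So $g$ is a retraction of $f$.

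Next I would produce the homotopy $fg \sim \mathrm{id}_Y$. Consider the cofibration $(\bd\Box^1 \to \Box^1) \hatotimes (\text{something involving } f)$; concretely, form the pushout-product $j \colon (\{0\} \hookrightarrow (\Box^1)^\sharp) \hatotimes (f \colon X \to Y)$, whose domain is $(\Box^1)^\sharp \otimes X \cup_{\{0\}\otimes X} \{0\} \otimes Y$ and codomain $(\Box^1)^\sharp \otimes Y$. By \cref{anodyne-pop}, since $\{0\} \hookrightarrow (\Box^1)^\sharp$ is anodyne, $j$ is anodyne. Mapping the domain to $Y$ by the constant homotopy $fg\pi$ on the first summand... — the precise bookkeeping of which map goes where is exactly the routine part — and using naive fibrancy of $Y$, I obtain an elementary homotopy $H \colon (\Box^1)^\sharp \otimes Y \to Y$ from $fg$ to $\mathrm{id}_Y$ which additionally restricts correctly along $f$, i.e. can be arranged so that $Hf$ relates $fgf = f$ to $f$ compatibly. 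This exhibits $g$ as, if not literally a strong deformation section, at least a homotopy inverse: $gf = \mathrm{id}_X$ strictly and $fg \sim \mathrm{id}_Y$.

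Having a strict one-sided inverse $g$ with $fg \sim \mathrm{id}_Y$ and $gf = \mathrm{id}_X$, $f$ is a homotopy equivalence essentially by definition (a left homotopy equivalence, which by \cref{EqRel} and the surrounding corollary coincides with the genuine notion between marked cubical quasicategories). Alternatively, and perhaps more cleanly, once I have $g$ with $gf = \mathrm{id}_X$ I could invoke \cref{htpy-equiv-iff-strong}: it suffices to check $f$ is a homotopy equivalence, and a map admitting both a strict retraction and a homotopy $fg \sim \mathrm{id}$ is certainly one.

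The main obstacle is the second step: getting the homotopy $fg \sim \mathrm{id}_Y$ to exist with the right boundary behavior, which requires carefully choosing the anodyne pushout-product map and the map out of its domain so that the lift against $Y \to \Box^0$ (or against $f$ itself, if one wants compatibility $fH' = $ something) genuinely produces a homotopy, not just an arbitrary cube. All the cubical identities and the verification that the relevant map is anodyne are routine given \cref{anodyne-pop} and \cref{fibrantExps}, but assembling the correct lifting diagram is where the real content lies.
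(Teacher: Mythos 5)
Your first step is exactly the paper's: lift $\mathrm{id}_X$ against the anodyne map $f$ using that $X \to \Box^0$ is a naive fibration, obtaining a retraction $g \colon Y \to X$ with $gf = \mathrm{id}_X$. The second step has the right shape — build the homotopy $fg \sim \mathrm{id}_Y$ as a lift against an anodyne pushout product — but your concrete choice of pushout product has a genuine gap.

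You propose $j = (\{0\} \hookrightarrow (\Box^1)^\sharp) \hatotimes f$, anodyne because the first factor is anodyne. But the domain of $j$ is $\{0\} \otimes Y \cup_{\{0\}\otimes X} (\Box^1)^\sharp \otimes X$: you get to prescribe the lift's value only at the $\{0\}$ end of the cylinder over $Y$ and over all of the cylinder on $X$. After lifting you obtain $H \colon (\Box^1)^\sharp \otimes Y \to Y$ with $H|_{\{0\}\otimes Y} = \mathrm{id}_Y$ and $Hf$ the constant homotopy at $f$, but $H|_{\{1\}\otimes Y}$ is just some $h \colon Y \to Y$ satisfying $hf = f$. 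Nothing forces $h = fg$, so this does not produce a homotopy $fg \sim \mathrm{id}_Y$. The fix — and what the paper does — is to use instead $(\bd\Box^1 \hookrightarrow (\Box^1)^\sharp) \hatotimes f$, whose domain $(\bd\Box^1 \otimes Y) \cup ((\Box^1)^\sharp \otimes X)$ lets you prescribe both endpoints, via $[[fg,\mathrm{id}_Y], f\pi_X]$; these agree on the overlap since $fgf = f$. Now $\bd\Box^1 \hookrightarrow (\Box^1)^\sharp$ is a cofibration but not anodyne, so the anodyneness of the pushout product comes from the anodyneness of $f$ itself (via \cref{anodyne-pop}), not from the first factor. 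This is exactly the "careful choice" you flagged as the main obstacle; your flagged instinct was correct, but the concrete proposal you settled on misses the constraint on the $\{1\}$ end. The remark about \cref{htpy-equiv-iff-strong} is not needed: once $gf = \mathrm{id}_X$ strictly and $fg \sim \mathrm{id}_Y$, $f$ is a (left) homotopy equivalence by definition, and the paper closes by noting the analogous argument gives the right homotopy equivalence.
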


\begin{proof}
Now let $f \colon X \to Y$ be anodyne, with $X$ and $Y$ marked cubical quasicategories. 
We can obtain a retraction $r \colon Y \to X$ as a lift in the following diagram:

\centerline{
\xymatrix{
X \ar[d]_{f} \ar@{=}[r] & X \ar[d] \\
Y \ar[r] & \Box^{0} \\
}
}

We can then obtain a left homotopy $fr \sim \mathrm{id}_{Y}$ as a lift in the following diagram:

\centerline{
\xymatrix{
(\bd \Box^{1} \otimes Y) \cup ((\Box^{1})^{\sharp} \otimes X) \ar[d] \ar[rr]^-{[[fr, \mathrm{id}_{Y}],f\pi_{1}]} & & Y \ar[d] \\
(\Box^{1})^{\sharp} \otimes Y \ar[rr] & & \Box^{0} \\ 
}
}

The lift exists since the left-hand map is anodyne by \cref{anodyne-pop}. 

An analogous proof shows that $f$ is a right homotopy equivalence.
\end{proof}

\begin{lemma}\label{tfib-iff-retract}
Let $f \colon X \to Y$ be a naive fibration. The following are equivalent:

\begin{enumerate}
\item\label{tfib-iff-retract-tfib} $f$ is a trivial fibration;
\item\label{tfib-iff-retract-sds} $f$ has a strong deformation section;
\item\label{tfib-iff-retract-streq} $f$ is a strong homotopy equivalence.
\end{enumerate}
\end{lemma}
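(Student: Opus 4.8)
The structure of the argument should mirror the classical proof that a trivial fibration between fibrant objects is a (strong) homotopy equivalence, adapted to the marked cubical setting. I will prove the cycle $\ref{tfib-iff-retract-tfib} \Rightarrow \ref{tfib-iff-retract-sds} \Rightarrow \ref{tfib-iff-retract-streq} \Rightarrow \ref{tfib-iff-retract-tfib}$, with the last implication routing through the machinery already developed (\cref{htpy-equiv-iff-strong}, \cref{anodyne-htpy-equiv}, and the two-out-of-six/retract arguments for weak equivalences).

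\textbf{From \ref{tfib-iff-retract-tfib} to \ref{tfib-iff-retract-sds}.} Suppose $f \colon X \to Y$ is a trivial fibration. Since $\varnothing \to Y$ is a cofibration, we get a section $g \colon Y \to X$ with $fg = \mathrm{id}_Y$ by lifting $\mathrm{id}_Y$ against $f$. To build the homotopy $H \colon gf \sim \mathrm{id}_X$ with $fH = \mathrm{id}_f$, I would set up a lifting problem against $f$ whose domain is the pushout $(\bd\Box^1 \otimes X) \cup_{(\bd\Box^1 \otimes X) \text{ part}} ((\Box^1)^\sharp \otimes X)$ — more precisely, the map $(\{0,1\} \hookrightarrow (\Box^1)^\sharp) \hatotimes (\varnothing \to X)$, i.e. $\bd\Box^1 \otimes X \to (\Box^1)^\sharp \otimes X$ — which is a cofibration by \cref{anodyne-pop}. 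The map on the boundary is $[gf, \mathrm{id}_X]$, and the map to $Y$ below is the composite $(\Box^1)^\sharp \otimes X \xrightarrow{\pi} X \xrightarrow{f} Y$, which restricts correctly on the two ends (using $fgf = f$). A lift $H$ exists because $f$ is a trivial fibration, and by construction $fH = f\pi = \mathrm{id}_f$ (identifying $\mathrm{id}_f$ with the degenerate homotopy $f\pi$). Hence $g$ is a strong deformation section.

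\textbf{From \ref{tfib-iff-retract-sds} to \ref{tfib-iff-retract-streq}.} This is essentially a matter of unwinding definitions: if $g$ is a strong deformation section of $f$ with $H \colon gf \sim \mathrm{id}_X$ and $fH = \mathrm{id}_f$, then taking $K := \mathrm{id}_{\mathrm{id}_Y}$ (the degenerate homotopy $\mathrm{id}_Y \sim \mathrm{id}_Y$) we have $fg = \mathrm{id}_Y$ and $fH = \mathrm{id}_f = Kf$, which is exactly the definition of a strong homotopy equivalence. The one point to check is that the constant homotopy on $\mathrm{id}_Y$ qualifies as the $K$ in the definition, which it does since $fg = \mathrm{id}_Y$ on the nose.

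\textbf{From \ref{tfib-iff-retract-streq} to \ref{tfib-iff-retract-tfib}, and the main obstacle.} This is the hard direction. A strong homotopy equivalence is in particular a homotopy equivalence; the goal is to upgrade ``naive fibration $+$ homotopy equivalence'' to ``trivial fibration.'' The plan is to show $f$ is a weak equivalence in the sense of the ambient definitions (via the naive fibrant replacement $Q(\eta_{\Box^0} f)$) and then invoke that a naive fibration which is a weak equivalence is a trivial fibration. Concretely: $X$ and $Y$ are already marked cubical quasicategories, so $\eta_X, \eta_Y$ are anodyne maps into fibrant objects, hence homotopy equivalences by \cref{anodyne-htpy-equiv}; thus $Q(\eta_Y f)$, sitting in a square with vertical anodyne maps and $f$, is a homotopy equivalence between marked cubical quasicategories (two-out-of-three for homotopy equivalence, which holds since $[-,-]$ is functorial and homotopy equivalences are the isomorphisms in the homotopy category). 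Being moreover a naive fibration, \cref{tfib-iff-retract}\,(\ref{tfib-iff-retract-streq}) — wait, that is circular — so instead I will argue directly: a naive fibration $p \colon A \to B$ between marked cubical quasicategories that is a homotopy equivalence admits a strong deformation section by the $(i)\Rightarrow(iii)$ part of \cref{htpy-equiv-iff-strong} together with a standard lifting argument promoting a semi-adjoint section to a strict one (lift the section's domain $\varnothing \to B$ against $p$, then correct the homotopy using that $\ihom(B,A) \to \ihom(B,B)$ is a naive fibration, exactly as in the proof of \cref{htpy-equiv-iff-strong}). Having a strong deformation section, $p$ has the right lifting property against cofibrations: given a square with a cofibration $m \colon C \to D$ on the left, one builds the lift by transporting along the deformation retraction and correcting the defect via an open-box filling argument — this is where \cref{anodyne-pop} (pushout-product of a cofibration with an anodyne map is anodyne) does the real work, since the correction problem is a lift of $p$ against a map of the form $m \hatotimes (\{\varepsilon\} \hookrightarrow (\Box^1)^\sharp)$, which is anodyne. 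Applying this with $p = Q(\eta_Y f)$ shows $Q(\eta_Y f)$ is a trivial fibration, i.e. $f$ is a weak equivalence; since $f$ is also a naive fibration and every object is fibrant here, $f$ is a fibration and a weak equivalence, hence — because all objects are cofibrant — a trivial fibration. The main obstacle is organizing this last step so as to avoid circularity with \cref{htpy-equiv-iff-strong} and \cref{tfib-iff-retract} themselves; the cleanest route is to isolate the lemma ``a naive fibration between marked cubical quasigategories that is a homotopy equivalence is a trivial fibration'' and prove it by the deformation-retract-plus-\cref{anodyne-pop} lifting argument, then derive the three-way equivalence from it.
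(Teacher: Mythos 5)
Your implications $(i)\Rightarrow(ii)$ and $(ii)\Rightarrow(iii)$ match the paper's proof essentially verbatim (the lifting problem against the cofibration $X \sqcup X \to (\Box^1)^\sharp \otimes X$ with $[gf,\id_X]$ on top and $f\pi_X$ below is exactly the construction used). For $(iii)\Rightarrow(i)$, however, you take a genuinely different and more circuitous route than the paper. The paper introduces the auxiliary condition $(iii)'$: \emph{the canonical map $\iota^1_{1,\varepsilon} \triangleright f \to f$ in $(\cSet')^\to$ admits a section}. It then observes that the adjuncts of $H$ and $K$, together with $g$, assemble into a section of this square precisely because $fH = Kf$; since $\iota^1_{1,\varepsilon} \triangleright f$ is a trivial fibration by \cref{fibrantExps}, $f$ is a trivial fibration as a retract. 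This is a one-step retract argument.

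Your plan instead first upgrades the strong homotopy equivalence to a strong deformation section, and then verifies the right lifting property directly. This works in principle, but the upgrade step is both nontrivial and described too loosely: "lift the section's domain $\varnothing \to B$ against $p$" cannot produce a section with $pg' = \id_B$ (you cannot yet lift against cofibrations), and what is actually needed is to lift the homotopy $K$ along the naive fibration $p$ against the anodyne map $\iota^1_{1,\varepsilon}\otimes B$ to replace $g$ by a strict section $g'$, and then a second, 2-dimensional lift to repair $H$ so that $fH'=\id_f$; neither is spelled out. More to the point, the detour is unnecessary: the direct lift works already from the strong-homotopy-equivalence data. Given a cofibration $m\colon C\to D$ and a square $(\alpha,\beta)$, the map $\iota^1_{1,\varepsilon}\hatotimes m$ is anodyne by \cref{anodyne-pop}, and one sets up the lifting problem with $g\beta$ and $H(\id\otimes\alpha)$ on top and $K(\id\otimes\beta)$ below; compatibility is exactly $fH = Kf$, and restriction of the lift to the appropriate endpoint solves the original square. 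This is precisely the content the paper packages into the single retract argument via $(iii)'$, which is what you should aim for: it avoids the upgrade step entirely and buys a much shorter proof.
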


\begin{proof}
If $f \colon X \to Y$ is a trivial fibration, then we can obtain a section $g \colon Y \to X$ as a lift of the following diagram:

\centerline{
\xymatrix{
\varnothing \ar[r] \ar[d] & X \ar[d]^{f} \\
Y \ar@{=}[r] & Y \\
}
}

We can then obtain a left homotopy $H \colon gf \sim \mathrm{id}_{X}$ satisfying $fH = \mathrm{id}_{f}$ as a lift in the following diagram:

\centerline{
\xymatrix{
X \sqcup X \ar@{>->}[d] \ar[rr]^-{[sf,\mathrm{id}_{X}]} & & X \ar[d]^{f} \\
(\Box^{1})^{\sharp} \otimes X \ar[rr]^-{f \pi_{X}} & & Y \\
}
}

This shows \ref{tfib-iff-retract-tfib} $\Rightarrow$ \ref{tfib-iff-retract-sds}, and the implication \ref{tfib-iff-retract-sds} $\Rightarrow$ \ref{tfib-iff-retract-streq} is trivial. 
To show that \ref{tfib-iff-retract-streq} $\Rightarrow$ \ref{tfib-iff-retract-tfib}, we first show that \ref{tfib-iff-retract-streq} implies the following condition:
\begin{itemize}
\item[(iii)'] the canonical map $\iota^1_{1,0} \triangleright f \to f$ in $(\cSet')^\to$ admits a section.
\end{itemize}

To see \ref{tfib-iff-retract-streq} $\Rightarrow$ (iii)', suppose $f$ is a strong homotopy equivalence with homotopy inverse $g \colon Y \to X$ and homotopies $H \colon gf \sim \mathrm{id}_{X}, K \colon fg \sim \mathrm{id}_{Y}$ satisfying $fH = Kf$. Then we have the following commuting diagram in $\cSet'$:

\centerline{
\xymatrix{
X \ar[r] \ar[d]_{f} & \ihom((\Box^1)^\sharp,X) \ar[d]_{\iota^1_{1,0} \triangleright f} \ar[r] & X \ar[d]^f \\
Y \ar[r] & X \times_{Y} \ihom((\Box^1)^\sharp,Y) \ar[r] & Y \\
}
}

The top-left map is the adjunct of $H$, while the bottom-left map is induced by $g$ and the adjunct of $K$; the right-hand square is as in the statement of (iii)', and hence the composite square is simply the identity square on $f$.

Finally, note that $\iota^{1}_{1,1} \triangleright f$ is a trivial fibration by \cref{fibrantExps}. Therefore, if the square given in the statement of (iii)' has a section, then $f$ is a trivial fibration as a retract of a trivial fibration. Thus (iii)' $\Rightarrow$ \ref{tfib-iff-retract-tfib}.
\end{proof}

\begin{corollary} \label{nfib+weq=trivFib}
  A map $f \colon X \to Y$ between marked cubical quasicategories is a trivial fibration exactly if it is a homotopy equivalence and a naive fibration.
\end{corollary}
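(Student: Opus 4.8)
The plan is to deduce this directly from \cref{tfib-iff-retract} and \cref{htpy-equiv-iff-strong}, both of which route the comparison between trivial fibrations and homotopy equivalences through the intermediate notion of a strong homotopy equivalence; no new combinatorics should be needed.

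For the forward implication, suppose $f$ is a trivial fibration. First I would note that $f$ is then a naive fibration: each of the generating anodyne maps is a monomorphism, so every anodyne map is a cofibration, and hence $f$, having the right lifting property against all cofibrations, has it against all anodyne maps in particular. With $f$ now known to be a naive fibration, \cref{tfib-iff-retract} applies and shows that $f$ is a strong homotopy equivalence, which is in particular a homotopy equivalence.

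For the converse, suppose $f$ is a naive fibration and a homotopy equivalence between marked cubical quasicategories. Because $f$ is a naive fibration, the last clause of \cref{htpy-equiv-iff-strong} promotes ``homotopy equivalence'' to ``strong homotopy equivalence'', and then \cref{tfib-iff-retract} closes the loop, yielding that $f$ is a trivial fibration.

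I expect no real obstacle at this stage: the genuinely substantive content — the $2$-categorical manipulation in $\Hosharp_2 \cSet'$ underlying \cref{htpy-equiv-iff-strong}, and the retract-of-a-trivial-fibration argument in \cref{tfib-iff-retract} — has already been carried out. The only point requiring a moment's attention is the elementary observation that anodyne maps are cofibrations, which is what licenses the first step of the forward implication.
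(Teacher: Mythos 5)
Your proposal is correct and follows exactly the route the paper takes: both directions are reduced to \cref{tfib-iff-retract} and \cref{htpy-equiv-iff-strong}, together with the observation that anodyne maps are cofibrations (so trivial fibrations are naive fibrations).
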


\begin{proof}
This follows from \cref{htpy-equiv-iff-strong,tfib-iff-retract}, together with the fact that every trivial fibration is a naive fibration since all anodyne maps are cofibrations.
\end{proof}

\begin{proposition}
The category of marked cubical quasicategories forms a fibration category, with naive fibrations as the fibrations and homotopy equivalences as the weak equivalences.
\end{proposition}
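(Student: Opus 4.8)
The plan is to verify the axioms of a fibration category (in the sense of Brown, as used throughout the paper): existence of a terminal object, all objects fibrant, pullbacks along fibrations exist and fibrations/trivial fibrations are stable under pullback, fibrations and trivial fibrations compose, and every map factors as a weak equivalence followed by a fibration. Here the fibrations are the naive fibrations and the weak equivalences are the homotopy equivalences between marked cubical quasicategories; note that by \cref{nfib+weq=trivFib} the trivial fibrations (fibrations that are also weak equivalences) are exactly the trivial fibrations in the weak-factorization-system sense, which is what makes the axioms tractable.

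First I would dispatch the easy axioms. The terminal object is $\Box^0$, and every marked cubical quasicategory is fibrant by definition. Homotopy equivalences satisfy two-out-of-six (hence two-out-of-three): this is a formal consequence of the fact that homotopy equivalences are precisely the maps sent to isomorphisms by the functor $X \mapsto \Ho_2\cSet'(-,X)$-style localization, or more concretely one checks it directly using that $\Ho_2\cSet'$ is a $2$-category and an equivalence in a $2$-category satisfies two-out-of-six. Composites of naive fibrations are naive fibrations since naive fibrations are defined by a right lifting property; composites of trivial fibrations are trivial fibrations for the same reason (they are the right class of a weak factorization system by \cref{cofibration-generators} and the small object argument, combined with \cref{nfib+weq=trivFib}). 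For the factorization axiom, given $f \colon X \to Y$ between marked cubical quasicategories, factor it via the (anodyne, naive fibration) weak factorization system as $X \xrightarrow{\iota} Mf \xrightarrow{p} Y$; then $Mf$ is a marked cubical quasicategory since $p$ is a naive fibration and $Y$ is fibrant, $\iota$ is anodyne between marked cubical quasicategories hence a homotopy equivalence by \cref{anodyne-htpy-equiv}, and $p$ is a naive fibration.

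The substantive content is pullback-stability. Pullbacks of naive fibrations along arbitrary maps exist in $\cSet'$ (it is complete) and are again naive fibrations, since the right class of a weak factorization system is stable under pullback; moreover the pullback of a marked cubical quasicategory along a map between marked cubical quasicategories is again one, being the domain of a naive fibration over a fibrant object. The same argument handles stability of trivial fibrations, using their characterization as the right class of the (cofibration, trivial fibration) weak factorization system. What remains — and this is the step I expect to be the main obstacle — is to show that the pullback of a \emph{trivial} naive fibration (equivalently, by \cref{nfib+weq=trivFib}, a naive fibration that is a homotopy equivalence, equivalently a trivial fibration in the lifting sense) along a naive fibration is again a weak equivalence. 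But once we have identified trivial fibrations between marked cubical quasicategories with the right class of (monomorphisms, trivial fibrations), this too follows from pullback-stability of that class. So the real work is precisely the bridge already built in \cref{tfib-iff-retract,nfib+weq=trivFib}: the only genuinely non-formal input is that a naive fibration which is a homotopy equivalence is a trivial fibration, and with that in hand every fibration-category axiom reduces to a weak-factorization-system formality. I would therefore present the proof as: (1) quote \cref{nfib+weq=trivFib} to identify trivial fibrations; (2) observe all remaining axioms are closure properties of the two weak factorization systems (cofibrations, trivial fibrations) and (anodynes, naive fibrations), together with \cref{anodyne-htpy-equiv} for the factorization axiom and two-out-of-three for homotopy equivalences via $\Ho_2\cSet'$.
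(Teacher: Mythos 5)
Your proof is correct and follows essentially the same line as the paper: identify trivial fibrations via \cref{nfib+weq=trivFib}, deduce pullback stability of fibrations and trivial fibrations from the two weak factorization systems, and obtain the factorization axiom from the (anodyne, naive fibration) factorization together with \cref{anodyne-htpy-equiv}. You spell out the routine axioms more explicitly and sketch a 2-categorical argument for 2-out-of-3, but the key inputs and overall decomposition are the same as the paper's.
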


\begin{proof}
The class of homotopy equivalences is closed under 2-out-of-3. \cref{nfib+weq=trivFib} shows that the maps between marked cubical quasicategories which are naive fibrations and homotopy equivalences are exactly the trivial fibrations; both fibrations and trivial fibrations are defined via a right lifting property, and hence they are stable under pullback. By \cref{anodyne-htpy-equiv}, each anodyne map between marked cubical quasicategories is a homotopy equivalence, and so the (anodyne, naive fibration)-factorization gives the factorization axiom.
\end{proof}

\begin{lemma}\label{we-marked-fibrant-objects}
  Let $f \colon X \to Y$ be a map between marked cubical quasicategories. Then the following conditions are equivalent:
  \begin{enumerate}
    \item\label{we-marked-fibrant-objects-we} $f$ is a weak equivalence;
    \item\label{we-marked-fibrant-objects-left} $f$ is a left homotopy equivalence;
    \item\label{we-marked-fibrant-objects-right} $f$ is a right homotopy equivalence.
  \end{enumerate}
\end{lemma}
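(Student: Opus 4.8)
The plan is to deduce the equivalences from the characterization of trivial fibrations between marked cubical quasicategories in \cref{nfib+weq=trivFib}, by passing through the canonical naive fibrant replacement of $f$. Recall that this replacement is the commuting square with $\eta_Y \circ f = Q(\eta_Y f) \circ \eta_{\eta_Y f}$, in which $\eta_Y \colon Y \to \overline{Y}$ and $\eta_{\eta_Y f} \colon X \to \overline{X}$ are anodyne, $\overline{X}$ and $\overline{Y}$ are marked cubical quasicategories, $Q(\eta_Y f) \colon \overline{X} \to \overline{Y}$ is a naive fibration, and, by definition, $f$ is a weak equivalence precisely when $Q(\eta_Y f)$ is a trivial fibration. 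Since $X$ and $Y$ are themselves marked cubical quasicategories, the anodyne maps $\eta_Y$ and $\eta_{\eta_Y f}$ are, by \cref{anodyne-htpy-equiv} (whose proof treats the right-handed case as well), both left and right homotopy equivalences.

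To prove $(i) \Leftrightarrow (ii)$ I would combine three facts about this square: by \cref{nfib+weq=trivFib} the naive fibration $Q(\eta_Y f)$ is a trivial fibration iff it is a left homotopy equivalence; left homotopy equivalences between marked cubical quasicategories satisfy two-out-of-three (as recorded in the proof that these form a fibration category); and $\eta_Y$, $\eta_{\eta_Y f}$ are left homotopy equivalences. Together these yield that $Q(\eta_Y f)$ is a left homotopy equivalence iff $f$ is, whence $f$ is a weak equivalence iff $f$ is a left homotopy equivalence. The equivalence $(i) \Leftrightarrow (iii)$ will be the same argument with ``right'' in place of ``left'' throughout, using the right-handed forms of \cref{anodyne-htpy-equiv} and \cref{nfib+weq=trivFib}; these hold by the ``analogous'' proofs already indicated, or, more conceptually, by transporting the whole situation along the anti-monoidal involution $(-)^{\rmco}$ of $\cSet'$, which preserves marked cubical quasicategories, naive fibrations, and trivial fibrations while interchanging $\ihom_L$ with $\ihom_R$, and hence left homotopies with right homotopies.

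The only delicate point will be the left/right bookkeeping --- in particular, checking that the defining notion of weak equivalence, which refers to the a priori asymmetric class of anodyne maps, is invariant under $(-)^{\rmco}$. This holds because $(-)^{\rmco}$ permutes the generating anodyne maps (the marked open box inclusions, the saturation map, and the $3$-out-of-$4$ maps) among themselves up to isomorphism, so the saturated class of anodyne maps, and with it the class of naive fibrations, is self-dual; trivial fibrations, being defined by lifting against monomorphisms, are self-dual for trivial reasons. Granting this, the right-handed argument is literally the left-handed one conjugated by $(-)^{\rmco}$, so no genuinely new work is required.
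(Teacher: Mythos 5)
Your argument for $(i)\Leftrightarrow(ii)$ is precisely the paper's: take the canonical naive fibrant replacement $\overline f = Q(\eta_Y f)$, note that $\eta_Y$ and $\eta_{\eta_Y f}$ are left homotopy equivalences by \cref{anodyne-htpy-equiv}, apply two-out-of-three, and conclude via \cref{nfib+weq=trivFib}. For $(i)\Leftrightarrow(iii)$ the paper also just says ``an analogous argument,'' and your primary suggestion --- rerun the proof with ``right'' in place of ``left,'' using the right-handed forms of \cref{anodyne-htpy-equiv} and \cref{nfib+weq=trivFib}, both of which are stated with that caveat --- is exactly that. So your main route is correct and matches the paper.

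The ``more conceptual'' alternative via $(-)^{\rmco}$ has a genuine gap, and it is worth being precise about it. You assert that $(-)^{\rmco}$ permutes the generating anodyne maps among themselves up to isomorphism, and hence that the saturated class of anodynes (and therefore of naive fibrations) is self-dual. This is true for the marked open box inclusions and the $3$-out-of-$4$ maps, but it fails for the saturation map: $K^{\rmco}\to (K')^{\rmco}$ is \emph{not} isomorphic to $K\to K'$, and at this point in the development one has no way to see that it is anodyne at all. The paper's own treatment of this issue, in the proof of \cref{op-co-Quillen-equivalence-marked}, singles the saturation map out as the problematic case and handles it by a lifting argument against fibrations between fibrant objects --- an argument that needs the model structure of \cref{cubical-marked-ms}, which is proved \emph{after} the present lemma. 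So the conjugation argument, as you set it up, is circular: the self-duality of weak equivalences under $(-)^{\rmco}$ is a consequence of the model structure, not something available for use in its construction. Since you offer the ``analogous proof'' as the main route and $(-)^{\rmco}$ only as an aside, the proof as a whole is fine; just strike (or correct) the claim that $(-)^{\rmco}$ fixes the saturation map up to isomorphism.
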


\begin{proof}
Consider the canonical naive fibrant replacement of $f$ used in the definition of the weak equivalences:
  \[
\xymatrix{
  X
  \ar[r]^f
  \ar[d]_{\iota_X}
&
  Y
  \ar[d]^{\iota_Y}
\\
  \overline{X}
  \ar[r]^{\overline{f}}
&
  \overline{Y}
}
\]
(here $\iota_{Y} = \eta_{Y}, \overline{f} = Q(\eta_{Y} f), \iota_{X} = \eta_{\eta_{Y}f}$).

By \cref{anodyne-htpy-equiv}, $\iota_{X}$ and $\iota_{Y}$ are left homotopy equivalences. Since left homotopy equivalences satisfy the two-out-of-three property, $f$ is a left homotopy equivalence if and only if $\overline{f}$ is one. By \cref{nfib+weq=trivFib}, $\overline{f}$ is a left homotopy equivalence if and only if it is a trivial fibration, i.e. if and only if $f$ is a weak equivalence. So \ref{we-marked-fibrant-objects-we} $\Leftrightarrow$ \ref{we-marked-fibrant-objects-left}; an analogous argument shows \ref{we-marked-fibrant-objects-we} $\Leftrightarrow$ \ref{we-marked-fibrant-objects-right}.
\end{proof}

\subsection{Cofibration category of marked cubical sets}

Our next result shows that the definition of the weak equivalences is not sensitive to the choice of naive fibrant replacement.

\begin{lemma}\label{we-independent-factorization}
Let $f \colon X \to Y$ be a map of marked cubical sets. The following are equivalent:
\begin{enumerate}
  \item\label{we-independent-factorization-we} $f$ is a weak equivalence.
  \item\label{we-independent-factorization-exists} there exists a naive fibrant replacement of $f$ by a trivial fibration;
  \item\label{we-independent-factorization-any} any naive fibrant replacement of $f$ is a trivial fibration.
\end{enumerate}
\end{lemma}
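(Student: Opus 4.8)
The plan is to first reduce the whole statement to the single implication \ref{we-independent-factorization-exists} $\Rightarrow$ \ref{we-independent-factorization-any}. The canonical naive fibrant replacement of $f$ built from the functorial (anodyne, naive fibration)-factorization is itself a naive fibrant replacement, and by definition $f$ is a weak equivalence exactly when its middle map $Q(\eta_Y f)$ is a trivial fibration. Hence \ref{we-independent-factorization-we} $\Rightarrow$ \ref{we-independent-factorization-exists} is immediate (the canonical replacement witnesses it) and \ref{we-independent-factorization-any} $\Rightarrow$ \ref{we-independent-factorization-we} is immediate (the canonical replacement is one of the replacements being quantified over). So what remains is to show that if \emph{some} naive fibrant replacement of $f$ has a trivial fibration as its middle map, then \emph{every} one does.

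To this end I would fix two naive fibrant replacements, say $(\iota_X, \iota_Y, \overline f)$ with $\overline f$ a trivial fibration and $(\iota'_X, \iota'_Y, \overline f')$ with $\overline f'$ merely a naive fibration, and build a comparison between the two squares. Lifting the anodyne map $\iota_Y$ against $\overline Y' \to \Box^0$ yields $v \colon \overline Y \to \overline Y'$ with $v \iota_Y = \iota'_Y$; then the square with left edge $\iota_X$, top edge $\iota'_X$, bottom edge $v \overline f$, right edge $\overline f'$ commutes (a one-line diagram chase: $\overline f' \iota'_X = \iota'_Y f = v \iota_Y f = v \overline f \iota_X$), and since $\iota_X$ is anodyne and $\overline f'$ is a naive fibration there is a lift $u \colon \overline X \to \overline X'$ with $u \iota_X = \iota'_X$ and $\overline f' u = v \overline f$. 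Symmetrically, lifting the anodyne maps $\iota'_X$ and $\iota'_Y$ against $\overline X \to \Box^0$ and $\overline Y \to \Box^0$ gives $u' \colon \overline X' \to \overline X$ with $u' \iota'_X = \iota_X$ and $v' \colon \overline Y' \to \overline Y$ with $v' \iota'_Y = \iota_Y$.

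The next point to establish is that $u$ and $v$ are homotopy equivalences. The composite $u' u \colon \overline X \to \overline X$ agrees with $\mathrm{id}_{\overline X}$ after precomposition with the anodyne map $\iota_X$, so a filler for the lifting problem obtained from the anodyne pushout–product $(\bd \Box^1 \to (\Box^1)^\sharp) \hatotimes \iota_X$ (anodyne by \cref{anodyne-pop}) against the naive fibration $\overline X \to \Box^0$ produces an elementary homotopy $u' u \sim \mathrm{id}_{\overline X}$; symmetrically $u u' \sim \mathrm{id}_{\overline X'}$, so $u$ is a homotopy equivalence, and likewise $v$. Then, since $\overline f$ is a trivial fibration it is a homotopy equivalence by \cref{nfib+weq=trivFib}, so $\overline f' u = v \overline f$ is a composite of homotopy equivalences and hence a homotopy equivalence; as homotopy equivalences satisfy two-out-of-three (homotopy is a congruence, so homotopy classes of maps form a category in which the homotopy equivalences are precisely the isomorphisms) and $u$ is a homotopy equivalence, $\overline f'$ is a homotopy equivalence; being also a naive fibration, $\overline f'$ is a trivial fibration by \cref{nfib+weq=trivFib}.

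The main obstacle is that the model structure on $\cSet'$ is not yet available and an anodyne map whose domain is not fibrant need not be a homotopy equivalence, so one cannot obtain $u$ and $v$ as homotopy equivalences "for free" from \cref{anodyne-htpy-equiv}; the explicit construction of their homotopy inverses via lifting pushout–products of cofibrations with anodyne maps, as above, is the technical heart of the argument, and care is needed to set up the comparison square so that the relevant anodyne/naive-fibration lifting properties genuinely apply.
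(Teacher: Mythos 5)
Your proof is correct and takes a genuinely different route from the paper's. Both arguments begin with the same reduction to \ref{we-independent-factorization-exists} $\Rightarrow$ \ref{we-independent-factorization-any}, but diverge from there. The paper forms a \emph{third} naive fibrant replacement $\overline{f}'' \colon \overline X'' \to \overline Y''$ of the induced map on pushouts $\overline X \cup_X \overline X' \to \overline Y \cup_Y \overline Y'$; the four resulting maps $\overline X \to \overline X''$, $\overline Y \to \overline Y''$, $\overline X' \to \overline X''$, $\overline Y' \to \overline Y''$ are anodyne between marked cubical quasicategories, so \cref{anodyne-htpy-equiv} applies directly, and two applications of 2-out-of-3 give that $\overline f'$ is a homotopy equivalence. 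You instead build direct comparison maps $u, v$ by lifting the anodyne legs against the naive fibrations $\overline X' \to \Box^0$, $\overline Y' \to \Box^0$ (and symmetrically), then exhibit homotopies $u'u \sim \mathrm{id}$, $uu' \sim \mathrm{id}$, etc., by lifting against the anodyne pushout products $(\bd \Box^1 \to (\Box^1)^\sharp) \hatotimes \iota_X$. Your approach avoids the pushout construction entirely and stays inside the original two replacements, at the cost of reconstructing by hand the homotopy-equivalence structure that the paper gets for free from \cref{anodyne-htpy-equiv} (the map $u'u$ is a retraction-type comparison, not itself anodyne, so that lemma can't be cited directly). The paper's route is shorter because it arranges for the relevant anodyne maps to have fibrant domains, precisely the point you flag in your closing paragraph as the obstruction one must route around; your route is more elementary in not needing the extra replacement. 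Both are valid, and both ultimately rest on 2-out-of-3 for homotopy equivalences via the quotient category of homotopy classes, as you note.
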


\begin{proof}
The implications \ref{we-independent-factorization-we} $\Rightarrow$ \ref{we-independent-factorization-exists} and \ref{we-independent-factorization-any} $\Rightarrow$ \ref{we-independent-factorization-we} are immediate from the definition of the weak equivalences. To prove \ref{we-independent-factorization-exists} $\Rightarrow$ \ref{we-independent-factorization-any}, consider a map $f \colon X \to Y$ having a naive fibrant replacement by a trivial fibration $\overline{f} \colon \overline{X} \to \overline{Y}$, and an arbitrary naive fibrant replacement $\overline{f}' \colon \overline{X}' \to \overline{Y}'$ of $f$. As depicted below, let $\overline{f}'' \colon \overline{X}'' \to \overline{Y}''$ be a naive fibrant replacement of the induced map between the pushouts $\overline{X} \cup_{X} \overline{X}' \to \overline{Y} \cup_{Y} \overline{Y}'$.

 \[
\xymatrix@!C{
 X
 \ar[rr]
 \ar[dd]
 \ar[dr]^{f}
&&
 \overline{X}'
 \ar[dd]|!{[dl];[dr]}{\hole}
 \ar[dr]^{\overline{f}'}
\\&
 Y
 \ar[rr]
 \ar[dd]
&&
 \overline{Y}'
 \ar[dd]
\\
 \overline{X}
 \ar[rr]|!{[uur];[dr]}{\hole}
 \ar[dr]^{\overline{f}}
&&
 \overline{X} \cup_{X} \overline{X}'
 \ar[dr]
 \ar[rr]|{\hole}
 &&
 \overline{X}''
 \ar[dr]^{\overline{f}''}
\\&
 \overline{Y}
 \ar[rr]
&&
 \overline{Y} \cup_{Y} \overline{Y}'
 \ar[rr]
 &&
 \overline{Y}''
}
\]

The maps $\overline{X} \to \overline{X}'', \overline{Y} \to \overline{Y}'', \overline{X}' \to \overline{X}'', \overline{Y}' \to \overline{Y}''$ are anodyne, as anodyne maps are closed under pushout and composition. Furthermore, $\overline{f}$ is a trivial fibration by assumption. Thus all of these maps are homotopy equivalences by \cref{anodyne-htpy-equiv} and \cref{nfib+weq=trivFib}. So we can apply the two-out-of-three property to see that $\overline{f}''$ is a homotopy equivalence; applying it again, we see that $\overline{f}'$ is a homotopy equivalence. Thus $\overline{f}'$ is a trivial fibration by \cref{nfib+weq=trivFib}. Since $\overline{f}'$ was arbitrary, we have shown that $f$ satisfies \ref{we-independent-factorization-any}.
\end{proof}

\begin{corollary}\label{anodyne-we}
Every anodyne map is a weak equivalence.
\end{corollary}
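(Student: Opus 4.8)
The plan is to reduce the statement to a claim about maps between marked cubical quasicategories, where \cref{nfib+weq=trivFib} is available. Let $f \colon X \to Y$ be anodyne, and form the canonical naive fibrant replacement
\[
\xymatrix{
  X \ar[r]^f \ar[d]_{\iota_X} & Y \ar[d]^{\iota_Y} \\
  \overline{X} \ar[r]^{\overline{f}} & \overline{Y}
}
\]
coming from the functorial factorization, so that $\iota_Y = \eta_Y$ and $\iota_X = \eta_{\eta_Y f}$ are anodyne, $\overline{f} = Q(\eta_Y f)$ is a naive fibration, and $\overline{f}\iota_X = \iota_Y f$. By definition, $f$ is a weak equivalence precisely when $\overline{f}$ is a trivial fibration. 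Since $\overline{Y} \to \Box^0$ and $\overline{f}$ are naive fibrations, so is $\overline{X} \to \Box^0$; thus $\overline{X}$ and $\overline{Y}$ are both marked cubical quasicategories. Moreover $\iota_Y f = \overline{f}\iota_X$ is anodyne, being a composite of anodyne maps. By \cref{nfib+weq=trivFib} it now suffices to show $\overline{f}$ is a homotopy equivalence.

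First I would produce a section: lifting the anodyne map $\iota_Y f$ against the naive fibration $\overline{f}$ in the square whose bottom edge is $\id_{\overline{Y}}$ gives $s \colon \overline{Y} \to \overline{X}$ with $\overline{f}s = \id_{\overline{Y}}$ and $s\iota_Y f = \iota_X$. It remains to check that $s$ is a homotopy inverse, i.e.\ that $s\overline{f} \sim \id_{\overline{X}}$; the key observation is that these two maps agree after precomposition with the anodyne map $\iota_X$, since $(s\overline{f})\iota_X = s(\overline{f}\iota_X) = s\iota_Y f = \iota_X$. The argument is then closed by the following general fact, which I would prove as a small lemma: if $j \colon A \to B$ is anodyne, $W$ is a marked cubical quasicategory, and $g, h \colon B \to W$ satisfy $gj = hj$, then $g$ and $h$ are left homotopic. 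To prove it one glues $[g,h] \colon \partial\Box^1 \otimes B \to W$ with the constant homotopy $(\Box^1)^\sharp \otimes A \xrightarrow{\pi_A} A \xrightarrow{gj} W$ (using $gj = hj$) into a map out of $(\partial\Box^1 \otimes B)\cup_{\partial\Box^1 \otimes A}((\Box^1)^\sharp \otimes A)$ and extends it along the inclusion into $(\Box^1)^\sharp \otimes B$; this inclusion is the pushout product $(\partial\Box^1 \hookrightarrow (\Box^1)^\sharp) \hatotimes j$, hence anodyne by \cref{anodyne-pop} since $\partial\Box^1 \to (\Box^1)^\sharp$ is a cofibration, so the extension exists against $W \to \Box^0$. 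Applying this with $j = \iota_X$, $W = \overline{X}$, $g = s\overline{f}$, $h = \id_{\overline{X}}$ yields $s\overline{f} \sim \id_{\overline{X}}$, so $\overline{f}$ is a homotopy equivalence, hence a trivial fibration by \cref{nfib+weq=trivFib}, and therefore $f$ is a weak equivalence.

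The main obstacle is essentially the identification of the gluing inclusion with a pushout product of a cofibration and an anodyne map — this is what lets \cref{anodyne-pop} apply — together with the bookkeeping that the relevant factorization maps agree on the nose rather than only up to homotopy. Everything else is an assembly of previously established facts: \cref{nfib+weq=trivFib}, \cref{anodyne-pop}, and the closure of anodyne maps and naive fibrations under composition. I do not expect any genuinely new difficulty beyond this routine gluing lemma. Note also that \cref{we-independent-factorization} is not needed here, since we argue directly with the canonical naive fibrant replacement used to define weak equivalences.
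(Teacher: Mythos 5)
Your proof is correct, but it takes a genuinely different route from the one in the paper. The paper's argument is a one-liner: since $f$ is anodyne, the square with $\eta_Y f$ on the left, $\eta_Y$ on the right, and $\id_{\overline{Y}}$ on the bottom is itself a naive fibrant replacement of $f$ by a trivial fibration, and \cref{we-independent-factorization} immediately gives that $f$ is a weak equivalence. You explicitly decline to invoke \cref{we-independent-factorization}, and instead work with the canonical replacement $\overline{f}$ directly: you extract a section $s$ by lifting the anodyne composite $\iota_Y f$ against the naive fibration $\overline{f}$, observe $(s\overline{f})\iota_X = \iota_X$, and then invoke a gluing lemma — if $j$ is anodyne and $g j = h j$ into a marked cubical quasicategory, then $g \sim h$, proved by extending over $(\partial\Box^1 \hookrightarrow (\Box^1)^\sharp)\hatotimes j$ via \cref{anodyne-pop} — to get $s\overline{f} \sim \id_{\overline{X}}$ and hence conclude via \cref{nfib+weq=trivFib}. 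This is valid, and your pushout-product identification is correct. Worth noting: your gluing lemma is essentially the same lifting trick already used in the proof of \cref{anodyne-htpy-equiv} (the left homotopy $fr \sim \id_Y$ is produced by exactly this kind of extension), so the paper already has this idea in hand; it simply packages the bookkeeping once and for all in \cref{we-independent-factorization}, whereas you redo it locally. The paper's route buys brevity at the cost of having proved a replacement-independence statement; yours is more self-contained here at the cost of re-running the lifting argument.
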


\begin{proof}
Let $f \colon X \to Y$ be anodyne. The following diagram gives a naive fibrant replacement of $f$:

\centerline{
\xymatrix{
X \ar[r]^{f} \ar[d]_{\eta_{Y} f} & Y \ar[d]^{\eta_{Y}} \\
\overline{Y} \ar@{=}[r] & \overline{Y} \\
}
}

Since $\mathrm{id}_{\overline{Y}}$ is a trivial fibration, $f$ is a weak equivalence by \cref{we-independent-factorization}.
\end{proof}

\begin{proposition} \label{we-characterizations}
The following are equivalent for a marked cubical map $A \to B$:
\begin{enumerate}
\item\label{we-characterizations-we} $A \to B$ is a weak equivalence;
\item\label{we-characterizations-htpy-eq} for any marked cubical quasicategory $X$, the induced map $\ihom(B, X) \to \ihom(A, X)$ is a homotopy equivalence;
\item\label{we-characterizations-pi} for any marked cubical quasicategory $X$, the induced map $\pi_0(\ihom(B, X)) \to \pi_0(\ihom(A, X))$ is a bijection.
\end{enumerate}
\end{proposition}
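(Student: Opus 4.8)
The strategy is to prove the cycle $\ref{we-characterizations-we} \Rightarrow \ref{we-characterizations-htpy-eq} \Rightarrow \ref{we-characterizations-pi} \Rightarrow \ref{we-characterizations-we}$. The middle implication is immediate: a homotopy equivalence induces a bijection on $\pi_0$, since $\pi_0$ is functorial and sends homotopic maps to equal maps (a homotopy being a zig-zag of marked edges, hence collapsed in $\pi_0$). So the content is in the other two implications.

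\textbf{First implication.} Given a weak equivalence $f \colon A \to B$ and a marked cubical quasicategory $X$, I want $\ihom(B,X) \to \ihom(A,X)$ to be a homotopy equivalence. I would take the canonical naive fibrant replacement $\iota_A \colon A \to \overline A$, $\iota_B \colon B \to \overline B$, $\overline f \colon \overline X \to \overline Y$ of $f$ as in the definition; by \cref{we-independent-factorization}, $\overline f$ is a trivial fibration, and $\iota_A, \iota_B$ are anodyne. Applying $\ihom(-, X)$ turns these into maps $\ihom(\overline B, X) \to \ihom(B, X)$ etc.; by \cref{fibrantExps}, the pullback-exponential $(\iota_A) \triangleright (X \to \Box^0)$ is a trivial fibration, i.e.\ $\ihom(\overline A, X) \to \ihom(A, X)$ is a trivial fibration, hence (being between marked cubical quasicategories, again by \cref{fibrantExps}) a homotopy equivalence by \cref{nfib+weq=trivFib}; similarly for $\iota_B$. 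And $\ihom(\overline f, X) \colon \ihom(\overline Y, X) \to \ihom(\overline X, X)$ is a trivial fibration, hence a homotopy equivalence, because $\overline f$ is a trivial fibration so $(\overline f) \triangleright (X \to \Box^0)$ is a trivial fibration by \cref{fibrantExps}. Two-out-of-three for homotopy equivalences (valid since they coincide with weak equivalences of the fibration category, or directly) then forces $\ihom(B, X) \to \ihom(A, X)$ to be a homotopy equivalence.

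\textbf{Third implication.} This is where the work lies. Assume $\ref{we-characterizations-pi}$ holds for $f \colon A \to B$. I want to show $f$ is a weak equivalence, i.e.\ that its naive fibrant replacement $\overline f \colon \overline A \to \overline B$ is a trivial fibration; by \cref{nfib+weq=trivFib} it suffices to show $\overline f$ is a homotopy equivalence. By \cref{htpy-equiv-iff-strong} it is enough to produce a homotopy inverse. The idea is the standard one: $\ref{we-characterizations-pi}$ applied with $X = \overline A$ gives a bijection $\pi_0 \ihom(\overline B, \overline A) \to \pi_0 \ihom(\overline A, \overline A)$, so the class of $\id_{\overline A}$ is hit by some $g \colon \overline B \to \overline A$ with $g \overline f \sim \id_{\overline A}$; applying $\ref{we-characterizations-pi}$ with $X = \overline B$ and using that $\overline f g$ and $\id_{\overline B}$ have the same image under $\pi_0 \ihom(\overline B, \overline B) \to \pi_0 \ihom(\overline A, \overline B)$ (both restrict to $\overline f$, using $g\overline f \sim \id$) yields $\overline f g \sim \id_{\overline B}$. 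One subtlety: I must first pass from $f$ to $\overline f$, checking that $\ref{we-characterizations-pi}$ for $f$ implies $\ref{we-characterizations-pi}$ for $\overline f$ — this follows from the already-established $\ref{we-characterizations-we} \Rightarrow \ref{we-characterizations-htpy-eq} \Rightarrow \ref{we-characterizations-pi}$ direction applied to the anodyne maps $\iota_A, \iota_B$ (which are weak equivalences by \cref{anodyne-we}), together with two-out-of-three for bijections. The main obstacle is bookkeeping the zig-zags: $\pi_0$ only records connected components, so ``$g \overline f \sim \id$'' means a zig-zag of marked edges in $\ihom(\overline A, \overline A)$, and I need to make sure this is still enough to conclude a genuine homotopy — but since $\ihom(\overline A, \overline A)$ is a marked cubical quasicategory, \cref{EqRel} collapses the zig-zag to a single marked edge, so this is fine.

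Assembling: $\ref{we-characterizations-we} \Rightarrow \ref{we-characterizations-htpy-eq}$ by the fibrant-replacement argument, $\ref{we-characterizations-htpy-eq} \Rightarrow \ref{we-characterizations-pi}$ by functoriality of $\pi_0$, and $\ref{we-characterizations-pi} \Rightarrow \ref{we-characterizations-we}$ by the two-out-of-three reduction to $\overline f$ and the homotopy-inverse construction above via \cref{htpy-equiv-iff-strong} and \cref{nfib+weq=trivFib}.
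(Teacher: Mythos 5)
Your overall structure — the cycle $\ref{we-characterizations-we} \Rightarrow \ref{we-characterizations-htpy-eq} \Rightarrow \ref{we-characterizations-pi} \Rightarrow \ref{we-characterizations-we}$, with $\ref{we-characterizations-htpy-eq} \Rightarrow \ref{we-characterizations-pi}$ by functoriality of $\pi_0$ and $\ref{we-characterizations-pi} \Rightarrow \ref{we-characterizations-we}$ by reduction to the naive fibrant replacement and the Yoneda-style construction of a homotopy inverse — is the same as the paper's, and those two legs are correct as written.

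There is, however, a genuine gap in $\ref{we-characterizations-we} \Rightarrow \ref{we-characterizations-htpy-eq}$. You assert that $\ihom(\overline f, X)$ is a trivial fibration because ``$\overline f$ is a trivial fibration so $(\overline f) \triangleright (X \to \Box^0)$ is a trivial fibration by \cref{fibrantExps}.'' But \cref{fibrantExps} requires the first argument of the pullback exponential to be a \emph{cofibration}, and it only concludes a trivial fibration when that cofibration is anodyne or when the second map is a trivial fibration. Here $\overline f$ is a trivial fibration — neither a cofibration nor anodyne — and $X \to \Box^0$ is only a naive fibration, so the lemma does not apply. Moreover the claim itself is dubious: unwinding by adjunction, $\ihom(\overline f, X)$ being a trivial fibration would require $X \to \Box^0$ to lift against $i \hatotimes \overline f$ for every cofibration $i$, and this pushout product need not even be a monomorphism when $\overline f$ fails to be one. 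What is actually true, and what you need, is that $\ihom(\overline f, X)$ is a \emph{homotopy equivalence}. The paper obtains this via \cref{precomp-htpy}: since $\overline f$ is a trivial fibration between marked cubical quasicategories, it is a (left) homotopy equivalence by \cref{nfib+weq=trivFib}, and \cref{precomp-htpy} then says that precomposition by $\overline f$ induces a (right) homotopy equivalence on $\ihom_L(-, X)$. Substituting that step for your misapplied \cref{fibrantExps}, your two-out-of-three argument closes the gap.
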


\begin{proof}
  First, suppose that $A \to B$ is a weak equivalence. Thus, there is a square
  \[
\xymatrix{
  A
  \ar[r]
  \ar[d]
&
  B
  \ar[d]
\\
  \overline{A}
  \ar[r]
&
  \overline{B}
}
\]
with $A \to \overline{A}$ and $B \to \overline{B}$ anodyne, and $\overline{A} \to \overline{B}$ a trivial fibration.
By \cref{nfib+weq=trivFib}, $\overline{A} \to \overline{B}$ is a left homotopy equivalence.

Applying $\ihom_L(-, X)$ to the diagram above, we obtain a diagram in which all objects are marked cubical quasicategories by \cref{fibrantExps}:
  \[
\xymatrix{
  \ihom_L(A, X)
&
  \ihom_L(B, X)
  \ar[l]
\\
  \ihom_L(\overline{A}, X)
  \ar[u]
&
  \ihom_L(\overline{B}, X)
   \ar[l]
  \ar[u]
}
\]
The vertical maps are trivial fibrations by \cref{fibrantExps}, hence homotopy equivalences by \cref{nfib+weq=trivFib}.
By \cref{precomp-htpy}, the bottom horizontal map is a right homotopy equivalence, since $\overline{A} \to \overline{B}$ is a left homotopy equivalence. Hence so is the upper horizontal map by 2-out-of-3.
Thus we have proven \ref{we-characterizations-we} $\Rightarrow$ \ref{we-characterizations-htpy-eq}.

The implication \ref{we-characterizations-htpy-eq} $\Rightarrow$ \ref{we-characterizations-pi} is clear, so it remains to show \ref{we-characterizations-pi} $\Rightarrow$ \ref{we-characterizations-we}. For that, we first observe that it suffices to consider $A$ and $B$ marked cubical quasicategories. To see this, consider the canonical naive fibrant replacement $\overline{f} \colon \overline{A} \to \overline{B}$ of a map $f \colon A \to B$. By definition, $f$ is a weak equivalence if and only if $\overline{f}$ is a trivial fibration; by \cref{nfib+weq=trivFib} and \cref{we-marked-fibrant-objects}, this holds if and only if $\overline{f}$ is a weak equivalence. Furthermore, the anodyne maps $\iota_{X}, \iota_{Y}$ are weak equivalences by \cref{anodyne-we}, and therefore satisfy \ref{we-characterizations-pi}; hence $f$ satisfies \ref{we-characterizations-pi} if and only if $\overline{f}$ does, by the 2-out-of-3 property for bijections.

Hence we can assume $A$ and $B$ are marked cubical quasicategories. Now take $X := A$ and set $g := (\pi_0f^*)^{-1}[\id_A]$. The verification that a representative of the class $g \in \pi_0 \ihom_L(B, A)$ defines a homotopy inverse of $f$ is straightforward; thus $f$ is a weak equivalence by \cref{we-marked-fibrant-objects}.
\end{proof}

\begin{corollary}\label{we-2-of-3}
The weak equivalences satisfy the 2-out-of-6 property (and hence the 2-out-of-3 property).
\end{corollary}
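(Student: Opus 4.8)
The plan is to reduce the 2-out-of-6 property for weak equivalences to the corresponding statement for homotopy equivalences between marked cubical quasicategories, which is a purely categorical fact about the homotopy category, and then transport it back along fibrant replacement. First I would recall the characterization obtained in \cref{we-characterizations}: a map $A \to B$ is a weak equivalence if and only if for every marked cubical quasicategory $X$ the induced map $\pi_0(\ihom_L(B,X)) \to \pi_0(\ihom_L(A,X))$ is a bijection. This characterization immediately gives 2-out-of-6: given composable maps $f \colon A \to B$, $g \colon B \to C$, $h \colon C \to D$ with $gf$ and $hg$ weak equivalences, applying the functor $\pi_0 \ihom_L(-,X)$ yields composable functions of sets in which $(gf)^* = f^* (gf)^*\ldots$ — more precisely, writing $\bar f = \pi_0\ihom_L(f,X)$ etc., we get $\bar f \bar g$ and $\bar g \bar h$ bijections (in the appropriate order of composition), hence by the 2-out-of-6 property for bijections of sets each of $\bar f, \bar g, \bar h$ is a bijection; since $X$ was arbitrary, $f$, $g$, $h$ are weak equivalences by \cref{we-characterizations}, and then so is any composite such as $hgf$.

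The only nontrivial point is therefore that \emph{the class of bijections of sets satisfies 2-out-of-6}, which is elementary: if $vu$ and $wv$ are bijections then $v$ has both a left inverse (from $vu$ being bijective, so $u(vu)^{-1}$ is a right inverse to $v$ after noting $(vu)^{-1}vu = \id$ gives $u(vu)^{-1}$ a left inverse of $v$ — one checks $v$ is injective from $wv$ bijective and surjective from $vu$ bijective) and consequently is a bijection, whereupon $u = v^{-1}(vu)$ and $w = (wv)v^{-1}$ are bijections as composites of bijections. The remaining claim, that 2-out-of-6 formally implies 2-out-of-3, is the standard observation: apply 2-out-of-6 to the triple $f, \id, g$ (or to $\id, f, \id$ as appropriate) to recover closure under composition and the cancellation properties.

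I expect no real obstacle here; the work was already done in establishing \cref{we-characterizations}. The one place to be slightly careful is bookkeeping the variance: $\ihom_L(-,X)$ is contravariant, so a composite $A \xrightarrow{f} B \xrightarrow{g} C$ induces $\pi_0\ihom_L(C,X) \xrightarrow{g^*} \pi_0\ihom_L(B,X) \xrightarrow{f^*} \pi_0\ihom_L(A,X)$, and one must feed the correctly-ordered composable pairs into the set-level 2-out-of-6. With that attention to order, the argument is immediate.
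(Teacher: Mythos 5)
Your argument is exactly the paper's: the corollary is deduced directly from condition~(iii) of \cref{we-characterizations} by appealing to the 2-out-of-6 property for bijections of sets. The paper states this in one line; you've merely unpacked the elementary set-theoretic verification and the variance bookkeeping, both of which are correct.
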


\begin{proof}
This is immediate from condition $(iii)$ of \cref{we-characterizations}.
\end{proof}

\begin{corollary}\label{KendEqMarked}
The endpoint inclusions $\Box^0 \to K$ are trivial cofibrations.
\end{corollary}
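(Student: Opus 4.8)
The plan is to verify directly that an endpoint inclusion $j \colon \Box^0 \to K$ is a cofibration and a weak equivalence, whence it is a trivial cofibration. That $j$ is a cofibration is immediate: it selects one of the two vertices of $K$ and so sends the unique non-degenerate cube of $\Box^0$ injectively to a non-degenerate cube of $K$, hence is a monomorphism by \cref{mono-characterization}. All the work is in showing $j$ is a weak equivalence, and for this I would compare $j$ against the saturation map $\mathrm{sat} \colon K \to K'$ using two-out-of-three.

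The key observation is that the two vertices of $K$ are exactly the endpoints of its middle edge. Consequently, for a suitable $\varepsilon \in \{0,1\}$, the composite $\mathrm{sat} \circ j \colon \Box^0 \to K \to K'$ factors as
\[
\Box^0 \xrightarrow{\iota^1_{1,\varepsilon}} (\Box^1)^\sharp \longrightarrow K' ,
\]
where the second map picks out the (marked) middle edge of $K'$. Now $\iota^1_{1,\varepsilon}$ is a one-dimensional marked open box inclusion, hence anodyne, and the map $(\Box^1)^\sharp \to K'$ is anodyne as well, being a composite of marked open box fillings; this is precisely the fact recorded in the proof of \cref{equivalence-iff-marked}. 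Therefore $\mathrm{sat} \circ j$ is anodyne, and in particular a weak equivalence by \cref{anodyne-we}. The saturation map $\mathrm{sat}$ is itself a generating anodyne map, hence also a weak equivalence by \cref{anodyne-we}. Applying two-out-of-three (\cref{we-2-of-3}) to the composite $\Box^0 \xrightarrow{j} K \xrightarrow{\mathrm{sat}} K'$, in which the total composite and the second factor are weak equivalences, we conclude that $j$ is a weak equivalence, and therefore a trivial cofibration.

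The only non-formal ingredient is the anodyneness of $(\Box^1)^\sharp \to K'$, which is where the main difficulty lies; everything else is bookkeeping. If one prefers not to invoke the proof of \cref{equivalence-iff-marked}, one can reprove this by constructing $K'$ from $(\Box^1)^\sharp$ in stages: first fill a two-dimensional marked open box whose critical edge is the marked edge, producing the ``left'' square of $K$ together with a fresh $1$-cube; then fill a second two-dimensional marked open box for the ``right'' square; and finally fill a three-dimensional open box with degenerate critical edge, in the manner of the proof of \cref{cqcats-have-connections}, to correct the fresh $1$-cube to the one actually appearing in $K'$. This last correction step is the subtle point, but it is entirely analogous to arguments already used in this section.
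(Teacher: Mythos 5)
Your argument is correct and is essentially the paper's proof: the paper draws the commuting square with vertical maps $\iota^1_{1,\varepsilon} \colon \Box^0 \to (\Box^1)^\sharp$ and $\mathrm{sat} \colon K \to K'$ and bottom map $(\Box^1)^\sharp \to K'$, observes that three of the four sides are anodyne hence weak equivalences, and concludes by two-out-of-three — which is precisely your factorization of $\mathrm{sat} \circ j$ through $(\Box^1)^\sharp$. The only differences are cosmetic (you unwind the square into an explicit composite and sketch a hands-on proof of anodyneness of $(\Box^1)^\sharp \to K'$, which the paper simply cites from the proof of \cref{equivalence-iff-marked}).
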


\begin{proof}
The maps in question are clearly cofibrations. To see that they are weak equivalences, consider the following commuting diagram:

\centerline{
\xymatrix{
\Box^0 \ar[r] \ar[d] & K \ar[d] \\
(\Box^1)^\sharp \ar[r] & K'
}
}

The left, right, and bottom maps are anodyne, hence weak equivalences by \cref{anodyne-we}. Thus the top map is a weak equivalence by \cref{we-2-of-3}.
\end{proof}

\begin{lemma} \label{triv-fib-is-we}
Trivial fibrations are weak equivalences. 
\end{lemma}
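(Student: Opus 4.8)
The plan is to reduce this to the characterization of weak equivalences in \cref{we-characterizations}, after first observing that every trivial fibration is a homotopy equivalence. So let $f \colon X \to Y$ be a trivial fibration. First I would note that $f$ is in particular a naive fibration, since every anodyne map is a cofibration and $f$ has the right lifting property against all cofibrations. Hence \cref{tfib-iff-retract} applies to $f$, and from condition \ref{tfib-iff-retract-tfib} there it follows that $f$ admits a strong deformation section: there is a map $g \colon Y \to X$ with $fg = \mathrm{id}_Y$, together with a left homotopy $H \colon gf \sim \mathrm{id}_X$. (If one prefers not to cite \cref{tfib-iff-retract}, the same $g$ and $H$ are obtained by two applications of the lifting property of $f$ against cofibrations, exactly as in that proof: lift $\varnothing \to X$ against $f$ over $\mathrm{id}_Y$ to get $g$, then lift the cofibration $X \sqcup X \to (\Box^1)^\sharp \otimes X$ against $f$ over $f\pi_X$ to get $H$.) In particular $f$ is a left homotopy equivalence.

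Next I would fix an arbitrary marked cubical quasicategory $Z$ and pass to the precomposition maps $f^* \colon \ihom_L(Y,Z) \to \ihom_L(X,Z)$ and $g^* \colon \ihom_L(X,Z) \to \ihom_L(Y,Z)$. From $fg = \mathrm{id}_Y$ we get $g^* f^* = (fg)^* = \mathrm{id}_{\ihom_L(Y,Z)}$ on the nose, while from the left homotopy $gf \sim \mathrm{id}_X$ together with \cref{precomp-htpy} we get that $f^* g^* = (gf)^*$ is right homotopic to $\mathrm{id}_{\ihom_L(X,Z)}$. Since right homotopic maps agree after applying $\pi_0$, it follows that $\pi_0(f^*) \colon \pi_0 \ihom_L(Y,Z) \to \pi_0 \ihom_L(X,Z)$ is a bijection, with inverse $\pi_0(g^*)$. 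As $Z$ was arbitrary, $f$ satisfies condition \ref{we-characterizations-pi} of \cref{we-characterizations}, and hence by the equivalence with condition \ref{we-characterizations-we} the map $f$ is a weak equivalence.

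I do not expect any serious obstacle here; the statement is essentially a bookkeeping corollary of the material already assembled. The one point requiring a little care is the left-versus-right homotopy bookkeeping in the second step: the homotopy $gf \sim \mathrm{id}_X$ produced above is a \emph{left} homotopy (it is supported on $(\Box^1)^\sharp \otimes X$), which is exactly the input shape to which \cref{precomp-htpy} applies, and one must keep $\ihom$ as $\ihom_L$ throughout so that \cref{precomp-htpy} and \cref{we-characterizations} are being invoked for the same internal hom. No new combinatorics with cubical identities is needed.
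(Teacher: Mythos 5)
Your proof is correct and follows essentially the same route as the paper's: show the trivial fibration is a homotopy equivalence, transfer this along precomposition via \cref{precomp-htpy}, and conclude with \cref{we-characterizations}. One small point in your favor: the paper's one-line proof cites \cref{nfib+weq=trivFib}, which is stated only for maps between marked cubical quasicategories, whereas a general trivial fibration need not have fibrant source or target; your citation of \cref{tfib-iff-retract}, which applies to arbitrary naive fibrations, is the more careful choice (and is indeed what actually underlies the paper's step).
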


\begin{proof}
If $A \to B$ is a trivial fibration, then it is a homotopy equivalence by \cref{nfib+weq=trivFib}. Hence $\ihom(B,X) \to \ihom(A,X)$ is a homotopy equivalence for all marked cubical quasicategories $X$ by \cref{precomp-htpy}, and hence $A \to B$ a weak equivalence by \cref{we-characterizations}. 
\end{proof}

\begin{proposition} \label{cset'-cof-cat}
The category of marked cubical sets forms a cofibration category with the above classes of weak equivalences and cofibrations.
\end{proposition}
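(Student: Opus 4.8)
The plan is to verify the axioms of a cofibration category (equivalently, Brown's category of cofibrant objects): the weak equivalences satisfy two-out-of-three and contain the isomorphisms; the cofibrations contain the isomorphisms and are closed under composition; pushouts along cofibrations exist and both cofibrations and trivial cofibrations are stable under such pushouts; every map factors as a cofibration followed by a weak equivalence; and $\varnothing \to A$ is a cofibration for every $A$. Most of these are already in hand. The cofibrations, being the monomorphisms and hence the left class of the cofibrantly generated weak factorization system of \cref{cofibration-generators}, contain the isomorphisms and are closed under composition and pushout, and $\varnothing \to A$ is always a monomorphism. The weak equivalences satisfy two-out-of-three (indeed two-out-of-six) by \cref{we-2-of-3}, and every isomorphism is a trivial fibration, hence a weak equivalence by \cref{triv-fib-is-we}. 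Finally, the factorization axiom follows from \cref{cofibration-generators}, which factors any map as a cofibration followed by a trivial fibration, together with \cref{triv-fib-is-we}.

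The only substantive step is the stability of trivial cofibrations under pushout, and I would argue it as follows. Let $j \colon A \to B$ be a trivial cofibration and $A \to C$ any map, with pushout $P = B \cup_A C$ and pushout leg $j' \colon C \to P$, which is automatically a cofibration by the above. To see that $j'$ is a weak equivalence, I would invoke \cref{we-characterizations}: fixing a marked cubical quasicategory $X$, it suffices to show that $\ihom_L(P, X) \to \ihom_L(C, X)$ is a homotopy equivalence. Since $\Box^n \otimes -$ preserves colimits, $\ihom_L(-, X)$ sends the defining pushout to a pullback, so this map is the base change of $\ihom_L(B, X) \to \ihom_L(A, X)$ along $\ihom_L(C, X) \to \ihom_L(A, X)$. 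Now $\ihom_L(B, X) \to \ihom_L(A, X)$ is a naive fibration by \cref{fibrantExps}, and, because $j$ is a weak equivalence, a homotopy equivalence by \cref{we-characterizations}; being a naive fibration and a homotopy equivalence between marked cubical quasicategories, it is a trivial fibration by \cref{nfib+weq=trivFib}. Trivial fibrations, characterized by a right lifting property, are stable under pullback, so $\ihom_L(P, X) \to \ihom_L(C, X)$ is a trivial fibration; as its source and target are marked cubical quasicategories by \cref{fibrantExps}, it is a homotopy equivalence, again by \cref{nfib+weq=trivFib}. Since $X$ was arbitrary, \cref{we-characterizations} shows $j'$ is a weak equivalence, hence a trivial cofibration.

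The main obstacle is precisely this pushout-stability of trivial cofibrations: the key device is to transport the problem through $\ihom_L(-, X)$ into a base-change question about trivial fibrations between marked cubical quasicategories, which is then settled using the pushout-product analysis behind \cref{fibrantExps} and the characterizations of weak equivalences between fibrant objects in \cref{nfib+weq=trivFib} and \cref{we-characterizations}. Everything else is formal, so once all the axioms are checked, $\cSet'$ is a cofibration category with the stated cofibrations and weak equivalences.
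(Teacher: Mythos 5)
Your proof is correct and takes essentially the same route as the paper. The paper's version is terser — it simply observes that by the characterization of weak equivalences via $\ihom(-,X)$, pushout-stability of trivial cofibrations reduces to pullback-stability of trivial fibrations — but the chain you spell out (trivial cofibration $\mapsto$ naive fibration and homotopy equivalence between marked cubical quasicategories $\mapsto$ trivial fibration, stable under base change, transported back) is exactly the content the paper is compressing.
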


\begin{proof}
The class of weak equivalences is closed under 2-out-of-3 by \cref{we-2-of-3}.
The category has an initial object and pushouts.
Cofibrations are the left class in a weak factorization system, hence stable under pushout.
Using the characterization of weak equivalences given by item~(ii) of \cref{we-characterizations}, stability of cofibrations that are weak equivalences under pushout reduces to stability of trivial fibrations under pullback.
By \cref{triv-fib-is-we}, trivial fibrations are weak equivalences, so the (cofibration, trivial fibration)-factorization gives the factorization axiom.
\end{proof}

\subsection{Model structure for marked cubical quasicategories}

\begin{definition}
A marked cubical set is \emph{finite} (resp.~\emph{countable}) if it has only finitely (resp. countably) many non-degenerate cubes. The \emph{cardinality} of a finite marked cubical set is its total number of non-degenerate cubes, in all dimensions.
\end{definition}

\begin{lemma}\label{tfib-accessible}
The trivial fibrations form an $\omega_{1}$-accessible, $\omega_{1}$-accessibly embedded subcategory of $(\mathsf{cSet}')^{\to}$.
\end{lemma}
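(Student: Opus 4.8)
The plan is to present the trivial fibrations as an injectivity class in the arrow category $(\cSet')^\to$ and then invoke the standard accessibility of such classes. By \cref{cofibration-generators} a map of marked cubical sets is a trivial fibration exactly when it has the right lifting property with respect to the set $J$ consisting of the boundary inclusions $\bd\Box^n \to \Box^n$ for $n \geq 0$ together with $\Box^1 \to (\Box^1)^\sharp$. For $j \colon A \to B$ in $J$, a map $f$ has the right lifting property against $j$ if and only if, regarded as an object of $(\cSet')^\to$, it is injective with respect to the morphism $(j, \id_B) \colon (A \xrightarrow{j} B) \to (B \xrightarrow{\id_B} B)$ of $(\cSet')^\to$, since a solution to a lifting square is precisely the required extension. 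Thus the trivial fibrations form the full subcategory of $(\cSet')^\to$ on the objects injective with respect to the set $\{(j, \id) : j \in J\}$.

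Next I would record the two presentability facts needed. First, $\cSet'$ is locally presentable: it is a full reflective subcategory of the presheaf category $\cSet'' = \pshf{\tBox}$ (with reflector $\Im$) which is closed under limits and under filtered colimits in $\cSet''$ — filtered colimits of sets preserve monomorphisms, hence preserve the defining condition that $X_e \to X_1$ be a monomorphism — so it is locally presentable by the standard recognition theorem, and consequently so is $(\cSet')^\to$. Second, every domain and codomain occurring in $J$, namely the $\bd\Box^n$, the $\Box^n$, and $(\Box^1)^\sharp$, is a finite marked cubical set and hence a $\kappa$-presentable object whenever $\cSet'$ is locally $\kappa$-presentable; in particular these are all $\omega_1$-presentable, and therefore so are the domains and codomains of the morphisms $(j, \id)$ in $(\cSet')^\to$.

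With this in hand, the conclusion is the standard fact that, in a locally presentable category, the class of objects injective with respect to a set of morphisms between $\omega_1$-presentable objects is an $\omega_1$-accessible subcategory closed under $\omega_1$-filtered colimits, hence $\omega_1$-accessibly embedded (this is the content of the small-injectivity-class results underpinning \cite{beke:sheafifiable}). For concreteness, closure under $\omega_1$-filtered colimits can also be checked by hand: if $f = \colim_i f_i$ over a filtered diagram of trivial fibrations, then since colimits in $(\cSet')^\to$ are pointwise and the domains and codomains of the maps in $J$ are finitely presentable, any lifting problem for $f$ against some $j \in J$ factors through a stage $f_i$, where it can be solved and the solution pushed forward. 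The step with actual content — and the one I expect to be the main obstacle to a fully self-contained treatment — is the accessibility of the injectivity class itself (as opposed to its closure under $\omega_1$-filtered colimits); note also that the value $\omega_1$ is not tight, the argument in fact delivering $\omega$-accessibility, but $\omega_1$ is all that is required later.
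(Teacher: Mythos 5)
Your reduction of the lemma to a small-injectivity-class statement is correct and is a genuinely different strategy from the paper's. You correctly observe that $f$ is a trivial fibration iff, as an object of $(\cSet')^\to$, it is injective against the set of squares $(j,\id_B)\colon(A\xrightarrow{j}B)\to(B\xrightarrow{\id}B)$ for $j$ ranging over \cref{cofibration-generators}, that $\cSet'$ is locally finitely presentable, and that closure under $\omega_1$-filtered (indeed, $\omega$-filtered) colimits follows by a routine finite-presentability argument. The paper instead proves $\omega_1$-accessibility by a hands-on construction: given a trivial fibration $f \colon X \to Y$, for each countable subcomplex $S \subseteq X$ it builds a larger countable subcomplex $\overline{S}$ by iteratively adjoining chosen lifts until $f|_{\overline{S}} \colon \overline{S} \to fS$ becomes a trivial fibration, and then checks these form an $\omega_1$-filtered diagram with colimit $f$.

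There is, however, a real gap in what you defer and a concrete error in what you assert. The theorem you want to invoke is \emph{not} that injectivity with respect to morphisms between $\omega_1$-presentable objects yields an $\omega_1$-accessible subcategory; the correct phenomenon is a jump in cardinality: morphisms between \emph{finitely} presentable objects give an $\omega_1$-accessible (not $\omega$-accessible) injectivity class. That cardinal jump is exactly the mathematical content of the paper's proof: when you replace a countable subobject $S$ by a countable $\overline{S}$ on which the lifting property holds, each pass adds finitely many cells but you must iterate $\omega$ times because new cells create new lifting problems, so you land on a countable object, not a finite one. Consequently your closing remark that ``the argument in fact delivers $\omega$-accessibility'' is wrong: that would require every trivial fibration to be a filtered colimit of trivial fibrations between \emph{finite} marked cubical sets, which is not what the fattening process produces. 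So while your framing is legitimate, the cardinal bound you need is precisely the nontrivial step, and your sketch both misstates its hypotheses and incorrectly claims a sharper conclusion than is available.
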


\begin{proof}
It suffices to show two things: that filtered colimits (and hence in particular $\omega_{1}$-filtered colimits) in $\mathsf{cSet}'$ preserve trivial fibrations, and that any trivial fibration can be expressed as an $\omega_{1}$-filtered colimit in $\mathsf{cSet}'$ of trivial fibrations between countable marked cubical sets. The first statement follows from the fact that the domains and codomains of the generating cofibrations are finite.

For the second statement, consider a trivial fibration $f \colon X \to Y$. Let $P$ denote the poset of countable subcomplexes of $X$; note that we consider edges of subcomplexes of $X$ to be marked if and only if they are marked in $X$. This category is $\omega_{1}$-filtered since any countable union of countable subcomplexes is countable. 

Let $i$ denote the inclusion $P \hookrightarrow \mathsf{cSet}'$; the colimit of this diagram is $X$. The images under $f$ of the countable subcomplexes of $X$, with the natural inclusions, also define a diagram $fi \colon P \to \mathsf{cSet}'$. One can easily show that trivial fibrations are surjective on underlying cubical sets; thus every cube of $Y$ appears in $fS$ for some countable subcomplex $S \subseteq X$. So $fi$ is a filtered diagram of subcomplexes of $Y$, in which the maps are inclusions and each cube of $Y$ is contained in some object of the diagram, with every marked edge of $Y$ being marked in some subcomplex in the diagram. From this, one can show that the colimit of $fi$ is $Y$. The map $f$ induces a natural transformation from $i$ to $fi$, whose induced map on the colimits is $f$ itself.

However, it may not be the case that for every component of this natural transformation is a trivial fibration. Thus we will replace $i$ by a different diagram, still having colimit $X$, with a natural transformation to $fi$ which does satisfy this property. For each countable subcomplex $S \subseteq X$, we will define a new countable subcomplex $\overline{S} \subseteq X$, such that $f\overline{S} = fS$, $f|_{\overline{S}} \colon \overline{S} \to f\overline{S}$ is a trivial fibration, and for $S' \subseteq S$, we have $\overline{S'} \subseteq \overline{S}$.

We first define $\overline{S}$ for finite $S$, proceeding by induction on cardinality. For $S = \varnothing$, we can simply set $\overline{S} = \varnothing$. Now assume that we have defined $\overline{S}$ for $|S| \leq m$, and consider a subcomplex $S$ of cardinality $m+1$. We will inductively define a family of subcomplexes $\overline{S}^{i}$ for $i \geq 0$, each countable and satisfying $f\overline{S}^{i} = fS$. Begin by setting $\overline{S}^{0} = S \cup \bigcup\limits_{S' \subsetneq S} \overline{S'}$. Then $\overline{S}^{0}$ is countable, $f\overline{S}^{0} = fS$, and for $S' \subseteq S$ we have $\overline{S'} \subseteq \overline{S}^{0}$.
 
Now assume that we have defined $\overline{S}^{i}$ for some $i \geq 0$, and let $\mathcal{D}$ be the set of all diagrams $D$ of the form:

\centerline{
\xymatrix{
\bd \Box^{n} \ar[r]^{\bd x_{D}} \ar[d] & \overline{S}^{i} \ar[d] \\
\Box^{n} \ar[r]^{y_{D}} & fS \\
}
}

Because $\overline{S}^{i}$ and $fS$ are countable, while $\bd \Box^{n}$ and $\Box^{n}$ are finite for any given $n$, there are countably many such diagrams. Because $f$ is a trivial fibration, for each such diagram we may choose a filler in $X$, i.e. an $n$-cube $x_{D} \colon \Box^{n} \to X$ whose boundary is $\bd x_{D}$, such that $fx_{D} = y_{D}$. Let $\overline{S}^{i+1} = \overline{S}^{i} \cup \bigcup\limits_{D \in \mathcal{D}} \{x_{D}\}$. Then $\overline{S}^{i+1}$ is still countable, since we have added at most countably many cubes to $\overline{S}^{i}$, and its image under $f$ is still $fS$, since each $x_{D}$ was chosen to map to a specific $y_{D} \in fS$.

Now let $\overline{S} = \bigcup\limits_{i \geq 0} \overline{S}^{i}$. This is countable, its image is $fS$, and for any $S' \subseteq S$ we have $\overline{S'} \subseteq \overline{S}$. Now consider a diagram:

\centerline{
\xymatrix{
\bd \Box^{n} \ar[r]^{\bd x} \ar[d] & \overline{S} \ar[d] \\
\Box^{n} \ar[r]^{y} & fS \\
}
}

Because $\Box^{n}$ is finite, the image of $\bd x$ is contained in some finite subcomplex of $\overline{S}$, hence in some $\overline{S}^{i}$, so it has a filler in $\overline{S}^{i+1}$ which maps to $y$. Furthermore, $f|_{\overline{S}}$ has the right lifting property with respect to the map $\Box^{1} \to (\Box^{1})^{\sharp}$, i.e. an edge $x \colon \Box^{1} \to \overline{S}$ is marked if and only $fx$ is marked, since this is true of edges in $X$. Thus $f|_{\overline{S}} \colon \overline{S} \to fS$ is a trivial fibration.

For a countably infinite $S \subseteq X$ we let $\overline{S} = \bigcup \overline{S'}$, where the union is taken over all finite subcomplexes $S' \subseteq S$. Then $f|_{\overline{S}}$ is the filtered colimit of the trivial fibrations $f|_{\overline{S'}}$, hence it is a trivial fibration.

The subcomplexes $\overline{S}$ with the natural inclusions define a diagram $\overline{i} \colon P \to \mathsf{cSet}'$, and $f$ induces a natural trivial fibration $\overline{i} \implies fi$. Observe that $\overline{i}$ is a filtered diagram of subcomplexes of $X$, in which the maps are inclusions and edges in the objects are marked if and only if they are marked in $X$; furthermore, every cube of $X$ is contained in some finite subcomplex $S$, and hence in $\overline{S}$. From this we can deduce that the colimit of $\overline{i}$ is $X$, by the same argument we used to show that the colimit of $fi$ is $Y$. The induced map between colimits is $f$; thus we have expressed $f$ as an $\omega_{1}$-filtered colimit of trivial fibrations between countable marked cubical sets.
\end{proof}

\begin{lemma} \label{we-accessible}
The weak equivalences form an $\omega_{1}$-accessible, $\omega_{1}$-accessibly embedded subcategory of $(\mathsf{cSet}')^{\to}$.
\end{lemma}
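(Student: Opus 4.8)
The plan is to realize the class $W$ of weak equivalences as the preimage of the trivial fibrations under a suitable accessible endofunctor of $(\cSet')^{\to}$, and then combine two facts: \cref{tfib-accessible}, which says the trivial fibrations form an $\omega_1$-accessible, $\omega_1$-accessibly embedded full subcategory of $(\cSet')^{\to}$; and the standard closure property that if $F \colon \mathcal{A} \to \mathcal{B}$ is an accessible functor between locally presentable categories and $\mathcal{B}_0 \subseteq \mathcal{B}$ is a full, accessible, accessibly embedded subcategory, then the full subcategory of $\mathcal{A}$ spanned by the objects $a$ with $Fa \in \mathcal{B}_0$ is again accessible and accessibly embedded (this is the closure of accessible categories under iso-comma squares; compare the accessibility arguments in \cite{beke:sheafifiable}). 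Here $\cSet'$ is locally presentable, being closed under filtered colimits in the presheaf category $\cSet''$ (filtered colimits of monomorphisms are monomorphisms in $\Set$), and hence $(\cSet')^{\to}$ is locally presentable as well.

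Concretely, I would take $R \colon (\cSet')^{\to} \to (\cSet')^{\to}$ to be the functor sending $f \colon X \to Y$ to the naive fibration $Q(\eta_Y f) \colon \overline{X} \to \overline{Y}$ appearing in the canonical naive fibrant replacement of $f$. By the very definition of the weak equivalences, $f$ lies in $W$ precisely when $R(f)$ is a trivial fibration; that is, $W = R^{-1}(\text{trivial fibrations})$ as full subcategories of $(\cSet')^{\to}$. The functor $R$ is assembled, in a manifestly functorial way, out of: the codomain functor $(\cSet')^{\to} \to \cSet'$; the functor $\cSet' \to (\cSet')^{\to}$ sending $Y$ to the terminal map $Y \to \Box^0$; the two components $Q$ and $\eta$ of the functorial (anodyne, naive fibration)-factorization; and the composition and pairing operations on arrows needed to build $\eta_Y f$ from $f$ and $\eta_Y$. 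All of these preserve filtered colimits --- equivalently, are $\omega$-, hence $\omega_1$-, accessible --- with the single exception of the factorization functor, which is where the real work lies.

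The main obstacle is therefore to check that the functorial factorization produced by the small object argument applied to the anodyne maps is accessible, and in fact $\omega_1$-accessible. This rests on the observation that the generating anodyne maps of \cref{gen-anodyne-def} --- the marked open box inclusions $\iota^{n}_{i,\varepsilon}$, the saturation map $K \subseteq K'$, and the four $3$-out-of-$4$ maps --- all have finitely presentable (indeed finite) domains and codomains. Consequently each step of the small object argument --- a pushout of a coproduct of generators along maps out of finitely presentable objects, indexed by a set of lifting problems which is itself a finite limit of hom-sets out of those objects --- preserves filtered colimits, and so does the $\omega$-indexed composite of such steps; hence $Q$ and $\eta$ preserve $\omega_1$-filtered colimits, so $R$ does too. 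Feeding this together with \cref{tfib-accessible} into the preimage fact above gives that $W$ is $\omega_1$-accessible and $\omega_1$-accessibly embedded in $(\cSet')^{\to}$, which is exactly the input on $W$ needed to invoke Jeff Smith's \cref{Jeff-Smith's-theorem} in the next subsection.
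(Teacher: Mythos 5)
Your proposal is correct and takes essentially the same route as the paper: express $W$ as the preimage (equivalently, a pullback in $\Cat$) of the trivial fibrations under the naive fibrant replacement functor, check that the replacement functor is accessible, and invoke the closure of accessible, accessibly embedded subcategories under this operation. The only cosmetic differences are that the paper cites \cite[Prop.~D.2.10]{joyal:theory-of-quasi-cats} for accessibility of the functorial factorization (where you sketch the argument directly from finite presentability of the generators' domains and codomains, which is a slightly sharper observation that yields $\omega$-accessibility), and the paper invokes \cite[Thm. 5.1.6]{makkai-pare} for the closure under pullbacks rather than phrasing it as preimage along an accessible functor.
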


\begin{proof}
The (anodyne, naive fibration) factorization gives us a naive fibrant replacement functor $F \colon (\mathsf{cSet}')^{\to} \to (\mathsf{cSet}')^{\to}$. By \cite[Prop.~D.2.10]{joyal:theory-of-quasi-cats}, this functor is $\omega_{1}$-accessible, since the domains and codomains of the generating anodyne maps are all countable. By definition, the category of weak equivalences $\mathsf{we}$ is given by the following pullback in $\mathsf{Cat}$:

\centerline{
\xymatrix{
\mathsf{we}  \ar[r] \ar[d] \drpullback & (\mathsf{cSet}')^{\to} \ar[d]^{F} \\
\mathsf{tfib} \ar@{>->}[r] & (\mathsf{cSet}')^{\to} \\
}
}

By \cref{tfib-accessible}, $\mathsf{tfib}$ is an $\omega_{1}$-accessible category, and its embedding into $(\mathsf{cSet}')^{\to}$ is an $\omega_{1}$-accessible functor. By \cite[Thm. 5.1.6]{makkai-pare}, the category of $\omega_{1}$-accessible categories and $\omega_{1}$-accesible functors has finite limits, and these are computed in $\mathsf{Cat}$. Thus $\mathsf{we}$ is $\omega_{1}$-accessible, and its embedding into $(\mathsf{cSet}')^{\to}$ is an $\omega_{1}$-accessible functor.
\end{proof}

\begin{theorem}[Analogue of model structure on marked simplicial sets]\label{cubical-marked-ms}
The above classes of weak equivalences, cofibrations, and fibrations define a model structure on $\cSet'$.
\end{theorem}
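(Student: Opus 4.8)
The plan is to invoke Jeff Smith's Theorem~\ref{Jeff-Smith's-theorem} with $\mathsf{C} = \cSet'$, the class $W$ of weak equivalences, and the set $I$ of generating cofibrations from \cref{cofibration-generators}, i.e.\ the boundary inclusions $\bd\Box^n \to \Box^n$ for $n \geq 0$ together with $\Box^1 \to (\Box^1)^\sharp$. We must verify the hypotheses of that theorem. First, $\cSet'$ is locally presentable: it is a reflective subcategory of the presheaf category $\cSet''$ via the functor $\Im$, and it is closed under filtered colimits, since these are computed pointwise in $\cSet''$ and a filtered colimit of monomorphisms in $\Set$ is again a monomorphism; a reflective, accessibly embedded subcategory of a locally presentable category is locally presentable. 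Next, $W$ forms an accessible, accessibly embedded subcategory of $(\cSet')^\to$ by \cref{we-accessible}; it satisfies the two-out-of-three axiom by \cref{we-2-of-3}; and it contains every map with the right lifting property against $I$, i.e.\ every trivial fibration, by \cref{triv-fib-is-we}.

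The one remaining hypothesis is that the intersection of $W$ with the saturation of $I$ is closed under pushouts and transfinite composition. By \cref{cofibration-generators} this saturation is the class of all monomorphisms, so the class in question is exactly the trivial cofibrations. Closure under pushout is already part of the cofibration-category structure established in \cref{cset'-cof-cat}. For closure under transfinite composition, a transfinite composite of monomorphisms is a monomorphism, so it suffices to see that such a composite of trivial cofibrations is a weak equivalence. Here I would use the characterization of \cref{we-characterizations}~(ii): for any trivial cofibration $j$ and marked cubical quasicategory $X$, the induced map $\ihom_L(j, X)$ is a naive fibration by \cref{fibrantExps} and a homotopy equivalence by hypothesis, hence a trivial fibration by \cref{nfib+weq=trivFib}. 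Since $\ihom_L(-, X)$ sends colimits to limits, it converts a transfinite composite of trivial cofibrations into a tower of trivial fibrations, whose limit is again a trivial fibration (the right lifting property against monomorphisms being stable under such limits); thus $\ihom_L(-, X)$ carries the composite to a trivial fibration, and applying \cref{we-characterizations}~(ii) once more shows the composite is a weak equivalence.

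With all hypotheses checked, Jeff Smith's Theorem produces a cofibrantly generated model structure on $\cSet'$ with weak equivalences $W$ and generating cofibrations $I$. Its cofibrations are the saturation of $I$, namely the monomorphisms by \cref{cofibration-generators}, and its fibrations are precisely the maps with the right lifting property against the cofibrations that are weak equivalences, i.e.\ against the trivial cofibrations — which is our defining property of fibrations. Hence the three named classes do assemble into this model structure. The genuine work here lies not in the final assembly but in the two inputs flagged above: the accessibility of $W$ in \cref{we-accessible} and the closure properties of trivial cofibrations, both of which rest on the analysis of marked cubical quasicategories and weak equivalences carried out in the preceding subsections; given those, the proof of the theorem is a short verification.
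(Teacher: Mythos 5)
Your proof is correct, and the overall structure (verify the hypotheses of Jeff Smith's Theorem) matches the paper's. The interesting divergence is in how you verify closure of trivial cofibrations under transfinite composition. The paper cites \cref{we-accessible} directly, asserting that $\omega_1$-accessibility of the weak equivalences gives closure under transfinite composition; this is terse, since an ordinal-indexed chain is in general only filtered, not $\omega_1$-filtered, and one has to combine accessibility with closure under finite composition to make the argument precise. You sidestep the accessibility machinery entirely: using \cref{we-characterizations}~(ii), you apply $\ihom_L(-,X)$ for a marked cubical quasicategory $X$, which converts a transfinite composite of trivial cofibrations into the projection from a (co)transfinite limit of trivial fibrations, and you conclude by the dual of the usual closure of left lifting classes under transfinite composition. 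This is more explicit and arguably more self-contained — it only uses the cylinder-functor homotopy theory already set up in Sections 2.2--2.4, at the cost of relying on \cref{we-characterizations} rather than the shorter accessibility citation. You also spell out the local presentability of $\cSet'$ (reflective accessibly embedded subcategory of the presheaf category $\cSet''$), which the paper simply asserts. Both routes are valid; yours trades brevity for transparency about exactly which facts carry the load.
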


\begin{proof}
We verify the assumptions of \cref{Jeff-Smith's-theorem}.

The category of marked cubical sets is locally finitely presentable.
Weak equivalences are an $\omega_{1}$-accessibly embedded, $\omega_{1}$-accessible subcategory of $(\cSet')^\to$ by \cref{we-accessible}.
Cofibrations have a small set of generators by \cref{cofibration-generators}.

Weak equivalences are closed under 2-out-of-3 and weak equivalences that are cofibrations are closed under pushout by \cref{cset'-cof-cat}.
Weak equivalences are closed under transfinite composition by \cref{we-accessible}, implying that the same holds for trivial cofibrations.
Every map lifting against cofibrations is a weak equivalence by \cref{triv-fib-is-we}.
\end{proof}

We refer to the model structure constructed above as the \emph{cubical marked model structure}. We will now analyze this model structure, beginning with a strengthening of \cref{anodyne-pop} and \cref{fibrantExps}.

\begin{lemma}\label{geo-prod-we}
If $X \to Y$ is a weak equivalence, then so is $A \otimes X \to A \otimes Y$ for any $A \in \cSet'$.
\end{lemma}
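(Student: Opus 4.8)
The plan is to deduce this from the characterization of weak equivalences in \cref{we-characterizations}. Fix an arbitrary marked cubical quasicategory $Z$; by condition~(ii) of that result it suffices to show that the map $\ihom_R(A \otimes Y, Z) \to \ihom_R(A \otimes X, Z)$ induced by $A \otimes X \to A \otimes Y$ is a homotopy equivalence.

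First I would rewrite this map using the closed monoidal structure. Since $\otimes$ is associative, $(A \otimes X) \otimes W \cong A \otimes (X \otimes W)$ naturally in $W$, and chaining the defining adjunctions $A \otimes - \dashv \ihom_R(A,-)$ and $X \otimes - \dashv \ihom_R(X,-)$ with the Yoneda lemma yields a natural isomorphism
\[
  \ihom_R(A \otimes X, Z) \;\cong\; \ihom_R\bigl(X, \ihom_R(A, Z)\bigr),
\]
and similarly for $Y$. By naturality in the first variable, under these isomorphisms the map above becomes $\ihom_R\bigl(f, \ihom_R(A,Z)\bigr)$, where $f \colon X \to Y$ denotes the given weak equivalence. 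Now $\ihom_R(A, Z)$ is itself a marked cubical quasicategory by \cref{fibrantExps}, so \cref{we-characterizations} applied to the weak equivalence $f$ shows that $\ihom_R\bigl(Y, \ihom_R(A,Z)\bigr) \to \ihom_R\bigl(X, \ihom_R(A,Z)\bigr)$ is a homotopy equivalence. Hence so is $\ihom_R(A \otimes Y, Z) \to \ihom_R(A \otimes X, Z)$, and since $Z$ was arbitrary, a second application of \cref{we-characterizations} gives that $A \otimes X \to A \otimes Y$ is a weak equivalence.

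There is no combinatorial difficulty here; the one thing that requires care is the bookkeeping forced by the non-symmetry of $\otimes$. One must consistently pair the right tensor $A \otimes -$ with the right internal hom $\ihom_R(A,-)$ — running the same computation with $\ihom_L$ would instead produce $\ihom_L(A \otimes X, Z) \cong \ihom_L\bigl(A, \ihom_L(X, Z)\bigr)$, with $\ihom_L(A,-)$ on the outside. Invoking the $\ihom_R$-form of \cref{we-characterizations} (and of \cref{fibrantExps}) is legitimate since both statements hold verbatim for either internal hom; moreover, for maps between fibrant objects left and right homotopy equivalences coincide by \cref{we-marked-fibrant-objects}, so the unspecified meaning of ``homotopy equivalence'' in \cref{we-characterizations} causes no trouble. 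If one prefers to use only the left-tensor form that was proved explicitly, I would instead carry out the computation with $\ihom_L$ and add one lemma: $\ihom_L(A,-)$ preserves homotopy equivalences. That lemma reduces, by functoriality, to showing it sends an elementary homotopy $H \colon (\Box^1)^\sharp \otimes V \to V'$ to one between the images, which is done by precomposing $\ihom_L(A, H)$ with the canonical comparison map $(\Box^1)^\sharp \otimes \ihom_L(A, V) \to \ihom_L\bigl(A, (\Box^1)^\sharp \otimes V\bigr)$ — the transpose of a map $(\Box^1)^\sharp \otimes \ihom_L(A,V) \otimes A \to (\Box^1)^\sharp \otimes V$ built from the evaluation $\ihom_L(A,V) \otimes A \to V$ and the associator — and using the naturality of this comparison in the $(\Box^1)^\sharp$-slot to identify its restrictions to the two endpoints with $\ihom_L(A, H|_0)$ and $\ihom_L(A, H|_1)$. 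I expect this last piece to be the most tedious step, though still entirely formal.
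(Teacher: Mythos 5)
Your argument is exactly the paper's: rewrite $\ihom_R(A \otimes X, Z) \cong \ihom_R(X, \ihom_R(A,Z))$ via associativity and the right-tensor adjunction, note $\ihom_R(A,Z)$ is a marked cubical quasicategory by \cref{fibrantExps}, and conclude from \cref{we-characterizations}. The extra commentary about left-versus-right bookkeeping is sound but not needed; the proof as written matches the paper.
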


\begin{proof}
By the adjunction $A \otimes - \adjoint \ihom_{R}(A,-)$, for $Z \in \cSet'$ we have a natural isomorphism $\ihom_{R}(A \otimes X,Z) \cong \ihom_{R}(X,\ihom_{R}(A,Z))$. Let $Z$ be a marked cubical quasicategory; then we have a commuting diagram

\centerline{
\xymatrix{
\ihom_{R}(A \otimes Y,Z) \ar[d]^{\cong} \ar[r] & \ihom_{R}(A \otimes X,Z) \ar[d]^{\cong} \\
\ihom_{R}(Y,\ihom_{R}(A,Z)) \ar[r] & \ihom_{R}(X,\ihom_{R}(A,Z)) \\
}
}

By \cref{fibrantExps}, $\ihom_{R}(A,Z)$ is a marked cubical quasicategory, so the bottom map is a homotopy equivalence by \cref{we-characterizations}. Hence the top map is a homotopy equivalence; thus we see that $A \otimes X \to A \otimes Y$ is a weak equivalence by \cref{we-characterizations}. 
\end{proof}

\begin{lemma}\label{we-pop}
The pushout product of a cofibration and a weak equivalence is a weak equivalence. 
\end{lemma}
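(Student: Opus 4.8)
The plan is to deduce this formally from \cref{geo-prod-we} (which already contains the real work), the fact that pushout products of cofibrations are cofibrations (\cref{anodyne-pop}), and left properness of the cofibration category of marked cubical sets (\cref{cset'-cof-cat}). Write $i \colon A \to B$ for the cofibration and $w \colon X \to Y$ for the weak equivalence; I will show that $i \hatotimes w \colon A \otimes Y \cup_{A \otimes X} B \otimes X \to B \otimes Y$ is a weak equivalence, the variant $w \hatotimes i$ being entirely symmetric (replace $A \otimes w$, $i \otimes X$, $B \otimes w$ below by $w \otimes A$, $X \otimes i$, $w \otimes B$).

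Set $P := A \otimes Y \cup_{A \otimes X} B \otimes X$, so that $P$ is the pushout of $A \otimes w \colon A \otimes X \to A \otimes Y$ along $i \otimes X \colon A \otimes X \to B \otimes X$, with pushout leg $\lambda \colon B \otimes X \to P$, and $i \hatotimes w \colon P \to B \otimes Y$ is the induced map. First I would observe that $i \otimes X$ is a cofibration: it is the pushout product $i \hatotimes (\varnothing \to X)$, hence a cofibration by \cref{anodyne-pop}. Next, by \cref{geo-prod-we}, both $A \otimes w$ and $B \otimes w$ are weak equivalences. Consequently $\lambda$, being the pushout of the weak equivalence $A \otimes w$ along the cofibration $i \otimes X$, is a weak equivalence: this is the gluing lemma (left properness) for the cofibration category established in \cref{cset'-cof-cat}.

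Finally, the composite $B \otimes X \xrightarrow{\lambda} P \xrightarrow{i \hatotimes w} B \otimes Y$ is exactly $B \otimes w$, which is a weak equivalence. Since $\lambda$ is a weak equivalence, the two-out-of-three property (\cref{we-2-of-3}) shows that $i \hatotimes w$ is a weak equivalence, completing the argument.

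In this approach there is no serious obstacle: the geometric-product invariance of weak equivalences proved in \cref{geo-prod-we} is what makes it go through, and everything else is the formal manipulation of the defining pushout together with two-out-of-three. The one point requiring a cited input rather than a direct construction is the left-properness statement for $\lambda$, but this is a standard consequence of the cofibration category axioms from \cref{cset'-cof-cat}; if one wished to avoid it, one could instead factor $w$ as a trivial cofibration followed by a trivial fibration and handle the two cases using \cref{we-characterizations} and \cref{fibrantExps}, but that is appreciably more involved.
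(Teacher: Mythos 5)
Your proof is correct and follows the paper's own argument almost step for step: factor $i \hatotimes w$ through the pushout $P = A \otimes Y \cup_{A \otimes X} B \otimes X$, use \cref{geo-prod-we} to see that $A \otimes w$ and $B \otimes w$ are weak equivalences, conclude the pushout leg $B \otimes X \to P$ is a weak equivalence, and finish with two-out-of-three. The one cosmetic difference is your justification for the pushout step — you cite the gluing lemma of the cofibration category from \cref{cset'-cof-cat}, whereas the paper simply notes that the marked model structure (already established in \cref{cubical-marked-ms}) is left proper since all objects are cofibrant; both are valid, though the paper's citation is the more immediate one at this point.
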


\begin{proof}
Let $i \colon A \to B$ be a cofibration and $f \colon X \to Y$ a weak equivalence; we will show that $i \hatotimes f$ is a weak equivalence (the case of $f \hatotimes i$ is similar). Consider the diagram which defines $i \hatotimes f$:

\centerline{
\xymatrix{
A \otimes X \ar[r] \ar[d] & B \otimes X \ar[d] \ar@/^1.5pc/[ddr] \\
A \otimes Y \ar[r] \ar@/_1.5pc/[drr] & A \otimes Y \cup_{A \otimes X} B \otimes X \ar[dr]^{i \hatotimes f} \\
&& B \otimes Y
}
}

The maps $A \otimes X \to A \otimes Y$ and $B \otimes X \to B \otimes Y$ are weak equivalences by \cref{geo-prod-we}. The map $A \otimes X \to B \otimes X$ is a cofibration by \cref{anodyne-pop}. The model structure is left proper, since all objects are cofibrant; thus the map from $B \otimes X$ into the pushout is a weak equivalence. Hence $i \hatotimes f$ is a weak equivalence by 2-out-of-3. 
\end{proof}

\begin{corollary}\label{tcof-pop}
Let $i \colon A \to B, j \colon A' \to B'$ be cofibrations. If either $i$ or $j$ is trivial, then so is the pushout product $i \hatotimes j$.
\end{corollary}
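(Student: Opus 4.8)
The plan is to reduce the statement to the two pushout-product lemmas already in hand, treating the two halves of "trivial cofibration" — being a cofibration and being a weak equivalence — separately. First I would note that it suffices to handle the case in which $i$ is trivial: although the geometric product is non-symmetric, \cref{we-pop} was proved for pushout products in both orders, so the argument for $j$ trivial is obtained from the argument for $i$ trivial simply by interchanging the roles of the left and right tensor throughout.

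So suppose $i \colon A \to B$ is a trivial cofibration and $j \colon A' \to B'$ is a cofibration. Since $i$ and $j$ are both cofibrations, \cref{anodyne-pop} (in its first clause, that the pushout product of two cofibrations is a cofibration) shows that $i \hatotimes j$ is a cofibration. On the other hand, $i$ is in particular a weak equivalence and $j$ is a cofibration, so \cref{we-pop} shows that $i \hatotimes j$ is a weak equivalence. A map that is both a cofibration and a weak equivalence is by definition a trivial cofibration, which gives the claim.

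I do not anticipate any real obstacle: the corollary is a purely formal consequence of \cref{anodyne-pop} and \cref{we-pop}, and the only subtlety worth a remark is the non-symmetry of $\otimes$, which is why one should explicitly invoke the two-sided form of \cref{we-pop} rather than appeal to symmetry of the pushout product.
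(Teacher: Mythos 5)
Your proof is correct and is precisely the argument the paper intends: the paper's proof is the one-line statement ``immediate from \cref{anodyne-pop,we-pop}'', and you have simply unpacked it into its two visible steps (cofibration from \cref{anodyne-pop}, weak equivalence from \cref{we-pop}). Your remark about invoking the two-sided form of \cref{we-pop} rather than appealing to a nonexistent symmetry of $\otimes$ is a fair point of care but not an extra idea beyond what the paper assumes.
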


\begin{proof}
This is immediate from \cref{anodyne-pop,we-pop}.
\end{proof}

\begin{corollary}\label{markedExps}
If $i$ is a cofibration and $f$ is a fibration, then the pullback exponential $i \triangleright f$ is a fibration, which is trivial if $i$ or $f$ is trivial. \qed
\end{corollary}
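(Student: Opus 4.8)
The plan is to deduce the statement formally from \cref{tcof-pop} and \cref{anodyne-pop} via the standard pushout-product/pullback-exponential adjunction, exactly as in the proof of \cref{fibrantExps}. Recall that in a (possibly non-symmetric) closed monoidal category one has the ``standard duality'': for maps $i$, $j$ and $f$, the map $j$ has the left lifting property against $i \triangleright f$ if and only if $j \hatotimes i$ has the left lifting property against $f$, with the handedness of $\hatotimes$ matched to whether $\triangleright$ is formed using $\ihom_L$ or $\ihom_R$. Since \cref{anodyne-pop}, \cref{we-pop} and \cref{tcof-pop} are all stated symmetrically in the two orders of $\hatotimes$, this choice is immaterial, and I would not belabour it.

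First I would show that $i \triangleright f$ is a fibration. Let $j$ be an arbitrary trivial cofibration. By \cref{tcof-pop}, $j \hatotimes i$ is again a trivial cofibration, and since $f$ is a fibration it has the right lifting property against $j \hatotimes i$; by the adjunction identity, $i \triangleright f$ has the right lifting property against $j$. As $j$ was arbitrary, $i \triangleright f$ is a fibration.

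Next I would treat the two trivial cases. Suppose $i$ is a trivial cofibration, and let $j$ be an arbitrary cofibration; by \cref{tcof-pop} (now with the trivial map in the other slot) $j \hatotimes i$ is a trivial cofibration, so the fibration $f$ lifts against it, whence $i \triangleright f$ lifts against every cofibration, i.e. is a trivial fibration. Finally, suppose $f$ is a trivial fibration, so that $f$ has the right lifting property against all cofibrations; for $j$ an arbitrary cofibration, \cref{anodyne-pop} gives that $j \hatotimes i$ is a cofibration, hence $f$ lifts against it, and therefore $i \triangleright f$ again lifts against every cofibration, i.e. is a trivial fibration. This exhausts all cases.

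The argument is entirely formal, so there is no real obstacle; the only point requiring any care is keeping the handedness of the tensor and of the internal hom consistent in the two-variable adjunction, which is harmless here precisely because the pushout-product lemmas \cref{anodyne-pop} and \cref{tcof-pop} have been recorded in both orders.
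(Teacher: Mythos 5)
Your proof is correct and is exactly the argument the paper intends: the corollary is flagged with a \qed because it follows formally from \cref{anodyne-pop} and \cref{tcof-pop} via the pushout-product/pullback-exponential adjunction, precisely as in the proof of \cref{fibrantExps}. Your handling of the three cases and the handedness remark are both fine.
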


\begin{corollary}
The category $\cSet'$, equipped with the cubical marked model structure and the geometric product, is a monoidal model category. \qed
\end{corollary}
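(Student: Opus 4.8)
The plan is to verify the axioms of a monoidal model category in the sense of Hovey: that the underlying category is closed monoidal, that the pushout-product axiom holds, and that the unit axiom holds. The first of these is already in place. The geometric product on $\cSet''$ restricts to a monoidal structure on $\cSet'$ with unit $\Box^0$, and the left and right tensors $-\otimes A$ and $A\otimes-$ are cocontinuous, hence admit right adjoints $\ihom_L(A,-)$ and $\ihom_R(A,-)$; thus $\cSet'$ is closed monoidal and nothing further is required on this front.

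For the pushout-product axiom I would simply assemble the work already done. By \cref{anodyne-pop}, the pushout product of two cofibrations is again a cofibration, and by \cref{tcof-pop} (which in turn rests on \cref{we-pop}), if in addition one of the two cofibrations is a weak equivalence, then their pushout product is a trivial cofibration. Together these two statements are precisely the pushout-product axiom.

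For the unit axiom, recall that in the cubical marked model structure every object is cofibrant; in particular the monoidal unit $\Box^0$ is cofibrant, so we may take the identity $\Box^0 \to \Box^0$ as its cofibrant replacement, and the induced comparison map $\Box^0 \otimes X \to \Box^0 \otimes X$ is the identity on $X$, hence a weak equivalence, for every $X$. Since all the ingredients have been prepared in the preceding lemmas, there is no real obstacle: the proof is a short assembly of \cref{anodyne-pop}, \cref{tcof-pop}, and the fact that all objects of $\cSet'$ are cofibrant.
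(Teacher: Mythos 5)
Your proof is correct and matches the paper's intent exactly: the paper marks the corollary with $\qed$ because it is immediate from \cref{anodyne-pop} and \cref{tcof-pop} (pushout-product axiom), the existence of the internal homs (closed monoidal structure), and the fact that all objects are cofibrant (unit axiom). Your write-up simply makes that assembly explicit.
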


Next we will characterize the fibrant objects, and fibrations between fibrant objects, of this model structure.

\begin{proposition}\label{marked-fib-obs}
A map between marked cubical quasicategories is a fibration if and only if it is a naive fibration. In particular, the fibrant objects of the cubical marked model structure are precisely the marked cubical quasicategories.
\end{proposition}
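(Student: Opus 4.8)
The plan is to prove the two implications separately, the forward one being immediate and unconditional. Every generating anodyne map is a monomorphism, and monomorphisms are closed under the operations generating the saturation, so every anodyne map is a cofibration by \cref{cofibration-generators}; it is a weak equivalence by \cref{anodyne-we}, hence a trivial cofibration. Consequently any fibration, having the right lifting property against trivial cofibrations, has it against anodyne maps and is a naive fibration. This step uses nothing about the objects being fibrant.

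For the converse I would establish the slightly stronger statement that a naive fibration $p \colon X \to Y$ whose codomain is a marked cubical quasicategory has the right lifting property against every trivial cofibration $i \colon A \to B$, and is therefore a fibration. The first step reduces to the case where $A$ and $B$ are themselves marked cubical quasicategories. Given a lifting square, pick an anodyne map $a \colon A \to \bar A$ with $\bar A$ a marked cubical quasicategory, form the pushout $P = \bar A \cup_A B$, and pick an anodyne map $P \to \bar B$ with $\bar B$ a marked cubical quasicategory; the induced map $\bar i \colon \bar A \to \bar B$ is a cofibration (a composite of a pushout of $i$ with an anodyne map) and a weak equivalence (by two-out-of-three, \cref{anodyne-we,we-2-of-3}), hence a trivial cofibration between marked cubical quasicategories. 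One then extends the square over this replacement: the bottom map extends along $B \to \bar B$ because $Y$ is fibrant, and the top map extends along $a$ by solving a lifting problem against $p$ — the latter uses that $p$ is a naive fibration and $a$ is anodyne, and is set up so that the extended square commutes. A lift for the replaced square restricts along $B \to \bar B$ to a lift for the original one.

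The second step treats a trivial cofibration $i \colon A \to B$ between marked cubical quasicategories. Factoring $i$ through the (anodyne, naive fibration) weak factorization system as $A \to C \to B$, the object $C$ is a marked cubical quasicategory and $C \to B$ is a weak equivalence by two-out-of-three, hence a trivial fibration by \cref{nfib+weq=trivFib}. Lifting the anodyne map $A \to C$ against $C \to B$ over the identity of $B$ then exhibits $i$ as a retract, in the arrow category, of that anodyne map; since naive fibrations lift against anodyne maps they lift against $i$, completing the converse. The ``in particular'' clause follows by applying the resulting equivalence with $Y = \Box^0$, which is a marked cubical quasicategory because its identity map is a naive fibration: thus $X$ is fibrant iff $X \to \Box^0$ is a fibration iff $X \to \Box^0$ is a naive fibration iff $X$ is a marked cubical quasicategory.

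I expect the main obstacle to be the reduction in the first step of the converse: one must arrange the extensions of the top and bottom maps of the lifting square over the fibrant replacements $\bar A$, $\bar B$ so that the resulting square still commutes. The key is to obtain the extension of the top map not from fibrancy of $X$ but as the solution of a lifting problem against $p$, so that compatibility with the (separately chosen) extension of the bottom map is automatic.
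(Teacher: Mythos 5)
Your proof is correct and takes a genuinely different route from the paper's. The paper establishes the converse by showing that the pullback exponential $i \triangleright f$ is a trivial fibration whenever $i$ is a trivial cofibration and $f$ is a naive fibration between marked cubical quasicategories: by \cref{fibrantExps} it is a naive fibration between marked cubical quasicategories, and then \cref{markedExps}, \cref{nfib+weq=trivFib}, \cref{we-marked-fibrant-objects} and two-out-of-three applied to the pullback square defining $i \triangleright f$ show it is a homotopy equivalence, hence a trivial fibration. That argument leans on the closed monoidal structure and uses the fibrancy of \emph{both} $X$ and $Y$, since it is \cref{fibrantExps} that makes $\ihom(A,X)$ and $\ihom(B,X)$ marked cubical quasicategories. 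Your argument is instead the standard fibrant-replacement-and-retract argument for Cisinski-style weak factorization systems: reduce to a trivial cofibration between marked cubical quasicategories (arranging compatibility of the two extensions precisely by taking the extension of the top leg to be a solution of a lifting problem against $p$, as you flag), then factor such a trivial cofibration as anodyne followed by naive fibration, observe the latter is a trivial fibration by two-out-of-three, \cref{we-marked-fibrant-objects}, and \cref{nfib+weq=trivFib}, and conclude the trivial cofibration is a retract of its anodyne part. This avoids the internal hom entirely and proves the strictly stronger statement that a naive fibration with merely fibrant codomain is a fibration, with no hypothesis on the domain; the paper's argument, while compact given the groundwork of \cref{fibrantExps} and \cref{markedExps}, does not obviously yield that generality.
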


\begin{proof}
It is clear that every fibration is a naive fibration. Now let $f \colon X \to Y$ be a naive fibration between marked cubical quasicategories, and $i \colon A \to B$ a trivial cofibration. We wish to show that $f$ has the right lifting property with respect to $i$; for this it suffices to show that $i \triangleright f$ has the right lifting property with respect to the map $\varnothing \to \Box^0$. For this, in turn, it suffices to show that $i \triangleright f$ is a trivial fibration.

First, note that $i \triangleright f$ is a naive fibration between marked cubical quasicategories by \cref{fibrantExps}. Therefore, by \cref{nfib+weq=trivFib}, it is a trivial fibration if and only if it is a homotopy equivalence. Now consider the diagram which defines $i \triangleright f$:

\centerline{
\xymatrix{
\ihom(B,X) \ar[dr]^{i \triangleright f} \ar@/^1.5pc/[drr] \ar@/_1.5pc/[ddr] \\
& \drpullback \ihom(A,X) \times_{\ihom(A,Y)} \ihom(B,Y) \ar[r] \ar[d]  & \ihom(A,X) \ar[d] \\
& \ihom(B,Y) \ar[r] & \ihom(A,Y) \\
}
}

The maps $\ihom(B,X) \to \ihom(A,X)$ and $\ihom(B,Y) \to \ihom(A,Y)$ are trivial fibrations by \cref{markedExps}; the map from the pullback to $\ihom(A,X)$ is a trivial fibration as a pullback of a trivial fibration. Thus $i \triangleright f$ is a weak equivalence by 2-out-of-3, hence a homotopy equivalence by \cref{we-marked-fibrant-objects}.
\end{proof}


\begin{proposition}\label{op-co-Quillen-equivalence-marked}
The adjunctions $(-)^\rmco \adjoint (-)^\rmco, (-)^\coop \adjoint (-)^\coop$ are Quillen self-equivalences of $\cSet'$.
\end{proposition}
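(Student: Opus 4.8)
The plan is to deduce this from \cref{QuillenEquivInvolution}. Since, as observed in the discussion preceding \cref{op-co-monoidal}, $(-)^\rmco$ and $(-)^\coop$ restrict from $\cSet''$ to involutions of $\cSet'$, each is self-adjoint with identity unit and counit, so by \cref{QuillenEquivInvolution} it suffices to show that $(-)^\rmco$ and $(-)^\coop$ are left Quillen endofunctors of the cubical marked model structure. I would establish this with \cref{Quillen-adj-fib-obs}. Write $F$ for either $(-)^\rmco$ or $(-)^\coop$; being an involution, $F$ is an isomorphism of the category $\cSet'$, hence preserves monomorphisms, i.e.\ cofibrations. By \cref{marked-fib-obs} the fibrations between fibrant objects are precisely the naive fibrations between marked cubical quasicategories, and these are exactly the maps with the right lifting property against the set $S$ of generating anodyne maps (since lifting against a set agrees with lifting against its saturation). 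So by \cref{Quillen-adj-fib-obs} the only thing left to check is that $F$ sends every map in $S$ to a trivial cofibration; as anodyne maps are cofibrations by definition and weak equivalences by \cref{anodyne-we}, it is enough to show $F$ sends each generating anodyne to an anodyne one — indeed, I expect it sends each to a map isomorphic to a generating anodyne.

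The second step is to verify this last claim by tracking the action of $F$ on faces and markings. For $(-)^\coop$, the action on generators of $\Box$ flips $\varepsilon \mapsto 1-\varepsilon$ on faces and connections and fixes degeneracies; hence $(\Box^n)^\coop \cong \Box^n$, the open box $\sqcap^n_{i,\varepsilon}$ is carried to $\sqcap^n_{i,1-\varepsilon}$, the critical edge with respect to $\bd_{i,\varepsilon}$ (the edge $f$ with $f_i = \mathrm{id}$ and $f_j = \mathrm{const}_{1-\varepsilon}$ for $j\neq i$) is carried to the critical edge with respect to $\bd_{i,1-\varepsilon}$, and $((\Box^2)^\sharp)^\coop \cong (\Box^2)^\sharp$; so $(-)^\coop$ permutes the marked open box inclusions ($\iota^n_{i,\varepsilon}\mapsto\iota^n_{i,1-\varepsilon}$) and permutes the four $3$-out-of-$4$ maps. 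For $(-)^\rmco$ the same goes through with $\varepsilon$ fixed and the index $i$ on a $\Box^n$ replaced by $n-i+1$, so $\iota^n_{i,\varepsilon}\mapsto\iota^n_{n-i+1,\varepsilon}$ and the $3$-out-of-$4$ maps on $\Box^2$ are again permuted (the two coordinate directions being swapped).

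The one generator not sent to another generator on the nose is the saturation map $K\hookrightarrow K'$, and I expect this to be the main obstacle — though a mild one. Here I would check directly on the finitely many non-degenerate cells of $K$ that $(-)^\coop$ (resp.\ $(-)^\rmco$) carries $K$ and $K'$ to isomorphic marked cubical sets by an isomorphism identifying the distinguished middle edge $\Box^1\hookrightarrow K$ with that of its image. The conceptual reason is that $K$ presents an edge $e$ equipped with a right inverse $a$ and a left inverse $b$ (via two $2$-cubes witnessing $ea=\mathrm{id}$ and $be=\mathrm{id}$): reversing all edges via $(-)^\coop$ merely interchanges the two inverses, and transposing the two $2$-cubes via $(-)^\rmco$ leaves the presentation unchanged, and in each case $e$ is fixed. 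Thus $F(K\hookrightarrow K')\cong(K\hookrightarrow K')$, a generating anodyne, so it is a trivial cofibration. Once all generators are accounted for, \cref{Quillen-adj-fib-obs} gives that $F$ is left Quillen and \cref{QuillenEquivInvolution} concludes.
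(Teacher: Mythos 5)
Your strategy — reduce via \cref{QuillenEquivInvolution} to checking that both functors are left Quillen, verify this with \cref{Quillen-adj-fib-obs}, and observe that $(-)^\rmco$ and $(-)^\coop$ permute the marked open box inclusions and the $3$-out-of-$4$ maps among themselves — is exactly the paper's, and your treatment of $(-)^\coop$ on the saturation map is correct: there is an isomorphism $K^\coop \cong K$ that reverses the two vertices, swaps the two outer edges, and fixes the middle edge. The gap is the claim that $K^\rmco \cong K$, which is false. The functor $(-)^\rmco$ does not transpose the two $2$-cubes of $K$; it reverses the coordinate order \emph{within} each $2$-cube. In $K$, the middle edge $e$ is the $\partial_{1,1}$-face of one non-degenerate square and the $\partial_{1,0}$-face of the other; in $K^\rmco$, it is the $\partial_{2,1}$-face of one and the $\partial_{2,0}$-face of the other. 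Since $e$ is the unique edge shared by two non-degenerate squares in either cubical set, any isomorphism $K \to K^\rmco$ would have to fix $e$; but an isomorphism of cubical sets commutes with every face operator $\partial_{i,\varepsilon}$, so it cannot carry a $\partial_{1,\cdot}$-face to a $\partial_{2,\cdot}$-face. This is precisely the non-symmetry of $\otimes$ that the paper flags: $\Box$ has no non-identity bijection $[1]^2 \to [1]^2$, so the "diagonal transpose" of $K$ is a genuinely different cubical set.

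For this generator the paper uses a different argument: it shows directly that $K^\rmco \to (K')^\rmco$ is a trivial cofibration. For any marked cubical quasicategory $X$, a map $K^\rmco \to X$ sends the middle edge to an edge that still has a left and a right inverse, hence is invertible in $\Ho X$, hence marked by \cref{equivalence-iff-marked}; so the map extends over $(K')^\rmco$. Since $K^\rmco \to (K')^\rmco$ is an epimorphism, lifting against $X \to \Box^0$ upgrades to lifting against all naive fibrations between marked cubical quasicategories, which by \cref{marked-fib-obs} are the fibrations between fibrant objects. You would need to substitute an argument of this kind for your isomorphism claim in the $(-)^\rmco$ case.
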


\begin{proof}
By \cref{QuillenEquivInvolution}, it suffices to show that the adjunctions are Quillen. To do this, we apply \cref{Quillen-adj-fib-obs}. Both $(-)^\rmco$ and $(-)^\coop$ preserve cofibrations, marked open box inclusions, and three-out-of-four maps; thus it remains to consider only the saturation map. 

The image of the saturation map under $(-)^\coop$ is isomorphic to the saturation map itself, and is therefore a trivial cofibration.  Now consider the map $K^\rmco \to (K')^\rmco$. To show that this is a trivial cofibration, it suffices to show that it has the left lifting property with respect to fibrations between marked cubical quasicategories. If $X$ is a marked cubical quasicategory, then $K^\rmco \to (K')^\rmco$ has the left lifting property against $X \to \Box^0$ by the fact that the marked edges in $X$ are precisely those which are invertible in $\Ho X$. Since $K^\rmco \to (K')^\rmco$ is an epimorphism, it therefore has the left lifting property against all maps between marked cubical quasicategories.
\end{proof} 

\section{Model structure on structurally marked cubical sets} \label{sec:structurally-marked}

The model structure on marked cubical sets described in the previous section resembles the Cisinski model structure on a presheaf category. In this section, we show that the category $\cSet''$ of structurally marked cubical sets (see Section \ref{Sec1Marked}) admits a Cisinski model structure which right induces the model structure on marked cubical sets from the previous section via the embedding $\cSet' \hookrightarrow \cSet''$, and that the two are Quillen equivalent.

None of the arguments in this section make use of connections, thus they are valid in all of the categories of cubical sets shown in the diagram \cref{eq:cube-cat-inclusions} at the end of \cref{sec:background}.

Since $\cSet''$ is a presheaf category, we may apply \cref{CisinskiMS} in order to construct a model structure on this category.
To do that, we first find a cellular model for $\cSet''$, i.e., a generating set of monomorphisms, using the Reedy category structure of $\tBox$, established in \cref{tBox-Reedy}.

\begin{lemma}\label{struct-mark-cell-mod}
	The monomorphisms of $\cSet''$ are the saturation of the set consisting of the boundary inclusions $\partial \Box^n \hookrightarrow \Box^n$ and the inclusion $\Box^1 \hookrightarrow (\Box^1)^\sharp$. \qed
\end{lemma}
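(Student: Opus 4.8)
The plan is to invoke the general machinery for recognizing cellular models of presheaf categories, exactly as was done for ordinary cubical sets in \cref{cSetCellularModel}, but now applied to the Reedy category $\tBox$ from \cref{tBox-Reedy}. First I would recall that for any EZ-Reedy category $\mathsf{R}$, the boundary inclusions $\partial \mathsf{R}(-,r) \hookrightarrow \mathsf{R}(-,r)$ (where the boundary object is the union of the images of the non-invertible degree-lowering... wait, degree-raising maps out of objects of smaller degree) form a cellular model for $\pshf{\mathsf{R}}$; this is \cite[Prop.~1.5]{ara:higher-quasicats}, already cited in the proof of \cref{cSetCellularModel}. So the content of the lemma is just the identification of these abstract boundary inclusions with the concrete maps listed in the statement.

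The key step is therefore to compute the boundary object $\partial \Box^n_\sharp$ of each representable in $\cSet''$. For $n = [1]^k$ with $k \neq 1$ and $k \neq 2$ (i.e.\ degree $0$ or degree $\geq 3$ in the Reedy structure of \cref{tBox-Reedy}), the only degree-raising generators into $[1]^k$ are the cubical face maps $\partial_{i,\varepsilon}$ — since $\varphi$ has codomain $[1]_e$ — and so the boundary object is the usual $\partial \Box^k$, giving the boundary inclusions $\partial \Box^k \hookrightarrow \Box^k$ as in $\cSet$. For the object $[1] = [1]^1$, which has Reedy degree $1$, the objects of strictly smaller degree are only $[1]^0$, and the degree-raising generators $[1]^0 \to [1]^1$ are the two face maps $\partial^1_{1,0}, \partial^1_{1,1}$; so its boundary object is again $\partial \Box^1 = \Box^0 \sqcup \Box^0$, contributing $\partial \Box^1 \hookrightarrow \Box^1$. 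The interesting case is $[1]_e$, of degree $2$: the objects of smaller degree are $[1]^0$ and $[1]^1$, and the degree-raising generators into $[1]_e$ are the composites $\partial^1_{1,\varepsilon}$ followed by... no — the generator $\varphi \colon [1] \to [1]_e$ itself (degree $1 \to 2$), together with $\varphi \partial^1_{1,\varepsilon} \colon [1]^0 \to [1]_e$. Since these all factor through $\varphi$, the boundary object of $[1]_e$ is precisely the image of $\varphi$, which as a subpresheaf of $\Box^1_\sharp$ is exactly the unmarked $\Box^1$. Hence the boundary inclusion for $[1]_e$ is $\Box^1 \hookrightarrow (\Box^1)^\sharp$.

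Assembling these computations, the cellular model provided by \cite[Prop.~1.5]{ara:higher-quasicats} is exactly $\{\partial \Box^n \hookrightarrow \Box^n \mid n \geq 0\} \cup \{\Box^1 \hookrightarrow (\Box^1)^\sharp\}$, which is the claimed set; its saturation is then the class of all monomorphisms of $\cSet''$ by the cited proposition. I expect the only mild subtlety — and the step worth checking carefully rather than the genuine obstacle — is verifying that the abstract boundary object of $[1]_e$ really is the unmarked subpresheaf $\Box^1$ and not something smaller or larger, i.e.\ confirming that every non-invertible degree-raising map into $[1]_e$ factors through $\varphi$ and that $\varphi$ is not itself invertible (it is not, by the identity $\zeta\varphi = \sigma^1_1$ together with the fact that $\varphi\zeta \neq \id_{[1]_e}$, since otherwise $[1]_e$ and $[1]$ would be isomorphic and the marking structure would collapse). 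Everything else is a routine transcription of the $\cSet$ argument.
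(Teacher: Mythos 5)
Your argument is correct and is precisely the one the paper intends: invoke \cite[Prop.~1.5]{ara:higher-quasicats} for the EZ-Reedy structure on $\tBox$ from \cref{tBox-Reedy}, exactly as \cref{cSetCellularModel} does for $\Box$ via \cref{Box-Reedy}, and identify the abstract boundary inclusions with the listed maps. The only thing the paper leaves implicit is your verification that the boundary object of $[1]_e$ is the image of $\varphi$, i.e.\ the minimally marked $\Box^1$ inside $(\Box^1)^\sharp$, and that no degree-raising map leaves $[1]_e$ (so the objects $[1]^k$ have the usual boundary $\partial\Box^k$); both checks are as you describe.
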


The functor $(\Box^{1})^{\sharp} \otimes - \colon \cSet'' \to \cSet''$, together with the natural transformations $\bd^{1}_{1,0} \otimes -, \bd^{1}_{1,1} \otimes - \colon \mathrm{id}  \to (\Box^{1})^{\sharp} \otimes -$, and $\pi \colon (\Box^{1})^{\sharp} \otimes - \to \mathrm{id}$, defines a cylinder functor on $\cSet''$ in the sense of \cref{CylinderFunctor}. 

Thus we have a notion of homotopy defined in terms of this cylinder functor: an elementary homotopy $f \sim g \colon X \to Y$ is a map $H \colon (\Box^1)^\sharp \otimes X \to Y$ with $H|_{\{0\} \otimes X} = f, H|_{\{1\} \otimes X} = g$, and a homotopy is a zigzag of elementary homotopies. In keeping with the notation of Section \ref{Sec1Cisinski},  we will write $[X,Y]$ for the set of homotopy classes of maps from $X$ to $Y$.

\begin{lemma} \label{cylinders-agree}\leavevmode
  \begin{enumerate}
    \item The cylinder functors in $\cSet'$ and $\cSet''$ agree, i.e., the latter is the image of the former under the embedding $\cSet' \to \cSet''$.
    \item For marked cubical sets $X$ and $Y$, the embedding $\cSet' \to \cSet''$ induces a bijection
      \[ [X, Y]_{\cSet'} \to [X, Y]_{\cSet''}\text{,} \]
      where the subscript indicates which category the homotopy classes are taken in.
  \end{enumerate}
\end{lemma}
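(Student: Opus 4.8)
The plan is to prove the two statements in sequence, with (1) doing most of the work and (2) following formally. For (1), I would show that the embedding $\cSet' \hookrightarrow \cSet''$ carries the cylinder functor $(\Box^1)^\sharp \otimes -$ on $\cSet'$ to the cylinder functor $(\Box^1)^\sharp \otimes -$ on $\cSet''$, together with its structure transformations $\bd^1_{1,0} \otimes -$, $\bd^1_{1,1} \otimes -$, and $\pi$. First I would recall that the embedding $\cSet' \hookrightarrow \cSet''$ preserves the geometric product: this is the content of \cref{flat-monoidal} and the surrounding discussion (the monoidal product on $\cSet'$ is by definition the restriction of that on $\cSet''$), so for $X$ a marked cubical set, the structurally marked cubical set $(\Box^1)^\sharp \otimes X$ computed in $\cSet''$ coincides with the one computed in $\cSet'$. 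The point requiring a small remark is that $(\Box^1)^\sharp \otimes X$ is again a \emph{genuinely} marked cubical set (i.e.\ lies in the image of the embedding), which follows from \cref{geometric-product-marked-pushout}: the edge set of the geometric product is a pushout of subsets of $X_1 \sqcup ((\Box^1)^\sharp)_1$ over $X_0 \times X_0$, hence the map $((\Box^1)^\sharp \otimes X)_e \to ((\Box^1)^\sharp \otimes X)_1$ remains a monomorphism. The endpoint transformations and the projection are likewise restrictions of the corresponding maps in $\cSet''$, since all are induced by maps of representables under the geometric product. This establishes (1).

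For (2), the key observation is that an (elementary) homotopy $H \colon (\Box^1)^\sharp \otimes X \to Y$ between maps $X \to Y$ of marked cubical sets is literally the same datum whether computed in $\cSet'$ or in $\cSet''$: by (1) the object $(\Box^1)^\sharp \otimes X$ is the same in both categories, and since $Y$ is a marked (not merely structurally marked) cubical set, the embedding $\cSet' \hookrightarrow \cSet''$ is fully faithful on the relevant hom-sets, so $\cSet'((\Box^1)^\sharp \otimes X, Y) = \cSet''((\Box^1)^\sharp \otimes X, Y)$. The same applies to the two maps being compared, $f, g \colon X \to Y$. Thus elementary homotopies correspond bijectively, and hence so do zig-zags of elementary homotopies; passing to equivalence classes gives the claimed bijection $[X,Y]_{\cSet'} \to [X,Y]_{\cSet''}$.

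I do not expect any serious obstacle here; the lemma is essentially a bookkeeping statement that the embedding $\cSet' \hookrightarrow \cSet''$ is fully faithful and monoidal, so it transports the cylinder functor and the homotopy relation verbatim. The only subtlety worth flagging explicitly, and the one place I would be careful, is the verification that $(\Box^1)^\sharp \otimes X$ stays inside $\cSet'$ — i.e.\ that its underlying structurally marked cubical set has its ``$e$-part'' monic into its edge set — since $\cSet''$ in general allows multiply-marked edges and the geometric product in $\cSet''$ is defined via left Kan extension from $\tBox_\sharp \times \tBox_\sharp$; but as noted this is immediate from the explicit pushout description of \cref{geometric-product-marked-pushout}.
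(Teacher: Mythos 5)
Your proof is correct and follows the same route as the paper, which reduces both statements to the monoidality of the embedding $\cSet' \hookrightarrow \cSet''$ established in \cref{flat-monoidal}. You simply spell out details that the paper absorbs into that cited fact and into the preceding (unproved) proposition that the geometric product on $\cSet''$ restricts to $\cSet'$ — namely the verification, via \cref{geometric-product-marked-pushout}, that $(\Box^1)^\sharp \otimes X$ lands back in the subcategory $\cSet'$, together with the observation that fullness of the embedding makes the homotopy relations coincide.
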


\begin{proof}
  Both of these statements follow easily from the fact that the embedding $\cSet' \to \cSet''$ is monoidal, established in \cref{flat-monoidal}.
\end{proof}

Let $S$ be the set of maps in $\cSet''$ consisting of the following maps:
\begin{itemize}
	\item the marked open box inclusions,
	\item the saturation map, and
	\item the 3-out-of-4 maps.
\end{itemize}

\begin{definition}
A map of structurally marked cubical sets is \emph{anodyne} if it is in the saturation of $S$.
\end{definition}

Note that the anodyne generators in $\cSet''$ are precisely those of $\cSet'$, embedded via $\cSet' \to \cSet''$.

\begin{remark}
It might seem natural to include the map $(\Box^1)^\sharp \to (\Box^1)^{2\sharp}$, the inclusion of the marked interval into the interval with two distinct markings, in $S$, so that adding a marking to an already-marked edge of a structurally marked cubical set would not change its homotopy type. In fact, however, this map is already anodyne, as it is a pushout of a 3-out-of-4 map.
\end{remark}

The following lemma shows that this definition of anodyne maps is consistent with that of \cref{Sec1Cisinski}.

\begin{lemma}\label{struct-mark-Lambda}
The set $\Lambda(S)$ is contained in the saturation of $S$.
\end{lemma}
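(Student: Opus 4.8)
The plan is to show, by induction on $n \geq 0$, that every map in $\Lambda^n(S)$ lies in the saturation of $S$; since $\Lambda(S) = \bigcup_{n \geq 0} \Lambda^n(S)$, this gives the claim.

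The starting point is to recognize the maps occurring in the construction of $\Lambda(S)$ as pushout products against the boundary inclusions of the cylinder $I = (\Box^1)^\sharp \otimes -$. Since $\bd^\varepsilon = \bd^1_{1,\varepsilon} \otimes -$, and since, by \cref{geometric-product-marked} together with cocontinuity of $\otimes$, there are natural isomorphisms $\Box^0 \otimes Z \cong Z$ and $\partial\Box^1 \otimes Z \cong Z \sqcup Z$, for any monomorphism $g \colon X \to Y$ in $\cSet''$ one obtains natural identifications
\[
  \bigl(IX \cup_\varepsilon Y \to IY\bigr) = \bigl(\{\varepsilon\} \hookrightarrow (\Box^1)^\sharp\bigr) \hatotimes g,
  \qquad
  \bigl(IX \cup (Y \sqcup Y) \to IY\bigr) = \bigl(\partial\Box^1 \hookrightarrow (\Box^1)^\sharp\bigr) \hatotimes g.
\]
Moreover $\{\varepsilon\} \hookrightarrow (\Box^1)^\sharp$ is precisely the marked open box inclusion $\iota^1_{1,1-\varepsilon}$, hence one of the anodyne generators, while $\partial\Box^1 \hookrightarrow (\Box^1)^\sharp$ is a monomorphism.

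The second ingredient is the analogue of \cref{anodyne-pop} for $\cSet''$: the pushout product of a monomorphism with an anodyne map is anodyne, and the pushout product of two monomorphisms is a monomorphism. This is proved exactly as \cref{anodyne-pop}, whose argument uses only cocontinuity of $\otimes$, closure of the anodyne maps under pushout and transfinite composition, \cref{open-box-pop}, and \cref{geometric-product-marked-pushout}; all of these hold for structurally marked cubical sets, the last two already being stated at that level of generality. With these ingredients in place the induction is straightforward. For $n = 0$, the maps of $S$ are anodyne by definition, and by \cref{struct-mark-cell-mod} every remaining map of $\Lambda^0(S)$ has the form $(\{\varepsilon\} \hookrightarrow (\Box^1)^\sharp) \hatotimes g$ with $g$ a generating cofibration, hence is the pushout product of an anodyne map with a monomorphism, hence anodyne. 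For the inductive step, every map of $\Lambda^{n+1}(S)$ has the form $(\partial\Box^1 \hookrightarrow (\Box^1)^\sharp) \hatotimes g$ with $g \in \Lambda^n(S)$; by the inductive hypothesis $g$ is anodyne, and it is a monomorphism since every map of $\Lambda(S)$ is, so this pushout product is again anodyne.

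The only step that requires care is the transfer of \cref{anodyne-pop} from $\cSet'$ to $\cSet''$, but this is harmless: that proof is phrased entirely in terms of the geometric product and the formula of \cref{geometric-product-marked-pushout}, both of which make sense verbatim for structurally marked cubical sets, so no new argument is needed there.
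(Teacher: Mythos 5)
Your proof is correct and follows essentially the same route as the paper's: identify the maps of $\Lambda^n(S)$ as pushout products against $\{\varepsilon\} \hookrightarrow (\Box^1)^\sharp$ and $\bd\Box^1 \hookrightarrow (\Box^1)^\sharp$, observe that the pushout-product argument of \cref{anodyne-pop} transfers verbatim to $\cSet''$, and induct on $n$. The paper states this more tersely (it simply says the proof of \cref{anodyne-pop} applies in this context and then invokes induction), whereas you spell out the identifications $\bigl(IX \cup_\varepsilon Y \to IY\bigr) = \iota^1_{1,1-\varepsilon} \hatotimes g$ and so on, but the underlying argument is the same.
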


\begin{proof}
The proof of \cref{anodyne-pop} applies equally well in this context, showing that a pushout product of a monomorphism with a map in the saturation of $S$ is again in the saturation of $S$. This implies that $\Lambda^{0}(S)$ is contained in the saturation of $S$; applying the same lemma inductively, we see that each set $\Lambda^{n}(S)$ is contained in the saturation of $S$.
\end{proof}

\begin{theorem}\label{model-struct-mark}
	The category $\cSet''$ of structurally marked cubical sets carries a cofibrantly generated model structure in which:
	\begin{itemize}
		\item the cofibrations are the monomorphisms;
		\item the fibrant objects, and fibrations between fibrant objects, are defined by the right lifting property with respect to the set of generating anodyne maps $S$;
		\item the weak equivalences are maps $X \to Y$ inducing bijections $[Y,Z] \to [X,Z]$ for all fibrant objects $Z$.
	\end{itemize}
\end{theorem}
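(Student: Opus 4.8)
The plan is to obtain the desired model structure as a direct instance of Cisinski's \cref{CisinskiMS}, applied to the presheaf category $\cSet'' = \pshf{\tBox}$. All three ingredients needed to invoke that theorem are already at hand. For the cellular model we take $M = \{\partial\Box^n \hookrightarrow \Box^n \mid n \geq 0\} \cup \{\Box^1 \hookrightarrow (\Box^1)^\sharp\}$, which generates the monomorphisms of $\cSet''$ by \cref{struct-mark-cell-mod}. For the cylinder functor we take $(\Box^1)^\sharp \otimes -$ together with the two endpoint inclusions $\bd^1_{1,0}\otimes -,\bd^1_{1,1}\otimes -$ and the projection $\pi$, which was observed above to satisfy the axioms of \cref{CylinderFunctor} (this is verified exactly as for the analogous cylinder on $\cSet$, using the explicit description of the geometric product in \cref{geometric-product-marked}). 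For the set of maps to be inverted we take $S$, consisting of the marked open box inclusions, the saturation map, and the 3-out-of-4 maps. Feeding these into \cref{CisinskiMS} yields a cofibrantly generated model structure on $\cSet''$ whose cofibrations are exactly the monomorphisms, whose weak equivalences are the maps inducing bijections $[Y,Z]\to[X,Z]$ for every fibrant $Z$, and in which a map between fibrant objects is a fibration if and only if it is a naive fibration, i.e., has the right lifting property against the Cisinski-anodyne maps, the saturation of $\Lambda(S)$.

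It then remains only to reconcile the resulting descriptions of fibrant objects and fibrations between fibrant objects with those in the statement, which amounts to checking that the saturation of $\Lambda(S)$ and the saturation of $S$ coincide. One inclusion is automatic, since $S \subseteq \Lambda^0(S) \subseteq \Lambda(S)$ and saturation is monotone. The reverse inclusion is precisely \cref{struct-mark-Lambda}: its proof reuses the pushout-product argument of \cref{anodyne-pop}, showing that the pushout product of a monomorphism with a map in the saturation of $S$ again lies in that saturation, so that starting from $\Lambda^0(S) = S \cup \{IX\cup_\varepsilon Y \to IY \mid (X\to Y)\in M\}$ and iterating keeps every $\Lambda^n(S)$ inside the saturation of $S$. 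With the two saturation classes identified, ``naive fibration'' means exactly ``right lifting property against $S$'', so the fibrant objects and the fibrations between fibrant objects are characterized as claimed, while the cofibrations and weak equivalences are read off directly from \cref{CisinskiMS}.

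The construction is thus essentially bookkeeping once \cref{struct-mark-Lambda} is in place; the only place requiring genuine work is that lemma, and there the main obstacle — already dispatched in the proof of \cref{anodyne-pop} — is verifying closure of the saturation of $S$ under pushout product with the generating cofibrations, in particular that $\iota^1_{i,\varepsilon}\hatotimes(\partial\Box^1 \to (\Box^1)^\sharp)$ is a 3-out-of-4 map and that the marked critical edge is preserved under the relevant pushout products $\iota^m_{i,\varepsilon}\hatotimes(\partial\Box^n \to \Box^n)$. Granting this, the hypotheses of \cref{CisinskiMS} hold verbatim and the theorem follows; I would also remark, as in \cref{cylinders-agree}, that the cylinder functor here restricts the one on $\cSet'$, since this is what makes the model structure genuinely comparable to the one of \cref{cubical-marked-ms} in the subsequent right-induction argument.
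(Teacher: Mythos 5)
Your proposal is correct and matches the paper's argument: both apply \cref{CisinskiMS} with the cellular model from \cref{struct-mark-cell-mod}, the cylinder $(\Box^1)^\sharp \otimes -$, and the set $S$, and both invoke \cref{struct-mark-Lambda} to identify the Cisinski-anodyne class with the saturation of $S$. The extra elaboration you give (spelling out the pushout-product checks behind \cref{struct-mark-Lambda} and noting the compatibility with the cylinder on $\cSet'$) is consistent with the paper and does not change the route.
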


\begin{proof}
	The existence of the model structure follows from \cref{CisinskiMS}. \cref{struct-mark-Lambda} shows that the set of generating anodyne maps is exactly $S$.
\end{proof}

The remainder of this section will be devoted to analyzing this model structure and its relationship with the model structure on marked cubical sets of \cref{cubical-marked-ms}.
More precisely, we will prove:

\begin{theorem} \label{compare-marked-struct-marked}\leavevmode
  \begin{enumerate}
    \item The image adjunction $\cSet'' \rightleftarrows \cSet'$ of the diagram \cref{eq:all-functors-marking} is a Quillen equivalence.
    \item The cubical marked model structure is right induced from the model structure of \cref{model-struct-mark} by the embedding $\cSet' \to \cSet''$.
\end{enumerate}    
\end{theorem}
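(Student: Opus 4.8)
The plan is to deduce both statements from the general machinery of Cisinski model structures together with the transfer theorem, using the fact — already recorded in \cref{cylinders-agree} — that the cylinder functors and homotopy relations on $\cSet'$ and $\cSet''$ agree under the embedding $\iota \colon \cSet' \to \cSet''$, and that the generating anodyne maps $S$ of $\cSet''$ are precisely the image of the generating anodynes of $\cSet'$. First I would record the adjunction to be used: the embedding $\iota \colon \cSet' \hookrightarrow \cSet''$ has left adjoint $\Im$ (from diagram \cref{eq:all-functors-marking}), and this is the "image adjunction" of part (1). For part (2), I claim the cubical marked model structure is right induced along $\iota$ from the model structure of \cref{model-struct-mark}; concretely I must show that a map $f$ of marked cubical sets is a weak equivalence (resp.\ fibration) in the sense of \cref{cubical-marked-ms} if and only if $\iota f$ is a weak equivalence (resp.\ fibration) in $\cSet''$, and that these classes, together with the monomorphisms as cofibrations, indeed arise by right induction.

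For the fibrations and fibrant objects this is nearly immediate: a marked cubical set $X$ is fibrant in $\cSet''$ iff $\iota X \to \Box^0$ lifts against $S$, which by definition is exactly the condition that $X$ is a marked cubical quasicategory, i.e.\ fibrant in $\cSet'$ by \cref{marked-fib-obs}; and since $\iota$ is fully faithful and $S$-generated fibrations in $\cSet''$ between fibrant objects coincide with naive fibrations, \cref{marked-fib-obs} again identifies fibrations between fibrant objects on both sides. For the weak equivalences, I would argue that $f \colon A \to B$ in $\cSet'$ is a weak equivalence iff for every marked cubical quasicategory $Z$ the map $[B,Z] \to [A,Z]$ is a bijection: the "only if" direction is \cref{we-characterizations} (condition (iii), using that $\pi_0 \ihom(-,Z)$-bijections are exactly homotopy-class bijections), and the characterization of weak equivalences in $\cSet''$ in \cref{model-struct-mark} is literally this same condition but with homotopy classes and fibrant objects taken in $\cSet''$. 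By \cref{cylinders-agree}(2) the homotopy-class sets $[X,Y]$ agree across the embedding, and by the previous sentence the fibrant objects agree, so the two classes of weak equivalences correspond under $\iota$. This establishes that $\iota$ preserves and reflects weak equivalences and fibrations, which is the definition of the cubical marked model structure being right induced — giving part (2), once we know the $\cSet''$-model structure exists, which is \cref{model-struct-mark}.

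For part (1), that $\Im \dashv \iota$ is a Quillen equivalence: first check it is Quillen. The right adjoint $\iota$ preserves fibrations and trivial fibrations since, being fully faithful and compatible with the generating (anodyne and cofibration) sets — the generators of $\cSet'$ map to those of $\cSet''$ — it reflects and preserves the relevant lifting properties; equivalently, the left adjoint $\Im$ sends the generating cofibrations $\{\bd\Box^n \to \Box^n, \Box^1 \to (\Box^1)^\sharp\}$ of $\cSet''$ to (generating) cofibrations of $\cSet'$, and sends generating anodynes to anodynes (indeed $\Im$ fixes all of these since they already lie in the image of $\iota$). Hence $\Im \dashv \iota$ is Quillen. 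To upgrade to a Quillen equivalence I would apply \cref{QuillenEquivCreate}\ref{QuillenEquivCounit} — wait, more precisely, since $\iota$ is fully faithful the counit $\Im\iota \Rightarrow \mathrm{id}$ is an isomorphism, so it suffices (by \cref{QuillenEquivCreate}\ref{QuillenEquivUnit}, applied with the roles of the two categories as in the statement) to check that $\iota$ preserves and reflects weak equivalences — done in part (2) above — and that for every cofibrant (= every) object $X$ of $\cSet''$ the derived unit $X \to \iota\Im X$ is a weak equivalence. Now $\Im X$ is obtained from $X$ by collapsing the multiple markings on each edge to a single marking (keeping the distinguished $\zeta$-marking), so the unit $X \to \iota\Im X$ is the identity on underlying cubical sets and simply identifies parallel markings; this map is anodyne because each such identification is a pushout of a $3$-out-of-$4$ map (the map $(\Box^1)^\sharp \to (\Box^1)^{2\sharp}$ is anodyne, as noted in the remark after \cref{model-struct-mark}), hence a weak equivalence by the analogue of \cref{anodyne-we}. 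Therefore the unit is a weak equivalence and the adjunction is a Quillen equivalence.

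The main obstacle I anticipate is the bookkeeping around multiple markings: showing carefully that the unit $X \to \iota \Im X$ is built as a (transfinite) composite of pushouts of the anodyne map $(\Box^1)^\sharp \to (\Box^1)^{2\sharp}$ (and hence of $3$-out-of-$4$ maps), for a general structurally marked $X$ with arbitrarily many markings on arbitrarily many edges, and that this composite is itself anodyne — this requires a small-object-argument-style or skeletal induction rather than a one-line argument. Everything else reduces cleanly to the already-established results \cref{cylinders-agree}, \cref{we-characterizations}, \cref{marked-fib-obs}, and \cref{model-struct-mark}.
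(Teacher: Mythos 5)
Your overall architecture (do part (2) via matching fibrant objects and weak equivalences, then part (1) via the unit being a weak equivalence) is reasonable, but there are two genuine gaps, one of which is a real mathematical error.

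\textbf{The unit is not anodyne.} You claim that the unit $\eta_X \colon X \to \iota\Im X$ is anodyne because it is built from pushouts of the map $(\Box^1)^\sharp \to (\Box^1)^{2\sharp}$. This is backwards. The functor $\Im$ \emph{collapses} multiple markings on an edge of $X$ to a single one, so $\eta_X$ is an \emph{epimorphism} on the $[1]_e$-component (and in fact on underlying presheaves). Anodyne maps are monomorphisms, so $\eta_X$ cannot be anodyne unless it is an isomorphism. The map $(\Box^1)^\sharp \to (\Box^1)^{2\sharp}$ you invoke points in the opposite direction: it \emph{adds} a second marking, while the unit \emph{merges} markings. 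What is actually true --- and what the paper proves as \cref{markedUnitLift} via an explicit characterization of trivial fibrations in $\cSet''$ (identity on underlying cubical sets, surjective on marking sets) --- is that $\eta_X$ is a \emph{trivial fibration}. Both trivial fibrations and anodyne maps are weak equivalences, so the conclusion you want survives, but your justification does not; you need the trivial-fibration characterization, which your proposal never establishes.

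\textbf{Fibrant objects in $\cSet''$ are not all in the image.} In part (2) you write that ``the fibrant objects agree,'' but a fibrant structurally marked cubical set can have multiply-marked edges and so need not lie in the image of $\iota$. To show $\iota$ preserves weak equivalences you must be able to test maps $[\iota B, Z] \to [\iota A, Z]$ against \emph{all} fibrant $Z \in \cSet''$, not just those coming from $\cSet'$. The missing ingredient is that every such $Z$ is weakly equivalent to a fibrant object in the image, namely via the trivial fibration $Z \to \iota\Im Z$ together with the fact that $\Im Z$ is fibrant in $\cSet'$ (the paper's \cref{premarkedFibs,ImFibrant}). This is exactly the same lemma you are missing above. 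There is also a structural issue: you present part (2) before part (1), but the cleanest route to ``$\iota$ preserves and reflects weak equivalences'' is via $\Im$ being a left Quillen equivalence with all objects cofibrant and $\Im\iota \cong \mathrm{id}$, which requires part (1) first --- the paper's ordering is the one that avoids this circularity.

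\end{document}
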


Before proving this theorem, we establish a number of intermediate results.

\begin{proposition} \label{Quillen-adj-marked-struct-marked}
	The image adjunction $\cSet'' \rightleftarrows \cSet'$ of the diagram \cref{eq:all-functors-marking} is a Quillen adjunction between the model structure of \cref{model-struct-mark} to the cubical marked model structure.
\end{proposition}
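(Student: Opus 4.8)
The plan is to apply \cref{Quillen-adj-fib-obs} to the left adjoint $\Im \colon \cSet'' \to \cSet'$ of the inclusion $\iota \colon \cSet' \hookrightarrow \cSet''$ appearing in \cref{eq:all-functors-marking}. Since $\iota$ is fully faithful, $\cSet'$ is a reflective subcategory of $\cSet''$, so the counit $\Im\iota \Rightarrow \mathrm{id}_{\cSet'}$ is a natural isomorphism; this will let us compute $\Im$ on the generating anodynes. By \cref{model-struct-mark} the fibrations between fibrant objects of $\cSet''$ are exactly the maps with the right lifting property against the generating anodyne set $S$, so it suffices to verify two things: that $\Im$ preserves cofibrations, and that $\Im$ sends each map of $S$ to a trivial cofibration of $\cSet'$.

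First I would check that $\Im$ preserves cofibrations, i.e.\ monomorphisms. Using the pointwise description $(\Im X)_n = X_n$ and $(\Im X)_e = \varphi^*(X_e) \subseteq X_1$ from \cref{eq:all-functors-marking}, a monomorphism $f \colon X \to Y$ in $\cSet''$ induces a map $\Im f$ which agrees with $f$ on $n$-cubes, hence is injective there, and which on marked edges is the restriction of the injective map $f_1 \colon X_1 \to Y_1$ to the subset $(\Im X)_e$, hence is again injective. Thus $\Im f$ is levelwise injective, so a monomorphism between marked cubical sets, i.e.\ a cofibration of $\cSet'$.

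Next, as noted in the excerpt, the generating anodyne maps of $\cSet''$ are precisely the images under $\iota$ of the generating anodyne maps of $\cSet'$; writing $s = \iota\tilde s$, the counit isomorphism gives $\Im s \cong \tilde s$. Now $\tilde s$ lies in the saturation of a set of monomorphisms, hence is a cofibration, and is a weak equivalence by \cref{anodyne-we}; since the class of trivial cofibrations is closed under isomorphism, $\Im s$ is a trivial cofibration. \cref{Quillen-adj-fib-obs} then gives that $\Im$ is a left Quillen functor, i.e.\ the image adjunction is a Quillen adjunction as claimed.

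The hard part, such as it is, is nothing more than the explicit check that $\Im$ preserves monomorphisms together with the bookkeeping identifying $\Im(S)$ with the generating anodynes of $\cSet'$ up to isomorphism; there is no genuine obstacle. (Alternatively one could invoke the Joyal--Tierney criterion directly and show the right adjoint $\iota$ preserves fibrations between fibrant objects, using full faithfulness and the fact that the domains and codomains of the maps in $S$ are marked cubical sets, but the route through \cref{Quillen-adj-fib-obs} is the shortest.)
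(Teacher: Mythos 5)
Your proof is correct and takes essentially the same route as the paper's, namely applying \cref{Quillen-adj-fib-obs} to $\Im$ by checking that it preserves cofibrations and carries the generating anodyne set $S$ of $\cSet''$ to trivial cofibrations of $\cSet'$. The paper simply declares both checks ``immediate''; your explicit verifications (levelwise injectivity via the pointwise description of $\Im$, and the counit isomorphism $\Im\iota\cong\mathrm{id}$ together with \cref{anodyne-we}) are exactly what makes them so.
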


\begin{proof}
	By \cref{Quillen-adj-fib-obs}, it suffices to show that $\Im$ preserves monomorphisms and takes generating anodynes to anodynes.
	Both of these statements are immediate.
\end{proof}

\begin{lemma}
A map of structurally marked cubical sets $f \colon X \to Y$ is a trivial fibration if and only if the underlying map of cubical sets is a trivial fibration in model structure of \cref{Grothendieck-ms-cSet} (i.e., has the right lifting property with respect to monomorphisms) and, for each edge $x$ of $X$, the map from the set of markings of $x$ to that of $fx$ is surjective.
\end{lemma}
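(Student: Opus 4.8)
The plan is to reduce the lifting condition defining trivial fibrations to the explicit generating set of monomorphisms of $\cSet''$ and then analyze the two resulting families of lifting problems separately. By definition a trivial fibration is a map with the right lifting property against all cofibrations, i.e.\ all monomorphisms, and by \cref{struct-mark-cell-mod} these form the saturation of $\{\bd\Box^n \hookrightarrow \Box^n \mid n \geq 0\} \cup \{\Box^1 \hookrightarrow (\Box^1)^\sharp\}$. Hence $f$ is a trivial fibration if and only if it has the right lifting property against every boundary inclusion $\bd\Box^n \hookrightarrow \Box^n$ and against $\Box^1 \hookrightarrow (\Box^1)^\sharp$.

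First I would treat the boundary inclusions. The key point is that the objects $\Box^n$ and $\bd\Box^n$ of $\cSet''$ are (canonically isomorphic to) the minimal markings $(\Box^n)^\flat$ and $(\bd\Box^n)^\flat$ of the corresponding cubical sets, and that the minimal-marking functor $(-)^\flat$ is left adjoint to the forgetful functor $\cSet'' \to \cSet$. Consequently, for any $X \in \cSet''$ with underlying cubical set $\bar X$, this adjunction identifies $\cSet''(\Box^n, X)$ with $\bar X_n$ and $\cSet''(\bd\Box^n, X)$ with the set of boundaries of $n$-cubes of $\bar X$, compatibly with the boundary inclusion. Thus a lifting problem for $\bd\Box^n \hookrightarrow \Box^n$ against $f$ in $\cSet''$ is precisely a lifting problem for the same map against the underlying cubical map of $f$ in $\cSet$, so $f$ lifts against all $\bd\Box^n \hookrightarrow \Box^n$ if and only if its underlying map is a trivial fibration in the Grothendieck model structure of \cref{Grothendieck-ms-cSet}, using \cref{cSetCellularModel}.

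Next I would analyze lifting against $\Box^1 \hookrightarrow (\Box^1)^\sharp$. Unwinding the presheaf data, a map $(\Box^1)^\sharp \to Y$ is the same as an edge $y \in Y_1$ together with a marking of $y$, that is, an element of $Y_e$ lying over $y$ along $Y(\varphi)$ — the remaining $e$-component data, the distinguished markings on the two endpoints, being forced by naturality with respect to $\zeta$ — whereas a map $\Box^1 \to X$ is just an edge of $X$, and the map induced by $\Box^1 \hookrightarrow (\Box^1)^\sharp$ forgets the marking. Hence a lifting problem for this map against $f$ amounts to: given an edge $x \in X_1$ and a marking $m$ of $fx$, produce a marking of $x$ mapping to $m$. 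Therefore $f$ has the right lifting property against $\Box^1 \hookrightarrow (\Box^1)^\sharp$ exactly when, for every edge $x$ of $X$, the map from the set of markings of $x$ to that of $fx$ is surjective. Combining the two analyses gives the stated characterization.

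The step requiring the most care is the identification of $\cSet''(\bd\Box^n, X)$ with the set of boundaries of $n$-cubes of the underlying cubical set of $X$ — i.e.\ verifying that the boundary inclusions of $\cSet''$ carry no marking information beyond that forced by the underlying cubical structure. Once this bookkeeping, via the colimit description of $\bd\Box^n$ and the adjunction $(-)^\flat \dashv (\text{forgetful})$, is in place, everything else is a direct unwinding of the two lifting conditions.
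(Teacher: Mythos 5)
Your proposal is correct and follows essentially the same route as the paper: reduce via \cref{struct-mark-cell-mod} to the two families of generating cofibrations, then observe that lifting against the boundary inclusions is equivalent to lifting at the level of underlying cubical sets (via the adjunction $(-)^\flat \dashv U$, which the paper leaves implicit) and that lifting against $\Box^1 \hookrightarrow (\Box^1)^\sharp$ is exactly the surjectivity of the map on marking sets.
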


\begin{proof}
By \cref{struct-mark-cell-mod}, $f$ is a trivial fibration if and only if it has the right lifting property with respect to all boundary inclusions and the inclusion of the interval into the marked interval. Having the right lifting property with respect to all boundary inclusions is equivalent to being a trivial fibration on underlying cubical sets; having the right lifting property with respect to the inclusion of the interval into the marked interval is equivalent to each map of marking sets being surjective.
\end{proof}

\begin{corollary}\label{markedUnitLift}
For all structurally marked cubical sets $X$, the adjunction unit $X \to \mathrm{Im}X$ is a trivial fibration. \qed
\end{corollary}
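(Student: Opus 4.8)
The plan is to apply the lemma immediately preceding this corollary, which characterizes the trivial fibrations of structurally marked cubical sets as those maps $f \colon X \to Y$ whose underlying map of cubical sets is a trivial fibration in the model structure of \cref{Grothendieck-ms-cSet} and for which, for every edge $x$ of $X$, the induced map from the set of markings of $x$ to the set of markings of $fx$ is surjective. So for the unit $\eta_X \colon X \to \Im X$ there are exactly two conditions to check.

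First I would observe that the underlying map of cubical sets of $\eta_X$ is the identity: by construction $(\Im X)_n = X_n$ for $n \geq 0$ with the maps of $\Box$ acting exactly as in $X$, and the unit acts as the identity in each such dimension. The identity is a trivial fibration, so this condition holds trivially.

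Second, I would unwind the notion of a marking. Fixing an edge $x \in X_1$ (and noting that $\eta_X$ sends $x$ to $x$, since the underlying map is the identity), a marking of $x$ in $X$ is an element of $X_e$ whose image under $\varphi^* \colon X_e \to X_1$ is $x$; on the other hand $(\Im X)_e = \varphi^*(X_e) \subseteq X_1$ with structure map $\varphi^*$ the inclusion, so a marking of $x$ in $\Im X$ is $x$ itself when $x \in \varphi^*(X_e)$ and there is none otherwise. When there is none, surjectivity is vacuous; when $x \in \varphi^*(X_e)$, choosing any preimage $m \in X_e$ of $x$ under $\varphi^*$ yields a marking of $x$ in $X$ which $\eta_X$ carries to the unique marking $x$ of $x$ in $\Im X$. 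Hence the map on markings is surjective in all cases, both conditions of the lemma are satisfied, and $\eta_X$ is a trivial fibration.

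I do not expect any genuine obstacle; the one point requiring care is correctly identifying the action of $\eta_X$ on the $e$-component, namely the corestriction of $\varphi^* \colon X_e \to X_1$ onto its image $\varphi^*(X_e)$, which is forced by compatibility with the structure map $\varphi^*$ of $\Im X$.
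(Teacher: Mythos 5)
Your proof is correct and is exactly the intended argument: the paper records this corollary with \qed because it is an immediate application of the lemma directly above it, and you have supplied precisely the two verifications that lemma requires (the underlying cubical-set map is the identity, hence a trivial fibration, and the induced map on marking sets over each edge is surjective, either vacuously or because you may choose any preimage in $X_e$).
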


\begin{lemma} \label{premarkedFibs}
The functor $\Im \colon \cSet'' \to \cSet'$ preserves fibrations and trivial fibrations.
\end{lemma}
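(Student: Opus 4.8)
The plan is to treat trivial fibrations and fibrations separately, transporting the second problem up to $\cSet''$ along the adjunction unit of \cref{markedUnitLift}; throughout I regard $\cSet'$ as the full subcategory of $\cSet''$ on those $X$ with $X_e \to X_1$ monic. For trivial fibrations the argument is direct. Since $\Im$ changes neither the sets $X_n$ (for $n \geq 0$) nor their face, degeneracy and connection maps, the underlying cubical map of $\Im f$ is that of $f$, hence a trivial fibration of cubical sets by the characterization of trivial fibrations in $\cSet''$ established just above; so $\Im f$ has the right lifting property against each $\bd\Box^n \hookrightarrow \Box^n$. And if $\xi$ is an edge of $\Im X$ with $\Im f(\xi)$ marked in $\Im Y$, then $f\xi$ carries a marking in $Y$, so by surjectivity of the marking map of $f$ over $\xi$ the edge $\xi$ already carries a marking in $X$, hence in $\Im X$; thus $\Im f$ also lifts against $\Box^1 \hookrightarrow (\Box^1)^\sharp$. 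By \cref{cofibration-generators}, $\Im f$ is a trivial fibration in $\cSet'$.

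For fibrations, the key is the following cancellation principle, valid in any model category with all objects cofibrant and in particular in $\cSet''$: if $p q$ is a fibration and $q$ is a trivial fibration, then $p$ is a fibration (given a trivial cofibration $j$ and a square over $p$, lift the top map along the trivial fibration $q$ using cofibrancy of the source of $j$, then lift $j$ against the fibration $pq$, then postcompose with $q$). Let $\eta$ denote the adjunction unit $\mathrm{id} \to \Im$. Applying the cancellation principle to the naturality square $\Im f \circ \eta_X = \eta_Y \circ f$ — in which $\eta_X$ and $\eta_Y$ are trivial fibrations by \cref{markedUnitLift}, so that $\eta_Y \circ f$ is a fibration — shows that $\Im f$, viewed as a map of $\cSet''$, is a fibration.

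It remains to descend this to $\cSet'$: a map of $\cSet'$ that is a fibration in $\cSet''$ must be shown to be a fibration in $\cSet'$. I would argue this via weak equivalences. First, $\Im$ preserves fibrant objects: if $Z$ is fibrant in $\cSet''$ and $a \colon A_0 \to A_1$ is a generating anodyne map of $\cSet'$, then any $A_0 \to \Im Z$ lifts along the trivial fibration $\eta_Z$ to some $A_0 \to Z$ (as $\varnothing \to A_0$ is a cofibration), which fills against $a$ in $Z$ by fibrancy; postcomposing the filler with $\eta_Z$ solves the original lifting problem, so $\Im Z$ is a marked cubical quasicategory, i.e.\ fibrant in $\cSet'$ by \cref{marked-fib-obs}. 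Hence for fibrant $Z$ the map $\eta_Z$ is a weak equivalence between fibrant objects of $\cSet''$, so it induces a bijection $[W, Z] \to [W, \Im Z]$ for all $W$ (every object being cofibrant); together with \cref{cylinders-agree}, which identifies $[B, \Im Z]$ taken in $\cSet'$ with the same set taken in $\cSet''$, this proves that the embedding $\cSet' \hookrightarrow \cSet''$ carries weak equivalences to weak equivalences. As it also preserves monomorphisms, it preserves trivial cofibrations; and since it is full, any lifting problem in $\cSet'$ of a $\cSet'$-trivial cofibration against a $\cSet'$-map that happens to be a fibration in $\cSet''$ can be solved inside $\cSet'$. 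Taking that map to be $\Im f$ gives the desired conclusion.

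The real work is the fibration case, and within it the final descent step: to reflect fibrations along the embedding one must first establish, as genuine (not formal) lemmas, that $\Im$ preserves fibrant objects and that the embedding preserves weak equivalences. The trivial-fibration case is, by contrast, immediate from the explicit characterization already at hand.
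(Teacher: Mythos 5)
Your proof is correct but takes a genuinely different route from the paper's, most visibly for the fibration case. The paper proceeds by direct lifting: given a trivial cofibration $i \colon A \hookrightarrow B$ in $\cSet'$ and a square against $\Im p$, it uses the unit trivial fibrations $u_X, u_Y$ (\cref{markedUnitLift}) to lift the two horizontal arrows into $\cSet''$, solves the resulting lifting problem for $i$ against $p$ in $\cSet''$, and composes the solution with $u_X$ to push it back into $\cSet'$. You instead apply a cancellation principle to the naturality square $\Im p \circ u_X = u_Y \circ p$ to deduce that $\Im p$ is a fibration of $\cSet''$, and then descend to $\cSet'$ by arguing that the full embedding reflects fibrations. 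The two arguments rest on the same nontrivial input: that $i$, regarded in $\cSet''$, remains a trivial cofibration there. The paper uses this without comment at the $\cSet''$-lifting step (nothing preceding \cref{premarkedFibs} in the paper records it), whereas you establish it carefully — first showing $\Im$ sends $\cSet''$-fibrant objects to marked cubical quasicategories, then using the unit trivial fibration to transport $[W,Z] \to [W,\Im Z]$ bijectively, then invoking \cref{cylinders-agree} to match homotopy classes across the embedding. Your route is therefore longer but makes the dependency explicit and self-contained; the paper's is more economical but leans on an observation it never spells out. For the trivial-fibration half your argument reads the conclusion directly off the explicit characterization of trivial fibrations in $\cSet''$, which is tidier than running the paper's lifting argument a second time, and it is correct as written.
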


\begin{proof}
We will show that if $p \colon X \to Y$ is a fibration between structurally marked cubical sets, then $\Im\, p\colon \Im X \to \Im Y$ is also a fibration. Given a trivial cofibration of marked cubical sets $i \colon A \hookrightarrow B$ with maps $\alpha \colon A \to \Im X$ and $\beta \colon B \to \Im Y$ making the square commute, apply \cref{markedUnitLift} to $\varnothing \to A \to \Im X$ to get $\alpha' \colon A \to X$ with $\alpha = u_X \alpha'$ and then again to the square
\[
\xymatrix{
A \ar[d]_{i} \ar[r]^{p\alpha'} & Y \ar[d]^{u_Y} \\
B \ar[r]^{\beta} & \Im Y }\]
 to get $\beta' \colon B \to Y$ that fits into a square
\[
\xymatrix{
A \ar[d]_{i} \ar[r]^{\alpha'} & X \ar[d]^{p} \\
B \ar[r]^{\beta'} & Y }\] 

whose lift $L \colon B \to X$ yields $u_X L \colon B \to \Im X$ which satisfies the equations 

\[ (\Im p) u_X L = u_Y p L 
                         = u_Y \beta' 
                         = \beta \quad \text{and} \quad
u_x L i = u_x \alpha' = \alpha\]
and so provides the lift. Thus $\Im p$ is a fibration.

The proof for trivial fibrations is analogous.
\end{proof}

\begin{lemma}\label{ImFibrant}
Let $X$ be a structurally marked cubical set. Then $X$ is fibrant if and only if $\mathrm{Im}X$ is fibrant (in the model structure of \cref{model-struct-mark}).
\end{lemma}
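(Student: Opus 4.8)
The plan is to prove the two implications directly, the essential input being \cref{markedUnitLift}, which tells us that the adjunction unit $\eta_X \colon X \to \Im X$ is always a trivial fibration in the model structure of \cref{model-struct-mark}.

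For the backward implication, suppose $\Im X$ is fibrant. Since $\eta_X$ is a trivial fibration it is in particular a fibration, and $\Im X \to \Box^0$ is a fibration by hypothesis; hence their composite $X \to \Box^0$ is a fibration, i.e.\ $X$ is fibrant. This direction is formal.

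For the forward implication, suppose $X$ is fibrant. By the characterization of fibrant objects in \cref{model-struct-mark}, it suffices to show that $\Im X$ has the right lifting property with respect to every generating anodyne map $j \colon A \to B$ in $S$. So fix such a $j$ and a map $a \colon A \to \Im X$. First I would lift $a$ through $\eta_X$: since $\varnothing \to A$ is a monomorphism, hence a cofibration, and $\eta_X$ is a trivial fibration, there is a map $\tilde a \colon A \to X$ with $\eta_X \tilde a = a$. Next, since $X$ is fibrant it has the right lifting property with respect to $j$, so $\tilde a$ extends along $j$ to a map $\tilde b \colon B \to X$. Finally, setting $b := \eta_X \tilde b$ yields the desired extension of $a$, because $b j = \eta_X \tilde b j = \eta_X \tilde a = a$.

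The whole argument is short, and the only point requiring attention is the bookkeeping of the two lifting problems and their variances — one against the cofibration $\varnothing \to A$, one against the generating anodyne $j$ — so I do not expect any genuine obstacle here. (Alternatively, the forward implication could be routed through \cref{premarkedFibs} together with \cref{marked-fib-obs}, after noting that the maps in $S$ lie in the image of the fully faithful embedding $\cSet' \hookrightarrow \cSet''$ so that lifting problems against them in $\cSet''$ and in $\cSet'$ coincide; but the lifting argument above is more self-contained and avoids invoking \cref{marked-fib-obs}.)
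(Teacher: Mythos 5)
Your proof is correct. The backward implication agrees with the paper (closure of fibrations under composition, using \cref{markedUnitLift}). For the forward implication your direct lifting argument — lift $a \colon A \to \Im X$ against the trivial fibration $\eta_X$, extend along the generating anodyne $j$ using fibrancy of $X$, then push down along $\eta_X$ — is a clean, self-contained instance of the general fact that any object receiving a trivial fibration from a fibrant object is itself fibrant, and it works entirely inside $\cSet''$. The paper instead routes through $\cSet'$: it invokes \cref{premarkedFibs} to get $\Im X$ fibrant in $\cSet'$, then \cref{Quillen-adj-marked-struct-marked} (the fact that the embedding $\cSet' \hookrightarrow \cSet''$ is right Quillen) to transfer fibrancy back to $\cSet''$. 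Since \cref{premarkedFibs} is itself proved by essentially the same unit-lifting trick, your argument is really a streamlining of the paper's, avoiding the detour through the marked model structure and the Quillen-adjunction bookkeeping; the paper's route has the advantage of producing \cref{premarkedFibs} as a reusable fact, which is needed independently in the proof of \cref{compare-marked-struct-marked}.
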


\begin{proof}
If $\mathrm{Im}X$ is fibrant, then $X$ is fibrant by \cref{markedUnitLift}. Conversely, if $X$ is fibrant, then $\Im X$ is fibrant in $\cSet'$ by \cref{premarkedFibs}, hence also in $\cSet''$ by \cref{Quillen-adj-marked-struct-marked}.
\end{proof}

\begin{proof}[Proof of \cref{compare-marked-struct-marked}]
  First, let us show that the right derived functor of the embedding $\cSet' \to \cSet''$ is an equivalence.
  Since $[X, Y]_{\cSet'} \to [X, Y]_{\cSet''}$ is bijective by \cref{cylinders-agree}, it is full and faithful.
  For essential surjectivity, by \cref{CisinskiHo}, given fibrant $X \in \cSet''$, we need fibrant $Y \in \cSet'$ weakly equivalent to $X$ in $\cSet''$.
  This is given by \cref{premarkedFibs,ImFibrant}.

  Now, let us show that cubical marked model structure is right induced.
  Since $\Im$ is a left Quillen equivalence and all objects are cofibrant, it preserves and reflects weak equivalences by \cref{QuillenEquivCreate-original}, hence so does the embedding.
  Since $\Im$ preserves fibrations, the embedding reflects them.
  That the embedding preserves fibrations is part of \cref{Quillen-adj-marked-struct-marked}.
\end{proof}

\section{Joyal model structure on cubical sets} \label{sec:joyal-cset}

Recall the adjunction $\cSet \rightleftarrows \cSet'$ of Section \ref{Sec1Marked}, in which the left adjoint is the minimal marking functor and the right adjoint is the forgetful functor. In this section we will use this adjunction to induce a model structure on $\cSet$ from the model structure on $\cSet'$ of \cref{cubical-marked-ms}.

None of the arguments in this section rely on the existence of connections, thus they are valid in all of the categories of cubical sets shown in the diagram \cref{eq:cube-cat-inclusions} at the end of \cref{sec:background}.

\begin{lemma}\label{K-cylinder}
For $X \in \cSet$, the image of the factorizations $X \sqcup X \to K \otimes X \to X$ and $X \sqcup X \to X \otimes K \to X$ under the minimal marking functor define cylinder objects for $X^\flat$ in $\cSet'$.
\end{lemma}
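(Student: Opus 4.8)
The plan is to verify that the two factorizations $X^\flat \sqcup X^\flat \to (K \otimes X)^\flat \to X^\flat$ (and its mirror with $X \otimes K$) satisfy the definition of a cylinder object in the cubical marked model structure: the first map must be a cofibration, and the second must be a weak equivalence. Since the minimal marking functor is monoidal by \cref{flat-monoidal}, we have $(K \otimes X)^\flat = K^\flat \otimes X^\flat$, so throughout we work with $K^\flat \otimes X^\flat$; for brevity I will suppress the $(-)^\flat$ and simply write $K \otimes X$ for the minimally-marked geometric product, identifying $\cSet$ with its image under $(-)^\flat$.

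First I would handle the cofibration half. The map $X \sqcup X \to K \otimes X$ is the result of tensoring the pair of endpoint inclusions $\Box^0 \sqcup \Box^0 \to K$ with $X$; more precisely, it is the pushout-product $(\partial\Box^0 \sqcup \partial\Box^0 \hookrightarrow K) \,\hat\otimes\, (\varnothing \to X)$ — or, more simply, it factors as the composite of monomorphisms obtained by applying $- \otimes X$ to the two endpoint inclusions $\Box^0 \hookrightarrow K$. Each endpoint inclusion $\Box^0 \to K$ is a monomorphism of cubical sets (hence a cofibration of minimally marked cubical sets by \cref{mono-characterization} and \cref{cofibration-generators}), and $- \otimes X$ preserves monomorphisms; alternatively one invokes that the pushout product of cofibrations is a cofibration (\cref{anodyne-pop}). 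Since the two images $\{0\}\otimes X$ and $\{1\}\otimes X$ inside $K \otimes X$ are disjoint — this can be read off from the explicit description of the geometric product in \cref{geometric-product-description}, as the two endpoints $0, 1 \in K_0$ are distinct and the relevant cubes are pairs — their coproduct includes as a subobject, so $X \sqcup X \to K \otimes X$ is indeed a monomorphism, i.e. a cofibration.

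For the weak equivalence half, the projection $K \otimes X \to \Box^0 \otimes X \cong X$ is induced by the unique map $K \to \Box^0$. I would show $K \to \Box^0$ is a weak equivalence of minimally marked cubical sets, then conclude via \cref{geo-prod-we}. To see $K^\flat \to \Box^0$ is a weak equivalence: by \cref{KendEqMarked} the endpoint inclusion $\Box^0 \to K$ is a trivial cofibration, and $\Box^0 \to \Box^0$ is the identity, so by two-out-of-three (\cref{we-2-of-3}) the composite $\Box^0 \to K \to \Box^0$ being the identity forces $K \to \Box^0$ to be a weak equivalence. Then \cref{geo-prod-we} gives that $K \otimes X \to \Box^0 \otimes X \cong X$ is a weak equivalence, and one checks that the composite $X \sqcup X \to K \otimes X \to X$ is the fold map, as both factors restrict to the identity on $X$ by construction of the endpoint inclusions. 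This completes the verification for the left-hand factorization; the right-hand one is verbatim the same using $X \otimes K$, $- \otimes K$ versus $K \otimes -$, and the ``right'' variant of \cref{geo-prod-we} (which holds by the same proof, or by applying $(-)^\rmco$ and \cref{op-co-Quillen-equivalence-marked}).

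The main obstacle — really the only point requiring care — is confirming that $\{0\} \otimes X$ and $\{1\} \otimes X$ are genuinely disjoint subobjects of $K \otimes X$, so that $X \sqcup X \to K\otimes X$ is mono rather than merely a map out of the coproduct; this is where one must actually look at \cref{geometric-product-description} and note that a $k$-cube of $K \otimes X$ lying in $\{0\}\otimes X$ records $0$ in its $K$-coordinate while one in $\{1\}\otimes X$ records $1$, and the identification $(x\sigma_{m+1}, y) = (x, y\sigma_1)$ never conflates these since it does not touch the $K$-factor's value at a vertex. Everything else is a direct appeal to the monoidality of $(-)^\flat$, \cref{KendEqMarked}, \cref{we-2-of-3}, and \cref{geo-prod-we}.
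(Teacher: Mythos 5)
Your proof is correct. It differs from the paper's in how the weak-equivalence half is established: the paper applies \cref{tcof-pop} directly, observing that the endpoint inclusion $X \cong \Box^0 \otimes X \to K \otimes X$ is the pushout product of the trivial cofibration $\Box^0 \to K$ (from \cref{KendEqMarked}) with the cofibration $\varnothing \to X$, hence a trivial cofibration, and then concludes that $K \otimes X \to X$ is a weak equivalence by two-out-of-three. You instead apply two-out-of-three first to obtain that $K \to \Box^0$ is a weak equivalence and then tensor with $X$ via \cref{geo-prod-we}. Both routes are valid and of comparable length. Note, however, that you have the two variants swapped: in the left factorization $K \otimes X \to X$ the weak equivalence $K \to \Box^0$ sits in the \emph{first} tensor factor, so this is the one needing the ``right'' variant of \cref{geo-prod-we} (the lemma as stated fixes the first factor and varies the second), whereas $X \otimes K \to X$ uses the lemma as stated with $A = X$.

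For the cofibration half, your disjointness check is sound but can be shortened: $\Box^0 \sqcup \Box^0 \hookrightarrow K$ is a monomorphism and $- \otimes X$ preserves monomorphisms (a degenerate case of \cref{anodyne-pop}, since $f \hatotimes (\varnothing \to X)$ is just $f \otimes X$), so $X \sqcup X \to K \otimes X$ is a monomorphism with no need to examine images separately. Two small slips to fix: $\partial\Box^0 \sqcup \partial\Box^0$ should read $\Box^0 \sqcup \Box^0$ (since $\partial\Box^0 = \varnothing$, the pushout product you wrote is just $\varnothing \to K \otimes X$), and $X \sqcup X \to K \otimes X$ is not a ``composite'' of the two endpoint-tensorings but the map out of their coproduct, which is precisely why the disjointness question arose in your argument.
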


\begin{proof}
By definition, the minimal marking functor sends the first map in each of these factorizations to a cofibration, i.e. a monomorphism; that it sends the second to a weak equivalence follows from \cref{KendEqMarked,tcof-pop}.
\end{proof}

\begin{theorem}[Analogue of Joyal model structure] \label{cubical-joyal-ms}
The category $\cSet$ of cubical sets carries a model structure in which:
\begin{itemize}
\item the cofibrations are the monomorphisms,
\item the weak equivalences are created by the minimal marking functor,
\item the fibrations are right orthogonal to trivial cofibrations.
\end{itemize}
\end{theorem}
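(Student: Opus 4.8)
The plan is to apply the left-induction transfer theorem, \cref{transfer-theorem}, to the adjunction $(-)^\flat \colon \cSet \rightleftarrows \cSet' \colon U$, where $(-)^\flat$ is the minimal marking functor and $U$ is the forgetful functor. The target category $\cSet'$ carries the cubical marked model structure of \cref{cubical-marked-ms}, which is cofibrantly generated (it was built via \cref{Jeff-Smith's-theorem}) and in which every object is cofibrant, since the cofibrations are the monomorphisms. Both $\cSet$ and $\cSet'$ are locally presentable. So the only hypothesis of \cref{transfer-theorem} that requires work is the existence, for every $X \in \cSet$, of a factorization $X \sqcup X \xrightarrow{i_X} IX \xrightarrow{p_X} X$ of the codiagonal such that $(-)^\flat$ sends $i_X$ to a cofibration and $p_X$ to a weak equivalence in $\cSet'$.

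For the cylinder factorization I would take $IX := K \otimes X$ (or symmetrically $X \otimes K$), with $i_X$ the map induced by the two endpoint inclusions $\Box^0 \to K$ tensored with $X$, and $p_X$ induced by the collapse $K \to \Box^0$. This is exactly the content of \cref{K-cylinder}: the minimal marking functor sends the first map to a monomorphism — because $(-)^\flat$ preserves monomorphisms and $(\Box^0 \to K) \hatotimes (\varnothing \to X)$-type reasoning shows the relevant map is a mono — and sends $p_X$ to a weak equivalence, which follows from \cref{KendEqMarked} (the endpoint inclusions $\Box^0 \to K$ are trivial cofibrations in $\cSet'$) together with \cref{tcof-pop} (pushout product of a trivial cofibration with a cofibration is a trivial cofibration), applied to $(\Box^0 \to K) \hatotimes (\varnothing \to X^\flat)$, and then two-out-of-three. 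Thus \cref{K-cylinder} supplies precisely the factorization required, and \cref{transfer-theorem} yields a model structure on $\cSet$ left-induced by $(-)^\flat$.

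It then remains to check that this left-induced model structure has the stated description of its three classes. By definition of left-induction (and the uniqueness remark following it), the cofibrations of $\cSet$ are exactly those maps $f$ with $f^\flat$ a cofibration in $\cSet'$; since $(-)^\flat$ preserves and reflects monomorphisms, these are precisely the monomorphisms of $\cSet$, matching \cref{cSetCellularModel}. Again by definition, the weak equivalences are exactly the maps created by $(-)^\flat$, as stated. Finally, the fibrations of any model structure are the maps with the right lifting property against trivial cofibrations, which is the third bullet. I do not expect a genuine obstacle here: the real work was already isolated and discharged in \cref{K-cylinder,KendEqMarked,tcof-pop}, and the remainder is unwinding definitions; the one point deserving a sentence of care is confirming that $(-)^\flat$ both preserves and reflects monomorphisms, so that ``cofibration left-induced from monomorphisms'' genuinely means ``monomorphism.''
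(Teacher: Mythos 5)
Your proposal matches the paper's proof exactly: apply the left-induction transfer theorem \cref{transfer-theorem} to the minimal-marking adjunction $\cSet \rightleftarrows \cSet'$, using $K \otimes X$ as the cylinder and invoking \cref{K-cylinder} (which in turn rests on \cref{KendEqMarked,tcof-pop}) to verify the required factorization hypothesis. The additional unwinding you supply at the end (that $(-)^\flat$ preserves and reflects monomorphisms, so the left-induced cofibrations really are the monomorphisms, and that the fibrations are automatically those with the RLP against trivial cofibrations) is correct and exactly the implicit content of the theorem statement.
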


\begin{proof}
Apply \cref{transfer-theorem} to the adjunction $\cSet \rightleftarrows \cSet'$ and the cubical marked model structure, with the factorization $X \sqcup X \to K \otimes X \to X$. \cref{K-cylinder} shows that this factorization satisfies the hypotheses of \cref{transfer-theorem}
\end{proof}

We refer to the model structure constructed above as the \emph{cubical Joyal model structure}. Its weak equivalences will be referred to as \emph{weak categorical equivalences}, respectively.

\begin{proposition}\label{unmarked-marked-Quillen-equivalence}
The adjunction $\cSet \rightleftarrows \cSet'$ is a Quillen equivalence.
\end{proposition}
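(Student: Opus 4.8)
The plan is to apply the second part of \cref{QuillenEquivCreate} to the adjunction $(-)^\flat \dashv U$, where $U \colon \cSet' \to \cSet$ is the forgetful functor. Since the cubical Joyal model structure is left-induced from the cubical marked model structure along the minimal marking functor (\cref{transfer-theorem,cubical-joyal-ms}), the functor $(-)^\flat$ preserves and reflects weak equivalences and $(-)^\flat \dashv U$ is a Quillen adjunction. By \cref{QuillenEquivCreate}, it therefore suffices to show that for every fibrant $Y \in \cSet'$ — that is, every marked cubical quasicategory by \cref{marked-fib-obs} — the counit $\varepsilon_Y \colon (UY)^\flat \to Y$ is a weak equivalence. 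Note that $\varepsilon_Y$ is the identity on underlying cubical sets: the marked cubical sets $(UY)^\flat$ and $Y$ differ only in that $Y$ additionally marks every non-degenerate equivalence.

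I would in fact show that $\varepsilon_Y$ is anodyne, so that it is a weak equivalence by \cref{anodyne-we}. Let $M \subseteq Y_1$ be the set of non-degenerate marked edges of $Y$. For each $f \in M$, \cref{equivalence-iff-marked} provides a map $k_f \colon K \to UY$ whose restriction along the middle-edge inclusion $\Box^1 \hookrightarrow K$ is $f$; equivalently a map $K^\flat \to (UY)^\flat$ in $\cSet'$, since the only marked edges of $K$ are degenerate. These assemble into a single map $\coprod_{f \in M} K^\flat \to (UY)^\flat$, and I would form the pushout $P$ of this map along the coproduct $\coprod_{f \in M}(K \hookrightarrow K')$ of saturation maps. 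As the saturation map is a generating anodyne, this coproduct is anodyne, hence so is the pushout leg $(UY)^\flat \to P$.

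It remains to identify $P$ with $Y$ and the pushout leg with $\varepsilon_Y$. Because the forgetful functor $\cSet' \to \cSet$ preserves colimits (it is both a left and a right adjoint) and $K \hookrightarrow K'$ is an isomorphism on underlying cubical sets, $P$ has underlying cubical set $UY$ and the pushout leg is the identity there. For the markings, one computes the pushout in $\cSet''$ and applies $\Im$: the resulting marked edges of $P$ are exactly the degenerate edges of $UY$ together with the edges $f$ for $f \in M$, i.e.\ precisely the marked edges of $Y$. Hence $P = Y$, the pushout leg is $\varepsilon_Y$, and $\varepsilon_Y$ is anodyne, completing the proof. The main obstacle is this last bookkeeping step — verifying that the pushout produces exactly the marking of $Y$, neither more nor less — which is slightly delicate because colimits of marked cubical sets are computed in $\cSet''$ and then reflected via $\Im$; the conceptual crux, however, is \cref{equivalence-iff-marked}, which guarantees that each equivalence in a marked cubical quasicategory already carries the $K$-shaped witnessing data needed to mark it by a single saturation-map pushout.
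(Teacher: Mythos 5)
Your proposal is correct and follows essentially the same route as the paper's proof: both apply \cref{QuillenEquivCreate}~\ref{QuillenEquivCounit}, note that $(-)^\flat$ preserves and reflects weak equivalences by construction of the left-induced model structure, and then show the counit $X^\flat \to X$ for a marked cubical quasicategory $X$ is a pushout of a coproduct of saturation maps, invoking \cref{equivalence-iff-marked} to supply the $K$-shaped data. The paper states this pushout identification tersely, whereas you spell out the bookkeeping via $\cSet''$ and $\Im$, but the argument is the same.
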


\begin{proof}
The minimal marking functor preserves and reflects weak equivalences by definition, thus we may apply \cref{QuillenEquivCreate} \ref{QuillenEquivCounit}. Let $X$ be a marked cubical quasicategory; abusing notation slightly, let $X^{\flat}$ denote the minimal marking of the underlying cubical set of $X$. We must show that the inclusion $X^{\flat} \to X$ is a weak equivalence. 

The marked edges of $X^{\flat}$ are precisely the degenerate edges; by \cref{equivalence-iff-marked}, the marked edges of $X$ are precisely those edges $\Box^1 \to X$ which factor through $K$. Thus $X^{\flat} \to X$ is a pushout of a coproduct of saturation maps, hence a trivial cofibration.
\end{proof}

We define some terminology which will be used in the analysis of this model structure.

\begin{itemize}
\item For $n \geq 2$, $1 \leq i \leq n, \varepsilon \in \{0,1\}$, the $(i,\varepsilon)$-\emph{inner open box}, denoted $\widehat{\sqcap}^{n}_{i,\varepsilon}$, is the quotient of an open box with the critical edge quotiented to a point. The $(i,\varepsilon)$-\emph{inner cube}, denoted $\widehat{\Box}^{n}_{i,\varepsilon}$, is defined similarly. The $(i,\varepsilon)$-\emph{inner open box inclusion} is the inclusion $\widehat{\sqcap}^{n}_{i,\varepsilon} \hookrightarrow \widehat{\Box}^{n}_{i,\varepsilon}$.
\item The class of \emph{anodyne maps} is the saturation of the set of inner open box inclusions.
\item An \emph{inner fibration} is a map having the right lifting property with respect to the inner open box inclusions.
\item An \emph{isofibration} is a map having the right lifting property with respect to the endpoint inclusions $\Box^0 \hookrightarrow K$.
\item A \emph{cubical quasicategory} is a cubical set $X$ such that the map $X \to \Box^0$ is an inner fibration.
\item An \emph{equivalence} in a cubical set $X$ is an edge $\Box^{1} \to X$ which factors through the inclusion of the middle edge $\Box^{1} \to K$.
\item For $n \geq 2, 1 \leq i \leq n, \varepsilon \in \{0,1\}$, a \emph{special open box} in a cubical set $X$ is a map $\sqcap^{n}_{i,\varepsilon} \to X$ which sends the critical edge to an equivalence.
\end{itemize}

The concept of homotopy developed in \cref{section:marked} adapts naturally to this setting, using equivalences in place of marked edges.

\begin{definition}
For a cubical set $X$, let $\sim_{0}$ denote the relation on $X_{0}$, the set of vertices of $X$, given by $x \sim_{0} y$ if there is an equivalence from $x$ to $y$ in $X$. Let $\sim$ denote the smallest equivalence relation on $X_{0}$ containing $\sim_{0}$.
\end{definition}

\begin{remark}
For $x, y \in X_{0}$, one can easily see that $x \sim y$ if and only if $x$ and $y$ are connected by a zigzag of equivalences. 
\end{remark}

\begin{definition}
For a cubical set $X$, the \emph{set of connected components} $\pi_0(X)$ is $X_0/\sim$.
\end{definition}

\begin{definition} 
An \emph{elementary left homotopy} $h \co f \sim g$ between maps $f, g \co A \to B$ is a map $h \co K \otimes A \to B$ such that $h|_{\braces{0} \otimes A} = f$ and $h|_{\braces{1} \otimes A} = g$.
Note that the elementary left homotopy $h$ corresponds to an edge $K \to \ihom_L(A, B)$ between the vertices corresponding to $f$ and $g$.
A \emph{left homotopy} between $f$ and $g$ is a zig-zag of elementary left homotopies.
\end{definition}

A left homotopy from $f$ to $g$ corresponds to a zig-zag of equivalences in $\ihom_L(A, B)$ and so maps from $A$ to $B$ are left homotopic exactly if $\pi_0(f) = \pi_0(g)$, where the set of connected components is taken in $\ihom_L(A, B)$.

These induce notions of \emph{elementary left homotopy equivalence} and \emph{left homotopy equivalence}. Each of these notions has a ``right'' variant using $A \otimes K$ and $\ihom_R(A,B)$. As in \cref{section:marked}, unless the potential for confusion arises or a statement depends on the choice, we will drop the use of ``left'' and ``right''. Homotopy equivalences between cubical quasicategories will be referred to as \emph{categorical equivalences}.

\begin{definition}
Let $X$ be a cubical set. The \emph{natural marking} on $X$ is a marked cubical set $X^\natural$ whose underlying cubical set is $X$, with edges marked if and only if they are equivalences.
\end{definition}

Since maps of cubical sets preserve equivalences, this defines a functor $(-)^\natural \colon \cSet \to \cSet'$.

Many results about the cubical Joyal model structure follow easily from the corresponding results about the cubical marked model structure.

\begin{lemma}\label{cof-tcof-pop-unmarked}
If $i, j$ are cofibrations in $\cSet$, then the pushout product $i \hatotimes j$ is a cofibration. Moreover, if either $i$ or $j$ is trivial then so is $i \hatotimes j$.
\end{lemma}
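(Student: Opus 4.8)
The plan is to deduce this statement from the corresponding result for marked cubical sets, namely \cref{anodyne-pop} and \cref{tcof-pop}, by transporting along the minimal marking functor $(-)^\flat \colon \cSet \to \cSet'$. The key observation is that the minimal marking functor is strong monoidal with respect to the geometric product, by \cref{flat-monoidal} (indeed, all the functors in the diagram \eqref{eq:all-functors-marking} are monoidal). Consequently, for maps $i, j$ in $\cSet$, we have $(i \hatotimes j)^\flat \cong i^\flat \hatotimes j^\flat$, since $(-)^\flat$ preserves colimits (it is a left adjoint) and hence preserves the pushout defining the pushout product.

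First I would record that $(-)^\flat$ preserves and reflects cofibrations: it preserves monomorphisms since it acts as the identity on underlying cubical sets, adding only the (forced) minimal marking, and it reflects them for the same reason — a map $f$ in $\cSet$ is mono iff $f^\flat$ is mono in $\cSet'$, by \cref{mono-characterization} and its marked analogue. By the definition of the cubical Joyal model structure (\cref{cubical-joyal-ms}), the weak equivalences in $\cSet$ are \emph{created} by $(-)^\flat$, so $f$ is a trivial cofibration iff $f^\flat$ is a trivial cofibration in $\cSet'$ (using that $(-)^\flat$ preserves and reflects both cofibrations and weak equivalences).

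Then the argument is a short diagram chase. Given cofibrations $i, j$ in $\cSet$, apply $(-)^\flat$ to obtain cofibrations $i^\flat, j^\flat$ in $\cSet'$. By \cref{anodyne-pop}, more precisely by the clause of \cref{we-pop} / \cref{tcof-pop} addressing the pushout product of two cofibrations, $i^\flat \hatotimes j^\flat$ is a cofibration in $\cSet'$; and if $i$ (hence $i^\flat$) or $j$ (hence $j^\flat$) is trivial, then $i^\flat \hatotimes j^\flat$ is a trivial cofibration by \cref{tcof-pop}. Via the monoidal isomorphism $(i \hatotimes j)^\flat \cong i^\flat \hatotimes j^\flat$, this says $(i \hatotimes j)^\flat$ is a (trivial) cofibration in $\cSet'$, and since $(-)^\flat$ reflects these classes, $i \hatotimes j$ is a (trivial) cofibration in $\cSet$.

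The only real subtlety — and the step I would be most careful about — is confirming that $(-)^\flat$ genuinely commutes with the pushout-product construction on the nose (up to the natural iso supplied by \cref{flat-monoidal}), i.e. that the minimal marking of the pushout $A \otimes Y \cup_{A \otimes X} B \otimes X$ agrees with the pushout of the minimal markings. This follows from $(-)^\flat$ being a left adjoint (hence colimit-preserving) together with its strong monoidality, but it is worth stating explicitly rather than leaving implicit. Everything else is formal. Note also that this section's hypotheses do not use connections, so the result holds in all four box categories of \eqref{eq:cube-cat-inclusions}, consistent with the conventions of \cref{sec:joyal-cset}.
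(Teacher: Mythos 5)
Your argument is correct and follows the same route as the paper's proof, which cites \cref{flat-monoidal}, \cref{anodyne-pop}, and \cref{tcof-pop} and declares the result immediate. You have simply spelled out the implicit steps — monoidality and colimit-preservation of $(-)^\flat$ giving $(i\hatotimes j)^\flat \cong i^\flat \hatotimes j^\flat$, and left-inducedness of the cubical Joyal model structure giving preservation and reflection of (trivial) cofibrations — all of which are exactly what "immediate" is leaning on.
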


\begin{proof}
This is immediate from \cref{flat-monoidal,anodyne-pop,tcof-pop}.
\end{proof}

\begin{corollary}\label{fibrant-exps-unmarked}
Let $i, f$ be maps in $\cSet$. If $i$ is a cofibration and $f$ is a fibration, then the pullback exponential $i \triangleright f$ is a fibration. \qed
\end{corollary}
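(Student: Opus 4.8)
The plan is to obtain this as a purely formal consequence of \cref{cof-tcof-pop-unmarked} via the standard adjunction relating pushout products and pullback exponentials. Since the geometric product $\otimes$ on $\cSet$ is closed (with internal homs $\ihom_L$ and $\ihom_R$), for any map $j$ of cubical sets a lifting problem of $j$ against $i \triangleright f$ corresponds, under transposition, to a lifting problem of the pushout product $j \hatotimes i$ against $f$; this is the usual two-variable adjunction, applied exactly as the ``standard duality'' invoked in the proof of \cref{fibrantExps}. Consequently, to show that $i \triangleright f$ is a fibration, that is, that it has the right lifting property with respect to every trivial cofibration $j$, it suffices to show that $j \hatotimes i$ has the left lifting property with respect to $f$ for every trivial cofibration $j$.

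So I would let $j$ be an arbitrary trivial cofibration in $\cSet$. Since $i$ is a cofibration and $j$ is a trivial cofibration, \cref{cof-tcof-pop-unmarked} gives that $j \hatotimes i$ is again a trivial cofibration. As $f$ is a fibration, it has the right lifting property with respect to all trivial cofibrations, in particular with respect to $j \hatotimes i$. Transposing this lift back along the two-variable adjunction produces a solution to the original lifting problem of $j$ against $i \triangleright f$. Since $j$ was arbitrary, $i \triangleright f$ is a fibration.

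The argument is essentially formal and I do not expect a genuine obstacle; the one point requiring care is bookkeeping the non-symmetry of $\otimes$, namely that the pullback exponential $i \triangleright f$ uses one of $\ihom_L$, $\ihom_R$ and the pushout product $j \hatotimes i$ in the transposed problem must be formed with the matching handedness of the geometric product, consistent with the monoidal conventions of \cref{subsection:cSet-basics}. Once that alignment is fixed, the transposition is exactly the defining adjunction of $\ihom_{L}$ (resp.\ $\ihom_{R}$), and all the content of the statement is supplied by \cref{cof-tcof-pop-unmarked}.
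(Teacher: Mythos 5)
Your argument is correct and is exactly the standard transposition argument the paper intends by marking this corollary as immediate (\qed) from \cref{cof-tcof-pop-unmarked}; it also mirrors the ``standard duality'' step already spelled out in the proof of \cref{fibrantExps}. The remark about matching the handedness of $\ihom_L$/$\ihom_R$ with the corresponding side of the non-symmetric geometric product is the right bookkeeping point and is handled correctly.
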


\begin{corollary}
The category $\cSet$, equipped with the cubical Joyal model structure and the geometric product, is a monoidal model category. \qed
\end{corollary}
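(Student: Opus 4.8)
The plan is to verify the defining conditions of a (not necessarily symmetric) monoidal model category in the sense of Hovey: a closed monoidal structure, the pushout-product axiom, and the unit axiom. The closed monoidal structure is already in hand. The geometric product $\otimes$ was shown in \cref{subsection:cSet-basics} to be a monoidal structure on $\cSet$ with unit $\Box^0$, and to be closed, with the internal homs $\ihom_L(X,-)$ and $\ihom_R(X,-)$ right adjoint to the left and right tensors respectively. It is non-symmetric, so throughout one works with the evident one-sided formulations of the axioms.

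The pushout-product axiom asks that $f \hatotimes g$ be a cofibration whenever $f$ and $g$ are, and be a trivial cofibration as soon as one of $f$, $g$ is. This is precisely the statement of \cref{cof-tcof-pop-unmarked}, so there is nothing further to prove here; it is worth noting that that lemma treats $f \hatotimes g$ symmetrically in its two arguments, which is exactly what the non-symmetric pushout-product axiom requires.

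For the unit axiom one must show that, for a cofibrant replacement $q \colon \widetilde{\Box^0} \to \Box^0$ of the unit and any cofibrant $X$, the maps $q \otimes X \colon \widetilde{\Box^0}\otimes X \to \Box^0 \otimes X \cong X$ and $X \otimes q$ are weak equivalences. But in the cubical Joyal model structure the cofibrations are the monomorphisms, so every object (in particular $\Box^0$) is cofibrant; we may therefore take $\widetilde{\Box^0} = \Box^0$ and $q = \mathrm{id}$, and the unit axiom holds trivially.

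I do not expect any genuine obstacle: the corollary is a purely formal consequence of \cref{cof-tcof-pop-unmarked} together with the fact that all objects of $\cSet$ are cofibrant, which is why it is stated as a corollary. The only point deserving a word of care is the asymmetry of the geometric product, handled by invoking the two-variable form of \cref{cof-tcof-pop-unmarked} and by checking the unit axiom on both sides.
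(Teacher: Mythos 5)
Your argument is correct and matches exactly what the paper leaves implicit by marking this corollary with a \qed: the pushout-product axiom is precisely \cref{cof-tcof-pop-unmarked}, and the unit axiom is vacuous since every object (in particular $\Box^0$) is cofibrant. Nothing further is needed.
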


Next we will characterize the fibrant objects, and fibrations between fibrant objects, in the cubical Joyal model structure.

\begin{lemma} \label{KendEq} 
The inner open box inclusions $\widehat{\sqcap}^{n}_{i,\varepsilon} \to \widehat{\Box}^{n}_{i,\epsilon}$, and the endpoint inclusions $\Box^0 \to K$, are trivial cofibrations. 
\end{lemma}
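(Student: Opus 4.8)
The plan is to use the construction of the cubical Joyal model structure by left induction along the minimal marking functor $(-)^\flat \colon \cSet \to \cSet'$ (\cref{cubical-joyal-ms}). Both maps in question are evidently monomorphisms, hence cofibrations, and by \cref{cubical-joyal-ms} a map of cubical sets is a weak equivalence precisely when its minimal marking is a weak equivalence in the cubical marked model structure. So it suffices to prove that $(\widehat{\sqcap}^{n}_{i,\varepsilon})^\flat \to (\widehat{\Box}^{n}_{i,\varepsilon})^\flat$ and the minimal marking of $\Box^0 \to K$ are weak equivalences in $\cSet'$; each is then a cofibration that is a weak equivalence, i.e.\ a trivial cofibration.

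For the endpoint inclusion this is immediate: $(-)^\flat$ sends $\Box^0 \to K$ to the endpoint inclusion $\Box^0 \to K$ of marked cubical sets (with $K$ bearing its minimal marking), which is a trivial cofibration --- in particular a weak equivalence --- by \cref{KendEqMarked}.

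For the inner open box inclusion (where $n \geq 2$), the key observation is that its minimal marking is a pushout of the marked open box inclusion $\iota^{n}_{i,\varepsilon}$. Let $A$ and $B$ denote the domain and codomain of $\iota^{n}_{i,\varepsilon}$, namely $\sqcap^{n}_{i,\varepsilon}$ and $\Box^n$ with the critical edge marked, so that the marked critical edge gives inclusions $(\Box^1)^\sharp \hookrightarrow A \hookrightarrow B$. I would then form the diagram
\[
\xymatrix{
(\Box^1)^\sharp \ar[r] \ar[d] & A \ar[r]^{\iota^{n}_{i,\varepsilon}} \ar[d] & B \ar[d] \\
\Box^0 \ar[r] & (\widehat{\sqcap}^{n}_{i,\varepsilon})^\flat \ar[r] & (\widehat{\Box}^{n}_{i,\varepsilon})^\flat
}
\]
in which the vertical maps collapse the critical edge to a point. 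The left-hand square is a pushout by the definition of $\widehat{\sqcap}^{n}_{i,\varepsilon}$ as the open box with critical edge collapsed, and the outer rectangle is a pushout by the analogous description of $\widehat{\Box}^{n}_{i,\varepsilon}$; by the pasting lemma for pushouts the right-hand square is therefore a pushout. Since $\iota^{n}_{i,\varepsilon}$ is a generating anodyne map and the anodyne maps of $\cSet'$ are closed under pushout, the bottom map $(\widehat{\sqcap}^{n}_{i,\varepsilon})^\flat \to (\widehat{\Box}^{n}_{i,\varepsilon})^\flat$ is anodyne, hence a weak equivalence by \cref{anodyne-we}.

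I expect the main point requiring care to be the identification of the lower row of this diagram: one must check that collapsing the marked critical edge of $B$ produces exactly $\widehat{\Box}^{n}_{i,\varepsilon}$ with its minimal marking (equivalently, that in the quotient the critical edge becomes degenerate and no further non-degenerate cubes are identified), and similarly for $A$; this also ensures that the bottom map is literally $(-)^\flat$ applied to the inner open box inclusion. This is a routine if slightly delicate computation with the explicit description of the quotients, after which the argument is complete.
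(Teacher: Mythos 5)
Your proposal is correct and takes essentially the same approach as the paper: the paper's one-line proof asserts that the minimal marking of the inner open box inclusion is a pushout of a marked open box inclusion and cites \cref{KendEqMarked} for the endpoint inclusion, which is exactly the argument you spell out in detail via the pasting of pushout squares.
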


\begin{proof}
The minimal marking of an inner open box inclusion is a pushout of a marked open box inclusion in $\cSet'$. The minimal marking of $\Box^0 \to K$ is a trivial cofibration by \cref{KendEqMarked}.
\end{proof}

\begin{lemma} \label{joyal-complex-special-open-box}
Cubical quasicategories have fillers for special open boxes.
\end{lemma}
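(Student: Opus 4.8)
The plan is to reduce special open box filling to ordinary inner open box filling by absorbing the critical equivalence into a copy of $K$, in the spirit of the proof that quasicategories fill special outer horns. Let $X$ be a cubical quasicategory and $\sigma \colon \sqcap^n_{i,\varepsilon} \to X$ a special open box; its critical edge $c$ is an equivalence, hence factors as $\Box^1 \to K \xrightarrow{\bar c} X$ with the first map the inclusion of the middle edge. Form the pushout $P := \sqcap^n_{i,\varepsilon} \cup_{\Box^1} K$, glued along the critical edge $\Box^1 \hookrightarrow \sqcap^n_{i,\varepsilon}$ and the middle edge $\Box^1 \hookrightarrow K$, and likewise $Q := \Box^n \cup_{\Box^1} K$. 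Then $\sigma$ and $\bar c$ glue to a map $P \to X$, and since the composite $\sqcap^n_{i,\varepsilon} \hookrightarrow \Box^n \hookrightarrow Q$ factors through $P$, it is enough to extend $P \to X$ along $P \hookrightarrow Q$ and then restrict the extension along $\Box^n \hookrightarrow Q$. So the lemma follows once we show that $P \hookrightarrow Q$ lies in the saturation of the inner open box inclusions, as $X \to \Box^0$ then has the right lifting property against it. (By the involutions $(-)^\rmco$ and $(-)^\coop$ of $\cSet$, which are automorphisms preserving the classes of inner open box inclusions and of cubical quasicategories while sending the special $(i,\varepsilon)$-open box to the special $(n{+}1{-}i,\varepsilon)$- and $(i,1{-}\varepsilon)$-open boxes, we may also reduce to one convenient value of $\varepsilon$.)

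To show $P \hookrightarrow Q$ is in the saturation of the inner open box inclusions, I would attach to $P$ the two cells of $\Box^n$ missing from $\sqcap^n_{i,\varepsilon}$ — the absent $(i,\varepsilon)$-face and the top $n$-cube — each as the final, ``missing'' face of an auxiliary inner open box one dimension higher. Two facts make this work. First, a cubical quasicategory has fillers for every open box whose critical edge is degenerate, since such an open box factors through the corresponding inner open box inclusion; consequently, by \cref{cqcats-have-connections}, it admits connections on $1$-cubes, which supply the connection-type faces appearing in the auxiliary boxes even when the ambient box category has no connections. Second, the cells of $K$ provide a coherent inverse of $c$ together with $2$-cubes with degenerate edges witnessing the inverse laws — precisely the data one uses to assemble auxiliary open boxes with degenerate critical edges. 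For $n = 2$ this is transparent: one fills a single $3$-dimensional inner open box whose faces are $\sigma$ (suitably placed), a $2$-cube coming from $K$, and degeneracies or connections of the remaining two edges of $\sigma$, the missing face being the sought extension — exactly the maneuver in the proof of \cref{cqcats-have-connections}.

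The step I expect to be the main obstacle is carrying out this attachment in general dimension: exhibiting, for each of the two cells, an explicit auxiliary open box all but one of whose faces are already present at the appropriate intermediate stage between $P$ and $Q$, and verifying through the cubical identities that the prescribed faces fit together into a genuine open box with degenerate critical edge. These are routine but lengthy computations, of the sort relegated to Appendix~\ref{appendix:calculations}; once completed for both cells, composing the two resulting pushouts of inner open box inclusions proves that $P \hookrightarrow Q$ is anodyne, and with it the lemma.
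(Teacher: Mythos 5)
Your overall strategy — absorb the critical equivalence into a copy of $K$, then try to reduce to lifting against the saturation of the inner open box inclusions — is a sensible one, and it is true that the paper's argument is ``morally'' of this shape.  However, there are two concrete problems with the way you have set it up.

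First, the claim that $P \hookrightarrow Q$ itself lies in the saturation of the inner open box inclusions cannot be right as stated, for a simple reason of cell counting: pushing out along an inner open box inclusion $\widehat{\sqcap}^k_{j,\delta} \hookrightarrow \widehat{\Box}^k_{j,\delta}$ always adjoins both a $k$-cube and a $(k-1)$-cube, and you propose to use auxiliary boxes one dimension higher than the cells you want to add.  So filling an $n$-dimensional auxiliary box to produce the missing $(n-1)$-face also adds an auxiliary $n$-cube, and filling an $(n+1)$-dimensional auxiliary box to produce the top $n$-cube also adds an $(n+1)$-cube — neither of which lives in $Q = \Box^n \cup_{\Box^1} K$.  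What you can hope to prove is that there is some larger complex $P'$ with an anodyne map $P \hookrightarrow P'$ and a monomorphism $Q \hookrightarrow P'$; the lift against $X \to \Box^0$ then extends $P \to X$ over $P'$ and one restricts along $Q \hookrightarrow P'$.  As written, though, your assertion is false and would not survive the referee.

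Second, and more seriously, the ``two auxiliary cells'' picture vastly underestimates what is involved and hides the genuine inductive structure that the paper's proof relies on.  Already for $n = 2$ the paper performs two preliminary $2$-dimensional inner open box fillings (to build the top and bottom faces of the $3$-dimensional auxiliary box) in addition to extracting $2$-cubes from $K$, so it is not a single $3$-box.  In the inductive step the situation is worse: the paper fills the square~\eqref{joyal-complex-special-open-box:0} to obtain a new equivalence edge, and then fills a long chain of generalized open boxes whose critical edges are \emph{that new edge}, invoking the induction hypothesis (special open box filling in dimensions $< n$).  That new edge is not an edge of $K$, it is not part of your fixed gadget, and the fillings whose critical edge it is are not inner open box fillings of any fixed complex.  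To make your abstract-anodyne reduction go through you would have to, at each stage, adjoin a fresh copy of $K$ for each newly produced equivalence edge and justify this as a further composite of inner fillings — which is essentially a repackaging of the paper's induction, and a nontrivial piece of work that your sketch does not engage with (it is not merely ``of the sort relegated to the appendix'').  Until those two points are addressed, the proposed proof has a real gap.
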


\begin{proof}
We only consider positive filling problems; the negative case is dual.
We argue by induction on the dimension of the filling problem. 

For a special open box of dimension 2, one can explicitly construct a filler by extending the given open box to an inner open box of dimension 3. We illustrate this construction for the case of a $(1,0)$-open box; the case of a $(2,0)$-open box is similar.

Consider the following open box, where the edge $e$ is an equivalence:

\centerline{
\xymatrix{
x \ar[r]^{f}  & y \ar[d]^{g} \\
w \ar[r]^{e} & z \\ 
}
}

Our assumption that $e$ is an equivalence means that there exist a pair of 2-cubes as follows:

\centerline{
\xymatrix{
z \ar[r]^{e^{-1}_{R}} \ar@{=}[d] & w \ar[d]^{e} \ar@{=}[r] & w \ar@{=}[d] \\
z \ar@{=}[r] & z \ar[r]^{e^{-1}_{L}} & w \\
}
} 

We extend this to a 3-dimensional $(1,1)$-open box, as depicted below. Here the front face is that which witnesses $e^{-1}_{R}$ as a right inverse to $e$, while the top and bottom faces are obtained by two-dimensional inner open box filling.

 \[
\xymatrix@!C{
 x
 \ar@{=}[rrr]
 \ar@{=}[ddd]
 \ar[dr]^{gf}
&&&
 x
 \ar[ddd]|!{[dl];[dr]}{\hole}_{f}
 \ar[dr]^{e^{-1}_{R}gf}
\\&
 z
 \ar[rrr]^{e^{-1}_{R}}
 \ar@{=}[ddd]
&&&
 w
 \ar[ddd]^{e}
\\
\\
 x
 \ar[rrr]|!{[uur];[dr]}{\hole}^{f}
 \ar[dr]^{gf}
&&&
 y
 \ar[dr]^{g}
\\&
 z
 \ar@{=}[rrr]
&&&
 z
}
\]

This open box is inner, hence it has a filler; the $(1,1)$-face of this 3-cube is a filler for the original 2-dimenisonal open box.

Now let $X$ be a cubical quasicategory, and suppose that $X$ has fillers for all special open boxes of dimension less than $n$. Consider a filling problem in $X$ of dimension $n$:
\[
\xymatrix@C+2cm{
  (\bd \Box^a \otimes \Box^1 \otimes \Box^b) \cup (\Box^a \otimes \braces{0} \otimes \Box^b) \cup (\Box^a \otimes \Box^1 \otimes \bd \Box^b)
  \ar[r]
  \ar@{>->}[d]
&
  X
\\
  \Box^a \otimes \Box^1 \otimes \Box^b
  \ar@{.>}[ur]
}
\]

We regard the codomain of the left map as a negative face of a larger cube via the map
\[
\xymatrix{
  \Box^a \otimes \Box^1 \otimes \Box^b
  \ar@{>->}[r]
&
  \Box^a \otimes \Box^1 \otimes \braces{0} \otimes \Box^b
}
\]
and the domain as the corresponding subobject.
The original filling problem then becomes a filling problem in $X$ of the form
\[
\begin{split}
&(\bd \Box^a \otimes \Box^1 \otimes \braces{0} \otimes \Box^b)
\\\cup&
(\Box^a \otimes \braces{0} \otimes \braces{0} \otimes \Box^b)
\\\cup&
(\Box^a \otimes \Box^1 \otimes \braces{0} \otimes \bd \Box^b)
\\&\to
\Box^a \otimes \Box^1 \otimes \braces{0} \otimes \Box^b
\end{split}
\]
where the critical edge is
\[
0^a000^b \to 0^a100^b
.\]
We will solve this problem by extending the given partial data to the whole of
\[
\Box^a \otimes \Box^1 \otimes \Box^1 \otimes \Box^b
.\]

For $n \geq 0$, let $\Gamma^n \subseteq \Box^n$ denote the union of the positive faces.
We use degeneracies in the new direction to fill
\[
\begin{split}
&(\Gamma^a \otimes \Box^1 \otimes \braces{0} \otimes \Box^b)
\\\cup&
(\Box^a \otimes \braces{0} \otimes \braces{0} \otimes \Box^b)
\\\cup&
(\Box^a \otimes \Box^1 \otimes \braces{0} \otimes \Gamma^b)
\end{split}
\to
\begin{split}
&(\Gamma^a \otimes \Box^1 \otimes \Box^1 \otimes \Box^b)
\\\cup&
(\Box^a \otimes \braces{0} \otimes \Box^1 \otimes \Box^b)
\\\cup&
(\Box^a \otimes \Box^1 \otimes \Box^1 \otimes \Gamma^b)
.\end{split}
\]
Since the critical edge is an equivalence, we can fill the square
\begin{equation} \label{joyal-complex-special-open-box:0}
\begin{gathered}
\xymatrix{
  0^a 00 0^b
  \ar@{=}[r]
  \ar[d]
&
  0^a 01 0^b
  \ar@{=}[d]
\\
  0^a 10 0^b
  \ar@{.>}[r]
&
  0^a 11 0^b
}
\end{gathered}
\end{equation}
where the dotted edge is again an equivalence.

In the following, we will indicate the filling direction of (generalized) open boxes by underlining the appropriate factor in the pushout monoidal product. What this means is that we can factor the given generalized open box inclusion as a series of open box fillings in different dimensions, each of which fills in the specified direction.
We now fill the generalized open box
\[
\braces{0^a} \otimes (\braces{0} \to \Box^1) \hatotimes \underline{(\braces{0} \to \Box^1)} \hatotimes (\braces{0^b} \to \Box^b)
\]
if $a, b \geq 1$.
Here, the critical edges are of the form $u v 0 w \to u v 1 w$ where $u, v, w$ are certain vertices of $\Box^a, \Box^1, \Box^b$, respectively.
All of these edges are degenerate except for the bottom edge in~\eqref{joyal-complex-special-open-box:0}, which is an equivalence.
Moreover, this edge only appears as a critical edge in filling problems of lower dimension.
So we may indeed fill this generalized open box using the fact that $X$ is a cubical quasicategory and the induction hypothesis.
Dually, we fill the generalized open box
\[
(\braces{0^a} \to \Box^a) \hatotimes (\braces{0} \to \Box^1) \hatotimes \underline{(\braces{0} \to \Box^1)} \otimes \braces{0^b}
\]
if $a, b \geq 1$.

We now fill the generalized open box
\[
(\braces{0^a} \cup \Gamma^a \to \bd \Box^a) \hatotimes (\braces{0} \to \Box^1) \hatotimes \underline{(\braces{0} \to \Box^1)} \hatotimes (\bd \Box^b \to \Box^b)
\]
if $a \geq 1$.
Again, the critical edges are of the form as above and we may argue as before.
Dually, we fill the generalized open box
\[
(\bd \Box^a \to \Box^a) \hatotimes (\braces{0} \to \Box^1) \hatotimes \underline{(\braces{0} \to \Box^1)} \hatotimes (\braces{0^b} \cup \Gamma^b \to \bd \Box^b)
\]
if $b \geq 1$.

At this stage, we have defined the cube on
\[
\begin{split}
&(\bd \Box^a \otimes \Box^1 \otimes \Box^1 \otimes \Box^b)
\\\cup&
(\Box^a \otimes \braces{0} \otimes \Box^1 \otimes \Box^b)
\\\cup&
(\Box^a \otimes \Box^1 \otimes \Box^1 \otimes \bd \Box^b)
.\end{split}
\]
We now fill the open box
\[
(\bd \Box^a \to \Box^a) \hatotimes \underline{(\braces{0} \to \Box^1)} \hatotimes (\varnothing \to \braces{1}) \hatotimes (\bd \Box^b \to \Box^b)
,\]
noting that the critical edge $0^a 00 0^b \to 0^a 10 0^b$ is degenerate.
We then fill the open box
\[
(\bd \Box^a \to \Box^a) \hatotimes (\varnothing \to \braces{1}) \hatotimes \underline{(\braces{0} \to \Box^1)} \hatotimes (\bd \Box^b \to \Box^b)
,\]
noting that the critical edge $0^a 00 0^b \to 0^a 01 0^b$ is degenerate.
We finally fill the open box
\[
(\bd \Box^a \to \Box^a) \hatotimes (\bd \Box^1 \to \Box^1) \hatotimes \underline{(\braces{0} \to \Box^1)} \hatotimes (\bd \Box^b \to \Box^b)
,\]
noting that the critical edge $0^a 00 0^b \to 0^a 01 0^b$ is degenerate.
This defines the entire cube.
\end{proof}

\begin{lemma} \label{inner-fib-special-open-box}
Let $X \to Y$ be an inner fibration between cubical quasicategories. Then a lift exists for any diagram of the form

\centerline{
\xymatrix{
\sqcap^{n}_{i,\varepsilon} \ar[r] \ar@{>->}[d] & X \ar[d] \\
\Box^n \ar[r] & Y \\
}
}

in which $\sqcap^{n}_{i,\varepsilon}$ is a special open box in $X$.
\end{lemma}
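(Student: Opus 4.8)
The plan is to run a relative version of the argument proving \cref{joyal-complex-special-open-box}. As there, by duality it suffices to treat positive open boxes, and we induct on the dimension $n$ of the filling problem. Given the square in the statement, we regard $\sqcap^n_{i,\varepsilon}$ as sitting inside an $(n{+}1)$-cube by viewing the $n$-cube as a negative face in a freshly adjoined direction — exactly as in the proof of \cref{joyal-complex-special-open-box}, where $\Box^a \otimes \Box^1 \otimes \Box^b$ is embedded into $\Box^a \otimes \Box^1 \otimes \Box^1 \otimes \Box^b$ — and extend the given map $\Box^n \to Y$ to that $(n{+}1)$-cube by a degeneracy in the new direction. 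One then carries out the same sequence of (generalized) open box fillings as in \cref{joyal-complex-special-open-box}, but now reading each one as a relative lifting problem against $X \to Y$: the steps whose critical edge is degenerate are inner open box fillings, solved by the inner fibration property of $X \to Y$, while the steps whose critical edge is an equivalence are special open boxes of strictly smaller dimension, solved by the inductive hypothesis. The decomposition of the open box inclusions used throughout is the pushout product description of \cref{open-box-pop}.

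For the base case $n = 2$ we imitate the explicit three-dimensional construction of \cref{joyal-complex-special-open-box}: the critical edge of the open box is an equivalence of $X$, hence factors through $\Box^1 \to K$, which supplies the auxiliary $2$-cubes of $X$ witnessing its left and right inverses; assembling these together with two $2$-dimensional inner open box fillers (themselves lifts against $X \to Y$) produces a $3$-dimensional inner open box whose accompanying map to $Y$ is obtained from $\Box^2 \to Y$ by a degeneracy in the new direction, and a lift against $X \to Y$ restricts along the relevant face to the required filler $\Box^2 \to X$. No appeal to the inductive hypothesis is needed here, only the inner fibration property.

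The work lies not in the overall strategy but in the bookkeeping: one must check, at each intermediate filling, that the partially constructed cube in $X$ still maps compatibly, under $X \to Y$, to the prescribed restriction of the fixed map into $Y$. This is where the fine structure of \cref{joyal-complex-special-open-box} is essential — there every auxiliary critical edge has the form $u v 0 w \to u v 1 w$, i.e.\ lies in the freshly adjoined direction (or becomes degenerate after an earlier step), so the degenerate extension of $\Box^n \to Y$ already pins down the required value in $Y$, and the legitimacy of the inner-fibration lifts is guaranteed by the fact that a morphism of cubical sets sends degenerate edges to degenerate edges, so that the $Y$-side datum of each such step factors through the collapse of its critical edge. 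Verifying that the chain of generalized inner open boxes behaves over the base $Y$ exactly as recorded in \cref{joyal-complex-special-open-box} is the main obstacle, and is the technical heart of the argument.
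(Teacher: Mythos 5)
Your degenerate-extension trick is where the argument breaks. Take the inductive step. Having embedded $\Box^n$ as the $\{0\}$-face of $\Box^{n+1}$ in direction $a{+}2$, you set the $Y$-side datum on the whole $(n{+}1)$-cube to be $\alpha\sigma_{a+2}$ (the degeneracy of the given filler $\alpha \colon \Box^n \to Y$). Now look at the edge $0^a 0 1 0^b \to 0^a 1 1 0^b$, i.e.\ the ``$(2,1)$-edge'' of the square $\braces{0^a} \otimes \Box^1 \otimes \Box^1 \otimes \braces{0^b}$: since the degeneracy kills direction $a{+}2$, this edge maps to the image $e_Y$ of the critical edge, which is in general not degenerate in $Y$. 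But in the proof of \cref{joyal-complex-special-open-box} the square is filled so that precisely this edge is \emph{degenerate} in $X$, and later, when the paper fills the generalized open box supported on the $\{1\}$-face $\Box^a \otimes \Box^1 \otimes \{1\} \otimes \Box^b$ (i.e.\ $(\partial\Box^a \to \Box^a) \hatotimes \underline{(\braces{0}\to\Box^1)} \hatotimes (\varnothing\to\braces{1}) \hatotimes (\partial\Box^b \to \Box^b)$, a dimension-$n$ filling), this degenerate edge is exactly the critical edge that makes the step inner. With your degenerate $Y$-datum the $X$-side of the square cannot be filled to match (it would force $e_Y$ degenerate), and if you instead change the $X$-side fill to be compatible (say $e\sigma_2$, so the $(2,1)$-edge is $e$) then that dimension-$n$ step becomes a special open box filling against $X \to Y$ whose critical edge is the original equivalence $e$ — which is the problem you started with, and outside the reach of the inductive hypothesis. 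Either way the claim that ``the degenerate extension of $\Box^n \to Y$ already pins down the required value in $Y$'' is false: the required value on the square differs from the degenerate one.

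The paper avoids this by \emph{not} taking a degenerate extension. It first extends to $H'' \to X$ as in \cref{joyal-complex-special-open-box} (so the square's $(2,1)$-edge is degenerate and the $(1,1)$-edge is a fresh equivalence), and then constructs the $Y$-side $(n{+}1)$-cube as a filler of $(\Box^a \otimes \Box^1 \otimes \{0\} \otimes \Box^b) \cup H'' \to Y$, using \cref{joyal-complex-special-open-box} applied \emph{in $Y$} — this is a genuine chain of (partly special) open box fillings in $Y$, not a degeneracy. Only then does one lift $H'' \hookrightarrow \Box^{n+1}$ against $X \to Y$ along the same chain. The same issue sinks your proposed base case: the auxiliary $2$-cubes of $X$ coming from the $K$-factorization of $e$ (witnessing $e^{-1}_R$ etc.) have non-degenerate images in $Y$, so they cannot be faces of any degeneracy $\alpha\sigma$; the paper instead fills a genuine $(2,0)$-open box in $Y$ using \cref{joyal-complex-special-open-box} before lifting. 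If you want to avoid the circularity, you need a $Y$-side datum that is constructed to agree with $f(H'')$, and the simplest way to get one is, as the paper does, to fill the remaining special open box in $Y$ nonrelatively.
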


\begin{proof}
Again we only consider positive filling problems; the negative case is dual.
Again we argue by induction on the dimension of the filling problem, with the case for dimension 2 being an exercise in filling three-dimensional open boxes, analogous to the base case of the previous proof. Once again, we will illustrate the argument for the case of a $(1,0)$-open box, with the case of a $(2,0)$-open box being similar.

Consider a $(1,0)$-open box in $X$ whose image in $Y$ admits a filler $\alpha$, as depicted below on the left and right, respectively:

\centerline{
\xymatrix{
x \ar[r]^{f}  & y \ar[d]^{g} & \overline{x} \ar[r]^{\overline{f}} \ar[d]_{h}  & \overline{y} \ar[d]^{\overline{g}} \\
w \ar[r]^{e} & z & \overline{w} \ar[r]^{\overline{e}} & \overline{z} \\ 
}
}

Once again, we assume that the critical edge $e$ is an equivalence in $X$. 

We begin by filling a 2-dimensional inner open box to obtain the following 2-cube in $X$:

\centerline{
\xymatrix{
x \ar@{..>}[d]_{gf} \ar[r]^{f} & y \ar[d]^{g} \\
z \ar@{=}[r] & z \\
}
}

We may extend the 2-cube $\alpha$ to a 3-dimensional $(2,0)$-open box in $Y$, as depicted below; here the right face is $\alpha$, the front is the image in $Y$ of the 2-cube in $X$ which witnesses $e^{-1}_{R}$ as a right inverse to $e$, and the bottom is the image in $Y$ of the 2-cube constructed above.

 \[
\xymatrix@!C{
 \overline{x}
 \ar@{=}[rrr]
 \ar@{=}[ddd]
 \ar[dr]^{\overline{gf}}
&&&
 \overline{x}
 \ar[ddd]|!{[dl];[dr]}{\hole}_{\overline{f}}
 \ar[dr]^{h}
\\&
 \overline{z}
 \ar[rrr]^{\overline{e}^{-1}_{R}}
 \ar@{=}[ddd]
&&&
 \overline{w}
 \ar[ddd]^{\overline{e}}
\\
\\
 \overline{x}
 \ar[rrr]|!{[uur];[dr]}{\hole}^{\overline{f}}
 \ar[dr]^{\overline{gf}}
&&&
 \overline{y}
 \ar[dr]^{\overline{g}}
\\&
 \overline{z}
 \ar@{=}[rrr]
&&&
 \overline{z}
}
\]

The critical edge of this open box is the equivalence $e$, hence it admits a filler by \cref{joyal-complex-special-open-box}. The top face of the cube thus obtained is a filler for an inner open box in $X$, hence it can be lifted along the inner fibration $X \to Y$; in particular we obtain an edge $\widetilde{h}$ mapping to $h$. We thus obtain the following $(1,1)$-inner open box in $X$:

 \[
\xymatrix@!C{
 x
 \ar@{=}[rrr]
 \ar@{=}[ddd]
 \ar[dr]^{gf}
&&&
 x
 \ar[ddd]|!{[dl];[dr]}{\hole}_{f}
 \ar[dr]^{\widetilde{h}}
\\&
 z
 \ar[rrr]^{e^{-1}_{R}}
 \ar@{=}[ddd]
&&&
 w
 \ar[ddd]^{e}
\\
\\
 x
 \ar[rrr]|!{[uur];[dr]}{\hole}^{f}
 \ar[dr]^{gf}
&&&
 y
 \ar[dr]^{g}
\\&
 z
 \ar@{=}[rrr]
&&&
 z
}
\]

The critical edge of this open box is degenerate, and the previously constructed 3-cube in $Y$ is a filler for its image. Thus we may lift this filler along $X \to Y$; in particular, we obtain a filler for its right face which maps to $\alpha$.

Now assume that $X \to Y$ lifts against all special open box fillings of dimension less than or equal to $n$, and consider a lifting problem
\[
\xymatrix@C+2cm{
  (\bd \Box^a \otimes \Box^1 \otimes \Box^b) \cup (\Box^a \otimes \braces{0} \otimes \Box^b) \cup (\Box^a \otimes \Box^1 \otimes \bd \Box^b)
  \ar[r]
  \ar@{>->}[d]
&
  X
  \ar[d]
\\
  \Box^a \otimes \Box^1 \otimes \Box^b
  \ar[r]
  \ar@{.>}[ur]
&
  Y
}
\]
where $a + b = n$.
As before, we regard the codomain of the left map as a negative face of a larger cube via the map
\[
\xymatrix{
  \Box^a \otimes \Box^1 \otimes \Box^b
  \ar@{>->}[r]
&
  \Box^a \otimes \Box^1 \otimes \braces{0} \otimes \Box^b
}
\]
and the domain as the corresponding subobject $H$.
The critical edge is once again $0^a000^b \to 0^a100^b$.
Let $H'$ be the union of $H$ with the subobjects
\[
\begin{split}
&(\Gamma^a \otimes \Box^1 \otimes \Box^1 \otimes \Box^b)
\\\cup&
(\Box^a \otimes \braces{0} \otimes \Box^1 \otimes \Box^b)
\\\cup&
(\Box^a \otimes \Box^1 \otimes \Box^1 \otimes \Gamma^b)
\end{split}
\]
and $H''$ be the union of $H'$ with the square
\[
\braces{0^a} \otimes \Box^1 \otimes \Box^1 \otimes \braces{0^b}
.\]
We use degeneracies in the new direction to extend the map to $X$ from $H$ to $H'$:
\[
\xymatrix{
  H
  \ar[r]
  \ar@{^{(}->}[d]
&
  X
\rlap{.}\\
  H'
  \ar@{.>}[ur]  
}
\]
Since the critical edge is an equivalence in $X$, we extend the map to $X$ from $H'$ to $H''$ by filling the square
\[
\xymatrix{
  0^a 00 0^b
  \ar@{=}[r]
  \ar[d]
&
  0^a 01 0^b
  \ar@{=}[d]
\\
  0^a 10 0^b
  \ar@{.>}[r]
&
  0^a 11 0^b
}
\]
where the dotted edge is again an equivalence in $X$.
Note that the map $X \to Y$ preserves equivalences.

We construct the dotted arrow in the diagram
\[
\xymatrix{
  H
  \ar[r]
  \ar@{>->}[d]
&
  H''
  \ar[r]
  \ar[d]
&
  X
  \ar[d]
\\
  \Box^a \otimes \Box^1 \otimes \braces{0} \otimes \Box^b
  \ar[r]
  \ar@/_1.5em/[rr]
&
  \Box^a \otimes \Box^1 \otimes \Box^1 \otimes \Box^b
  \ar@{.>}[r]
&
  Y
}
\]
by solving a filling problem
\[
\xymatrix{
  (\Box^a \otimes \Box^1 \otimes \braces{0} \otimes \Box^b) \cup H''
  \ar[r]
  \ar[d]
&
  Y
\\
  \Box^a \otimes \Box^1 \otimes \Box^1 \otimes \Box^b
  \ar@{.>}[ur]
}
\]
as follows: the left map factors as a finite composite of open box inclusions of the form
\[
(\partial \Box^{a'} \to \Box^{a'}) \hatotimes (\braces{0} \to \Box^1) \hatotimes \underline{(\braces{0} \to \Box^1)} \hatotimes (\partial \Box^{b'} \to \Box^{b'})
\]
where $\Box^{a'}$ and $\Box^{b'}$ are faces of $\Box^a$ and $\Box^b$, respectively.
All critical edges are of the form $u v 0 w \to u v 1 w$ where $u, v, w$ are certain points of $\Box^a, \Box^1, \Box^b$, respectively.
All of these edges are degenerate in $Y$ except for the bottom edge in~\eqref{joyal-complex-special-open-box:0}, which is an equivalence.
We can thus fill these open boxes using the fact that $Y$ is a cubical quasicategory and \cref{joyal-complex-special-open-box}.

It remains to construct a lift
\[
\xymatrix{
  H''
  \ar[r]
  \ar[d]
&
  X
  \ar[d]
\\
  \Box^a \otimes \Box^1 \otimes \Box^1 \otimes \Box^b
  \ar[r]
  \ar@{.>}[ur]
&
  Y
\rlap{,}}
\]
which is done exactly as in the proof of \cref{joyal-complex-special-open-box} using that $X \to Y$ is an inner fibration.
\end{proof}

As with \cref{either-dir-comp}, the two proofs above make use of connections on 1-cubes (in this case, positive connections), but they can be adapted to $\cSet_\varnothing$ and $\cSet_0$ via \cref{cqcats-have-connections}.

\begin{lemma}\label{natural-marked-cubical-quasicat}
If $X$ is a cubical quasicategory, then $X^\natural$ is a marked cubical quasicategory.
\end{lemma}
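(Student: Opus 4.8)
The plan is to invoke \cref{marked-cubical-quasicat-3-of-4}, which reduces the claim to showing that the map $X^\natural \to \Box^0$ has the right lifting property with respect to (a) the marked open box inclusions $\iota^n_{i,\varepsilon}$ and (b) the saturation map $K \hookrightarrow K'$. Condition (b) is automatic: the underlying cubical set map of $K \hookrightarrow K'$ is an isomorphism, so a lift against it just amounts to observing that any map $K \to X^\natural$ sends the middle edge of $K$ to a marked edge of $X^\natural$, i.e.\ to an equivalence; and the image of the middle edge tautologically factors through the middle-edge inclusion $\Box^1 \to K$, hence is an equivalence by definition. So the only real content is condition (a).

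For (a), I would first dispose of the case $n = 1$: a lifting problem of $X^\natural \to \Box^0$ against $\iota^1_{1,\varepsilon}$ is just a vertex $x$ of $X$, and the degenerate edge $x \sigma_1$ provides the required marked edge, since $x\sigma_1$ is the image of the middle edge of $K$ under the constant map $K \to \Box^0 \xrightarrow{x} X$ and therefore factors through $\Box^1 \to K$. (This same observation shows that every degenerate $1$-cube of a cubical set is an equivalence, which is also what makes $X^\natural$ a well-defined marked cubical set.) For $n \ge 2$, the key point is that, after forgetting markings, a lifting problem of $X^\natural \to \Box^0$ against $\iota^n_{i,\varepsilon}$ is exactly a map $\sqcap^n_{i,\varepsilon} \to X$ sending the critical edge with respect to $\partial_{i,\varepsilon}$ to an equivalence --- that is, a special open box in $X$. (The critical edge indeed lies in $\sqcap^n_{i,\varepsilon}$ when $n \ge 2$, as it is contained in the face $\partial_{k,1-\varepsilon}$ for any $k \neq i$.) Since $X$ is a cubical quasicategory, \cref{joyal-complex-special-open-box} supplies a filler $\Box^n \to X$, which restricts to the original map on $\sqcap^n_{i,\varepsilon}$. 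This cubical set map is automatically a map of marked cubical sets $\Box^n \to X^\natural$: the marked edges of the marked cube $\Box^n$ occurring as the codomain of $\iota^n_{i,\varepsilon}$ are the critical edge, which maps to an equivalence by hypothesis, together with the degenerate edges, which map to degenerate, hence equivalence, edges of $X$; in both cases the image is marked in $X^\natural$. This filler is the desired lift.

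I do not expect a genuine obstacle: all the substantive work has already been carried out in \cref{joyal-complex-special-open-box} (existence of fillers for special open boxes in a cubical quasicategory) and \cref{marked-cubical-quasicat-3-of-4} (the reduction of fibrancy in $\cSet'$ to marked open boxes and the saturation map). The only thing to check is the essentially definitional dictionary between marked open box lifting problems against $X^\natural$ and special open box filling problems in $X$, together with the bookkeeping that the resulting fillers preserve the natural marking.
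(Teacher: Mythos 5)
Your proposal is correct and takes essentially the same route as the paper: invoke \cref{marked-cubical-quasicat-3-of-4} to reduce to lifting against marked open boxes and the saturation map, then handle the former via \cref{joyal-complex-special-open-box} and the latter by the definition of the natural marking. You simply spell out more explicitly the $n=1$ base case and the bookkeeping that the special-open-box filler underlies a genuine marked map into $X^\natural$, both of which the paper's proof leaves tacit.
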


\begin{proof}
Given a cubical quasicategory $X$, we have fillers for special open boxes in $X$ by \cref{joyal-complex-special-open-box}.
This implies that $X^\natural$ has fillers for marked open boxes. Furthermore, the definition of the natural marking implies that $X^\natural$ has the right lifting property with respect to the saturation map for any cubical set $X$. By \cref{marked-cubical-quasicat-3-of-4}, this suffices to show that $X^\natural$ is a marked cubical quasicategory.
\end{proof}

\begin{theorem}\label{fibrant-objects-unmarked}
The fibrant objects of the cubical Joyal model structure are given by cubical quasicategories.
The fibrations between fibrant objects are characterized by lifting against inner open box inclusions and endpoint inclusions $\Box^0 \hookrightarrow K$.
\end{theorem}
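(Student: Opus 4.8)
The plan is to deduce everything from the cubical marked model structure of \cref{cubical-marked-ms} through the left-inducing Quillen adjunction $(-)^\flat \dashv U \colon \cSet \rightleftarrows \cSet'$ of \cref{cubical-joyal-ms}, together with the natural marking functor $(-)^\natural \colon \cSet \to \cSet'$.

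\emph{Fibrant objects.} One inclusion is immediate: the inner open box inclusions $\widehat{\sqcap}^n_{i,\varepsilon} \hookrightarrow \widehat{\Box}^n_{i,\varepsilon}$ are trivial cofibrations by \cref{KendEq}, so every fibrant cubical set lifts against them and is hence a cubical quasicategory. Conversely, given a cubical quasicategory $X$, \cref{natural-marked-cubical-quasicat} gives that $X^\natural$ is a marked cubical quasicategory, hence fibrant in $\cSet'$ by \cref{marked-fib-obs}. Since $(-)^\flat$ is left Quillen, its right adjoint $U$ is right Quillen and therefore preserves fibrant objects; as $U(X^\natural) = X$, this shows $X$ is fibrant.

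\emph{Fibrations between fibrant objects.} The forward implication is again immediate from \cref{KendEq}: a fibration lifts against all trivial cofibrations, in particular against the inner open box inclusions and the endpoint inclusions $\Box^0 \hookrightarrow K$. For the converse, let $f \colon X \to Y$ be a map between cubical quasicategories with the right lifting property against these two classes. I would show that $f^\natural \colon X^\natural \to Y^\natural$ is a naive fibration in $\cSet'$; since $X^\natural$ and $Y^\natural$ are marked cubical quasicategories by \cref{natural-marked-cubical-quasicat}, \cref{marked-fib-obs} then upgrades $f^\natural$ to a fibration in $\cSet'$, whence $f = U(f^\natural)$ is a fibration in $\cSet$ because $U$ is right Quillen. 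It remains to verify that $f^\natural$ has the right lifting property against the generating anodyne maps of $\cSet'$. Lifting against the saturation map and the $3$-out-of-$4$ maps is automatic: the markings of $X^\natural$ and $Y^\natural$ are precisely the equivalences, which are detected by factorization through $K$ via \cref{equivalence-iff-marked} and satisfy the $3$-out-of-$4$ property (as $X^\natural$ lifts against the corresponding anodyne generators), so in each such lifting square the given map out of the subobject already extends uniquely to the codomain, and this extension is automatically compatible with the square since the subobject inclusion is an epimorphism in $\cSet'$. Lifting against the marked open box inclusions $\iota^n_{i,\varepsilon}$ with $n \geq 2$ is exactly a special open box lifting problem for $f$, which is solved by \cref{inner-fib-special-open-box}, as $f$ is an inner fibration between cubical quasicategories. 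Finally, lifting against $\iota^1_{i,\varepsilon} \colon \Box^0 \to (\Box^1)^\sharp$ amounts to lifting a prescribed equivalence of $Y$ along $f$ with a prescribed endpoint; factoring that equivalence through the middle-edge inclusion $\Box^1 \hookrightarrow K$ by \cref{equivalence-iff-marked}, this follows from the right lifting property of $f$ against the endpoint inclusion $\Box^0 \hookrightarrow K$, after which restriction back along $\Box^1 \hookrightarrow K$ yields the required equivalence in $X$.

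The main obstacle is the $n \geq 2$ case of the marked open box inclusions, i.e.\ bridging the cubical-Joyal notion of inner fibration (critical edge collapsed) with the marked notion of special open box filling (critical edge an equivalence). However, this bridge is precisely the content of \cref{inner-fib-special-open-box} (which itself rests on \cref{joyal-complex-special-open-box}), so inside the present argument only the $n = 1$ case and the two ``automatic'' cases above require attention, and these are routine.
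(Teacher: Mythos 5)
Your proof is correct and follows essentially the same route as the paper's: reduce to the marked model structure via $(-)^\natural$, invoke \cref{natural-marked-cubical-quasicat} and \cref{marked-fib-obs} for fibrant objects, and for fibrations verify the generating anodynes of $\cSet'$ (using \cref{inner-fib-special-open-box} for the $n \geq 2$ marked open boxes, the $\Box^0 \hookrightarrow K$ isofibration property for $n=1$, and the epimorphism observation for the saturation and 3-out-of-4 maps). Your spelled-out treatment of the $\iota^1$ case and the epimorphism argument are just more explicit versions of what the paper states tersely.
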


\begin{proof}
By \cref{KendEq}, every fibrant object is a cubical quasicategory and every fibration is an inner isofibration.

If $X$ is a cubical quasicategory, then $X^\natural$ is a marked cubical quasicategory by \cref{natural-marked-cubical-quasicat}. The forgetful functor $\cSet' \to \cSet$ preserves fibrant objects as a right Quillen adjoint, and the underlying cubical set of $X^\natural$ is $X$, thus $X$ is fibrant.

The case of fibrations between fibrant objects proceeds in an analogous way. Let $f \colon X \to Y$  be an inner isofibration between cubical quasicategories; we will show that $f^\natural$ is a fibration in $\cSet'$. Lifting against one-dimensional marked open box inclusions follows from the isofibration property; lifting against higher-dimensional marked open box inclusions follows from \cref{inner-fib-special-open-box}. To see that $f^\natural$ has the right lifting property with respect to the saturation and 3-out-of-4 maps, observe that any marked cubical quasicategory has the right lifting property with respect to these maps, hence so does any map between marked cubical quasicategories since the maps in question are epimorphisms. Since $X^\natural$ and $Y^\natural$ are marked cubical quasicategories, this implies that $f^\natural$ is a fibration by \cref{marked-fib-obs}.
\end{proof}

\begin{corollary}\label{weqIffHeq}
Let $f \colon X \to Y$ be a map between cubical quasicategories. Then $f$ is a weak categorical equivalence if and only if it is a categorical equivalence. \qed
\end{corollary}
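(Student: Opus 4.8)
The plan is to deduce this from the model-categorical Whitehead theorem, after identifying the homotopy relation implicit in ``categorical equivalence'' with the abstract homotopy relation of the cubical Joyal model structure. By \cref{fibrant-objects-unmarked} the objects $X$ and $Y$ are fibrant, and every object of $\cSet$ is cofibrant, so $f$ is a weak categorical equivalence if and only if its image in $\mathrm{Ho}(\cSet)$ is an isomorphism, and the only real work is to see that a map between cubical quasicategories is invertible in $\mathrm{Ho}(\cSet)$ exactly when it admits a two-sided $K$-homotopy inverse.

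The first step I would carry out is to check that $K \otimes X$, and symmetrically $X \otimes K$, is a genuine cylinder object for $X$ in the cubical Joyal model structure. The fold map factors as $X \sqcup X \to K \otimes X \to X$, and this is precisely the factorization used to verify the hypotheses of \cref{transfer-theorem} in the proof of \cref{cubical-joyal-ms}: by \cref{K-cylinder} the minimal marking functor sends $X \sqcup X \to K \otimes X$ to a monomorphism and $K \otimes X \to X$ to a weak equivalence of $\cSet'$. Since the minimal marking functor creates (and in particular reflects) both monomorphisms and weak equivalences of the cubical Joyal model structure, $X \sqcup X \to K \otimes X$ is a cofibration and $K \otimes X \to X$ is a weak equivalence; the same argument applies to $X \otimes K$. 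Consequently, for $A$ cofibrant and $Z$ fibrant the left- and right-homotopy relations on $\cSet(A, Z)$ of \cref{sec:joyal-cset} both coincide with the model-categorical left-homotopy relation, and in particular with each other.

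Next I would invoke the standard formalism of the homotopy category of a model category: for $A$ cofibrant and $Z$ fibrant, $\mathrm{Ho}(\cSet)(A, Z)$ is the quotient of $\cSet(A, Z)$ by left homotopy with respect to any cylinder object of $A$, and a map of $\cSet$ is a weak equivalence exactly when its image in $\mathrm{Ho}(\cSet)$ is invertible. Combining this with the previous paragraph, two maps $X \to Y$ between cubical quasicategories are $K$-homotopic iff they agree in $\mathrm{Ho}(\cSet)$. From here the argument is formal. If $f$ is a weak categorical equivalence, pick a map $g \colon Y \to X$ representing the inverse of $[f]$ in $\mathrm{Ho}(\cSet)$ (possible as $X$ and $Y$ are cofibrant--fibrant); the relations $[gf] = [\mathrm{id}_X]$ and $[fg] = [\mathrm{id}_Y]$ translate into $K$-homotopies $gf \to \mathrm{id}_X$ and $fg \to \mathrm{id}_Y$, so $f$ is a homotopy equivalence, i.e.\ a categorical equivalence. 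Conversely, a two-sided $K$-homotopy inverse of $f$ descends to a two-sided inverse of $[f]$ in $\mathrm{Ho}(\cSet)$, so $f$ is a weak categorical equivalence.

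The main obstacle I anticipate is purely one of bookkeeping: making sure the specific cylinder $K \otimes -$ produced by the transfer is recognised as a cylinder object for the \emph{induced} model structure (as above), and being comfortable quoting the general homotopy-category facts. If one prefers to avoid the abstract Whitehead theorem, the same conclusion can be reached entirely through $\cSet'$: $f$ is a weak categorical equivalence iff $f^\flat$ is a weak equivalence of $\cSet'$ (by definition), iff $f^\natural$ is (using that $X^\flat \to X^\natural$ and $Y^\flat \to Y^\natural$ are trivial cofibrations, being pushouts of coproducts of saturation maps as in the proof of \cref{unmarked-marked-Quillen-equivalence}, together with \cref{natural-marked-cubical-quasicat}), iff $f^\natural$ is a homotopy equivalence of $\cSet'$ by \cref{we-marked-fibrant-objects}; one then checks, using that $K^\flat \otimes A^\flat$ is a cylinder object for the cofibrant $A^\flat$ by \cref{K-cylinder}, that $Y^\natural$ is fibrant, and that the minimal marking functor is left adjoint to the forgetful functor with $(K \otimes A)^\flat = K^\flat \otimes A^\flat$ (\cref{flat-monoidal}), that $K$-homotopy classes of maps $A \to Y$ correspond to homotopy classes $[A^\flat, Y^\natural]$ of $\cSet'$, which allows one to transport a homotopy inverse and the witnessing homotopies back and forth between the two categories.
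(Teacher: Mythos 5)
Your proposal is correct and takes essentially the same approach as the paper: the paper's proof is simply a citation of \cref{K-cylinder} and \cref{fibrant-objects-unmarked}, leaving the reader to assemble exactly the argument you spell out — that cubical quasicategories are fibrant, everything is cofibrant, $K \otimes -$ (or $- \otimes K$) is a cylinder functor for the cubical Joyal model structure (since the minimal marking functor reflects cofibrations and weak equivalences by left-inducedness, and \cref{K-cylinder} supplies the required data in $\cSet'$), so the $K$-homotopy relation of \cref{sec:joyal-cset} coincides with the abstract left-homotopy relation, and the model-categorical Whitehead theorem finishes the job. Your alternate route through $\cSet'$ is also sound, though unnecessary.
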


\begin{proof}
This follows from \cref{K-cylinder,fibrant-objects-unmarked}.
\end{proof}

\begin{corollary}\label{cubical-quasicat-hom}
Let $X, Y \in \cSet$, with $Y$ a cubical quasicategory. Then $\ihom(X,Y)$ is a cubical quasicategory.
\end{corollary}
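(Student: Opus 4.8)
The plan is to deduce this almost immediately from the two results just proved. First I would invoke \cref{fibrant-objects-unmarked}: since $Y$ is a cubical quasicategory it is a fibrant object of the cubical Joyal model structure, so $Y \to \Box^0$ is a fibration. Next I would identify $\ihom(X,Y) \to \Box^0$ as a pullback exponential, namely $(\varnothing \to X) \triangleright (Y \to \Box^0)$: both $\ihom(\varnothing, Y)$ and $\ihom(X, \Box^0)$ are terminal, so the codomain of this pullback exponential collapses to $\Box^0$ and the map is exactly $\ihom(X,Y)\to\Box^0$. Since $\varnothing \to X$ is a cofibration and $Y \to \Box^0$ a fibration, \cref{fibrant-exps-unmarked} shows $\ihom(X,Y) \to \Box^0$ is a fibration, so $\ihom(X,Y)$ is fibrant, hence a cubical quasicategory by \cref{fibrant-objects-unmarked} again. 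The argument is insensitive to the choice of $\ihom_L$ versus $\ihom_R$.

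As an alternative I could argue entirely on the marked side, which is essentially how the analogous statement in \cref{fibrantExps} was obtained. By \cref{natural-marked-cubical-quasicat} the natural marking $Y^\natural$ is a marked cubical quasicategory, so by \cref{fibrantExps} the marked internal hom $\ihom_L(X^\flat, Y^\natural)$ is a marked cubical quasicategory. Its underlying cubical set is $\ihom_L(X,Y)$: this follows from the adjunction $(-)^\flat \dashv U$ together with the monoidality of $(-)^\flat$ recorded in \cref{flat-monoidal}, since $(\Box^n \otimes X)^\flat \cong (\Box^n)^\flat \otimes X^\flat$. Finally, \cref{KendEq} tells us the minimal marking of an inner open box inclusion is a pushout of a marked open box inclusion, hence anodyne, so by adjunction the underlying cubical set of any marked cubical quasicategory has fillers for inner open boxes; applying this to $\ihom_L(X^\flat, Y^\natural)$ concludes.

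I do not expect a genuine obstacle here: all the content lives in \cref{fibrant-objects-unmarked} and \cref{fibrant-exps-unmarked} (equivalently \cref{fibrantExps} and \cref{anodyne-pop} on the marked side), and what remains is the routine bookkeeping of recognizing $\ihom(X,Y)\to\Box^0$ as a pullback exponential, or the equally routine underlying-cubical-set computation in the marked approach. The only point that would require real work is a direct, model-structure-free proof — that the pushout product of an inner open box inclusion with a monomorphism is Joyal-anodyne — but the route through fibrancy above avoids this entirely, so I would take that route.
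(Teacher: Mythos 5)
Your first argument is exactly the paper's proof: the paper proves this corollary by citing \cref{fibrant-exps-unmarked} and \cref{fibrant-objects-unmarked}, which is precisely the pullback-exponential-with-fibrancy route you spell out. The marked-side alternative in your second paragraph is also sound, but the first route already matches the paper.
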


\begin{proof}
This follows from \cref{fibrant-exps-unmarked,fibrant-objects-unmarked}.
\end{proof}

Using \cref{fibrant-objects-unmarked}, we can see that this model structure can also be constructed using the Cisinski theory of Section \ref{Sec1Cisinski}.

\begin{proposition}
Let $\cSet_{K}$ denote the model structure given by applying \cref{CisinskiMS} to $\cSet$ with the following data:
\begin{itemize}
\item $I = - \otimes K$, with natural transformations $\bd_{\varepsilon}$ and $\sigma$ induced by the endpoint inclusions and the map $K \to \Box^0$;
\item $M = \{\bd \Box^n \hookrightarrow \Box^n | n \geq 0 \}$;
\item $S = \{\widehat{\sqcap}^{n}_{i,\varepsilon} \hookrightarrow \widehat{\Box}^{n}_{i,\varepsilon}|n \geq 2, 1 \leq i \leq n, \varepsilon = 0,1\}$. 
\end{itemize}

Then $\cSet_{K}$ coincides with the cubical Joyal model structure. 
\end{proposition}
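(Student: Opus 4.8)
The plan is to show that the identity functor is a left Quillen functor from each of the two model structures on $\cSet$ to the other; since a model structure is determined by its two weak factorisation systems --- namely (cofibrations, trivial fibrations) and (trivial cofibrations, fibrations) --- this forces $\cSet_K$ to coincide with the cubical Joyal model structure. Both structures have the monomorphisms as cofibrations, for $\cSet_K$ by construction via \cref{CisinskiMS} and for the cubical Joyal structure by \cref{cubical-joyal-ms}, so the identity preserves cofibrations in both directions and the work lies in comparing the fibrations.

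The key preliminary step is to identify the fibrant objects of $\cSet_K$ as the cubical quasicategories. One inclusion is immediate: since the inner open box inclusions constitute $S \subseteq \Lambda^0(S)$, every $\cSet_K$-fibrant object has the right lifting property against the inner open box inclusions, i.e.\ is a cubical quasicategory. For the converse I would show that every map of $\Lambda(S)$ --- hence every $\cSet_K$-anodyne map --- is a trivial cofibration in the cubical Joyal model structure, by induction on the construction of $\Lambda(S)$. The maps of $S$ are trivial cofibrations by \cref{KendEq}. A short computation identifies, for $X \to Y$ a boundary inclusion, the map $IX \cup_\varepsilon Y \to IY$ with the pushout product $(X \to Y) \hatotimes (\{\varepsilon\} \hookrightarrow K)$, a trivial cofibration by \cref{KendEq,cof-tcof-pop-unmarked}; and, for $X \to Y$ in $\Lambda^n(S)$, it identifies $IX \cup (Y \sqcup Y) \to IY$ with the pushout product $(X \to Y) \hatotimes (\partial\Box^1 \hookrightarrow K)$, where $\partial\Box^1 \hookrightarrow K$ is the inclusion of the two endpoints --- a monomorphism by the cylinder axioms of \cref{CylinderFunctor} --- which is a trivial cofibration by the inductive hypothesis and \cref{cof-tcof-pop-unmarked}. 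As the cubical Joyal trivial cofibrations form a saturated class, the whole saturation of $\Lambda(S)$ consists of cubical Joyal trivial cofibrations. Since cubical quasicategories are cubical Joyal fibrant by \cref{fibrant-objects-unmarked}, they have the right lifting property against every $\cSet_K$-anodyne map, so the fibrant objects of $\cSet_K$ are precisely the cubical quasicategories.

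With this in hand I would apply \cite[Prop.~7.15]{joyal-tierney:qcat-vs-segal} with $F$ and $U$ both the identity. In one direction: a fibration between fibrant objects of $\cSet_K$ is, by \cref{CisinskiMS}, a naive fibration between cubical quasicategories; since $\Lambda^0(S)$ contains the inner open box inclusions and --- taking $X \to Y$ to be $\varnothing \to \Box^0$ --- the two endpoint inclusions $\Box^0 \hookrightarrow K$, such a map has the right lifting property against the inner open box inclusions and against $\Box^0 \hookrightarrow K$, hence is a cubical Joyal fibration between cubical quasicategories by \cref{fibrant-objects-unmarked}; so the identity from the cubical Joyal model structure to $\cSet_K$ is left Quillen. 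In the other direction: a cubical Joyal fibration between cubical quasicategories has the right lifting property against all cubical Joyal trivial cofibrations, hence against every map of $\Lambda(S)$ by the previous paragraph, hence against every $\cSet_K$-anodyne map; so it is a $\cSet_K$-naive fibration between $\cSet_K$-fibrant objects, thus a $\cSet_K$-fibration by \cref{CisinskiMS}. So the identity from $\cSet_K$ to the cubical Joyal model structure is left Quillen as well, and combining the two directions yields the equality of the two model structures.

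The only step requiring genuine care is the inductive identification of the Cisinski $\Lambda$-maps as pushout products with the cofibration $\partial\Box^1 \hookrightarrow K$ and with the trivial cofibrations $\{\varepsilon\} \hookrightarrow K$; once these are unwound, everything reduces to \cref{cof-tcof-pop-unmarked}, \cref{KendEq}, and the characterisation of the fibrant objects and of fibrations between them already obtained in \cref{fibrant-objects-unmarked}. The routine verification that $- \otimes K$ is a cylinder functor on $\cSet$ in the sense of \cref{CylinderFunctor}, needed for \cref{CisinskiMS} to apply at all, can be dispatched along the lines of \cref{K-cylinder}.
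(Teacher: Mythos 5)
Your proof is correct, and its central combinatorial step --- showing by induction on the construction of $\Lambda(S)$ that every Cisinski-anodyne map is a cubical Joyal trivial cofibration, via the identifications of the $\Lambda$-maps with pushout products against $\{\varepsilon\}\hookrightarrow K$ and $\bd\Box^1\hookrightarrow K$ and the appeal to \cref{cof-tcof-pop-unmarked,KendEq} --- is exactly the ``inductive argument involving \cref{cof-tcof-pop-unmarked}'' that the paper's proof alludes to but does not spell out, so you have supplied a useful expansion there. Where you diverge is in how the conclusion is drawn. The paper simply observes that, once the cofibrations and the fibrant objects are known to agree, the model structures must coincide; this is an implicit invocation of the general principle (due to Joyal) that a model structure on a given category is determined by its class of cofibrations together with its class of fibrant objects. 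You instead avoid that black box by showing that the identity functor is left Quillen in both directions, using \cref{fibrant-objects-unmarked} and \cref{CisinskiMS} to match up the fibrations between fibrant objects and then appealing to \cite[Prop.~7.15]{joyal-tierney:qcat-vs-segal}; this is a bit longer but more self-contained, and it has the side benefit of explicitly exhibiting the matching classes of fibrations between fibrant objects. Either route is fine.
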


\begin{proof}
The cofibrations in both model structures are the monomorphisms. Therefore, to show that the model structures coincide, it suffices to show that they have the same fibrant objects, i.e. that the objects having the right lifting property with respect to all maps in $\Lambda(S)$ are precisely the cubical quasicategories. For this, observe that all fibrant objects of $\cSet_K$ are cubical quasicategories, since $S \subseteq \Lambda(S)$ is precisely the set of inner open box inclusions. Furthermore, an inductive argument involving \cref{cof-tcof-pop-unmarked} shows that all maps in $\Lambda(S)$ are trivial cofibrations in the cubical Joyal model structure, so all cubical quasicategories are fibrant in $\cSet_K$.
\end{proof}

Our next goal will be to characterize the weak categorical equivalences in a manner similar to \cref{we-characterizations}.

\begin{lemma}\label{pi0-iso}
The following triangle of functors commutes:

\centerline{
\xymatrix{
\cSet \ar[dr]_{\pi_0} \ar[rr]^{(-)^\natural} && \cSet' \ar[dl]^{\pi_0} \\
& \Set
}
}
\end{lemma}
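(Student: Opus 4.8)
The plan is to unwind the two occurrences of $\pi_0$ appearing in the triangle and observe that they are formed from literally the same data. By definition, both $\pi_0 \colon \cSet \to \Set$ and $\pi_0 \colon \cSet' \to \Set$ send an object to the quotient of its set of $0$-cubes by the smallest equivalence relation containing a relation $\sim_0$. Since the underlying cubical set of $X^\natural$ is $X$, the two sets of $0$-cubes in question are the same set $X_0$, so it suffices to check that the two relations $\sim_0$ on $X_0$ coincide.

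First I would recall the relevant definitions. In the cubical set $X$, we have $x \sim_0 y$ exactly when there is an edge $\Box^1 \to X$ from $x$ to $y$ factoring through the middle-edge inclusion $\Box^1 \to K$, i.e.\ an equivalence in $X$; in the marked cubical set $X^\natural$, we have $x \sim_0 y$ exactly when there is a marked edge from $x$ to $y$. By the very definition of the natural marking, an edge of $X$ is marked in $X^\natural$ if and only if it is an equivalence. Hence the two relations $\sim_0$ agree, so the equivalence relations $\sim$ that they generate agree, and therefore the quotient sets $X_0/\!\sim$ agree. This produces a canonical—indeed, the identity—bijection $\pi_0(X) \cong \pi_0(X^\natural)$, and the only point worth confirming along the way is that $X^\natural$ is a genuine marked cubical set: every degenerate edge $x\sigma_1$ is an equivalence, since it factors as $\Box^1 \xrightarrow{\sigma_1} \Box^0 \xrightarrow{x} X$ and $\sigma_1 \colon \Box^1 \to \Box^0$ factors through $\Box^1 \to K \to \Box^0$, so all degeneracies are marked.

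It remains to verify naturality of this identification. Given $f \colon X \to Y$ in $\cSet$, the functor $(-)^\natural$ produces $f^\natural \colon X^\natural \to Y^\natural$ acting as $f$ on underlying cubical sets (this is exactly the observation, made just after the definition of $(-)^\natural$, that maps of cubical sets preserve equivalences). Both $\pi_0(f)$ and $\pi_0(f^\natural)$ are then the map on quotients induced by $f_0 \colon X_0 \to Y_0$, and so they are equal under the identification above. Hence the triangle commutes. I do not anticipate a real obstacle in this argument; it is essentially a matter of matching definitions, the mildest subtlety being the well-definedness check for $X^\natural$ just mentioned.
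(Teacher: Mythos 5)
Your argument is correct and is essentially the paper's own: both you and the paper observe that $X$ and $X^\natural$ share the same vertex set and that the two $\sim_0$ relations coincide because the natural marking marks exactly the equivalences. The extra details you supply (the degeneracy check for $X^\natural$ and the naturality verification) are sound but go slightly beyond what the paper records.
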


\begin{proof}
For $X \in \cSet$, $X$ and $X^\natural$ have the same set of vertices, and the equivalence relations defining $\pi_{0}X$ and $\pi_{0}X^\natural$ coincide.
\end{proof}

\begin{lemma}\label{hom-underlying}
Let $X, Y \in \cSet$, and let $Y'$ be a marked cubical set whose underlying cubical set is $Y$. The underlying cubical set of $\ihom(X^\flat,Y')$ is isomorphic to $\ihom(X,Y)$, and this isomorphism is natural in both $X$ and $Y$.
\end{lemma}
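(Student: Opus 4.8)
The plan is to unwind the definitions of the internal homs and use the universal property of the minimal marking functor $(-)^\flat \dashv (\text{forgetful})$. Recall from \cref{subsection:cSet-basics} and \cref{Sec1Marked} that for cubical sets $\ihom_L(X,Y)_n = \cSet(\Box^n \otimes X, Y)$, while for marked cubical sets $\ihom_L(X^\flat, Y')_n = \cSet'\bigl((\Box^n)^\flat \otimes X^\flat, Y'\bigr)$; here we use that the geometric product of marked cubical sets restricts from that of $\cSet''$ (\cref{geometric-product-marked}) and that the underlying cubical set of a geometric product of marked cubical sets is the geometric product of the underlying cubical sets. The key observation is that $(-)^\flat$ is strong monoidal — this is part of \cref{flat-monoidal} — so $(\Box^n)^\flat \otimes X^\flat \cong (\Box^n \otimes X)^\flat$.

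**Main steps.** First I would record the isomorphism $(\Box^n)^\flat \otimes X^\flat \cong (\Box^n \otimes X)^\flat$ from \cref{flat-monoidal}. Second, I would apply the defining adjunction $(-)^\flat \dashv (\text{forgetful})$ to obtain, for each $n$, a natural bijection
\[
\cSet'\bigl((\Box^n \otimes X)^\flat, Y'\bigr) \;\cong\; \cSet\bigl(\Box^n \otimes X, Y\bigr),
\]
since the underlying cubical set of $Y'$ is $Y$. Composing with the isomorphism from the first step gives $\ihom_L(X^\flat, Y')_n \cong \ihom_L(X, Y)_n$. Third, I would check that these bijections are compatible with the cubical structure maps: a cubical operator $\theta \colon [1]^m \to [1]^n$ acts on $\ihom_L$ by precomposition with $\theta \otimes X$ (resp.\ $\theta^\flat \otimes X^\flat$), and since $(-)^\flat$ is a functor commuting with the monoidal structure coherently, the square relating the two actions commutes; hence we get an isomorphism of cubical sets, not merely a levelwise bijection. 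Fourth, naturality in $X$ and in $Y$ follows because every isomorphism invoked (the strong-monoidality iso and the adjunction iso) is natural in all its arguments. The same argument applies verbatim with $\ihom_R$ in place of $\ihom_L$, using that $(-)^\flat$ is also strong monoidal for the right tensor (again \cref{flat-monoidal}).

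**Expected obstacle.** None of this is deep; the only mildly delicate point is making sure the monoidality isomorphism $(\Box^n)^\flat \otimes X^\flat \cong (\Box^n \otimes X)^\flat$ is itself \emph{natural} in $[1]^n$ (i.e.\ in the variable $\Box^n$), so that the levelwise bijections assemble into a map of cubical sets rather than an unstructured family; this is guaranteed by the coherence built into ``strong monoidal functor,'' but it deserves an explicit sentence. A secondary bookkeeping point is being careful that the forgetful functor $\cSet' \to \cSet$ genuinely sends $Y'$ to $Y$ on the nose and that its interaction with the geometric product is strict enough for the identifications to literally be equalities of cubical sets; since the geometric product of marked cubical sets is defined so that its underlying cubical set \emph{is} the geometric product of underlying cubical sets (\cref{geometric-product-marked}), this causes no trouble.
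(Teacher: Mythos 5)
Your proposal is correct and follows essentially the same approach as the paper's proof: identify the $n$-cubes of $\ihom(X^\flat,Y')$ as maps out of $(\Box^n \otimes X)^\flat$ via the strong monoidality of $(-)^\flat$ (\cref{flat-monoidal}), then apply the adjunction $(-)^\flat \dashv (\text{forgetful})$ to land in $\cSet(\Box^n \otimes X, Y)$. The paper presents this more tersely for $\ihom_R$ (with $X \otimes \Box^n$ rather than $\Box^n \otimes X$); your extra sentences on coherence of the monoidality isomorphism and compatibility with structure maps simply make explicit what the paper leaves implicit.
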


\begin{proof}
We will prove the statement for $\ihom_R$; the proof for $\ihom_L $ is similar. The $n$-cubes in the underlying cubical set of $\ihom_R(X^\flat,Y')$ are maps $X^\flat \otimes \Box^n \cong (X \otimes \Box^n)^\flat \to Y'$ (the isomorphism follows from \cref{flat-monoidal}). Under the adjunction $\cSet \rightleftarrows \cSet'$, these correspond to maps $X \otimes \Box^n \to Y$.
\end{proof}

\begin{proposition} \label{we-characterizations-unmarked}
  The following are equivalent for a cubical map $A \to B$:
  \begin{enumerate}
    \item \label{A-B-we} $A \to B$ is a weak categorical equivalence;
    \item \label{A-B-hom} for any cubical quasicategory $X$, the induced map $\ihom(B, X) \to \ihom(A, X)$ is a categorical equivalence;
    \item \label{A-B-pi} for any cubical quasicategory $X$, the induced map $\pi_0(\ihom(B, X)) \to \pi_0(\ihom(A, X))$ is a bijection.
  \end{enumerate}
\end{proposition}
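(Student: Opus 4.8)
The plan is to mirror the proof of \cref{we-characterizations} in the marked setting, transporting everything across the Quillen equivalence $\cSet \rightleftarrows \cSet'$ of \cref{unmarked-marked-Quillen-equivalence} together with the natural-marking functor. First I would record the key compatibility: for a cubical quasicategory $X$, the marked cubical set $X^\natural$ is a marked cubical quasicategory by \cref{natural-marked-cubical-quasicat}, and by \cref{hom-underlying} the underlying cubical set of $\ihom(A^\flat, X^\natural)$ is $\ihom(A,X)$, naturally in $A$. Combining this with \cref{pi0-iso} gives $\pi_0(\ihom(A,X)) \cong \pi_0(\ihom(A^\flat, X^\natural))$ for every $A$, and moreover a homotopy of maps $A^\flat \to X^\natural$ in $\cSet'$ restricts, under the forgetful functor, to a homotopy $A \to X$ in the cubical Joyal structure built from the cylinder $-\otimes K$. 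So the three conditions for $A \to B$ translate verbatim into the corresponding conditions for $A^\flat \to B^\flat$ tested against marked cubical quasicategories of the form $X^\natural$.

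Next I would prove the implications. For \ref{A-B-we} $\Rightarrow$ \ref{A-B-hom}: since the minimal marking functor creates weak equivalences, $A \to B$ is a weak categorical equivalence iff $A^\flat \to B^\flat$ is a weak equivalence in $\cSet'$; then \cref{we-characterizations}\,(ii) gives that $\ihom(B^\flat, X^\natural) \to \ihom(A^\flat, X^\natural)$ is a homotopy equivalence of marked cubical quasicategories for every cubical quasicategory $X$, and passing to underlying cubical sets via \cref{hom-underlying} (using that the forgetful functor is monoidal and sends marked homotopies to homotopies) yields that $\ihom(B,X) \to \ihom(A,X)$ is a homotopy equivalence between cubical quasicategories, hence a categorical equivalence. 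The implication \ref{A-B-hom} $\Rightarrow$ \ref{A-B-pi} is immediate, since a homotopy equivalence induces a bijection on $\pi_0$. For \ref{A-B-pi} $\Rightarrow$ \ref{A-B-we}: given the $\pi_0$-bijection for all cubical quasicategories $X$, apply \cref{pi0-iso} and \cref{hom-underlying} to see that $\pi_0(\ihom(B^\flat, X^\natural)) \to \pi_0(\ihom(A^\flat, X^\natural))$ is a bijection for every marked cubical quasicategory of the form $X^\natural$; but every marked cubical quasicategory $Y$ arises, up to the relevant data, with underlying cubical set a cubical quasicategory, and the marked edges of $Y$ are exactly the equivalences of its underlying cubical set (by \cref{equivalence-iff-marked} applied in $\cSet'$, together with the fact that in a marked cubical quasicategory the marked edges are precisely those factoring through $K$), so $Y \cong (|Y|)^\natural$ where $|Y|$ is the underlying cubical set. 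Hence the hypothesis of \cref{we-characterizations}\,(iii) holds for $A^\flat \to B^\flat$, so it is a weak equivalence in $\cSet'$, and therefore $A \to B$ is a weak categorical equivalence.

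The main obstacle I anticipate is the identification $Y \cong (|Y|)^\natural$ for an arbitrary marked cubical quasicategory $Y$, which is what lets us reduce testing against all marked cubical quasicategories to testing against naturally-marked cubical quasicategories: one must check that a marked cubical quasicategory has no ``extra'' unmarked equivalences and no ``spurious'' marked non-equivalences, which follows from \cref{equivalence-iff-marked} but requires care because $\cSet'$-fibrancy is being used, not just the lifting properties against $S$. A secondary technical point is confirming that the cylinder used to define homotopy in the cubical Joyal structure ($-\otimes K$, via \cref{K-cylinder}) is compatible under the forgetful functor with the cylinder $(\Box^1)^\sharp \otimes -$ of $\cSet'$ after minimal marking — this is where the monoidality of the functors in \cref{eq:all-functors-marking} (\cref{flat-monoidal}) and the computation $K^\flat \otimes X^\flat \cong (K\otimes X)^\flat$ are invoked — but this is routine given the earlier lemmas.
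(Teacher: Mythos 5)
Your proposal is correct and follows essentially the same strategy as the paper: transfer everything through the minimal-marking/natural-marking functors to \cref{we-characterizations}, using \cref{natural-marked-cubical-quasicat}, \cref{hom-underlying}, \cref{pi0-iso}, and \cref{equivalence-iff-marked}. The only cosmetic difference is in (i)$\Rightarrow$(ii), where you transfer the marked homotopy equivalence directly to a Joyal homotopy equivalence (via the observation that marked edges of $\ihom(A^\flat,X^\natural)$ are exactly the equivalences of $\ihom(A,X)$), whereas the paper invokes Ken Brown's lemma for the right Quillen functor and then \cref{weqIffHeq}; both routes are sound.
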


\begin{proof}
To see that \ref{A-B-we} $\Rightarrow$ \ref{A-B-hom}, let $A \to B$ be a weak categorical equivalence in $\cSet$, and $X$ a cubical quasicategory. Then $X^\natural$ is a marked cubical quasicategory by \cref{natural-marked-cubical-quasicat}, so $\ihom(B^\flat,X^{\natural}) \to \ihom(A^\flat,X^\natural)$ is a homotopy equivalence by \cref{we-characterizations}. The underlying cubical set functor preserves weak equivalences between fibrant objects by Ken Brown's lemma, so $\ihom(B,X) \to \ihom(A,X)$ is a weak categorical equivalence by \cref{hom-underlying}. Hence it is a categorical equivalence by \cref{cubical-quasicat-hom,weqIffHeq}.

The implication \ref{A-B-hom} $\Rightarrow$ \ref{A-B-pi} is clear, so now we consider \ref{A-B-pi} $\Rightarrow$ \ref{A-B-we}. For this, let $X$ be the underlying cubical set of a marked cubical quasicategory $X'$, and note that by \cref{pi0-iso,hom-underlying}, we have the following commuting diagram in $\mathsf{Set}$:

\centerline{
\xymatrix{
\pi_{0} \ihom(B,X) \ar[r] \ar[d]_{\cong} & \pi_{0} \ihom(A,X) \ar[d]^{\cong} \\
\pi_{0} \ihom(B^{\flat},X') \ar[r] & \pi_{0} \ihom(A^{\flat},X') \\
}
}

Since the underlying cubical set functor preserves fibrant objects, $X$ is a cubical quasicategory. So if \ref{A-B-pi} holds then the top map is an isomorphism, hence so is the bottom map. Thus $A^\flat \to B^\flat$ is a weak equivalence in $\cSet'$ by \cref{we-characterizations}, meaning that $A \to B$ is a weak categorical equivalence.
\end{proof}

We now state two straightforward properties of the cubical Joyal model structure.

\begin{proposition}\label{straightforward-properties} \leavevmode
  \begin{enumerate}
    \item \label{Grothendieck-Joyal-localization} The Grothendieck model structure on $\cSet$ of \cref{Grothendieck-ms-cSet} is a localization of the cubical Joyal model structure.
    \item \label{tau-nerve-Quillen-adj} The adjunction $\tau_1 \colon \cSet \rightleftarrows \Cat : \! \Nerve$ is a Quillen adjunction between the canonical model structure on $\Cat$ and the cubical Joyal model structure.  \qed 
  \end{enumerate}  
\end{proposition}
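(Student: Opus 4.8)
The plan is to deduce both statements from the recognition criterion of Joyal and Tierney in the form of \cref{Quillen-adj-fib-obs}: a left adjoint out of the cubical Joyal model structure is left Quillen once it preserves cofibrations and sends a characterizing class $S$ for ``fibrations between fibrant objects'' to trivial cofibrations. By \cref{fibrant-objects-unmarked} we may take $S$ to be the set consisting of the inner open box inclusions $\widehat{\sqcap}^n_{i,\varepsilon} \hookrightarrow \widehat{\Box}^n_{i,\varepsilon}$ together with the endpoint inclusions $\Box^0 \hookrightarrow K$. Both parts of the proposition then reduce to identifying the image of this set under the relevant left adjoint.

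For \ref{Grothendieck-Joyal-localization}, I would apply \cref{Quillen-adj-fib-obs} to the identity functor $\mathrm{id}\colon \cSet \to \cSet$ regarded as a left adjoint from the cubical Joyal model structure to the Grothendieck model structure of \cref{Grothendieck-ms-cSet}. It preserves cofibrations since both model structures have the monomorphisms as cofibrations, so it remains to check that the inner open box inclusions and the maps $\Box^0 \hookrightarrow K$ are trivial cofibrations in the Grothendieck model structure. These are monomorphisms, hence cofibrations; and they are weak equivalences because their sources and targets are all weakly contractible in the Grothendieck model structure. For $\Box^n$, $\sqcap^n_{i,\varepsilon}$, and $K$ this is standard, and $\widehat{\Box}^n_{i,\varepsilon}$, $\widehat{\sqcap}^n_{i,\varepsilon}$ are obtained from $\Box^n$, $\sqcap^n_{i,\varepsilon}$ by collapsing the critical edge, i.e.\ by pushing out the weak equivalence $\Box^1 \to \Box^0$ along a monomorphism, which is again a weak equivalence by left properness of the Grothendieck model structure. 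Thus the identity is left Quillen; since the two model structures also share the trivial fibrations (the maps with the right lifting property against monomorphisms), every weak categorical equivalence factors in the cubical Joyal structure as a trivial cofibration followed by a trivial fibration, both of which are Grothendieck weak equivalences, so $\cSet$ with the Grothendieck model structure is a localization of the cubical Joyal one.

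For \ref{tau-nerve-Quillen-adj}, I would apply \cref{Quillen-adj-fib-obs} to $\tau_1$, which is a left adjoint, with the same class $S$. First, $\tau_1$ preserves cofibrations: a monomorphism of cubical sets is injective on $0$-cubes, and $\tau_1$ of it is the corresponding functor on fundamental categories, which is therefore injective on objects, i.e.\ a cofibration in the folk model structure on $\Cat$. Next, one checks that $\tau_1$ sends each map in $S$ to a trivial cofibration of $\Cat$. Using the explicit description of $\tau_1 X$ as the quotient of the free category on $X_1 \rightrightarrows X_0$ by the relations coming from degeneracies and $2$-cubes, $\tau_1$ carries every inner open box inclusion to an isomorphism of categories: for $n \ge 4$ the cells attached in passing from $\widehat{\sqcap}^n_{i,\varepsilon}$ to $\widehat{\Box}^n_{i,\varepsilon}$ lie in dimension $\ge 3$ and so are invisible to $\tau_1$, while for $n = 2, 3$ the attached $2$-face only imposes a relation already forced by the remaining faces (in particular $\tau_1(\widehat{\sqcap}^2_{1,0} \hookrightarrow \widehat{\Box}^2_{1,0})$ is the identity of $[2]$). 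Similarly, a direct computation shows that $\tau_1 K$ is the walking isomorphism and $\tau_1(\Box^0 \hookrightarrow K)$ is the inclusion of an object into it, which is injective on objects and an equivalence of categories, hence a trivial cofibration. (The same input also shows that the cubical nerve $\Nerve(\catC)$ is a cubical quasicategory, so that $\Nerve$ lands among the fibrant objects, as required implicitly by the criterion.)

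The main obstacle is the computation underlying \ref{tau-nerve-Quillen-adj}, namely that $\tau_1$ inverts every inner open box inclusion; the cases $n = 2, 3$ require careful bookkeeping with the cubical identities and with the ``cube relations'' expressing that in $[1]^n$ the commutativity of all but one $2$-face forces the last. For \ref{Grothendieck-Joyal-localization} the only nontrivial point is the weak contractibility in the Grothendieck model structure of $\Box^n$, $\sqcap^n_{i,\varepsilon}$, their critical-edge collapses, and $K$; this is routine but is what the argument rests on.
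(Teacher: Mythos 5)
Your proof is correct and takes the route the paper evidently has in mind (the proposition is stated with a \texttt{\textbackslash qed} and no written proof): deduce both Quillen adjunctions from \cref{Quillen-adj-fib-obs} using the characterizing class $S$ of \cref{fibrant-objects-unmarked}, consisting of the inner open box inclusions and the endpoint inclusions $\Box^0\hookrightarrow K$. A few small remarks. For \ref{Grothendieck-Joyal-localization} you can avoid the left-properness step: each inner open box inclusion $\widehat{\sqcap}^n_{i,\varepsilon}\hookrightarrow\widehat{\Box}^n_{i,\varepsilon}$ is the pushout of the Grothendieck generating anodyne $\sqcap^n_{i,\varepsilon}\hookrightarrow\Box^n$ along the quotient $\sqcap^n_{i,\varepsilon}\to\widehat{\sqcap}^n_{i,\varepsilon}$, so it is a trivial cofibration directly. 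For \ref{tau-nerve-Quillen-adj} your description of the $n=2$ case is slightly off as stated --- in passing from $\widehat{\sqcap}^2_{i,\varepsilon}$ to $\widehat{\Box}^2_{i,\varepsilon}$ one attaches a new $1$-cell as well as the $2$-cell (the missing face $\partial_{i,\varepsilon}$ is an edge when $n=2$) --- but this does not affect the conclusion: the new generator is identified by the attached $2$-cell with the composite of the other two, so the induced functor is still the identity of $[2]$, as you assert. It is also worth recording that the $n=3$ verification genuinely uses the collapsed critical edge: the relation coming from the missing $2$-face is implied by the others only modulo post-composition with the critical edge, so $\tau_1(\sqcap^3_{i,\varepsilon})\to\tau_1(\Box^3)$ is \emph{not} an isomorphism, whereas $\tau_1(\widehat{\sqcap}^3_{i,\varepsilon})\to\tau_1(\widehat{\Box}^3_{i,\varepsilon})$ is; this is also consistent with the fact that $\tau_1\dashv\Nerve$ is not Quillen with respect to the Grothendieck model structure even though it is with respect to the cubical Joyal one. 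Finally, the parenthetical about $\Nerve(\catC)$ being a cubical quasicategory, while true, is not needed for the criterion in \cref{Quillen-adj-fib-obs}, which only asks that $F$ preserve cofibrations and send $S$ to trivial cofibrations.
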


The cubical Joyal model structure is clearly left proper, since all objects are cofibrant. However, it is not right proper. The proof of this fact is similar to the standard proof of the corresponding result for the Joyal model structure on $\sSet$, but requires an additional step due to the fact that inner cubes, unlike representable simplicial sets, are generally not fibrant.

\begin{proposition}
The cubical Joyal model structure is not right proper.
\end{proposition}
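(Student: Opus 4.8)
The plan is to adapt the standard argument that the Joyal model structure on $\sSet$ is not right proper. There one observes that the inner horn inclusion $\Lambda^2_1 \hookrightarrow \Delta^2$ is a trivial cofibration, that the face inclusion $\Delta^{\{0,2\}} \hookrightarrow \Delta^2$ is a fibration (it is the nerve of the isofibration $[1] \to [2]$ sending $0 \mapsto 0$, $1 \mapsto 2$, and both objects are fibrant), and that pulling the former back along the latter yields $\partial \Delta^1 \hookrightarrow \Delta^1$, which is not a weak categorical equivalence. To reproduce this in $\cSet$, I would take the trivial cofibration to be an inner open box inclusion $\widehat{\sqcap}^2_{i,\varepsilon} \hookrightarrow \widehat{\Box}^2_{i,\varepsilon}$ in dimension $2$, which is a trivial cofibration by \cref{KendEq}, and let $g \colon \Box^1 \to \widehat{\Box}^2_{i,\varepsilon}$ be the map picking out the image of the face $\partial_{i,\varepsilon}$ of $\Box^2$ under the quotient $\Box^2 \to \widehat{\Box}^2_{i,\varepsilon}$. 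A direct inspection then shows that the pullback of $\widehat{\sqcap}^2_{i,\varepsilon} \hookrightarrow \widehat{\Box}^2_{i,\varepsilon}$ along $g$ is the inclusion $\partial \Box^1 \hookrightarrow \Box^1$: the non-degenerate edge of $\Box^1$ is carried by $g$ exactly onto $\partial_{i,\varepsilon}$, the face omitted from $\widehat{\sqcap}^2_{i,\varepsilon}$, while both of its endpoints lie in $\widehat{\sqcap}^2_{i,\varepsilon}$. Finally, $\partial \Box^1 \hookrightarrow \Box^1$ is not a weak categorical equivalence: applying the left Quillen functor $\tau_1$ of \cref{straightforward-properties} to a weak categorical equivalence gives an equivalence of categories, but $\tau_1 \partial\Box^1$ is the discrete category on two objects, which is not equivalent to $\tau_1 \Box^1 = [1]$.

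Everything then hinges on the one place where the argument genuinely diverges from the simplicial case, namely showing that $g \colon \Box^1 \to \widehat{\Box}^2_{i,\varepsilon}$ is a fibration. In $\sSet$ this is immediate from the characterization of fibrations between fibrant objects, since $\Delta^{\{0,2\}} \hookrightarrow \Delta^2$ is the nerve of a functor and $\Delta^2$ is a quasicategory; but the inner cube $\widehat{\Box}^2_{i,\varepsilon}$ is not a cubical quasicategory and $g$ is not the cubical nerve of a functor, so that route is unavailable — this is the additional step. Instead, I would exhibit $g$ as a pullback of a fibration between fibrant objects. The fundamental category of $\widehat{\Box}^2_{i,\varepsilon}$ is $[2]$, so the unit of $\tau_1 \adjoint \Nerve$ provides a map $\widehat{\Box}^2_{i,\varepsilon} \to \Nerve([2])$, and one checks that the square
\[
\xymatrix{
  \Box^1 \ar[r]^-{g} \ar[d] & \widehat{\Box}^2_{i,\varepsilon} \ar[d] \\
  \Nerve([1]) \ar[r]^-{\Nerve(\delta)} & \Nerve([2])
}
\]
is cartesian, where $\delta \colon [1] \to [2]$ is again $0 \mapsto 0$, $1 \mapsto 2$. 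The content of this verification is that a cube of $\Nerve([1])$ lands inside the image of $\widehat{\Box}^2_{i,\varepsilon}$ in $\Nerve([2])$ precisely when the corresponding monotone map of posets lies in $\Box$ — equivalently, precisely on the subobject $\Box^1 \hookrightarrow \Nerve([1])$. Since $\delta$ is an isofibration of categories and $\Nerve$ is right Quillen by \cref{straightforward-properties}, $\Nerve(\delta)$ is a fibration, hence so is its pullback $g$.

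Granting these verifications, right properness would force the pullback $\partial\Box^1 \hookrightarrow \Box^1$ of the weak equivalence $\widehat{\sqcap}^2_{i,\varepsilon} \hookrightarrow \widehat{\Box}^2_{i,\varepsilon}$ along the fibration $g$ to be a weak categorical equivalence, contradicting the first paragraph. The only substantial work is the pullback computation; the rest is elementary bookkeeping with cubical faces and the definition of $\tau_1$. I expect the identification of that pullback square — invisible in $\sSet$ because $\Delta^2$ is itself both a nerve and fibrant — to be the one point where care is needed.
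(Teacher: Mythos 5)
Your approach is essentially the same as the paper's: both pull the trivial cofibration $\widehat{\sqcap}^2_{i,\varepsilon} \hookrightarrow \widehat{\Box}^2_{i,\varepsilon}$ back along the image of the missing face $\Box^1 \to \widehat{\Box}^2_{i,\varepsilon}$, obtaining $\partial\Box^1 \hookrightarrow \Box^1$, and establish that this comparison map is a fibration by exhibiting it as the pullback of $\Nerve([1]) \to \Nerve([2])$ (the nerve of the isofibration $0 \mapsto 0$, $1 \mapsto 2$) along the canonical map $\widehat{\Box}^2_{i,\varepsilon} \to \Nerve([2])$. The one presentational difference is that the paper disposes of the cartesianness of both squares by a single clean observation — all maps involved are monomorphisms, so the pullbacks are intersections, which are visibly $\partial\Box^1$ and $\Box^1$ — whereas your verification that cubes of $\Nerve([1])$ landing in the image of $\widehat{\Box}^2_{i,\varepsilon}$ are precisely those factoring through $\Box^1 \hookrightarrow \Nerve([1])$ is stated as a claim and would require a small additional argument (essentially, that if $(f_1,f_2) \in \Box([1]^n,[1]^2)$ composes with the inner $2$-cube to give $\delta \circ g$, then $g = f_1$ is a projection of $f$ and hence lies in $\Box$); the paper's route via intersections sidesteps this.
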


\begin{proof}
We will exhibit a fibration $X \to Z$ and a weak equivalence $Y \to Z$ such that the pullback map $X \times_{Z} Y \to X$ is not a weak equivalence. 

First consider the map $[1] \to [2]$ in $\Cat$ which picks out the morphism $0 \to 2$. This is an isofibration, hence its image under $N_{\Box}$ is a fibration by \cref{straightforward-properties} \ref{tau-nerve-Quillen-adj}.

We have a map $\widehat{\Box}^{2}_{2,0} \to N_{\Box} [2]$ given by the following 2-cube in $N_{\Box} [2]$:

\centerline{
\xymatrix{
0 \ar[r] \ar[d] & 2 \ar@{=}[d] \\
1 \ar[r] & 2 \\
}
}

Now consider the following commuting diagram in $\cSet$:

\centerline{
\xymatrix{
\bd \Box^1 \drpullback \ar[rr]^-{\{0,2\}} \ar[d] & & \widehat{\sqcap}^{2}_{2,0} \ar[d] \\
\Box^1 \drpullback \ar[d] \ar[rr]^-{0 \to 2} & & \widehat{\Box}^{2}_{2,0} \ar[d] \\
N_{\Box}[1] \ar[rr]^-{N_{\Box}(0 \to 2)} & & N_{\Box} [2] \\
}
}

Pullbacks of two monomorphisms in $\cSet$ are given by intersections; this is immediate from the corresponding result in $\Set$. From this, it follows that both of the squares in the diagram above are pullbacks. 

The middle horizontal map is a fibration, as a pullback of a fibration. So the inclusion $\bd \Box^1 \to \Box^1$ is the pullback of the trivial cofibration $\widehat{\sqcap}^{2}_{2,0} \to \widehat{\Box}^{2}_{2,0}$ along a fibration. However, it is not a weak equivalence by \cref{straightforward-properties} \ref{tau-nerve-Quillen-adj}, since its image under $\tau$ is not an equivalence of categories.
\end{proof}

Next we will study the interactions of the functors $(-)^\rmco$ and $(-)^\coop$ of Section \ref{subsection:cSet-basics} with the cubical Joyal model structure.

\begin{proposition}\label{op-Quillen-equiv}
The adjunctions $(-)^\rmco \adjoint (-)^\rmco$ and  $(-)^{\coop} \adjoint (-)^{\coop}$ are Quillen self-equivalences of the cubical Joyal model structure.
\end{proposition}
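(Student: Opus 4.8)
The plan is to reduce everything to the marked setting and invoke \cref{QuillenEquivInvolution}. By \cref{op-co-involution}, the functors $(-)^\rmco$ and $(-)^\coop$ are involutions of $\cSet$, so by \cref{QuillenEquivInvolution} it suffices to show that the self-adjunctions $(-)^\rmco \adjoint (-)^\rmco$ and $(-)^\coop \adjoint (-)^\coop$ are Quillen, i.e.\ that both functors preserve cofibrations and trivial cofibrations. Preservation of cofibrations is immediate: each of $(-)^\rmco$, $(-)^\coop$ is in particular an automorphism of the presheaf category $\cSet$, hence preserves (and reflects) monomorphisms.

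For trivial cofibrations it then suffices to check that $(-)^\rmco$ and $(-)^\coop$ preserve weak categorical equivalences, and here I would combine three facts. First, by \cref{cubical-joyal-ms} the minimal marking functor $(-)^\flat \colon \cSet \to \cSet'$ both preserves and reflects weak equivalences, since the weak categorical equivalences are created by it. Second, $(-)^\flat$ commutes up to natural isomorphism with both involutions: since by definition $(-)^\rmco$ and $(-)^\coop$ send degeneracy and connection maps to degeneracy and connection maps, they preserve and reflect degeneracy of cubes, and a quick check then gives natural isomorphisms $(X^\flat)^\rmco \cong (X^\rmco)^\flat$ and $(X^\flat)^\coop \cong (X^\coop)^\flat$ (the underlying cubical sets agree on the nose, and an edge is marked on the left exactly when it is degenerate in $X^\rmco$, resp.\ $X^\coop$, i.e.\ exactly when it is marked on the right). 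Third, by \cref{op-co-Quillen-equivalence-marked} the adjunctions $(-)^\rmco \adjoint (-)^\rmco$ and $(-)^\coop \adjoint (-)^\coop$ on $\cSet'$ are Quillen self-equivalences, so these functors preserve trivial cofibrations and trivial fibrations, hence all weak equivalences (all objects of $\cSet'$ being cofibrant). Chaining these: if $f$ is a weak categorical equivalence in $\cSet$, then $f^\flat$ is a weak equivalence in $\cSet'$, so $(f^\flat)^\rmco \cong (f^\rmco)^\flat$ is a weak equivalence in $\cSet'$, whence $f^\rmco$ is a weak categorical equivalence; the argument for $(-)^\coop$ is identical.

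Putting this together, $(-)^\rmco$ and $(-)^\coop$ preserve cofibrations and weak categorical equivalences, hence trivial cofibrations, so they are left Quillen functors; being self-adjoint, the two adjunctions are therefore Quillen, and \cref{QuillenEquivInvolution} upgrades them to Quillen equivalences. The only steps needing genuine attention are the compatibility of minimal marking with the involutions and the observation that the involutions of $\cSet'$ preserve \emph{all} weak equivalences, and both are routine. I would mention the alternative, more hands-on route for contrast: by \cref{Quillen-adj-fib-obs} together with \cref{fibrant-objects-unmarked} it would be enough to show that each involution preserves monomorphisms and carries the inner open box inclusions and the endpoint inclusions $\Box^0 \hookrightarrow K$ to trivial cofibrations; the inner open box inclusions are merely permuted among themselves (so this reduces to \cref{KendEq}), but identifying $K^\rmco$ and $K^\coop$ explicitly is less convenient than the reduction to $\cSet'$, which is why that is the preferred approach.
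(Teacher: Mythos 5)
Your argument is correct and follows essentially the same route as the paper: reduce via \cref{QuillenEquivInvolution} to checking the adjunctions are Quillen, observe that the diagram $(-)^\flat \circ (-)^\rmco \cong (-)^\rmco \circ (-)^\flat$ (and similarly for $(-)^\coop$) commutes, and then invoke \cref{op-co-Quillen-equivalence-marked} together with the fact that the cubical Joyal model structure is left-induced along $(-)^\flat$. The only cosmetic divergence is that the paper transports both cofibrations and trivial cofibrations through $(-)^\flat$ in one stroke (both classes being created by $(-)^\flat$ under the left-induction), whereas you handle monomorphisms directly and only route weak equivalences through the marked category; and you spell out the verification that $(-)^\flat$ intertwines the involutions, which the paper states without comment — a small but worthwhile addition.
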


\begin{proof}
By \cref{QuillenEquivInvolution}, it suffices to show that the adjunctions are Quillen. We will prove the statement for $(-)^\rmco$; the proof for $(-)^\coop$ is identical.

To show that the adjunction $(-)^\rmco \adjoint (-)^\rmco$ is Quillen, we must show that $(-)^\rmco$ preserves cofibrations and trivial cofibrations. Unwinding the definitions, we must show that, given a map $f$ in $\cSet$, if $f^\flat$ is a (trivial) cofibration in $\cSet'$ then so is $(f^\rmco)^\flat$. We have the following commuting diagram:

\centerline{
\xymatrix{
\cSet \ar[r]^{(-)^\rmco} \ar[d]_{(-)^\flat} & \cSet \ar[d]^{(-)^\flat} \\
\cSet' \ar[r]^{(-)^\rmco} & \cSet'
}
}

The result thus follows from the fact that the map $(-)^\rmco \colon \cSet' \to \cSet'$ preserves (trivial) cofibrations by \cref{op-co-Quillen-equivalence-marked}.
\end{proof}

\begin{remark}
The statement of \cref{op-Quillen-equiv} concerning $(-)^\coop$ is not immediately adaptable to $\cSet_0$ and $\cSet_1$, as $(-)^\coop$ does not define an automorphism of either of these categories, but rather an isomorphism between them. Thus the corresponding result for cubical sets with only one connection states that for $\varepsilon \in \{0,1\}$, the adjunction $(-)^\coop : \cSet_{\varepsilon} \rightleftarrows \cSet_{1-\varepsilon} : (-)^\coop$ is a Quillen equivalence.
\end{remark}

The result above allows us to show that our set of pseudo-generating trivial cofibrations do not form a set of generating trivial cofirbations for the cubical Joyal model structure.

\begin{proposition}
The endpoint inclusions $\Box^0 \to K^\rmco$ have the right lifting property against all anodyne maps, but they are not fibrations.
\end{proposition}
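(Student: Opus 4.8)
The plan is to treat the two assertions separately. For the claim that the endpoint inclusions $\Box^0 \to K^\rmco$ are not fibrations: by \cref{KendEq} the endpoint inclusions $\Box^0 \to K$ are trivial cofibrations, and by \cref{op-Quillen-equiv} the functor $(-)^\rmco$ is a left Quillen self-equivalence of the cubical Joyal model structure, hence preserves trivial cofibrations; since $(-)^\rmco$ fixes $\Box^0$ and acts as the identity on $0$-cubes, it carries the endpoint inclusions of $K$ to those of $K^\rmco$, which are therefore trivial cofibrations. A map that is simultaneously a trivial cofibration and a fibration lifts against itself and is thus an isomorphism, and $K^\rmco \not\cong \Box^0$ (it has two vertices), so the endpoint inclusions $\Box^0 \to K^\rmco$ cannot be fibrations.

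For the claim that they do have the right lifting property against all anodyne maps, I would rely on two observations. First, every inner open box inclusion $\widehat{\sqcap}^n_{i,\varepsilon} \hookrightarrow \widehat{\Box}^n_{i,\varepsilon}$ (with $n \ge 2$) is a bijection on $0$-cubes: $\sqcap^n_{i,\varepsilon}$ already contains every vertex of $\Box^n$, and passing to the inner cube identifies the two endpoints of the critical edge on both sides. Since evaluation at $[1]^0$ preserves colimits and retracts, the class of maps inducing a bijection on $0$-cubes is closed under pushout, transfinite composition, and retracts; hence every anodyne map is a bijection on $0$-cubes. Second, $K^\rmco$ is a finite $2$-dimensional cubical set whose non-degenerate cubes are two vertices, three edges, and two squares, none of the positive-dimensional ones having all vertices equal; writing a cube of $K^\rmco$ in standard form, it follows that any cube all of whose vertices equal a fixed vertex $v$ is the constant cube on $v$, i.e.\ factors through $\{v\} = \Box^0 \hookrightarrow K^\rmco$.

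Granting these, the lift is immediate: in a commuting square with an anodyne $j\colon A \to B$ on the left and the endpoint inclusion $e\colon \Box^0 \to K^\rmco$ on the right, let $v$ be the vertex named by $e$; commutativity makes $A \to B \to K^\rmco$ constant at $v$, and since $j$ is bijective on $0$-cubes the map $B \to K^\rmco$ sends every vertex of every cube of $B$ to $v$, hence factors through $e$ by the second observation. The induced map $B \to \Box^0$ is the lift, and it commutes with both legs automatically since $\Box^0$ is terminal. The step requiring the most care is this positive direction --- realizing that because $\Box^0$ is terminal the lifting problem reduces to the statement that every map into $K^\rmco$ that is constant on the vertices hit by $j$ is globally constant, and isolating the two structural properties above that make this go through; the remaining verifications are routine.
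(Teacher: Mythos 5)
Your proof is correct and rests on the same two ideas as the paper's: that $\Box^0 \to K^\rmco$ is a trivial cofibration (via \cref{op-Quillen-equiv}) but not an isomorphism, hence not a fibration; and that any cube of $K^\rmco$ all of whose vertices are a fixed $v$ is the constant cube on $v$, so any map into $K^\rmco$ that is constant on vertices factors through $\Box^0$. The only cosmetic difference is that you verify the lifting property against \emph{all} anodyne maps directly, via the observation that the class of $0$-cube bijections contains the inner open box inclusions and is saturated, whereas the paper checks only the generating inner open box inclusions and invokes the standard closure of a right lifting class; both routes rely on the same vertex-counting fact about $K^\rmco$.
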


\begin{proof}
Fix an endpoint inclusion $\Box^0 \to K^\rmco$; we must show that this map has the right lifting property against the inner open box inclusions and the endpoint inclusions $\Box^0 \to K$. Consider the following diagram in $\cSet$:

\centerline{
\xymatrix{
\widehat{\sqcap}^n_{i,\varepsilon} \ar[r] \ar[d] & \Box^0 \ar[d] \\
\widehat{\Box}^n_{i,\varepsilon} \ar[r] & K^\rmco \\
}
}

We may note that constant open boxes $\widehat{\sqcap}^n_{i,\varepsilon} \to K^\rmco$ for $n \geq 2$ have only constant fillers; thus the map $\widehat{\Box}^n_{i,\varepsilon} \to K^\rmco$ in this diagram factors through the unique map $\widehat{\Box}^n_{i,\varepsilon} \to \Box^0$, implying that the diagram admits a lift. Similarly, any map $K \to K^\rmco$ is constant, implying that $\Box^0 \to K^\rmco$ has the right lifting property against the maps $\Box^0 \to K$.

To see that $\Box^0 \to K^\rmco$ is not a fibration, observe that it is the image under $(-)^\rmco$ of one of the anodyne maps $\Box^0 \to K$, hence it is a trivial cofibration by \cref{op-Quillen-equiv}. Thus it cannot be a fibration, as it is not an isomorphism and therefore does not have the right lifting property against itself.
\end{proof}

We conclude this section with a proof of the following result, relating the cubical Joyal model structure to the Joyal model structure on simplicial sets via the triangulation functor.

\begin{proposition}\label{T-Quillen-adj}
The adjunction $T : \cSet \rightleftarrows \sSet : U$ is a Quillen adjunction between the cubical Joyal model structure and the Joyal model structure on $\sSet$.
\end{proposition}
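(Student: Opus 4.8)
The plan is to apply \cref{Quillen-adj-fib-obs} to the left adjoint $T \colon \cSet \to \sSet$, so we must verify two things: that $T$ preserves cofibrations, and that $T$ carries some class $S$ of maps characterizing the fibrations between fibrant objects of the cubical Joyal model structure to trivial cofibrations in the Joyal model structure on $\sSet$. The first point is exactly \cref{T-pres-cof}. For the second, by \cref{fibrant-objects-unmarked} we may take $S$ to consist of the inner open box inclusions $\widehat{\sqcap}^n_{i,\varepsilon} \hookrightarrow \widehat{\Box}^n_{i,\varepsilon}$ together with the endpoint inclusions $\Box^0 \hookrightarrow K$. All of these are monomorphisms, so by \cref{T-pres-cof} their images under $T$ are cofibrations in $\sSet$; the remaining task is to check that these images are weak categorical equivalences.

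For the endpoint inclusions I would compute $TK$ explicitly. Presenting $K$ as two copies of $\Box^2$ glued along the middle edge and using $T\Box^2 \cong \Delta^1 \times \Delta^1$ (\cref{T-prod}), one finds --- after discarding the simplices coming from the many degenerate edges of $K$ --- an isomorphism $TK \cong J$ with the simplicial set $J$ of \cref{sSet-J}, under which $T(\Box^0 \hookrightarrow K)$ becomes an endpoint inclusion $\Delta^0 \hookrightarrow J$. The latter is an anodyne map of \cref{sSet-J}, hence a trivial cofibration in the Joyal model structure, which settles this case.

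It then remains to show that each $T(\widehat{\sqcap}^n_{i,\varepsilon} \hookrightarrow \widehat{\Box}^n_{i,\varepsilon})$ is a weak categorical equivalence; I expect that in fact one can show it lies in the saturation of the inner horn inclusions. The approach is induction on $n$. For $n = 2$ a direct computation --- again using $T\Box^2 \cong \Delta^1 \times \Delta^1$ and tracking what happens when the image of the critical edge is collapsed --- identifies $T(\widehat{\sqcap}^2_{i,\varepsilon} \hookrightarrow \widehat{\Box}^2_{i,\varepsilon})$ with an inner horn inclusion $\Lambda^2_1 \hookrightarrow \Delta^2$. For the inductive step, one starts from the pushout-product presentation of the (uncollapsed) open box inclusion $\sqcap^n_{i,\varepsilon} \hookrightarrow \Box^n$ (\cref{open-box-pop}), applies $T$ using that it preserves pushout products and colimits (\cref{T-pres-pop}, together with $T\Box^m \cong (\Delta^1)^m$), and then analyzes how collapsing the critical edge transforms the resulting triangulated complexes, reducing the extension problem to a sequence of inner horn fillings in $\sSet$.

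The main obstacle is this last step. Unlike the representable simplicial sets $\Delta^m$, the inner cube $\widehat{\Box}^n_{i,\varepsilon}$ is a nontrivial quotient of $\Box^n$, and the critical-edge collapse interacts with the simplices that triangulation introduces inside each $(\Delta^1)^m$-factor; so the clean pushout-product description of $T(\sqcap^n_{i,\varepsilon} \hookrightarrow \Box^n)$ must be unwound with some care before the inner-horn structure of the collapsed inclusion becomes visible. This is the one genuinely combinatorial ingredient of the proof.
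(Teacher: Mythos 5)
Your overall plan coincides with the paper's: apply \cref{Quillen-adj-fib-obs} with the characterization of fibrations between fibrant objects from \cref{fibrant-objects-unmarked}, using \cref{T-pres-cof} for cofibrations, and check that $T$ sends the inner open box inclusions and the endpoint inclusions $\Box^0 \hookrightarrow K$ to trivial cofibrations. However, both of the remaining verification steps as you sketch them contain concrete errors.

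For the endpoint inclusions, the claimed isomorphism $TK \cong J$ is false. Triangulating each of the two non-degenerate squares of $K$ introduces a non-degenerate diagonal edge (a loop at one of the two vertices of $K$), and each square contributes \emph{two} non-degenerate $2$-simplices; these diagonals do not arise from any degenerate cube of $K$ and so cannot be ``discarded.'' Thus $TK$ has four non-degenerate $2$-simplices while $J$ has two. The paper instead constructs a weak categorical equivalence $TK \to J$ that is not injective: it identifies inside $TK$ a pair of disjoint copies of $Z = \Delta^2 \cup_{\Lambda^2_1} \Delta^0$, shows each copy is contractible, exhibits $J$ as the pushout that collapses each copy of $Z$ to a point, and concludes by $2$-out-of-$3$.

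For the inner open box inclusions, the $n=2$ base case is not a single inner horn inclusion. The domain $T\widehat{\sqcap}^2_{i,\varepsilon}$ is indeed isomorphic to $\Lambda^2_1$, but the codomain $T\widehat{\Box}^2_{i,\varepsilon}$ is strictly larger than $\Delta^2$: collapsing the critical edge of $(\Delta^1)^2$ does not make either of its two triangles degenerate (the two edges from the initial vertex to the collapsed vertex --- the image of the diagonal and the image of the remaining side --- remain distinct $1$-simplices), so the inclusion adds two edges and two $2$-simplices. More seriously, your expectation that the map lies in the saturation of the \emph{inner} horn inclusions alone is not what the paper proves and is not obviously true. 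In the paper's decomposition (via \cref{open-box-pop} and \cref{T-pres-pop} into open prism fillings, then each prism filling into horn fillings), exactly one horn per prism is an \emph{outer} horn; the point is that its critical edge corresponds to the critical edge of $\sqcap^n_{i,\varepsilon}$ and hence becomes degenerate in the quotient, so the filling is a trivial cofibration by Joyal's theorem on special outer horns. That appeal to special outer horns is the essential ingredient missing from your sketch, and it replaces the induction you propose: the paper's argument is direct and uniform in $n$.
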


Conceptually, this adjunction might be best understood at the level of marked simplicial and marked cubical sets. However, in order to avoid the burden of relying on the model structure on marked simplicial sets, we will compare the model structures on $\cSet$ and $\sSet$ directly.

\begin{lemma}\label{T-endpoints}
$T$ sends the endpoint inclusions $\Box^0 \to K$ to trivial cofibrations in the Joyal model structure.
\end{lemma}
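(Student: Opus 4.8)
The plan is to reduce the claim to the already-established fact that the endpoint inclusions $\Delta^0 \to J$ are trivial cofibrations (\cref{sSet-J}), by identifying the target of $T(\Box^0 \to K)$ up to weak categorical equivalence. Since $T$ preserves monomorphisms (\cref{T-pres-cof}) and $T\Box^0 = \Delta^0$, the maps $T(\Box^0 \to K)$ are the two vertex inclusions $\Delta^0 \to TK$, so it suffices to show that each is a weak categorical equivalence, i.e.\ that $TK$ is weakly contractible in the Joyal model structure. First I would compute $TK$ explicitly. Triangulating the skeletal presentation $K = \mathrm{sk}_1 K \cup_{\partial\Box^2 \sqcup \partial\Box^2} (\Box^2 \sqcup \Box^2)$ -- using that $T$ preserves this pushout and that $T\Box^2 = \Delta^1 \times \Delta^1$ (\cref{T-prod}) -- one finds that $TK$ has two vertices $0, 1$; exactly five non-degenerate edges, namely $e\colon 0 \to 1$, two edges $b, b'\colon 1 \to 0$ (the images of the non-degenerate $1$-cubes of $K$), and two ``diagonal'' loops $\delta_0\colon 0 \to 0$ and $\delta_1\colon 1 \to 1$ arising from the diagonals of the two triangulated squares; and exactly four non-degenerate $2$-simplices: two of them exhibit one-sided inverses of $e$, with long edge $\delta_1$ resp.\ $\delta_0$, and the other two are ``caps'' of the form of a filled inner horn $\Lambda^2_1 \hookrightarrow \Delta^2$ whose two present faces are degenerate and whose long edge is $\delta_1$ resp.\ $\delta_0$.

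Next I would collapse the redundant data. For $\varepsilon \in \{0, 1\}$ let $N_\varepsilon \subseteq TK$ be the subcomplex spanned by $\delta_\varepsilon$ together with the cap whose long edge is $\delta_\varepsilon$; a direct inspection shows that $\{\varepsilon\} \hookrightarrow N_\varepsilon$ is precisely the pushout of the inner horn inclusion $\Lambda^2_1 \hookrightarrow \Delta^2$ along the map $\Lambda^2_1 \to \Delta^0 = \{\varepsilon\}$, hence inner anodyne; in particular $N_\varepsilon \to \{\varepsilon\}$ is a weak categorical equivalence by two-out-of-three. Since $N_0$ and $N_1$ are disjoint subcomplexes of $TK$ and the Joyal model structure is left proper (all objects being cofibrant), collapsing them one after the other yields weak categorical equivalences $TK \to TK/N_0 \to (TK/N_0)/N_1$. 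Finally, one checks directly that $(TK/N_0)/N_1 \cong J$: the surviving cells are exactly the two vertices, the three edges $e, b, b'$, and the two $2$-simplices $[1 \xrightarrow{b} 0 \xrightarrow{e} 1]$ and $[0 \xrightarrow{e} 1 \xrightarrow{b'} 0]$ with degenerate long edge, which is the description of $J$ from \cref{sSet-J}, and this isomorphism is the identity on vertices. Hence the collapse map $TK \to J$ is a weak categorical equivalence.

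To conclude, recall from \cref{sSet-J} that the Joyal model structure arises from \cref{CisinskiMS} with cylinder $I = - \times J$ and cellular model $M = \{\partial\Delta^n \to \Delta^n\}$; taking $\partial\Delta^0 = \varnothing \to \Delta^0$ in $M$, the map $I\varnothing \cup_\varepsilon \Delta^0 \to I\Delta^0$ appearing in $\Lambda^0(S)$ is exactly the endpoint inclusion $\{\varepsilon\} \hookrightarrow J$, which is therefore anodyne, hence a trivial cofibration. Composing $\{\varepsilon\} \hookrightarrow TK$ with the weak categorical equivalence $TK \to J$ gives $\{\varepsilon\} \hookrightarrow J$, so $\{\varepsilon\} \hookrightarrow TK$ is a weak categorical equivalence by two-out-of-three; being also a monomorphism, it is a trivial cofibration. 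As this is $T(\Box^0 \to K)$ for either endpoint, the lemma follows.

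The fiddly part -- and essentially the only obstacle -- is the explicit computation of $TK$, in particular verifying that the two diagonals of the triangulated squares survive as non-degenerate edges rather than collapsing onto degeneracies; once $TK$ is pinned down, the remainder is a routine application of left properness and the known structure of the Joyal model structure.
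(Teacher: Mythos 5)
Your proof is correct and follows essentially the same route as the paper's: compute $TK$ by triangulating the two squares of $K$, identify the two ``cap'' subcomplexes (each a $2$-simplex with two faces collapsed to a point, the paper's $Z$), collapse them via left properness to obtain a weak categorical equivalence $TK \to J$, and conclude by two-out-of-three using the known weak contractibility of $J$. The only cosmetic difference is that the paper collapses $Z \sqcup Z$ in a single pushout whereas you collapse $N_0$ and $N_1$ one at a time.
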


\begin{proof}
We will construct a weak categorical equivalence from $TK$ to the simplicial set $J$ of \cref{sSet-J}. The simplicial set $TK$ is depicted below:

\[
\xymatrix{
  1 \ar[r] \ar@{=}[d] \ar[dr] & 0 \ar[d] \ar@{=}[r] \ar[dr] &0\ar@{=}[d] \\
  1 \ar@{=}[r] &1\ar[r] &0}
\]

Let $Z$ denote the simplicial set defined by the following pushout:

\centerline{
\xymatrix{
\Lambda^{2}_{1} \ar[r] \ar@{>->}[d] & \Delta^0 \ar[d] \\
\Delta^{2} \ar[r] & Z \pushoutcorner \\
}
}

The map $\Delta^0 \to Z$ is a trivial cofibration, as a pushout of an inner horn inclusion; thus $Z$ is contractible. We have a pair of cofibrations $Z \hookrightarrow TK$, picking out the bottom-left and top-right simplices in the illustration above; the induced map $Z \sqcup Z \to TK$ is a cofibration since these two simplices have no faces in common. We obtain $J$ as a quotient of $TK$ by contracting each of these two simplices to a point; in other words, we have the following pushout diagram:

\centerline{
\xymatrix{
Z \sqcup Z \ar[d] \ar@{>->}[r] & TK \ar[d] \\
\Delta^0 \sqcup \Delta^{0} \ar[r] & J \pushoutcorner \\
}
}

The left map is a weak equivalence since coproducts preserve weak equivalences in the Joyal model structure. Thus $TK \to J$ is a weak equivalence as a pushout of a weak equivalence along a cofibration. The composite of $\Delta^{0} \to TK$ with this quotient map is an endpoint inclusion $\Delta^0 \to J$, hence a weak equivalence; thus $\Delta^0 \to TK$ is a weak equivalence by 2-out-of-3.
\end{proof}

\begin{lemma}\label{U-fib-obs}
$T$ sends inner open box inclusions to trivial cofibrations.
\end{lemma}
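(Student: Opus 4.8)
The plan is to show that the triangulation of an inner open box inclusion $\widehat{\sqcap}^n_{i,\varepsilon}\hookrightarrow\widehat{\Box}^n_{i,\varepsilon}$ is a trivial cofibration in the Joyal model structure on $\sSet$. It is a cofibration since $T$ preserves monomorphisms by \cref{T-pres-cof}, so the content is that it is a weak categorical equivalence. The starting point is \cref{open-box-pop}(2): the (un-quotiented) open box inclusion $\sqcap^n_{i,\varepsilon}\hookrightarrow\Box^n$ is the pushout product $(\bd\Box^{i-1}\hookrightarrow\Box^{i-1})\hatotimes(\{1-\varepsilon\}\hookrightarrow\Box^1)\hatotimes(\bd\Box^{n-i}\hookrightarrow\Box^{n-i})$; passing to inner boxes, the inner open box inclusion is the corresponding pushout product with the middle factor $\{1-\varepsilon\}\hookrightarrow\Box^1$ replaced by the endpoint inclusion $\Box^0\hookrightarrow K$ (the quotient of $\Box^1$ by the critical edge is exactly $K$, and the inner box identifies the critical edge to a point in the same way).

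Applying $T$ and using \cref{T-pres-pop}, which says $T$ preserves pushout products, we get
\[
T(\widehat{\sqcap}^n_{i,\varepsilon}\hookrightarrow\widehat{\Box}^n_{i,\varepsilon})\;\cong\;(T\bd\Box^{i-1}\hookrightarrow T\Box^{i-1})\;\widehat{\times}\;(\Delta^0\hookrightarrow TK)\;\widehat{\times}\;(T\bd\Box^{n-i}\hookrightarrow T\Box^{n-i}).
\]
Now $T\Box^k=\Delta^{\times k}$ is a product of copies of $\Delta^1$, and the maps $T\bd\Box^k\to T\Box^k$ are cofibrations of simplicial sets. By \cref{T-endpoints}, $\Delta^0\hookrightarrow TK$ is a trivial cofibration in the Joyal model structure. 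Since the Joyal model structure is a monoidal (cartesian) model category, the pushout product of a trivial cofibration with any cofibrations is again a trivial cofibration. Hence the displayed map is a trivial cofibration, completing the proof.

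The main thing to verify carefully is the identification of $T(\widehat{\sqcap}^n_{i,\varepsilon}\hookrightarrow\widehat{\Box}^n_{i,\varepsilon})$ with the asserted pushout product: one must check that collapsing the critical edge commutes appropriately with the pushout-product decomposition, i.e. that the inner open box inclusion really is $(\bd\Box^{i-1}\hookrightarrow\Box^{i-1})\hatotimes(\Box^0\hookrightarrow K)\hatotimes(\bd\Box^{n-i}\hookrightarrow\Box^{n-i})$. This is a direct but slightly fiddly combinatorial check using the explicit description of the geometric product and the fact that $K$ arises from $\Box^1$ precisely by contracting the non-degenerate diagonal 1-cube — which is the critical edge of $\Box^n$ sitting in the middle factor. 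Everything else is a formal consequence of the cited lemmas and the fact that the Joyal model structure is cartesian monoidal.

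\begin{proof}
By \cref{open-box-pop}, the inner open box inclusion $\widehat{\sqcap}^n_{i,\varepsilon}\hookrightarrow\widehat{\Box}^n_{i,\varepsilon}$ is the pushout product
\[
(\bd\Box^{i-1}\hookrightarrow\Box^{i-1})\;\hatotimes\;(\Box^0\hookrightarrow K)\;\hatotimes\;(\bd\Box^{n-i}\hookrightarrow\Box^{n-i}),
\]
since contracting the critical edge of $\Box^n$ corresponds precisely to contracting the non-degenerate edge of the middle factor $\Box^1$ in the decomposition of $\sqcap^n_{i,\varepsilon}\hookrightarrow\Box^n$ from \cref{open-box-pop}, and the quotient of $\Box^1$ by that edge is $K$.

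Applying $T$ and using \cref{T-pres-pop}, we obtain
\[
T(\widehat{\sqcap}^n_{i,\varepsilon}\hookrightarrow\widehat{\Box}^n_{i,\varepsilon})\;\cong\;(T\bd\Box^{i-1}\hookrightarrow T\Box^{i-1})\;\widehat{\times}\;(\Delta^0\hookrightarrow TK)\;\widehat{\times}\;(T\bd\Box^{n-i}\hookrightarrow T\Box^{n-i}).
\]
The outer two maps are cofibrations in $\sSet$ by \cref{T-pres-cof}, and $\Delta^0\hookrightarrow TK$ is a trivial cofibration in the Joyal model structure by \cref{T-endpoints}. Since the Joyal model structure is cartesian monoidal (see \cref{sSet-J}), the pushout product of a trivial cofibration with cofibrations is a trivial cofibration. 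Hence $T(\widehat{\sqcap}^n_{i,\varepsilon}\hookrightarrow\widehat{\Box}^n_{i,\varepsilon})$ is a trivial cofibration.
\end{proof}
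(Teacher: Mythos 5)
Your key step asserts that the inner open box inclusion $\widehat{\sqcap}^n_{i,\varepsilon}\hookrightarrow\widehat{\Box}^n_{i,\varepsilon}$ is the pushout product $(\bd\Box^{i-1}\hookrightarrow\Box^{i-1})\hatotimes(\Box^0\hookrightarrow K)\hatotimes(\bd\Box^{n-i}\hookrightarrow\Box^{n-i})$, justified by the claim that ``the quotient of $\Box^1$ by its non-degenerate edge is exactly $K$''. This is false: collapsing the non-degenerate edge of $\Box^1$ to a point gives $\Box^0$, not $K$. The cubical set $K$ of \cref{gen-anodyne-def} is two-dimensional (it has non-degenerate $2$-cubes), so the codomain $\Box^{i-1}\otimes K\otimes\Box^{n-i}$ of your proposed pushout product is $(n+1)$-dimensional, whereas $\widehat{\Box}^n_{i,\varepsilon}$, being a quotient of $\Box^n$, is at most $n$-dimensional. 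The two maps therefore cannot be isomorphic, and the argument collapses at step one. (Substituting the correct quotient $\Box^0$ for $K$ does not help either: a pushout product with an identity map is an isomorphism, which $\widehat{\sqcap}^n_{i,\varepsilon}\hookrightarrow\widehat{\Box}^n_{i,\varepsilon}$ is not.)

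The underlying difficulty is that the passage from $\Box^n$ to $\widehat{\Box}^n_{i,\varepsilon}$ collapses a single edge, not an entire tensor factor, so the quotient does not commute with the pushout-product decomposition of \cref{open-box-pop}. This is exactly why the paper's proof is more hands-on: it first triangulates the \emph{plain} open box inclusion as a pushout product via \cref{open-box-pop} and \cref{T-pres-pop}, observes that the inner version is the further quotient of this map that collapses the triangulated critical edge, rewrites the triangulated open box inclusion as a composite of prism fillings $(\bd\Delta^k\to\Delta^k)\,\hat\times\,(\{\varepsilon\}\hookrightarrow\Delta^1)$, and then decomposes each prism filling into horn fillings. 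All but one of these horns are inner, and the single outer horn has its critical edge degenerate after the quotient, so it is a trivial cofibration by Joyal's theorem on special outer horns. Your instinct to isolate the direction of the critical edge and replace it by an invertible interval is in the right spirit, but the literal identification with $K$ does not hold, and no clean pushout-product description of $\widehat{\sqcap}^n_{i,\varepsilon}\hookrightarrow\widehat{\Box}^n_{i,\varepsilon}$ is available that would let the argument go through this way.
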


\begin{proof}
By \cref{open-box-pop}, \cref{T-pres-pop}, and the symmetry of the cartesian product in $\sSet$, the triangulation of an open box inclusion $\sqcap^{m}_{i,\varepsilon} \hookrightarrow \Box^m$ is the pushout product $(T\bd \Box^{m-1} \hookrightarrow (\Delta^{1})^{m-1}) \hat{\times} (\{\varepsilon\} \hookrightarrow \Delta^1)$. Therefore, since $T$ preserves colimits, the triangulation of $\widehat{\sqcap}^{m}_{i,\varepsilon} \hookrightarrow \widehat{\Box}^m$ is the inclusion of the quotients of these simplicial sets in which the edge corresponding to the critical edge of $\sqcap^{m}_{i,\varepsilon}$ is collapsed to a vertex.

Since $T\bd \Box^{m-1} \hookrightarrow (\Delta^{1})^{m-1}$ is a monomorphism of simplicial sets, it can be written as a composite of boundary fillings. Since pushout products commute with composition, we can thus rewrite $T\sqcap^{m}_{i,\varepsilon} \hookrightarrow (\Delta^1)^m$ as a composite of pushouts of maps of the form $(\bd \Delta^{n} \to \Delta^{n}) \hat{\times} (\{\varepsilon\} \hookrightarrow \Delta^{1})$, i.e. open prism fillings. We can obtain $T\widehat{\Box}^n$ from $T\sqcap^{n}_{i,\varepsilon}$, therefore, by filling the corresponding open prisms in $T\sqcap^{n}_{i,\varepsilon}$. 

Each open prism filling can be explicitly written as a composite of horn fillings. Each of these horn fillings but one will be inner, and hence a trivial cofibration. However, the critical edge of the unique outer horn, i.e. the unique non-degenerate edge containing either the initial or the terminal vertices of both the horn and its missing face, corresponds to the critical edge of $\sqcap^{m}_{i,\varepsilon}$, hence it is degenerate. Thus this horn-filling is also a trivial cofibration by \cite[Thm. 2.2]{joyal:qcat-kan}.
\end{proof}

\begin{proof}[Proof of \cref{T-Quillen-adj}]
This follows from \cref{Quillen-adj-fib-obs}, together with \cref{T-pres-cof,T-endpoints,U-fib-obs}.
\end{proof}

\begin{corollary}\label{T-pres-we}
The triangulation functor preserves weak equivalences.
\end{corollary}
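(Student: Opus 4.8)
The plan is to deduce this immediately from \cref{T-Quillen-adj} via Ken Brown's lemma. Recall that Ken Brown's lemma asserts that any functor between model categories which sends trivial cofibrations between cofibrant objects to weak equivalences preserves all weak equivalences between cofibrant objects. The strategy is to apply this to $T$.

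First I would observe that every object of the cubical Joyal model structure is cofibrant: by \cref{cubical-joyal-ms} the cofibrations are the monomorphisms, so for every cubical set $X$ the map $\varnothing \to X$ is a cofibration. Next, by \cref{T-Quillen-adj}, $T$ is a left Quillen functor, so in particular it sends trivial cofibrations to trivial cofibrations, and these are in particular weak equivalences; thus the hypothesis of Ken Brown's lemma is satisfied. Applying the lemma, $T$ preserves weak equivalences between cofibrant objects, and since all objects are cofibrant, $T$ preserves all weak equivalences.

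There is essentially no obstacle here: the statement is a formal consequence of $T$ being left Quillen together with the fact that the cubical Joyal model structure has all objects cofibrant. If one preferred to avoid citing Ken Brown's lemma as a black box, one could instead argue directly by factoring a weak categorical equivalence $f \colon A \to B$ as a trivial cofibration followed by a trivial fibration, applying $T$ to the trivial cofibration (preserved since $T$ is left Quillen) and checking separately that $T$ sends trivial fibrations between cofibrant objects to weak equivalences; but invoking Ken Brown's lemma once, as above, is the cleanest route.
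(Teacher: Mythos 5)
Your proof is correct and follows exactly the same route as the paper: all objects of the cubical Joyal model structure are cofibrant, $T$ is left Quillen by \cref{T-Quillen-adj}, so Ken Brown's lemma gives the result.
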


\begin{proof}
Since all cubical sets are cofibrant, this is immediate from \cref{T-Quillen-adj} and Ken Brown's lemma.
\end{proof}

\section{Cones in cubical sets}\label{sec:cones}

In this and the subsequent section, we will prove that the triangulation adjunction $T : \cSet \rightleftarrows \sSet : U$ is a Quillen equivalence between the cubical Joyal and the Joyal model structures.
Working directly with the this adjunction, however, is difficult, as it is hard to describe the counit $TU \Rightarrow \id$ explicitly.

To remedy this issue, we introduce a different adjunction $Q : \sSet \rightleftarrows \cSet : \int$, coming from the straightening-over-the-point functor, as studied in \cite{kapulkin-lindsey-wong,kapulkin-voevodsky:cubical-straightening}, also closely related to Lurie's straightening construction \cite[Ch.~2]{lurie:htt}.
We then show that $Q \adjoint \int$ is a Quillen equivalence and construct a natural weak equivalence $TQ \Rightarrow \id$, from which we derive our conclusion.

As in the case of triangulation, the key difficulty in proving that $Q \adjoint \int$ is a Quillen equivalence lies in understanding the counit map $Q \int X \to X$.
This however is a much more tractable problem as it was for instance shown in \cite{kapulkin-lindsey-wong} that it is a monomorphism.
Intuitively,  $Q \int X$ is the subcomplex of $X$ which is built out of cubes with sufficiently degenerate faces that they may be regarded as simplices, e.g., a square with a single edge collapsed to a point or a cube with one face collapsed to a point and another face collapsed to an edge.

For a cubical quasicategory $X$, we write the inclusion $Q \int X \to X$ as a transfinite composite of pushouts of inner open box fillings, thus establishing it as an anodyne map in the cubical Joyal model structure.
To determine its decomposition into individual open box fillings, we develop a theory of cones in cubical sets.
Roughly speaking, the decomposition proceeds by induction on the dimension of the base of a cone contained in $X$ and in particular, $Q \int X$ consists of cubes obtained by repeatedly taking cones only on the vertices of $X$ (rather than on cubes of arbitrary dimension).
To identify the open boxes needed to build $X$ from $Q \int X$, we introduce the notion of a coherent family of composites, a technical construction that picks out a distinguished cone on each cube of $X$.

The main purpose of this section is to set up the technical machinery needed for the proof that $Q \adjoint \int$ is a Quillen equivalence.
Then in \cref{sec:relation}, we define $Q$ using the theory of cones developed in this section and prove that it is a left Quillen equivalence.
Finally, as indicated above, we construct in \cref{sec:relation} a natural weak equivalence $TQ \Rightarrow \id$, and conclude that $T$ is also a left Quillen equivalence.

One can imagine an alternative approach that would instead proceed by establishing that for some left Quillen functor $F \colon \sSet \to \cSet$ the composite $FT$ is naturally weakly equivalent to the identity, bypassing the technical proof that $Q$ is a left Quillen equivalence. 
While we considered this approach, we were not able to identify a suitable functor $F$; in particular, we could not see a direct natural transformation $QT \Rightarrow \id$ and any zigzag we could think of would involve objects similar to coherent families of composites.
As a result, we opted for the approach presented below as it is both the most straightforward and provides insight into how a cubical quasicategory is built out of its maximal simplicial subcomplex.

We begin this section in \cref{left-positive-def} by defining a cone on a cubical set and showing in \cref{cone-monad} that taking cones defines a monad. 
We then proceed to analyze the faces and subcomplexes of iterated cones on standard cubes in \cref{CmnDef} through \cref{BEqv}.
In \cref{theta-construction}, we define coherent families of composites and show in \cref{theta-exists} that every cubical quasicategory admits such a family.

To define the constructions of this section, only one kind of connection is necessary; thus our arguments apply not only in $\cSet$, but also in $\cSet_0$ and $\cSet_1$, as displayed in the diagram \cref{eq:cube-cat-inclusions} at the end of \cref{sec:background}, after making suitable changes to the definitions.

\begin{definition}\label{left-positive-def}
The \emph{cone functor} $C \colon \cSet \to \cSet$ is defined by the following pushout diagram in $\mathrm{End} \, \cSet$:

\centerline{
\xymatrix{
\id \ar[d]_{\bd_{1,1} \otimes -} \ar[r] & \Box^{0} \ar[d] \\
\Box^{1} \otimes - \ar[r] & C \pushoutcorner \\
}
} 

For $m, n \geq 0$, the \emph{standard $(m,n)$-cone} is the object $C^{m,n} = C^{n}\Box^m$. In particular, for all $m$, $C^{m,0} = \Box^m$. We refer to the natural map $\Box^0 \Rightarrow C$ appearing in this diagram as the \emph{cone point}.
\end{definition}

A simple computation shows:

\begin{lemma}\label{Qcone}
For all $n \geq 1$, $C^{n} \varnothing \cong C^{0,n-1}$ and $C^{0,n} \cong C^{1,n-1}$. \qed
\end{lemma}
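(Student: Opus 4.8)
\textbf{Proof proposal for \cref{Qcone}.}

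The plan is to unwind the pushout defining $C$ and combine it with the explicit description of the geometric product in \cref{geometric-product-description}, together with the identity $\Box^1 \otimes \Box^k = \Box^{k+1}$ (and, more generally, $\Box^m \otimes \Box^n = \Box^{m+n}$) noted in \cref{subsection:cSet-basics}. Since $C$ is defined by a pushout in $\mathrm{End}\,\cSet$, for any $X$ the object $CX$ is the pushout of $\Box^0 \leftarrow X \xrightarrow{\bd_{1,1}\otimes X} \Box^1 \otimes X$; i.e. $CX = (\Box^1 \otimes X)/(\bd_{1,1}\otimes X)$, the quotient collapsing the image of the $(1,1)$-face of the interval factor to the cone point. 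Both claimed isomorphisms are instances of computing such a quotient for a particularly simple $X$, so the proof is a short explicit identification of the resulting cubical set rather than anything structural.

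First I would prove $C^n\varnothing \cong C^{0,n-1}$ for $n \geq 1$. By coproduct-preservation of $\Box^1\otimes-$, applying $C$ to $\varnothing$ gives $C\varnothing = (\Box^1\otimes\varnothing)/(\varnothing) \cong \Box^1$, since $\Box^1\otimes\varnothing = \varnothing$ and the pushout of $\Box^0 \leftarrow \varnothing \to \varnothing$ is just $\Box^0$ — wait, more carefully: the pushout of $\Box^0 \xleftarrow{} \varnothing \xrightarrow{} \Box^1\otimes\varnothing$ is $\Box^0 \sqcup (\Box^1\otimes\varnothing) = \Box^0 \sqcup \varnothing = \Box^0$. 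Hmm, that gives $C\varnothing = \Box^0$, not $\Box^1$; then $C^2\varnothing = C\Box^0 = (\Box^1\otimes\Box^0)/(\bd_{1,1}\otimes\Box^0) = \Box^1/\{1\}$. In general $C^n\varnothing = C^{n-1}\Box^0 = C^{0,n-1}$ by definition of $C^{0,k} = C^k\Box^0$, so the first isomorphism is essentially unwinding definitions once one checks $C\varnothing \cong \Box^0 = C^{0,0}$, giving $C^n\varnothing = C^{n-1}(C\varnothing) \cong C^{n-1}\Box^0 = C^{0,n-1}$.

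For the second isomorphism $C^{0,n} \cong C^{1,n-1}$, i.e. $C^n\Box^0 \cong C^{n-1}\Box^1$, it suffices by applying $C^{n-1}$ to both sides to prove the base case $C\Box^0 \cong \Box^1 = C^{1,0}$, and then invoke functoriality: $C^{0,n} = C^{n-1}(C\Box^0) \cong C^{n-1}\Box^1 = C^{1,n-1}$. The base case $C\Box^0 \cong \Box^1$ follows from the pushout: $C\Box^0 = (\Box^1\otimes\Box^0)/(\bd_{1,1}\otimes\Box^0) = \Box^1/\{(1)\}$, i.e.\ the interval with its terminal vertex collapsed to a point — but collapsing a single vertex of $\Box^1$ to the point $\Box^0$ along the face inclusion yields $\Box^1$ again up to isomorphism, since the pushout of $\Box^0 \xleftarrow{} \Box^0 \xrightarrow{\bd_{1,1}} \Box^1$ is simply $\Box^1$ (the left leg is an identity, so the pushout is the right leg's codomain). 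This is the crux: the only real content is recognizing that $\bd_{1,1}\colon\Box^0\to\Box^1$ is the map $X \to \Box^1\otimes X$ specialized at $X = \Box^0$ using $\Box^1\otimes\Box^0 = \Box^1$, and that a pushout along an isomorphism (here the identity $\Box^0 \to \Box^0$) is the other leg.

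The main obstacle — if one can call it that — is bookkeeping the precise form of the maps $X \to \Box^1\otimes X$ and the quotient under \cref{geometric-product-description}, making sure the collapsed subobject is identified correctly (it is $\bd_{1,1}\otimes X$, the copy of $X$ sitting at the $\{1\}$-end of the interval, not the $\{0\}$-end or a degenerate copy). Once that identification is made the isomorphisms are forced, so I expect this to genuinely be the "simple computation" the paper claims; the proof can be given in a few lines by reducing each statement to its $n=1$ (resp.\ base) case via functoriality of $C$ and then computing the relevant pushout directly.
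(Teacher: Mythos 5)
Your proof is correct and is exactly the ``simple computation'' the paper alludes to: compute $C\varnothing \cong \Box^0$ and $C\Box^0 \cong \Box^1$ directly from the defining pushout (using that $\Box^1 \otimes -$ preserves the initial object and that a pushout along an isomorphism is the other leg), then apply $C^{n-1}$ to both base cases. The brief wobble in your first paragraph is self-corrected, and the final argument matches what the paper leaves implicit.
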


To develop our understanding of the cone construction, we consider certain examples of cones $CX$ for $X \in \cSet$. In all of our illustrations, we will denote the cone point of $CX$ by $c$. For our simplest example, we may observe that $C\Box^0 \cong \Box^1$:

\centerline{
\xymatrix{
0 \ar[r] & c \\
}
}

$C \Box^1$ is the quotient of $\Box^2$ depicted below:

\centerline{
\xymatrix{
0 \ar[r] \ar[d] & c \ar@{=}[d] \\
1 \ar[r] & c \\
}
}

For our final example, let $X$ denote the cubical set $0 \to 1 \to 2$. Then $CX$ is the cubical set depicted below:

\centerline{
\xymatrix{
0 \ar[r] \ar[d] & c \ar@{=}[d] \\
1 \ar[r] \ar[d] & c \ar@{=}[d] \\
2 \ar[r] & c \\
}
}

We define the natural transformation $\eta \colon \id \Rightarrow C$ to be the composite of $\bd_{1,0} \otimes - \colon \id \to \Box^1 \otimes -$ with the quotient map $\Box^1 \otimes - \Rightarrow C$. We also define a natural transformation $\mu \colon C^2 \Rightarrow C$ as follows. By the universal property of the pushout, such a natural transformation corresponds to a diagram of the form depicted below:

\centerline{
\xymatrix{
C \ar[r] \ar[d]_{\bd_{1,1} \otimes C} & \Box^0 \ar[d] \\
\Box^1 \otimes C \ar[r] & C \\
}
}

The only natural transformation $\Box^0 \to C$ is the cone point. Now note that $\Box^1 \otimes -$ preserves pushouts as a left adjoint. Thus we may define the map $\Box^1 \otimes C \to C$ as the map between pushouts induced by the following map between diagrams:

\centerline{
\xymatrix{
\Box^1 \otimes \Box^1 \otimes - \ar[d]_{\gamma_{1,0} \otimes -} && \Box^1 \otimes - \ar[ll]_{\bd_{2,1} \otimes -} \ar[d]^{\sigma_1 \otimes -} \ar[rr]^{\pi_{\Box^1}} && \Box^1 \ar[d] \\
\Box^1 \otimes - && \id \ar[ll]_{\bd_{1,1} \otimes -} \ar[rr] && \Box^0 \\
}
}

The commutativity of the left-hand square follows from the cubical identities.

We can also view the natural transformation $\mu$ more concretely, using \cref{geometric-product-description}. For $X \in \cSet, k \geq 0$, a $k$-cube of $\Box^1 \otimes X$ consists of a pair $(f \colon \Box^a \to \Box^1,x \colon \Box^b \to X)$ such that $a + b = k$. The quotient $CX$ is then obtained by identifying cubes $(f,x)$ and $(f',x')$ if $f = f' = \const_1$. Similarly, cubes of $C^{2}X$ consist of pairs $(f_1,f_2,x)$, with $(f_1,f_2,x)$ and $(f'_1,f'_2,x')$ identified if $f_1 = f'_1 = \const_1$ or $f_1 = f'_1$ and $f_2 = f'_2 = \const_1$. Since $\gamma_{1,0} \otimes X$ respects these identifications, it descends to a map $\mu \colon C^{2} X \to CX$.

\begin{proposition}\label{left-positive-monad}
The triple $(C,\eta,\mu)$ defines a monad on $\cSet$.
\end{proposition}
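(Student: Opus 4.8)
The plan is to realise $C$ as a quotient of an honest monad on $\cSet$ and transport the monad structure along the quotient. Write $D := \Box^1 \otimes -$, equipped with $\bar\eta := \bd_{1,0} \otimes - \colon \id \Rightarrow D$ and $\bar\mu := \gamma_{1,0} \otimes - \colon D^2 \Rightarrow D$, where we use $D^2 X = (\Box^1 \otimes \Box^1) \otimes X$ and $\gamma_{1,0} \colon \Box^1 \otimes \Box^1 = \Box^2 \to \Box^1$. The first step is to observe that $(\Box^1, \gamma_{1,0}, \bd_{1,0})$ is a monoid in $(\cSet, \otimes, \Box^0)$: associativity is the cubical identity $\gamma_{1,0}\gamma_{1,0} = \gamma_{1,0}\gamma_{2,0}$, and the two unit laws are $\gamma_{1,0}\bd_{1,0} = \id = \gamma_{1,0}\bd_{2,0}$ (these being $\gamma_{1,0}\circ(\bd_{1,0}\otimes\Box^1)$ and $\gamma_{1,0}\circ(\Box^1\otimes\bd_{1,0})$). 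Since $M \otimes -$ is a monad for any monoid $M$ in a monoidal category, $(D, \bar\eta, \bar\mu)$ is a monad.

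The second step records the compatibility of this monoid with the cone quotient. The face $\bd_{1,1} \colon \Box^0 \to \Box^1$ is a two-sided absorbing element, i.e.\ $\gamma_{1,0}\circ(\bd_{1,1}\otimes\Box^1) = \bd_{1,1}\circ\sigma_1 = \gamma_{1,0}\circ(\Box^1\otimes\bd_{1,1})$; both equalities are instances of the cubical identities relating connections and faces. By \cref{left-positive-def}, $CX$ is the pushout of the span $\Box^0 \leftarrow X \to DX$ with right leg $\bd_{1,1}\otimes X$, so the pushout leg $q_X \colon DX \to CX$ is the (objectwise surjective) map collapsing the image of $\bd_{1,1}\otimes X$ onto the cone point. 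The absorption identities are exactly what makes $\bar\mu$ descend along $q$: this gives the natural transformation $\mu \colon C^2 \Rightarrow C$ already constructed before the statement, characterised by $\mu_X \circ q^{(2)}_X = q_X \circ \bar\mu_X$, where $q^{(2)}_X := q_{CX}\circ Dq_X \colon D^2 X \to C^2 X$ is surjective as a composite of surjections; likewise $\eta = q\circ\bar\eta$.

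The third step deduces the three monad axioms for $(C,\eta,\mu)$ from those for $(D,\bar\eta,\bar\mu)$. As $D$ is cocontinuous it preserves epimorphisms, so $q^{(2)}$ and the analogously-defined $q^{(3)} \colon D^3 \Rightarrow C^3$ (namely $q^{(3)}_X = q_{C^2 X}\circ Dq^{(2)}_X$) are objectwise surjective. Precomposing each identity $\mu\circ\eta C = \mathrm{id}_C$, $\mu\circ C\eta = \mathrm{id}_C$, $\mu\circ C\mu = \mu\circ\mu C$ with the appropriate $q^{(n)}_X$ and rewriting using the naturality squares of $q \colon D \Rightarrow C$ together with $\eta = q\bar\eta$ and $\mu\circ q^{(2)} = q\circ\bar\mu$, each side becomes the corresponding monad identity for $D$ followed by $q_X$; cancelling the epimorphism $q_X$ yields the identity for $C$.

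The main obstacle is the bookkeeping in the third step: keeping straight which naturality square of $q$ is used at each stage and checking that $q^{(2)}, q^{(3)}$ really are the quotient maps through which $\bar\mu$ and its iterates factor. It is genuinely the cocontinuity of the auxiliary monad $D$, not of $C$, that powers the cancellation, since $C$ does not even preserve coproducts (it glues a single cone point). A more hands-on but equivalent route avoids $D$ altogether and verifies the three axioms by direct computation on cubes, using the explicit description of $C^n X$ by tuples $(f_1,\dots,f_n,x)$ recorded before the statement: there each axiom reduces transparently to associativity and unitality of $\max = \gamma_{1,0}$ and to its absorption of any coordinate sent to the cone point.
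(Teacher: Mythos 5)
Your proposal is correct, but it takes a more structured route than the paper's, whose entire proof of this proposition is the one sentence ``the monad laws follow from a straightforward calculation using the cubical identities.'' You instead exhibit $C$ as a componentwise quotient of the cocontinuous monad $D = \Box^1 \otimes -$ coming from the monoid $(\Box^1, \gamma_{1,0}, \partial_{1,0})$ in $(\cSet, \otimes, \Box^0)$, check that $\partial_{1,1}$ is two-sided absorbing for $\gamma_{1,0}$ so that the structure descends along the epimorphism $q\colon D \Rightarrow C$, and then cancel the componentwise surjection $q^{(n)}_X$ in each axiom. Everything here checks out: the monoid axioms for $\Box^1$ are precisely $\gamma_{1,0}\gamma_{1,0} = \gamma_{1,0}\gamma_{2,0}$ and $\gamma_{1,0}\partial_{1,0} = \id = \gamma_{1,0}\partial_{2,0}$; the absorption identities $\gamma_{1,0}\partial_{1,1} = \partial_{1,1}\sigma_1 = \gamma_{1,0}\partial_{2,1}$ do guarantee that $\bar\mu_X$ respects the relation generating $C^2X$ as a quotient of $D^2 X$ (both clauses of that relation force a $\const_1$ in the first coordinate after taking $\max$); and the bookkeeping of naturality squares in your third step is right, e.g.\ for associativity one gets $\mu_X \circ C\mu_X \circ q^{(3)}_X = q_X \circ \bar\mu_X \circ D\bar\mu_X$ and $\mu_X \circ \mu_{CX} \circ q^{(3)}_X = q_X \circ \bar\mu_X \circ \bar\mu_{DX}$, which agree by associativity of $D$. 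Two small remarks. First, the epimorphism actually being cancelled in the final sentence of your third step is $q^{(n)}_X$, not $q_X$: after simplification both sides of the $C$-identity equal a single $D$-expression postcomposed with $q_X$, so they are \emph{equal on the nose} and the point is that they were obtained by precomposing the $C$-sides with the epi $q^{(n)}_X$. Second, your observation at the end that $C$ fails to preserve coproducts (one cone point versus many) is correct and is a good reason this transport argument genuinely needs to run through $D$. As a bonus, your argument is essentially what the paper's \cref{cone-monad} gestures at when it says the monad structure is ``induced by'' natural transformations on $I_W$, so you have supplied a proof the paper leaves implicit.
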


\begin{proof}
The monad laws follow from a straightforward calculation using the cubical identities.
\end{proof}

Given a cubical set $X$, the natural way to form a cone on $X$ is to take its geometric product with the interval $\Box^1$, and quotient one end of the cylinder to a vertex, as was done in \cref{left-positive-def}. This definition, however, involved certain choices: we chose to tensor on the left rather than on the right, and to quotient the subcomplex $\{1\} \otimes X$ rather than $\{0\} \otimes X$. Considering the alternative choices, we obtain four distinct cone functors. In general, we will work with the functor $C$ of \cref{left-positive-def}; when the potential for ambiguity arises, we will refer to this functor as $C_{L,1}$.

\begin{definition}\label{all-cones-def}
We define the \emph{left negative, left positive, right negative,} and \emph{right positive cone functors}, denoted $C_{L,0}, C_{L,1}, C_{R,0}, C_{R,1} \colon \cSet \to \cSet$, respectively, by the following pushout diagrams in $\mathrm{End} \, \cSet$:

\begin{footnotesize}
\centerline{
\xymatrix{
\id \ar[d]_{\bd_{1,0} \otimes -} \ar[r] & \Box^{0} \ar[d] & \id \ar[d]_{\bd_{1,1} \otimes -} \ar[r] & \Box^{0} \ar[d] & \id \ar[d]_{- \otimes \bd_{1,0}} \ar[r] & \Box^{0} \ar[d] & \id \ar[d]_{- \otimes \bd_{1,1}} \ar[r] & \Box^{0} \ar[d] \\
\Box^{1} \otimes - \ar[r] & C_{L,0} \pushoutcorner & \Box^{1} \otimes - \ar[r] & C_{L,1} \pushoutcorner &  - \otimes \Box^{1} \ar[r] & C_{R,0} \pushoutcorner & - \otimes \Box^{1}  \ar[r] & C_{R,1} \pushoutcorner \\
}
} 
\end{footnotesize}
\end{definition}

To understand the differences between these definitions, we illustrate the cubical sets $C_{W,\varepsilon} \Box^1$ for $W \in \{L,R\}, \varepsilon \in \{0,1\}$. These are the four quotients which can be obtained from $\Box^2$ by collapsing one of its faces to a vertex.

\centerline{
\xymatrix{
c \ar[r] \ar@{=}[d] & 0 \ar[d] & 0 \ar[d] \ar[r] & c \ar@{=}[d] & c \ar@{=}[r] \ar[d] & c \ar[d] & 0 \ar[d] \ar[r] & 1 \ar[d] \\
c \ar[r] & 1 & 1 \ar[r] & c & 0 \ar[r] & 1 & c \ar@{=}[r] & c \\
\ar@{}[r]^{C_{L,0} \Box^1} & & \ar@{}[r]^{C_{L,1}\Box^1} & & \ar@{}[r]^{C_{R,0}\Box^1} &  & \ar@{}[r]^{C_{R,1}\Box^1} & \\ 
}
}

Applying the involutions $(-)^\rmco, (-)^\coop, (-)^\op$ to the pushout diagrams of \cref{all-cones-def}, and using \cref{op-co-monoidal}, we obtain the following result, which shows that any one of these cone concepts suffices to describe all the others.

\begin{lemma}\label{cone-involutions}
The functors $C_{W,\varepsilon}$ for $W \in \{L,R\}, \varepsilon \in \{0,1\}$ are related by the following formulas:

\begin{itemize}
\item $C_{L,0} = (-)^\coop \circ C_{L,1} \circ (-)^\coop$;
\item $C_{R,0} = (-)^\op \circ C_{L,1} \circ (-)^\op$;
\item $C_{R,1} = (-)^\rmco \circ C_{L,1} \circ (-)^\rmco$. \qed
\end{itemize}
\end{lemma}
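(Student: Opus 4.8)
The statement to prove is \cref{cone-involutions}, which asserts three identities relating the four cone functors $C_{W,\varepsilon}$ via the involutions $(-)^\rmco$, $(-)^\coop$, $(-)^\op$.

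Here's my plan:

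The strategy is to exploit the defining pushout squares of \cref{all-cones-def} together with \cref{op-co-monoidal}, which tells us how each involution interacts with the geometric product. Each $C_{W,\varepsilon}$ is defined as a pushout in $\mathrm{End}\,\cSet$ of $\Box^0 \leftarrow \id \to (\text{tensor with }\Box^1)$ where the left leg is an endpoint inclusion. Since the involutions are self-adjoint equivalences of categories (\cref{op-co-involution}), they preserve pushouts; so applying, say, $(-)^\coop \circ (-) \circ (-)^\coop$ to the defining square of $C_{L,1}$ produces another pushout square, and it suffices to identify this transported square with the defining square of $C_{L,0}$.

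First I would record what each involution does to the relevant data. Conjugating the functor $\Box^1 \otimes -$ by $(-)^\coop$ gives $\Box^1 \otimes -$ again, since $(-)^\coop$ is strong monoidal by \cref{op-co-monoidal} and fixes $\Box^1$ (indeed it fixes every object and only flips $\varepsilon$-indexed generators); conjugating by $(-)^\rmco$ turns a left tensor into a right tensor since $(-)^\rmco$ is strong anti-monoidal, i.e.\ $(\Box^1 \otimes X)^\rmco \cong X^\rmco \otimes \Box^1$, and similarly $(-)^\op$ is strong anti-monoidal so it also swaps left and right tensor. Next I would track the endpoint inclusions: under $(-)^\coop$ the face $\bd_{1,1}$ becomes $\bd_{1,0}$ (the functor sends $\bd^n_{i,\varepsilon}$ to $\bd^n_{i,1-\varepsilon}$), while $(-)^\rmco$ and $(-)^\op$ fix or relabel indices in a way compatible with the anti-monoidal swap. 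Also $(\Box^0)^{\rmco} = (\Box^0)^{\coop} = \Box^0$, so the cone-point leg is transported to the cone-point leg. Putting these together: applying $(-)^\coop$-conjugation to the square defining $C_{L,1}$ (with left leg $\bd_{1,1}\otimes -$) yields exactly the square defining $C_{L,0}$ (with left leg $\bd_{1,0}\otimes -$), which gives the first formula; applying $(-)^\op$-conjugation swaps left-to-right tensor and turns $\bd_{1,1}$ into... here one must check whether $(-)^\op = (-)^\rmco \circ (-)^\coop$ flips the $\varepsilon$ — it does, via the $\coop$ factor — so $(-)^\op$-conjugation of $C_{L,1}$'s square gives the square with right tensor and leg $-\otimes\bd_{1,0}$, namely $C_{R,0}$; and $(-)^\rmco$-conjugation gives right tensor but keeps $\varepsilon = 1$, namely $C_{R,1}$. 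These are the three claimed identities.

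The one genuinely fiddly point — the main obstacle, such as it is — is getting the bookkeeping of indices exactly right when an anti-monoidal involution swaps the tensor factors, since the identification $(\Box^1\otimes X)^\rmco\cong X^\rmco\otimes\Box^1$ is only a natural \emph{isomorphism}, not an equality, so strictly one should phrase the conclusion as natural isomorphisms of functors (which is all the lemma claims) and check that the isomorphisms assemble compatibly with the two legs of the pushout. This is routine given \cref{op-co-monoidal} but deserves a sentence of care. I would close by noting that since the involutions are their own inverses, each displayed formula can be read in either direction, and the four functors are thus all interdefinable, justifying our later convention of working only with $C = C_{L,1}$.

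\begin{proof}
All three formulas are proved the same way, so we spell out the first and indicate the modifications for the others.

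Recall from \cref{op-co-involution} that $(-)^\rmco$, $(-)^\coop$, and $(-)^\op$ are involutions of $\cSet$; in particular they preserve all colimits, so conjugating a pushout square in $\mathrm{End}\,\cSet$ by any of them yields another pushout square. We also use \cref{op-co-monoidal}: $(-)^\coop$ is strong monoidal, while $(-)^\rmco$ and $(-)^\op$ are strong anti-monoidal, and all three fix the object $\Box^1$. Finally, $(\Box^0)^\rmco = (\Box^0)^\coop = \Box^0$ and, on generating face maps, $(\bd^n_{i,\varepsilon})^\coop = \bd^n_{i,1-\varepsilon}$ while $(\bd^n_{i,\varepsilon})^\rmco = \bd^n_{n-i+1,\varepsilon}$; composing, $(\bd^n_{i,\varepsilon})^\op = \bd^n_{n-i+1,1-\varepsilon}$.

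Consider the pushout square defining $C_{L,1} = C$:
\[
\xymatrix{
\id \ar[d]_{\bd_{1,1} \otimes -} \ar[r] & \Box^{0} \ar[d] \\
\Box^{1} \otimes - \ar[r] & C_{L,1} \pushoutcorner \\
}
\]
Apply the endofunctor $F \mapsto (-)^\coop \circ F \circ (-)^\coop$ to this diagram. Since $(-)^\coop$ is an involution, it sends $\id$ to $\id$, and since it is strong monoidal and fixes $\Box^1$, it sends $\Box^1 \otimes -$ to $\Box^1 \otimes -$. It sends the constant functor $\Box^0$ to $\Box^0$, and the cone-point natural transformation to the cone-point. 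The left leg $\bd_{1,1}\otimes -$ is carried to $(\bd_{1,1})^\coop \otimes - = \bd_{1,0} \otimes -$ under the monoidal structure isomorphisms. Thus the transported square is naturally isomorphic to
\[
\xymatrix{
\id \ar[d]_{\bd_{1,0} \otimes -} \ar[r] & \Box^{0} \ar[d] \\
\Box^{1} \otimes - \ar[r] & (-)^\coop \circ C_{L,1} \circ (-)^\coop \pushoutcorner \\
}
\]
which is a pushout square since conjugation by the equivalence $(-)^\coop$ preserves pushouts. Comparing with the defining square of $C_{L,0}$ in \cref{all-cones-def} and using the uniqueness of pushouts, we obtain a natural isomorphism $C_{L,0} \cong (-)^\coop \circ C_{L,1} \circ (-)^\coop$, which is the first formula.

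For the third formula, conjugate the defining square of $C_{L,1}$ by $(-)^\rmco$ instead. Now $(-)^\rmco$ is strong anti-monoidal, so $(\Box^1 \otimes X)^\rmco \cong X^\rmco \otimes \Box^1 = X^\rmco \otimes (\Box^1)^\rmco$; hence $(-)^\rmco \circ (\Box^1 \otimes -) \circ (-)^\rmco \cong - \otimes \Box^1$. The left leg $\bd_{1,1} \otimes -$ is carried, via the anti-monoidal structure isomorphism, to $- \otimes (\bd_{1,1})^\rmco = - \otimes \bd_{1,1}$ (for a face map $[1]^0 \to [1]^1$ the index $i = 1$ is fixed by $i \mapsto n - i + 1$, and the sign $\varepsilon$ is unchanged by $(-)^\rmco$). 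The cone-point leg is again carried to the cone-point. So the transported square is a pushout square naturally isomorphic to the defining square of $C_{R,1}$, giving $C_{R,1} \cong (-)^\rmco \circ C_{L,1} \circ (-)^\rmco$.

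For the second formula, conjugate by $(-)^\op$. As $(-)^\op = (-)^\rmco \circ (-)^\coop$, it is strong anti-monoidal, so $(-)^\op \circ (\Box^1 \otimes -) \circ (-)^\op \cong - \otimes \Box^1$, while the left leg $\bd_{1,1} \otimes -$ is carried to $- \otimes (\bd_{1,1})^\op = - \otimes \bd_{1,0}$, using $(\bd^1_{1,1})^\op = \bd^1_{1,0}$. This identifies the transported pushout square with the defining square of $C_{R,0}$, yielding $C_{R,0} \cong (-)^\op \circ C_{L,1} \circ (-)^\op$.

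In each case the isomorphisms above are assembled from the monoidal (resp.\ anti-monoidal) structure isomorphisms of \cref{op-co-monoidal}, which are natural and compatible with the two legs of the pushout, so the resulting isomorphisms of cone functors are natural. This proves all three formulas.
\end{proof}
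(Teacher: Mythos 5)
Your proof is correct and takes exactly the approach the paper intends: the lemma is stated with an immediate \qed following the remark that one applies the involutions to the pushout diagrams of \cref{all-cones-def} and uses \cref{op-co-monoidal}, and your write-up simply fills in the (anti-)monoidal bookkeeping that the paper leaves implicit. The index-tracking on the face maps (including the observation that $(\bd^1_{1,1})^\rmco = \bd^1_{1,1}$ while $(\bd^1_{1,1})^\coop = (\bd^1_{1,1})^\op = \bd^1_{1,0}$) is correct.
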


\begin{proposition}\label{cone-monad}
For $W \in \{L,R\}, \varepsilon \in \{0,1\}$, the functor $C_{W,\varepsilon} \colon \cSet \to \cSet$ admits the structure of a monad, with the unit $\eta$ and multiplication $\mu$ induced by natural transformations $\id \Rightarrow I_{W}$ and $I_{W}^2 \Rightarrow I_{W}$, where $I_L, I_R \colon \cSet \to \cSet$ are functors given by $I_L X = \Box^1 \otimes X$ and $I_R X = X \otimes \Box^1$, as follows:

\renewcommand{\arraystretch}{1.7}
\centerline{
$\begin{array}{c c c c c}
\mathrm{endofunctor} & &\mathrm{unit} & & \mathrm{multiplication} \\
\hline
C_{L,0} & & \bd_{1,1} \otimes - & & \gamma_{1,1} \otimes - \\
C_{L,1} & & \bd_{1,0} \otimes - & & \gamma_{1,0} \otimes - \\
C_{R,0} & & - \otimes \bd_{1,1} & & - \otimes \gamma_{1,1} \\
C_{R,1} & & - \otimes \bd_{1,0} & & - \otimes \gamma_{1,0} \\ 
\end{array}$
}
\end{proposition}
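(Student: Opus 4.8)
The plan is to reduce the statement about all four cone functors $C_{W,\varepsilon}$ to the single case $C_{L,1} = C$, for which the monad structure has already been established in \cref{left-positive-monad}. The key observation is \cref{cone-involutions}, which expresses each of $C_{L,0}$, $C_{R,0}$, $C_{R,1}$ as a conjugate of $C_{L,1}$ by one of the involutions $(-)^\coop$, $(-)^\op$, $(-)^\rmco$. Since each of these involutions $G$ satisfies $G \circ G = \id$ (by \cref{op-co-involution}), conjugation by $G$ transports the monad $(C_{L,1}, \eta, \mu)$ to a monad $(G \circ C_{L,1} \circ G, G\eta G, G\mu G)$: the unit of the conjugated monad is $\id = G \circ G \Rightarrow G \circ C_{L,1} \circ G$ obtained by whiskering $\eta$ with $G$ on both sides, and similarly the multiplication is obtained by whiskering $\mu$. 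The monad laws for the conjugate follow formally from those for $C_{L,1}$, since whiskering with a functor preserves pasting identities of natural transformations and $G G = \id$ collapses the relevant composites. This is the conceptual content, and it is essentially immediate.

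First I would record the general principle as a short lemma (or just inline it): if $(M, \eta, \mu)$ is a monad on a category $\mathsf{C}$ and $G \colon \mathsf{C} \to \mathsf{C}$ is an involution, then $(GMG, G\eta G, G\mu G)$ is a monad, where by $G\eta G$ I mean the natural transformation $\id = GG \xRightarrow{G\eta G} GMG$ and similarly for the multiplication $GGMG = GM G \cdot GG \Rightarrow$ wait --- here I should be careful to write the composite $GM^2G = G M G \cdot G M G \xRightarrow{G\mu G} GMG$ using $GG = \id$ in the middle. Then I would apply this with $M = C_{L,1}$ and $G$ ranging over $(-)^\coop$, $(-)^\op$, $(-)^\rmco$ as dictated by \cref{cone-involutions}, obtaining monad structures on $C_{L,0}$, $C_{R,0}$, $C_{R,1}$ respectively.

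It then remains to check that the units and multiplications produced this way are the ones named in the table. For this I would unwind how $\eta$ and $\mu$ for $C_{L,1} = C$ were defined: $\eta$ is the composite of $\bd_{1,0} \otimes -$ with the quotient $\Box^1 \otimes - \Rightarrow C$, and $\mu$ is induced by $\gamma_{1,0} \otimes -$ on the quotient. Conjugating by $(-)^\coop$, which is strong monoidal and sends $\bd_{1,0}$ to $\bd_{1,1}$ and $\gamma_{1,0}$ to $\gamma_{1,1}$ (by the definition of $(-)^\coop$ on generating morphisms in \cref{subsection:cSet-basics}), turns these into $\bd_{1,1} \otimes -$ and $\gamma_{1,1} \otimes -$, matching the $C_{L,0}$ row; similarly, conjugating by $(-)^\rmco$ (which on the relevant one-dimensional maps fixes $\bd_{1,\varepsilon}$ but reverses the tensor order, by \cref{op-co-monoidal}) gives the $C_{R,1}$ row, and conjugating by $(-)^\op$ gives the $C_{R,0}$ row. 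These identifications amount to tracking the action of the involutions on the geometric product and on the maps $\bd_{1,\varepsilon}$, $\gamma_{1,\varepsilon}$, $\sigma_1$ appearing in the defining diagrams, using \cref{op-co-monoidal}.

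The main obstacle --- such as it is --- is purely bookkeeping: making sure the conjugation is applied on the correct side (left versus right tensor) for the anti-monoidal involutions $(-)^\rmco$ and $(-)^\op$, since these swap $I_L$ and $I_R$, and confirming that the quotient map $\Box^1 \otimes - \Rightarrow C$ is carried by $G$ to the analogous quotient map for the conjugated cone functor (this is exactly the content of applying $G$ to the pushout square defining $C_{L,1}$, using that $G$ preserves colimits as it is its own adjoint). There is no genuine mathematical difficulty; the proof is a formal consequence of \cref{left-positive-monad}, \cref{cone-involutions}, \cref{op-co-involution}, and \cref{op-co-monoidal}, and I would present it as such, with the routine diagram-chasing for the table entries either sketched or left to the reader.
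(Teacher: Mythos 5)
Your proposal is correct and follows exactly the same route as the paper's proof, which simply cites \cref{left-positive-monad} for the monad structure on $C_{L,1}$ and \cref{cone-involutions} to conjugate it to the other three cases. You have merely spelled out the standard conjugation-of-a-monad-by-an-involution argument and the generator bookkeeping (via \cref{op-co-involution} and \cref{op-co-monoidal}) that the paper leaves implicit.
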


\begin{proof}
This follows from \cref{left-positive-monad,cone-involutions}.
\end{proof}

In order to express the counit $Q \int X \to X$ (for a cubical quasicategory $X$) as a transfinite composite of anodyne maps, we will need to analyze the images of standard cones in cubical quasicategories.

For the remainder of this section, we will work exclusively with left positive cones, with the understanding that our results may be adapted to any of the other three varieties of cones using the formulas of \cref{cone-involutions}.

\begin{definition} \label{CmnDef}
For $m, n \geq 0$, an \emph{$(m,n)$-cone} in a cubical set $X$ is a map $C^{m,n} \to X$. 
\end{definition}

Observe that each cone $C^{m,n} \to X$ corresponds to a unique $(m+n)$-cube of $X$ by pre-composition with the quotient map $\Box^{m+n} \to C^{m,n}$. Thus we will also use the term ``$(m,n)$-cone'' to refer to a map $\Box^{m+n} \to X$ which factors through this quotient map. In particular, when we refer to the $(i,\varepsilon)$-face of a cone $x$, this means the $(i,\varepsilon)$-face of the corresponding cube: $\Box^{m+n-1} \xrightarrow{\partial_{i,\varepsilon}} \Box^{m+n} \to C^{m,n} \xrightarrow{x} X$.

For $m,n,k \geq 0$, recall that $\Box^{m+n}_{k}$ is the set of maps $[1]^{k} \to [1]^{m+n}$ in the box category $\Box$; thus we may write such a $k$-cube $f$ as $(f_{1},...,f_{m+n})$ where each $f_{i}$ is a map $[1]^{k} \to [1]$. This allows us to describe $C^{m,n}$ explicitly as a quotient of $\Box^{m+n}$.

\begin{lemma}\label{ConeDesc}
For all $m, n \geq 0, C^{m,n}$ is the quotient of $\Box^{m+n}$ obtained by identifying two $k$-cubes $f, g$ if there exists $j$ with $1 \leq j \leq n$ such that $f_{i} = g_{i}$ for $i \leq j$ and $f_{j} = g_{j} = \mathrm{const}_{1}$ (the constant map $[1]^{k} \to [1]$ with value 1).
\end{lemma}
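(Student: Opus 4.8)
The plan is to prove \cref{ConeDesc} by induction on $n$, unwinding the iterated pushout definition of $C^{m,n} = C^n \Box^m$ and tracking exactly which cubes of $\Box^{m+n}$ get identified at each stage. First I would dispose of the base case $n = 0$, where $C^{m,0} = \Box^m$ and the asserted equivalence relation is trivial (there is no $j$ with $1 \le j \le 0$), so the claim holds vacuously. For the inductive step, I would assume the description holds for $C^{m,n-1}$ as a quotient $q \colon \Box^{m+n-1} \twoheadrightarrow C^{m,n-1}$ and analyze $C^{m,n} = C(C^{m,n-1})$ via the defining pushout square, using the concrete description of $C(-)$ in terms of pairs $(f, x)$ from \cref{geometric-product-description}: a $k$-cube of $\Box^1 \otimes C^{m,n-1}$ is a pair $(f_1 \colon [1]^a \to [1], x \colon [1]^b \to C^{m,n-1})$ with $a + b = k$ (subject to the degeneracy identification), and passing to the quotient $C(C^{m,n-1})$ identifies $(f_1, x)$ with $(f_1', x')$ whenever $f_1 = f_1' = \const_1$. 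Composing this with the inductive description of $C^{m,n-1}$ as a quotient of $\Box^{m+n-1}$, and using that a $k$-cube of $\Box^{m+n}$ is a tuple $(f_1, \ldots, f_{m+n})$, I would match up the "new" first coordinate $f_1$ with the cone just added and the remaining coordinates $(f_2, \ldots, f_{m+n})$ with a $k$-cube of $\Box^{m+n-1}$, reindexing so that the inductively-given identifications on $\Box^{m+n-1}$ (which involve some $j$ with $1 \le j \le n-1$) become identifications on $\Box^{m+n}$ involving $j+1$ with $2 \le j+1 \le n$, while the freshly-introduced identification is the case $j = 1$. Together these yield precisely the stated relation: identify $f, g$ if there is $j$ with $1 \le j \le n$, $f_i = g_i$ for $i < j$ — wait, $i \le j$ in the statement, consistent with $f_j = g_j = \const_1$ — and $f_j = g_j = \const_1$.

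The key steps, in order, are: (1) state the base case $n=0$; (2) recall the concrete pair-description of $C(-) = C_{L,1}(-)$ from \cref{left-positive-def} and \cref{geometric-product-description}, namely that $CX$ is the quotient of $\Box^1 \otimes X$ collapsing $(f_1, x) \sim (f_1', x')$ when $f_1 = f_1' = \const_1$; (3) apply the inductive hypothesis to rewrite $C^{m,n-1}$ as an explicit quotient of $\Box^{m+n-1}$; (4) compose the two quotient maps and check that the composite equivalence relation on $\Box^{m+n}$ is exactly the one claimed, being careful about the reindexing $f = (f_1, f_2, \ldots, f_{m+n}) \leftrightarrow (f_1; (f_2,\ldots,f_{m+n}))$ and about the well-known subtlety that the $\otimes$-identification $(x\sigma_{m_1+1}, y) = (x, y\sigma_1)$ already lives inside the quotient and does not introduce spurious extra identifications; (5) verify that the relation so obtained is indeed an equivalence relation (transitivity is the only nontrivial point: if $f$ is identified with $g$ via $j$ and $g$ with $h$ via $j'$, take $\min(j,j')$), which also confirms it is the relation generated by the naive pairwise rule.

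The main obstacle I expect is bookkeeping rather than conceptual: carefully showing that the quotient of a quotient introduces no identifications beyond those listed, i.e. that the naive map from $\Box^{m+n}$ modulo the stated relation to $C^{m,n}$ is a bijection on cubes in each dimension. This is the cubical-sets analogue of the "no further identifications" argument already carried out in the proof of \cref{geometric-product-description}, and I would either cite that proof's technique (projecting along $\pi_{\Box^1} \colon \Box^1 \otimes X \to \Box^1$ to detect whether the first coordinate is $\const_1$) or reproduce it briefly: given two tuples $f, g$ with the same image in $C^{m,n}$, peel off the cone coordinates one at a time, at each stage using the projection to $\Box^1$ to decide whether that coordinate is $\const_1$ (in which case we are in case $j = $ that index) or agrees with the corresponding coordinate of the other tuple (in which case we continue). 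Since each individual cone step is a pushout along a representable, \cref{colim-factor} guarantees that this exhausts all possible identifications, completing the induction.
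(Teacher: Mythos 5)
Your proposal is correct and follows essentially the same route as the paper's proof: fix $m$, induct on $n$, note the base case $n=0$ is trivial, then unwind $C^{m,n+1} = C(C^{m,n})$ by using that $\Box^1 \otimes -$ preserves colimits to describe $\Box^1 \otimes C^{m,n}$ as a quotient of $\Box^{1+m+n}$, and finally take the further quotient identifying tuples whose first coordinate is $\mathrm{const}_1$. The only difference is one of emphasis: you plan to spell out the "no further identifications" check via the projection technique from \cref{geometric-product-description} and to verify transitivity of the generating relation explicitly (via $\min(j,j')$), both of which the paper leaves implicit but which are correct and harmless additions.
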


\begin{proof}
We fix $m$ and proceed by induction on $n$. For the base case $n = 0$, there cannot exist any $j$ satisfying the given criteria, thus no identifications are to be made; and indeed we have $C^{m,0} = \Box^{m}$ by definition.

Now suppose that the given description holds for $C^{m,n}$, and let $q$ denote the quotient map $\Box^{m+n} \to C^{m,n}$. Then because the functor $\Box^{1} \otimes -$ preserves colimits, $\Box^{1} \otimes C^{m,n}$ is a quotient of $\Box^{1+m+n}$ with quotient map $\Box^{1} \otimes q$. From this description we see that $\Box^{1} \otimes C^{m,n}$ is obtained from $\Box^{1+m+n}$ by identifying two $k$-cubes $f,g$ whenever $f_{1} = g_{1}$ and the cubes $(f_{2},...,f_{n+1})$ and $(g_{2},...,g_{n+1})$ are identified in $C^{m,n}$. In other words, we obtain $\Box^{1} \otimes C^{m,n}$ from $\Box^{1+m+n}$ by identifying $f$ and $g$ if there exists $j$ with $2 \leq j \leq n + 1$ such that $f_{i} = g_{i}$ for all $i \leq j$ and $f_{j} = g_{j} = \mathrm{const}_{1}$. Taking the pushout of the inclusion $\partial_{1,1} \otimes C^{m,n} \colon C^{m,n} \hookrightarrow \Box^{1} \otimes C^{m,n}$ along the unique map $C^{m,n} \to \Box^{0}$, we then see that $C^{m,n+1}$ is the quotient of $\Box^{1} \otimes C^{m,n}$ obtained by identifying cubes $f,g$ whenever $f_{1} = g_{1} = \mathrm{const}_{1}$. Thus the description holds for $C^{m,n+1}$.
\end{proof}

\begin{corollary}\label{ConeWLOG}
For $k \leq n$, the quotient map $\Box^{m+n} \to C^{m,n}$ factors through $C^{m+k,n-k}$. In particular, if $x \colon \Box^{m+n} \to X$ is an $(m,n)$-cone, then $x$ is also an $(m+k,n-k)$-cone for all $k \leq n$. \qed
\end{corollary}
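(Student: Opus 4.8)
The plan is to deduce this directly from the explicit quotient description in \cref{ConeDesc}. First I would note that $(m+k)+(n-k)=m+n$, so that $C^{m+k,n-k}=C^{n-k}\Box^{m+k}$ is, exactly like $C^{m,n}$, presented as a quotient of $\Box^{m+n}$; write $q_{m,n}\colon \Box^{m+n}\to C^{m,n}$ and $q_{m+k,n-k}\colon \Box^{m+n}\to C^{m+k,n-k}$ for the two quotient maps (these are the maps named in \cref{CmnDef}, and by the inductive construction in the proof of \cref{ConeDesc} they are precisely the quotients by the equivalence relations described there).

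Next I would compare the two equivalence relations. By \cref{ConeDesc}, two $\ell$-cubes $f,g$ of $\Box^{m+n}$ are identified in $C^{m,n}$ whenever there is $j$ with $1\leq j\leq n$ such that $f_i=g_i$ for all $i\leq j$ and $f_j=g_j=\mathrm{const}_1$, while they are identified in $C^{m+k,n-k}$ under the same condition but with the stronger constraint $1\leq j\leq n-k$. Since $k\geq 0$ gives $n-k\leq n$, any $j$ witnessing an identification in $C^{m+k,n-k}$ also witnesses one in $C^{m,n}$; hence $q_{m,n}$ identifies any pair of cubes identified by $q_{m+k,n-k}$. By the universal property of the quotient $q_{m+k,n-k}$, there is then a unique map $r\colon C^{m+k,n-k}\to C^{m,n}$ with $r\circ q_{m+k,n-k}=q_{m,n}$, which is exactly the claimed factorization of $q_{m,n}$ through $C^{m+k,n-k}$.

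The ``in particular'' clause is then immediate: if $x\colon \Box^{m+n}\to X$ is an $(m,n)$-cone, then by definition $x=\bar{x}\circ q_{m,n}$ for some $\bar{x}\colon C^{m,n}\to X$, so $x=(\bar{x}\circ r)\circ q_{m+k,n-k}$ factors through $q_{m+k,n-k}$ and is therefore an $(m+k,n-k)$-cone. I do not expect any genuine obstacle here; the only point requiring a little care is confirming that the quotient map $\Box^{m+n}\to C^{m,n}$ of \cref{CmnDef} coincides with the one exhibited in \cref{ConeDesc}, so that containment of the two explicit relations is indeed the relevant comparison — and this is transparent from the proof of \cref{ConeDesc}.
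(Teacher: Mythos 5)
Your proof is correct and matches the argument the paper intends: the corollary is stated with a \qed precisely because it follows immediately from \cref{ConeDesc} by noting the inclusion of equivalence relations (the range $1\leq j\leq n-k$ is contained in $1\leq j\leq n$) and invoking the universal property of the quotient. The ``in particular'' clause is handled exactly as you describe, by composing the factorization with $\bar{x}$.
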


Using the characterization of cones given above, we can show that any face of a given cone is a cone of a specified degree.

\begin{lemma}\label{FaceIso}
For $i \leq n$, the image of the composite map $\Box^{m+n-1} \xrightarrow{\partial_{i,0}} \Box^{m+n} \to C^{m,n}$ is isomorphic to $C^{m,n-1}$. For $i \geq n + 1$, $\varepsilon \in \{0,1\}$, the image of the composite map $\Box^{m+n-1} \xrightarrow{\partial_{i,\varepsilon}} \Box^{m+n} \to C^{m,n}$ is isomorphic to $C^{m-1,n}$.
\end{lemma}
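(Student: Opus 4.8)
The plan is to argue directly from the combinatorial description of $C^{m,n}$ in \cref{ConeDesc}. Write $q \colon \Box^{m+n} \to C^{m,n}$ for the quotient map and recall that, for $k$-cubes $f = (f_1,\dots,f_{m+n})$ and $g = (g_1,\dots,g_{m+n})$ of $\Box^{m+n}$, one has $qf = qg$ precisely when $f = g$ or there is an index $j$ with $1 \le j \le n$, with $f_l = g_l$ for all $l \le j$, and with $f_j = g_j = \mathrm{const}_1$. A preliminary observation, needed in order to manipulate this relation freely, is that it is already an equivalence relation: the only nonroutine point is transitivity, which follows by keeping the smaller of the two witnessing indices. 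Since images in a presheaf category are computed levelwise, the image of $\Box^{m+n-1} \xrightarrow{\partial_{i,\varepsilon}} \Box^{m+n} \xrightarrow{q} C^{m,n}$ consists, in each degree, of the classes $qh$ with $h_i = \mathrm{const}_\varepsilon$.

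For the first assertion, fix $i \le n$ and regard $\partial^{m+n}_{i,0}$ as a map $\Box^{m+n-1} \to \Box^{m+n}$, its source being $\Box^{m+(n-1)}$. I would establish two things: that $q \circ \partial_{i,0}$ coequalizes the relation on $\Box^{m+n-1}$ defining $C^{m,n-1}$, so that it factors as a map $\bar\phi \colon C^{m,n-1} \to C^{m,n}$ precomposed with the quotient $\Box^{m+n-1} \to C^{m,n-1}$; and that $\bar\phi$ is a monomorphism. Both reduce to the coordinate formula $(\partial_{i,0}h)_l = h_l$ for $l < i$, $(\partial_{i,0}h)_i = \mathrm{const}_0$, and $(\partial_{i,0}h)_l = h_{l-1}$ for $l > i$. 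For the factorization, a relation in $C^{m,n-1}$ witnessed by an index $j \le n-1$ maps to a relation in $C^{m,n}$ witnessed by $j$ if $j < i$ and by $j+1$ if $j \ge i$; the bound $j \le n-1$ is exactly what keeps the shifted index within $[1,n]$. For injectivity of $\bar\phi$, if $q\partial_{i,0}f = q\partial_{i,0}g$ then either $\partial_{i,0}f = \partial_{i,0}g$, whence $f = g$ because $\partial^{m+n}_{i,0}$ is an injective function, or there is a witnessing index $j \le n$; since the $i$-th coordinate of $\partial_{i,0}h$ is $\mathrm{const}_0 \ne \mathrm{const}_1$ we must have $j \ne i$, and deleting the inserted coordinate exhibits $f \sim g$ in $C^{m,n-1}$ via $j$ (when $j < i$, which forces $j \le n-1$) or via $j-1$ (when $j > i$). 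Since the quotient $\Box^{m+n-1} \to C^{m,n-1}$ is an epimorphism, the epi--mono factorization of $q \circ \partial_{i,0}$ then identifies $C^{m,n-1}$ with the image, as required.

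The second assertion is parallel but easier. For $n+1 \le i \le m+n$, and hence $m \ge 1$, regard $\partial^{m+n}_{i,\varepsilon}$ as a map $\Box^{m+n-1} \to \Box^{m+n}$ with source $\Box^{(m-1)+n}$. Because the inserted coordinate sits in position $i > n$, it lies beyond every cone index $j \le n$ occurring in \cref{ConeDesc}, so no index shifting is needed: the same two arguments — factorization through the quotient $\Box^{m+n-1} \to C^{m-1,n}$, and monicity of the induced map via injectivity of $\partial^{m+n}_{i,\varepsilon}$ — go through directly, identifying the image with $C^{m-1,n}$.

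I expect the only real work to be the bookkeeping in the first case: tracking how the witnessing cone index $j$ transforms under insertion and deletion of the $i$-th coordinate, and verifying at each step that it stays within the range demanded by \cref{ConeDesc}. Everything else — levelwise computation of images, injectivity of face maps, and the fact that the relation of \cref{ConeDesc} is an equivalence relation — is routine.
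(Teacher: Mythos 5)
Your proof is correct and follows the same route as the paper: both work directly from the combinatorial description in \cref{ConeDesc}, track how the witnessing index $j$ is shifted by insertion of the $i$-th coordinate, and conclude that the relation induced on $\Box^{m+n-1}$ is exactly the one defining $C^{m,n-1}$ (resp.\ $C^{m-1,n}$). You spell out the two directions (factorization and injectivity of the induced map) more explicitly than the paper, which compresses them into a single ``if and only if'' claim, but the content is identical.
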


\begin{proof}
First consider the composite map $\Box^{m+n-1} \xrightarrow{\partial_{i,0}} \Box^{m+n} \to C^{m,n}$. Let $f = (f_{1},...,f_{m+n-1})$ denote a $k$-cube of $\Box^{m+n-1}$, as in the proof of \cref{ConeDesc}. We denote the image of this cube under $\partial_{i,0}$ by $f' = (f'_{1},...,f'_{m+n-1})$, where $f'_{j} = f_{j}$ for $j < i$, $f'_{i} = \mathrm{const}_{0}$, and $f'_{j} = f_{j-1}$ for $j > i$. By \cref{ConeDesc}, given two $k$-cubes $f$ and $g$ in $\Box^{m+n-1}$, their images under $\partial_{i,0}$ will be identified in the quotient $C^{m,n}$ if and only if there exists $j \leq n$ such that $f'_{l} = g'_{l}$ for $l \leq j$ and $f'_{j} = g'_{j} = \mathrm{const}_{1}$ -- in other words, if there exists $j \leq n - 1$ such that $f_{l} = g_{l}$ for $l \leq j$ and $f_{j} = g_{j} = \mathrm{const}_{1}$. The desired isomorphism thus follows from \cref{ConeDesc}.

The analysis of $\partial_{i,\varepsilon}$ where $i \geq n + 1$, $\varepsilon \in \{0,1\}$ is similar, except that in that case we have $f'_{j} = f_{j}$ for all $j \leq i$. Thus we conclude that the images of $f$ and $g$ in the quotient $C^{m,n}$ are equal if and only if there exists $j \leq n$ such that $f_{l} = g_{l}$ for $l \leq j$ and $f_{j} = g_{j} = \mathrm{const}_{1}$.
\end{proof}

Using \cref{FaceIso} and further computations, we can analyze the effect of cubical structure maps on cones.

\begin{lemma}\label{ConeFaceDeg}
Let $x$ be an $(m,n)$-cone in a cubical set $X$. Then:

\begin{enumerate}
\item\label{Low0FaceOfCone} for $1 \leq i \leq n$, $x\bd_{i,0}$ is an $(m,n-1)$-cone;
\item\label{High0FaceOfCone} for $n+1 \leq i \leq m + n$, $x\bd_{i,0}$ is an $(m-1,n)$-cone;
\item\label{1FaceOfCone} if $m \geq 1$, then for all $i$, $x\bd_{i,1}$ is an $(m-1,n)$-cone;
\item\label{DegenOfCone} for $n+1 \leq i \leq m + n + 1$, $x\sigma_{i} $ is an $(m+1,n)$-cone;
\item\label{Low0ConOfCone} for $1 \leq i \leq n$, $x\gamma_{i,0}$ is an $(m,n+1)$-cone;
\item\label{HighConOfCone} for $n+1 \leq i \leq m + n$, $x\gamma_{i,\varepsilon}$ is an $(m+1,n)$-cone.
\end{enumerate}
\end{lemma}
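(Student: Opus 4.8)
The plan is to reduce each of the six items to a purely combinatorial statement about quotient maps between standard cones. Recall that an $(m,n)$-cone $x$ in $X$ is a cube $\Box^{m+n}\to X$ factoring through the quotient $q_{m,n}\colon \Box^{m+n}\to C^{m,n}$, say $x = y\circ q_{m,n}$. If $\psi$ denotes one of the structure maps in question, regarded as a map of representables with codomain $\Box^{m+n}$, then $x\psi = y\circ q_{m,n}\circ\psi$ is automatically an $(m',n')$-cone provided $q_{m,n}\circ\psi$ factors through $q_{m',n'}$; and by the universal property of the quotient this holds exactly when the equivalence relation pulled back along $\psi$ from the relation cutting out $C^{m,n}$ contains the relation cutting out $C^{m',n'}$. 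Using \cref{ConeDesc} --- two $k$-cubes $f,g$ of $\Box^{p}$ are identified in $C^{m,n}$ iff there is some $j\le n$ with $f_i=g_i$ for $i\le j$ and $f_j=g_j=\mathrm{const}_1$ --- this containment becomes a short case analysis on the witness index $j$ and its position relative to the index $i$ of $\psi$, using the cubical identities to see how faces, degeneracies, and connections reindex coordinates and interact with $\mathrm{const}_1$.

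I would then dispatch the items as follows. Parts \ref{Low0FaceOfCone} and \ref{High0FaceOfCone}, together with the case $i\ge n+1$ of part \ref{1FaceOfCone}, are immediate from \cref{FaceIso}, whose proof already produces the factorization of $q_{m,n}\circ\partial_{i,\varepsilon}$ through $q_{m,n-1}$ (when $i\le n$, $\varepsilon=0$) or through $q_{m-1,n}$ (when $i\ge n+1$, either $\varepsilon$). For the remaining case $i\le n$ of \ref{1FaceOfCone}, since $\partial_{i,1}$ inserts a $\mathrm{const}_1$ in position $i\le n$, a pair identified in $C^{m-1,n}$ with witness $j$ stays identified in $C^{m,n}$ with witness $j$ if $j<i$ and with witness $i$ if $j\ge i$. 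Parts \ref{DegenOfCone} and \ref{HighConOfCone} are the easiest: the witness $j$ is always $\le n$ and hence strictly below the operator index $i\ge n+1$, so $\sigma_i$ and $\gamma_{i,\varepsilon}$ both leave the first $j$ coordinates unchanged and the same witness works. Part \ref{Low0ConOfCone} is the one genuinely new computation: given $f\sim g$ in $C^{m,n+1}$ with witness $j\le n+1$, I would exhibit a witness for $\gamma_{i,0}f\sim\gamma_{i,0}g$ in $C^{m,n}$, taking $j$ itself when $j\le i$ (using $\max\{\mathrm{const}_1,-\}=\mathrm{const}_1$ in the boundary case $j=i$) and $j-1$ when $j>i$; the inequality $j-1\le n$ is precisely what pins the degree at $n$.

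Part \ref{Low0ConOfCone} is really the shadow of the monad structure of \cref{cone-monad}: the multiplication $\mu\colon C^2\Rightarrow C$ is induced by $\gamma_{1,0}\otimes-$, so applied to $C^{n-1}\Box^m$ it gives a quotient $C^{m,n+1}\to C^{m,n}$ through which $q_{m,n}\circ\gamma_{i,0}$ factors --- and the restriction to the negative connection $\varepsilon=0$ in the statement reflects that this is the connection compatible with the left-positive cone, since the corresponding identity $\min\{\mathrm{const}_1,-\}=\mathrm{const}_1$ fails. The main obstacle here is not conceptual but bookkeeping: keeping the variance of each structure map straight and correctly tracking how coordinate insertion, coordinate deletion, and the $\max$ of a negative connection permute and collapse the witness index. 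There is essentially no risk of a conceptual gap, only of an indexing slip among the six items and their subcases, so I would present all of the computations uniformly in the notation of \cref{ConeDesc} to minimize that risk.
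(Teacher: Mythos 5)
Your proposal is correct and follows essentially the same strategy as the paper: parts \ref{Low0FaceOfCone}, \ref{High0FaceOfCone}, and the $i \geq n+1$ case of \ref{1FaceOfCone} are read off from \cref{FaceIso}, and the remaining items are handled by tracking the witness index $j$ of the equivalence relation in \cref{ConeDesc} through the relevant structure map. The paper displays the case $i \leq n$ of \ref{1FaceOfCone} explicitly and declares the others similar, while you display the case \ref{Low0ConOfCone}; your witness bookkeeping (keeping $j$ when $j \leq i$, shifting to $j-1$ when $j > i$, and the $\max\{\mathrm{const}_1,-\} = \mathrm{const}_1$ point) matches what the omitted computations would say, and your remark tying \ref{Low0ConOfCone} to the monad multiplication is a nice conceptual gloss consistent with \cref{cone-monad}.
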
 

\begin{proof}
First consider \cref{Low0FaceOfCone}. By \cref{FaceIso}, we have a commuting diagram as shown below:

\centerline{
\xymatrix{
\Box^{m+n-1} \ar[d] \ar@{^(->}[r]^-{\partial_{i,0}} & \Box^{m+n} \ar[d] \\
C^{m,n-1} \ar@{^(->}[r] & C^{m,n} \\
}
}

Now, for an $(m+n)$-cube $x \in X_{m+n}$ to be an $(m,n)$-cone means precisely that the corresponding map $x \colon \Box^{m+n} \to X$ factors through $C^{m,n}$. So the face $x\bd_{i,0}$ can be written as $\Box^{m+n-1} \xrightarrow{\partial_{i,0}} \Box^{m+n} \to C^{m,n} \xrightarrow{x} X$; by the diagram above we can rewrite this as $\Box^{m+n-1} \to C^{m,n-1} \to C^{m,n} \xrightarrow{x} X$. So $x\bd_{i,0}$ factors through $C^{m,n-1}$, meaning that it is an $(m,n-1)$-cone.

Similar commuting diagrams can be used to prove the remaining statements. For \cref{High0FaceOfCone} we may again apply \cref{FaceIso}; the other statements require new computations. We will show these computations for \cref{1FaceOfCone}; the others are similar.

Let $ m \geq 1, i \leq n$ and consider the composite $\Box^{m+n-1} \xrightarrow{\partial_{i,1}} \Box^{m+n} \to C^{m,n}$. As in the proof of \cref{FaceIso}, we let $f$ denote an arbitrary $k$-cube of $\Box^{m+n-1}$ and let $f'$ denote its image under $\partial_{i,1}$; then once again we have $f'_{j} = f_{j}$ for $j \leq i - 1$, but now $f'_{i} = \mathrm{const}_{1}$. So let $f$ and $g$ be two $k$-cubes of $\Box^{m+n-1}$, and suppose that there exists $j \leq n$ such that $f_{l} = g_{l}$ for $l \leq j$ and $ f_{j} = g_{j} = \mathrm{const}_{1}$. Then there exists $j' \leq n$ such that $f'_{l} = g'_{l}$ for $l \leq j'$ and $f'_{j'} = g'_{j'} = \mathrm{const}_{1}$: if $j < i$ then $j' = j$, while if $j \geq i$ then $j' = i$. So $f'$ and $g'$ are identified in $C^{m,n}$. Thus the composite map factors through $C^{m-1,n}$, i.e. we have a commuting diagram:

\centerline{
\xymatrix{
\Box^{m+n-1} \ar[d] \ar@{^(->}[r]^-{\partial_{i,1}} & \Box^{m+n} \ar[d] \\
C^{m-1,n} \ar@{^(->}[r] & C^{m,n} \\
}
}

So for any $(m,n)$-cone $x$, $x\bd_{i,1}$ is an $(m-1,n)$-cone.
\end{proof}

\begin{corollary}\label{QFace}
For $n \geq 1$, every face of a $(0,n)$-cone is a $(0,n-1)$-cone.
\end{corollary}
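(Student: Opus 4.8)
\emph{Proof proposal.} The plan is to reduce the statement to \cref{ConeFaceDeg} by a change of indexing: a $(0,n)$-cone is the same thing as a $(1,n-1)$-cone, and it is in this second guise that the hypothesis ``$m \geq 1$'' of \cref{1FaceOfCone} becomes available, which is precisely what is needed to control the faces $x\bd_{i,1}$.

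First I would check that, for $n \geq 1$, the canonical quotient maps $\Box^{n} \to C^{0,n}$ and $\Box^{n} \to C^{1,n-1}$ coincide, so that a cube $\Box^{n} \to X$ is a $(0,n)$-cone if and only if it is a $(1,n-1)$-cone. Using the explicit description of \cref{ConeDesc}, the relation defining $C^{0,n}$ identifies $k$-cubes $f,g$ of $\Box^{n}$ whenever $f_{i} = g_{i}$ for $i \leq j$ and $f_{j} = g_{j} = \mathrm{const}_{1}$ for some $1 \leq j \leq n$; the case $j = n$ merely asserts $f = g$ and so contributes nothing, leaving exactly the relation defining $C^{1,n-1}$. (This is the ``simple computation'' underlying \cref{Qcone}.) Applying the same observation with $n$ replaced by $n-1$, every $(1,n-2)$-cone is likewise a $(0,n-1)$-cone.

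Given a $(0,n)$-cone $x$ with $n \geq 1$, I would then regard it as a $(1,n-1)$-cone and apply \cref{ConeFaceDeg} with $m = 1$: \cref{High0FaceOfCone} gives that $x\bd_{n,0}$ is a $(0,n-1)$-cone, \cref{1FaceOfCone} (now applicable, as $m = 1$) gives that each $x\bd_{i,1}$ is a $(0,n-1)$-cone, and \cref{Low0FaceOfCone} gives that each $x\bd_{i,0}$ with $1 \leq i \leq n-1$ is a $(1,n-2)$-cone, hence a $(0,n-1)$-cone by the previous step (for $n = 1$ there are no such faces). These account for all $2n$ faces of $x$, which proves the corollary. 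The only point requiring any care — the ``main obstacle'', such as it is — is the first step: one must verify that the isomorphism of \cref{Qcone} is compatible with the quotient maps out of $\Box^{n}$, since \cref{ConeWLOG} on its own only yields the implication ``$(0,n)$-cone $\Rightarrow$ $(1,n-1)$-cone'', which is the wrong direction for handling the $\bd_{i,0}$-faces; the \cref{ConeDesc} calculation above is what supplies the converse.
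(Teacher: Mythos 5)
Your proof is correct, but it takes a slightly more roundabout route than the paper's and, as you yourself note, it carries an extra burden that the paper's argument sidesteps entirely. The paper handles the faces $x\bd_{i,0}$ by applying \cref{ConeFaceDeg}~\ref{Low0FaceOfCone} \emph{directly to $x$ viewed as a $(0,n)$-cone}, with $m = 0$: this yields a $(0,n-1)$-cone at once, with no re-indexing needed. Only for the faces $x\bd_{i,1}$ does the paper pass to the $(1,n-1)$-cone view, and for that it needs only the factorization direction supplied by \cref{ConeWLOG}. Your proof instead routes \emph{all} faces through the $(1,n-1)$-cone view; for the small-index $\bd_{i,0}$ faces \cref{ConeFaceDeg}~\ref{Low0FaceOfCone} then produces a $(1,n-2)$-cone, and you need the genuine identification $C^{0,n-1} = C^{1,n-2}$ (as quotients of $\Box^{n-1}$, not merely an abstract isomorphism) to return to the target form. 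Your observation that \cref{ConeWLOG} alone does not supply this converse is exactly right, and the \cref{ConeDesc} computation you give is the correct way to close the gap — the case $j = n$ in the equivalence relation for $C^{0,n}$ is vacuous, so the quotient maps $\Box^n \to C^{0,n}$ and $\Box^n \to C^{1,n-1}$ have identical kernels. So both proofs are sound; the paper's is a step shorter because it chooses the more economical value of $m$ for each face rather than committing to $m = 1$ throughout, whereas yours has the incidental merit of making the cube-level content of \cref{Qcone} explicit.
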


\begin{proof}
Let $x \colon \Box^n \to X$ be a $(0,n)$-cone, and consider a face $x \bd_{i,\varepsilon}$. If $i \leq n, \varepsilon = 0$, then $x \bd_{i,\varepsilon}$ is a $(0,n-1)$-cone by \cref{ConeFaceDeg} \ref{Low0FaceOfCone}. Otherwise, we may note that $x$ is a $(1,n-1)$-cone by \cref{ConeWLOG}, and therefore $x \bd_{i,\varepsilon}$ is a $(0,n-1)$-cone by \cref{ConeFaceDeg} \ref{High0FaceOfCone} or \ref{1FaceOfCone}. 
\end{proof}

\begin{corollary}\label{cn}
For $m \geq 1, n \geq 0$, let $x$ be an $(m+n-1)$-cube in a cubical set $X$. If $x\gamma_{n,0}$ is an $(m,n)$-cone, then it is also an $(m-1,n+1)$-cone.
\end{corollary}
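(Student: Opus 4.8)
The plan is to recognize $x$ itself as a face of the cube $x\gamma_{n,0}$ and then bootstrap, applying \cref{ConeFaceDeg} twice. Note first that the hypothesis only makes sense when $n \geq 1$, since the connection $\gamma_{n,0}$ is defined on $\Box^{m+n}$ precisely when $1 \leq n \leq m+n-1$, i.e. when $m, n \geq 1$; both inequalities are available to us.

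The first step is to invoke the face--connection cubical identity $\gamma_{n,0}\partial_{n+1,0} = \id$ (the case $j = i-1$, $\varepsilon = \varepsilon'$ of the identities, with $i = n+1$). Interpreted in $X$, this gives $x = (x\gamma_{n,0})\partial_{n+1,0}$, so $x$ is exactly the $(n+1,0)$-face of the $(m+n)$-cube $x\gamma_{n,0}$; this face is legitimate since $n+1 \leq m+n$, i.e. $m \geq 1$.

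The second step: since $x\gamma_{n,0}$ is an $(m,n)$-cone by assumption and the index $n+1$ satisfies $n+1 \leq n+1 \leq m+n$ (again using $m \geq 1$), \cref{ConeFaceDeg}\ref{High0FaceOfCone} applies to this face and shows that $x = (x\gamma_{n,0})\partial_{n+1,0}$ is an $(m-1,n)$-cone. The third step is then to apply \cref{ConeFaceDeg}\ref{Low0ConOfCone} to the $(m-1,n)$-cone $x$ with index $i = n$ (here $1 \leq n \leq n$ uses $n \geq 1$), which yields that $x\gamma_{n,0}$ is an $(m-1,n+1)$-cone, as claimed.

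There is no real obstacle beyond index bookkeeping: the one thing to check carefully is that $n+1$ lies in the range $[\,n+1,\,m+n\,]$ demanded by \cref{ConeFaceDeg}\ref{High0FaceOfCone} — which holds exactly because $m \geq 1$ — and that $n$ lies in the range $[\,1,\,n\,]$ demanded by \cref{ConeFaceDeg}\ref{Low0ConOfCone}. An alternative, more computational route would go through the explicit description in \cref{ConeDesc}: $C^{m-1,n+1}$ is the further quotient of $C^{m,n}$ that additionally identifies $k$-cubes agreeing in coordinates $\leq n+1$ whose $(n+1)$-st coordinate is $\mathrm{const}_{1}$, and one verifies directly that $x\gamma_{n,0}$ respects this extra identification by replacing the $n$-th coordinates with $\mathrm{const}_{1}$, something $\gamma_{n,0}$ cannot detect once the $(n+1)$-st coordinate is already $\mathrm{const}_{1}$. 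The first route is shorter, and is the one I would take.
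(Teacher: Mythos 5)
Your argument is exactly the paper's: identify $x = (x\gamma_{n,0})\partial_{n+1,0}$ via the cubical identity, apply \cref{ConeFaceDeg}\ref{High0FaceOfCone} to conclude $x$ is an $(m-1,n)$-cone, then apply \cref{ConeFaceDeg}\ref{Low0ConOfCone} to get that $x\gamma_{n,0}$ is an $(m-1,n+1)$-cone. You simply make the index bookkeeping (and the implicit $n\geq 1$) explicit where the paper leaves it tacit.
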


\begin{proof}
By \cref{ConeFaceDeg} \ref{High0FaceOfCone}, $x\gamma_{n,0}\bd_{n+1,0} = x$ is an $(m-1,n)$-cone. Therefore, $x\gamma_{n,0}$ is an $(m-1,n+1)$-cone by \cref{ConeFaceDeg} \ref{Low0ConOfCone}.
\end{proof}

In some cases it will be more convenient to characterize cones in a cubical set by a set of conditions on their faces. By a direct analysis of the cubes of $C^{m,n}$, or by an inductive argument similar to that used in the proof of \cref{ConeDesc}, we have the following characterization of $(m,n)$-cones in $X$.

\begin{lemma}\label{FaceCond}
For $m, n$ with $n \geq 1$, and $X \in cSet$, a cube $x \colon\Box^{m+n} \to X$ is an $(m,n)$-cone if and only if for all $i$ such that $1 \leq i \leq n$ we have 

$$x\bd_{i,1} = x\bd_{m+n,0}\bd_{m+n-1,0}...\bd_{i+1,0}\bd_{i,1}\sigma_{i}\sigma_{i+1}...\sigma_{m+n-2}\sigma_{m+n-1}$$

(In the case $m = 0, i = n$ we interpret this statement as the tautology $x\bd_{n,1} = x\bd_{n,1}$). \qed
\end{lemma}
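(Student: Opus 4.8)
The plan is to exploit the explicit description of $C^{m,n}$ as a quotient of $\Box^{m+n}$ provided by \cref{ConeDesc}. Since $C^{m,n}$ is a quotient presheaf and $\Box^{m+n}$ is representable, a cube $x \colon \Box^{m+n} \to X$ is an $(m,n)$-cone, i.e.\ factors through the quotient map $q \colon \Box^{m+n} \to C^{m,n}$, if and only if $x \circ f = x \circ g$ for every pair of $k$-cubes $f, g$ of $\Box^{m+n}$ (viewed as maps $[1]^k \to [1]^{m+n}$, hence as tuples $f = (f_1,\dots,f_{m+n})$ of maps $[1]^k \to [1]$) that get identified in $C^{m,n}$ — namely those for which there is some $j$ with $1 \leq j \leq n$, $f_l = g_l$ for $l \leq j$, and $f_j = g_j = \mathrm{const}_1$. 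The goal is to show this a priori infinite family of conditions collapses to the finite list of face identities in the statement.

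For the forward implication, I would fix $i$ with $1 \leq i \leq n$ and verify that the two $(m+n-1)$-cubes of $\Box^{m+n}$ occurring on the two sides of the claimed identity — namely $\partial_{i,1}$ and the composite $R_i := \partial_{m+n,0}\cdots\partial_{i+1,0}\partial_{i,1}\sigma_i\cdots\sigma_{m+n-1}$ — are identified in $C^{m,n}$. A direct coordinate computation using the formulas for $\partial_{i,\varepsilon}$ and $\sigma_i$ shows that, as tuples, both $\partial_{i,1}$ and $R_i$ have $l$-th coordinate the projection $\pi_l$ for $l < i$ and $i$-th coordinate $\mathrm{const}_1$; thus they agree in all coordinates $\leq i$ and satisfy the criterion of \cref{ConeDesc} with $j = i$. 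Applying $x$ (which factors through $q$) to both then yields the desired face identity. The degenerate case $m = 0$, $i = n$ is exactly where $R_n$ collapses to $\partial_{n,1}$ and the identity becomes the stated tautology.

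For the converse, assume the face identities hold for all $1 \leq i \leq n$, and let $f, g$ be $k$-cubes identified in $C^{m,n}$ with witness $j \leq n$. The key observation is that any $k$-cube $f$ whose $j$-th coordinate is $\mathrm{const}_1$ factors as $\partial_{j,1} \circ f'$ with $f' = (f_1,\dots,\widehat{f_j},\dots,f_{m+n})$, so $x \circ f = (x\partial_{j,1}) \circ f'$; substituting the face identity for $x\partial_{j,1}$ and simplifying via the cubical identities, the string $\sigma_j\cdots\sigma_{m+n-1}$ composed with $f'$ collapses to $(f_1,\dots,f_{j-1})$, whence $x \circ f$ equals the fixed $(j-1)$-cube $x\,\partial_{m+n,0}\cdots\partial_{j+1,0}\partial_{j,1}$ evaluated at $(f_1,\dots,f_{j-1})$. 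Since $f$ and $g$ agree in coordinates $\leq j$, the same computation applied to $g$ gives $x \circ g = x \circ f$. As this accounts for all identifications made in forming $C^{m,n}$, it follows that $x$ factors through $q$. (An alternative, paralleling the proof of \cref{ConeDesc}, would be to induct on $n$, pushing the statement through the pushouts defining $C^{m,n+1}$ from $C^{m,n}$; but the direct analysis above is cleaner.)

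The main obstacle I anticipate is purely bookkeeping: carefully tracking the action of each face and degeneracy on coordinate tuples, confirming that the long composites $R_i$ and $\sigma_j\cdots\sigma_{m+n-1} \circ f'$ simplify as claimed, and being vigilant about the index ranges and the edge cases $m = 0$ and $i = n$. These are all routine consequences of the cubical identities and the defining formulas, but they are the step most prone to off-by-one slips, so that is where the care needs to go.
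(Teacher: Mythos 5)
Your proposal is correct and takes the route the paper explicitly suggests (``a direct analysis of the cubes of $C^{m,n}$'' via \cref{ConeDesc}); the paper itself states the lemma without proof, so you have in effect supplied the omitted details. The coordinate computations check out: $\partial_{i,1}$ and the long composite both restrict to $(\pi_1,\dots,\pi_{i-1},\mathrm{const}_1)$ in the first $i$ slots, so they are identified with witness $j=i$, and conversely the string $\sigma_j\cdots\sigma_{m+n-1}$ is precisely projection onto the first $j-1$ coordinates, so $x\circ f$ depends only on $(f_1,\dots,f_{j-1})$, which is the datum shared by any pair identified with witness $j$.
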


This formula is admittedly somewhat ad hoc; various other choices of face maps would give equivalent conditions. The intuition behind it is that for $1 \leq i \leq n$, the $(i,1)$-face of an $(m,n)$-cone is degenerate in dimensions $i + 1$ to $m + n$.

We will also have use for the following result, which shows that the standard cones contain many inner open boxes.

\begin{lemma}\label{ConeEdge}
For $n \geq 1, 2 \leq i \leq m+n$, the quotient map $\Box^{m+n} \to C^{m,n}$ sends the critical edge with respect to the face $\partial_{i,0}$ to a degenerate edge. 
\end{lemma}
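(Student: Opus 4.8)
The plan is to read off both sides of the desired identity from the combinatorial description of $C^{m,n}$ in \cref{ConeDesc}. Write $N = m+n$ and let $q \colon \Box^{N} \to C^{m,n}$ be the quotient map. By the explicit formula for critical edges (the one stated just after the definition of the critical edge), the critical edge of $\Box^{N}$ with respect to $\partial_{i,0}$ is the $1$-cube $f \colon [1] \to [1]^{N}$ with $f_i = \mathrm{id}_{[1]}$ and $f_l = \mathrm{const}_1$ for all $l \neq i$. I would compare $f$ with the totally constant $1$-cube $g \colon [1] \to [1]^{N}$, all of whose components are $\mathrm{const}_1$; this $g$ is the degeneracy $v\sigma_1$ of the vertex $v = (1,\dots,1)$ of $\Box^{N}$, so $q(g) = q(v)\sigma_1$ is a degenerate edge of $C^{m,n}$. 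It therefore suffices to prove that $f$ and $g$ are identified by $q$, i.e.\ represent the same $1$-cube of $C^{m,n}$.

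By the identification criterion of \cref{ConeDesc}, this amounts to producing an index $j$ with $1 \le j \le n$ such that $f_l = g_l$ for all $l \le j$ and $f_j = g_j = \mathrm{const}_1$. I would take $j = i-1$ when $i \le n$, and $j = n$ when $i \ge n+1$. In either case $j \ge 1$: for $i \le n$ this uses the hypothesis $i \ge 2$, and for $i \ge n+1$ it uses $n \ge 1$; and in either case $j \le n$. The key observation is that in both cases $j < i$ (namely $i-1 < i$, and $n < n+1 \le i$), so every component $f_l$ with $l \le j$ equals $\mathrm{const}_1$, hence coincides with $g_l$; in particular $f_j = g_j = \mathrm{const}_1$. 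Thus the criterion is satisfied and $q(f) = q(g)$ is degenerate.

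I do not anticipate a genuine obstacle here: the only point needing care is the case split according to whether $i$ falls in the ``cone'' range $\{1,\dots,n\}$ or the ``base'' range $\{n+1,\dots,m+n\}$, together with the remark that the bound $i \ge 2$ is exactly what keeps $j = i-1$ positive in the first case — and indeed it cannot be dropped, since the critical edge with respect to $\partial_{1,0}$ is the cone edge, which survives as a non-degenerate edge of $C^{m,n}$. One could also phrase the argument uniformly by setting $j = \min(i-1,n)$, eliminating the case distinction, but I expect the two-case version to read more transparently.
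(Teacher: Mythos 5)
Your proof is correct and follows the same route as the paper: apply the description of $C^{m,n}$ from \cref{ConeDesc} to show the critical edge is identified with the constant edge at $(1,\ldots,1)$. The only difference is presentational — the paper simply takes $j = 1$ (which works because $i \geq 2$ forces $f_1 = \mathrm{const}_1$ and $n \geq 1$ gives $1 \leq n$), avoiding your case split on whether $i \leq n$ or $i \geq n+1$.
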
 

\begin{proof}
The critical edge in question corresponds to the function $f \colon [1] \to [1]^{m+n}$ with $f_{i} = \mathrm{id}_{[1]}$, $f_{j} = \mathrm{const}_{1}$ for $j \neq i$. In particular, $f_{1} = \mathrm{const}_{1}$, so $f$ is equivalent, under the equivalence relation of \cref{ConeDesc}, to the map $[1] \to [1]^{m+n}$ which is constant at $(1,...,1)$.
\end{proof}

We now prove a lemma regarding the standard forms of cones.

\begin{lemma}\label{sa1}
Let $m \geq 1$, and let $x \colon C^{m,n} \to X$ be a degenerate $(m,n)$-cone.

\begin{enumerate}
\item\label{sa1-degen} If the standard form of $x$ is $z\sigma_{a_{p}}$, then $a_{p} \geq n + 1$.

\item\label{sa1-con} If the standard form of $x$ is $z\gamma_{b_{q},1}$, then $b_{q} \geq n+1$.
\end{enumerate}
\end{lemma}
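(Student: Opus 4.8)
The plan is to analyze the standard form of a degenerate $(m,n)$-cone by combining the characterization of $(m,n)$-cones via their faces in \cref{FaceCond} with the cubical identities governing how $\sigma$'s and $\gamma$'s interact with face maps. First I would set up the key structural fact: if $x \colon C^{m,n} \to X$ is an $(m,n)$-cone with $n \geq 1$, then for each $i$ with $1 \leq i \leq n$ the face $x\partial_{i,1}$ is $\sigma_i\sigma_{i+1}\cdots\sigma_{m+n-1}$ applied to a lower-dimensional cube, i.e.\ it is degenerate ``in all directions $\geq i$'' in the precise sense of \cref{FaceCond}. I would record, as a small sublemma, that a cube of the form $w\sigma_i\sigma_{i+1}\cdots\sigma_{m+n-1}$ has, in its own standard form, some degeneracy $\sigma_j$ with $j \geq i$ as the rightmost degeneracy (this follows because the standard form of such a cube, by \cref{StandForm} and the uniqueness in \cref{normal-form}, must have its degeneracy indices including such a $j$; the point is that collapsing in direction $m+n-1$ cannot be undone by connections since connection indices in standard form are bounded and the degeneracy indices are the largest).

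For part \ref{sa1-degen}, suppose the standard form of $x$ ends with $\sigma_{a_p}$ with $a_p \leq n$. Then $x = z\sigma_{a_p}$ where $z$ is the rest of the standard form. I would compute the face $x\partial_{a_p,1}$: by the cubical identity $\sigma_j\partial_{i,\varepsilon} = \id$ for $j = i$ (reading the action on the right: $\partial_{a_p,1}$ then $\sigma_{a_p}$ cancel), we get $x\partial_{a_p,1} = z$, so $z$ is a non-degenerate cube in the sense that it is the ``core'' of $x$'s standard form — more precisely $z = y\gamma_{b_1,\varepsilon_1}\cdots\gamma_{b_q,\varepsilon_q}\sigma_{a_1}\cdots\sigma_{a_{p-1}}$ with $y$ non-degenerate. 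On the other hand, since $a_p \leq n$ and $x$ is an $(m,n)$-cone, \cref{FaceCond} forces $x\partial_{a_p,1}$ to be degenerate (it equals something with $\sigma_{m+n-1}$ on the right, and $m+n-1 \geq n \geq a_p \geq 1$ so this string is nonempty as long as $m+n \geq 2$; the edge cases $m+n=1$ are handled directly since then $n=1,m=0$ and $a_p \leq 1$ gives a contradiction with dimension counting or is vacuous). But $z$'s standard form is $y\gamma_{b_1,\varepsilon_1}\cdots\gamma_{b_q,\varepsilon_q}\sigma_{a_1}\cdots\sigma_{a_{p-1}}$, and comparing with the degenerate form forced by \cref{FaceCond} via uniqueness of standard forms (\cref{StandForm}) yields a contradiction — either $z$ must contain a degeneracy it doesn't, or the indices clash. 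I would make this comparison precise by tracking which degeneracy/connection indices are forced.

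For part \ref{sa1-con}, suppose the standard form of $x$ ends with $\gamma_{b_q,1}$ with $b_q \leq n$. Since $m \geq 1$, \cref{ConeFaceDeg}\ref{High0FaceOfCone} tells us $x\partial_{n+1,0}$ is an $(m-1,n)$-cone, and more usefully I would look at the face $x\partial_{b_q,1}$ and $x\partial_{b_q+1,1}$ and use the cubical identity $\gamma_{j,\varepsilon'}\partial_{i,\varepsilon} = \id$ for $j = i-1, i$ with $\varepsilon = \varepsilon'$ — so $x\partial_{b_q,1}$ picks off the connection, giving $x\partial_{b_q,1} = z\cdots$ where $z$ is the part before $\gamma_{b_q,1}$. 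Then \cref{FaceCond} again forces $x\partial_{b_q,1}$ (since $b_q \leq n$) to be degenerate of the special form, and I would derive a contradiction with the uniqueness of the standard form of $z$ exactly as before, since $z$'s standard form, by assumption, ends with either a connection $\gamma_{b_{q-1},\varepsilon}$ with $b_{q-1} \leq b_q$ or a degeneracy, neither compatible with the required trailing $\sigma_{m+n-1}$-structure when $b_q \leq n < m+n$.

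The main obstacle I expect is the bookkeeping in the final contradiction step: one must carefully match the standard form of the ``core'' cube $z$ (obtained by cancelling the trailing $\sigma$ or $\gamma$ against an appropriate face) against the explicit degeneracy pattern $\sigma_i\sigma_{i+1}\cdots\sigma_{m+n-1}$ mandated by \cref{FaceCond}, using the uniqueness clause of \cref{normal-form}/\cref{StandForm} to conclude these are incompatible. The subtlety is that a cube can be ``the same'' when written with connections versus degeneracies only in restricted ways, so I would need to be careful that the cancellation identities ($\sigma_j\partial_{j,\varepsilon} = \id$ and $\gamma_{j,\varepsilon}\partial_{j,\varepsilon} = \id$, read in the right-action convention) are applied in the correct dimension and that no hidden simplification occurs. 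The edge cases $n = 1$ and small $m+n$ should be checked separately but are straightforward.
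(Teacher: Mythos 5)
Your proposal is correct and takes essentially the same approach as the paper: compute the face $x\partial_{a_p,1}$ (resp.\ $x\partial_{b_q,1}$), which cancels the trailing $\sigma_{a_p}$ (resp.\ $\gamma_{b_q,1}$) to recover $z$, invoke \cref{FaceCond} to write that face as a cube with a trailing degeneracy string $\sigma_{a_p}\cdots\sigma_{m+n-1}$ (resp.\ $\sigma_{b_q}\cdots\sigma_{m+n-1}$), and then derive a contradiction from the uniqueness of standard forms in \cref{StandForm}. The only difference is cosmetic: the paper re-applies $\sigma_{a_p}$ to both sides to locate the contradiction in the standard form of $x$ itself (producing a degeneracy of index $\geq m+n$), whereas you locate it directly in the standard form of $z$ (whose degeneracy indices are bounded by $a_p - 1$, while the forced trailing string gives an index $\geq a_p$); both routes work because $w = x\partial_{m+n,0}\cdots\partial_{a_p+1,0}\partial_{a_p,1}$ has dimension exactly $a_p-1$, so its degeneracy indices in standard form are $< a_p$ and the combined degeneracy string is already sorted.
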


\begin{proof}
For $n = 0$ these statements are trivial, so assume $n \geq 1$. We will prove \cref{sa1-degen}; the proof for \cref{sa1-con} is similar. 

Towards a contradiction, suppose that $a_{p} \leq n$, and let $z = y\gamma_{b_{1},\varepsilon_{1}}...\gamma_{b_{q},\varepsilon_{q}}\sigma_{a_{1}}...\sigma_{a_{p-1}}$, so that $z\sigma_{a_{p}} = x$. Taking the $(a_{p},1)$-faces of both sides of this equation, and applying \cref{FaceCond}, we see that:

\begin{align*}
z & = x\partial_{m+n,0}...\partial_{a_{p}+1,0}\partial_{a_{p},1}\sigma_{a_{p}}...\sigma_{m+n-1} \\
\therefore z\sigma_{a_{p}} & = x\partial_{m+n,0}...\partial_{a_{p}+1,0}\partial_{a_{p},1}\sigma_{a_{p}}...\sigma_{m+n-1}\sigma_{a_{p}} \\
\therefore x & = x\partial_{m+n,0}...\partial_{a_{p}+1,0}\partial_{a_{p},1}\sigma_{a_{p}}...\sigma_{m+n} \\
\end{align*}

In the last step, we have repeatedly used the identity $\sigma_{j}\sigma_{i} = \sigma_{i}\sigma_{j+1}$ for $i \leq j$ to rearrange the string $\sigma_{a_{p}}...\sigma_{m+n-1,1}\sigma_{a_{p}}$ into one whose indices are in strictly increasing order. (We can do this because, by our assumption on $m$, $m+n-1 \geq n \geq a_p$.) Now let $y'\gamma_{b'_{1},\varepsilon'_{1}}...\gamma_{b'_{q'},\varepsilon'_{q'}}\sigma_{a'_{1}}...\sigma_{a'_{p'}}$ be the standard form of $x\partial_{m+n,0}...\partial_{a_{p}+1,0}\partial_{a_{p},1}$; then we have:

$$
x = y'\gamma_{b'_{1},\varepsilon'_{1}}...\gamma_{b'_{q'},\varepsilon'_{q'}}\sigma_{a'_{1}}...\sigma_{a'_{p'}}\sigma_{a_{p}}...\sigma_{m+n}
$$

We can apply further identities to re-order the maps on the right-hand side of this equation, obtaining a standard form for $x$ in which the rightmost degeneracy map has index greater than or equal to $m+n$. But as the standard form of $x$ is unique, this contradicts our assumption that $a_{p} \leq n$.
\end{proof}

\begin{corollary}\label{Q-stand-form}
Let $x \colon \Box^n \to X$. If $x$ is a $(0,n)$-cone, then the standard form of $x$ contains no positive connection maps.
\end{corollary}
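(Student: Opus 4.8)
The plan is to argue by induction on $n$, stripping one cubical operator at a time off the standard form of $x$. The base cases $n \leq 1$ are immediate, since the standard form of a cube of dimension at most $1$ involves no connection maps. For the inductive step I would write the standard form of $x$ (via \cref{StandForm}) as
$$x = y\gamma_{b_1,\varepsilon_1}\cdots\gamma_{b_q,\varepsilon_q}\sigma_{a_1}\cdots\sigma_{a_p},$$
with $y$ non-degenerate; if $q = 0$ there is nothing to prove, so assume $q \geq 1$. By \cref{ConeWLOG} (using $n \geq 2$), $x$ is also a $(1,n-1)$-cone, and of course it is degenerate.

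The crux is to control the rightmost operator $\rho$ of the standard form. When $p \geq 1$, $\rho = \sigma_{a_p}$; when $p = 0$, $\rho = \gamma_{b_q,\varepsilon_q}$, and here I would use \cref{sa1} \ref{sa1-con} --- with its parameter ``$m$'' set to $1$ and its ``$n$'' to $n-1$ --- to rule out $\varepsilon_q = 1$: that would force $b_q \geq n$, contradicting $b_q \leq n-1$ (which holds because $\gamma_{b_q,\varepsilon_q}$ occurs in the standard form of an $n$-cube). So in all cases $\rho$ is either a degeneracy or a negative connection, and there is an index $i$ with $1 \leq i \leq n$ and $\rho\,\partial_{i,0} = \id$ --- namely $i = a_p$ or $i = b_q$, using the cubical identities $\sigma_i\partial_{i,0} = \id$ and $\gamma_{i,0}\partial_{i,0} = \id$. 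I would then set $x' := x\,\partial_{i,0}$; this equals $x$ with $\rho$ deleted from its standard form, that deleted expression still satisfies the normal-form conditions of \cref{normal-form} and so is, by uniqueness, the standard form of $x'$, and it contains exactly the positive connections of the standard form of $x$ (since $\rho$ is a degeneracy or a negative connection).

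It then remains to apply the induction hypothesis to $x'$, which is legitimate because \cref{ConeFaceDeg} \ref{Low0FaceOfCone} (with $m = 0$, and $1 \leq i \leq n$) tells us $x'$ is a $(0,n-1)$-cone. I expect the main friction to be the bookkeeping in the middle step: verifying carefully that deleting the rightmost operator of a standard form leaves a standard form --- this is exactly where the asymmetry of the normal form of \cref{normal-form} (strictly increasing degeneracy indices, and $b_i < b_{i+1}$ when $\varepsilon_i = \varepsilon_{i+1}$) is used --- together with keeping track of the fact that \cref{sa1} requires $m \geq 1$, which is why the $(1,n-1)$-cone reinterpretation via \cref{ConeWLOG} is needed at all.
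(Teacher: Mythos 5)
Your proof is correct. Both your argument and the paper's ultimately rest on the same contradiction: a terminal positive connection $\gamma_{b,1}$ in the standard form of a degenerate $(0,k)$-cone would, by \cref{sa1}\ref{sa1-con} applied after using \cref{ConeWLOG} to reread the cone as a $(1,k-1)$-cone (so that the $m \geq 1$ hypothesis of \cref{sa1} is met), force $b \geq k$, contradicting the bound $b \leq k-1$ coming from the typing of $\gamma_{b,1}\colon [1]^k \to [1]^{k-1}$. Where the two proofs differ is in how the offending connection gets brought to the end of the string. The paper fixes any positive connection $\gamma_{b_i,1}$ in the standard form, strips off everything to its right in one pass by taking faces (justified by \cref{QFace}, which is needed here because some of those faces are positive), and then applies the contradiction. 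Your induction instead peels off only the single rightmost operator $\rho$ at each stage: you must first rule out, via the same invocation of \cref{sa1}, that $\rho$ is itself a positive connection, and then verify the auxiliary fact that deleting $\rho$ from the standard form yields the standard form of $x\partial_{i,0}$ (which you rightly identify as the bookkeeping-heavy step); \cref{ConeFaceDeg}\ref{Low0FaceOfCone} with $m=0$ then feeds the induction. Both routes work; the paper's is slightly more economical because it avoids the standard-form truncation lemma, but your version has the small merit of making explicit the passage through \cref{ConeWLOG} before applying \cref{sa1}, which the paper's proof leaves implicit.
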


\begin{proof}
Let $x = y\gamma_{b_{1},\varepsilon_{1}}...\gamma_{b_{q},\varepsilon_{q}}\sigma_{a_{1}}...\sigma_{a_{p}}$ in standard form. Towards a contradiction, suppose that there exists $1 \leq i \leq q$ such that $\varepsilon_i = 1$. By repeatedly applying face maps and using \cref{QFace}, we see that $y\gamma_{b_{1},\varepsilon_{1}}...\gamma_{b_{i},1}$ is a $(0,n-p-q+i)$-cone. \cref{sa1} \ref{sa1-con} thus implies that  $b_i \geq n - p - q + i + 1$. But $\gamma_{b_{i},1}$ is a map $[1]^{n-p-q+i} \to [1]^{n-p-q+i-1}$, implying $b_i \leq n-p-q+i-1$.
\end{proof}

Before turning our attention to coherent families of composites, we introduce certain subcomplexes of the standard cones, which will be useful in constructing coherent families of composites.


\begin{definition}
For $m, n \geq 0, n \leq k \leq m + n - 1$, $B^{m,n,k}$ is the subcomplex of $C^{m,n}$ consisting of the images of the faces $\partial_{1,0}$ through $\partial_{k,0}$, as well as all all faces $\partial_{i,1}$, under the quotient map $\Box^{m+n} \to C^{m,n}$.
\end{definition}

In order to characterize maps out of $B^{m,n,k}$, we will need to prove a couple of lemmas concerning the faces of $C^{m,n}$.

\begin{lemma}\label{FaceInt}
For $m, n \geq 0, 1 \leq i_{1} < i_{2} \leq m + n, \varepsilon_{1}, \varepsilon_{2} \in \{0,1\}$, where $i_{j} \geq n+ 1$ if $\varepsilon_{j} = 1$, the intersection of the images of the faces $\partial_{i_{1},\varepsilon_{1}}$ and $\partial_{i_{2},\varepsilon_{2}}$ of $\Box^{m+n}$ under the quotient map $\Box^{m+n} \to C^{m,n}$ is exactly the image of the face $\partial_{i_{2},\varepsilon_{2}}\partial_{i_{1},\varepsilon_{1}} = \partial_{i_{1},\varepsilon_{1}}\partial_{i_{2}-1,\varepsilon_{2}}$.
\end{lemma}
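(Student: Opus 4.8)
The plan is to reduce the claim to the explicit combinatorial description of $C^{m,n}$ as a quotient of $\Box^{m+n}$ furnished by \cref{ConeDesc}, and then argue purely at the level of the equivalence relation $\sim$ defining that quotient. Write $q \colon \Box^{m+n} \to C^{m,n}$ for the quotient map. The image of $\partial_{i,\varepsilon}$ under $q$ is by definition $q(\Im \partial_{i,\varepsilon})$, i.e.\ the set of cubes $q(f)$ where $f = g\partial_{i,\varepsilon}$ for some $g$; equivalently, using the description of $\Box^{m+n}_k$ as tuples $(f_1,\dots,f_{m+n})$ of maps $[1]^k \to [1]$, the image of $\partial_{i,\varepsilon}$ consists of those $q$-classes containing a representative $f$ with $f_i = \mathrm{const}_\varepsilon$. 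One containment is immediate: the image of $\partial_{i_2,\varepsilon_2}\partial_{i_1,\varepsilon_1} = \partial_{i_1,\varepsilon_1}\partial_{i_2-1,\varepsilon_2}$ is contained in both $q(\Im\partial_{i_1,\varepsilon_1})$ and $q(\Im\partial_{i_2,\varepsilon_2})$ by functoriality, hence in their intersection.

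For the reverse containment, suppose $c \in C^{m,n}_k$ lies in both images, so $c = q(f) = q(g)$ where $f_{i_1} = \mathrm{const}_{\varepsilon_1}$ and $g_{i_2} = \mathrm{const}_{\varepsilon_2}$. I want to produce a single representative $h$ of $c$ with $h_{i_1} = \mathrm{const}_{\varepsilon_1}$ \emph{and} $h_{i_2} = \mathrm{const}_{\varepsilon_2}$; such an $h$ factors through $\partial_{i_2-1,\varepsilon_2}$ after factoring through $\partial_{i_1,\varepsilon_1}$ (using $i_1 < i_2$ to place the coordinates correctly), witnessing $c$ in the image of the iterated face. By \cref{ConeDesc}, $f \sim g$ means there is $j \le n$ with $f_l = g_l$ for $l \le j$ and $f_j = g_j = \mathrm{const}_1$. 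The argument splits according to how $i_1, i_2$ compare to $j$. If both $i_1, i_2 \le j$ then $f_{i_1} = g_{i_1}$ and $f_{i_2} = g_{i_2}$, so already $f$ itself has $f_{i_1} = \mathrm{const}_{\varepsilon_1}$ and $f_{i_2} = g_{i_2} = \mathrm{const}_{\varepsilon_2}$; take $h = f$. If $i_1 \le j < i_2$, then $f_{i_1} = g_{i_1} = \mathrm{const}_{\varepsilon_1}$, and I modify $g$ in coordinate $i_2$: define $h$ by $h_l = g_l$ for $l \ne i_2$ and $h_{i_2} = g_{i_2} = \mathrm{const}_{\varepsilon_2}$ — wait, $g$ already satisfies this, so here too one of the two given representatives works after checking that $g_{i_1} = f_{i_1} = \mathrm{const}_{\varepsilon_1}$, which holds since $i_1 \le j$. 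The only genuinely new case is $j < i_1 < i_2$: here neither $f$ nor $g$ individually need have both coordinates constant, but since $f_j = \mathrm{const}_1$ and $j \le n$, the class $q(f)$ is unaffected by altering coordinates $> j$ subject to the caveat — actually one must be careful, as altering coordinates can change the class. The clean move is: since $f_j = \mathrm{const}_1$ with $j \le n$, \cref{ConeDesc} lets me freely replace $f_l$ for any $l > j$ without leaving the class, because the relation only requires agreement on coordinates $\le j$ together with $f_j = \mathrm{const}_1$; so define $h_l = f_l$ for $l \le j$ and for $l \notin \{i_1,i_2\}$, set $h_{i_1} = \mathrm{const}_{\varepsilon_1}$, $h_{i_2} = \mathrm{const}_{\varepsilon_2}$. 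Then $h \sim f$ via the same index $j$, so $q(h) = q(f) = c$, and $h$ has the required form.

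The main obstacle I anticipate is bookkeeping in this last case: one has to be careful that the modified tuple $h$ is still a legitimate $k$-cube (it is, since each $h_l$ is a map $[1]^k \to [1]$) and, more subtly, that the hypothesis $i_j \ge n+1$ when $\varepsilon_j = 1$ is actually used — it guarantees that when $\varepsilon_1 = 1$ or $\varepsilon_2 = 1$ the modified coordinate $i_j$ exceeds $n$, so it lies outside the range $\le j \le n$ where coordinates are pinned down, which is exactly what makes the replacement legal. After establishing the set-level equality of the underlying cubes in each dimension $k$, compatibility with the cubical structure maps is automatic since all the maps in sight ($q$, the face inclusions, the iterated face) are maps of cubical sets and images of subobjects along a map of presheaves are computed dimensionwise. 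I would organize the write-up as: (1) recall the description of $q$ and of the images of faces in terms of constant coordinates; (2) the easy containment; (3) the case analysis on $j$ versus $i_1, i_2$, folding the first three cases together and treating $j < i_1 < i_2$ with the free-replacement observation; (4) remark that structure-map compatibility is formal.
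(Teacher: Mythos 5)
Your proof is correct and takes essentially the same approach as the paper: both reduce to the explicit description of $C^{m,n}$ from \cref{ConeDesc}, dispatch the easy containment by functoriality, and obtain the reverse containment by a case analysis on the index $j$ witnessing the generating relation, freely altering coordinates beyond $j$ as needed. The only difference is a cosmetic reparametrization — you start from two representatives with the respective constant coordinates and analyze the witness $j$ between them, whereas the paper fixes one arbitrary representative and separately invokes witnesses for the two image conditions — but the case split and the replacement trick are the same.
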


\begin{proof}
That the intersection of the images of $\partial_{i_{1},\varepsilon_{1}}$ and $\partial_{i_{2},\varepsilon_{2}}$ contains the image of $\partial_{i_{2},\varepsilon_{2}}\partial_{i_{1},\varepsilon_{1}}$ follows from the fact that this face is the intersection of $\partial_{i_{1},\varepsilon_{1}}$ and $\partial_{i_{2},\varepsilon_{2}}$ in $\Box^{m+n}$. Now we will verify the opposite containment, using description of $C^{m,n}$ from \cref{ConeDesc}. 

To this end, consider a map $f \colon [1]^{k} \to [1]^{m+n}$ such that the equivalence class $[f] \in C^{m,n}_{k}$ is contained in the images of faces $(i_{1},\varepsilon_{1})$ and $(i_{2},\varepsilon_{2})$. We will  construct $f' \colon [1]^{k} \to [1]^{m+n}$ such that $f \sim f'$ and $f'$ is contained in the intersection of faces $(i_{1},\varepsilon_{1})$ and $(i_{2},\varepsilon_{2})$, thereby showing that $[f] = [f']$ is contained in the image of this intersection under the quotient map.

Since $f$ is in the image of face $(i_{1},\varepsilon_{1})$, $f \sim g$ for some $g \colon [1]^{k} \to [1]^{m+n}$ such that $g_{i_{1}} = \mathrm{const}_{\varepsilon_{1}}$. Therefore, at least one of the following holds:

\begin{enumerate}
\item $f_{i_{1}} = \mathrm{const}_{\varepsilon_{1}}$;
\item $f_{j} = g_{j} = \mathrm{const}_{1}$ for some $j \leq \mathrm{min}(i_{1}-1,n)$.
\end{enumerate} 

If (ii) holds, then $f$ is equivalent to any $f'$ such that $f'_{l} = f_{l}$ for $l \leq j$; in particular, we can choose such an $f'$ satisfying $f'_{i_{1}} = \mathrm{const}_{\varepsilon_{1}}, f'_{i_{2}} = \mathrm{const}_{\varepsilon_{2}}$.

Now suppose that (i) holds, but (ii) does not. Then because $f$ is in the image of face $(i_{2},\varepsilon_{2})$, $f \sim h$ for some $h \colon [1]^{k} \to [1]^{m+n}$ such that $h_{i_{2}} = \mathrm{const}_{\varepsilon_{2}}$. Therefore, at least one of the following holds:

\begin{enumerate}
\item $f_{i_{2}} = \mathrm{const}_{\varepsilon_{2}}$;
\item $f_{j} = h_{j} = \mathrm{const}_{1}$ for some $i_{1}+1 \leq j \leq \mathrm{min}(i_{2}-1,n)$.
\end{enumerate}

In case (i), we have $f_{i_{1}} = \mathrm{const}_{\varepsilon_{1}}, f_{i_{2}} = \mathrm{const}_{\varepsilon_{2}}$, so we can simply choose $f' = f$. In case (ii), $f$ is equivalent to any $f'$ such that $f'_{l} = f_{l}$ for $l \leq j$ (which implies $f'_{i_{1}} = \mathrm{const}_{\varepsilon_{1}}$); in particular, we can choose such an $f'$ satisfying $f'_{i_{2}} = \mathrm{const}_{\varepsilon_{2}}$.
\end{proof}

\begin{lemma}\label{Contain}
For $i \leq n$, the image of the face $\partial_{i,1}$ under the quotient map $\Box^{m+n} \to C^{m,n}$ is contained in the image of $\partial_{m+n,1}$. 
\end{lemma}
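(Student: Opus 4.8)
\textbf{Proof proposal for \cref{Contain}.}

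The plan is to work directly with the combinatorial description of $C^{m,n}$ given in \cref{ConeDesc}, which identifies two $k$-cubes $f, g \colon [1]^k \to [1]^{m+n}$ precisely when there is some $j \leq n$ with $f_l = g_l$ for $l \leq j$ and $f_j = g_j = \mathrm{const}_1$. To show the image of $\partial_{i,1}$ is contained in the image of $\partial_{m+n,1}$, it suffices to take an arbitrary $k$-cube of $\Box^{m+n}$ lying in the face $\partial_{i,1}$ and show that its equivalence class in $C^{m,n}$ also contains a representative lying in the face $\partial_{m+n,1}$.

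So first I would take a $k$-cube $f = (f_1, \ldots, f_{m+n}) \colon [1]^k \to [1]^{m+n}$ with $f_i = \mathrm{const}_1$, where $i \leq n$. Since $i \leq n$, the index $i$ is among the first $n$ coordinates, so \cref{ConeDesc} tells us that $f$ is equivalent to \emph{any} cube $f'$ that agrees with $f$ in coordinates $1, \ldots, i$ (because coordinate $i$ is already $\mathrm{const}_1$ and $i \leq n$). The natural choice is to set $f'_l = f_l$ for $l \leq i$, leave $f'_l = f_l$ for $i < l < m+n$ — actually the cleanest choice is simply $f'_l = f_l$ for all $l < m+n$ and $f'_{m+n} = \mathrm{const}_1$; this still agrees with $f$ in coordinates $1$ through $i$, hence $f \sim f'$ by \cref{ConeDesc}. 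Now $f'$ has its last coordinate equal to $\mathrm{const}_1$, so it lies in the image of $\partial_{m+n,1}$ (recall $\partial_{m+n,1}$ inserts the value $1$ in the last slot, so cubes in its image are exactly those whose last coordinate is the constant map at $1$, up to the remaining coordinates, which $f'$ matches by construction). Therefore $[f] = [f']$ lies in the image of $\partial_{m+n,1}$, as desired.

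I expect this argument to be essentially routine; there is no serious obstacle. The only point requiring a little care is bookkeeping with the equivalence relation of \cref{ConeDesc} — in particular, verifying that replacing the top coordinate of $f$ by $\mathrm{const}_1$ genuinely produces an equivalent cube (which hinges on $i \leq n$, so that coordinate $i$ being $\mathrm{const}_1$ already triggers the identification regardless of what happens in later coordinates), and being precise about the image of $\partial_{m+n,1}$ in $C^{m,n}$ after passing to the quotient. One should also note that $\partial_{m+n,1}$ is a legitimate face to land in, i.e. the constraint $i \geq n+1$ when $\varepsilon = 1$ from \cref{FaceInt} is satisfied by $m+n \geq n+1$ whenever $m \geq 1$ (and if $m = 0$ the statement is either vacuous or trivial, since then $i \leq n = m+n$ forces $i = n = m+n$ and the claim is an equality). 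This is the sort of edge-case check that should be dispatched in a sentence rather than belabored.
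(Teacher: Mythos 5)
Your argument is correct and is essentially the paper's own proof: both take a $k$-cube $f$ lying in $\partial_{i,1}$ (so $f_i = \mathrm{const}_1$ with $i \leq n$), invoke \cref{ConeDesc} to replace the last coordinate by $\mathrm{const}_1$ while preserving the equivalence class, and conclude that the class meets the image of $\partial_{m+n,1}$. One small caveat on your closing remark: the condition ``$i \geq n+1$ when $\varepsilon = 1$'' in \cref{FaceInt} is a hypothesis of that lemma, not a restriction on which faces have images in $C^{m,n}$, so there is no legitimacy issue to dispatch; and when $m = 0$ the constraint $i \leq n = m+n$ does \emph{not} force $i = n$, though this does not matter since your main argument already covers every $i < m+n$ and the case $i = m+n$ is trivially an equality.
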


\begin{proof}
Let $f \colon [1]^{k} \to [1]^{m+n}$ be a $k$-cube of $\Box^{m+n}$ which factors through $\partial_{i,1}$. Then $f_{i} = \mathrm{const}_{1}$. Thus $f$ is equivalent to any $f' \colon [1]^{k} \to [1]^{m+n}$ such that $f'_{j} = f_{j}$ for all $j \leq i$; in particular, we may choose such an $f'$ with $f'_{m+n} = \mathrm{const}_{1}$. So $f'$ factors through $\partial_{m+n,1}$; thus $[f] = [f']$ is contained in the image of $\partial_{m+n,1}$ under the quotient map.
\end{proof}

Note that \cref{Contain} can also be seen as a consequence of \cref{FaceCond}.

\begin{lemma}\label{BMap}
For a cubical set $X$, a map $x \colon B^{m,n,n} \to X$ is determined by a set of $(m,n-1)$-cones $x_{i,0} \colon C^{m,n-1} \to X$ for $1 \leq i \leq n$ and a set of $(m-1,n)$-cones $x_{i,1}$ for $n+1 \leq i \leq m+n$ such that for all $i_{1} < i_{2}, \varepsilon_{1}, \varepsilon_{2} \in \{0,1\}, x_{i_{2},\varepsilon_{2}}\bd_{i_{1},\varepsilon_{1}} = x_{i_{1},\varepsilon_{1}}\bd_{i_{2}-1,\varepsilon_{2}}$, with $x_{i,\varepsilon}$ being the image of $\partial_{i,\varepsilon}$ under $x$. 
\end{lemma}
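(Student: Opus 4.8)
The plan is to recognize $B^{m,n,n}$ as a union of subobjects of $C^{m,n}$ and then to use the elementary fact that, in a presheaf category, a map out of a union of subobjects is the same thing as a family of maps out of the constituent subobjects that agree on all \emph{pairwise} intersections — no higher intersections are needed, since unions of subpresheaves are computed pointwise as unions of sets, and for sets pairwise agreement on a union of subsets suffices to glue. Concretely, $B^{m,n,n}$ is only defined when $m \geq 1$, so \cref{Contain} applies and shows that each face $\bd_{i,1}$ with $i \leq n$ already factors, in $C^{m,n}$, through the face $\bd_{m+n,1}$. Hence, writing $\bd_{i,\varepsilon}(\Box^{m+n})$ for the image in $C^{m,n}$ of the $(i,\varepsilon)$-face of $\Box^{m+n}$, we may describe
\[
B^{m,n,n} = \bigcup_{i=1}^{n} \bd_{i,0}(\Box^{m+n}) \;\cup\; \bigcup_{i=n+1}^{m+n} \bd_{i,1}(\Box^{m+n})
\]
as a subobject of $C^{m,n}$, the redundant faces $\bd_{i,1}$ with $i \leq n$ having been absorbed.

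Next I would identify the types of these pieces and of maps out of them. By \cref{FaceIso}, for $i \leq n$ the piece $\bd_{i,0}(\Box^{m+n})$ is isomorphic, compatibly with the quotient maps, to $C^{m,n-1}$, so a map $\bd_{i,0}(\Box^{m+n}) \to X$ is precisely an $(m,n-1)$-cone; likewise, for $i \geq n+1$ the piece $\bd_{i,1}(\Box^{m+n})$ is isomorphic to $C^{m-1,n}$, so a map out of it is an $(m-1,n)$-cone. Thus, given $x \colon B^{m,n,n} \to X$, the cubes $x_{i,\varepsilon}$ obtained by precomposing $x$ with $\Box^{m+n-1} \xrightarrow{\bd_{i,\varepsilon}} \Box^{m+n} \to C^{m,n}$ are cones of exactly the asserted degrees (this is also the content of \cref{ConeFaceDeg} \ref{Low0FaceOfCone}, \ref{High0FaceOfCone}, \ref{1FaceOfCone}), and conversely a family $\{x_{i,0}\}_{1\leq i\leq n} \cup \{x_{i,1}\}_{n+1\leq i\leq m+n}$ of cones of these degrees specifies a map out of each piece.

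For the pairwise intersections I would invoke \cref{FaceInt}: for $i_1 < i_2$ with $i_j \geq n+1$ whenever $\varepsilon_j = 1$, the intersection $\bd_{i_1,\varepsilon_1}(\Box^{m+n}) \cap \bd_{i_2,\varepsilon_2}(\Box^{m+n})$ in $C^{m,n}$ is the image of $\bd_{i_2,\varepsilon_2}\bd_{i_1,\varepsilon_1} = \bd_{i_1,\varepsilon_1}\bd_{i_2-1,\varepsilon_2}$. Tracing the first factorization through the $(i_2,\varepsilon_2)$-face and the second through the $(i_1,\varepsilon_1)$-face — using \cref{FaceIso} again so that each intermediate quotient is a standard cone — the condition that the two restrictions of $x$ to this intersection agree reads exactly $x_{i_2,\varepsilon_2}\bd_{i_1,\varepsilon_1} = x_{i_1,\varepsilon_1}\bd_{i_2-1,\varepsilon_2}$; the case $\varepsilon_1 = 1,\varepsilon_2 = 0$ is vacuous, since it would force $i_1 > i_2$. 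Hence families satisfying the stated compatibility are precisely the cocones on the diagram of these faces and their pairwise intersections, and by the union-of-subobjects principle these correspond bijectively — and naturally in $X$ — to maps $B^{m,n,n} \to X$.

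The argument is essentially bookkeeping, with \cref{FaceIso}, \cref{FaceInt}, and \cref{Contain} as the only substantive inputs, all already available. The points that will require care are the reduction via \cref{Contain} (so that the redundant $(i,1)$-faces with $i \leq n$ do not contribute spurious compatibility conditions) and the index shift in the cubical identity $\bd_{i_2,\varepsilon_2}\bd_{i_1,\varepsilon_1} = \bd_{i_1,\varepsilon_1}\bd_{i_2-1,\varepsilon_2}$, which is exactly what makes the intersection condition emerge in the asymmetric form stated in the lemma.
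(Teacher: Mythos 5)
Your proof is correct and follows essentially the same route as the paper's: both reduce the claim to gluing over the nontrivial faces via \cref{Contain}, identify each piece with a standard cone via \cref{FaceIso}, and reduce the compatibility condition on pairwise intersections to the cubical face identity via \cref{FaceInt}. The only difference is that you are more explicit about the underlying presheaf-theoretic fact that pairwise agreement suffices to glue maps out of a union of subobjects, which the paper takes for granted.
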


\begin{proof}
To define a map $x \colon B^{m,n,k} \to X$, it suffices to assign the values of $x$ on the faces $[\partial_{i,\varepsilon}]$ of $C^{m,n}$ for which $i \leq k$ or $\varepsilon = 1$, provided that these choices are consistent on the intersections of faces. By \cref{Contain}, it suffices to consider only those faces for which $i \leq k, \varepsilon = 0$ or $i \geq n + 1, \varepsilon = 1$. These faces are isomorphic to $C^{m,n-1}$ or $C^{m-1,n}$, respectively, by \cref{FaceIso}. By \cref{FaceInt}, to show that these choices  are consistent on the intersections of faces, it suffices to show that they satisfy the cubical identity for composites of face maps.
\end{proof}

\begin{proposition}\label{BEqv}
For all $m, n \geq 1, n \leq k \leq m + n - 1,$ the inclusion $B^{m,n,k} \hookrightarrow C^{m,n}$ is a trivial cofibration.
\end{proposition}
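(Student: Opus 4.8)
The plan is to prove this by a double induction: an outer induction on $m$, and for each fixed $m$ a downward induction on $k$ (carried out uniformly in $n$, so that the inner hypothesis gives the statement for all $B^{m,n,k'}$ with $k < k' \leq m+n-1$, and the outer hypothesis gives it for all $B^{m-1,n',k'}$). In each step I would exhibit the relevant inclusion as a finite composite of pushouts of inner open box inclusions $\widehat{\sqcap}^{\ell}_{i,0} \hookrightarrow \widehat{\Box}^{\ell}_{i,0}$, which are trivial cofibrations by \cref{KendEq}; since trivial cofibrations are closed under pushout and composition, the result follows.

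For the base case $k = m+n-1$ (which, when $m = 1$, is the only case, so it also serves as the base of the outer induction), the preimage of $B^{m,n,m+n-1}$ under the quotient map $q \colon \Box^{m+n} \to C^{m,n}$ is the open box $\sqcap^{m+n}_{m+n,0}$. Using the explicit description of $C^{m,n}$ in \cref{ConeDesc}, any two cubes identified by $q$ already lie in $\sqcap^{m+n}_{m+n,0}$: such cubes have some coordinate $j \leq n$ constant at $1$, hence lie in the face $\partial_{j,1}$, and $\partial_{j,1} \neq \partial_{m+n,0}$. Consequently the square with corners $\sqcap^{m+n}_{m+n,0}, \Box^{m+n}, B^{m,n,m+n-1}, C^{m,n}$ is a pushout. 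By \cref{ConeEdge} (note $m + n \geq 2$) the map $q$ collapses the critical edge with respect to $\partial_{m+n,0}$, so both legs of this pushout square factor through the corresponding edge-collapses, and the square is equivalently a pushout of $\widehat{\sqcap}^{m+n}_{m+n,0} \hookrightarrow \widehat{\Box}^{m+n}_{m+n,0}$.

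For the inductive step, with $m \geq 2$ and $n \leq k < m+n-1$, I would factor the inclusion as $B^{m,n,k} \hookrightarrow B^{m,n,k+1} \hookrightarrow C^{m,n}$; the second map is a trivial cofibration by the inner induction hypothesis, so it remains to treat the first. Here $B^{m,n,k+1}$ is obtained from $B^{m,n,k}$ by adjoining the face $\partial_{k+1,0}$, whose image in $C^{m,n}$ is isomorphic to $C^{m-1,n}$ by \cref{FaceIso} (using $k+1 \geq n+1$). Computing the intersection of this face with $B^{m,n,k}$ via \cref{FaceInt} and \cref{Contain}, and using \cref{ConeDesc} to see that attaching it creates no further identifications, one checks that this intersection is exactly $B^{m-1,n,k}$, so that
\[ B^{m,n,k+1} \cong B^{m,n,k} \cup_{B^{m-1,n,k}} C^{m-1,n}\text{.} \]
This exhibits $B^{m,n,k} \hookrightarrow B^{m,n,k+1}$ as a pushout of $B^{m-1,n,k} \hookrightarrow C^{m-1,n}$; since $n \leq k \leq (m-1)+n-1$, the latter is a trivial cofibration by the outer induction hypothesis (applied to $m - 1 \geq 1$), and we are done.

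The main obstacle is the bookkeeping needed to confirm that the two squares above really are pushouts, i.e.\ that the quotient defining $C^{m,n}$ identifies no cube of the newly attached face with a cube lying outside the expected intersection. This is a finite but delicate combinatorial verification with \cref{ConeDesc,FaceIso,FaceInt,Contain}; it is made slightly subtler by the fact that, unlike in $\Box^{m+n}$, the images of $\partial_{k+1,0}$ and $\partial_{k+1,1}$ in $C^{m,n}$ need not be disjoint, so one must argue directly that any cube in the overlap of the attached face with $B^{m,n,k}$ already lies in some face $\partial_{j,1}$ with $j \leq n \leq k$, hence in $B^{m-1,n,k}$.
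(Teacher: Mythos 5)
Your proof is correct and follows essentially the same route as the paper: the final inclusion $B^{m,n,m+n-1} \hookrightarrow C^{m,n}$ is identified as a pushout of an inner open box inclusion via \cref{ConeEdge}, and each intermediate step $B^{m,n,k} \hookrightarrow B^{m,n,k+1}$ is exhibited as a pushout of $B^{m-1,n,k} \hookrightarrow C^{m-1,n}$ using \cref{FaceInt}, \cref{Contain}, and \cref{FaceIso}. The paper phrases this as a single induction on $m$ (first composing the intermediate inclusions, then handling the last open box filling) where you organize it as a double induction, but these are equivalent, and your explicit attention to why the relevant squares are genuine pushouts — in particular why no extra identifications arise in the quotient — is a sound elaboration of a point the paper leaves to the reader.
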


\begin{proof}
We proceed by induction on $m$. In the base case $m = 1$, the only relevant value of $k$ is $k = n$. The only face of $C^{1,n}$ which is missing from $B^{1,n,n}$ is $[\partial_{n+1,0}]$, so the inclusion $B^{1,n,n} \hookrightarrow C^{1,n}$ is an $(n+1,0)$-open box filling. By \cref{ConeEdge}, the critical edge for this open box filling is degenerate, so the inclusion is a trivial cofibration.

Now let $m \geq 2$, and suppose the statement holds for $m-1$. For $n \leq k \leq m + n - 2$, consider the intersection of the $(k+1,0)$-face of $C^{m,n}$, $[\partial_{k,0}]$, with the subcomplex $B^{m,n,k}$. By \cref{FaceInt} and \cref{Contain}, this intersection consists of faces $(1,0)$ through $(k,0)$ and $(1,1)$ through $(m+n-1,1)$ of $[\partial_{k+1,0}]$. By \cref{FaceIso}, it is thus isomorphic to $B^{m-1,n,k}$.

Thus we can express $B^{m,n,k+1}$ as the following pushout:

\centerline{
\xymatrix{
B^{m-1,n,k} \ar@{^(->}[r] \ar@{^(->}[d] & B^{m,n,k} \ar@{^(->}[d] \\
C^{m-1,n} \ar@{^(->}[r] & B^{m,n,k+1} \pushoutcorner \\
}
}

By the induction hypothesis, $B^{m,n,k} \hookrightarrow C^{m-1,n}$ is a trivial cofibration, since $n \leq k \leq m + n - 2$. Thus $B^{m,n,k} \hookrightarrow B^{m,n,k+1}$ is a trivial cofibration, as a pushout of a trivial cofibration. From this we can see that for any $n \leq k \leq m + n - 2$, the composite inclusion $B^{m,n,k} \hookrightarrow B^{m,n,k+1} \hookrightarrow ... \hookrightarrow B^{m,n,m+n-1}$ is a trivial cofibration.

Thus it suffices to prove that $B^{m,n,m+n-1} \hookrightarrow C^{m,n}$ is a trivial cofibration. Here, as in the base case, the subcomplex $B^{m,n,m+n-1}$ is only missing the face $[\partial_{m+n,0}]$, so the inclusion is an $(m+n,0)$-open box filling. The critical edge of this open box is degenerate by \cref{ConeEdge}, so the inclusion is indeed a trivial cofibration.

Thus we see that the inclusion $B^{m,n,k} \hookrightarrow C^{m,n}$ is a trivial cofibration for any $m,n,k$ satisfying the constraints given in the statement.
\end{proof}

We now turn our attention to coherent families of composites, a technical tool needed to build a cubical quasicategory out of its maximal simplicial subcomplex via inner open box fillings.
To this end, we begin by defining coherent families of composites and then show that every cubical quasicategory admits such a family.

\begin{definition}\label{theta-construction}
A \emph{coherent family of composites} $\theta$ in a cubical quasicategory $X$ consists of a family of functions $\theta^{m,n} \colon \cSet(C^{m,n},X) \to \cSet(C^{m,n+1},X)$ satisfying the following identities:

\begin{enumerate}
\myitem[$(\Theta 1)$]\label{ThetaFace0} for $x \colon C^{m,n} \to X$ and $i \leq n$, $\theta^{m,n}(x)\bd_{i,0} = \theta^{m,n-1}(x\bd_{i,0})$;
\myitem[$(\Theta 2)$]\label{ThetaFace0Id} for $x \colon C^{m,n} \to X$, $\theta^{m,n}(x)\bd_{n+1,0} = x$;
\myitem[$(\Theta 3)$]\label{ThetaFace1} for $x \colon C^{m,n} \to X$ and $i \geq n + 2$, $\theta^{m,n}(x)\bd_{i,1} = \theta^{m-1,n}(x\bd_{i-1,1})$;
\myitem[$(\Theta 4)$]\label{ThetaDegen} for $x \colon C^{m-1,n} \to X$ and $i \geq n + 1$, $\theta^{m,n}(x\sigma_{i}) = \theta^{m-1,n}(x)\sigma_{i+1}$;
\myitem[$(\Theta 5)$]\label{ThetaLowCon} for $x \colon C^{m,n-1} \to X$ and $i \leq n - 1$, $\theta^{m,n}(x\gamma_{i,0}) = \theta^{m,n-1}(x)\gamma_{i,0}$;
\myitem[$(\Theta 6)$]\label{ThetaHighCon} for $x \colon C^{m-1,n} \to X$ and $i \geq n + 1$, then $\theta^{m,n}(x\gamma_{i,\varepsilon}) = \theta^{m-1,n}(x)\gamma_{i+1,\varepsilon}$;
\myitem[$(\Theta 7)$]\label{ThetaTheta} for $x \colon C^{m,n-1} \to X$, $\theta^{m,n}(\theta^{m,n-1}(x)) = \theta^{m,n-1}(x)\gamma_{n,0}$; 
\myitem[$(\Theta 8)$]\label{ThetaConeWLOG} for $x \colon C^{m-1,n+1} \to X$, $\theta^{m,n}(x) = x\gamma_{n+1,0}$.
\end{enumerate}
\end{definition}

The rough intuition behind \cref{theta-construction} is this: thinking of cubes in a cubical quasicategory $X$ as representing diagrams commuting up to homotopy, constructing a coherent family of composites on $X$ amounts to coherently choosing a specific composite edge for each $x \colon \Box^n \to X$. For instance, consider a 2-cube $x$ as depicted below, witnessing $gf \sim qp$:

 \[
\xymatrix{
  a
  \ar[r]^f
  \ar[d]_p
&
  b
  \ar[d]^g
\\
  c
  \ar[r]^q
&
  d
}
\]

Then the identities \ref{ThetaFace0} through \ref{ThetaConeWLOG} imply that $\theta^{2,0}(x)$ is a 3-cube of the form depicted below:

 \[
\xymatrix@!C{
 a
 \ar[rrr]^{s}
 \ar[ddd]_{f}
 \ar[dr]^{p}
&&&
 d
 \ar@{=}[ddd]|!{[dl];[dr]}{\hole}
 \ar@{=}[dr]
\\&
 c
 \ar[rrr]^{q}
 \ar[ddd]_{q}
&&&
 d
 \ar@{=}[ddd]
\\
\\
 b
 \ar[rrr]|!{[uur];[dr]}{\hole}^{g}
 \ar[dr]^{g}
&&&
 d
 \ar@{=}[dr]
\\&
 d
 \ar@{=}[rrr]
&&&
 d
}
\]

The edge $s$ from $a$ to $d$ is homotopic to both composites $gf$ and $qp$.

The remainder of this section is dedicated to proving the following theorem.

\begin{theorem}\label{theta-exists}
Every cubical quasicategory admits a coherent family of composites.
\end{theorem}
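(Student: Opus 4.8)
The plan is to construct the functions $\theta^{m,n}$ by a simultaneous induction that interleaves two kinds of moves: using the defining identities $(\Theta 4)$--$(\Theta 8)$ to force the value of $\theta^{m,n}$ on cones that are themselves degenerate (or that factor through a lower cone), and using inner/special open box fillings to define $\theta^{m,n}$ on the remaining, genuinely non-degenerate cones. The key organizing observation is that the target $\theta^{m,n}(x) \colon C^{m,n+1} \to X$ is an $(m,n+1)$-cone whose faces are either prescribed by $(\Theta 1)$--$(\Theta 3)$ in terms of lower-dimensional values of $\theta$, or are irrelevant (the ``missing'' face $\partial_{n+1,0}$ recovers $x$ itself by $(\Theta 2)$, and the top faces beyond those constrained by $(\Theta 3)$ can be taken degenerate in the sense of \cref{FaceCond}). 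Concretely, given $x \colon C^{m,n} \to X$, the boundary data determined by the already-constructed components of $\theta$ assemble into a map out of the subcomplex $B^{m,n+1,k} \hookrightarrow C^{m,n+1}$ (for the appropriate $k$), and \cref{BEqv} tells us this inclusion is a trivial cofibration — in fact built from inner open box fillings with degenerate critical edges via \cref{ConeEdge} — so a filler exists because $X$ is a cubical quasicategory. This is exactly the role \cref{BEqv}, \cref{BMap}, \cref{FaceInt}, \cref{Contain} were set up to play.

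The induction should be organized by a well-founded order on pairs $(m,n)$ together with a secondary induction that, for fixed $(m,n)$, first handles cones $x$ whose standard form (\cref{StandForm}) ends in a degeneracy $\sigma_i$ or connection $\gamma_{i,\varepsilon}$ with large index, or that arise as $\theta^{m,n-1}(-)$ of a lower cone, or that are $(m-1,n+1)$-cones — in each of these cases $(\Theta 4)$, $(\Theta 5)$, $(\Theta 6)$, $(\Theta 7)$, $(\Theta 8)$ respectively \emph{dictate} the value of $\theta^{m,n}(x)$, and one must check this is well-defined and consistent with the face identities. Here \cref{sa1}, \cref{ConeWLOG}, \cref{cn}, and \cref{ConeFaceDeg} are the combinatorial tools: they guarantee, e.g., that if $x$ is degenerate then its standard form ends in a $\sigma_i$ or $\gamma_{i,1}$ with $i \geq n+1$ (so that $(\Theta 4)$ or $(\Theta 6)$ applies and produces a cube of the right cone degree), and that the various ways a cone can be recognized as degenerate or as a lower cone do not conflict. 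For the remaining cones — those that are non-degenerate, not of the form $\theta^{m,n-1}(\cdots)$, and not $(m-1,n+1)$-cones — we are free to choose $\theta^{m,n}(x)$ by a single open box filling against $B^{m,n+1,k} \hookrightarrow C^{m,n+1}$, provided the boundary data is consistent; consistency of that boundary data is precisely the content of \cref{BMap} combined with the induction hypotheses $(\Theta 1)$--$(\Theta 3)$.

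The main obstacle — and where the bulk of the genuine work lies — is verifying the \emph{coherence} of all eight identities simultaneously: one must show that the value forced by one identity on a cone that is ``reachable'' in several ways (e.g. a cone that is simultaneously degenerate in two directions, or is both a $\theta$-value and a degeneracy) agrees, and that the faces of a freshly-filled $\theta^{m,n}(x)$ automatically satisfy $(\Theta 1)$--$(\Theta 3)$ rather than needing to be imposed. This is a bookkeeping argument driven entirely by the cubical identities and the face/degeneracy behavior of cones catalogued in \cref{ConeFaceDeg}, \cref{FaceCond}, \cref{sa1}, \cref{Q-stand-form}; it is routine in each individual case but lengthy in aggregate. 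I would structure the writeup by: (1) defining the order of construction; (2) treating the ``forced'' cases and checking pairwise compatibility of $(\Theta 4)$--$(\Theta 8)$; (3) treating the ``free'' cases via \cref{BEqv} and \cref{BMap}; (4) verifying $(\Theta 1)$--$(\Theta 3)$ hold for the newly-defined values, using \cref{ConeEdge} to see the relevant open boxes are inner and \cref{joyal-complex-special-open-box}/\cref{inner-fib-special-open-box} where special (rather than inner) boxes appear. Most of the detailed identity-chasing would be deferred to the appendix, in keeping with the paper's stated convention.
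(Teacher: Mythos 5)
Your proposal matches the paper's own proof in both strategy and detail: the definition proceeds by induction on $(m,n)$, with a base case ($m \leq 1$) forced by $(\Theta 4)$--$(\Theta 8)$, an inductive case split (\cref{ThetaDef}) in which the value of $\theta^{m,n}(x)$ is either dictated by one of $(\Theta 4)$--$(\Theta 8)$ (using \cref{sa1}, \cref{cn}, \cref{ConeFaceDeg} to classify which applies) or freely chosen by lifting along the trivial cofibration $B^{m,n+1,n+1} \hookrightarrow C^{m,n+1}$ of \cref{BEqv} with boundary data assembled via \cref{BMap}, and the identity-verification deferred to Appendix~\ref{appendix:calculations}. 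One small overreach in your sketch: the fillers needed are supplied entirely by \cref{BEqv} (a composite of inner open box fillings with degenerate critical edges by \cref{ConeEdge}), so \cref{joyal-complex-special-open-box}/\cref{inner-fib-special-open-box} are not actually invoked here.
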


To prove this, we will construct the family of functions $\theta^{m,n}$ by induction on $m$ and $n$.

\begin{definition}[Base case]\label{theta-base-def}
For a cubical quasicategory $X$ and $x \colon C^{0,n} \to X$, let $\theta^{0,n}(x) = x \sigma_{n+1}$. For $x \colon C^{1,n} \to X$, let $\theta^{1,n}(x) = x \gamma_{n+1,0}$. 
\end{definition}

These define $(0,n+1)$-cones and $(1,n+1)$-cones, respectively, by \cref{ConeFaceDeg}.

\begin{remark}
While it may appear that these definitions of $\theta^{0,n}$ and $\theta^{1,n}$ were chosen arbitrarily, in fact they are implied by the identities of \cref{theta-construction}. Specifically, the given definition of $\theta^{1,n}$ is implied by \ref{ThetaConeWLOG} and \cref{Qcone}. This, together with \ref{ThetaFace1} and \cref{ConeFaceDeg} \ref{DegenOfCone}, then implies the given definition of $\theta^{0,n}$.
\end{remark}

\begin{lemma}\label{theta-base-case}
For a cubical quasicategory $X$, the families of functions $\theta^{0,n}$ and $\theta^{1,n}$ satisfy the identities of \cref{theta-construction}.
\end{lemma}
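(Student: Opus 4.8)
The plan is to verify each of the eight identities $(\Theta 1)$ through $(\Theta 8)$ directly for the explicitly-defined functions $\theta^{0,n}(x) = x\sigma_{n+1}$ and $\theta^{1,n}(x) = x\gamma_{n+1,0}$, treating the cases $m=0$ and $m=1$ separately and reducing everything to a routine manipulation of cubical identities. Several of the identities become vacuous or trivial in low dimension: for instance, $(\Theta 3)$ requires $i \geq n+2$ while for $m=0$ a $(0,n)$-cone lives on $\Box^n$ so there is no such face, and for $m=1$ the constraint $i \geq n+2$ forces $i = n+2 = m+n+1$, which is again out of range for a face of $\Box^{n+1}$; similarly $(\Theta 4)$ is vacuous for $m=0$. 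So the real content is checking $(\Theta 1), (\Theta 2), (\Theta 5), (\Theta 7), (\Theta 8)$ for $\theta^{0,n}$, and $(\Theta 1)$ through $(\Theta 8)$ (minus the vacuous ones) for $\theta^{1,n}$.

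First I would handle $\theta^{0,n}$. For $(\Theta 2)$, $\theta^{0,n}(x)\bd_{n+1,0} = x\sigma_{n+1}\bd_{n+1,0} = x$ by the cubical identity $\sigma_j\partial_{i,\varepsilon} = \id$ for $j = i$. For $(\Theta 1)$, with $i \leq n$, $\theta^{0,n}(x)\bd_{i,0} = x\sigma_{n+1}\partial_{i,0} = x\partial_{i,0}\sigma_n = \theta^{0,n-1}(x\bd_{i,0})$, using $\sigma_j\partial_{i,\varepsilon} = \partial_{i,\varepsilon}\sigma_{j-1}$ for $j > i$. For $(\Theta 5)$, with $i \leq n-1$, $\theta^{0,n}(x\gamma_{i,0}) = x\gamma_{i,0}\sigma_{n+1} = x\sigma_n\gamma_{i,0} = \theta^{0,n-1}(x)\gamma_{i,0}$ via $\sigma_j\gamma_{i,\varepsilon} = \gamma_{i,\varepsilon}\sigma_{j+1}$ for $j > i$ (read right-to-left). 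For $(\Theta 7)$, $\theta^{0,n}(\theta^{0,n-1}(x)) = x\sigma_n\sigma_{n+1}$, and we must see this equals $\theta^{0,n-1}(x)\gamma_{n,0} = x\sigma_n\gamma_{n,0}$; this follows from $\sigma_i\gamma_{i,\varepsilon} = \sigma_i\sigma_i$ read appropriately (i.e.\ $x\sigma_n\gamma_{n,0} = x\sigma_n\sigma_{n+1}$ by the identity $\sigma_j\gamma_{i,\varepsilon} = \sigma_i\sigma_i$ for $j=i$, transcribed into the right-action convention). For $(\Theta 8)$, a $(-1,n+1)$-cone is vacuous, so there is nothing to check (or one interprets $C^{-1,n+1}$ as empty). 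Then for $\theta^{1,n}(x) = x\gamma_{n+1,0}$, I would run through the same list: $(\Theta 2)$ uses $\gamma_{j,\varepsilon}\partial_{i,\varepsilon} = \id$ for $j = i-1$ appropriately indexed; $(\Theta 1)$, $(\Theta 3)$, $(\Theta 4)$, $(\Theta 5)$, $(\Theta 6)$ each follow from a single cubical identity relating a connection with a face, degeneracy, or another connection; $(\Theta 7)$ uses $\gamma_{j,\varepsilon}\gamma_{i,\varepsilon} = \gamma_{i,\varepsilon}\gamma_{i+1,\varepsilon}$ for $j = i$; and $(\Theta 8)$ is exactly the definition $\theta^{1,n}(x) = x\gamma_{n+1,0}$ once one checks that a $(0,n+1)$-cone $x \colon C^{0,n+1} \to X$ regarded as input to $\theta^{1,n}$ via $C^{0,n+1} \cong C^{1,n}$ (\cref{Qcone}) lands on the nose.

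The one subtlety, and the place I expect to spend the most care, is the bookkeeping in $(\Theta 8)$ and the compatibility with \cref{Qcone}: the remark preceding the lemma already asserts that the base-case definitions are \emph{forced} by $(\Theta 8)$ together with $C^{0,n} \cong C^{1,n-1}$, and one must make sure the isomorphism $C^{0,n+1} \cong C^{1,n}$ is exactly the one induced by the quotient maps from $\Box^{n+1}$ so that "$\theta^{1,n}(x) = x\gamma_{n+1,0}$" and "$\theta^{0,n+1}$ applied through the iso" agree, rather than differing by a reindexing. A related point is that one must confirm each $\theta^{0,n}(x)$ and $\theta^{1,n}(x)$ is genuinely a cone of the claimed degree — $(0,n+1)$ and $(1,n+1)$ respectively — which is handled by \cref{ConeFaceDeg} parts \ref{DegenOfCone} and \ref{HighConOfCone}, as already noted after \cref{theta-base-def}. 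Beyond that, every identity reduces to one line of cubical-identity manipulation in the right-action convention, so the proof is essentially a table of eight (times two) short verifications; I would present it by dispatching the vacuous cases in one sentence and then listing the nontrivial equalities with the relevant cubical identity cited for each.

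\begin{proof}
We treat the cases $m = 0$ and $m = 1$ separately, in each case checking the identities of \cref{theta-construction} that are not vacuous. Recall that $\theta^{0,n}(x) = x\sigma_{n+1}$ is a $(0,n+1)$-cone and $\theta^{1,n}(x) = x\gamma_{n+1,0}$ a $(1,n+1)$-cone by \cref{ConeFaceDeg} \ref{DegenOfCone} and \ref{HighConOfCone}.

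\emph{Case $m = 0$.} Here $(\Theta 3)$, $(\Theta 4)$, and $(\Theta 6)$ involve faces or degeneracies of index $\geq n+2$ of an $n$-cube, hence are vacuous, and $(\Theta 8)$ is vacuous as there are no $(-1, n+1)$-cones. For $(\Theta 2)$, $\theta^{0,n}(x)\bd_{n+1,0} = x\sigma_{n+1}\partial_{n+1,0} = x$ by $\sigma_j\partial_{i,\varepsilon} = \id$ for $j = i$. For $(\Theta 1)$ with $i \leq n$, $\theta^{0,n}(x)\bd_{i,0} = x\sigma_{n+1}\partial_{i,0} = x\partial_{i,0}\sigma_n = \theta^{0,n-1}(x\bd_{i,0})$, using $\sigma_j\partial_{i,\varepsilon} = \partial_{i,\varepsilon}\sigma_{j-1}$ for $j > i$. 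For $(\Theta 5)$ with $i \leq n-1$, $\theta^{0,n}(x\gamma_{i,0}) = x\gamma_{i,0}\sigma_{n+1} = x\sigma_n\gamma_{i,0} = \theta^{0,n-1}(x)\gamma_{i,0}$, using $\sigma_j\gamma_{i,\varepsilon} = \gamma_{i,\varepsilon}\sigma_{j+1}$ for $j > i$. For $(\Theta 7)$, $\theta^{0,n}(\theta^{0,n-1}(x)) = x\sigma_n\sigma_{n+1}$, while $\theta^{0,n-1}(x)\gamma_{n,0} = x\sigma_n\gamma_{n,0} = x\sigma_n\sigma_{n+1}$ by $\sigma_j\gamma_{i,\varepsilon} = \sigma_i\sigma_i$ for $j = i$; so the two agree.

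\emph{Case $m = 1$.} Now $(\Theta 3)$ requires $i \geq n+2 = m+n+1$, which is out of range for a face of $\Box^{n+1}$, so it is vacuous, as is $(\Theta 4)$ (a degeneracy of index $\geq n+1$ of an $n$-cube, i.e.\ of a $(0,n)$-cone, only arises through $C^{1,n}$, and the identity then reduces to a cubical identity we verify below). For $(\Theta 2)$, $\theta^{1,n}(x)\bd_{n+1,0} = x\gamma_{n+1,0}\partial_{n+1,0} = x$ by $\gamma_{j,\varepsilon}\partial_{i,\varepsilon} = \id$ for $j = i - 1, i$ with matching sign. For $(\Theta 1)$ with $i \leq n$, $\theta^{1,n}(x)\bd_{i,0} = x\gamma_{n+1,0}\partial_{i,0} = x\partial_{i,0}\gamma_{n,0} = \theta^{1,n-1}(x\bd_{i,0})$, using $\gamma_{j,\varepsilon'}\partial_{i,\varepsilon} = \partial_{i,\varepsilon}\gamma_{j-1,\varepsilon'}$ for $j > i$. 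For $(\Theta 5)$ with $i \leq n-1$, $\theta^{1,n}(x\gamma_{i,0}) = x\gamma_{i,0}\gamma_{n+1,0} = x\gamma_{n,0}\gamma_{i,0} = \theta^{1,n-1}(x)\gamma_{i,0}$, using $\gamma_{j,\varepsilon'}\gamma_{i,\varepsilon} = \gamma_{i,\varepsilon}\gamma_{j+1,\varepsilon'}$ for $j > i$. For $(\Theta 6)$, $x \colon C^{0,n} \to X$ and $i \geq n+1$: then $x\gamma_{i,\varepsilon}$ makes sense only when $i = n+1$ with $x$ regarded via $C^{0,n} \cong C^{1,n-1}$, and this case coincides with the definition, $\theta^{1,n}(x)$ being already $x\gamma_{n+1,0}$; more precisely, combined with \cref{Qcone} this is exactly $(\Theta 8)$. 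For $(\Theta 7)$, $\theta^{1,n}(\theta^{1,n-1}(x)) = x\gamma_{n,0}\gamma_{n+1,0}$, while $\theta^{1,n-1}(x)\gamma_{n,0} = x\gamma_{n,0}\gamma_{n,0}$; wait, we instead compute $\theta^{1,n-1}(x)\gamma_{n,0} = x\gamma_{n,0}\gamma_{n,0} = x\gamma_{n,0}\gamma_{n+1,0}$ by $\gamma_{j,\varepsilon}\gamma_{i,\varepsilon} = \gamma_{i,\varepsilon}\gamma_{i+1,\varepsilon}$ for $j = i$, so the two agree. For $(\Theta 8)$, given $x \colon C^{0,n+1} \to X$, identifying $C^{0,n+1} \cong C^{1,n}$ via the common quotient from $\Box^{n+1}$ (\cref{Qcone}), the function $\theta^{1,n}$ sends $x$ to $x\gamma_{n+1,0}$ by definition, which is the required identity.

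This exhausts the non-vacuous identities, so $\theta^{0,n}$ and $\theta^{1,n}$ satisfy all of $(\Theta 1)$ through $(\Theta 8)$.
\end{proof}
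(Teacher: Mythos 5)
Your verification of $\theta^{0,n}$ is correct and matches the paper's. The gap is in the $m=1$ case: you claim $(\Theta 3)$ is vacuous because "$i = n+2$ is out of range for a face of $\Box^{n+1}$", but this misreads where the face is applied. In $(\Theta 3)$, the face $\bd_{i,1}$ acts on $\theta^{1,n}(x)$, which is a $(1,n+1)$-cone living on $\Box^{n+2}$, not on $\Box^{n+1}$; so $\bd_{n+2,1}$ is perfectly in range, and on the right-hand side $x\bd_{n+1,1}$ is a valid face of the $(n+1)$-cube $x$. The paper therefore has to verify
\[
\theta^{1,n}(x)\bd_{n+2,1} \;=\; x\gamma_{n+1,0}\bd_{n+2,1} \;=\; x\bd_{n+1,1}\sigma_{n+1} \;=\; \theta^{0,n}(x\bd_{n+1,1}),
\]
which is a genuine (one-line, but nontrivial) computation your proof omits.

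Similarly, your treatment of $(\Theta 4)$ for $m=1$ is too cavalier: you parenthetically assert it "reduces to a cubical identity we verify below," but you never actually verify it, and it is not vacuous. With $i = n+1$ the identity reads $\theta^{1,n}(x\sigma_{n+1}) = \theta^{0,n}(x)\sigma_{n+2}$, i.e.\ $x\sigma_{n+1}\gamma_{n+1,0} = x\sigma_{n+1}\sigma_{n+2}$, which needs the cubical identity $\sigma_{n+1}\gamma_{n+1,0} = \sigma_{n+1}\sigma_{n+2}$. The paper carries out exactly this check. So the list of vacuous identities for $m=1$ is shorter than you assert: only $(\Theta 6)$ is genuinely vacuous there (your intuition for why is a bit garbled but the conclusion is right), while $(\Theta 3)$ and $(\Theta 4)$ each require a short verification that is missing from your argument.
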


\begin{proof}
We first verify the identities for $\theta^{0,n}$. The hypotheses of \ref{ThetaFace1}, \ref{ThetaDegen} and \ref{ThetaHighCon} are vacuous here, as there are no cubical structure maps satisfying the given constraints on their indices; \ref{ThetaConeWLOG} similarly does not apply in this case. The remaining identities follow easily from the cubical identities:

\begin{itemize}
\item For \ref{ThetaFace0}, let $i \leq n$. Then $\theta^{0,n}(x)\bd_{i,0} = x\sigma_{n+1}\bd_{i,0} = x\bd_{i,0}\sigma_{n} = \theta^{0,n-1}(x)\sigma_{n}$.
\item For \ref{ThetaFace0Id}, we have $\theta^{0,n}(x)\bd_{n+1,0} = x\gamma_{n+1,0}\bd_{n+1,0} = x$.
\item For \ref{ThetaLowCon}, let $1 \leq i \leq n - 1$. Then $\theta^{0,n}(x\gamma_{i,0}) = x\gamma_{i,0}\sigma_{n+1} = x\sigma_{n}\gamma_{i,0} = \theta^{0,n-1}(x)\gamma_{i,0}$.
\item For \ref{ThetaTheta}, we have $\theta^{0,n+1}(\theta^{0,n}(x)) = x\sigma_{n+1}\sigma_{n+2} = x\sigma_{n+1}\gamma_{n+1,0} = \theta^{0,n}(x)\gamma_{n+1,0}$. 
\end{itemize}

Next we will verify the identities for $\theta^{1,n}$. Here \ref{ThetaConeWLOG} holds by definition, while the hypothesis of \ref{ThetaHighCon} is still vacuous, as there are no connection maps $\gamma_{i,\varepsilon} \colon [1]^{n} \to [1]^{n-1}$ with $i \geq n + 1$. Once again, we can verify the remaining identities using the cubical identities:

\begin{itemize}
\item For \ref{ThetaFace0}, let $i \leq n$. Then $\theta^{1,n}(x)\bd_{i,0} = x\gamma_{n+1,0}\bd_{i,0} = x\bd_{i,0}\gamma_{n,0} = \theta^{1,n-1}(x\bd_{i,0})$.
\item For \ref{ThetaFace0Id}, we have $\theta^{1,n}(x)\bd_{n+1,0} = x\gamma_{n+1,0}\bd_{n+1,0} = x$.
\item For \ref{ThetaFace1}, we need only consider the case $m' = 1, i = n + 2$. For this case we have $\theta^{1,n}(x)\bd_{n+2,1}  = x\gamma_{n+1,0}\bd_{n+2,1} = x\bd_{n+1,1}\sigma_{n+1}  = \theta^{0,n}(x\bd_{n+1,1})$. 
\item For \ref{ThetaDegen}, the only relevant degeneracy is $\sigma_{n+1}$, and we have $\theta^{1,n}(x\sigma_{n+1}) = x\sigma_{n+1}\gamma_{n+1,0} = x\sigma_{n+1}\sigma_{n+2} = \theta^{0,n}(x)\sigma_{n+2}$.
\item For \ref{ThetaLowCon}, let $1 \leq i \leq n - 1$. Then $\theta^{1,n}(x\gamma_{i,0}) = x\gamma_{i,0}\gamma_{n+1,0} = x\gamma_{n,0}\gamma_{i,0} = \theta^{1,n-1}(x)\gamma_{i,0}$.
\item For \ref{ThetaTheta}, we have $\theta^{1,n+1}(\theta^{1,n}(x)) = x\gamma_{n+1,0}\gamma_{n+2,0} = x\gamma_{n+1,0}\gamma_{n+1,0} = \theta^{1,n}(x)\gamma_{n+1,0}$. \qedhere
\end{itemize}
\end{proof}

The following lemma will be used in defining $\theta^{m,n}$ in the inductive case.

\begin{lemma}\label{theta-lift}
Let $m \geq 2, n \geq 0$, and let $X$ be a cubical quasicategory equipped with functions $\theta^{m,n}$ satisfying the identities of \cref{theta-construction} for all pairs $(m',n')$ such that $m' \leq m$, $n' \leq n$, and at least one of these two inequalities is strict. Then for any $x \colon C^{m,n} \to X$, there exists an $(m,n+1)$-cone $\widetilde{\theta}^{m,n}(x) \colon C^{m,n+1} \to X$ satisfying \ref{ThetaFace0}, \ref{ThetaFace0Id}, and \ref{ThetaFace1}.
\end{lemma}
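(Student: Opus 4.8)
The plan is to realize $\widetilde{\theta}^{m,n}(x)$ as an extension of a judiciously chosen map out of the subcomplex $B^{m,n+1,n+1} \hookrightarrow C^{m,n+1}$. By \cref{BMap}, with its $n$ instantiated as $n+1$, a map $b \colon B^{m,n+1,n+1} \to X$ amounts to a family of $(m,n)$-cones $x_{i,0} \colon C^{m,n} \to X$ for $1 \le i \le n+1$ together with a family of $(m-1,n+1)$-cones $x_{i,1} \colon C^{m-1,n+1} \to X$ for $n+2 \le i \le m+n+1$, subject to the compatibility $x_{i_2,\varepsilon_2}\bd_{i_1,\varepsilon_1} = x_{i_1,\varepsilon_1}\bd_{i_2-1,\varepsilon_2}$ for $i_1 < i_2$. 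I would take
\[
x_{i,0} = \theta^{m,n-1}(x\bd_{i,0}) \text{ for } 1 \le i \le n, \qquad x_{n+1,0} = x, \qquad x_{i,1} = \theta^{m-1,n}(x\bd_{i-1,1}) \text{ for } n+2 \le i \le m+n+1.
\]
These are cones of the prescribed degrees by \cref{ConeFaceDeg} (parts \ref{Low0FaceOfCone} and \ref{1FaceOfCone}), and $\theta^{m,n-1}$, $\theta^{m-1,n}$ are among the given functions since $(m,n-1)$ and $(m-1,n)$ each satisfy the index constraints of the hypothesis. These choices are precisely the ones dictated by \ref{ThetaFace0}, \ref{ThetaFace0Id}, and \ref{ThetaFace1}.

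Next I would verify the compatibility conditions, organised by the pair $(\varepsilon_1,\varepsilon_2)$. The case $\varepsilon_1 = 1,\ \varepsilon_2 = 0$ is vacuous, since it would require $n+2 \le i_1 < i_2 \le n+1$. In each of the remaining cases, one rewrites both sides using the pertinent identity among \ref{ThetaFace0}, \ref{ThetaFace0Id}, \ref{ThetaFace1} for the lower-index functions $\theta^{m,n-1}$ and $\theta^{m-1,n}$ (which hold by assumption), after which the two sides become the value of a single lower-index function $\theta^{m',n'}$ applied to the same cube by a cubical face identity of the form $\bd_{j,\varepsilon'}\bd_{i,\varepsilon} = \bd_{i+1,\varepsilon}\bd_{j,\varepsilon'}$ with $j \le i$. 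Concretely: when $\varepsilon_1 = \varepsilon_2 = 0$ one uses \ref{ThetaFace0Id} to collapse $x_{n+1,0}\bd_{i_1,0}$ if $i_2 = n+1$, and \ref{ThetaFace0} on both sides if $i_2 \le n$; when $\varepsilon_1 = 0,\ \varepsilon_2 = 1$ one uses \ref{ThetaFace0Id} if $i_1 = n+1$, and otherwise \ref{ThetaFace0} on one side and \ref{ThetaFace1} on the other; when $\varepsilon_1 = \varepsilon_2 = 1$ one uses \ref{ThetaFace1} on both sides. All the auxiliary $\theta^{m',n'}$ that appear ($\theta^{m,n-2}$, $\theta^{m-1,n-1}$, $\theta^{m-2,n}$) again lie in the range supplied by the hypothesis, with $\theta^{0,n}$ of \cref{theta-base-def} entering when $m = 2$. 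This step is where essentially all the work lies, and it is also where care is needed with the small cases $n = 0$ (several index ranges become empty) and $m = 2$.

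With $b \colon B^{m,n+1,n+1} \to X$ in hand, the conclusion is immediate. By \cref{BEqv} the inclusion $B^{m,n+1,n+1} \hookrightarrow C^{m,n+1}$ is a trivial cofibration — its hypotheses hold with $m$, $n+1$, $n+1$ in place of $m$, $n$, $k$, since $m \ge 2 \ge 1$ and $n+1 \le n+1 \le m+n$ — and $X$, being a cubical quasicategory, is fibrant in the cubical Joyal model structure by \cref{fibrant-objects-unmarked}. Hence $b$ extends along this inclusion to a map $\widetilde{\theta}^{m,n}(x) \colon C^{m,n+1} \to X$, which is by definition an $(m,n+1)$-cone. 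Since \ref{ThetaFace0}, \ref{ThetaFace0Id}, \ref{ThetaFace1} constrain only the faces $\bd_{i,0}$ for $i \le n+1$ and $\bd_{i,1}$ for $i \ge n+2$, all of which $\widetilde{\theta}^{m,n}(x)$ inherits from $b$, the extension satisfies the three required identities. Thus the only real obstacle is the case bookkeeping of the second paragraph; everything else is a direct appeal to \cref{BMap,BEqv,fibrant-objects-unmarked}.
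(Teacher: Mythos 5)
Your proof is correct and follows the paper's route exactly: prescribe $y_{i,0} = \theta^{m,n-1}(x\bd_{i,0})$ for $i \le n$, $y_{n+1,0} = x$, $y_{i,1} = \theta^{m-1,n}(x\bd_{i-1,1})$ for $i \ge n+2$; check the face-commutation identity of \cref{BMap} using \ref{ThetaFace0}--\ref{ThetaFace1} for the lower-index $\theta$'s so as to assemble a map out of $B^{m,n+1,n+1}$; and lift along the trivial cofibration $B^{m,n+1,n+1} \hookrightarrow C^{m,n+1}$ of \cref{BEqv}, using fibrancy of $X$ via \cref{fibrant-objects-unmarked}. Your case analysis is in fact slightly more complete than what the paper prints, which displays computations for $i_1 < i_2 \le n$, $i_1 < i_2 = n+1$, $n+1 = i_1 < i_2$, and $n+2 \le i_1 < i_2$, but does not write out the mixed pair $i_1 \le n$, $i_2 \ge n+2$; as you indicate, that case is settled by \ref{ThetaFace0} for $\theta^{m-1,n}$ on one side and \ref{ThetaFace1} for $\theta^{m,n-1}$ on the other, both reducing to $\theta^{m-1,n-1}(x\bd_{i_1,0}\bd_{i_2-2,1})$.
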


\begin{proof}
For each $i \leq n$, the face $x\bd_{i,0}$ is an $(m,n-1)$-cone by \cref{ConeFaceDeg} \ref{Low0FaceOfCone}; thus $X$ contains an $(m,n)$-cone $\theta^{m,n-1}(x\bd_{i,0})$. Similarly, for each $i \geq n + 2$, the face $x\bd_{i-1,1}$ is an $(m-1,n)$-cone, and so $X$ contains an $(m-1,n+1)$-cone $\theta^{m-1,n}(x\bd_{i-1,1})$, and these cones satisfy the identities of \cref{theta-construction}. Using \cref{BMap}, we will define a map $y \colon B^{m,n+1,n+1} \to X$ with $y_{i,0} = \theta^{m,n-1}(x\bd_{i,0})$ for $1 \leq i \leq n$, $y_{n+1,0} = x$, and $y_{i,1} = \theta^{m-1,n}(x\bd_{i-1,1})$ for $i \geq n + 2$.

To show that we can define such a map, we must verify that our choices of $y_{i,\varepsilon}$ satisfy the cubical identity for composing face maps.

For $i_{1} < i_{2} \leq n, \varepsilon_{1} = \varepsilon_{2} = 0$, we have:

\begin{align*}
y_{i_{2},0}\bd_{i_{1},0} & = \theta^{m,n-1}(x\bd_{i_{2},0})\bd_{i_{1},0} \\
& = \theta^{m,n-2}(x\bd_{i_2,0}\bd_{i_1,0}) \\
& = \theta^{m,n-2}(x\bd_{i_1,0}\bd_{i_{2}-1,0}) \\
& = \theta^{m,n-1}(x\bd_{i_1,0})\bd_{i_{2}-1,0} \\
& = y_{i_1,0}\bd_{i_{2}-1,0} \\
\end{align*}

For $i_{1} < i_{2} = n + 1$, we have:

\begin{align*}
y_{n+1,0}\bd_{i_{1},0} & = x\bd_{i_1,0} \\
& = \theta^{m,n-1}(x\bd_{i_1,0})\bd_{n,0} \\
& = y_{i_{1},0}\bd_{n,0} \\
\end{align*}

For $n + 1 = i_{1} < i_{2}$ we have:

\begin{align*}
y_{i_{2},1}\bd_{n+1,0} & = \theta^{m-1,n}(x\bd_{i_{2}-1,1})\bd_{n+1,0} \\
& = x\bd_{i_{2}-1,1} \\
& = y_{n+1,0}\bd_{i_{2}-1,1} \\
\end{align*}

Finally, for $n + 2 \leq i_{1} < i_{2}$, we have:

\begin{align*}
y_{i_{2},1}\bd_{i_{1},1} & = \theta^{m-1,n}(x\bd_{i_{2}-1,1})\bd_{i_{1},1} \\
& = \theta^{m-2,n}(x\bd_{i_{2}-1,1}\bd_{i_{1}-1,1}) \\
& = \theta^{m-2,n}(x\bd_{i_{1}-1,1}\bd_{i_{2}-2,1}) \\
& = \theta^{m-1,n}(x\bd_{i_{1}-1,1})\bd_{i_{2}-1,1} \\
& = y_{i_{1},1}\bd_{i_{2}-1,1} \\
\end{align*}

Thus the $(n+1)$-tuple $y$ does indeed define a map $B^{m,n+1,n+1} \to X$. Now consider the following commuting diagram:

\centerline{
\xymatrix{
B^{m,n+1,n+1} \ar[r]^{\hspace{2pc} y} \ar@{^(->}[d]_{\sim} & X \ar[d] \\
C^{m,n+1} \ar[r] & \Box^{0} \\
}
}

The left-hand map is a trivial cofibration by \cref{BEqv}, while the right-hand map is a fibration by assumption. Thus there exists a lift of this diagram, i.e. an $(m,n+1)$-cone $\widetilde{\theta}^{m,n}(x) \colon C^{m,n+1} \to X$ such that for $i \leq n, \widetilde{\theta}^{m,n}(x)\bd_{i,0} = \theta^{m,n-1}(x\bd_{i,0})$, $\widetilde{\theta}^{m,n}(x)\bd_{n+1,0} = x$, and for $i \geq n + 2$, $\widetilde{\theta}^{m,n}(x)\bd_{i,1} = \theta^{m-1,n}(x\bd_{i-1,1})$.
\end{proof}

Although \cref{theta-lift} applies for an arbitrary $(m,n)$-cone $x$ with $m \geq 2$, we will not use it to construct $\theta^{m,n}$ for all such cones, as the arbitrary lift used in its proof may not satisfy \ref{ThetaDegen} through \ref{ThetaConeWLOG}. Instead, we define $\theta^{m,n}$ for $m \geq 2,n \geq 0$ by the following case analysis.

\begin{definition}[Inductive case]\label{ThetaDef}
Let $m \geq 2, n \geq 0$, and let $X$ be a cubical quasicategory equipped with functions $\theta^{m,n}$ satisfying the identities of \cref{theta-construction} for all pairs $(m',n')$ such that $m' \leq m$, $n' \leq n$, and at least one of these two inequalities is strict. Let $x \colon C^{m,n} \to X$ be an $(m,n)$-cone. Then $\theta^{m,n}(x) \colon \Box^{m+n+1} \to X^{m,n}$ is defined as follows:

\begin{enumerate}[label=(\arabic*)]
\item If the standard form of $x$ is $z\sigma_{a_{p}}$ for some $a_{p} \geq n + 1$, then $\theta^{m,n}(x) = \theta^{m-1,n}(z)\sigma_{a_{p}+1}$;
\item If the standard form of $x$ is $z\gamma_{b_{q},0}$ for some $b_{q} \leq n - 1$, then $\theta^{m,n}(x) = \theta^{m,n-1}(z)\gamma_{b_{q},0}$;
\item If the standard form of $x$ is $z\gamma_{b_{q},\varepsilon}$ for some $b_{q} \geq n + 1$, then $\theta^{m,n}(x) = \theta^{m-1,n}(z)\gamma_{b_{q}+1,\varepsilon}$;
\item If $x$ is an $(m-1,n+1)$-cone not covered under any of cases (1) through (3), then $\theta^{m,n}(x) = x\gamma_{n+1,0}$;
\item If $x = \theta^{m,n-1}(x')$ for some $x' \colon C^{m,n-1} \to X$ and $x$ is not covered under any of cases (1) through (4) then $\theta^{m,n}(x) = x\gamma_{n,0}$;
\item If $x$ is not convered under any of cases (1) through (5), then $\theta^{m,n}(x)$ is the cone $\widetilde{\theta}^{m,n}(x)$ constructed in \cref{theta-lift}.
\end{enumerate}
\end{definition}

That each of the constructions of \cref{ThetaDef} produces an $(m,n+1)$-cone can be seen from \cref{ConeWLOG,ConeFaceDeg,theta-lift}. 

Before proving that this definition satisfies the identities of \cref{theta-construction}, we prove some simple lemmas about its cases.

\begin{lemma}\label{theta-cases-degenerate}
Every degenerate cone in a cubical quasicategory $X$ falls under one of cases (1) to (4) of \cref{ThetaDef}.
\end{lemma}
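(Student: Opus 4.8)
The plan is to unpack what "degenerate cone" means via the standard form (\cref{StandForm}) and then match each possibility against the six cases of \cref{ThetaDef}. So suppose $x \colon C^{m,n} \to X$ is a degenerate $(m,n)$-cone with $m \geq 2$. Being degenerate, its standard form is $z\sigma_{a_p}\cdots$ or $z\gamma_{b_q,\varepsilon}\cdots$ — that is, the rightmost map in the standard form is either a degeneracy $\sigma_{a_p}$ or a connection $\gamma_{b_q,\varepsilon}$. I would split into these two situations.

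If the standard form ends with a degeneracy $\sigma_{a_p}$, then by \cref{sa1}\ref{sa1-degen} we have $a_p \geq n+1$, so $x$ falls under case (1) of \cref{ThetaDef}. If instead the standard form ends with a connection $\gamma_{b_q,\varepsilon}$, I distinguish on the sign $\varepsilon$ and the index $b_q$. When $\varepsilon = 1$, \cref{sa1}\ref{sa1-con} gives $b_q \geq n+1$, so $x$ falls under case (3). When $\varepsilon = 0$, either $b_q \leq n-1$, in which case $x$ falls under case (2), or $b_q \geq n$; but $b_q \leq n$ since $\gamma_{b_q,0}$ is a connection on $C^{m,n}$ regarded as $[1]^{m+n}$ (so $b_q \leq m+n-1$, and more to the point the relevant constraint forces $b_q \le m+n-1$; the only boundary value not yet covered is $b_q = n$). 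The remaining possibility is therefore exactly $x = z\gamma_{n,0}$ with $\varepsilon = 0$, i.e.\ the standard form ends with $\gamma_{n,0}$. Here I invoke \cref{cn}: since $z\gamma_{n,0}$ is an $(m,n)$-cone with $m \geq 1$, it is also an $(m-1,n+1)$-cone, so $x$ falls under case (4) (assuming it was not already caught by cases (1)–(3), which is fine since the cases are checked in order).

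Thus every degenerate cone lands in one of cases (1) through (4), which is the assertion. The only mildly delicate point — and the main thing to get right — is the bookkeeping on the index $b_q$ of a terminal negative connection: one must confirm that $b_q \leq n$ always holds for a negative connection on $C^{m,n}$, so that the dichotomy "$b_q \leq n-1$ or $b_q = n$" is exhaustive, and then that the $b_q = n$ case is precisely the hypothesis of \cref{cn}. Everything else is a direct appeal to \cref{sa1} and \cref{cn}, so I do not expect any real obstacle beyond this case-chasing.
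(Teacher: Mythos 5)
Your overall strategy---reduce to the standard form, then invoke \cref{sa1} for terminal degeneracies and positive connections and \cref{cn} for the one remaining case---is exactly the paper's intended argument (the paper just cites \cref{cn,sa1} without spelling out the case split). However, there is a genuine gap in your handling of a terminal negative connection $\gamma_{b_q,0}$. You assert that $b_q \leq n$ must hold, but this is false: the only \emph{a priori} constraint on a connection index on $[1]^{m+n}$ is $1 \leq b_q \leq m+n-1$, and nothing in \cref{sa1} rules out $b_q \geq n+1$ for a negative connection (indeed, when $n=0$ every terminal connection satisfies $b_q \geq n+1$). Your parenthetical even concedes this, retreating to ``$b_q \leq m+n-1$'' and then jumping, without justification, to ``the only boundary value not yet covered is $b_q = n$.'' As written, the case $\gamma_{b_q,0}$ with $b_q \geq n+1$ is simply not addressed.

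The fix is easy and preserves your structure: case (3) of \cref{ThetaDef} reads ``the standard form of $x$ is $z\gamma_{b_q,\varepsilon}$ for some $b_q \geq n+1$'' with $\varepsilon$ unconstrained, so it already absorbs negative connections with $b_q \geq n+1$, not only the positive ones. So for $\varepsilon = 0$ the correct trichotomy is: $b_q \leq n-1$ gives case (2), $b_q \geq n+1$ gives case (3), and the single leftover value $b_q = n$ gives an $(m-1,n+1)$-cone by \cref{cn} and hence case (4). Your use of \cref{cn} in that last step (apply it with $z$ playing the role of the lemma's $x$) is correct; the only error is the spurious bound on $b_q$ and the consequent omission of the $b_q \geq n+1$, $\varepsilon = 0$ case.
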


\begin{proof}
This follows from \cref{cn,sa1}.
\end{proof}

\begin{corollary}\label{theta-case-6}
Case (6) of \cref{ThetaDef} consists precisely of those $(m,n)$-cones of $X$ which are:

\begin{itemize}
\item Non-degenerate;
\item Not $(m-1,n+1)$-cones;
\item Not equal to $\theta^{m,n-1}(x)$ for any $x \colon C^{m,n-1} \to X$. \qed
\end{itemize}
\end{corollary}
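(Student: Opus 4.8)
The statement to prove is \cref{theta-case-6}, which characterizes case (6) of \cref{ThetaDef} as precisely those $(m,n)$-cones of $X$ that are non-degenerate, not $(m-1,n+1)$-cones, and not of the form $\theta^{m,n-1}(x')$ for any $x' \colon C^{m,n-1} \to X$.

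\textbf{Proof proposal.} The plan is to unwind \cref{ThetaDef} case by case. A cone $x \colon C^{m,n} \to X$ falls under case (6) exactly when it is excluded from cases (1) through (5). I will show that the disjunction of the hypotheses of cases (1)--(5) is equivalent to the disjunction ``$x$ is degenerate, or $x$ is an $(m-1,n+1)$-cone, or $x = \theta^{m,n-1}(x')$ for some $x'$''. First I would observe that by \cref{StandForm} every cube $x$ has a unique standard form $z\gamma_{b_1,\varepsilon_1}\cdots\gamma_{b_q,\varepsilon_q}\sigma_{a_1}\cdots\sigma_{a_p}$, so exactly one of the following holds: the standard form ends with a degeneracy $\sigma_{a_p}$ (with $a_p$ its index), or it ends with a connection $\gamma_{b_q,\varepsilon_q}$, or it has no degeneracies or connections at all, i.e. $x$ is non-degenerate. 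In the first two situations $x$ is degenerate; conversely every degenerate cone ends in a degeneracy or connection in its standard form.

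Next I would match up cases (1)--(3) with degeneracy. By \cref{sa1}, for a degenerate $(m,n)$-cone with $m \geq 1$, if the standard form ends with $\sigma_{a_p}$ then $a_p \geq n+1$ (so case (1) applies), and if it ends with $\gamma_{b_q,1}$ then $b_q \geq n+1$ (so case (3) applies). If it ends with $\gamma_{b_q,0}$, then either $b_q \leq n-1$ (case (2)) or $b_q \geq n$; the remaining subcase $b_q = n$ is handled by \cref{cn}, which shows such a cone is also an $(m-1,n+1)$-cone, hence falls under case (4) if not already covered by (1)--(3). Thus, invoking \cref{theta-cases-degenerate} directly, every degenerate cone is covered by cases (1)--(4). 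Conversely, cases (1)--(3) only arise when the standard form ends in a degeneracy or connection, so they force $x$ degenerate. Case (4) is, by its wording, exactly ``$x$ is an $(m-1,n+1)$-cone and not covered by (1)--(3)'', and case (5) is exactly ``$x = \theta^{m,n-1}(x')$ for some $x'$, and not covered by (1)--(4).'' Assembling these: $x$ avoids all of (1)--(5) iff $x$ is non-degenerate (avoiding (1)--(3), using that a degenerate cone is covered by (1)--(4) and a cone covered by (1)--(3) is degenerate), $x$ is not an $(m-1,n+1)$-cone (avoiding (4), given non-degeneracy), and $x$ is not of the form $\theta^{m,n-1}(x')$ (avoiding (5), given the previous two). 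This is precisely the asserted characterization.

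\textbf{Main obstacle.} The one point requiring care is the interaction between the three exclusion conditions: I must check that a non-degenerate $(m-1,n+1)$-cone is indeed flagged by case (4) and not accidentally by (1)--(3) (which is automatic since (1)--(3) require a degeneracy/connection at the end of the standard form), and similarly that the ``not covered under any of cases (1) through (4)'' and ``(1) through (5)'' clauses in the definitions of (5) and (6) do not interact in unexpected ways — but since the cases are defined by an explicit priority ordering, this is a matter of bookkeeping rather than genuine difficulty. The substantive input is \cref{theta-cases-degenerate} (equivalently \cref{cn} and \cref{sa1}), which guarantees degenerate cones never reach case (6); everything else is a direct reading of \cref{ThetaDef}.
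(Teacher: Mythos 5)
Your proof is correct and takes essentially the same approach as the paper: the paper states this as an immediate corollary of \cref{theta-cases-degenerate} (which in turn rests on \cref{sa1} and \cref{cn}), and your argument unwinds exactly that — degenerate cones land in cases (1)--(4), cases (1)--(3) force degeneracy, and cases (4)--(5) are by definition the two remaining exclusion conditions, read off with their explicit priority ordering. The only difference is that you re-derive the content of \cref{theta-cases-degenerate} inline before also citing it; trimming one or the other would match the paper's concision.
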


\begin{lemma}\label{Theta-T}
Let $X$ be a cubical quasicategory, and let $m, n \geq 0$ for which we have defined $\theta^{m,n}$ satisfying the identities of \cref{theta-construction}. Then $x \colon C^{m,n} \to X$ is covered under case (6) of \cref{ThetaDef}, i.e.:

\begin{itemize}
\item $x$ is non-degenerate;
\item $x$ is not an $(m-1,n+1)$-cone;
\item $x$ is not equal to $\theta^{m,n-1}(x')$ for any $x' \colon C^{m,n-1} \to X$;
\end{itemize}

if and only if $\theta^{m,n}(x)$ is covered under case (5), i.e. it is non-degenerate and is not an $(m-1,n+2)$-cone.
\end{lemma}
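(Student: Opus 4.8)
The plan is to prove the two implications separately, as one is immediate and the other rests on a case analysis of standard forms. I would begin with the contrapositive of the ``if'' direction. Cases (1)--(6) of \cref{ThetaDef} are mutually exclusive and exhaustive among the $(m,n)$-cones of $X$ (each later case being defined by excluding the earlier ones), so if $x$ is not covered by case~(6) it falls under one of cases (1)--(5). In cases (1), (2), (3) and (5) the explicit formula in \cref{ThetaDef} exhibits $\theta^{m,n}(x)$ as a degeneracy or a connection of another cube, and in case~(4) we have $\theta^{m,n}(x) = x\gamma_{n+1,0}$, again a connection of another cube; so in every case $\theta^{m,n}(x)$ is degenerate, hence not ``covered by case~(5)''. (At the outset I would also record that, for the $(m,n+1)$-cone $\theta^{m,n}(x)$, which is automatically of the form $\theta^{m,n}(-)$, being covered by case~(5) is by \cref{theta-cases-degenerate} equivalent to being non-degenerate and not an $(m-1,n+2)$-cone, which justifies the ``i.e.'' in the statement.) By \cref{theta-case-6}, the three conditions on $x$ in the statement amount precisely to $x$ being covered by case~(6) of \cref{ThetaDef}, so for the ``if'' direction we may assume $x$ is covered by case~(6), whence $\theta^{m,n}(x) = \widetilde\theta^{m,n}(x)$, the cone constructed in \cref{theta-lift}.

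Write $y := \widetilde\theta^{m,n}(x)$. First I would show $y$ is not an $(m-1,n+2)$-cone: otherwise, since $n+1 \leq n+2$, \cref{ConeFaceDeg}~\ref{Low0FaceOfCone} would make $y\bd_{n+1,0}$ an $(m-1,n+1)$-cone, but $y\bd_{n+1,0} = x$ by \ref{ThetaFace0Id}, contradicting case~(6). Then I would prove $y$ non-degenerate by contradiction. As $y$ is an $(m,n+1)$-cone with $m \geq 2$, \cref{sa1} shows that if $y$ is degenerate its standard form ends either with a degeneracy $\sigma_a$ with $a \geq n+2$, or with a positive connection $\gamma_{b,1}$ with $b \geq n+2$, or with a negative connection $\gamma_{b,0}$ for some $b$. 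In the first two cases, and when $b \leq n-1$ or $b \geq n+2$, the relevant cubical identity lets one commute the final structure map past $\bd_{n+1,0}$, exhibiting $x = y\bd_{n+1,0}$ as a degeneracy or connection of another cube and contradicting non-degeneracy of $x$. If $b = n$, then $\gamma_{n,0}\bd_{n+1,0} = \id$, so $y = x\gamma_{n,0}$ and therefore $x = y\bd_{n,0} = \theta^{m,n-1}(x\bd_{n,0})$ by \ref{ThetaFace0}, contradicting that $x$ is not in the image of $\theta^{m,n-1}$. If $b = n+1$, then $\gamma_{n+1,0}\bd_{n+1,0} = \id$, so $y = x\gamma_{n+1,0}$, and \cref{cn} makes $y$ an $(m-1,n+2)$-cone, already excluded. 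These cases being exhaustive, $y$ is non-degenerate, completing the proof.

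The only genuinely delicate part is the standard-form case analysis for a hypothetically degenerate $\widetilde\theta^{m,n}(x)$, and within it the two boundary subcases $b = n$ and $b = n+1$ of the negative connection, where the final connection cannot simply be commuted past $\bd_{n+1,0}$. These are exactly the subcases forcing $x$ to lie in the image of $\theta^{m,n-1}$ or to be an $(m-1,n+1)$-cone, which is precisely why case~(6) of \cref{ThetaDef} is defined by excluding those two possibilities; everything else is routine manipulation of the cubical identities together with \cref{sa1}, \cref{cn}, and \cref{ConeFaceDeg}.
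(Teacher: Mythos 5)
Your proof is correct, and the converse direction (if $x$ is not covered by case~(6) then $\theta^{m,n}(x)$ is degenerate, hence covered by (1)--(4)) coincides with the paper's. Where you genuinely diverge is the non-degeneracy of $\theta^{m,n}(x)$ when $x$ is covered by case~(6). The paper invokes a general structural fact: if a degenerate cube has a non-degenerate face, that face must appear as at least two distinct faces of the cube. Since $\theta^{m,n}(x)\bd_{n+1,0}=x$ is non-degenerate, degeneracy of $\theta^{m,n}(x)$ would force $x$ to recur among the other faces, but those are $\theta^{m,n-1}(x\bd_{i,0})$ for $i\leq n$ or $(m-1,n+1)$-cones for $i\geq n+2$ or $\varepsilon=1$, and either equality contradicts one of the case~(6) hypotheses. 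You instead argue by contradiction via a direct case analysis on the last letter of the standard form of a hypothetically degenerate $\widetilde\theta^{m,n}(x)$, using \cref{sa1} to constrain that letter, commuting it past $\bd_{n+1,0}$ in the generic subcases, and identifying the two boundary subcases $\gamma_{n,0}$ and $\gamma_{n+1,0}$ with the excluded conditions in case~(6). Both arguments are sound and comparably elementary; the paper's is slightly more economical once the repeated-face fact is accepted as routine, while yours makes the role of each case~(6) hypothesis explicit (each boundary subcase of the standard form corresponds to one exclusion), at the modest cost of re-deriving by hand what the general fact would give uniformly. Your parenthetical justification of the ``i.e.'' in the statement (reduction of case~(5) coverage to non-degenerate and not an $(m-1,n+2)$-cone via \cref{theta-cases-degenerate}) is stated a bit loosely but the content is right, needing also \cref{sa1} and \cref{cn} to close the loop.
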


\begin{proof}
First suppose $x$ is covered under case (6). The cubical identities show that if a degenerate cube $y$ has a non-degenerate face $z$, then $z$ appears as at least two distinct faces of $y$. We have $\theta^{m,n}(x) \bd_{n+1,0} = x$, and $x$ is non-degenerate by assumption, so if $\theta^{m,n}(x)$ is degenerate, then $x$ must appear as at least one other face of $\theta^{m,n}(x)$. However, for $i \leq n$ we have $\theta^{m,n}(x) \bd_{i,0} = \theta^{m,n-1}(x \bd_{i,0})$, while for $i \geq n + 2$ or $\varepsilon = 1$, $\theta^{m,n}(x) \bd_{i,\varepsilon}$ is an $(m-1,n+1)$-cone by \cref{ConeFaceDeg}. Thus none of these faces are equal to $x$, showing that $\theta^{m,n}(x)$ is non-degenerate. Furthermore, $\theta^{m,n}(x)$ is not an $(m-1,n+2)$-cone, as this would imply that $\theta^{m,n}(x) \bd_{n+1,0} = x$ was an $(m-1,n+1)$-cone by \cref{ConeFaceDeg} \ref{Low0FaceOfCone}.

On the other hand, if $x$ is not covered under case (6), then $\theta^{m,n}(x)$ is degenerate, hence covered under one of cases (1) to (4) by \cref{theta-cases-degenerate}.
\end{proof}

The proof that the construction $\theta$ of \cref{ThetaDef} satisfies all of the identities of \cref{theta-construction} involves many elaborate case analyses; for brevity, these calculations have been relegated to Appendix \ref{appendix:calculations}.

\begin{proof}[Proof of \cref{theta-exists}]
The functions $\theta^{m,n}$ are defined inductively by \cref{theta-base-def,ThetaDef}. That this definition satisfies all the given identities is proven in \cref{AB,IdC,CDE,IdF,IdG}.
\end{proof}

The following lemma will be useful in various proofs involving coherent families of composites.

\begin{lemma}\label{ThetaDegEdge}
Let $X$ be a cubical quasicategory equipped with a coherent family of composites $\theta$. For $m \geq 0$ and $x \colon \Box^{m} \to X$, the critical edge of $\theta^{m,0}(x) \colon \Box^{m+1} \to X$ with respect to its $(1,0)$-face is degenerate.
\end{lemma}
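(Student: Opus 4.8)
The statement to prove is \cref{ThetaDegEdge}: for a cubical quasicategory $X$ with coherent family of composites $\theta$, and $x \colon \Box^m \to X$, the critical edge of $\theta^{m,0}(x)$ with respect to its $(1,0)$-face is degenerate. The plan is to identify this critical edge concretely as an edge of $\theta^{m,0}(x) \colon \Box^{m+1} \to X$ and then show that the quotient map $\Box^{m+1} \to C^{m,1}$ (through which $\theta^{m,0}(x)$ factors, since $\theta^{m,0}(x)$ is an $(m,1)$-cone) already collapses it to a degenerate edge. Recall from the discussion after \cref{Box-Reedy} that the critical edge of $\Box^{m+1}$ with respect to $\partial_{1,0}$ is the map $f \colon [1] \to [1]^{m+1}$ with $f_1 = \mathrm{id}_{[1]}$ and $f_j = \mathrm{const}_1$ for $j \neq 1$. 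The key observation is that this $f$ has $f_j = \mathrm{const}_1$ for all $j \geq 2$; in particular $f_{m+1} = \mathrm{const}_1$. By \cref{ConeEdge} (applied with the roles of the indices set up for an $(m,1)$-cone, i.e. $n = 1$ and $i = m+1 \geq 2$), the quotient map $\Box^{m+1} \to C^{m,1}$ sends the critical edge with respect to $\partial_{m+1,0}$ to a degenerate edge — but I actually want the critical edge with respect to $\partial_{1,0}$, so I should instead argue directly: since $f_1 = \mathrm{const}_1$ is false here ($f_1 = \mathrm{id}$), I cannot use the $j=1$ clause of \cref{ConeDesc}'s equivalence relation directly.

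Let me restructure. The cleaner route is: $\theta^{m,0}(x)$ is an $(m,1)$-cone by construction (this is noted after \cref{ThetaDef}, and for the base cases after \cref{theta-base-def}), so it factors through $\Box^{m+1} \to C^{m,1}$. I will show that the composite of the critical-edge inclusion $[1] \to [1]^{m+1}$ with this quotient map is a degenerate edge of $C^{m,1}$, which will imply $\theta^{m,0}(x)$ restricted to the critical edge is degenerate in $X$. Concretely, the critical edge is $f = (\mathrm{id}_{[1]}, \mathrm{const}_1, \ldots, \mathrm{const}_1)$. To see that $[f]$ is degenerate in $C^{m,1}_1$, I apply \cref{ConeDesc} with $n = 1$: two $1$-cubes $g, h$ are identified if $g_1 = h_1 = \mathrm{const}_1$. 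That does not apply to $f$ directly. However, I can instead use \cref{FaceCond}: an $(m,1)$-cone $y$ satisfies $y\bd_{1,1} = y\bd_{m+1,0}\bd_{m,0}\cdots\bd_{2,0}\bd_{1,1}\sigma_1\cdots\sigma_{m-1}$, i.e. the $(1,1)$-face of $\theta^{m,0}(x)$ is degenerate (lies in the image of degeneracies). The critical edge with respect to $\partial_{1,0}$ lies inside the $(1,1)$-face: indeed $f$ factors through $\partial_{1,1} \colon [1]^m \to [1]^{m+1}$ since $f_1 = \mathrm{const}_1$... wait, $f_1 = \mathrm{id}$, not $\mathrm{const}_1$. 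The $(1,0)$-face is $\partial_{1,0}$; its critical edge is adjacent to $\partial_{1,0}$, not contained in it, and it runs from $(0,\ldots,0)$ through $(1,\ldots,1)$ staying off $\partial_{1,0}$. Since the only coordinate where this edge is non-constant is the first, and it takes value $1$ in coordinates $2,\ldots,m+1$, it lies in the intersection of the faces $\partial_{2,1}, \ldots, \partial_{m+1,1}$ — in particular in $\partial_{m+1,1}$.

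So the argument is: the critical edge $f$ factors through $\partial_{m+1,1} \colon [1]^m \to [1]^{m+1}$, hence $\theta^{m,0}(x)$ restricted to $f$ equals $(\theta^{m,0}(x)\bd_{m+1,1})$ restricted to the corresponding edge of $\Box^m$. Now $\theta^{m,0}(x)\bd_{m+1,1}$: by \ref{ThetaFace1} (with $m' = m$, $n' = 0$, $i = m+1 \geq n'+2 = 2$, valid as long as $m \geq 1$), we get $\theta^{m,0}(x)\bd_{m+1,1} = \theta^{m-1,0}(x\bd_{m,1})$, and iterating down, or more simply by \cref{FaceCond} applied to the $(m,1)$-cone $\theta^{m,0}(x)$ with $i = 1$, the face $\theta^{m,0}(x)\bd_{1,1}$ is a degeneracy of a lower cube; but I need the critical edge, which sits in the $(m+1,1)$-face rather than the $(1,1)$-face. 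By \cref{ConeFaceDeg}\ref{1FaceOfCone}, $\theta^{m,0}(x)\bd_{m+1,1}$ is an $(m-1,1)$-cone, and by induction on $m$ one reduces to the edge itself; alternatively, \cref{ConeEdge} with $n=1$, $i = m+1$ directly gives that the quotient $\Box^{m+1} \to C^{m,1}$ sends the critical edge with respect to $\partial_{m+1,0}$ to a degenerate edge — but I want $\partial_{1,0}$. The honest resolution: the critical edge w.r.t. $\partial_{1,0}$ is $(\mathrm{id}, 1, \ldots, 1)$; precompose with the quotient to $C^{m,1}$. Apply the equivalence relation of \cref{ConeDesc} ($n=1$): this edge is NOT collapsed by the $j=1$ clause. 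Instead I claim it factors: $(\mathrm{id},1,\ldots,1) = \partial_{2,1} \circ \cdots \circ \partial_{m+1,1} \circ (\mathrm{id}_{[1]})$ where the last map is $[1] \xrightarrow{\mathrm{id}} [1] = [1]^1$. So the edge lies in the image of $[1]^1 \to C^{m,1}$ induced by the inclusion of the $(1,1)$-directions... I think the cleanest is to invoke \cref{FaceCond}: $\theta^{m,0}(x)$ being an $(m,1)$-cone means its $(1,1)$-face is $\bd$-and-$\sigma$ of a lower cube. Hmm — but the critical edge sits off the $(1,1)$-face.

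I will therefore take the most robust approach: induct on $m$. For $m = 0$, $\theta^{0,0}(x) = x\sigma_1$ (from \cref{theta-base-def}), whose unique edge is degenerate, so the critical edge is degenerate. For $m = 1$, $\theta^{1,0}(x) = x\gamma_{1,0}$; the critical edge of $\Box^2$ w.r.t. $\partial_{1,0}$ is $(\mathrm{id},\mathrm{const}_1)$, and $x\gamma_{1,0}$ restricted to this is $x \cdot (\gamma_{1,0} \circ (\mathrm{id},\mathrm{const}_1))$; by the cubical identities $\gamma_{1,0}(\mathrm{id},\mathrm{const}_1) = \max(\mathrm{id},1) = \mathrm{const}_1$, which is a constant map, so the edge is degenerate. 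For the inductive step $m \geq 2$, use that the critical edge of $\theta^{m,0}(x)$ w.r.t. $\partial_{1,0}$ is contained in the face $\theta^{m,0}(x)\bd_{m+1,1}$ (since $f_{m+1} = \mathrm{const}_1$ means $f$ factors through $\partial_{m+1,1}$), and under this identification it becomes the critical edge w.r.t. the $(1,0)$-face of that face. By \cref{ConeFaceDeg}\ref{1FaceOfCone}, $\theta^{m,0}(x)\bd_{m+1,1}$ is an $(m-1,1)$-cone; moreover by \ref{ThetaFace1} it equals $\theta^{m-1,0}(x\bd_{m,1})$. Then the critical edge w.r.t. the $(1,0)$-face of $\theta^{m-1,0}(x\bd_{m,1})$ is degenerate by the induction hypothesis. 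One small check: that the critical edge of $\Box^{m+1}$ w.r.t. $\partial_{1,0}$, when restricted along $\partial_{m+1,1}$, is the critical edge of $\Box^m$ w.r.t. $\partial_{1,0}$ — this is immediate from the explicit formula $f_1 = \mathrm{id}$, $f_j = \mathrm{const}_1$ ($j \neq 1$), as dropping the last (constant-$1$) coordinate yields exactly the critical edge formula in one lower dimension.

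\textbf{Main obstacle.} The only delicate point is the bookkeeping in the inductive step: verifying that the critical edge of $\Box^{m+1}$ with respect to $\partial_{1,0}$ genuinely lies in the image of $\partial_{m+1,1}$ and that restricting it there produces the critical edge of $\Box^m$ with respect to $\partial_{1,0}$. Both follow directly from the explicit description of the critical edge as the map $f \colon [1] \to [1]^{m+1}$ with $f_1 = \mathrm{id}_{[1]}$ and $f_j = \mathrm{const}_1$ otherwise (recorded after \cref{Box-Reedy}), together with the identity $\gamma_{1,0} \circ \partial_{1,1}' = \mathrm{const}$ in the base case and the formula $\theta^{m,0}(x)\bd_{m+1,1} = \theta^{m-1,0}(x\bd_{m,1})$ supplied by \ref{ThetaFace1}; none of this requires any genuinely hard computation, only careful indexing.
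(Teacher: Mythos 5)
Your proposal is correct and, after the exploratory false starts, lands on essentially the same argument as the paper: induct on $m$ with base case $\theta^{0,0}(x) = x\sigma_1$, observe that the critical edge factors through $\partial_{m+1,1}$, apply \ref{ThetaFace1} to get $\theta^{m,0}(x)\partial_{m+1,1} = \theta^{m-1,0}(x\partial_{m,1})$, and invoke the induction hypothesis. Your separate $m=1$ base case is unnecessary, since \ref{ThetaFace1} applies with $i=m+1\geq 2$ already for $m\geq 1$, which is where the paper starts its inductive step.
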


\begin{proof}
We proceed by induction on $m$. For $m = 0$, we have $\theta^{0,0}(x) = x\sigma_{1}$; so $\theta^{0,0}(x)$ is a degeneracy of a vertex, thus its unique edge is degenerate. 

Now let $m \geq 1$, and suppose that the statement holds for $m - 1$. The edge in question may be written as $\theta^{m,0}(x)\bd_{m+1,1}...\bd_{3,1}\bd_{2,1}$. By \ref{ThetaFace1}, this is equal to $\theta^{m-1,0}(x\bd_{m,1})\bd_{m,1}...\bd_{2,1}$, which is degenerate by the induction hypothesis.
\end{proof}

\section{Comparison with the Joyal model structure} \label{sec:relation}

In this section we use the theory of cones developed in \cref{sec:cones} to compare the cubical Joyal model structure with the Joyal model structure on $\sSet$, showing that the model structures constructed in \cref{section:marked,sec:structurally-marked,sec:joyal-cset} present the theory of $(\infty,1)$-categories. As in \cref{sec:cones}, our results can be adapted to $\cSet_0$ and $\cSet_1$ by replacing the constructions of \cref{sec:cones} with their analogues in the appropriate settings.

Our main goal is to prove the following:

\begin{theorem}\label{T-Quillen-equivalence}
The adjunction $T : \cSet \rightleftarrows \sSet : U$ is a Quillen equivalence between the cubical Joyal model structure on $\cSet$ and the Joyal model structure on $\sSet$. 
\end{theorem}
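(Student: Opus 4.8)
The plan is to avoid working with the triangulation adjunction $T \adjoint U$ directly, and instead route through the straightening-over-the-point adjunction $Q \colon \sSet \rightleftarrows \cSet \colon \int$, as advertised in the introduction of \cref{sec:cones}. The proof proceeds in three stages. First, I would set up $Q$ using the theory of cones: on representables, $Q\Delta^n$ should be built from iterated cones on the point (so that $Q\Delta^0 = \Box^0$, $Q\Delta^1 = \Box^1$, $Q\Delta^2 = C\Box^1 = C^{1,1}$, etc.), with $\int$ defined by the right Kan extension $\left(\int X\right)_n = \cSet(Q\Delta^n, X)$. Second, I would prove that $Q \adjoint \int$ is a Quillen adjunction and then a Quillen equivalence. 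The key point here is that for a cubical quasicategory $X$, the counit $Q\!\int X \to X$ is a monomorphism (by \cite{kapulkin-lindsey-wong}) which can be exhibited as a transfinite composite of pushouts of inner open box inclusions: one filters $X$ by the dimension of the base of cones it contains, and uses a coherent family of composites $\theta$ (which exists by \cref{theta-exists}) together with the trivial cofibrations $B^{m,n,k} \hookrightarrow C^{m,n}$ of \cref{BEqv} and the degeneracy of critical edges from \cref{ConeEdge}, \cref{ThetaDegEdge} to identify each successive attaching map as anodyne in the cubical Joyal model structure. This makes $Q\!\int X \to X$ a trivial cofibration, which is exactly the derived counit being a weak equivalence; combined with the fact that $\int$ reflects weak equivalences between fibrant objects (reduced to the simplicial statement since $T$ and $\int$ will be seen to agree up to homotopy on the relevant objects), \cref{QuillenEquivCreate} gives that $Q \adjoint \int$ is a Quillen equivalence.

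Third, I would construct a natural weak equivalence $TQ \Rightarrow \mathrm{id}_{\sSet}$ and deduce that $T$ is a left Quillen equivalence. Since $T$ preserves colimits and both $T$ and $Q$ are determined by their values on representables, it suffices to produce natural maps $TQ\Delta^n \to \Delta^n$ that are weak categorical equivalences in $\sSet$ and are compatible with the simplicial operators; concretely $TQ\Delta^n = TC^{1,\ldots}$ is an iterated simplicial cone on the point, and the canonical collapse (induced by the cone structure and the triangulation of connections) gives a map to $\Delta^n$ which is a trivial cofibration by an argument parallel to \cref{T-endpoints}, \cref{U-fib-obs} together with the outer-horn result \cite[Thm.~2.2]{joyal:qcat-kan}. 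Given such a natural transformation, one checks that $T$ preserves and reflects weak equivalences: preservation is \cref{T-pres-we}, and reflection follows because a map $f$ with $Tf$ a weak equivalence fits into a commuting square with $TQTf$, which is a weak equivalence by the natural transformation $TQ \Rightarrow \mathrm{id}$ applied twice and the two-out-of-three property, after which the fact that $Q$ is a Quillen equivalence transports this back to $f$. Then \cref{QuillenEquivCreate}\ref{QuillenEquivUnit} (or rather the counit version, using that $T$ preserves and reflects weak equivalences by \cref{QuillenEquivCreate}\ref{QuillenEquivCounit}) applied to $T \adjoint U$ — whose Quillen adjunction is \cref{T-Quillen-adj} — yields the result; alternatively, one shows directly that the derived functors of $T$ and $\int$ are mutually inverse equivalences of homotopy categories, so that $T$, being part of a Quillen adjunction inducing an equivalence on homotopy categories, is a Quillen equivalence.

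The main obstacle, by a wide margin, is the second stage: proving that the counit $Q\!\int X \to X$ is anodyne by explicitly decomposing it into inner open box fillings. This is where the entire machinery of \cref{sec:cones} — cones, the subcomplexes $B^{m,n,k}$, standard forms of cones, and coherent families of composites — is deployed, and the bookkeeping is substantial: one must organize the cubes of $X$ not captured by $Q\!\int X$ into a well-ordered sequence of attachments, verify at each step that the relevant attaching map is a pushout of some $B^{m,n,k} \hookrightarrow C^{m,n}$ or a genuine inner open box inclusion (using that the critical edges are degenerate, hence the boxes are inner), and check that $\theta$ supplies the fillers coherently. The remaining verifications — that $Q$ preserves cofibrations and sends inner horn inclusions to trivial cofibrations (via \cref{Quillen-adj-fib-obs} and an analysis of $Q\Lambda^n_i \hookrightarrow Q\Delta^n$ as a composite of cone open-box fillings), that $\int$ reflects weak equivalences between fibrant objects, and the construction of $TQ \Rightarrow \mathrm{id}$ — are comparatively routine, reducing either to already-established cubical facts or to standard simplicial arguments about prisms and outer horns.
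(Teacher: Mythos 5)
Your broad plan --- route through $Q \adjoint \int$, show the counit $Q\!\int X \hookrightarrow X$ is anodyne using cones and coherent families of composites, and construct the natural weak equivalence $\bar{F} \colon TQ \Rightarrow \mathrm{id}_{\sSet}$ --- is exactly the paper's approach (\cref{Counit}, \cref{FTrans}, \cref{FWE}). But the logical steps connecting these ingredients contain real errors. To conclude that $Q \adjoint \int$ is a Quillen equivalence, you propose pairing ``counit a weak equivalence'' with ``$\int$ reflects weak equivalences between fibrant objects.'' This is not one of the valid combinations in \cref{QuillenEquivCreate-original}: the counit condition pairs with the \emph{left} adjoint reflecting weak equivalences between cofibrant objects (item (2)), while a right adjoint reflecting weak equivalences between fibrant objects pairs with the \emph{unit} condition (item (3)). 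The combination you propose is genuinely insufficient --- consider the abelianization adjunction $\mathsf{Grp} \rightleftarrows \mathsf{Ab}$, which has an isomorphic counit and a conservative, fully faithful right adjoint but is nowhere near an equivalence. The paper instead shows that $Q$ preserves weak equivalences (\cref{QPres}, via Ken Brown) and reflects them (\cref{QRef}), and the reflection proof already uses the natural weak equivalence $\bar{F}$ together with the fact that $T$ preserves weak equivalences; so the $TQ \Rightarrow \mathrm{id}$ comparison must come \emph{before} the $Q$-equivalence is concluded, whereas you defer it to a separate third stage.

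Second, your direct argument that $T$ reflects weak equivalences is circular. Knowing $Tf$ is a weak equivalence does show, via $\bar{F}$ and two-out-of-three, that $TQTf$ is one --- but ``transporting this back to $f$'' would require either that $T$ reflects weak equivalences (the very claim at stake) or some natural comparison between $QT$ and $\mathrm{id}_{\cSet}$, and the paper explicitly remarks that it could not find such a transformation. The actual proof of \cref{T-Quillen-equivalence} never establishes that $T$ reflects weak equivalences at all: it works in the homotopy categories, observing that $\bar{F}$ induces $LT \circ LQ \cong \mathrm{id}_{\Ho\,\sSet}$, and since $LQ$ is an equivalence by \cref{Q-Quillen-equivalence}, so is $LT$; this is exactly the statement that $T \adjoint U$ is a Quillen equivalence. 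Your closing ``alternatively'' gestures at this correct argument (modulo a slip --- it is $LT$ and $LQ$, not $T$ and $\int$, whose mutual inversion one verifies); that is the version to keep, and the direct reflection argument should be dropped.
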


Throughout this section, $\sSet$ and $\cSet$ will be equipped with the Joyal and cubical Joyal model structures, respectively, unless otherwise noted.  

As indicated in \cref{sec:cones}, due to the difficulty of working directly with the triangulation functor, we first establish a second Quillen adjunction $Q : \sSet \rightleftarrows \cSet : \int$; this adjunction, known as straightening-over-the-point,  was previously studied in \cite{kapulkin-lindsey-wong,kapulkin-voevodsky:cubical-straightening} (for specific choices of the box category), but here we will construct it using the theory of cones developed in \cref{sec:cones}. 
In the simplicial setting, a similar adjunction was developed by Lurie \cite[Ch.~2]{lurie:htt}.

We will prove that $Q \adjoint \int$ is a Quillen equivalence, and that the left derived functor of $Q$ is an inverse to that of $T$.
To give a construction of $Q$ in terms of cones, we first recall a folklore result about constructing cosimplicial objects out of monads.

\begin{proposition}\label{monad-cosimplicial-object}
Let $M$ be a monad on a category $\catC$. Then $M$ induces an augmented cosimplicial object $\Delta_{\mathrm{aug}} \to \mathrm{End} \, \catC$, defined as follows:

\begin{itemize}
\item For $n \geq -1$, $[n] \mapsto M^{n+1}$;
\item $(\bd_{i} \colon [n-1] \to [n]) \mapsto M^{n-i}\eta_{M^i}$;
\item $(\sigma_{i} \colon [n] \to [n-1]) \mapsto M^{n-i-1}\mu_{M^i}$.
\end{itemize}

In particular, for any $c \in \catC$ there is an augmented cosimplicial object $\Delta_{\mathrm{aug}} \to \catC$ given by instantiating this construction at $c$.
\end{proposition}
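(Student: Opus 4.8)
The plan is to verify directly that the three assignments in the statement assemble into a functor $\Delta_{\mathrm{aug}} \to \mathrm{End}\,\catC$, by checking the cosimplicial identities. First I would recall that $\Delta_{\mathrm{aug}}$ is generated by the coface maps $\bd_i \colon [n-1] \to [n]$ and codegeneracy maps $\sigma_i \colon [n] \to [n-1]$ subject to the standard (co)simplicial identities, so it suffices to check that the proposed images of the generators obey these relations; functoriality on all of $\Delta_{\mathrm{aug}}$ then follows by the universal property of the generators-and-relations presentation.

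The bulk of the work is the following bookkeeping. Unwinding the definitions, $\bd_i \colon [n-1]\to[n]$ is sent to the natural transformation $M^{n-i}\eta_{M^i} \colon M^n \Rightarrow M^{n+1}$ (insert the unit $\eta$ after $i$ copies of $M$), and $\sigma_i \colon [n]\to[n-1]$ is sent to $M^{n-i-1}\mu_{M^i} \colon M^{n+1}\Rightarrow M^n$ (multiply the two copies of $M$ in positions $i+1, i+2$). One then checks:
\begin{itemize}
\item $\bd_j\bd_i = \bd_i\bd_{j-1}$ for $i<j$: both sides insert two units, and the relation reduces to the trivial fact that inserting a unit at position $i$ and a unit at position $j$ commute as whiskered natural transformations, i.e.\ $M^a\eta_{M^b}$ for non-overlapping insertions commute;
\item $\sigma_j\sigma_i = \sigma_i\sigma_{j+1}$ for $i\le j$: this is associativity of $\mu$ repackaged — two multiplications at disjoint or adjacent positions commute, using $\mu\circ M\mu = \mu\circ\mu M$ for the adjacent case;
\item the three mixed identities $\sigma_j\bd_i = \bd_i\sigma_{j-1}$ ($i<j$), $\sigma_j\bd_i = \bd_{i-1}\sigma_j$ ($i>j+1$), and $\sigma_j\bd_i = \mathrm{id}$ ($i=j$ or $i=j+1$): the first two are again disjoint-insertion-vs-multiplication commutations, while the degenerate case $i\in\{j,j+1\}$ is exactly the monad unit law $\mu\circ M\eta = \mathrm{id} = \mu\circ\eta M$, suitably whiskered.
\end{itemize}
Each of these is a routine diagram chase in $\mathrm{End}\,\catC$ using only the monad axioms (associativity and unitality of $(M,\eta,\mu)$) together with the interchange/naturality laws for horizontal and vertical composition of natural transformations; I would present one representative case (say the unit-law case and one commutation case) in detail and leave the rest as analogous.

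For the final sentence of the statement, once the augmented cosimplicial object $F \colon \Delta_{\mathrm{aug}} \to \mathrm{End}\,\catC$ is in hand, evaluation at a fixed object $c \in \catC$ is the composite $\Delta_{\mathrm{aug}} \xrightarrow{F} \mathrm{End}\,\catC \xrightarrow{\mathrm{ev}_c} \catC$, where $\mathrm{ev}_c$ is the (strict) functor sending an endofunctor $G$ to $Gc$ and a natural transformation $\alpha$ to $\alpha_c$; composing functors yields a functor, so this is immediately an augmented cosimplicial object in $\catC$, with $[n]\mapsto M^{n+1}c$ and cofaces/codegeneracies the components at $c$ of the transformations above.

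The main obstacle — such as it is — is purely notational: keeping the indexing of the whiskered units $M^{n-i}\eta_{M^i}$ and multiplications $M^{n-i-1}\mu_{M^i}$ aligned with the classical convention for $\bd_i,\sigma_i$ so that the off-by-one shifts in the cosimplicial identities (the ``$i<j$ vs.\ $i>j+1$'' split, and the ``$i=j$ or $i=j+1$'' degeneracy) come out correctly. There is no conceptual difficulty: this is the standard bar-construction / simplicial-resolution-of-a-monad statement (dualized to the cosimplicial side), and the proof is a finite check against the monad axioms.
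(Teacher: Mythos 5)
Your proof is correct, but it takes a genuinely different route from the paper. You verify the cosimplicial identities by hand, reducing each one to the monad axioms (associativity, unitality of $(M,\eta,\mu)$) plus interchange for whiskered natural transformations; the final claim is then immediate by composing with the evaluation functor $\mathrm{ev}_c \colon \mathrm{End}\,\catC \to \catC$. The paper instead appeals to the universal property of $\Delta_{\mathrm{aug}}$: the augmented simplex category is the \emph{free} strict monoidal category on a monoid object (under ordinal sum), and a monad on $\catC$ is precisely a monoid in the strict monoidal category $(\mathrm{End}\,\catC, \circ, \mathrm{id})$, so the data $(M,\eta,\mu)$ classifies a unique strict monoidal functor $\Delta_{\mathrm{aug}} \to \mathrm{End}\,\catC$, whose underlying functor is the desired augmented cosimplicial object. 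The two proofs verify exactly the same relations in the end — the universal-property argument bundles your case-by-case check into the statement that $\Delta_{\mathrm{aug}}$ is the walking monoid — but the paper's version is shorter and also makes manifest \emph{why} the particular formulas for $\bd_i$ and $\sigma_i$ are forced (they are the images of the coface/codegeneracy generators under the unique monoidal functor), while your direct check has the advantage of not presupposing familiarity with the classification of $\Delta_{\mathrm{aug}}$. Either is a complete and acceptable proof.
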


\begin{proof}
This follows from the characterization of $\Delta_{\mathrm{aug}}$ as the universal monoidal category equipped with a monoid, together with the characterization of monads on a category $\catC$ as monoids in $\mathrm{End} \, \catC$.
\end{proof}

For $n \geq 0$, let $Q^n$ denote the cubical set $C^{n+1}\varnothing = C^{0,n}$. Likewise, for $W \in \{L,R\}, \varepsilon \in \{0,1\}$, let $Q^{n}_{W,\varepsilon} = C^{n+1}_{W,\varepsilon} \varnothing$.

\begin{proposition}\label{left-positive-Q}
The assignment $[n] \mapsto Q^n$ extends to a cosimplicial object $Q \colon \Delta \to \cSet$, with simplicial structure maps defined as follows:

$\begin{array}{l|ccccccc}
\text{a map } Q^{n-1} \to Q^n & 
0^\text{th} \text{ face} & 1^\text{st} \text{ face} & 2^\text{nd} \text{ face} & 
\cdots & j^\text{th} \text{ face} & \cdots & n^\text{th} \text{ face} \\ \hline

\text{is induced by a map } \Box^{n-1} \to \Box^n & 
\partial_{n, 1} & \partial_{n, 0} & \partial_{n-1,0} &
\cdots & \partial_{n-j+1, 0} & \cdots & \partial_{1, 0} \\
& &&&&&& \\
\text{a map } Q^n \to Q^{n-1} & 
0^\text{th} \text{ deg.} & 1^\text{st} \text{ deg.} & 2^\text{nd} \text{ deg.} &
\cdots & j^\text{th} \text{ deg.} & \cdots & (n-1)^\text{st} \text{ deg.} \\ \hline

\text{is induced by a map } \Box^n \to \Box^{n-1} & 
\sigma_n & \gamma_{n-1,0}  & \gamma_{n-2,0} &
\cdots & \gamma_{n-j,0} & \cdots & \gamma_{1,0}
\end{array}$
\end{proposition}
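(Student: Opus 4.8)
The plan is to verify directly that the assigned face and degeneracy maps satisfy the cosimplicial identities, using the monad structure on the cone functor $C = C_{L,1}$ established in \cref{left-positive-monad,cone-monad}. Recall by \cref{Qcone} that $Q^n = C^{n+1}\varnothing \cong C^{0,n}$, so we are really describing a cosimplicial object valued in the standard cones. First I would apply \cref{monad-cosimplicial-object} to the monad $(C, \eta, \mu)$ evaluated at the initial object $\varnothing$: this produces an augmented cosimplicial object $\Delta_{\mathrm{aug}} \to \cSet$ sending $[n] \mapsto C^{n+1}\varnothing = Q^n$, with coface maps $C^{n-i}\eta_{C^i\varnothing} \colon Q^{n-1} \to Q^n$ and codegeneracy maps $C^{n-i-1}\mu_{C^i\varnothing} \colon Q^n \to Q^{n-1}$. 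So the cosimplicial identities are automatic; what remains is to identify these abstract monad-induced maps with the concrete maps $\Box^{n-1} \to \Box^n$ and $\Box^n \to \Box^{n-1}$ listed in the table, under the quotient $\Box^{n} \to C^{0,n} \cong Q^n$.

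The bulk of the work is therefore a bookkeeping computation: unwinding what $C^{n-i}\eta_{C^i\varnothing}$ and $C^{n-i-1}\mu_{C^i\varnothing}$ do at the level of cubes. By \cref{cone-monad}, the unit of $C_{L,1}$ is $\bd_{1,0} \otimes -$ and the multiplication is $\gamma_{1,0} \otimes -$, and by the explicit description following \cref{left-positive-monad}, a cube of $C^k X$ is represented by a tuple $(f_1, \dots, f_k, x)$ with the identifications of \cref{ConeDesc}. The key point is that applying $C$ to a map inserts/acts on a coordinate at a shifted position: iterating $C$ a total of $n-i$ times after applying $\eta$ or $\mu$ at depth $i$ turns "insert a $\bd_{1,0}$ in the first of the remaining cone coordinates" into "insert $\bd_{i+1,0}$" (using that the first $m+n$ coordinates of $C^{0,n}$ viewed as a quotient of $\Box^n$ are governed by \cref{ConeDesc}), and similarly the multiplication becomes a connection $\gamma$ at a shifted index. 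Matching the indexing conventions — the simplicial $0^{\text{th}}$ face corresponds to $i = n$, which lands on the cone point and is realized by $\bd_{n,1}$ rather than $\bd_{n,0}$, because the cone point is the target of $\bd_{1,1}$ in the defining pushout of \cref{left-positive-def} — is exactly the content of the table, including the anomalous entries in the $0^{\text{th}}$ face and $0^{\text{th}}$ degeneracy columns.

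Concretely I would proceed in three steps. Step one: record the explicit form of $\eta$ and $\mu$ on cubes from the discussion after \cref{left-positive-monad}, and compute how $C(-)$ transforms a cubical operator acting on the "first" cone coordinate into the same operator acting on the second coordinate; by induction this gives $C^{j}$ of such an operator in terms of a shift by $j$. Step two: substitute into the formulas $C^{n-i}\eta_{C^i\varnothing}$ and $C^{n-i-1}\mu_{C^i\varnothing}$ from \cref{monad-cosimplicial-object}, and read off the induced map $\Box^{n-1} \to \Box^n$ (resp. $\Box^n \to \Box^{n-1}$); this requires separating the cases $i < n$ (giving $\bd_{n-j+1,0}$ and $\gamma_{n-j,0}$ for $j \geq 1$) from the boundary case corresponding to $j = 0$ (giving $\bd_{n,1}$ and $\sigma_n$, where the unit hits the cone point and the multiplication degenerates the cone direction). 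Step three: conclude that $Q$ is a genuine cosimplicial object by invoking \cref{monad-cosimplicial-object} — no independent verification of the simplicial identities is needed once the identification is in place. The main obstacle is purely the index arithmetic: keeping straight which cone coordinate each iterate of $C$ acts on, and correctly handling the reversal of orientation at the cone point so that the extremal columns of the table come out with the stated asymmetry ($\bd_{n,1}$, $\sigma_n$ rather than $\bd_{n,0}$, $\gamma_{n,0}$). This is routine but error-prone, and is the kind of calculation one would spell out carefully (possibly in an appendix) rather than in the body of the text.
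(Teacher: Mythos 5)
Your proposal follows exactly the paper's argument: apply \cref{monad-cosimplicial-object} to the cone monad $(C,\eta,\mu)$ at the initial object $\varnothing$, and then identify the resulting abstract coface and codegeneracy maps with the table entries by unwinding $\eta = \bd_{1,0}\otimes -$, $\mu = \gamma_{1,0}\otimes -$, and the index shift coming from iterating $C$. The paper's proof is just a one-line citation of that lemma (plus a pointer to \cite{kapulkin-lindsey-wong} for the concrete verification), so you have simply spelled out the same route in more detail, and your handling of the anomalous $0$th-face and $0$th-degeneracy columns via the cone point is correct.
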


\begin{proof}
This follows from applying \cref{monad-cosimplicial-object} to the monad of \cref{left-positive-monad} and the object $\varnothing \in \cSet$. For a direct construction, see \cite[Prop. 2.3]{kapulkin-lindsey-wong}.
\end{proof}

Taking the left Kan extension of this cosimplicial object along the Yoneda embedding, we obtain a functor $Q \colon \sSet \to \cSet$.

\[
\xymatrix@C+0.5cm{
  \Delta
  \ar[r]
  \ar@{^{(}->}[d]
&
  \cSet
\\
  \sSet
  \ar[ru]_{Q}
&
}
\]

This functor has a right adjoint $\int \colon \cSet \to \sSet$, given by $(\int X)_{n} = \cSet(Q^{n},X)$.

\begin{remark}
Viewing $\sSet$ as the slice category $\sSet \downarrow \Delta^{0}$ and $\cSet$ as the functor category $\cSet^{[0]}$, the adjunction $Q \adjoint \int$ coincides with the cubical straightening-unstraightening adjunction developed in \cite{kapulkin-voevodsky:cubical-straightening}.
\end{remark}

The alternative cone monads described in \cref{cone-monad} admit similar constructions.

\begin{proposition}\label{Q-cosimp-ob}
For $W \in \{L,R\}$, $\varepsilon \in \{0,1\}$, the assignment $[n] \mapsto Q^{n}_{W,\varepsilon}$ extends to a cosimplicial object $Q_{W,\varepsilon} \colon \Delta \to \cSet$. For $(W, \varepsilon) \neq (L,1)$ the simplicial structure maps are defined as follows:

$\begin{array}{l|ccccccc}
\text{a map } Q_{L,0}^{n-1} \to Q_{L,0}^n & 
0^\text{th} \text{ face} & 1^\text{st} \text{ face} & 2^\text{nd} \text{ face} & 
\cdots & j^\text{th} \text{ face} & \cdots & n^\text{th} \text{ face} \\ \hline

\text{is induced by a map } \Box^{n-1} \to \Box^n & 
\partial_{n, 0} & \partial_{n, 1} & \partial_{n-1,1} &
\cdots & \partial_{n-j+1, 1} & \cdots & \partial_{1, 1} \\
& &&&&&& \\
\text{a map } Q_{L,0}^n \to Q_{L,0}^{n-1} & 
0^\text{th} \text{ deg.} & 1^\text{st} \text{ deg.} & 2^\text{nd} \text{ deg.} &
\cdots & j^\text{th} \text{ deg.} & \cdots & (n-1)^\text{st} \text{ deg.} \\ \hline

\text{is induced by a map } \Box^n \to \Box^{n-1} & 
\sigma_n & \gamma_{n-1,1}  & \gamma_{n-2,1} &
\cdots & \gamma_{n-j,1} & \cdots & \gamma_{1,1}
\end{array}$

\vspace{2pc}

$\begin{array}{l|ccccccc}
\text{a map } Q_{R,0}^{n-1} \to Q_{R,0}^n & 
0^\text{th} \text{ face} & 1^\text{st} \text{ face} & 2^\text{nd} \text{ face} & 
\cdots & j^\text{th} \text{ face} & \cdots & n^\text{th} \text{ face} \\ \hline

\text{is induced by a map } \Box^{n-1} \to \Box^n & 
\partial_{1, 0} & \partial_{1, 1} & \partial_{2,1} &
\cdots & \partial_{j, 1} & \cdots & \partial_{n, 1} \\
& &&&&&& \\
\text{a map } Q_{R,0}^n \to Q_{R,0}^{n-1} & 
0^\text{th} \text{ deg.} & 1^\text{st} \text{ deg.} & 2^\text{nd} \text{ deg.} &
\cdots & j^\text{th} \text{ deg.} & \cdots & (n-1)^\text{st} \text{ deg.} \\ \hline

\text{is induced by a map } \Box^n \to \Box^{n-1} & 
\sigma_1 & \gamma_{1,1}  & \gamma_{2,1} &
\cdots & \gamma_{j,1} & \cdots & \gamma_{n-1,1}
\end{array}$

\vspace{2pc}

$\begin{array}{l|ccccccc}
\text{a map } Q_{R,1}^{n-1} \to Q_{R,1}^n & 
0^\text{th} \text{ face} & 1^\text{st} \text{ face} & 2^\text{nd} \text{ face} & 
\cdots & j^\text{th} \text{ face} & \cdots & n^\text{th} \text{ face} \\ \hline

\text{is induced by a map } \Box^{n-1} \to \Box^n & 
\partial_{1, 1} & \partial_{1, 0} & \partial_{2,0} &
\cdots & \partial_{j, 0} & \cdots & \partial_{n, 0} \\
& &&&&&& \\
\text{a map } Q_{R,1}^n \to Q_{R,1}^{n-1} & 
0^\text{th} \text{ deg.} & 1^\text{st} \text{ deg.} & 2^\text{nd} \text{ deg.} &
\cdots & j^\text{th} \text{ deg.} & \cdots & (n-1)^\text{st} \text{ deg.} \\ \hline

\text{is induced by a map } \Box^n \to \Box^{n-1} & 
\sigma_1 & \gamma_{1,0}  & \gamma_{2,0} &
\cdots & \gamma_{j,0} & \cdots & \gamma_{n-1,0}
\end{array}$
\end{proposition}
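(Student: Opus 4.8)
The plan is to deduce \cref{Q-cosimp-ob} from \cref{monad-cosimplicial-object} applied to the four cone monads of \cref{cone-monad}, exactly as \cref{left-positive-Q} was deduced for $C_{L,1}$. First I would observe that for each $W \in \{L,R\}$ and $\varepsilon \in \{0,1\}$, \cref{cone-monad} exhibits $C_{W,\varepsilon}$ as a monad on $\cSet$ with explicitly described unit and multiplication; then \cref{monad-cosimplicial-object}, instantiated at the object $\varnothing \in \cSet$, produces an augmented cosimplicial object whose value at $[n]$ is $C_{W,\varepsilon}^{n+1}\varnothing = Q^n_{W,\varepsilon}$. Truncating away the augmentation $[-1] \mapsto \varnothing$ yields the desired cosimplicial object $Q_{W,\varepsilon} \colon \Delta \to \cSet$, and this already establishes the first sentence of the statement for all four pairs $(W,\varepsilon)$.

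It then remains to identify, for the three pairs $(W,\varepsilon) \neq (L,1)$, the concrete cubical maps $\Box^{n-1} \to \Box^n$ and $\Box^n \to \Box^{n-1}$ inducing the cosimplicial faces and degeneracies, as recorded in the three tables. Here I would argue as in \cite[Prop. 2.3]{kapulkin-lindsey-wong}: by \cref{monad-cosimplicial-object} the $i$-th coface of $Q_{W,\varepsilon}$ is $C_{W,\varepsilon}^{n-i}\,\eta_{C_{W,\varepsilon}^i}$ evaluated at $\varnothing$, and the $i$-th codegeneracy is $C_{W,\varepsilon}^{n-i-1}\,\mu_{C_{W,\varepsilon}^i}$ evaluated at $\varnothing$. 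Using that $C_{W,\varepsilon}^{k}\varnothing \cong Q^{k-1}_{W,\varepsilon}$ is a quotient of $\Box^{k-1}$ (the analogue of \cref{Qcone}, obtained by applying the relevant involution from \cref{cone-involutions}), one unwinds these natural transformations: for $C_{L,1}$ the unit is $\bd_{1,0}\otimes -$ and the multiplication is $\gamma_{1,0}\otimes -$, so iterating $C_{L,1}$ and tracking which tensor factor each application acts on gives precisely the face maps $\partial_{n-j+1,0}$ and connection maps $\gamma_{n-j,0}$ of \cref{left-positive-Q}. The other three cases are the images of this computation under the involutions: $C_{L,0} = (-)^\coop C_{L,1} (-)^\coop$ replaces each $\bd_{i,\varepsilon}$ by $\bd_{i,1-\varepsilon}$ and each $\gamma_{i,\varepsilon}$ by $\gamma_{i,1-\varepsilon}$, which turns the $\partial_{\cdot,0}$'s and $\gamma_{\cdot,0}$'s into $\partial_{\cdot,1}$'s and $\gamma_{\cdot,1}$'s; $C_{R,1} = (-)^\rmco C_{L,1} (-)^\rmco$ reverses the order of the indices (sending $\bd_{i,\varepsilon} \mapsto \bd_{n-i+1,\varepsilon}$, $\sigma_i \mapsto \sigma_{n-i+1}$, $\gamma_{i,\varepsilon}\mapsto\gamma_{(n-1)-i+1,\varepsilon}$), which reverses the left-to-right reading of each row; and $C_{R,0} = (-)^\op C_{L,1}(-)^\op$ combines both effects. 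Matching these transformed generators against the three tables is then a bookkeeping exercise.

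The main obstacle I anticipate is the index bookkeeping in this last step: keeping straight, through an $(n{+}1)$-fold iterate of a cone functor built from a \emph{non-symmetric} geometric product, exactly which of the $n+1$ interval factors the $i$-th unit or multiplication acts on, and how the cubical identities (\cref{normal-form}) rewrite the resulting composite of faces and connections into the normal form displayed in the tables. Because the product is non-symmetric, the left cones ($C_{L,\varepsilon}$, which tensor $\Box^1$ on the \emph{left}) and the right cones ($C_{R,\varepsilon}$, which tensor on the \emph{right}) genuinely differ in the \emph{order} in which indices appear — this is why the $R$ tables read $\partial_{1,\cdot}, \partial_{2,\cdot}, \ldots, \partial_{n,\cdot}$ while the $L$ tables read $\partial_{n,\cdot}, \partial_{n-1,\cdot}, \ldots, \partial_{1,\cdot}$ — and one must be careful that the involution identities of \cref{cone-involutions} are applied with the correct dimension-dependent index shifts. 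Everything else is formal: functoriality of left Kan extension and \cref{monad-cosimplicial-object} do all the conceptual work, and the verification reduces to the routine cubical-identity manipulations already carried out for the case $(L,1)$ in \cite{kapulkin-lindsey-wong}.
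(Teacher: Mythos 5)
Your proposal is correct and follows essentially the same route as the paper, which proves \cref{Q-cosimp-ob} by applying \cref{monad-cosimplicial-object} to the monads of \cref{cone-monad}. Your additional observation that the three tables can be read off by transporting the $(L,1)$ case of \cref{left-positive-Q} along the involutions of \cref{cone-involutions} is sound (and is precisely what underlies the paper's subsequent \cref{Q-involutions}), so it is a valid shortcut for the bookkeeping you flag as the main concern.
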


\begin{proof}
This follows from applying \cref{monad-cosimplicial-object} to the monads of \cref{cone-monad}.
\end{proof}

\begin{remark}\label{monad-op}
 We could instead have chosen to define the cosimplicial object of \cref{monad-cosimplicial-object} by $\bd_i \mapsto M^{i} \eta_{M^{n-i}}$, $\sigma_i \mapsto M^{i} \mu_{M^{n-i-1}}$; this amounts to pre-composing the cosimplicial object as we have defined it with the involution $(-)^\op \colon \sSet \to \sSet$. If we had made this choice, we would have obtained a different set of cosimplicial objects $Q_{W,\varepsilon}$.
\end{remark}

As with the cosimplicial object constructed using left positive cones, each of these Kan extends to a functor $Q_{W,\varepsilon} \colon \sSet \to \cSet$ having a right adjoint $\int\limits_{W,\varepsilon} \colon \cSet \to \sSet$.

\begin{lemma}\label{Q-involutions}
The functors $Q_{W,\varepsilon}$ and $\int\limits_{W,\varepsilon}$, for $W \in \{L,R\}, \varepsilon \in \{0,1\}$, are related by the following formulas:

\begin{multicols}{2}
\begin{itemize}
\item $Q_{L,0} = (-)^\coop \circ Q_{L,1}$;
\item $Q_{R,0} = (-)^\op \circ Q_{L,1}$;
\item $Q_{R,1} = (-)^\rmco \circ Q_{L,1}$; \newline
\item $\int\limits_{L,0} = \int\limits_{L,1} \circ (-)^\coop$;
\item $\int\limits_{R,0} = \int\limits_{L,1} \circ (-)^\op$;
\item $\int\limits_{R,1} = \int\limits_{L,1} \circ (-)^\rmco$;
\end{itemize}
\end{multicols}
\end{lemma}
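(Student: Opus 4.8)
\textbf{Proof proposal for \cref{Q-involutions}.}

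The plan is to derive the six formulas from the analogous relations between the cone monads established in \cref{cone-involutions}, together with the fact that left Kan extension along the Yoneda embedding is functorial in the defining cosimplicial object. First I would recall that $Q_{W,\varepsilon}$ is, by construction, the left Kan extension along $\Delta \hookrightarrow \sSet$ of the cosimplicial object $[n] \mapsto Q^n_{W,\varepsilon} = C^{n+1}_{W,\varepsilon}\varnothing$ of \cref{Q-cosimp-ob}, which in turn arises by applying \cref{monad-cosimplicial-object} to the monad $C_{W,\varepsilon}$. The key observation is that the involutions $(-)^\rmco, (-)^\coop, (-)^\op$ are monad morphisms in the appropriate twisted sense: by \cref{cone-involutions}, $(-)^\coop$ intertwines $C_{L,1}$ with $C_{L,0}$, i.e.\ $(-)^\coop \circ C_{L,1} = C_{L,0} \circ (-)^\coop$, and similarly $(-)^\op$ intertwines $C_{L,1}$ with $C_{R,0}$ and $(-)^\rmco$ intertwines $C_{L,1}$ with $C_{R,1}$ (using that each involution is its own inverse). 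Since $\varnothing^\coop = \varnothing^\rmco = \varnothing^\op = \varnothing$ and each involution preserves the units and multiplications appearing in \cref{monad-cosimplicial-object} up to the matching of the two monads, applying $(-)^\coop$ (resp.\ $(-)^\op$, $(-)^\rmco$) to the cosimplicial object $Q_{L,1}$ yields exactly the cosimplicial object $Q_{L,0}$ (resp.\ $Q_{R,0}$, $Q_{R,1}$) — this is precisely why the face and degeneracy maps in the tables of \cref{Q-cosimp-ob} are the images under the involutions of those in \cref{left-positive-Q}.

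Next I would promote this identification of cosimplicial objects to the identification of functors $\sSet \to \cSet$. Since $Q_{W,\varepsilon} = \Lan_{y}(Q^{\bullet}_{W,\varepsilon})$ and left Kan extension is natural, a natural isomorphism $Q^{\bullet}_{L,0} \cong (-)^\coop \circ Q^{\bullet}_{L,1}$ of cosimplicial objects induces a natural isomorphism $Q_{L,0} \cong \Lan_{y}\bigl((-)^\coop \circ Q^{\bullet}_{L,1}\bigr)$; and because $(-)^\coop \colon \cSet \to \cSet$ is cocontinuous (being an involution, hence a left adjoint, by \cref{op-co-involution}), it commutes with the left Kan extension, giving $\Lan_{y}\bigl((-)^\coop \circ Q^{\bullet}_{L,1}\bigr) \cong (-)^\coop \circ \Lan_{y}(Q^{\bullet}_{L,1}) = (-)^\coop \circ Q_{L,1}$. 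The same argument with $(-)^\op$ and $(-)^\rmco$ in place of $(-)^\coop$ gives the remaining two formulas in the left-hand column. The formulas in the right-hand column then follow by passing to right adjoints: $\int_{W,\varepsilon}$ is right adjoint to $Q_{W,\varepsilon}$, and since each involution is its own adjoint, $Q_{L,0} = (-)^\coop \circ Q_{L,1}$ has right adjoint $\int_{L,1} \circ (-)^\coop$, whence $\int_{L,0} = \int_{L,1} \circ (-)^\coop$ by uniqueness of adjoints, and similarly for the other two.

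I do not expect a serious obstacle here; the statement is essentially bookkeeping. The one point requiring a little care is verifying that the involutions really do match up the structure maps of the two monads as described in \cref{monad-cosimplicial-object} — that is, checking that under the identification $C_{L,0} = (-)^\coop \circ C_{L,1} \circ (-)^\coop$ the unit $\bd_{1,1} \otimes -$ and multiplication $\gamma_{1,1} \otimes -$ of $C_{L,0}$ (per the table in \cref{cone-monad}) correspond to the unit $\bd_{1,0} \otimes -$ and multiplication $\gamma_{1,0} \otimes -$ of $C_{L,1}$, and likewise for the right-tensor versions under $(-)^\op$ and $(-)^\rmco$. This is a direct consequence of the action of the involutions on generating maps recorded in \cref{subsection:cSet-basics} ($\bd^n_{i,\varepsilon}{}^\coop = \bd^n_{i,1-\varepsilon}$, $\gamma^n_{i,\varepsilon}{}^\coop = \gamma^n_{i,1-\varepsilon}$, etc.), combined with the anti-/strong monoidality of \cref{op-co-monoidal}; once this compatibility is checked, the rest is formal. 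Alternatively, one may bypass the monadic language entirely and simply observe directly from the tables in \cref{left-positive-Q} and \cref{Q-cosimp-ob} that the listed face and degeneracy maps of $Q^{\bullet}_{L,0}$, $Q^{\bullet}_{R,0}$, $Q^{\bullet}_{R,1}$ are the images of those of $Q^{\bullet}_{L,1}$ under $(-)^\coop$, $(-)^\op$, $(-)^\rmco$ respectively, and then invoke cocontinuity of the involutions as above.
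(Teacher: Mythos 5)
Your proposal is correct and takes essentially the same approach as the paper, which simply observes that the first three formulas follow from \cref{cone-involutions} (via the cosimplicial objects, cocontinuity of the involutions, and Kan extension) and the last three by passing to right adjoints. You have just spelled out in detail the steps the paper leaves implicit.
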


\begin{proof}
It suffices to prove the first three items, which follow from \cref{cone-involutions}.
\end{proof}

As we did in \cref{sec:cones}, from here on we will work exclusively with left positive cones except where noted, using the subscript $(L,1)$ only where the potential for ambiguity arises.

The analogous functor $Q \colon \sSet \to \cSet_0$, which we will denote $Q_{0}$, was previously studied in \cite{kapulkin-lindsey-wong}. In that paper, the objects $Q^{n}_{0}$ were described as quotients of $\Box^n_{0}$ under a certain equivalence relation; this relation is precisely that of \cref{ConeDesc} in the case $m = 0$. We begin by recalling some of the theory developed in that paper, and adapting it to the present setting.

\begin{lemma}\label{Q-i-composite}
We have a commuting triangle of adjunctions:

\centerline{
\xymatrix{
\sSet \ar@<1ex>[rr]^{Q} \ar@{}|{\rotatebox{-90}{$\adjoint$}}[rr] \ar@<1ex>[ddr]^{Q_0} \ar@{}|{\rotatebox{-140}{$\adjoint$}}[ddr] && \cSet \ar@<1ex>[ll]^{\int} \ar@<1ex>[ddl]^{i^\ast} \\
\\
& \cSet_0 \ar@<1ex>[uul]^{\int_0} \ar@<1ex>[uur]^{i_!} \ar@{}|{\rotatebox{-40}{$\adjoint$}}[uur]
}
}
\end{lemma}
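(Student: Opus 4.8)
The plan is to prove \cref{Q-i-composite} by verifying that the triangle of left adjoints commutes, since commutativity of right adjoints then follows by uniqueness of adjoints. Concretely, I would show that $i_! \circ Q_0 \cong Q$ as functors $\sSet \to \cSet$, and that the lower-left adjunction $Q_0 \adjoint \int_0$ and the right-hand adjunction $i_! \adjoint i^*$ are the ones already described in the excerpt (the former from \cite{kapulkin-lindsey-wong}, the latter recorded just before \cref{i-adjunction-non-degenerate}). Since all three functors $Q$, $Q_0$, and $i_!$ are cocontinuous (being left adjoints, with $Q$ and $Q_0$ defined by left Kan extension along the Yoneda embedding), it suffices to check the isomorphism $i_! \circ Q_0 \cong Q$ on representables, i.e.\ to produce natural isomorphisms $i_!(Q_0^n) \cong Q^n$ in $\cSet$ compatible with the cosimplicial structure maps.

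The key step is therefore the identification $i_!(Q_0^n) \cong Q^n$. Here I would use the description of $i_!$ from the excerpt: for $Y \in \cSet_0$, the cubical set $i_! Y$ is obtained by \emph{freely adding positive connections} to $Y$, and a map out of $i_! Y$ is determined by its restriction to the non-degenerate cubes of $i_! Y$, which are exactly the non-degenerate cubes of $Y$. On the other side, $Q^n = C^{n+1}\varnothing = C^{0,n}$ is the quotient of $\Box^n$ described in \cref{ConeDesc} (with $m = 0$), and $Q_0^n$ is the analogous quotient of $\Box_0^n$ — by the remark in the excerpt, the equivalence relation defining $Q_0^n$ in \cite{kapulkin-lindsey-wong} is precisely that of \cref{ConeDesc} in the case $m=0$. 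So both $Q^n$ and $i_!(Q_0^n)$ are assembled from the same non-degenerate cubes (those of $Q_0^n$), and the content is that $Q^n$ contains no ``extra'' non-degenerate cubes beyond what one gets by freely adjoining positive connections to $Q_0^n$. This is exactly \cref{Q-stand-form}: the standard form of any cube of $Q^n$ (a $(0,n)$-cone) contains no positive connection maps, so every cube of $Q^n$ is a composite of degeneracies and negative connections applied to a non-degenerate cube, and the non-degenerate cubes of $Q^n$ coincide with those of $Q_0^n$ under $i^*$. Thus $Q^n$ has precisely the universal property of $i_!(Q_0^n)$, giving the natural isomorphism; naturality in $[n] \in \Delta$ is then a matter of checking that the cosimplicial face and degeneracy maps of \cref{left-positive-Q} match those of the cosimplicial object $Q_0$ of \cite{kapulkin-lindsey-wong} under $i_!$, which is immediate since $i_!$ acts as the identity on the generating non-degenerate cubes.

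I would then conclude: $i_! \circ Q_0$ and $Q$ agree on the cosimplicial object $\Delta \to \cSet$, hence (both being left Kan extensions along the Yoneda embedding, and $i_!$ being cocontinuous) they agree as functors $\sSet \to \cSet$. Passing to right adjoints gives $\int \cong \int_0 \circ i^*$, and the three adjunction units/counits assemble into the commuting triangle as displayed.

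The main obstacle I anticipate is not any deep difficulty but rather the bookkeeping in the identification $i_!(Q_0^n) \cong Q^n$: one must be careful that the non-degenerate cubes of $Q^n$ really are \emph{exactly} the $i_!$-images of the non-degenerate cubes of $Q_0^n$ — i.e.\ that passing from $\Box_0^n$ to $\Box^n$ and then taking the cone quotient does not create or destroy non-degeneracy in unexpected ways — and that the equivalence relation of \cite{kapulkin-lindsey-wong} genuinely coincides with the $m=0$ case of \cref{ConeDesc} rather than merely resembling it. Both points are handled by \cref{Q-stand-form} together with \cref{StandForm,map-on-non-degen} and the explicit description of $i_!$, so the proof should be short, but it is worth stating these citations explicitly to avoid a gap.
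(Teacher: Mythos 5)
Your overall strategy is the paper's: reduce to showing $i_!(Q_0^n) \cong Q^n$, then use cocontinuity. The paper dispatches the key isomorphism with ``by definition,'' while you try to supply the details, and it is there that the gap lies.

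The claim ``the standard form of any cube of $Q^n$ (a $(0,n)$-cone) contains no positive connection maps, so every cube of $Q^n$ is a composite of degeneracies and negative connections applied to a non-degenerate cube'' is false, and it is not what \cref{Q-stand-form} says. \Cref{Q-stand-form} concerns $(0,n)$-cones \emph{in an ambient cubical set $X$}: these are by definition $n$-cubes $x \colon \Box^n \to X$ that factor through the quotient $\Box^n \to C^{0,n} = Q^n$, and the lemma constrains the standard form of such an $x$ viewed as an element of $X_n$. It says nothing about the cubes of $Q^n$ as a cubical set (i.e.\ elements of $(Q^n)_k$ for arbitrary $k$), which are not $(0,n)$-cones. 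In fact $Q^n$, being an object of $\cSet$, genuinely contains positive connections: in $Q^2 \cong C\Box^1$, the $2$-cube corresponding to $\partial_{1,0}\gamma_{1,1} \colon [1]^2 \to [1]^2$ is not identified with anything else under the relation of \cref{ConeDesc} (since $(\partial_{1,0}\gamma_{1,1})_1 = \mathrm{const}_0$ and $(\partial_{1,0}\gamma_{1,1})_2 = \min$), and its standard form is $[\partial_{1,0}]\gamma_{1,1}$, ending in a positive connection. Indeed, if $Q^n$ \emph{had} no positive connections the conclusion would fail, since $i_!(Q_0^n)$ certainly does have them --- that is precisely what $i_!$ freely adjoins.

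What your plan should establish instead is the weaker statement that the \emph{non-degenerate} cubes of $Q^n$ coincide with those of $Q_0^n$ (both are represented by face maps $[1]^k \to [1]^n$, which lie in $\Box_0$, and the only thing to check is that a class $[f]$ that is degenerate in $Q^n$ via a positive connection is already degenerate in $Q_0^n$ --- which holds, since the ConeDesc relation forcing the degeneracy also identifies $[f]$ with a cube factoring through $[1]^{j-1}$ via $\Box_0$-maps). Combined with surjectivity of $i_!Q_0^n \to Q^n$ (which you have, since $\Box^n = i_!\Box_0^n \to Q^n$ factors through it), \cref{mono-characterization} and \cref{i-adjunction-non-degenerate} then give the isomorphism. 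Alternatively, and closer to what the paper seems to have in mind with ``by definition,'' one may note that $i_!$ is strong monoidal for the geometric product --- being the left Kan extension along the strict monoidal inclusion $\Box_0 \hookrightarrow \Box$ --- hence commutes with the cone functor $C$, which is built from pushouts and $\Box^1 \otimes -$; applying this to $\varnothing$ gives $i_!Q_0^n \cong Q^n$ directly, with naturality in $[n]$ for free.
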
 

\begin{proof}
By definition, $i_{!} Q^{n}_{0} \cong Q^{n}$; the general result follows from this, using the fact that $i_{!}$ preserves colimits as a left adjoint.
\end{proof}

\begin{lemma}
For any $X \in \cSet$, the counit $Q \int X \to X$ is a monomorphism. 
\end{lemma}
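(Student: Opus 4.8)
The plan is to exhibit the counit $\epsilon_X \colon Q \int X \to X$ as a comparison map on non-degenerate cubes and show it is injective there, invoking \cref{mono-characterization}. By \cref{Q-i-composite} we have $i_! Q_0^n \cong Q^n$ and, more generally, $Q \cong i_! Q_0 (-)$ at the level of the left Kan extensions, so the counit $Q \int X \to X$ in $\cSet$ factors through the counit $i_! i^\ast X \to X$; using \cref{i-adjunction-non-degenerate} it suffices to understand the corresponding comparison map $Q_0 \int X \to i^\ast X$ in $\cSet_0$, i.e.\ the counit of the adjunction $Q_0 \adjoint \int_0$. This reduces the statement to the monomorphism claim for the straightening-over-the-point adjunction in $\cSet_0$, which, as noted in the surrounding text (``it was for instance shown in \cite{kapulkin-lindsey-wong} that it is a monomorphism''), is available in the literature. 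So the first and cleanest route is simply to cite \cite{kapulkin-lindsey-wong} for the $\cSet_0$ statement and transport it along $i_!$.

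If one instead wants a self-contained argument, I would proceed as follows. Using \cref{colim-factor}, every $k$-cube of $Q \int X$ arises from a simplex $\sigma \colon \Delta^m \to \int X$ — equivalently a cone $\sigma \colon Q^m = C^{0,m} \to X$ — together with a map $\Box^k \to Q^m$; the counit sends this to the composite $\Box^k \to Q^m = C^{0,m} \xrightarrow{\sigma} X$. By \cref{StandForm} and \cref{ConeDesc}, the non-degenerate $k$-cubes of $Q^m = C^{0,m}$ (in $\cSet$, or equivalently in $\cSet_0$ after applying $i^\ast$, using \cref{Q-stand-form}) are in bijection with the injective maps $[1]^k \hookrightarrow [1]^m$ that survive the cone identification — concretely those $f = (f_1,\dots,f_m)$ with no $f_j = \mathrm{const}_1$ among the ``collapsed'' coordinates — and each such cube is itself a $(0,k)$-cone. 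One then checks that the non-degenerate cubes of $Q \int X$ are exactly the images under $\epsilon_X$ of $(0,k)$-cones $\Box^k \to X$ that are non-degenerate in $X$, and that two such data determine the same non-degenerate cube of $Q\int X$ precisely when they determine the same cube of $X$: the key point is that a $(0,k)$-cone in $X$, together with the full combinatorial record of its iterated $(i,0)$-faces (which by \cref{QFace} and \cref{ConeFaceDeg} are themselves lower cones), recovers the simplex of $\int X$ it came from. This is exactly the kind of faithful-reconstruction statement that makes $Q \int X$ the ``maximal sub-simplicial-set'' of $X$ described informally in the section introduction.

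The main obstacle is the bookkeeping in the second route: one must show that the equivalence relation on pairs $(\sigma, \psi)$ imposed by the colimit defining $Q\int X$ collapses to precisely the relation ``$\epsilon_X$ agrees'', with no extra identifications and no extra cubes. Concretely this means proving that if $\sigma \colon C^{0,m}\to X$ and $\sigma' \colon C^{0,m'}\to X$ are two cones and $f \colon \Box^k \to C^{0,m}$, $f'\colon \Box^k \to C^{0,m'}$ are maps with $\sigma f = \sigma' f'$ in $X$, while the cubes picked out in $Q^m$, $Q^{m'}$ are non-degenerate, then $(\sigma,f)$ and $(\sigma',f')$ are connected by the simplicial identities used in forming $\int X$ — i.e.\ one can reduce both to a common non-degenerate simplex of $\int X$. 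I expect this to go through by an induction on $k$ using \cref{QFace}, \cref{ConeFaceDeg}, and the uniqueness of standard form (\cref{StandForm}, \cref{Q-stand-form}), but it is precisely the step where the simplicial structure maps of $Q$ from \cref{left-positive-Q} must be matched against face maps in $\Box$, and care is needed because the $0^{\text{th}}$ face of $Q$ is induced by $\partial_{n,1}$ rather than a $\partial_{\cdot,0}$. Given that \cite{kapulkin-lindsey-wong} already records the $\cSet_0$ version, my recommendation for the paper is the short transport argument of the first paragraph, relegating the combinatorial verification to that reference.
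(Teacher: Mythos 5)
Your recommended route has a real gap.  The factorization $Q = i_!\,Q_0$, $\int = \int_0\, i^\ast$ via \cref{Q-i-composite} and the citation of \cite{kapulkin-lindsey-wong} for the $\cSet_0$ counit are exactly right, and do match the paper's starting point.  But ``transport along $i_!$'' is not automatic.  Since $i_!$ preserves monomorphisms, the $\cSet_0$ result gives a monomorphism $Q\int X = i_!\,Q_0 \int_0 i^\ast X \hookrightarrow i_! i^\ast X$; however, the counit $i_! i^\ast X \to X$ is \emph{not} a monomorphism in general.  Any cube of $X$ that is degenerate only via a positive connection (e.g.\ $\iota\,\gamma_{1,1}$ in $\Box^1$) is non-degenerate in $i^\ast X$, hence also non-degenerate in $i_! i^\ast X$, and the counit sends it to a degenerate cube of $X$; by \cref{mono-characterization} this cannot be a monomorphism.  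Equivalently, \cref{i-adjunction-non-degenerate} together with the $\cSet_0$ result tells you only that $Q\int X \to X$ carries non-degenerate cubes injectively to non-degenerate cubes of $i^\ast X$ — a strictly weaker conclusion than landing injectively in non-degenerate cubes of $X$.  So ``cite [KLW] and transport'' does not close the argument on its own.

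The missing ingredient is an observation you actually gesture at in your second, self-contained sketch (via \cref{Q-stand-form}), but there it is deployed for a different purpose and buried inside a much heavier, only conjectural colimit reconstruction that you yourself flag as incomplete.  The paper needs only this: by \cref{colim-factor} and \cref{ConeFaceDeg}, every non-degenerate cube of $Q\int X$ is a $(0,n)$-cone, hence so is its image in $X$ under any cubical map; and by \cref{Q-stand-form} the standard form of a $(0,n)$-cone contains no positive connection maps, so such a cube is degenerate in $X$ if and only if it is degenerate in $i^\ast X$.  With that one additional line, your transport argument goes through and becomes precisely the paper's proof — no need for the detailed bookkeeping of your second route.
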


\begin{proof}
By \cref{Q-i-composite}, the counit of $Q \adjoint \int$ is the natural map $i_{!} Q_0 \int_0 i^{\ast} X \to X$ for $X \in \cSet$. Applying \cref{mono-characterization}, we wish to show that this map sends non-degenerate cubes to non-degenerate cubes, and does so injectively. 

By \cref{i-adjunction-non-degenerate}, the action of this map on non-degenerate cubes coincides with that of its adjunct map $Q_0 \int_0 i^{\ast} X \to i^{\ast} X$. This map is a monomorphism by \cite[Lem. 4.2]{kapulkin-lindsey-wong}, hence it maps the non-degenerate cubes of $Q_0 \int_0 i^{\ast} X$ injectively to non-degenerate cubes of $i^{\ast} X$ by the analogue of \cref{mono-characterization} in $\cSet_0$. The non-degenerate cubes of $i^{\ast} X$ consist of the non-degenerate cubes of $X$, together with the degenerate cubes of $X$ which cannot be expressed as degeneracies or negative connections. Therefore, to show that $Q \int X \to X$ is a monomorphism, it suffices to show that it does not send any non-degenerate cubes of $Q \int X$ to degenerate cubes of $X$ whose standard forms contain positive connection maps.

Note that every non-degenerate $n$-cube of $Q \int X$ is a $(0,n)$-cone -- this follows from \cref{colim-factor,ConeFaceDeg}. Thus the images of such cones under any cubical set map will also be $(0,n)$-cones. By \cref{Q-stand-form}, their standard forms will therefore contain no positive connection maps. 
\end{proof}

This lemma shows that for any cubical set $X$, $Q \int X$ is a subcomplex of $X$. Specifically, it is the subcomplex whose non-degenerate $n$-cubes, for each $n$, are those which factor through $Q^{n}$ -- in other words, they are the non-degenerate $(0,n)$-cones in $X$.

\begin{theorem}\label{Q-fully-faithful}
The functor $Q \colon \sSet \to \cSet$ is fully faithful.
\end{theorem}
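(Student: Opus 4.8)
The plan is to show that $Q$ is fully faithful by exhibiting the counit $Q \int \Box^n \to \Box^n$ — wait, more precisely, by showing that the unit $\Delta^n \to \int Q \Delta^n$ is an isomorphism, which for a left Kan extension along Yoneda suffices since $Q$ then fully faithfully embeds the representables and everything is determined by colimits. Concretely, $Q$ is fully faithful if and only if the unit $\eta_X \colon X \to \int Q X$ is an isomorphism for all simplicial sets $X$, and since both $Q$ and $\int$ preserve colimits in the relevant sense (colimits on the left, and $\int Q$ preserves colimits because $Q \Delta^n$ is a finite — indeed the subcomplex-generating — cubical set and one checks $\int$ commutes with the relevant colimits), it suffices to prove $\Delta^n \xrightarrow{\cong} \int Q \Delta^n$ for each $n \geq 0$. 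By the defining formula $(\int Y)_k = \cSet(Q^k, Y)$, this amounts to showing that the natural map $(\Delta^n)_k \to \cSet(Q^k, Q^n)$ is a bijection for all $k$; that is, every cubical map $Q^k \to Q^n$ is induced by a unique simplicial operator $[k] \to [n]$, where $Q^m = C^{0,m} = C^{m+1}\varnothing$.

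First I would record, using \cref{ConeDesc} (with $m = 0$) and \cref{QFace}, the explicit description of $Q^n$ as a quotient of $\Box^n$: its non-degenerate cubes are exactly the non-degenerate $(0,k)$-cones, and any face of a $(0,n)$-cone is a $(0,n-1)$-cone (\cref{QFace}), so $Q^n$ is closed under the face structure in the expected combinatorial way; moreover by \cref{Q-stand-form} the standard forms of its cubes contain no positive connections, and by inspection (or \cref{sa1}) the degeneracies appearing are similarly constrained. Next, since $Q = i_! Q_0$ by \cref{Q-i-composite} and $i_!$ is fully faithful (it is a left adjoint to $i^\ast$ which is a localization — or more simply, $i_!$ freely adds positive connections and acts as the identity on non-degenerate cubes, so $\cSet(i_! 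A, i_! B) \cong \cSet_0(A, B)$ via \cref{i-adjunction-non-degenerate} whenever $B$ contains no non-degenerate positive connections, which holds for $B = Q^n_0$), the problem reduces to the corresponding statement in $\cSet_0$: that $Q_0 \colon \sSet \to \cSet_0$ is fully faithful. This last fact is established in \cite{kapulkin-lindsey-wong} (the category of simplicial sets is a coreflective subcategory of $\cSet_0$ via $Q_0$), which the excerpt has already invoked in the introduction and in the proof that the counit $Q \int X \to X$ is a monomorphism. So the cleanest route is: (1) cite \cref{Q-i-composite}; (2) observe $i_!$ is fully faithful on the image of $Q_0$ using \cref{i-adjunction-non-degenerate} together with the fact that $Q_0^n$ has no non-degenerate positive connections (it lives in $\cSet_0$, which has none); (3) invoke the full faithfulness of $Q_0$ from \cite{kapulkin-lindsey-wong}. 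Then $\int Q \cong \int_0 i^\ast i_! Q_0 \cong \int_0 Q_0 \cong \mathrm{id}$ on the nose, because $i^\ast i_! \cong \mathrm{id}$ on $\cSet_0$ (adding positive connections and then forgetting the category structure gives back the original, as positive connections are simply discarded under $i^\ast$ — or rather, one uses the triangle identities for $i_! \adjoint i^\ast$ and that the unit $\mathrm{id} \to i^\ast i_!$ is an isomorphism since $i$ is a full subcategory inclusion).

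Alternatively — and this may be cleaner to write — one argues directly: $Q$ fully faithful $\iff$ the unit $\mathrm{id}_{\sSet} \to \int Q$ is a natural isomorphism $\iff$ (since everything is determined by representables and colimits) $[n] \mapsto \cSet(Q^k, Q^n)$ reproduces $\Delta$. One shows a cubical map $f \colon Q^k \to Q^n$ is determined by where it sends the "spine" of $Q^k$ — the composite $\Delta^k \to Q^k$ picks out a chain of $k$ edges, and the images of these $k$ edges plus the images of the $k{+}1$ vertices must assemble into a map $[k] \to [n]$ by the combinatorics of cones (every edge of $Q^n$ factors through $Q^1 = \Box^1$ and corresponds to a pair $i \leq j$ in $[n]$, i.e. a morphism of $[n]$; a $(0,k)$-cone's $2$-faces force these to compose correctly). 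Conversely every simplicial operator clearly gives such a map, and distinct operators give distinct maps since they differ already on vertices or on the spine.

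\textbf{The main obstacle.} The genuinely load-bearing content is the full faithfulness of $Q_0 \colon \sSet \to \cSet_0$, which is \cite[\S 2--4]{kapulkin-lindsey-wong} and which I would simply cite rather than reprove; given the introduction's explicit statement that this result is being leveraged, invoking it is legitimate. The remaining work is bookkeeping with \cref{Q-i-composite}, \cref{i-adjunction-non-degenerate}, and the (routine) verification that $i_!$ is fully faithful — since $i \colon \Box_0 \hookrightarrow \Box$ is a full subcategory inclusion, $i_!$ is automatically fully faithful, so the unit $\mathrm{id} \to i^\ast i_!$ is an isomorphism and hence $\int Q = \int_0 i^\ast i_! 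Q_0 \cong \int_0 Q_0 \cong \mathrm{id}$, giving the result. The only subtlety to be careful about is confirming that the isomorphism $\int_0 Q_0 \cong \mathrm{id}$ from \cite{kapulkin-lindsey-wong} is the \emph{unit} of the adjunction $Q_0 \adjoint \int_0$ (equivalently, that $Q_0$ is fully faithful, not merely that $\int_0 Q_0$ is abstractly isomorphic to the identity functor) — but this is exactly the "coreflective subcategory" statement quoted in the introduction.
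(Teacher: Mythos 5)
Your overall plan — write $Q = i_! Q_0$ via \cref{Q-i-composite}, cite the full faithfulness of $Q_0$ from \cite{kapulkin-lindsey-wong}, and transfer it through $i_!$ — is exactly the paper's strategy. But your treatment of $i_!$ contains a genuine error.

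You assert (twice, the second time as the load-bearing step of the argument) that $i_! \colon \cSet_0 \to \cSet$ is fully faithful because $i \colon \Box_0 \hookrightarrow \Box$ is a full subcategory inclusion, from which you deduce that the unit $\mathrm{id} \to i^\ast i_!$ is an isomorphism. This is false: $\Box_0$ is the \emph{wide} subcategory of $\Box$ generated by faces, degeneracies, and negative connections — it has all the objects $[1]^n$ but strictly fewer morphisms, since $\Box$ additionally contains positive connections $\gamma_{i,1}$. So $i$ is faithful but not full, and the unit $X \to i^\ast i_! X$ is only a monomorphism, not an isomorphism: $i_!$ freely adds positive connections, and under $i^\ast$ those freely added cubes become genuinely new (non-degenerate) cubes of the $\Box_0$-presheaf. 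The paper says this explicitly in the proof ("In general, $i_!$ is not full"); faithfulness of $i_!$ gives faithfulness of $Q$ for free, but fullness requires more work.

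The paper's fix, which you gesture at in a parenthetical but do not carry through, is to show $i_!$ is full \emph{on the image of} $Q_0$: given $f \colon QX \to QY$, every non-degenerate cube of $QX$ is a $(0,n)$-cone, its image $fx$ is again a $(0,n)$-cone, and by \cref{Q-stand-form} the standard form of any $(0,n)$-cone contains no positive connections, so $fx$ is a cube of $Q_0 Y \subseteq i^\ast QY$. By \cref{map-on-non-degen} this determines a map $Q_0 X \to Q_0 Y$ whose image under $i_!$ is $f$. Your parenthetical formulation ("$\cSet(i_! A, i_! B) \cong \cSet_0(A,B)$ whenever $B$ contains no non-degenerate positive connections, which holds for $B = Q^n_0$") misidentifies where the constraint lives — the issue is not a property of the codomain object (which, being an $i_!$-image, automatically has no such cubes) but of where arbitrary maps $f$ can send non-degenerate cubes; the cone argument is exactly what rules out the bad images. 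You need to promote that cone argument from a parenthetical aside to the actual proof, and delete the appeal to $\Box_0 \hookrightarrow \Box$ being full, which it is not.
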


\begin{proof}
That $Q_0$ is fully faithful follows from \cite[Thm. 3.9]{kapulkin-lindsey-wong}. Since $i_!$ is faithful, it follows from \cref{Q-i-composite} that $Q$ is faithful as well. In general, $i_!$ is not full; we will show, however, that it is full on the image of $Q_0$, which suffices to show that the composite $Q$ is fully faithful.

Let $X, Y \in \sSet$, and consider a map $f \colon QX \to QY$. By \cref{map-on-non-degen}, $f$ is determined by its action on the non-degenerate cubes of $QX$. Let $x \colon \Box^n \to QX$ be non-degenerate; then $x$ is a $(0,n)$-cone, hence so is $fx$. Therefore, by \cref{Q-stand-form}, the standard form of $fx$ contains no positive connection maps; thus $fx$ corresponds to a cube of $Q_0 Y$.

Thus we see that the action of $f$ on the non-degenerate cubes of $QX$ (which coincide with those of $Q_0 X$) defines a map $Q_0 X \to Q_0 Y$; the image of this map under $i_!$ is precisely $f$. 
\end{proof}

Our next goal is to show the following:

\begin{proposition}\label{QAdj}
The adjunction $Q \adjoint \int$ is Quillen.
\end{proposition}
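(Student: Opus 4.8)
The plan is to apply \cref{Quillen-adj-fib-obs}: since $Q$ is a left adjoint and the cofibrations of the Joyal model structure on $\sSet$ are the monomorphisms, it suffices to show that $Q$ preserves monomorphisms and sends a chosen generating set $S$ of trivial cofibrations to trivial cofibrations in the cubical Joyal model structure. For $S$ I would take the inner horn inclusions $\Lambda^n_i \hookrightarrow \Delta^n$ for $1 < i < n$ together with the endpoint inclusions $\{\varepsilon\} \hookrightarrow J$, $\varepsilon \in \{0,1\}$, since by \cref{sSet-J} these characterize the fibrations between fibrant objects in the Joyal model structure.

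\textbf{Preservation of monomorphisms.} First I would observe that $Q$ preserves monomorphisms. Since $Q$ is a left Kan extension along the Yoneda embedding, a standard argument reduces this to checking that the cosimplicial object $Q \colon \Delta \to \cSet$ of \cref{left-positive-Q} is \emph{Reedy cofibrant} (or more directly, one uses the composite $Q = i_! Q_0$ of \cref{Q-i-composite} together with the facts that $i_!$ preserves monomorphisms and that $Q_0$ preserves monomorphisms, the latter being part of the theory of \cite{kapulkin-lindsey-wong}). Alternatively, one can argue concretely: $Q\bd\Delta^n \to Q\Delta^n = Q^n$ is the inclusion of the boundary of the cone $C^{0,n}$, which is a monomorphism by the explicit description of $Q^n$ as a quotient of $\Box^n$ in \cref{ConeDesc} together with the analysis of faces in \cref{FaceIso,QFace}. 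Since every monomorphism of simplicial sets is a transfinite composite of pushouts of boundary inclusions and $Q$ preserves colimits, $Q$ preserves all monomorphisms.

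\textbf{Inner horns go to anodyne maps.} The heart of the proof — and the main obstacle — is showing that $Q$ sends each inner horn inclusion $\Lambda^n_i \hookrightarrow \Delta^n$ ($1 < i < n$) to a trivial cofibration in $\cSet$. Here the theory of cones from \cref{sec:cones} does the work: $Q\Lambda^n_i \to Q^n = C^{0,n}$ is a monomorphism onto a subcomplex of the cone $C^{0,n}$, and the plan is to identify this subcomplex and factor the inclusion as a composite of pushouts of inner open box inclusions, using \cref{BEqv} and the explicit face analysis in \cref{ConeFaceDeg,FaceIso,ConeEdge}. Concretely, $Q\Lambda^n_i$ is the union of all faces of $C^{0,n}$ except the $i$-th, and using the face correspondence of \cref{left-positive-Q} (the $0$-th face comes from $\partial_{n,1}$, and the $j$-th face for $j \geq 1$ comes from $\partial_{n-j+1,0}$) together with \cref{ConeWLOG} one rewrites this in terms of the subcomplexes $B^{m,n,k}$; the inclusion $B^{m,n,k} \hookrightarrow C^{m,n}$ is a trivial cofibration by \cref{BEqv}, and the $\cSet_0$ case was handled in \cite{kapulkin-lindsey-wong}. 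The endpoint inclusions $\{\varepsilon\} \hookrightarrow J$ are easier: $QJ$ is built from $Q\Delta^2$'s by collapsing, and by the 2-out-of-3 argument in the style of \cref{T-endpoints} (or by direct inspection, noting $Q^1 = C^{0,1} \cong C^{1,0} = \Box^1$ by \cref{Qcone} so that $QJ$ contains a copy of $K$) one shows $\Box^0 \to QJ$ is a trivial cofibration. The delicate bookkeeping is precisely matching up the simplicial face conventions of \cref{left-positive-Q} with the cone face computations of \cref{ConeFaceDeg}, so that each intermediate inclusion is recognized as a pushout of an inner open box filling with degenerate critical edge (as guaranteed by \cref{ConeEdge}).

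Combining the two items, \cref{Quillen-adj-fib-obs} yields that $Q \adjoint \int$ is a Quillen adjunction, completing the proof.
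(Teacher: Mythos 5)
Your overall strategy is the one the paper uses — invoke \cref{Quillen-adj-fib-obs}, check that $Q$ preserves monomorphisms, and check that $Q$ sends a pseudo-generating set of trivial cofibrations (inner horn inclusions and the endpoint inclusions of $J$) to trivial cofibrations. The first part (preservation of monomorphisms via $Q = i_! Q_0$) is correct, and you correctly identify that \cref{ConeEdge} is the lemma supplying the degenerate critical edge. But the central step — showing $Q\Lambda^n_i \hookrightarrow Q^n$ is anodyne — is misrouted.

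You propose to rewrite $Q\Lambda^n_i \hookrightarrow Q^n$ ``in terms of the subcomplexes $B^{m,n,k}$'' and invoke \cref{BEqv}. This cannot work as stated: \cref{BEqv} requires $m \geq 1$, while $Q^n = C^{0,n}$, so no $B^{0,n,k}$ exists. Even after passing through the isomorphism $C^{0,n} \cong C^{1,n-1}$ of \cref{Qcone}, the only admissible subcomplex is $B^{1,n-1,n-1}$, which is $C^{1,n-1}$ minus its \emph{last} $(n,0)$-face; but for an inner horn $\Lambda^n_i$ with $0 < i < n$ the omitted face of $Q^n$ is the image of $\partial_{n-i+1,0}$, which can be any of $\partial_{2,0},\ldots,\partial_{n,0}$ as $i$ varies. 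So the $B^{m,n,k}$ machinery covers only one value of $i$, and the subcomplexes $B^{m,n,k}$ are really designed for a different job (they appear in \cref{theta-lift}, where one fills cones in $X$ one face at a time). The correct argument is both simpler and sharper: since $Q$ preserves colimits, $Q\Lambda^n_i$ is the image of $\sqcap^n_{n-i+1,0}$ under the quotient map $\Box^n \to Q^n$, and the square with $\sqcap^n_{n-i+1,0} \hookrightarrow \Box^n$ on the left and $Q\Lambda^n_i \hookrightarrow Q^n$ on the right is a pushout. Thus $Q\Lambda^n_i \hookrightarrow Q^n$ is a \emph{single} pushout of an open box inclusion, which is inner because \cref{ConeEdge} guarantees the critical edge becomes degenerate in $Q^n$; no composite decomposition is needed. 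This is \cref{openBoxInnerHornPushout}.

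For the endpoint inclusions of $J$, the paper simply observes $QJ \cong K$ (\cref{QJ}), after which \cref{KendEq} applies directly. Your remark that ``$QJ$ contains a copy of $K$'' is weaker than what is needed (a proper inclusion $K \subsetneq QJ$ would not by itself make $\Box^0 \to QJ$ a trivial cofibration), and the 2-out-of-3 fallback in the style of \cref{T-endpoints} is unnecessary once the isomorphism $QJ \cong K$ is noted: the two cubical sets have the same two vertices, two non-degenerate edges, and two non-degenerate squares, so the identification is immediate.
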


To prove this, we will show that this adjunction satisfies the hypotheses of \cref{Quillen-adj-fib-obs}.

\begin{lemma}\label{Q-pres-cof}
$Q$ preserves monomorphisms.
\end{lemma}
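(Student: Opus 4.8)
The plan is to reduce the statement to a claim about non-degenerate cubes, using \cref{mono-characterization}. Since $Q$ is a left adjoint it preserves colimits, and every simplicial set is a colimit of representables; so it would suffice to understand the cubical sets $Q^n = Q\Delta^n = C^{0,n}$ and the maps between them induced by the face maps $\partial_i \colon \Delta^{n-1} \to \Delta^n$. However, colimit-preservation alone does not immediately give that a monomorphism of simplicial sets goes to a monomorphism of cubical sets, so the cleanest route is to pass through the already-established machinery for $\cSet_0$.

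First I would invoke \cref{Q-i-composite}, which factors $Q$ as $i_! \circ Q_0$. By \cite[Lem.~4.1]{kapulkin-lindsey-wong} (or whichever result in that paper states it), $Q_0 \colon \sSet \to \cSet_0$ preserves monomorphisms; alternatively this can be proven directly by the same skeletal induction as in the simplicial case, since $Q_0$ preserves colimits and sends boundary inclusions $\bd \Delta^n \to \Delta^n$ to the inclusion $\bd Q^n_0 \hookrightarrow Q^n_0$ of the subcomplex spanned by the images of the proper faces (this uses \cref{ConeDesc} to see that the quotient map does not collapse distinct proper faces together, i.e.\ \cref{FaceInt}). So it remains to show that $i_! \colon \cSet_0 \to \cSet$ preserves monomorphisms \emph{on the image of $Q_0$} — and in fact it is enough to check it on a monomorphism $f \colon A \to B$ with $A, B$ in the image of $Q_0$, since every monomorphism of simplicial sets is sent by $Q_0$ to such a map.

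The key point is that $i_!$ is ``freely adding positive connections'': by the description of $i_!$ recalled after \cref{i-adjunction-non-degenerate}, $i_! f$ agrees with $f$ on non-degenerate cubes, and the non-degenerate cubes of $i_! Q_0 X$ are exactly the non-degenerate cubes of $Q_0 X$. Now by \cref{mono-characterization} I must check that $i_! f$ sends non-degenerate cubes of $i_! Q_0 X = QX$ to non-degenerate cubes of $QY$, injectively. Injectivity is immediate since $f = Q_0 f$ is a monomorphism in $\cSet_0$ and hence injective on non-degenerate cubes by the analogue of \cref{mono-characterization} in $\cSet_0$. For the first clause: every non-degenerate $n$-cube $x$ of $QX$ is a $(0,n)$-cone (this follows, as in the proof of the preceding lemma, from \cref{colim-factor} and \cref{ConeFaceDeg}), hence so is its image $(i_!f)(x) = f(x)$, and by \cref{Q-stand-form} the standard form of $f(x)$ contains no positive connection maps. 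But a degenerate cube whose standard form contains no positive connections is a degeneracy-or-negative-connection of a lower cube, so it would already be degenerate \emph{in $\cSet_0$}; since $f = Q_0 f$ preserves non-degeneracy in $\cSet_0$, $f(x)$ is non-degenerate there, hence $(i_!f)(x)$ is non-degenerate in $\cSet$. This gives the result.

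The main obstacle is the bookkeeping in the last step: one must be careful that ``non-degenerate in $\cSet$'' for a cube whose standard form avoids positive connections coincides with ``non-degenerate in $\cSet_0$'', and that the two notions of standard form (\cref{StandForm} and its $\cSet_0$-analogue) are compatible under $i_!$ — this is exactly the content of \cref{i-adjunction-non-degenerate} together with \cref{Q-stand-form}. Everything else is a routine application of colimit-preservation and the cellular model \cref{cSetCellularModel}. I would expect the written proof to be just a few lines, citing \cref{Q-i-composite}, the monomorphism-preservation of $Q_0$ from \cite{kapulkin-lindsey-wong}, \cref{i-adjunction-non-degenerate}, \cref{Q-stand-form}, and \cref{mono-characterization}.
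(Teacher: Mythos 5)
Your proposal takes the same route as the paper: factor $Q = i_! \circ Q_0$ via \cref{Q-i-composite}, cite that $Q_0$ preserves monomorphisms from \cite{kapulkin-lindsey-wong} (the paper cites Lemma 4.5 there), and then argue that $i_!$ preserves monomorphisms. The one place you diverge is that you hedge on the $i_!$ step, restricting to monomorphisms between objects in the image of $Q_0$ and invoking $(0,n)$-cones and \cref{Q-stand-form} to verify non-degeneracy is preserved. The paper simply asserts, without qualification, that $i_!$ preserves all monomorphisms, and your extra machinery is not needed: since $i_!$ is ``freely adding positive connections'', the non-degenerate cubes of $i_! X$ are precisely the non-degenerate cubes of $X$ (this is stated in the paragraph following \cref{i-adjunction-non-degenerate}, and the unique standard form of \cref{StandForm} makes it transparent), and $i_! f$ agrees with $f$ on them. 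So by \cref{mono-characterization} and its $\cSet_0$-analogue, $i_!$ preserves and reflects monomorphisms in general, and your route through $(0,n)$-cones and \cref{Q-stand-form} — while correct — is a detour. That said, the caution is understandable: left Kan extensions along non-full inclusions do not preserve monomorphisms in general, and the reason it works here is precisely the EZ-Reedy structure giving the clean description of non-degenerate cubes.
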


\begin{proof}
$Q_0$ preserves monomorphisms by \cite[Lem. 4.5]{kapulkin-lindsey-wong}. The stated result thus follows from \cref{Q-i-composite} and the fact that $i_!$ preserves monomorphisms.
\end{proof}

\begin{lemma}\label{openBoxInnerHornPushout} 
The image under $Q$ of an inner horn inclusion $\Lambda^n_i \subseteq \Delta^n$ is a trivial cofibration.
\end{lemma}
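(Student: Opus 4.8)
The goal is to show that $Q$ sends the inner horn inclusion $\Lambda^n_i \subseteq \Delta^n$ (for $0 < i < n$) to a trivial cofibration in the cubical Joyal model structure. It is a monomorphism by \cref{Q-pres-cof}, so the real content is showing it is a weak categorical equivalence; in fact I expect to prove the stronger statement that it is anodyne, i.e.\ lies in the saturation of the inner open box inclusions (which suffices by \cref{KendEq}).

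The plan is to analyze the subcomplex $Q\Lambda^n_i \subseteq Q\Delta^n$ concretely. Since $Q$ is a left adjoint, $Q\Delta^n = Q^n = C^{0,n}$, and $Q\Lambda^n_i$ is the union of the images of the $Q$-faces $Q\delta_j \colon Q^{n-1} \to Q^n$ for $j \neq i$; by \cref{left-positive-Q}, these faces are (up to reindexing) the composites $\Box^{n-1} \xrightarrow{\partial_{k,\varepsilon}} \Box^n \to C^{0,n}$ for appropriate $(k,\varepsilon)$. Concretely, the $0$th face corresponds to $\partial_{n,1}$, and the $j$th face for $j \geq 1$ corresponds to $\partial_{n-j+1,0}$. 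So $Q\Lambda^n_i$ is the subcomplex of $C^{0,n}$ consisting of the images of all faces $[\partial_{k,0}]$ for $k \neq n-i+1$ together with $[\partial_{n,1}]$. Recalling that $C^{0,n}$ is also a $(1,n-1)$-cone by \cref{ConeWLOG}, and that by \cref{QFace} every face of a $(0,n)$-cone is a $(0,n-1)$-cone (so the only $(1,\cdot)$-type face is $[\partial_{n,1}]$, which by \cref{Contain} is a subcomplex already), I want to identify $Q\Lambda^n_i \hookrightarrow C^{0,n}$ with a subcomplex of the form $B^{1,n-1,k} \hookrightarrow C^{1,n-1}$ (or an iterated version thereof) so that \cref{BEqv} applies. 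The point is that omitting exactly one inner face $[\partial_{n-i+1,0}]$, with $2 \leq n-i+1 \leq n$, leaves a subcomplex that — via \cref{FaceInt}, \cref{Contain}, \cref{FaceIso} — is built up from $C^{1,n-1}$ by a finite sequence of pushouts of inclusions $B^{m',n-1,k} \hookrightarrow C^{m',n-1}$, each trivial by \cref{BEqv}, and the critical edge of the final open box filling is degenerate by \cref{ConeEdge} since the index $n-i+1 \geq 2$ (this is exactly where innerness, $i < n$, is used; if $i = n$ the missing face would be $[\partial_{1,0}]$ and its critical edge need not be degenerate).

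More precisely, I would argue by a double induction (on $n$, and on which faces have been filled in) exactly mirroring the proof of \cref{BEqv}: write $Q\Lambda^n_i$ as $B^{1,n-1,n-1}$-like data missing only $[\partial_{n-i+1,0}]$, decompose the inclusion into $C^{0,n}$ as a composite of pushouts of $B$-inclusions and one final $(n-i+1,0)$-open box filling with degenerate critical edge. Since trivial cofibrations (equivalently, here, the anodynes generated by inner open box inclusions with degenerate critical edge, which are trivial cofibrations by \cref{KendEq}) are closed under pushout and transfinite composition, the result follows.

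\textbf{Main obstacle.} The technical heart is the bookkeeping that identifies $Q\Lambda^n_i$ — described as a union of images of specific faces of $C^{0,n}$ under the quotient $\Box^n \to C^{0,n}$ — with the standard subcomplexes $B^{m,n,k}$, correctly tracking which $\partial_{k,\varepsilon}$ survives and verifying (via \cref{FaceInt} and \cref{Contain}) that the intersections of the surviving faces are exactly what \cref{BMap} and the pushout decomposition in \cref{BEqv} require. One subtlety to watch: the quotient map $\Box^n \to C^{0,n}$ collapses the $(k,1)$-faces for $k \leq n$, so several of the $Q$-faces have overlapping or degenerate images, and one must check that after these collapses the remaining "open box" really is an inner open box of $C^{0,n}$ whose critical edge is killed by \cref{ConeEdge}. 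Once that identification is pinned down, the homotopical content is entirely supplied by \cref{BEqv} and \cref{ConeEdge}.
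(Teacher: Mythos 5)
Your identification of $Q\Lambda^n_i$ inside $Q^n = C^{0,n}$ is correct, and you've also correctly singled out \cref{ConeEdge} as the ingredient that uses innerness. But the route through \cref{BEqv} has a real gap. The $B$-inclusions $B^{m,n,k} \hookrightarrow C^{m,n}$ always add a consecutive \emph{top} range of negative faces, namely $[\partial_{k+1,0}]$ through $[\partial_{m+n,0}]$. Since $Q^n = C^{0,n} \cong C^{1,n-1}$, the only $B$-inclusion available with that codomain is $B^{1,n-1,n-1} \hookrightarrow C^{1,n-1}$, which is missing precisely the face $[\partial_{n,0}]$. That matches $Q\Lambda^n_i$ only when $n-i+1 = n$, i.e., $i = 1$. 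For $1 < i < n$ the missing face $[\partial_{n-i+1,0}]$ sits in the middle, so $Q\Lambda^n_i$ is not a $B^{m',n',k'}$, and there is no obvious way to write the inclusion $Q\Lambda^n_i \hookrightarrow Q^n$ as a composite of pushouts of $B$-inclusions: it adds exactly one non-degenerate $(n-1)$-cube and one non-degenerate $n$-cube, so it is a single attaching step, not a sequence. The phrase ``or an iterated version thereof'' is where this would have to be repaired, and it does not survive inspection.

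The fix is much simpler and bypasses \cref{BEqv} entirely. You already noted (via \cref{Contain}) that the image of $\sqcap^n_{n-i+1,0}$ under the quotient $\Box^n \to Q^n$ is exactly $Q\Lambda^n_i$. The extra observation you need is that the square
\[
\xymatrix{
\sqcap^n_{n-i+1,0} \ar[r]\ar[d] & Q\Lambda^n_i \ar[d] \\
\Box^n \ar[r] & Q^n
}
\]
is a pushout: all identifications made by the quotient $\Box^n \to Q^n$ involve cubes factoring through some $\partial_{j,1}$, and every such face already lies in $\sqcap^n_{n-i+1,0}$, so the quotient is injective on the complement. Since $2 \le n-i+1 \le n$, \cref{ConeEdge} makes the critical edge degenerate in $Q\Lambda^n_i$, so the horizontal maps factor through the collapsed open box and cube; the resulting right-hand square exhibits $Q\Lambda^n_i \hookrightarrow Q^n$ as a pushout of the inner open box inclusion $\widehat{\sqcap}^n_{n-i+1,0} \hookrightarrow \widehat{\Box}^n_{n-i+1,0}$, hence anodyne. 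This is the paper's argument, and it avoids the $B$-complex machinery entirely.
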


\begin{proof}  
Because $Q$ preserves colimits, $Q\Lambda^{n}_{i}$ is the subcomplex of $Q^{n}$ consisting of the images of the maps $Q \partial_{j} \colon  Q^{n-1} \to Q^{n}$ for which $j \neq i$. By \cref{left-positive-Q} we can see that this subcomplex is the image of $\sqcap^{n}_{n-i+1,0}$ under the quotient map $\Box^{n} \to Q^{n}$. We thus have the following pushout:

\[
  \xymatrix{
    \sqcap^n_{n-i+1,0} \ar[r]\ar[d] & Q\Lambda^n_i \ar[d] \\
    \Box^n \ar[r] & Q^n \pushoutcorner
  }
\]

Furthermore, the critical edge of the open box $\sqcap^n_{n-i+1,0} \to Q\Lambda^n_i$ is degenerate by \cref{ConeEdge}. Thus $Q\Lambda^n_i \hookrightarrow Q^{n}$ is a trivial cofibration, as an inner open box filling.
\end{proof}

\begin{lemma}\label{QJ} 
$QJ \cong K$. \qed
\end{lemma}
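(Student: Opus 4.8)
The plan is to compute $QJ$ directly from the colimit formula for the left Kan extension $Q$, using the explicit description of the simplicial set $J$ of \cref{sSet-J} as a pushout of representables. Recall that $J$ is the nerve of the walking isomorphism; as a simplicial set it is covered by two nondegenerate $2$-simplices sharing no faces beyond what the picture dictates, and in fact $J$ can be built from its $2$-skeleton. So I would first write $J$ as a colimit of standard simplices $\Delta^n$ (for $n \le 2$) along the face maps visible in the diagram, then apply $Q$, which preserves colimits, to reduce the problem to computing $Q^0 = \Box^0$, $Q^1 = C^{0,1} \cong \Box^1$, and $Q^2 = C^{0,2}$ together with the relevant face maps from \cref{left-positive-Q}.

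The key computation is that $Q^2 = C^{0,2}$ is, by \cref{ConeDesc} (or by the explicit illustration in \cref{sec:cones} of $C X$ for $X = (0 \to 1 \to 2)$, which is exactly $C^{2}\varnothing$ up to the identifications there), the quotient of $\Box^2$ obtained by collapsing one edge to a point — precisely the shape of each of the two nondegenerate squares in $K$ as drawn in \cref{gen-anodyne-def}. Then I would match up the cosimplicial face maps of $Q$ given in \cref{left-positive-Q} with the face maps of $J$: the $0$th and $1$st faces $Q^1 \to Q^2$ are induced by $\partial_{2,1}$ and $\partial_{2,0}$, the $2$nd by $\partial_{1,0}$, and these are exactly the incidences among the edges of $K$ needed so that gluing two copies of $Q^2$ along the pattern dictated by $J$ yields $K$. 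Carrying this out, $QJ$ is the cubical set with two nondegenerate $2$-cubes, each a copy of $C^{0,2}$, glued along their shared edges in the configuration
\[
\xymatrix{
  1 \ar[r] \ar@{=}[d] &0 \ar[d] \ar@{=}[r] & 0 \ar@{=}[d] \\
  1 \ar@{=}[r] &1 \ar[r] &0 }
\]
which is the defining picture of $K$.

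The main obstacle, and the only place requiring care, is bookkeeping: making sure that the degenerate edges identified in passing from $\Box^2$ to $Q^2 = C^{0,2}$ are the correct ones (the edge corresponding to the critical edge, which \cref{ConeEdge} tells us is collapsed), and that no \emph{further} identifications occur when the two squares are glued — i.e. that the map $QJ \to K$ one writes down is a bijection on nondegenerate cubes in each dimension and hence an isomorphism by \cref{mono-characterization} (or rather, by exhibiting an inverse). Since both $QJ$ and $K$ are $2$-dimensional with finitely many nondegenerate cubes, this is a finite check: two nondegenerate $2$-cubes, the appropriate nondegenerate edges, and the four vertices, on each side, matched by the face-map dictionary of \cref{left-positive-Q}. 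Alternatively, and perhaps more cleanly, one observes that $J$ is the pushout $\Delta^2 \cup_{\Lambda^2_1} \Delta^0$ glued twice (contracting each of the two $2$-simplices), exactly as in the proof of \cref{T-endpoints}; applying $Q$ and using that $Q\Lambda^2_1 \hookrightarrow Q^2$ is an inner open box filling with the critical edge collapsed (\cref{openBoxInnerHornPushout}), one identifies $Q$ of each contracted simplex as $C_{L,1}$ of a point, and assembles $K$. Either route is a short diagram chase once the identification $Q^2 \cong C^{0,2}$ and its faces are on the table.
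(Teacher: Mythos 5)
Your methodology is the right one, and it matches what the paper, by leaving the lemma unproved, implicitly invites: compute $QJ$ from the cosimplicial object $Q^\bullet$ via the colimit formula, using that $J$ is $2$-dimensional and $Q$ preserves colimits, and then match the result against the picture of $K$. But two points need correction before the proof can be called complete.

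First, a minor slip in your identification of $Q^2$. You write that $Q^2 = C^{0,2}$ is the object illustrated as $CX$ for $X = (0 \to 1 \to 2)$, ``which is exactly $C^2 \varnothing$.'' Neither half of that is right: $C^2\varnothing = C\Box^0 = \Box^1$ is just an edge, and the cone on the three-term chain is a $2$-dimensional cubical set distinct from both $C^2\varnothing$ and $Q^2$. The object you want is $Q^2 = C^3\varnothing = C^{0,2} = C\Box^1$, the cone on a single edge, whose picture appears in \cref{sec:cones} \emph{just before} the chain example: the quotient of $\Box^2$ collapsing the $\partial_{1,1}$-face to the cone point.

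Second, and more substantively, the bookkeeping you defer is exactly where the content lies, and carrying it out uncovers a subtlety you cannot skate past. Using the face dictionary of \cref{left-positive-Q} (for $n=2$ the $0$th, $1$st, $2$nd cosimplicial faces $Q^1 \to Q^2$ are induced by $\partial_{2,1}$, $\partial_{2,0}$, $\partial_{1,0}$ respectively), the $2$-cube of $QJ$ associated to a nondegenerate $2$-simplex $s$ of $J$ has its $\partial_{1,1}$-face degenerate (that is the cone point of $Q^2$) and its $\partial_{2,0}$-face equal to the image of the $1$st (diagonal) face of $s$. Both nondegenerate $2$-simplices of $J$ have degenerate diagonal, so \emph{both} nondegenerate $2$-cubes of $QJ$ come out with the same degeneracy pattern: $\partial_{1,1}$- and $\partial_{2,0}$-faces degenerate, $\partial_{1,0}$- and $\partial_{2,1}$-faces nondegenerate. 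If one reads the picture of $K$ in \cref{gen-anodyne-def} strictly by the drawing convention of \cref{subsection:cSet-basics}, the right-hand square has this pattern, but the left-hand square as drawn has the complementary faces $\partial_{1,0}$ and $\partial_{2,1}$ degenerate. Since $\Box$ has no axis-swap symmetry, these are genuinely different face data. So the claim that what you compute is ``the defining picture of $K$'' cannot simply be asserted from the picture; one must reconcile the literal face data (for instance by noting that the left square in the $2\times 1$ grid layout is drawn with its orientation informally transposed, as grid pictures for horizontal pasting of cubes tend to do). This is precisely the finite check your proposal promises to perform but does not; without it the isomorphism has not been established.
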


\begin{proof}[Proof of \cref{QAdj}]
By \cref{Q-pres-cof}, $Q$ preserves cofibrations. By \cref{openBoxInnerHornPushout}, the image under $Q$ of an inner-horn inclusion is a trivial cofibration. By \cref{QJ}, the image under $Q$ of an endpoint inclusion $\Delta^0 \to J$ is an endpoint inclusion $\Box^0 \to K$, hence a trivial cofibration by \cref{KendEq}. Thus the adjunction is Quillen by \cref{Quillen-adj-fib-obs}.
\end{proof}

\begin{corollary}\label{QPres}
$Q$ preserves weak equivalences.
\end{corollary}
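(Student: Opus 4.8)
The statement is that $Q$ preserves weak equivalences, and the key fact we have just established is that $Q \dashv \int$ is a Quillen adjunction (\cref{QAdj}). The plan is to combine this with Ken Brown's lemma. Since every object of $\sSet$ is cofibrant in the Joyal model structure (the cofibrations are the monomorphisms), a left Quillen functor automatically preserves all weak equivalences: a left Quillen functor preserves trivial cofibrations, and by factoring an arbitrary weak equivalence between cofibrant objects as a trivial cofibration followed by a trivial fibration — and observing that the trivial fibration part, being a weak equivalence between cofibrant objects that moreover has a section by lifting, or more simply by Ken Brown applied to the opposite situation — one concludes preservation of all weak equivalences.

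Concretely, I would argue as follows. First I would recall that in the Joyal model structure on $\sSet$ all objects are cofibrant, so in particular the source and target of any weak equivalence are cofibrant. Then I would invoke Ken Brown's lemma in the form: a functor out of a model category that sends trivial cofibrations between cofibrant objects to weak equivalences sends all weak equivalences between cofibrant objects to weak equivalences. By \cref{QAdj}, $Q$ is left Quillen, hence preserves trivial cofibrations (between all objects, a fortiori between cofibrant ones), and these map to trivial cofibrations in $\cSet$, which are in particular weak equivalences. Applying Ken Brown's lemma, $Q$ preserves all weak equivalences between cofibrant objects; since every simplicial set is cofibrant, $Q$ preserves all weak equivalences.

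I do not expect any genuine obstacle here: this is the standard corollary of a Quillen adjunction whose domain has all objects cofibrant, exactly parallel to \cref{T-pres-we} for the triangulation functor, which was proved in one line the same way. The only point requiring a word of care is making explicit that the cofibrations in the Joyal model structure are the monomorphisms (so that every object is cofibrant), which is recorded in \cref{sSet-J}. Thus the proof is a direct citation of \cref{QAdj} together with Ken Brown's lemma, mirroring the proof of \cref{T-pres-we}.
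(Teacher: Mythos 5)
Your proof is correct and matches the paper's: both cite \cref{QAdj} to get that $Q$ is left Quillen, note that all objects of $\sSet$ are cofibrant, and conclude by Ken Brown's lemma. The paper's version is the one-liner you anticipated; your somewhat meandering first paragraph is redundant once you state the Ken Brown argument cleanly in the second.
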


\begin{proof}
Since all simplicial sets are cofibrant in the Quillen model structure, this follows from \cref{QAdj} and Ken Brown's lemma.
\end{proof}

Next we will concern ourselves with the relationship between $Q$ and the triangulation functor. Our goal will be to develop a natural weak categorical equivalence $TQ \Rightarrow \mathrm{id}_{\mathsf{sSet}}$.

For $n \geq 0$, we have a poset map $G \colon [n] \to [1]^n$ defined by:

$$
(Ga)_i = 
\begin{cases} 
0 & i \leq n - a \\
1 & i \geq n - a + 1 \\
\end{cases}
$$
For a given $n \geq 0$, let $F \colon [1]^n \to [n]$ be defined via $Fb = n -i + 1$,where $i \in \{1,...,n+1\}$ is maximal such that $b_j = 0$ for all $j < i$. 

\begin{lemma}\label{F-characterization}
For any $n \geq 0$, the functors $F : [1]^n \rightleftarrows [n] : G$ are adjoint.
\end{lemma}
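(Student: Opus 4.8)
The plan is to verify that $G \colon [n] \to [1]^n$ is left adjoint to $F \colon [1]^n \to [n]$ by checking the hom-set condition directly, which in the poset setting amounts to showing
\[ Ga \leq b \iff a \leq Fb \]
for all $a \in [n]$ and $b \in [1]^n$. Since both $[n]$ and $[1]^n$ are posets, naturality and the triangle identities are automatic, so this single biconditional is the entire content. The convention to keep straight is that I claim $G \adjoint F$ (not $F \adjoint G$): $G$ is the left adjoint, $F$ the right adjoint, consistent with $F$ being the ``truncation'' map $b \mapsto n - i + 1$.

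First I would unpack the two sides explicitly. By definition $Ga$ is the element of $[1]^n$ whose first $n-a$ coordinates are $0$ and whose last $a$ coordinates are $1$; equivalently, $(Ga)_i = 1$ precisely when $i \geq n - a + 1$. On the other side, $Fb = n - i + 1$ where $i$ is maximal with $b_j = 0$ for all $j < i$; in other words, writing $\ell(b)$ for the length of the initial run of zeros in $b$ (so $0 \leq \ell(b) \leq n$, with $\ell(b) = n$ when $b$ is the all-zeros tuple — note $b_{n}$ is unconstrained in that case, matching $i = n+1$), we have $Fb = n - \ell(b)$. Wait: with $i$ maximal such that $b_j = 0$ for $j < i$, we get $i = \ell(b) + 1$, so $Fb = n - (\ell(b)+1) + 1 = n - \ell(b)$. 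Then $a \leq Fb$ says $a \leq n - \ell(b)$, i.e. $\ell(b) \leq n - a$.

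Next I would show $Ga \leq b$ is equivalent to $\ell(b) \leq n - a$. The order on $[1]^n$ is coordinatewise, so $Ga \leq b$ means $(Ga)_i \leq b_i$ for all $i$, which (since $(Ga)_i = 0$ for $i \leq n-a$) is exactly the condition that $b_i = 1$ for all $i \geq n - a + 1$. That is precisely the statement that the initial run of zeros of $b$ has length at most $n - a$, i.e. $\ell(b) \leq n - a$. Combining the two computations gives $Ga \leq b \iff \ell(b) \leq n-a \iff a \leq Fb$, which is the desired adjunction.

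I do not expect a serious obstacle here: the only thing to be careful about is the boundary/edge cases of the definition of $\ell(b)$ — namely the all-zeros tuple, where $i = n+1$ and $Fb = 0$, and the case $a = 0$, where $Ga$ is all zeros and $Ga \leq b$ holds trivially while $a \leq Fb$ also holds trivially — and the direction of the adjunction, which one should double-check against a small example such as $n = 1$ or $n = 2$ before committing. Once the biconditional above is established, one concludes immediately that $(F, G)$ form an adjoint pair $G \adjoint F$, since a pair of monotone maps between posets satisfying this Galois-connection identity is the same thing as an adjunction.
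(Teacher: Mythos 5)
Your proposed adjunction runs in the wrong direction, and the biconditional you write down is in fact false. The notation $F : [1]^n \rightleftarrows [n] : G$ means $F$ is the left adjoint and $G$ the right adjoint, i.e.\ one should prove $b \leq Ga \iff Fb \leq a$. You instead try to show $Ga \leq b \iff a \leq Fb$, which would make $G$ the left adjoint; this is not only the opposite claim, it is simply not true. Take $n = 2$, $a = 1$, $b = (1,0)$: then $Ga = (0,1) \not\leq (1,0) = b$, but $Fb = 2$ so $a = 1 \leq 2 = Fb$. So your biconditional fails, and the sanity check on a small example you suggest ``one should'' carry out would have caught this.

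The error enters when you claim that ``$b_i = 1$ for all $i \geq n - a + 1$'' is precisely the statement ``$\ell(b) \leq n - a$.'' The first condition implies the second (since it forces $b_{n-a+1} = 1$), but not conversely: $b$ can have a short initial run of zeros and still have zeros further out, as in $b = (1,0)$ above, where $\ell(b) = 0$ but $b_2 = 0$. The paper's proof avoids this by working with the inequality $b \leq Ga$ instead: this says $b_j = 0$ for all $j \leq n - a$ (the lower coordinates are forced to $0$, and the upper coordinates are unconstrained since $(Ga)_j = 1$ there), which \emph{is} genuinely equivalent to $i \geq n - a + 1$, i.e.\ $Fb = n - i + 1 \leq a$. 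So the fix is to flip the inequality you are verifying — check $b \leq Ga \iff Fb \leq a$ rather than $Ga \leq b \iff a \leq Fb$ — after which the run-of-zeros bookkeeping goes through cleanly.
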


\begin{proof}
  Let $a \in [n]$ and $b \in [1]^n$.
  We have that $b \leq Ga$ if and only if $b_j = 0$ for all $j \leq n - a$ -- in other words, if and only if $i \geq n - a + 1$. Rearranging, we obtain that this is equivalent to $n - i + 1 \leq a$, i.e., $Fb \leq a$.
\end{proof}

Similarly, one can show that $G$ also has a right adjoint, although it will not play any role in this paper.

\begin{proposition}
For all $n$, $F$ induces a map of simplicial sets $TQ^{n} \to \Delta^n$.
\end{proposition}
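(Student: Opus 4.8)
The plan is to show that the poset map $F \colon [1]^n \to [n]$ is compatible with the quotient map $\Box^n \to Q^n = C^{0,n}$, so that it descends to a map of cubical sets $Q^n \to \Nerve_\Box [n]$, and then to triangulate. More precisely, since $Q^n$ is a quotient of $\Box^n$ by \cref{ConeDesc} and $\Nerve_\Box[n]$ is the cubical nerve of $[n]$, a poset map $[1]^n \to [n]$ corresponds to a cubical set map $\Box^n \to \Nerve_\Box[n]$; I would verify that this map coequalizes the identifications defining $C^{0,n}$, hence factors as $\Box^n \to Q^n \to \Nerve_\Box[n]$. Applying the triangulation functor $T$ and using that $T\Box^n = T\Nerve_\Box[n]$ has $\Delta^n$ as a retract — or more directly, using the counit $T\Nerve_\Box \Rightarrow \mathrm{id}$ coming from $T \dashv U$ composed with the natural map $TQ^n \to T\Nerve_\Box[n]$ — yields the desired map $TQ^n \to \Delta^n$. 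Naturality in $[n]$ is automatic once everything is expressed via the cosimplicial structure of \cref{left-positive-Q}.

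The key steps, in order, are as follows. First, I would recall from \cref{ConeDesc} the explicit description of $C^{0,n}$ as the quotient of $\Box^n$ identifying $k$-cubes $f, g \colon [1]^k \to [1]^n$ whenever there is $j \le n$ with $f_i = g_i$ for $i \le j$ and $f_j = g_j = \mathrm{const}_1$. Second, I would check that $F$ (applied to such $f$, $g$ by postcomposition) yields equal $k$-cubes of $\Nerve_\Box[n]$: if $f_j = g_j = \mathrm{const}_1$ for some $j$, then for any vertex $v \in [1]^k$ the value $f(v) \in [1]^n$ has its $j$-th coordinate equal to $1$, so $F(f(v)) = n - i + 1$ where $i \le j$ is minimal with $(f(v))_{i'} = 0$ for $i' < i$; since $f$ and $g$ agree on the first $j$ coordinates, this minimal index and hence $F(f(v)) = F(g(v))$ agree, and this works uniformly over all of $[1]^k$. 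Thus $F$ descends to $\bar F \colon Q^n \to \Nerve_\Box[n]$. Third, I would apply $T$ to $\bar F$ and postcompose with the natural transformation $T \Nerve_\Box \Rightarrow \mathrm{id}_{\sSet}$, the counit of $T \dashv U$ restricted along $\Nerve_\Box = U \circ (\text{nerve}) $ — or observe directly that $\Nerve_\Box[n] = U(\Delta^n)$ up to the identification $(\Delta^1)^n$, so that $T\Nerve_\Box[n] \to \Delta^n$ is just the counit component at $\Delta^n$. Composing gives $TQ^n \to \Delta^n$.

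The main obstacle I anticipate is bookkeeping rather than conceptual: one must be careful that the combinatorial identification in \cref{ConeDesc} is genuinely respected by $F$ at the level of all $k$-cubes (not merely vertices), and that the resulting map is the ``right'' one — i.e. that under the cosimplicial structure maps of \cref{left-positive-Q} (the faces $\partial_{n-j+1,0}$ and the $0$-th face $\partial_{n,1}$, and the degeneracies $\gamma_{n-j,0}$, $\sigma_n$) the family $\{\bar F\}_n$ is natural, so that it assembles into a map of cosimplicial cubical sets $Q \Rightarrow \Nerve_\Box$ and hence, after $T$, into a natural transformation $TQ^n \to \Delta^n$ fitting the later claim that this is a natural weak categorical equivalence $TQ \Rightarrow \mathrm{id}$. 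Verifying this naturality amounts to checking that $F$ intertwines the generating maps $\partial_{i,\varepsilon}$, $\sigma_i$, $\gamma_{i,0}$ of $\Box$ (as used in the cosimplicial structure of $Q$) with the coface and codegeneracy maps of $\Delta$ — a routine but slightly tedious computation with the poset maps, which I would carry out by direct evaluation on objects of $[1]^{n-1}$ and $[1]^n$ using the adjunction $F \dashv G$ of \cref{F-characterization}.
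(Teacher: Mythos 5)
Your proof is correct and amounts to essentially the paper's argument, just packaged on the other side of the adjunction $T \dashv U$. The key observation in both is the same: by \cref{ConeDesc} (applied to $C^{0,n}$) and \cref{F-characterization}, post-composition with $F$ collapses the identifications defining the quotient, because $Fb$ depends only on the position of the first $1$ in $b \in [1]^n$. You descend $F$ to a cubical map $Q^n \to N_\Box[n]$ and then apply $T$ and compose with the counit $TU\Delta^n \to \Delta^n$ (using $N_\Box[n] = U\Delta^n$); the paper instead runs the argument directly in $\sSet$, describing $TQ^n$ as the analogous quotient of $(\Delta^1)^n$ via a computation parallel to \cref{ConeDesc} (using \cref{T-prod} and the fact that $T$ preserves colimits), and shows that $NF$ factors through that quotient. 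The two constructions yield the same map: precomposing your composite with the quotient $T\Box^n \to TQ^n$ gives the adjunct of $F \colon \Box^n \to U\Delta^n$ under $T \dashv U$, which is precisely $NF$, so both factorizations produce the same $\bar F$. Two small slips to note: the parenthetical ``$T\Box^n = TN_\Box[n]$'' is false ($N_\Box[n] = U\Delta^n$ is a different cubical set from $\Box^n$), though your second, counit-based route is the one that matters and is fine; and in your second step the relevant index $i$ should be the \emph{maximal} one with $(f(v))_{i'} = 0$ for all $i' < i$ (equivalently the least index where a $1$ occurs), not ``minimal'' as written --- the quantifier is off but the intended argument is clearly correct.
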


\begin{proof}
First, observe that by applying the nerve functor $N \colon \mathsf{Cat} \to \mathsf{sSet}$, we get an induced map $NF \colon (\Delta^{1})^{n} \to \Delta^{n}$. 

The simplicial set $TQ^{n}$ is a quotient of $T\Box^{n} = (\Delta^{1})^{n}$. Specifically, since $N$ is fully faithful, we may regard $n$-simplices $\Delta^{n} \to (\Delta^{1})^{n}$ as poset maps $[n] \to [1]^{n}$. Then by an argument analogous to the proof of \cref{ConeDesc}, using \cref{T-prod} and the fact that $T$ preserves colimits, $TQ^{n}$ is obtained by identifying two such maps $f, g$ if there exists $i$ such that $f_{j} = g_{j}$ for $j \leq i$ and $f_{i} = g_{i} = \mathrm{const}_{1}$. $NF$ then acts on such maps by post-composition with $F$. By \cref{F-characterization}, $F$ depends only on the position of the first $1$ in an object of $[1]^{n}$; therefore, maps which are identified in $TQ^{n}$ agree after post-composition with $F$. Thus $NF$ factors through the quotient $TQ^{n}$.
\end{proof}

Let $\bar{F} \colon TQ^{n} \to \Delta^{n}$ denote the map constructed above. Then we can show:

\begin{lemma}\label{FCos}
The maps $\bar{F} \colon TQ^{n} \to \Delta^{n}$ form a natural transformation of co-simplicial objects in $\mathsf{sSet}$. That is, for any map $\phi \colon [m] \to [n]$ in $\Delta$, we have a commuting diagram:

\centerline{
\xymatrix{
TQ^{m} \ar[r]^{TQ\phi} \ar[d]_{\bar{F}} & TQ^{n} \ar[d]^{\bar{F}} \\
\Delta^{m} \ar[r]^{\phi} & \Delta^{n} \\
}
}
\end{lemma}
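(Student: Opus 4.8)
The plan is to reduce the naturality statement to checking it on the generating maps of $\Delta$, namely the coface maps $\partial^i \colon [m] \to [m+1]$ and the codegeneracy maps $\sigma^i \colon [m+1] \to [m]$, since every map in $\Delta$ factors as a composite of these and both $TQ(-)$ and $\Delta^{(-)}$ are functors. For each generator, the simplicial structure map on the $Q$-side is described in \cref{left-positive-Q} as being induced by an explicit cubical operator (a face $\partial_{j,\varepsilon}$ or a connection/degeneracy $\gamma_{j,0}$, $\sigma_j$), and triangulation turns this into the nerve of the corresponding poset map $(\Delta^1)^{m} \to (\Delta^1)^{m+1}$ (or the reverse). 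On the $\Delta$-side the structure maps are the standard coface and codegeneracy inclusions. So in each case the square in question, before passing to the quotients defining $TQ^m$ and $TQ^{m+1}$, becomes a square of poset maps between cubes and simplices, and I would verify commutativity there by a direct computation with the formulas for $F$ (from \cref{F-characterization}: $Fb = n-i+1$ where $i$ is minimal with $b_i \neq 0$, reading $b_j = 0$ for $j < i$) and for the cubical operators listed in \cref{left-positive-Q}.

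Concretely, I would argue as follows. Fix $\phi$ a generator. An $k$-simplex of $TQ^m$ is represented by a poset map $f \colon [k] \to [1]^m$, modulo the equivalence relation identifying $f \sim g$ when $f_j = g_j$ for $j \le i$ and $f_i = g_i = \mathrm{const}_1$ for some $i$ (the triangulated analogue of \cref{ConeDesc}, already invoked in the proof just above). The map $\bar F$ sends the class of $f$ to $F \circ f$, and $TQ\phi$ sends the class of $f$ to the class of $(\Delta^1 \text{-operator}) \circ f$ where the operator is the one named in \cref{left-positive-Q}; similarly $\Delta^\phi$ is postcomposition with the corresponding map $[m] \to [n]$. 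Thus commutativity amounts to the identity $\phi \circ F = F \circ (\text{cubical operator})$ of poset maps $[1]^m \to [n]$, valid after passing to $TQ^m$'s equivalence relation. For a coface $\partial_j \colon [m] \to [m+1]$ the claim is that inserting a coordinate at the appropriate spot in $[1]^{m+1}$ (via the face map $\partial_{?,\varepsilon}$ from \cref{left-positive-Q}) and then applying $F$ agrees with applying $F$ and then $\partial_j$; this is a short case split on whether the inserted coordinate lies before or after the first nonzero entry, handled using the description of $F$ in terms of that first nonzero position. For a codegeneracy the analogous check uses the connection maps $\gamma_{?,0}$ (which take $\max$ of two adjacent coordinates) and the degeneracy $\sigma_?$, again tracking how these operations move the position of the first nonzero coordinate; here the equivalence relation is genuinely needed, since collapsing a coordinate can identify two cubes that $F$ already identified.

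I expect the main obstacle to be purely bookkeeping: the index shifts in \cref{left-positive-Q} (the $j$th face of $Q$ is induced by $\partial_{n-j+1,0}$ for $j \geq 1$, the $0$th face by $\partial_{n,1}$; the $j$th degeneracy by $\gamma_{n-j,0}$, etc.) reverse and shift the simplicial indices relative to the cubical ones, so one must be careful to line up the coface/codegeneracy numbering on the two sides correctly. Once the indexing is fixed, each individual verification is a one-line computation with the explicit formula for $F$. An alternative, cleaner route that sidesteps some of the bookkeeping is to observe that $\bar F$ arises from a natural transformation at the level of the cone monad: the poset maps $F \colon [1]^n \to [n]$ together with the monad structure on $C_{L,1}$ and the standard monad presentation of $\Delta_{\mathrm{aug}}$ (\cref{monad-cosimplicial-object}) should exhibit the $\bar F$'s as a map of cosimplicial objects by construction, because $Q$ and the cosimplicial simplex are both built from monads and $F$ is compatible with the unit $\eta$ and multiplication $\mu$ of the respective monads. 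If that compatibility (essentially: $F$ sends the cone point to the top vertex of $[n]$, and $F$ intertwines $\gamma_{1,0}$ with the monad multiplication of the "free adjoin a top element" monad on posets) can be checked cleanly, then \cref{FCos} follows formally from functoriality of the monad-to-cosimplicial-object construction, applied after triangulation. I would pursue this conceptual argument first and fall back on the explicit generator-by-generator check only if the monad compatibility turns out to be awkward to state.
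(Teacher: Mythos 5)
Your primary approach matches the paper's proof: both reduce to the generating face and degeneracy maps of $\Delta$, use the dictionary in \cref{left-positive-Q} to identify the corresponding cubical operator $\phi'$, reduce the naturality square to a square of poset maps, and then pass through the quotient $(\Delta^1)^m \twoheadrightarrow TQ^m$ by an epimorphism argument. One small inaccuracy in your write-up: you say that for codegeneracies ``the equivalence relation is genuinely needed, since collapsing a coordinate can identify two cubes that $F$ already identified.'' In fact the square $\phi \circ F = F \circ \phi'$ commutes \emph{strictly} as a square of poset maps $[1]^m \to [n]$ in $\Cat$, for every generator including the connections/degeneracies; the paper states this as Diagram~\eqref{F-diagram-Cat}, and a direct check with the formula $Fb = n - i + 1$ confirms it case by case (tracking how inserting a coordinate or taking a max shifts the position of the first nonzero entry). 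The equivalence relation is only needed to descend the commuting square on $(\Delta^1)^m$ to the quotient $TQ^m$, not to make the poset square commute. Your alternative monadic route — showing $F$ is a morphism of monads from $(C_{L,1}, \eta, \mu)$ restricted to $\varnothing$ to the ``adjoin a top element'' monad on posets, and then invoking functoriality of \cref{monad-cosimplicial-object} — is a genuinely cleaner conceptual argument and would avoid the generator-by-generator index bookkeeping entirely; the paper does not take this route, but it would be worth developing if you want to avoid the index gymnastics.
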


\begin{proof}
It suffices to show that this holds for the generating morphisms of $\Delta$, namely the face and degeneracy maps. For each such map $\phi \colon [m] \to [n]$ we have a corresponding map $\phi' \colon [1]^{m} \to [1]^{n}$ in $\Box$, as described in \cref{left-positive-Q}:

\begin{itemize}
\item For $\partial_{0} \colon [n-1] \to [n]$, $\partial_{0}' = \partial_{n,1}$;
\item For $i \geq 1, \partial_{i} \colon [n-1] \to [n]$, $\partial_{i}' = \partial_{n-i+1,0}$;
\item For $\sigma_{0} \colon [n] \to [n-1]$, $\sigma_{0}' = \sigma_{n}$;
\item For $\sigma_{i} \colon [n] \to [n-1]$, $\sigma_{i}' = \gamma_{n-i,0}$.
\end{itemize}

For every such $\phi$ we have a commuting diagram in $\mathsf{cSet}$, where the vertical maps $\Box^{m} \to Q^{m}$ are the quotient maps:

\begin{equation}\label{F-diagram-cSet}
\begin{gathered}
\xymatrix{
\Box^{m} \ar[d] \ar[r]^{\phi'} & \Box^{n} \ar[d] \\
Q^{m} \ar[r]^{Q\phi} & Q^{n} \\
}
\end{gathered}
\end{equation}

Furthermore, by direct computation using \cref{F-characterization}, we have commuting diagrams in $\mathsf{Cat}$:

\begin{equation}\label{F-diagram-Cat}
\begin{gathered}
\xymatrix{
[1]^{m} \ar[d]_{F} \ar[r]^{\phi'} & [1]^{n} \ar[d]^{F} \\
[m] \ar[r]^{\phi} & [n] \\
}
\end{gathered}
\end{equation}

Now consider the following diagram in $\mathsf{sSet}$:

\centerline{
\xymatrix{
(\Delta^{1})^{m} \ar[r]^{T\phi'} \ar[d] & (\Delta^{1})^{n} \ar[d] \\
TQ^{m} \ar[r]^{TQ \phi} \ar[d]_{\bar{F}} & TQ^{n} \ar[d]^{\bar{F}} \\
\Delta^{m} \ar[r]^{\phi} & \Delta^{n} \\
}
}

The top square commutes, as it is obtained by applying $T$ to Diagram \ref{F-diagram-cSet}; the outer rectangle also commutes, as it is obtained by applying $N$ to Diagram \ref{F-diagram-Cat}. We wish to show that the bottom square commutes, i.e. that $\phi \circ \bar{F} = \bar{F} \circ TQ\phi$. Since the quotient map $(\Delta^{1})^{m} \to TQ^{m}$ is an epimorphism, we can show the desired equality by pre-composing with this map and performing a simple diagram chase.
\end{proof}

\begin{corollary}\label{FTrans}
$\bar{F}$ extends to a natural transformation $\bar{F} \colon TQ \Rightarrow \mathrm{id}_{\mathsf{sSet}}$.
\end{corollary}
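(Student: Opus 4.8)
The statement to prove, \cref{FTrans}, asserts that the maps $\bar{F} \colon TQ^n \to \Delta^n$ assemble into a natural transformation $TQ \Rightarrow \mathrm{id}_{\sSet}$.

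The plan is to bootstrap from \cref{FCos}, which already establishes that the components $\bar{F}$ form a natural transformation between the co-simplicial objects $[n] \mapsto TQ^n$ and $[n] \mapsto \Delta^n$ in $\sSet$; that is, for every map $\phi \colon [m] \to [n]$ in $\Delta$ the evident square commutes. The remaining work is entirely formal: I would invoke the universal property of left Kan extension along the Yoneda embedding $\Delta \hookrightarrow \sSet$. Both $TQ$ and $\mathrm{id}_{\sSet}$ are cocontinuous functors $\sSet \to \sSet$, and each is the left Kan extension (along Yoneda) of its restriction to representables: for $TQ$ this holds because $T$ and $Q$ are both left adjoints, hence cocontinuous, so $TQ$ preserves colimits and therefore equals the left Kan extension of the co-simplicial object $[n] \mapsto TQ^n$; for $\mathrm{id}_{\sSet}$ it is the standard density/co-Yoneda fact that every simplicial set is canonically a colimit of representables. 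Since natural transformations out of a left Kan extension $\mathrm{Lan}_y D$ into an arbitrary functor $H$ correspond bijectively to natural transformations $D \Rightarrow H \circ y$, the co-simplicial natural transformation of \cref{FCos} extends uniquely to a natural transformation $TQ \Rightarrow \mathrm{id}_{\sSet}$ whose components at representables are the given maps $\bar{F}$.

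Concretely, I would carry this out in two short steps. First, observe that for $X \in \sSet$ one has $X = \colim_{\Delta^n \to X} \Delta^n$ and hence $TQX = \colim_{\Delta^n \to X} TQ^n$, the colimit being preserved since $TQ$ is cocontinuous; then define $\bar{F}_X \colon TQX \to X$ as the map induced on colimits by the cocone $\big(TQ^n \xrightarrow{\bar{F}} \Delta^n \to X\big)_{\Delta^n \to X}$, which is well-defined precisely by the commuting squares of \cref{FCos} applied to the morphisms of the category of simplices of $X$. Second, given $g \colon X \to Y$ in $\sSet$, naturality $\bar{F}_Y \circ TQ g = g \circ \bar{F}_X$ is checked by precomposing with each colimit leg $TQ^n \to TQX$ (these are jointly epimorphic), reducing it again to the squares of \cref{FCos} together with the fact that the legs are compatible with reindexing along $g$.

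There is essentially no obstacle here — the mathematical content is all in \cref{FCos}, and this corollary is the routine passage from a co-simplicial natural transformation to a natural transformation of cocontinuous endofunctors of $\sSet$ via left Kan extension. The only point requiring a word of care is the assertion that $TQ$ preserves colimits, which follows immediately since both $T$ (by \cref{T-Quillen-adj}, or simply as a left Kan extension) and $Q$ (constructed as a left Kan extension along Yoneda) are left adjoints; one could alternatively phrase the whole argument without mentioning $TQ$'s cocontinuity by noting that $Q$ and $T$ are each separately left Kan extensions and composing the resulting universal properties.
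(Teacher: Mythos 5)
Your argument is correct and is exactly the paper's intended reasoning: the paper's proof is simply ``Immediate from \cref{FCos} and the fact that $T$ and $Q$ preserve colimits,'' and your expansion via the universal property of left Kan extension along Yoneda is the standard unpacking of that one-liner. No gaps.
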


\begin{proof}
Immediate from \cref{FCos} and the fact that $T$ and $Q$ preserve colimits.
\end{proof}

Note that this is precisely the map considered by Lurie in \cite[Prop.~2.2.2.7]{lurie:htt}, since Lurie's straightening construction can be recovered from its cubical analogue by composing with triangulation \cite[Thm.~3.8]{kapulkin-voevodsky:cubical-straightening}.

\begin{proposition}\label{FWE}
For every simplicial set $X$, the map $\bar{F} \colon TQX \to X$ is a weak categorical equivalence.
\end{proposition}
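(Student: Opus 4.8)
Here is the plan. I would prove that every component $\bar F_X \colon TQX \to X$ of the natural transformation of \cref{FTrans} is a weak categorical equivalence, by a single induction on dimension that simultaneously handles the reduction to standard simplices and the standard simplices themselves. The facts used repeatedly are: $T$ and $Q$ are cocontinuous and preserve monomorphisms (\cref{T-pres-cof}, \cref{Q-pres-cof}), so $TQ$ sends boundary inclusions $\partial\Delta^n\hookrightarrow\Delta^n$ to cofibrations; weak categorical equivalences in $\sSet$ are closed under coproducts (as in the proof of \cref{T-endpoints}), under pushout along cofibrations, and under countable sequential colimits along cofibrations; and the Joyal model structure is left proper. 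As a sanity check, the remark after \cref{FTrans} identifies $\bar F$ with Lurie's comparison map, so the statement also follows from \cite[Prop.~2.2.2.7]{lurie:htt}; the point of the argument below is to keep the proof inside the cubical framework.

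\textbf{Reduction to representables.} Fix $X$ and write $X = \colim_n \mathrm{sk}_n X$, with $\mathrm{sk}_{n-1}X\hookrightarrow\mathrm{sk}_n X$ a pushout of a coproduct of boundary inclusions $\partial\Delta^n\hookrightarrow\Delta^n$ over the nondegenerate $n$-simplices of $X$. I would prove by induction on $n$ the statement $P(n)$: for every simplicial set $X$, $\bar F_{\mathrm{sk}_n X}$ is a weak categorical equivalence. For $n=-1$ this is trivial since $\mathrm{sk}_{-1}X=\varnothing$ and $TQ\varnothing=\varnothing$. Granting $P(n-1)$, apply $\bar F$ to the defining pushout square for $\mathrm{sk}_n X$: since $TQ$ preserves colimits both squares are pushouts, and since $TQ$ preserves monomorphisms both horizontal maps being pushed out are cofibrations. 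The comparison map on $\mathrm{sk}_{n-1}X$ is a weak categorical equivalence by $P(n-1)$; the one on $\coprod\partial\Delta^n$ is too, because $\dim\partial\Delta^n=n-1$, so $\partial\Delta^n=\mathrm{sk}_{n-1}\partial\Delta^n$ and $P(n-1)$ applies, and coproducts preserve weak categorical equivalences; the one on $\coprod\Delta^n$ is handled by the representable case below. The gluing lemma (using left properness and the cofibrancy just noted) then yields that $\bar F_{\mathrm{sk}_n X}$ is a weak categorical equivalence, completing the induction. Finally $\bar F_X$ is the colimit of the $\bar F_{\mathrm{sk}_n X}$ along the cofibrations $\mathrm{sk}_n X\hookrightarrow\mathrm{sk}_{n+1}X$ and $TQ\,\mathrm{sk}_n X\hookrightarrow TQ\,\mathrm{sk}_{n+1}X$, hence a weak categorical equivalence as a sequential colimit of such along cofibrations.

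\textbf{The representable case and the main obstacle.} It remains to show, assuming $P(n-1)$, that $\bar F_{\Delta^n}\colon TQ^n\to\Delta^n$ is a weak categorical equivalence. For $n\leq 1$ this map is an isomorphism: $Q^0\cong\Box^0$ and $Q^1\cong\Box^1$, and the poset map $F\colon[1]^n\to[n]$ inducing $\bar F_{\Delta^n}$ is then the identity. For $n\geq 2$, the inner horn inclusion $\Lambda^n_1\hookrightarrow\Delta^n$ is a trivial cofibration in the Joyal model structure, while by \cref{openBoxInnerHornPushout} the inclusion $Q\Lambda^n_1\hookrightarrow Q^n$ is a pushout of a single inner open box inclusion; applying $T$ and using that $T$ sends inner open box inclusions to trivial cofibrations (\cref{U-fib-obs}) together with cocontinuity, $TQ\Lambda^n_1\hookrightarrow TQ^n$ is a trivial cofibration. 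Moreover $\bar F_{\Lambda^n_1}$ is a weak categorical equivalence, since $\dim\Lambda^n_1=n-1$, so $\Lambda^n_1=\mathrm{sk}_{n-1}\Lambda^n_1$ and $P(n-1)$ applies. Naturality of $\bar F$ gives a commuting square with left edge $\bar F_{\Lambda^n_1}$, right edge $\bar F_{\Delta^n}$, and top and right(-ward) edges the two trivial cofibrations above; two applications of two-out-of-three force $\bar F_{\Delta^n}$ to be a weak categorical equivalence. The only genuinely delicate point is this representable step: it matters that \cref{openBoxInnerHornPushout} exhibits $Q\Lambda^n_i\hookrightarrow Q^n$ as a pushout of a single inner open box inclusion (so \cref{U-fib-obs} applies after one colimit), and that $TQ^n$ need not itself be a quasicategory, so one cannot fill $TQ^n$ out of $\Delta^n$ directly but must route through the cubical side. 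Everything else is standard cellular-induction and left-properness bookkeeping.
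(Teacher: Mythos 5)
Your proof is correct and takes essentially the same route as the paper's: induction on skeleton dimension, the isomorphism observation for low-dimensional representables, the inner-horn square with two-out-of-three for the higher representables, the gluing lemma for general skeleta, and a sequential colimit at the end. The only cosmetic differences are that you phrase the induction as a single statement $P(n)$ quantified over all $X$ and fix the inner horn $\Lambda^n_1$, whereas the paper handles the representable case first and quotes an arbitrary inner horn $\Lambda^m_i$ — the content is the same.
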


\begin{proof}
We begin by proving the statement for the case where $X$ is $m$-skeletal for some $m \geq 0$, proceeding by induction on $m$. For $m = 0$ or $m = 1$, the map in question is an isomorphism.

Now let $m \geq 2$, and suppose that the statement holds for any $(m-1)$-skeletal $X$. Then in particular, it holds for any horn $\Lambda^{m}_{i}$. For any $0 < i < n$, consider the following commuting diagram:

\centerline{
\xymatrix{
TQ \Lambda^{m}_{i} \ar@{^(->}[r]^{\sim} \ar[d]_{\sim} & TQ^{m} \ar[d] \\
\Lambda^{m}_{i} \ar@{^(->}[r]^{\sim} & \Delta^{m}
}
}

The left-hand map is a weak equivalence by the induction hypothesis; the bottom map is a trivial cofibration as an inner horn inclusion; and the top map is a trivial cofibration by \cref{QAdj} and \cref{T-Quillen-adj}. Thus $\bar{F} \colon TQ^{m} \to \Delta^{m}$ is a weak equivalence by the two-out-of-three property. Extending this result to an arbitrary $m$-skeletal simplicial set $X$ is an application of the gluing lemma, using the fact that both $T$ and $Q$ preserve colimits.

Now let $X$ be an arbitrary simplicial set; then $\bar{F}$ is a weak equivalence on the $n$-skeleton of $X$ for each $n \geq 0$. Thus $\bar{F} \colon TQX \to X$ is a weak equivalence, using the fact that sequential colimits of cofibrations preserve weak equivalences.
\end{proof}

\begin{proposition}\label{QRef}
$Q$ reflects weak categorical equivalences.
\end{proposition}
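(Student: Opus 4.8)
\textbf{Proof plan for \cref{QRef}.}
The strategy is to combine the two key facts we have already established about $Q$: it preserves weak equivalences (\cref{QPres}) and the natural map $\bar F \colon TQ \Rightarrow \id_{\sSet}$ is a pointwise weak categorical equivalence (\cref{FWE}). Since $T$ also preserves weak equivalences (\cref{T-pres-we}), the plan is to show that $Q$ reflecting weak equivalences follows formally. Suppose $f \colon X \to Y$ is a map of simplicial sets such that $Qf$ is a weak categorical equivalence in $\cSet$. Applying $T$, we conclude that $TQf$ is a weak categorical equivalence in $\sSet$ by \cref{T-pres-we}. Now consider the naturality square for $\bar F$:
\[
\xymatrix{
  TQX \ar[r]^{TQf} \ar[d]_{\bar F_X} & TQY \ar[d]^{\bar F_Y} \\
  X \ar[r]^{f} & Y
}
\]
The vertical maps $\bar F_X$ and $\bar F_Y$ are weak categorical equivalences by \cref{FWE}, and the top map $TQf$ is one as just argued. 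Hence by the two-out-of-three property for weak categorical equivalences (valid since these are the weak equivalences of a model structure), $f$ is a weak categorical equivalence. This completes the argument.

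There is essentially no hard step here; the work has all been done in the preceding results, and the only thing to be careful about is that the cited inputs are applied in the right category. In particular, \cref{QPres} is needed only to confirm the setup is consistent, while the actual reflection argument uses \cref{T-pres-we} and \cref{FWE}. If one prefers to avoid invoking \cref{T-pres-we}, one can instead note that $T$ is a left Quillen functor by \cref{T-Quillen-adj} and all objects of $\cSet$ are cofibrant, so $T$ preserves weak equivalences by Ken Brown's lemma — but \cref{T-pres-we} already records exactly this. The only genuinely substantive ingredient is \cref{FWE}, whose proof was carried out by induction on skeleta; here we simply use it as a black box.
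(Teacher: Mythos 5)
Your proof is correct and is essentially identical to the paper's: both form the naturality square for $\bar F$, observe that the vertical maps are weak categorical equivalences by \cref{FWE} and that the top map is one because $T$ preserves weak equivalences, and conclude by two-out-of-three. (In fact your citation of \cref{T-pres-we} for the top map is the right one; the paper's proof accidentally cites \cref{T-pres-cof} there, which concerns preservation of monomorphisms.)
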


\begin{proof}
Let $f \colon X \to Y$ be a map of simplicial sets, such that $Qf$ is a weak categorical equivalence. We have a commuting diagram:

\centerline{
\xymatrix{
TQX \ar[r]^{TQf} \ar[d]_{\bar{F}} & TQY \ar[d]^{\bar{F}} \\
X \ar[r]^{f} & Y \\
}
}

The top horizontal map is a weak categorical equivalence by \cref{T-pres-cof}, as are the vertical maps by \cref{FWE}. Thus $f$ is a weak categorical equivalence by the 2-out-of-3 property.
\end{proof}

We have shown that the adjunction $Q \adjoint \int$ satisfies the hypotheses of \cref{QuillenEquivCreate} \ref{QuillenEquivCounit}. To show that it is a Quillen equivalence, therefore, we will prove the following:

\begin{proposition}\label{Counit}
For any cubical quasicategory $X$, the counit $\varepsilon \colon Q \int X \hookrightarrow X$ is anodyne.
\end{proposition}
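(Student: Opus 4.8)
The plan is to build $X$ from the subcomplex $Q\int X$ by attaching, one at a time, all the non-degenerate cubes of $X$ that are \emph{not} $(0,n)$-cones, and to show that each such attachment can be realized as a pushout of an inner open box filling. Recall from the discussion after \cref{Q-fully-faithful} that $Q\int X \subseteq X$ is precisely the subcomplex whose non-degenerate $n$-cubes are the non-degenerate $(0,n)$-cones in $X$. So the cubes we must attach are exactly the non-degenerate cubes $x\colon \Box^n \to X$ that fail to be $(0,n)$-cones. The organizing principle is a double induction: we filter these cubes first by the dimension $m\ge 1$ of the ``base'' of the smallest cone structure they carry (equivalently, by how far they are from being a $(0,n)$-cone), and within a fixed $m$ by the total dimension. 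Concretely, using \cref{ConeWLOG}, every cube is an $(m,n)$-cone for a \emph{maximal} value of $m$ (with $m$ possibly equal to the full dimension, meaning no cone structure at all beyond the trivial one); write $X_m$ for the subcomplex generated by $Q\int X$ together with all cubes that are $(m',n')$-cones for some $m'\le m$. Then $X_0 = Q\int X$, and $\bigcup_m X_m = X$, so it suffices to show each inclusion $X_{m-1}\hookrightarrow X_m$ is anodyne.

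To attach a single non-degenerate $(m,n)$-cone $x$ (with $m$ maximal) we want to recognize the relevant boundary data as an inner open box. Here is where the coherent family of composites $\theta$ from \cref{theta-exists} enters: fix such a $\theta$ on $X$ (using \cref{natural-marked-cubical-quasicat} and \cref{theta-exists}, or directly \cref{theta-exists} since $X$ is a cubical quasicategory). Given $x\colon C^{m,n}\to X$ which is non-degenerate, not an $(m-1,n+1)$-cone, and not of the form $\theta^{m,n-1}(x')$ — i.e.\ $x$ falls under case (6) of \cref{ThetaDef} — the cube $\theta^{m,n}(x)\colon C^{m,n+1}\to X$ has $\theta^{m,n}(x)\partial_{n+1,0}=x$ by \ref{ThetaFace0Id}, while by \cref{ConeFaceDeg} all of its other relevant faces are $(m-1,n+1)$-cones or lower-complexity cones, hence already present in $X_{m-1}$ once we also note (via \ref{ThetaFace0}, \ref{ThetaFace1}) that those faces are themselves values of $\theta$ on faces of $x$. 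By \cref{Theta-T}, $\theta^{m,n}(x)$ is itself non-degenerate and not an $(m-1,n+2)$-cone, so it is a genuinely new $(m,n+1)$-cube being attached alongside $x$. The key geometric input is \cref{ConeEdge}: the quotient map $\Box^{m+n+1}\to C^{m,n+1}$ collapses the critical edge for the face $\partial_{n+1,0}$ to a degenerate edge, so attaching $\theta^{m,n}(x)$ along the open box $\sqcap^{m+n+1}_{n+1,0}$ is exactly an inner open box filling. This attaches the pair $(x,\theta^{m,n}(x))$ simultaneously: $x$ appears as the missing face, $\theta^{m,n}(x)$ as the filler.

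Thus the factorization of $X_{m-1}\hookrightarrow X_m$ proceeds by transfinite composition: well-order the non-degenerate $(m,n)$-cubes of $X$ of case-(6) type by increasing dimension (and arbitrarily within each dimension), and at each stage form the pushout
\[
\xymatrix{
\widehat{\sqcap}^{\,m+n+1}_{\,n+1,0} \ar[r] \ar@{>->}[d] & X_{m-1}^{(\alpha)} \ar[d] \\
\widehat{\Box}^{\,m+n+1}_{\,n+1,0} \ar[r] & X_{m-1}^{(\alpha+1)}
}
\]
where the top map is well-defined precisely because every face of $\sqcap^{m+n+1}_{n+1,0}$, after passing to the cone quotient, is either a lower-dimensional cube of the same complexity (already attached at an earlier stage) or a cone of complexity $<m$, hence in $X_{m-1}$; this uses \ref{ThetaFace0} through \ref{ThetaFace1} to identify these faces with $\theta$-values of faces of $x$, together with \cref{ConeFaceDeg}. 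Cubes that are $(m-1,n+1)$-cones or $\theta$-images (cases (1)–(5) of \cref{ThetaDef}) are already in $X_{m-1}$ by \cref{ConeWLOG} and \cref{theta-cases-degenerate} once the case-(6) cubes of complexity $m$ are attached, because $\theta^{m,n}$ applied to a complexity-$(m-1)$ cone or a degenerate cone lands in the subcomplex generated by lower data; and degenerate cubes contribute nothing new. Since inner open box inclusions are anodyne and anodyne maps are closed under pushout and transfinite composition, each $X_{m-1}\hookrightarrow X_m$ is anodyne, and hence so is the countable composite $Q\int X=X_0\hookrightarrow X$.

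The main obstacle I anticipate is the bookkeeping in the attaching maps: verifying that at each stage the open box $\sqcap^{m+n+1}_{n+1,0}$ really does map into the already-constructed subcomplex, which amounts to checking that all faces \emph{other than} $\partial_{n+1,0}$ of the cone $\theta^{m,n}(x)$ are either strictly lower-dimensional cubes of complexity exactly $m$ (handled by the within-$m$ induction on dimension) or cubes of complexity $<m$. This requires a careful application of the identities \ref{ThetaFace0}, \ref{ThetaFace1}, \ref{ThetaFace0Id} together with \cref{ConeFaceDeg} to track exactly which faces are $\theta$-values versus cone-reductions, and an argument that the well-ordering can be chosen so that all needed faces precede the current stage — essentially a verification that the subcomplex $B^{m,n+1,n+1}\subseteq C^{m,n+1}$ of \cref{BEqv} is exactly the part glued from earlier stages, so that \cref{BEqv} (which is itself proved by inner open box fillings via \cref{ConeEdge}) does the heavy lifting. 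In fact it may be cleanest to attach each $x$ together with $\theta^{m,n}(x)$ in one step using the trivial cofibration $B^{m,n+1,n+1}\hookrightarrow C^{m,n+1}$ of \cref{BEqv} rather than a single open box, which packages the combinatorics more economically; either way the content is the same.
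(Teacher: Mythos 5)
Your proposal reproduces the paper's proof in its essentials: equip $X$ with a coherent family of composites via \cref{theta-exists}, filter by cone base $m$ and then (within a fixed $m$) by $n$, and attach each case-(6) $(m,n)$-cone $x$ together with $\theta^{m,n}(x)$ via a pushout of the inner open box $\widehat{\sqcap}^{m+n+1}_{n+1,0}\hookrightarrow\widehat\Box^{m+n+1}_{n+1,0}$, using \cref{Theta-T} to see that $\theta^{m,n}(x)$ is a genuinely new cube and the identities \ref{ThetaFace0}--\ref{ThetaFace1} together with \cref{ConeFaceDeg} to place all other faces of $\theta^{m,n}(x)$ in the preceding stage. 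The paper attaches all such cones of a fixed $(m,n)$ simultaneously via a coproduct pushout rather than via a transfinite well-ordering within a dimension, but this is cosmetic. One small imprecision: the faces $\partial_{i,0}$ of $\theta^{m,n}(x)$ for $i\le n$ are $\theta^{m,n-1}(x\partial_{i,0})$, which have the \emph{same} dimension $m+n$ as $x$, not lower; they are nonetheless already present, but because they arrived bundled with the lower-dimensional cone $x\partial_{i,0}$ at the previous $n$, not because they are themselves lower-dimensional.

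Two points deserve more attention. First, your justification that the critical edge is degenerate invokes only \cref{ConeEdge}, but that lemma requires the face index to be at least $2$; for $n=0$ (attaching an arbitrary $m$-cube along the $(1,0)$-face of $\theta^{m,0}(x)$) \cref{ConeEdge} is silent, and you need \cref{ThetaDegEdge} instead — the paper explicitly splits into these two cases. Second, the closing suggestion to attach along $B^{m,n+1,n+1}\hookrightarrow C^{m,n+1}$ rather than the single open box is not merely a repackaging: the complement $C^{m,n+1}\setminus B^{m,n+1,n+1}$ contains, besides the top cube and the $(n+1,0)$-face, also the faces $\partial_{i,0}$ for $n+2\le i\le m+n+1$, and these are $(m-1,n+1)$-cones (by \cref{ConeFaceDeg}) already lying in the preceding stage of the filtration. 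Pushing out along $B^{m,n+1,n+1}$ would therefore introduce duplicate copies of those faces, and the resulting map into $X$ would fail to be a monomorphism. \cref{BEqv} is the right tool for \emph{constructing} $\theta^{m,n}(x)$ (exactly as in the proof of \cref{theta-lift}), not for the attaching step; the attaching step should use the single open box as in the main body of your argument.
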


\begin{proof}
Let $X$ be a cubical quasicategory. By \cref{theta-exists}, we may equip $X$ with a coherent family of composites $\theta$. We will build $X$ from $Q \int X$ via successive inner open box fillings, thereby showing that the inclusion of $Q \int X$ into $X$ is anodyne. 

For $m \geq 2, n \geq -1$, let $X^{m,n}$ denote the smallest subcomplex of $X$ containing all $(m',n')$-cones of $X$, as well as all cones of the form $\theta^{m',n'}(x)$, for $m' < m$ or $m' = m, n' \leq n$. In particular, this means $X^{2,-1} = Q \int X$, since all cubes in the image of $\theta^{0,n}$ or $\theta^{1,n}$ are degenerate.

For $m < m'$ or $m = m', n \leq n'$, we have $X^{m,n} \subseteq X^{m',n'}$. Thus we obtain a sequence of inclusions:

$$
Q \textstyle{\int} X = X^{2,-1} \hookrightarrow X^{2,0} \hookrightarrow ... \hookrightarrow X^{3,-1} \hookrightarrow X^{3,0} \hookrightarrow ... \hookrightarrow X^{m,n} \hookrightarrow ...
$$

Observe that the colimit of this sequence is $X$. Furthermore, for each $m$, $X^{m,-1}$ is the union of all subcomplexes $X^{m',n}$ for $m' < m$, i.e. the colimit of the sequence of inclusions $Q \int X \hookrightarrow ... \hookrightarrow X^{m',n} \hookrightarrow ...$. So to show that $Q \int X \hookrightarrow X$ is anodyne, it suffices to show that each map $X^{m,n-1} \hookrightarrow X^{m,n}$ for $n \geq 0$ is anodyne.

Fix $m \geq 2, n \geq 0$, and Let $S$ denote the set of non-degenerate $(m,n)$-cones of $X$ which are not $(m-1,n+1)$-cones, and are not in the image of $\theta^{m,n-1}$ -- in other words, those $(m,n)$-cones which fall under case (6) of \cref{ThetaDef}. To construct $X^{m,n}$ from $X^{m,n-1}$, we must adjoin to $X^{m,n-1}$ all $(m,n)$-cones of $X$, and images of such cones under $\theta^{m,n}$, which are not already present in $X^{m,n}$. Using \cref{sa1,ConeFaceDeg,Theta-T}, and the identities \ref{ThetaFace0} to \ref{ThetaConeWLOG}, we can see that these are precisely the cones in $S$ and their images under $\theta^{m,n}$.

Let $x \in S$; we will analyze the faces of $\theta^{m,n}(x)$ to determine which of them are contained in $X^{m,n-1}$. For $i \leq n$ we have $\theta^{m,n}(x) \bd_{i,0} = \theta^{m,n-1}(x\bd_{i,0})$ by \ref{ThetaFace0}, while for $i \geq n + 2$ or $\varepsilon = 1$, $\theta^{m,n}(x) \bd_{i,\varepsilon}$ is an $(m-1,n+1)$-cone by \cref{ConeFaceDeg}. Thus we see that the only face of $\theta^{m,n}(x)$ which is not contained in $X^{m,n-1}$ is $\theta^{m,n}(x)\bd_{n+1,0} = x$. Furthermore, the critical edge of $\theta^{m,n}(x)$ with respect to the $(n+1,0)$-face  is degenerate; for $n = 0$ this follows from \cref{ThetaDegEdge}, while for $n \geq 1$ it follows from \cref{ConeEdge}. Thus the faces of $\theta^{m,n}(x)$ which are contained in $X^{m,n-1}$ form an $(n+1,0)$-inner open box.

Constructing $X^{m,n}$ from $X^{m,n-1}$ amounts to filling all of these inner open boxes; in other words, we have a pushout diagram:

\centerline{
\xymatrix{
\bigsqcup\limits_{S} \widehat{\sqcap}^{m+n+1}_{n+1,0} \ar[r] \ar@{^(->}[d] & X^{m,n-1} \ar@{^(->}[d] \\
\bigsqcup\limits_{S} \widehat{\Box}^{m+n+1} \ar[r] & X^{m,n} \pushoutcorner
}
}

Thus $X^{m,n-1} \hookrightarrow X^{m,n}$ is anodyne, as a pushout of a coproduct of anodyne maps.
\end{proof}

\begin{theorem} \label{Q-Quillen-equivalence}
The adjunction $Q : \sSet \rightleftarrows \cSet : \int$ is a Quillen equivalence.
\end{theorem}

\begin{proof}
The adjunction is Quillen by \cref{QAdj}. $Q$ preserves and reflects the weak equivalences of the Quillen model structure on $\mathsf{sSet}$ by \cref{QPres} and \cref{QRef}. Thus $Q \dashv \int$ satisfies the hypotheses of \cref{QuillenEquivCreate}, \cref{QuillenEquivCounit} and we can apply \cref{Counit} to conclude that it is a Quillen equivalence.
\end{proof}

\begin{corollary}
For all $W \in \{L,R\}, \varepsilon \in \{0,1\}$, the adjunction $Q_{W,\varepsilon} \adjoint \int\limits_{W,\varepsilon}$ is a Quillen equivalence.
\end{corollary}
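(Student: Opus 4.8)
The plan is to bootstrap off \cref{Q-Quillen-equivalence}, which already handles the left positive case $(W,\varepsilon) = (L,1)$, and to deduce the other three cases by conjugating with the involutions of $\cSet$. Concretely, \cref{Q-involutions} expresses each functor $Q_{W,\varepsilon}$ as a composite $\Phi \circ Q_{L,1}$, where $\Phi \in \{(-)^\coop, (-)^\op, (-)^\rmco\}$ is one of the involutions of $\cSet$, and dually writes $\int\limits_{W,\varepsilon}$ as $\int\limits_{L,1} \circ \Phi$. Thus the adjunction $Q_{W,\varepsilon} \adjoint \int\limits_{W,\varepsilon}$ is precisely the composite of the adjunction $Q_{L,1} \adjoint \int\limits_{L,1}$ (from $\sSet$ to $\cSet$) with the self-adjunction $\Phi \adjoint \Phi$ (from $\cSet$ to $\cSet$). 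So the first step is simply to record this decomposition.

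The second step is to observe that each of the two adjunctions being composed is a Quillen equivalence. The adjunction $Q_{L,1} \adjoint \int\limits_{L,1}$ is a Quillen equivalence by \cref{Q-Quillen-equivalence}. For the self-adjunction $\Phi \adjoint \Phi$, \cref{op-Quillen-equiv} shows that $(-)^\rmco \adjoint (-)^\rmco$ and $(-)^\coop \adjoint (-)^\coop$ are Quillen self-equivalences of the cubical Joyal model structure; and since $(-)^\op$ is by definition the composite $(-)^\rmco \circ (-)^\coop$ and these two involutions commute, $(-)^\op \adjoint (-)^\op$ is also a Quillen self-equivalence, being a composite of Quillen equivalences. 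The final step is to invoke the fact that a composite of Quillen equivalences is again a Quillen equivalence, which yields the claim in each of the three remaining cases.

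I do not expect a serious obstacle here: the whole argument is a formal manipulation of the identities in \cref{Q-involutions} together with \cref{op-Quillen-equiv}. The only point that warrants a moment's care is the remark that the $\op$-involution of $\cSet$ is itself left (and right) Quillen, which is immediate once one notes that it factors as $(-)^\rmco \circ (-)^\coop$. One could alternatively run the argument at the level of the underlying cone monads, using \cref{cone-involutions} and \cref{Q-cosimp-ob} to identify the cosimplicial objects $Q_{W,\varepsilon}$ up to the action of the involutions, but the adjunction-level bookkeeping described above is the cleanest route and avoids re-deriving anything.
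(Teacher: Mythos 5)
Your argument matches the paper's proof, which cites precisely \cref{op-Quillen-equiv}, \cref{Q-involutions}, and \cref{Q-Quillen-equivalence} and leaves the composition step implicit. Your explicit note that $(-)^\op \adjoint (-)^\op$ is Quillen because it factors as $(-)^\rmco \circ (-)^\coop$ is a correct and worthwhile clarification of a detail the paper glosses over.
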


\begin{proof}
Immediate from \cref{op-Quillen-equiv,Q-involutions,Q-Quillen-equivalence}.
\end{proof}

\begin{proof}[Proof of \cref{T-Quillen-equivalence}]
The adjunction $T \adjoint U$ is Quillen by \cref{T-Quillen-adj}. To see that it is a Quillen equivalence, note that because all objects in both $\cSet$ and $\sSet$ are cofibrant, the left derived functor $L(TQ)$ is the composite of the left derived functors $LT$ and $LQ$, while the left derived functor of the identity is the identity; this can easily be seen from \cite[Def.~1.36]{hovey:book}. By \cref{FTrans}, we have a natural weak equivalence $TQ \Rightarrow \mathrm{id}_{\sSet}$. In the homotopy category $\Ho \, \sSet$, this natural weak equivalence becomes a natural isomorphism $LT \circ LQ \cong \mathrm{id}_{\Ho \, \sSet}$. By \cref{Q-Quillen-equivalence}, $LQ$ is an equivalence of categories, thus $LT$ is an equivalence of categories as well. 
\end{proof}

The proofs in this section can easily be adapted to show that $Q \adjoint \int$ is a Quillen equivalence between the Quillen and Grothendieck model structures for $\infty$-groupoids on $\sSet$ and $\cSet$, respectively. (The analogue of this result for $\cSet_0$ was essentially stated as \cite[Prop. 5.3]{kapulkin-lindsey-wong}, but the proof supplied there only shows that $Q_0$ and $\int_0$ form a Quillen adjunction.)

\begin{proposition}\label{Quillen-Grothendieck-equiv}
The adjunction $Q : \sSet \rightleftarrows \cSet : \int$ is a Quillen equivalence between the Quillen model structure on $\sSet$ and the Grothendieck model structure on $\cSet$.
\end{proposition}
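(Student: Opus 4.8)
The plan is to adapt the proofs of \cref{QAdj}, \cref{QPres}, \cref{QRef}, and \cref{Counit}, replacing the Joyal and cubical Joyal model structures by the Quillen and Grothendieck ones throughout, and then to conclude via \cref{QuillenEquivCreate}~\ref{QuillenEquivCounit}. The only conceptual input beyond those proofs is that the Quillen (resp.\ Grothendieck) model structure is a localization of the Joyal (resp.\ cubical Joyal) one --- the latter by \cref{straightforward-properties}~\ref{Grothendieck-Joyal-localization} --- so that every weak categorical equivalence of simplicial sets is a Quillen weak equivalence, every weak categorical equivalence of cubical sets (in particular every anodyne map in the sense of \cref{sec:joyal-cset}) is a Grothendieck weak equivalence, and every cubical Kan complex, being Grothendieck-fibrant and hence cubical-Joyal-fibrant, is a cubical quasicategory by \cref{fibrant-objects-unmarked}.

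First I would show that $Q \adjoint \int$ is a Quillen adjunction from the Quillen to the Grothendieck model structure, by \cref{Quillen-adj-fib-obs} applied with $S$ the set of \emph{all} horn inclusions $\Lambda^n_k \hookrightarrow \Delta^n$, which characterizes the fibrations between fibrant objects in the Quillen model structure. That $Q$ preserves cofibrations is \cref{Q-pres-cof}. The computation of \cref{openBoxInnerHornPushout} already shows that $Q$ sends each inner horn inclusion to a pushout of an inner open box inclusion; for the outer ones the same argument applies, using \cref{FaceCond} to see that in $Q^n = C^{0,n}$ every face $[\partial_{i,1}]$ with $i \le n$ is contained in $[\partial_{n,0}]$, whence $Q\Lambda^n_0 \hookrightarrow Q^n$ and $Q\Lambda^n_n \hookrightarrow Q^n$ are pushouts of ordinary (outer) open box inclusions. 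Since every open box inclusion is anodyne in the Grothendieck model structure --- the status of the critical edge being irrelevant there --- $Q$ sends $S$ to trivial cofibrations, so the adjunction is Quillen.

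Next I would verify that $Q$ preserves and reflects weak equivalences. Preservation is immediate from the previous step and Ken Brown's lemma, all simplicial sets being cofibrant, as in \cref{QPres}. For reflection I would reuse the natural transformation $\bar{F} \colon TQ \Rightarrow \mathrm{id}_{\sSet}$ of \cref{FTrans}: each component $\bar{F}_X \colon TQX \to X$ is a weak categorical equivalence by \cref{FWE}, hence a Quillen weak equivalence; and $T$ is left Quillen from the Grothendieck to the Quillen model structure, as it preserves monomorphisms (\cref{T-pres-cof}) and, by \cref{open-box-pop} and \cref{T-pres-pop}, carries an open box inclusion to a pushout product of an endpoint inclusion $\{\varepsilon\} \hookrightarrow \Delta^1$ with monomorphisms, which is a trivial cofibration in the Quillen model structure. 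Hence $T$ preserves Grothendieck weak equivalences by Ken Brown, and the naturality square of $\bar{F}$ together with $2$-out-of-$3$ shows that $Q$ reflects weak equivalences, just as in \cref{QRef}.

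Finally, by \cref{QuillenEquivCreate}~\ref{QuillenEquivCounit} it remains to check that for every Grothendieck-fibrant cubical set $Y$ --- that is, every cubical Kan complex --- the counit $Q\int Y \to Y$ is a Grothendieck weak equivalence. Such a $Y$ is a cubical quasicategory, so \cref{Counit} applies and the counit is anodyne in the cubical Joyal model structure, hence a Grothendieck weak equivalence by the localization remark above; this completes the argument. I do not anticipate a genuine obstacle: the only new computation is the outer-horn case in the Quillen-adjunction step, and the fact that the counit is already anodyne (\cref{Counit}) makes the final step immediate, so the remaining work is just keeping track of which localization each class of weak equivalences lives in.
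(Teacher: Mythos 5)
Your proposal is correct and follows the paper's own outline exactly: establish that the Kan/Grothendieck analogues of $T \adjoint U$ and $Q \adjoint \int$ are Quillen adjunctions, deduce that $Q$ preserves and (via $\bar F$) reflects weak equivalences, and then invoke \cref{QuillenEquivCreate}~\ref{QuillenEquivCounit} together with \cref{Counit}, using that cubical Kan complexes are cubical quasicategories and that cubical-Joyal anodyne maps are Grothendieck weak equivalences. The only thing to flag is a small overstatement in the outer-horn computation: the claim that $[\partial_{i,1}] \subseteq [\partial_{n,0}]$ in $Q^n$ for all $i \leq n$ is false at $i = n$ (already in $Q^1 \cong \Box^1$, where $[\partial_{1,1}]$ is the vertex $1$, not contained in $[\partial_{1,0}] = \{0\}$), and \cref{FaceCond} only yields it for $i < n$; that is all that is needed to identify the image of $\sqcap^n_{n,1}$ with $Q\Lambda^n_0$, and for $Q\Lambda^n_n$ the cleaner route is \cref{Contain}, which puts every $[\partial_{i,1}]$ inside $[\partial_{n,1}]$, a face already present in $Q\Lambda^n_n$. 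This does not affect the validity of the argument.
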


\begin{proof}
\cref{T-Quillen-adj} and \cref{QAdj} both have natural analogues, showing that $T \adjoint U$ and $Q \adjoint \int$ are Quillen adjunctions between these model structures (implying in particular that $Q$ preserves weak equivalences). Since every weak equivalence in the Joyal model structure is also a weak equivalence in the Quillen model structure, $\overline{F}$ is a natural weak equivalence in the Quillen model structure as well. Thus the proof of \cref{QRef} adapts to show that $Q$ reflects the weak equivalences of the Quillen model structure. \cref{QuillenEquivCreate}, \cref{QuillenEquivCounit} and \cref{Counit} then imply the analogue of \cref{Q-Quillen-equivalence}, since every cubical Kan complex is a cubical quasicategory and every weak equivalence in the cubical Joyal model structure is a weak equivalence in the Grothendieck model structure.
\end{proof}

We thus obtain a new proof of the following result, previously shown in \cite[Prop.~8.4.30]{CisinskiAsterisque} for cubical sets without connections:

\begin{theorem}\label{Quillen-Grothendieck-equiv-T}
$T \adjoint U$ is a Quillen equivalence between the Grothendieck model structure on $\cSet$ and the Quillen model structure on $\sSet$. \qed
\end{theorem}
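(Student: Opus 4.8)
The statement to prove is \cref{Quillen-Grothendieck-equiv-T}: that $T \adjoint U$ is a Quillen equivalence between the Grothendieck model structure on $\cSet$ and the Quillen model structure on $\sSet$. The plan is to run the same ``two-out-of-three for derived functors'' argument that was used to prove \cref{T-Quillen-equivalence}, but now with every model structure replaced by its $\infty$-groupoidal counterpart. Concretely, I would factor the proof into three ingredients, all of which have just been assembled in this section: (i) $T \adjoint U$ is a Quillen adjunction between the Grothendieck model structure on $\cSet$ and the Quillen model structure on $\sSet$; (ii) $Q \adjoint \int$ is a Quillen equivalence between these same two model structures; and (iii) there is a natural weak equivalence $\bar{F} \colon TQ \Rightarrow \id_{\sSet}$ whose components are weak equivalences in the Quillen model structure on $\sSet$.

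For (i), I would invoke the natural analogue of \cref{T-Quillen-adj} mentioned in the proof of \cref{Quillen-Grothendieck-equiv}: $T$ preserves monomorphisms by \cref{T-pres-cof}, and one checks that $T$ sends the (Grothendieck-)anodyne generators — open box inclusions — to trivial cofibrations in the Quillen model structure, which follows from \cref{open-box-pop}, \cref{T-pres-pop} and the corresponding simplicial fact, so \cref{Quillen-adj-fib-obs} applies. For (ii), this is precisely \cref{Quillen-Grothendieck-equiv}. For (iii), note that every weak equivalence in the Joyal model structure is also a weak equivalence in the Quillen model structure, and $\bar{F} \colon TQX \to X$ is a weak categorical equivalence for all $X$ by \cref{FWE}; hence it is a Quillen weak equivalence, and by \cref{FTrans} these assemble into a natural transformation $TQ \Rightarrow \id_{\sSet}$.

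With these in hand the conclusion is formal, exactly as in the proof of \cref{T-Quillen-equivalence}. Since all objects of $\cSet$ and $\sSet$ are cofibrant in the respective model structures, the left derived functor of a composite is the composite of left derived functors, so $L(TQ) \cong LT \circ LQ$, and the left derived functor of the identity is the identity. Passing $\bar{F}$ to the homotopy category $\Ho \sSet$ (with the Kan--Quillen model structure) turns it into a natural isomorphism $LT \circ LQ \cong \id_{\Ho \sSet}$. By the analogue of \cref{Q-Quillen-equivalence} asserted in \cref{Quillen-Grothendieck-equiv}, $LQ$ is an equivalence of categories; therefore $LT$ is an equivalence of categories, which is exactly the statement that the Quillen adjunction $T \adjoint U$ is a Quillen equivalence.

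I do not anticipate a genuine obstacle here: every nontrivial input has already been established. The only point requiring a modicum of care is making sure each cited result is being applied to the correct pair of model structures — in particular that \cref{Quillen-Grothendieck-equiv} really does give both the Quillen-adjunction and the equivalence halves of (ii), and that one is entitled to reuse $\bar{F}$ and \cref{FWE} after localizing the weak equivalences on the simplicial side. Since the Grothendieck and cubical Joyal model structures share the same cofibrations, and likewise the Quillen and Joyal model structures on $\sSet$, all the cofibrancy bookkeeping needed for the derived-functor argument carries over verbatim. This is precisely why the theorem is stated with a bare \qed in the excerpt.
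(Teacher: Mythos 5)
Your argument is correct and is exactly the intended one: the paper marks \cref{Quillen-Grothendieck-equiv-T} with a bare \qed because it follows from \cref{Quillen-Grothendieck-equiv} by rerunning the derived-functor argument of the proof of \cref{T-Quillen-equivalence}, with the Joyal/cubical-Joyal structures replaced by the Kan--Quillen/Grothendieck ones, and you have spelled out precisely that argument including the needed Quillen-adjunction analogue of \cref{T-Quillen-adj}. No gaps.
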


\section{Mapping spaces in cubical quasicategories}\label{sec:fund-thm}

In this section we introduce the concept of a mapping space in a cubical quasicategory, and show that categorical equivalences can be characterized in terms of these mapping spaces and the homotopy category construction of \cref{section:marked}.

Our arguments in this section, up to the proof of our main result, \cref{fundamental-theorem}, do not rely on connections, thus they are valid in all of the categories of cubical sets shown in the diagram \cref{eq:cube-cat-inclusions} at the end of \cref{sec:background}. Some of the subsequent results, which relate cubical mapping spaces and homotopy categories to their simplicial analogues, make use of the adjunction $Q \dashv \int$ developed in \cref{sec:relation}, so those results which involve these functors can only be adapted to $\cSet_0$ or $\cSet_1$.

\begin{definition}\label{cubical-mapping-space}
For $X \in \cSet$ and $x_0,x_1 \colon \Box^0 \to X$, we define the \emph{mapping space} $\Map_{X}(x_0,x_1)$ by the following pullback:

\centerline{
\xymatrix{
\Map_{X}(x_0,x_1) \drpullback \ar[d] \ar[rr] && \ihom_{L}(\Box^1,X) \ar[d] \\
\Box^0 \ar[rr]^{(x_0,x_1)} && \ihom_{L}(\bd \Box^1, X) \\
}
}
\end{definition}

From this definition, we can derive a more concrete description of the cubical mapping space. For $X \in \cSet$, $x_0, x_1 \colon \Box^0 \to X$, we have:

\[ \Map_X(x_0, x_1)_n = \left\{  \Box^{n+1} \overset{s}{\to} X \ \big| \ s \partial_{n+1, \varepsilon} = x_\varepsilon  \right\}\text{,} \]

with the cubical operations given by those of $X$. Note that $x_{\varepsilon}$ here refers to the degeneracy of the vertex $x_{\varepsilon}$ in the appropriate dimension.


There is a clear geometric intuition behind this definition, as the example below shows.

\begin{example}
  Given a cubical set $X$ and $0$-cubes $x_0, x_1 \colon \Box^0 \to X$, we have that:
  \begin{itemize}
    \item a $0$-cube in $\Map_X(x_0, x_1)$ is a $1$-cube from $x_0$ to $x_1$ in $X$;
	\item a $1$-cube in $\Map_X(x_0,x_1)$ is a $2$-cube in $X$ of the form
	\[
\xymatrix{
  x_0 \ar@{=}[r] \ar[d] & x_0 \ar[d] \\
  x_1 \ar@{=}[r] & x_1 }
\]
  \end{itemize}
\end{example}

Given a cubical set map $f \colon X \to Y$, for any $x_{0}, x_{1} \colon \Box^0 \to X$ there is a natural map $f_{\ast} \colon \Map_{X}(x_{0},x_{1}) \to \Map_{Y}(fx_{0},fx_{1})$ induced by a natural map between the pullbacks of \cref{cubical-mapping-space}. Thus the mapping space construction defines a functor $\Map \colon \bd \Box^1 \downarrow \cSet \to \cSet$. In fact, this functor has a left adjoint, which we will now describe.

\begin{definition}\label{Sigma-def}
For $X \in \cSet$, the \emph{suspension} of $X$ is the bi-pointed cubical set $\Sigma X \in \bd \Box^1 \downarrow \cSet$ defined by the following pushout diagram:

\centerline{
\xymatrix{
X \sqcup X \ar[d] \ar[r] & \bd \Box^1 \ar[d] \\
X \otimes \Box^1 \ar[r] & \Sigma X \pushoutcorner \\
}
}

The chosen map $\bd \Box^1 \to \Sigma X$ is that which appears in the diagram above. We denote the basepoints of $\Sigma X$, i.e. the images under this map of the vertices $0, 1 \in \bd \Box^1$, by $0$ and $1$, respectively. For $f \colon X \to Y$, we define $\Sigma f \colon \Sigma X \to \Sigma Y$ to be the natural map between pushouts induced by $f$.
\end{definition}

\begin{proposition}\label{Sigma-Map-adjoint}
The functor $\Sigma \colon \cSet \to \bd \Box^1 \downarrow \cSet$ is left adjoint to $\Map \colon \bd \Box^1 \downarrow \cSet \to \cSet$.
\end{proposition}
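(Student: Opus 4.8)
The plan is to exhibit the adjunction at the level of hom-sets and check naturality, using the universal properties that define both functors. Fix $X \in \cSet$ and a bi-pointed cubical set $(Y, y_0, y_1) \in \bd \Box^1 \downarrow \cSet$; I must produce a natural bijection between maps $\Sigma X \to Y$ in $\bd \Box^1 \downarrow \cSet$ and maps $X \to \Map_Y(y_0, y_1)$ in $\cSet$. First I would unwind the domain: by \cref{Sigma-def}, a map $\Sigma X \to Y$ under $\bd \Box^1$ is precisely a map $h \colon X \otimes \Box^1 \to Y$ whose restrictions $h|_{X \otimes \{0\}}$ and $h|_{X \otimes \{1\}}$ are the constant maps at $y_0$ and $y_1$ respectively (these restrictions are the two legs $X \sqcup X \to X \otimes \Box^1 \to \Sigma X \to Y$, which the under-$\bd\Box^1$ condition forces to factor through $y_0, y_1 \colon \Box^0 \to Y$). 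Such data is, by the tensor-hom adjunction $- \otimes \Box^1 \dashv \ihom_R(\Box^1, -)$ — or rather, matching the $\ihom_L$ in \cref{cubical-mapping-space}, the adjunction for the appropriate tensor variable — the same as a map $X \to \ihom_L(\Box^1, Y)$ whose composite with $\ihom_L(\Box^1, Y) \to \ihom_L(\bd \Box^1, Y)$ equals the constant map at $(y_0, y_1) \colon \Box^0 \to \ihom_L(\bd\Box^1, Y)$.

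Next I would invoke the pullback defining $\Map_Y(y_0, y_1)$: a map $X \to \ihom_L(\Box^1, Y)$ whose postcomposition to $\ihom_L(\bd \Box^1, Y)$ is the constant map $X \to \Box^0 \xrightarrow{(y_0,y_1)} \ihom_L(\bd\Box^1,Y)$ is, by the universal property of the pullback in \cref{cubical-mapping-space}, exactly a map $X \to \Map_Y(y_0, y_1)$. Composing these two translations gives the desired bijection
\[
(\bd \Box^1 \downarrow \cSet)(\Sigma X, Y) \;\cong\; \cSet(X, \Map_Y(y_0, y_1))\text{.}
\]
One small point to verify carefully is the bookkeeping of left-versus-right tensor: \cref{Sigma-def} uses $X \otimes \Box^1$ (right tensor in $\Box^1$), and \cref{cubical-mapping-space} uses $\ihom_L(\Box^1, X)$, the right adjoint of $- \otimes \Box^1$; so the two do match, and the relevant adjunction is precisely $- \otimes \Box^1 \dashv \ihom_L(\Box^1, -)$ as recorded in \cref{subsection:cSet-basics}. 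I would double-check the identification of the boundary restriction with postcomposition along $\ihom_L(\Box^1, Y) \to \ihom_L(\bd\Box^1, Y)$, which follows since $\bd\Box^1 \hookrightarrow \Box^1$ induces this restriction map and the adjunction is natural in the first variable.

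Finally I would check naturality of the bijection in both $X$ and $(Y, y_0, y_1)$. Naturality in $X$ follows because every step above is an instance of a natural transformation (the tensor-hom adjunction counit/unit, and precomposition with a map $X' \to X$), and naturality in $Y$ follows from functoriality of $\ihom_L$ and of pullbacks, together with the fact that $\Sigma$ and $\Map$ were defined on morphisms precisely via the induced maps on pushouts and pullbacks. This establishes $\Sigma \dashv \Map$. I do not anticipate a genuine obstacle here; the only thing requiring care is the consistent tracking of the two tensor/hom variances and confirming that the "under $\bd\Box^1$" constraint on $\Sigma X \to Y$ corresponds exactly to the "into the pullback cone point" constraint defining $\Map_Y(y_0,y_1)$ — i.e. that the pushout defining $\Sigma$ and the pullback defining $\Map$ are formally dual under the tensor-hom adjunction. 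Once that correspondence is spelled out the result is immediate.
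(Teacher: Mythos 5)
Your proposal is correct and follows essentially the same route as the paper: both arguments unwind the universal property of the pushout defining $\Sigma$ and the pullback defining $\Map$, connecting them through the adjunction $- \otimes \Box^1 \dashv \ihom_L(\Box^1, -)$ (phrased in the paper as pushout-product/pullback-exponential duality). Your careful bookkeeping of the left-vs-right variance is well placed — the paper's proof, after writing the pullback square with $\ihom_L$, momentarily slips and writes $\ihom_R$ in the ensuing sentence, which you avoid.
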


\begin{proof}
Let $X, Y \in \cSet, y_0,y_1 \colon \Box^0 \to Y$. By the universal property of the pullback, maps $X \to \Map_{Y}(y_0,y_1)$ correspond to diagrams of the form

\centerline{
\xymatrix{
X \ar[d] \ar[rr]^-{f} && \ihom_{L}(\Box^1,Y) \ar[d] \\
\Box^0 \ar[rr]^-{(y_0,y_1)} && \ihom_{L}(\bd \Box^1, Y) \\
}
}

The map $\ihom_{R}(\Box^1,Y) \to \ihom_{R}(\bd \Box^1,Y)$ is the pullback exponential $(\bd \Box^1 \hookrightarrow \Box^1) \triangleright (Y \to \Box^0)$. Using the duality between pushout products and pullback exponentials, and observing that the pushout object $X \otimes \Box^1 \cup_{X \otimes \bd \Box^1} \Box^0 \otimes \bd \Box^1$ is precisely $\Sigma X$, we have a natural bijection between such diagrams and diagrams of the form

\centerline{
\xymatrix{
\Sigma X \ar[r]^{\overline{f}} \ar[d] & Y \ar[d] \\
\Box^1 \ar[r] & \Box^0 \\
}
}

in which $\overline{f}$ maps the basepoints $0 \mapsto y_0, 1 \mapsto y_1$. In other words, cubical set maps $X \to \Map_{Y}(y_0,y_1)$ are in natural bijection with bi-pointed cubical set maps $(\Sigma X, 0, 1) \to (Y,y_0,y_1)$.
\end{proof}

By using $\ihom_{R}$ rather than $\ihom_{L}$ in the pullback diagram of \cref{cubical-mapping-space}, we obtain the \emph{left mapping space} functor $\Map^L \colon \bd \Box^1 \downarrow \cSet \to \cSet$.

\centerline{
\xymatrix{
\Map^L_{X}(x_0,x_1) \drpullback \ar[d] \ar[rr] && \ihom_{R}(\Box^1,X) \ar[d] \\
\Box^0 \ar[rr]^-{(x_0,x_1)} && \ihom_{R}(\bd \Box^1, X) \\
}
}

This functor admits the following explicit description:

\[ \Map^{L}_X(x_0, x_1)_n = \left\{  \Box^{n+1} \overset{s}{\to} X \ \big| \ s \partial_{1, \varepsilon} = x_\varepsilon  \right\}\text{,} \]

Once again, cubical operations are given by those of $X$. In this case, that means that each face map $\bd_{i,\varepsilon}$ of $\Map^L_{X}(x_0,x_1)$ is induced by the face map $\bd_{1+i,\varepsilon}$ of $X$, and similarly for degeneracies and connections. 

$\Map^L$ also has a left adjoint, the \emph{left suspension} $\Sigma_L \colon \cSet \to \bd \Box^1 \downarrow \cSet$, with $\Sigma_{L} X$ defined as a quotient of $\Box^1 \otimes X$. Where the potential for confusion may arise, we will refer to $\Map$ and $\Sigma$ as the \emph{right mapping space} and \emph{right suspension}, denoting them by $\Map^{R}$ and $\Sigma_{R}$.

\begin{lemma}\label{Map-involutions}
The left and right mapping space constructions are related by the following formulas:

\begin{itemize}
\item $\Map^{L}_{X}(x_0,x_1)^\rmco \cong \Map^{R}_{X^\rmco}(x_0,x_1)$, $\Map^{R}_{X}(x_0,x_1)^\rmco \cong \Map^{L}_{X^\rmco}(x_0,x_1)$;
\item $\Map^{L}_{X}(x_0,x_1)^\coop \cong \Map^{L}_{X^\coop}(x_1,x_0)$, $\Map^{R}_{X}(x_0,x_1)^\coop \cong \Map^{R}_{X^\coop}(x_1,x_0)$;
\item $\Map^{L}_{X}(x_0,x_1)^\op \cong \Map^{R}_{X^\op}(x_1,x_0)$, $\Map^{R}_{X}(x_0,x_1)^\op \cong \Map^{L}_{X^\op}(x_1,x_0)$;
\end{itemize}
\end{lemma}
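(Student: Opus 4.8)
The plan is to establish each of the six isomorphisms by tracing the effect of the relevant involution through the pullback square of \cref{cubical-mapping-space} (for the right mapping space) or its analogue for $\Map^L$. Since all three involutions $(-)^\rmco, (-)^\coop, (-)^\op$ preserve colimits and finite limits (they are autoequivalences of $\cSet$ by \cref{op-co-involution}), applying any of them to a pullback square yields a pullback square; so it suffices to identify how each involution transforms the three objects appearing in the defining square, namely $\Box^0$, $\ihom_R(\Box^1, X)$, $\ihom_R(\bd\Box^1, X)$ (resp.\ with $\ihom_L$), together with the pointing map $(x_0, x_1)$.

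The key inputs are: first, $(\Box^0)^\rmco = (\Box^0)^\coop = \Box^0$, and $(\Box^1)^\rmco = (\Box^1)^\coop = \Box^1$; second, $(\bd\Box^1)^W$ is again $\bd\Box^1$ as a cubical set, but the two vertices $0,1$ are swapped by $(-)^\coop$ (since $\bd_{1,\varepsilon}^\coop = \bd_{1,1-\varepsilon}$) and by $(-)^\op$, while they are fixed by $(-)^\rmco$; third, and most importantly, the interaction of the involutions with the internal homs recorded in \cref{op-co-hom}, e.g.\ $\ihom_R(\Box^1, X)^\rmco \cong \ihom_L((\Box^1)^\rmco, X^\rmco) = \ihom_L(\Box^1, X^\rmco)$, and likewise for $(-)^\coop$ (which preserves $L$ vs.\ $R$) and $(-)^\op$ (which swaps them). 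Combining these, applying $(-)^\rmco$ to the right-mapping-space pullback square gives a pullback square with corners $\Box^0$, $\ihom_L(\Box^1, X^\rmco)$, $\ihom_L(\bd\Box^1, X^\rmco)$ and pointing map $(x_0, x_1)$ — which is exactly the defining square for $\Map^L_{X^\rmco}(x_0, x_1)$, since the vertices are not permuted. This yields $\Map^R_X(x_0,x_1)^\rmco \cong \Map^L_{X^\rmco}(x_0,x_1)$; the companion statement $\Map^L_X(x_0,x_1)^\rmco \cong \Map^R_{X^\rmco}(x_0,x_1)$ follows by the same argument with $L$ and $R$ interchanged, or by applying $(-)^\rmco$ again and using that it is an involution. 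For $(-)^\coop$, the same recipe applies, but now $\ihom$ keeps its handedness while the vertex-swap on $\bd\Box^1$ forces the exchange $(x_0,x_1) \leftrightarrow (x_1, x_0)$, producing $\Map^L_X(x_0,x_1)^\coop \cong \Map^L_{X^\coop}(x_1,x_0)$ and its right-handed analogue. The $(-)^\op$ statements then follow by composing, since $(-)^\op = (-)^\rmco \circ (-)^\coop$ and these two involutions commute (as noted after their definition): one handedness swap from $(-)^\rmco$ and one vertex swap from $(-)^\coop$, giving $\Map^L_X(x_0,x_1)^\op \cong \Map^R_{X^\op}(x_1, x_0)$, etc.

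As a bookkeeping aid one can alternatively dualize the concrete descriptions $\Map^R_X(x_0,x_1)_n = \{ s\colon \Box^{n+1} \to X \mid s\partial_{n+1,\varepsilon} = x_\varepsilon\}$ and $\Map^L_X(x_0,x_1)_n = \{ s\colon \Box^{n+1}\to X \mid s\partial_{1,\varepsilon} = x_\varepsilon\}$: under $(-)^\rmco$ the operator $\partial_{n+1,\varepsilon}$ on $\Box^{n+1}$ becomes $\partial_{1,\varepsilon}$, swapping the two descriptions without touching $\varepsilon$; under $(-)^\coop$ the index $i$ of $\partial_{i,\varepsilon}$ is preserved but $\varepsilon$ is flipped, which exchanges the roles of $x_0$ and $x_1$; and under $(-)^\op$ both effects occur. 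One must check that the induced cubical structure maps on the dualized mapping space agree with those on the claimed target — this is the routine but slightly fiddly part, amounting to re-deriving the reindexing $\partial_{i,\varepsilon} \mapsto \partial_{1+i,\varepsilon}$ (for $\Map^L$) or the shift keeping the last coordinate (for $\Map^R$) through the relevant involution identities on $\Box$. I expect no genuine obstacle here; the only thing to be careful about is consistently tracking which involution preserves and which reverses handedness of $\ihom$, and which one permutes the basepoints, so that the six formulas come out with the stated pairings. Given \cref{op-co-hom}, the argument is essentially formal.
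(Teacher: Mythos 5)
Your proposal is correct and takes essentially the same approach as the paper: apply the involutions $(-)^\rmco$, $(-)^\coop$, $(-)^\op$ to the defining pullback squares for $\Map^L$ and $\Map^R$ and invoke \cref{op-co-hom} to identify the resulting internal-hom corners. The one small slip --- in the $(-)^\rmco$ step, after dualizing the $\Map^R$ square (which uses $\ihom_L$), the corners should read $\ihom_R(\Box^1, X^\rmco)$ and $\ihom_R(\bd \Box^1, X^\rmco)$, not $\ihom_L$ --- does not affect the conclusion, since those are precisely the corners of the $\Map^L_{X^\rmco}$ square.
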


\begin{proof}
This follows from applying the involutions $(-)^\rmco, (-)^\coop$, and $(-)^\op$ to the pullback diagrams defining $\Map^L$ and $\Map^R$, and applying \cref{op-co-hom}.
\end{proof}

From here on, we will work exclusively with right mapping spaces unless otherwise noted, omitting the superscript $R$, with the understanding that our results may be adapted to left mapping spaces using the formulas of \cref{Map-involutions}.

\begin{proposition}\label{Sigma-Map-Quillen-adj}
$\Sigma \adjoint \Map$ is a Quillen adjunction between the Grothendieck model structure on $\cSet$ and the cubical Joyal model structure on $\bd \Box^1 \downarrow \cSet$.
\end{proposition}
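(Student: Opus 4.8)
The plan is to verify that $\Sigma$ is left Quillen by checking that it preserves cofibrations and sends generating trivial cofibrations (equivalently, a convenient pseudo-generating set of anodyne maps) to trivial cofibrations in the slice category $\bd \Box^1 \downarrow \cSet$, where the Grothendieck model structure on $\cSet$ has generating cofibrations $\bd \Box^n \to \Box^n$ and the anodyne maps are generated by the open box inclusions $\sqcap^n_{i,\varepsilon} \to \Box^n$. Since $\Sigma$ is a left adjoint by \cref{Sigma-Map-adjoint}, it preserves colimits, and the model structure on the slice $\bd\Box^1 \downarrow \cSet$ is created by the forgetful functor to $\cSet$; thus it suffices to check the relevant statements after forgetting basepoints, where the cofibrations and anodyne maps are detected in $\cSet$ with the cubical Joyal model structure (and every Grothendieck-anodyne map is also cubical-Joyal-anodyne).

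First I would observe that the underlying cubical set of $\Sigma X$ is obtained from $X \otimes \Box^1$ by collapsing $X \otimes \{0\}$ and $X \otimes \{1\}$ to points $0$ and $1$; equivalently, the pushout square in \cref{Sigma-def} exhibits $\Sigma X$ as $(X \otimes \Box^1) \cup_{X \sqcup X} \bd\Box^1$. The key structural fact is that for a monomorphism $i \colon A \to B$ in $\cSet$, the induced map $\Sigma i \colon \Sigma A \to \Sigma B$ fits into a pushout square whose other leg is $(i \otimes \Box^1) \cup_{A \sqcup A} (B \sqcup B) \to B \otimes \Box^1$ glued along $\bd\Box^1$; more precisely, $\Sigma i$ is the pushout of the map $i \hatotimes (\bd\Box^1 \to \Box^1)$ — the pushout-product of $i$ with the boundary inclusion of the interval — along the quotient collapsing the two copies of $B \otimes \bd\Box^1$-pieces appropriately. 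I would make this precise by a direct diagram chase: writing $\Sigma i$ as the comparison map between the two pushouts defining $\Sigma A$ and $\Sigma B$, and factoring through the intermediate object $(B \otimes \Box^1) \cup_{A \otimes \Box^1}\Sigma A$, one sees $\Sigma i$ is a pushout of $(A \otimes \Box^1) \cup_{A\sqcup A}(B \sqcup B) \to B \otimes \Box^1$, which by \cref{open-box-pop}(1)-type bookkeeping and \cref{cof-tcof-pop-unmarked} is exactly $i \hatotimes (\bd\Box^1 \to \Box^1)$ up to the identifications on $\bd\Box^1$. Then \cref{cof-tcof-pop-unmarked} (for the cofibration case) gives that $\Sigma i$ is a cofibration.

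For the trivial-cofibration case, I would take $i$ to be an open box inclusion $\sqcap^n_{i,\varepsilon} \to \Box^n$ (a trivial cofibration in the Grothendieck structure) — or more safely, any Grothendieck-anodyne generator — and use the same pushout description: $\Sigma i$ is a pushout of $i \hatotimes (\bd\Box^1 \to \Box^1)$. Now $i \hatotimes (\bd\Box^1 \to \Box^1)$ is a trivial cofibration in the cubical Joyal model structure by \cref{cof-tcof-pop-unmarked}, since $i$ is a (cubical-Joyal-)trivial cofibration — indeed any open box inclusion is Grothendieck-anodyne hence cubical-Joyal-anodyne, hence a trivial cofibration there, and we should note we only need the conclusion in the Grothendieck structure on the target, which is a further localization so trivial cofibrations there are preserved. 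Since the model structure on $\bd\Box^1 \downarrow \cSet$ is the slice model structure, cofibrations and trivial cofibrations are detected by the forgetful functor to $\cSet$, so $\Sigma i$ is a trivial cofibration in $\bd\Box^1 \downarrow \cSet$.

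The main obstacle I anticipate is getting the pushout-product identification exactly right in the presence of the collapsing to two distinct basepoints: one must check that the gluing along $X \sqcup X$ into $\bd\Box^1$ rather than keeping $X \otimes \bd\Box^1$ does not spoil the pushout-of-pushout-product structure, and in particular that no unwanted identifications occur that would prevent $\Sigma i$ from being a monomorphism. This is handled by the explicit description of the geometric product in \cref{geometric-product-description} together with \cref{mono-characterization}: the nondegenerate cubes of $\Sigma X$ of positive dimension correspond bijectively to nondegenerate cubes of $X$ of dimension one less, so monomorphy of $\Sigma i$ reduces to that of $i$. With that lemma in hand, the rest is the bookkeeping sketched above. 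Finally, I would invoke \cref{Quillen-adj-fib-obs} in the form: a left adjoint preserving cofibrations and sending a set of generating trivial cofibrations to trivial cofibrations is left Quillen — but since here we have directly exhibited preservation of cofibrations and of (generating) trivial cofibrations, it is cleaner to simply invoke the standard criterion that an adjunction between cofibrantly generated model categories is Quillen if the left adjoint preserves generating cofibrations and generating trivial cofibrations, which is exactly what we have shown. \qed
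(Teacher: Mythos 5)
Your identification of $\Sigma i$ as a pushout of $i \hatotimes (\bd\Box^1 \to \Box^1)$ is correct and is essentially equivalent to the paper's observation that $\Sigma\sqcap^n_{j,\varepsilon} \to \Sigma\Box^n$ is a quotient of the open box inclusion $\sqcap^{n+1}_{j,\varepsilon} \hookrightarrow \Box^{n+1}$. The preservation-of-cofibrations half of your argument is fine. But the trivial-cofibration half contains a genuine error: the claim that ``any open box inclusion is Grothendieck-anodyne hence cubical-Joyal-anodyne'' has the implication reversed. By \cref{straightforward-properties}, the Grothendieck model structure is a \emph{localization} of the cubical Joyal model structure, so cubical-Joyal trivial cofibrations form a subclass of Grothendieck trivial cofibrations, not the other way around. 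An outer open box inclusion such as $\sqcap^2_{1,0} \to \Box^2$ is Grothendieck-anodyne but is \emph{not} cubical-Joyal anodyne. Consequently $i \hatotimes (\bd\Box^1 \to \Box^1)$ — which, by \cref{open-box-pop}, is itself just the open box inclusion $\sqcap^{n+1}_{j,\varepsilon} \hookrightarrow \Box^{n+1}$ — need not be a cubical-Joyal trivial cofibration, and \cref{cof-tcof-pop-unmarked} does not apply. Your parenthetical about ``only need[ing] the conclusion in the Grothendieck structure on the target'' is also confused: the target of $\Sigma$ is $\bd\Box^1\downarrow\cSet$ with the cubical Joyal model structure, so that is where triviality must be established.

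What your argument is missing — and what makes the proposition nonobvious, since $\Sigma$ must upgrade \emph{all} open box inclusions (not just inner ones) to cubical-Joyal trivial cofibrations — is the specific effect of the collapsing in the suspension. The faces $\bd_{n+1,0}$ and $\bd_{n+1,1}$ of $\Box^{n+1}$ are quotiented to vertices in $\Sigma\Box^n$, and for $j \leq n$ the critical edge of $\Box^{n+1}$ with respect to $\bd_{j,\varepsilon}$ has its $(n+1)$-st coordinate constantly $1-\varepsilon$, hence lies in the collapsed face $\bd_{n+1,1-\varepsilon}$. Its image in $\Sigma\Box^n$ is therefore a degeneracy of a vertex. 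This is exactly what makes $\Sigma\sqcap^n_{j,\varepsilon} \to \Sigma\Box^n$ a pushout of an inner open box inclusion $\widehat\sqcap^{n+1}_{j,\varepsilon} \to \widehat\Box^{n+1}_{j,\varepsilon}$, hence cubical-Joyal anodyne. Without tracking the critical edge through the quotient, the pushout-product bookkeeping alone cannot close the argument.
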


\begin{proof}
That $\Sigma$ preserves cofibrations follows from the description of the geometric product in \cref{geometric-product-description}. To show that $\Sigma$ preserves trivial cofibrations, it suffices to show that $\Sigma$ sends all open box inclusions to trivial cofibrations in the cubical Joyal model structure. To see this, observe that $\Sigma \Box^n$ is the quotient of $\Box^{n+1}$ in which the faces $\bd_{n+1,0}, \bd_{n+1,1}$ are quotiented down to vertices, while $\Sigma \sqcap^{n}_{i,\varepsilon}$ is the corresponding quotient of $\sqcap^{n+1}_{i,\varepsilon}$. For $i \leq n, \varepsilon \in \{0,1\}$, the critical edge of $\Box^{n+1}$ with respect to the face $\bd_{i,\varepsilon}$ is an edge of the face $\bd_{n+1,1-\varepsilon}$, hence its image in $\Sigma \Box^{n+1}$ is degenerate. Thus the inclusion $\Sigma \sqcap^{n}_{i,\varepsilon} \to \Sigma \Box^{n}$ is a trivial cofibration. 
\end{proof}

\begin{corollary} \label{Map-right-Quillen}
If $f \colon X \to Y$ is a (trivial) fibration in the cubical Joyal model structure, then each induced map $f_{\ast} \colon \Map_{X}(x_0,x_1) \to \Map_{Y}(fx_0,fx_1)$ is a (trivial) fibration in the Grothendieck model structure.

In particular, if $X$ is a cubical quasicategory then all mapping spaces $\Map_{X}(x_0,x_1)$ are cubical Kan complexes. \qed
\end{corollary}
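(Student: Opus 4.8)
The plan is to deduce \cref{Map-right-Quillen} as a direct consequence of the Quillen adjunction $\Sigma \dashv \Map$ established in \cref{Sigma-Map-Quillen-adj}, using the standard fact that a right Quillen functor preserves fibrations and trivial fibrations. First I would recall that a right adjoint in a Quillen adjunction sends fibrations to fibrations and trivial fibrations to trivial fibrations; this is a formal consequence of the defining lifting properties, and it is the content we need. Applying this with the Quillen adjunction $\Sigma \colon \cSet \rightleftarrows \bd\Box^1 \downarrow \cSet : \Map$ of \cref{Sigma-Map-Quillen-adj} — where $\cSet$ carries the Grothendieck model structure and $\bd\Box^1 \downarrow \cSet$ the (slice of the) cubical Joyal model structure — immediately yields that $\Map$ preserves (trivial) fibrations.

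The one subtlety to address is the passage from a fibration $f \colon X \to Y$ in $\cSet$ (cubical Joyal) to a morphism in the slice category $\bd\Box^1 \downarrow \cSet$ to which $\Map$ can be applied. Given $x_0, x_1 \colon \Box^0 \to X$, the pair $(fx_0, fx_1)$ equips $Y$ with a bi-pointing, and $f$ becomes a morphism $(X, x_0, x_1) \to (Y, fx_0, fx_1)$ in $\bd\Box^1 \downarrow \cSet$. Since fibrations and trivial fibrations in a slice model structure $\catC \downarrow A$ are created by the forgetful functor to $\catC$, $f$ is a (trivial) fibration in $\bd\Box^1 \downarrow \cSet$ whenever it is one in $\cSet$. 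Then $\Map$ sends it to the (trivial) fibration $f_\ast \colon \Map_X(x_0, x_1) \to \Map_Y(fx_0, fx_1)$ in $\cSet$ with the Grothendieck model structure, which is precisely the first claim.

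For the final assertion, apply the first part with $Y = \Box^0$ and $f$ the unique map $X \to \Box^0$. If $X$ is a cubical quasicategory, then $X$ is fibrant in the cubical Joyal model structure by \cref{fibrant-objects-unmarked}, so $X \to \Box^0$ is a fibration; hence $\Map_X(x_0, x_1) \to \Map_{\Box^0}(\ast, \ast) = \Box^0$ is a fibration in the Grothendieck model structure, i.e.\ $\Map_X(x_0, x_1)$ is a cubical Kan complex. I do not anticipate a genuine obstacle here: the proof is entirely formal once \cref{Sigma-Map-Quillen-adj} is in hand. The only point requiring a moment's care is spelling out that fibrations in the slice $\bd\Box^1 \downarrow \cSet$ are detected on underlying objects — but this is standard for slices of model categories and needs no more than a sentence.

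\begin{proof}
A right Quillen functor preserves fibrations and trivial fibrations. Given a (trivial) fibration $f \colon X \to Y$ in the cubical Joyal model structure and vertices $x_0, x_1 \colon \Box^0 \to X$, regard $Y$ as bi-pointed by $(fx_0, fx_1)$; then $f$ defines a morphism $(X, x_0, x_1) \to (Y, fx_0, fx_1)$ in $\bd \Box^1 \downarrow \cSet$. Since fibrations and trivial fibrations in this slice model structure are created by the forgetful functor to $\cSet$, this morphism is a (trivial) fibration in $\bd \Box^1 \downarrow \cSet$. By \cref{Sigma-Map-Quillen-adj}, $\Map$ is right Quillen from the cubical Joyal model structure on $\bd \Box^1 \downarrow \cSet$ to the Grothendieck model structure on $\cSet$, so $f_\ast \colon \Map_X(x_0, x_1) \to \Map_Y(fx_0, fx_1)$ is a (trivial) fibration in the Grothendieck model structure.

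For the last statement, let $X$ be a cubical quasicategory. By \cref{fibrant-objects-unmarked}, $X$ is fibrant in the cubical Joyal model structure, so $X \to \Box^0$ is a fibration. Applying the above with $Y = \Box^0$, the map $\Map_X(x_0, x_1) \to \Box^0$ is a fibration in the Grothendieck model structure; that is, $\Map_X(x_0, x_1)$ is a cubical Kan complex.
\end{proof}
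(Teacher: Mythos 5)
Your proof is correct and takes the same route the paper intends: the corollary is stated with a \qed because it follows immediately from \cref{Sigma-Map-Quillen-adj} by the fact that right Quillen functors preserve (trivial) fibrations, which is exactly your argument. Your explicit remark that (trivial) fibrations in the slice $\bd\Box^1 \downarrow \cSet$ are created by the forgetful functor is the right thing to note and is indeed standard.
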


We can characterize categorical equivalences in terms of these mapping spaces and the homotopy categories defined in Section \ref{sec:marked-cubical-quasicat-theory}.

\begin{definition}
Let $X$ be a cubical quasicategory. We define the \emph{homotopy category} $\Ho X$ to be the homotopy category of the marked cubical quasicategory $X^{\natural}$.
\end{definition}

\begin{lemma}\label{Ho-involutions}
For a cubical quasicategory $X$, we have the following natural isomorphisms:

\begin{itemize}
\item $\Ho \, X^\rmco \cong \Ho \, X$;
\item $\Ho \, X^\coop \cong (\Ho \, X)^\op$;
\item $\Ho \, X^\op \cong (\Ho \, X)^\op$. \qed
\end{itemize}
\end{lemma}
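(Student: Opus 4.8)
The plan is to derive all three isomorphisms from the corresponding isomorphisms for the homotopy categories of marked cubical quasicategories, together with the description of $\Ho X$ as $\Ho X^\natural$. First I would establish the key compatibility: for a cubical quasicategory $X$, the involutions interact with the natural marking in the expected way, i.e., $(X^\rmco)^\natural \cong (X^\natural)^\rmco$, and similarly for $(-)^\coop$ and $(-)^\op$. This is because an edge $x \colon \Box^1 \to X$ is an equivalence (factors through $\Box^1 \to K$) if and only if its image under the relevant involution is an equivalence; indeed $K^\rmco$, $K^\coop$, and $K^\op$ are all isomorphic to $K$ (one checks this directly from the description of $K$ in \cref{gen-anodyne-def}, using that $(-)^\coop$ flips the direction of $1$-cubes and $(-)^\rmco$ reverses the order of coordinates, neither of which changes the abstract shape $K$), so the classes of marked edges match up under the bijections on cubes induced by the involutions. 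Hence $\Ho X^\rmco = \Ho(X^\rmco)^\natural \cong \Ho(X^\natural)^\rmco$, and likewise for the other two.

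Next I would invoke the analogous statement at the level of marked cubical quasicategories. The functor $\Ho$ on marked cubical quasicategories was defined in \cref{sec:marked-cubical-quasicat-theory} in terms of $0$-cubes, equivalence classes of edges under $\sim_X$, and open-box composition. Since $(-)^\rmco$ acts as the identity on objects and merely reindexes cubical operators, it identifies the $0$-cubes of $Y$ with those of $Y^\rmco$ and, by inspecting the $2$-cube witnessing $\sim_Y$, identifies $Y_1(x,y)/\!\sim_Y$ with $Y^\rmco_1(x,y)/\!\sim_{Y^\rmco}$; composition is preserved because the defining open box for $gf$ is taken by $(-)^\rmco$ to the defining open box for the same composite (the critical edge being degenerate is an involution-invariant condition). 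This gives $\Ho Y^\rmco \cong \Ho Y$ naturally. For $(-)^\coop$, the direction-reversal on $1$-cubes means a $2$-cube exhibiting $gf = qp$ becomes one exhibiting a composite in the opposite order, so the homotopy-category functor composed with $(-)^\coop$ lands in the opposite category: $\Ho Y^\coop \cong (\Ho Y)^\op$. The case $(-)^\op = (-)^\rmco \circ (-)^\coop$ follows by composing the previous two: $\Ho Y^\op \cong \Ho(Y^\coop)^\rmco \cong \Ho Y^\coop \cong (\Ho Y)^\op$.

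Combining the two inputs yields the result: $\Ho X^\rmco \cong \Ho(X^\natural)^\rmco \cong \Ho X^\natural = \Ho X$; $\Ho X^\coop \cong \Ho(X^\natural)^\coop \cong (\Ho X^\natural)^\op = (\Ho X)^\op$; $\Ho X^\op \cong \Ho(X^\natural)^\op \cong (\Ho X^\natural)^\op = (\Ho X)^\op$. Naturality in $X$ is automatic since every isomorphism constructed above is induced by the bijections on cubes coming from the involutions, which are natural.

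The main obstacle I anticipate is not conceptual but bookkeeping: one must carefully verify that $(-)^\coop$ genuinely transposes the roles of source and target on $1$-cubes and that this is exactly what turns composition into opposite-composition (as opposed to, say, fixing the direction but swapping the two $2$-cube faces $\partial_{1,0}, \partial_{1,1}$), so that the opposite-category appears on the correct two of the three formulas. Checking $K^\rmco \cong K^\coop \cong K^\op \cong K$ explicitly, and tracking the action of the involutions on the generating $2$-cube of $K$, is the one place where an error could silently flip a $\Ho$ to a $\Ho(-)^\op$; everything else is a direct transport-of-structure argument along the bijections of \cref{op-co-involution}.
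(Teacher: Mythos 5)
Your overall route---reducing to $\Ho X^\natural$ via the natural marking, verifying that $(-)^\natural$ commutes with the involutions, and then analyzing $\Ho$ at the marked level---is the right one, and since the paper states this lemma with no proof, this is exactly the argument one would have to supply. However, the specific claim that $K^\rmco \cong K$ (and hence $K^\op \cong K$) is false. The functor $(-)^\rmco$ transposes $2$-cubes, so it takes the two generating squares of $K$, which in $K$ are glued along a shared $\partial_{1,*}$-face, to squares glued along a shared $\partial_{2,*}$-face. A direct check shows no cubical set isomorphism $K \to K^\rmco$ can exist: any such map is a bijection on the two vertices and the three nondegenerate edges, and in either of the two possible vertex assignments no compatible assignment of the nondegenerate $2$-cubes respects the face data. (Your claim $K^\coop \cong K$ is correct---the isomorphism swaps $0 \leftrightarrow 1$ and exchanges the roles of the two squares---so $\coop$ is genuinely special here.)

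The step you wanted this to justify is nevertheless true, and the correct bridge is \cref{either-dir-comp} applied to $X^\natural$ (which is a marked cubical quasicategory by \cref{natural-marked-cubical-quasicat}). A map $K \to X$ hitting $e$ on the middle edge is precisely a pair of $2$-cubes witnessing a right inverse and a left inverse of $e$ in one orientation, while a map $K^\rmco \to X$ is the same data with the two $2$-cubes transposed; \cref{either-dir-comp} says a $2$-cube with given boundary exists iff its transpose does (both conditions are equivalent to the same equation in $\Ho X^\natural$), so $e$ extends along $\Box^1 \hookrightarrow K$ iff it extends along $\Box^1 \hookrightarrow K^\rmco$. The same observation also plugs the secondary gap in your marked-level argument: $\sim_{Y^\rmco}$ is defined by a $2$-cube with degenerate $(1,*)$-faces, whereas $(-)^\rmco$ takes a witness of $\sim_Y$ to a $2$-cube with degenerate $(2,*)$-faces; that these give the same relation is again an instance of \cref{either-dir-comp}, not a formal consequence of "reindexing."
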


Our next goal is to prove the following:

\begin{theorem}\label{fundamental-theorem}
Let $f \colon X \to Y$ be a map between cubical quasicategories. Then $f$ is a categorical equivalence if and only if the following two conditions are satisfied:

\begin{itemize}
\item $\Ho f \colon \Ho X \to \Ho Y$ is an equivalence of categories;
\item for all pairs of vertices $x_{0}, x_{1} \colon \Box^0 \to X$, the induced map $\Map_{X}(x_{0},x_{1}) \to \Map_{Y}(fx_{0},fx_{1})$ is a homotopy equivalence in the Grothendieck model structure.
\end{itemize}
\end{theorem}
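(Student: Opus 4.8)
The plan is to prove the two directions separately, relying heavily on the marked cubical machinery of \cref{section:marked} together with the Quillen equivalence $T \adjoint U$ of \cref{T-Quillen-equivalence} to import the analogous facts from the simplicial world where convenient. First, suppose $f$ is a categorical equivalence. Then $f^\natural \colon X^\natural \to Y^\natural$ is a homotopy equivalence between marked cubical quasicategories (by \cref{natural-marked-cubical-quasicat} both are fibrant, and $(-)^\natural$ preserves the relevant weak equivalences since it fits into the picture relating $\cSet$ and $\cSet'$; concretely one uses that $X^\flat \to X^\natural$ is a trivial cofibration as in the proof of \cref{unmarked-marked-Quillen-equivalence}). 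Applying the functor $\Ho$ to a homotopy equivalence of marked cubical quasicategories yields an equivalence of categories, using \cref{htpy-equiv-iff-strong} and the $2$-categorical description $\Ho_2 \cSet'$, so $\Ho f$ is an equivalence. For the mapping spaces, factor $f$ as a trivial cofibration followed by a fibration between cubical quasicategories; on the trivial-cofibration part the induced map of mapping spaces is a weak equivalence because $\Sigma \adjoint \Map$ is a Quillen adjunction (\cref{Sigma-Map-Quillen-adj}) — more precisely, one shows $\Map$ is a right Quillen functor $\bd\Box^1 \downarrow \cSet \to \cSet$ so it preserves weak equivalences between fibrant objects by Ken Brown's lemma — and on the fibration part it is a trivial fibration by \cref{Map-right-Quillen} once we know the fibration is a weak equivalence, which follows from \cref{weqIffHeq} and $2$-out-of-$3$.

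For the converse, suppose $\Ho f$ is an equivalence and each $\Map_X(x_0,x_1) \to \Map_Y(fx_0, fx_1)$ is a weak equivalence of cubical Kan complexes. We want to conclude $f$ is a categorical equivalence, i.e. (by \cref{weqIffHeq}) a weak categorical equivalence. The strategy is to reduce to the case of a fibration: using \cref{fibrant-objects-unmarked}, factor $f = p \circ j$ with $j$ a trivial cofibration and $p$ an inner isofibration between cubical quasicategories; since $j$ is already known to be a categorical equivalence, both hypotheses transfer to $p$ (for mapping spaces, $j_*$ is a weak equivalence by the Quillen-adjunction argument above, so $2$-out-of-$3$ gives the statement for $p_*$; for homotopy categories, $\Ho j$ is an equivalence). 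So it suffices to show: an inner isofibration $p \colon X \to Y$ between cubical quasicategories which is essentially surjective and full-and-faithful on homotopy categories and a weak equivalence on all mapping spaces is a trivial fibration. Here "trivial fibration" is the target because $p$ being a fibration (\cref{fibrant-objects-unmarked}) plus a weak categorical equivalence is equivalent to being a trivial fibration.

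To establish that, I would show $p^\natural \colon X^\natural \to Y^\natural$ is a trivial fibration in $\cSet'$, equivalently (by \cref{nfib+weq=trivFib}, since $p^\natural$ is a naive fibration between marked cubical quasicategories by the proof of \cref{fibrant-objects-unmarked}) that $p^\natural$ is a homotopy equivalence, equivalently a weak equivalence. By \cref{we-characterizations} it is enough to show $p$ induces bijections $\pi_0 \ihom(W, X^\natural) \to \pi_0 \ihom(W, Y^\natural)$ for all marked cubical quasicategories $W$ — but this I would instead handle via the standard "path-lifting" argument: using that $\Ho p$ is an equivalence and that all fibers of $p$ have contractible mapping spaces onto their images, show $p$ lifts against $\bd\Box^n \to \Box^n$ for all $n$ and against $\Box^1 \to (\Box^1)^\sharp$ at the marked level. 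The inductive core is: given a sphere $\bd\Box^n \to X$ and a filler $\Box^n \to Y$ of its image, produce a filler in $X$; for $n=0$ this is essential surjectivity of $\Ho p$ (combined with the isofibration property to lift the equivalence), for $n=1$ it is fullness plus isofibration, for $n=2$ it is faithfulness, and for $n \geq 2$ more generally it follows from contractibility of the mapping spaces of the fibers together with \cref{inner-fib-special-open-box}. The main obstacle — and the step that will require the most care — is precisely this last reduction: translating "$p_*$ is a weak equivalence on mapping spaces" plus "$\Ho p$ is an equivalence" into the lifting property against $\bd\Box^n \hookrightarrow \Box^n$ uniformly in $n$, since cubes of dimension $\geq 2$ in $X$ are not literally elements of a single mapping space but must be analyzed via iterated mapping spaces (mapping spaces of mapping spaces, i.e. the hom-spaces of $\ihom(\Box^{n-1} \otimes \text{-}, X)$-type objects) and an induction that keeps track of the boundary data. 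I expect to organize this by an induction on $n$ using the pullback presentation of \cref{cubical-mapping-space} together with \cref{Map-right-Quillen} applied to $p$ itself, pulling back the hypothesis on mapping spaces to successively higher-dimensional "relative" mapping spaces.
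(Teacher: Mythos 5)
The forward direction of your proposal is sound, and your reduction to the case of a fibration $p\colon X \to Y$ in the converse is exactly the paper's first move. Your treatment of vertex-surjectivity (essential surjectivity of $\Ho p$ plus the isofibration property) also matches. The gap is precisely where you flag it: you have not established that the lifting problem against $\bd\Box^{n}\hookrightarrow\Box^{n}$ for general $n$ reduces to the hypothesis on mapping spaces, and the route you sketch -- iterating mapping spaces, passing to ``relative'' mapping spaces of $\ihom(\Box^{n-1}\otimes -, X)$-type objects -- is not the paper's approach and does not obviously close the gap. The fundamental difficulty is that the hypothesis only controls $\Map_X(x_0,x_1)$ for \emph{vertices} $x_0, x_1$; an arbitrary sphere $\bd\Box^{n+1}\to X$ does not land in any such mapping space, and iterated mapping spaces produce objects whose homotopy type is controlled by a \emph{different} kind of lifting data than what you start with.

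The paper instead proves a self-contained combinatorial reduction, \cref{lift-replace-Sigma}, built from \cref{K-boundary-anodyne,K-anodyne,lift-replace-degen-face}. The content of those lemmas is that any boundary lifting problem $\bd\Box^{n+1}\to X$ over $\Box^{n+1}\to Y$ for a fibration between cubical quasicategories can be replaced by an equivalent one in which the $(1,0)$- and $(1,1)$-faces are degenerate on vertices; this is done by exhibiting suitable anodyne subcomplexes of a quotient $K^n$ of $\Box^{n+2}$ and using Reedy/injective fibrancy of $p$ in $\cSet^{\to}$. Once both end faces are degenerate, the boundary inclusion \emph{is} $\Sigma\bd\Box^n\hookrightarrow\Sigma\Box^n$, and by the $\Sigma\dashv\Map$ adjunction (\cref{Sigma-Map-adjoint}) lifting against it is literally the statement that $p_*\colon\Map_X(x_0,x_1)\to\Map_Y(px_0,px_1)$ has the right lifting property against $\bd\Box^n\hookrightarrow\Box^n$. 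That is the step you are missing: without \cref{lift-replace-Sigma} or a substitute combinatorial argument, the claim that the mapping-space hypothesis controls arbitrary boundary fillings is unjustified. Your appeal to \cref{inner-fib-special-open-box} and ``contractibility of the fibers'' does not supply this, because \cref{inner-fib-special-open-box} solves open-box (not boundary) problems and does so only relative to a chosen special open box, not a full sphere. You need a reduction that replaces the sphere by a suspended sphere before the adjunction can bite.
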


The proof of this statement will require several steps. We begin by defining certain quotients of standard cubes which will be used in the proof.

\begin{definition}\label{K-def}
For $n \geq 0$, we define $K^n$ to be the quotient of $\Box^{n+2}$ in which:

\begin{itemize}
\item $\Box^1 \otimes \{0\} \otimes \Box^n$, i.e. the $(2,0)$-face, is degenerate in the first dimension, i.e. the corresponding map  $\Box^{1+n} \to K^{n}$ factors through $\sigma_1$;
\item $\{1\} \otimes \{1\} \otimes \Box^n$ is a degeneracy of a vertex;
\item the edge $\Box^1 \otimes \{1,...,1\}$ is degenerate. 
\end{itemize}

Let $\overline{K}^{n}$ denote the image of $\Box^1 \otimes \bd \Box^{n+1}$ in $\overline{K}^n$.
  For $\varepsilon \in \{0, 1\}$, let $K^n_{\varepsilon}$ denote the image of $\{\varepsilon\} \otimes \Box^{n+1}$ in $K^n$.
  Let $\overline{K}^n_{\varepsilon}$ denote the intersection of $\overline{K}^n$ and $K_{\varepsilon}^n$, i.e. the image in $K^n$ of the boundary of the $(1,\varepsilon)$-face. 
\end{definition}

Note that the inclusion $\overline{K}_{0}^n \hookrightarrow K_0^n$ is isomorphic to $\bd \Box^{n+1} \hookrightarrow \Box^{n+1}$. Similarly, $\overline{K}_{1}^n \hookrightarrow K_1^n$ is isomorphic to the quotient of $\bd \Box^{n+1} \hookrightarrow \Box^{n+1}$ where the $(1,1-\varepsilon)$-face is a degeneracy of a vertex.

\begin{lemma}\label{K-boundary-anodyne}
For all $n \geq 0$, the inclusion $\overline{K}^n_0 \hookrightarrow \overline{K}^n$ is anodyne.
\end{lemma}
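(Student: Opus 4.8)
The plan is to build the inclusion $\overline{K}^n_0 \hookrightarrow \overline{K}^n$ as a transfinite composite of pushouts of inner open box inclusions, by induction on $n$, in a way that runs parallel to the proof of \cref{BEqv}. First I would set up the base case: for small $n$ (say $n = 0$ and $n = 1$) the subcomplexes $\overline{K}^n_0$ and $\overline{K}^n$ are low-dimensional enough that the inclusion can be exhibited explicitly as a finite composite of inner open box fillings, using \cref{ConeEdge}-style observations that the relevant critical edges are degenerate by construction (recall that in $K^n$ the edge $\Box^1 \otimes \{1,\dots,1\}$ is collapsed and the $(2,0)$-face is degenerate in the first direction, which is precisely what forces the critical edges of the open boxes we fill to land in a degenerate part of $K^n$).

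For the inductive step, I would analyze $\overline{K}^n$ face by face. Recall $\overline{K}^n$ is the image of $\Box^1 \otimes \partial\Box^{n+1}$ in $K^n$, so it is assembled from the images of the faces $\partial_{i,\varepsilon}$ of $\Box^{n+2}$ with $i \geq 2$ (the $(1,\varepsilon)$-faces being $K^n_\varepsilon$, which we are adjoining on top of). The subcomplex $\overline{K}^n_0 = K^n_0 \cap \overline{K}^n$ is the boundary of the $(1,0)$-face. The strategy is to adjoin the missing faces $[\partial_{i,\varepsilon}]$ one at a time, in a carefully chosen order (faces $\partial_{i,0}$ for increasing $i$, then the $\partial_{i,1}$'s, mirroring the order used for $B^{m,n,k}$), checking at each stage that the intersection of the new face with what has already been built is, via \cref{FaceIso}/\cref{FaceInt}-type computations for these quotients, a copy of a lower-dimensional object of the same shape — namely $\overline{K}^{n-1}_0 \hookrightarrow \overline{K}^{n-1}$ or an honest inner open box inclusion. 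Then each stage is a pushout of something already known to be a trivial cofibration (either by the induction hypothesis or because it is directly an inner open box filling with degenerate critical edge), so the composite is anodyne. The degeneracies built into $K^n$ — the first-direction degeneracy of the $(2,0)$-face and the collapsed edge along $\{1,\dots,1\}$ — are exactly what guarantee the critical edges of the remaining open box fillings are degenerate, so that those fillings are inner (trivial) cofibrations rather than outer horn fillings.

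The main obstacle I expect is the bookkeeping in identifying the intersections of faces inside the quotient $K^n$: because $K^n$ is a quotient of $\Box^{n+2}$ by three separate degeneracy relations (not a single cone-type collapse as in \cref{ConeDesc}), the combinatorics of which faces get identified, and what the overlap of a given face with the union of previously-adjoined faces looks like, is messier than in \cref{BEqv}. In particular one must verify that after collapsing, the intersection pattern is still ``nice'' enough that each incremental pushout has the required form, and that no unexpected collapse makes a would-be inner open box filling degenerate into an isomorphism or forces an outer horn. I would handle this by writing the cubes of $K^n$ explicitly as equivalence classes of maps $[1]^k \to [1]^{n+2}$ modulo the three relations, exactly as in the proof of \cref{ConeDesc}, and then the face intersection computations become direct (if tedious) checks on representatives. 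Once the decomposition is in place, closure of anodyne maps under pushout and transfinite composition finishes the argument.
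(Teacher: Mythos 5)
Your proposal is in the right spirit—decompose the inclusion face by face and use the built-in degeneracies of $K^n$ to make the critical edges vanish—but it takes a genuinely different structural route from the paper, and there is a subtlety it would hit. The paper does \emph{not} argue by induction on $n$. It decomposes $\overline{K}^n_0 \hookrightarrow \overline{K}^n$ directly into two stages: first one adjoins the image of $\Box^1 \otimes \{1\} \otimes \Box^n$ (the $(2,1)$-face) to obtain an intermediate $E$, showing that $\overline{K}^n_0 \hookrightarrow E$ is a pushout of an open box filling whose critical edges are degenerate by the collapse of $\{1\}\otimes\{1\}\otimes\Box^n$; then one adjoins the remaining faces $(j,0),(j,1)$ for $j\geq 3$ in increasing $j$, and shows each step $E_i \hookrightarrow E_{i+1}$ is a pushout of a map of the form $\widehat{\sqcap}^i_{1,1}\otimes(\bd\Box^1\otimes\Box^{(n+2)-(i+1)}) \hookrightarrow \widehat{\Box}^i_{1,1}\otimes(\bd\Box^1\otimes\Box^{(n+2)-(i+1)})$, i.e., an inner open box inclusion tensored with a cofibration, which is anodyne by the pushout-product property. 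No instance of the lemma in lower dimension is invoked.

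The issue you would face with the inductive route is that the intersections do not actually come out as copies of $\overline{K}^{n-1}_0\hookrightarrow \overline{K}^{n-1}$. The faces $[\partial_{j,1}]$ for $j\geq 3$ of $K^n$ are isomorphic to $K^{n-1}$, but the faces $[\partial_{j,0}]$ for $j\geq 3$ are not: the degenerate edge $\Box^1\otimes\{1,\dots,1\}$ does not lie in the $(j,0)$-face, so the restriction of $K^n$ to that face carries only two of the three quotient relations. Since your proposed ordering does the $(j,0)$-faces first, the inductive hypothesis in the form you state it would not apply to what you need. Your hedge — ``or an honest inner open box inclusion'' — is closer to the truth, but what actually appears are tensor products of inner open box inclusions with cofibrations, and the order in which faces are adjoined needs to be chosen so that the degenerate structure of $K^n$ (the $(2,0)$-face degeneracy and the collapsed $\{1\}\otimes\{1\}\otimes\Box^n$) is available when each critical edge is checked. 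Doing the $(2,1)$-face first, as the paper does, is what makes this work cleanly; adjoining all $(j,0)$-faces before any $(j,1)$-faces is the wrong order because none of the degeneracies forcing inner open boxes are visible yet. So: a plausible plan, but the specific inductive skeleton and face ordering you propose would need to be revised, and the paper's direct two-stage decomposition bypasses the difficulty you correctly anticipated.
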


\begin{proof}
Let $E$ denote the union of $\overline{K}_0^n$ with the image of $\Box^1 \otimes \{1\} \otimes \Box^n$ in $\overline{K}^n$. We first show that the inclusion $\overline{K}_0^n \hookrightarrow E$ is anodyne. To see this, observe that the intersection of $\overline{K}_0^n$ with $E$ is the image in $K^n$ of $\{0\} \otimes \{1\} \otimes \Box^n$. This coincides with the image in $K^n$ of $\bd \Box^1 \otimes \{1\} \otimes \Box^n \cup \Box^1 \otimes \{1\} \otimes \{(1,...,1)\}$. Thus $\overline{K}_0^n \hookrightarrow E$ is a pushout of the image in $K^n$ of the inclusion $\bd \Box^1 \otimes \{1\} \otimes \Box^n \cup \Box^1 \otimes \{1\} \otimes \{(1,...,1)\} \hookrightarrow \Box^1 \otimes \{1\} \otimes \Box^n$. This map can be written as a composite of open box fillings; in $K^n$, the critical edges of each of these open boxes will be degenerate, hence the map will be anodyne.
    
To see that $E \hookrightarrow \overline{K}^n$ is anodyne, observe that $E$ consists of the images in $K^n$ of the boundary of the $(1,0)$-face together with the $(2,\varepsilon)$-faces. For $2 \leq i \leq n+2$, let $E_i$ consist of the images in $K^n$ of the boundary of the $(1,0)$-face together with the $(j,\varepsilon)$-faces for $j \leq i$; thus $E_2 = E$ while $E_{n+1} = \overline{K}^n$. So it suffices to show that each map $E_i \hookrightarrow E_{i+1}$ is anodyne. To see this, observe that for $i \geq 2, \varepsilon \in \{0,1\}$, the intersection of the image in $K^n$ of the $(i+1,\varepsilon)$-face with $E_i$ consists of the images of its $(1,0)$-face and its $(j,\varepsilon')$-faces for $j \leq i$ (this can be seen from the cubical identities). In other words, this intersection is the image in $K^n$ of $\sqcap^{i}_{1,1} \otimes \{\varepsilon\} \otimes \Box^{(n+2)-(i+1)}$. Thus the inclusion $E_i \hookrightarrow E_{i+1}$ is a pushout of $\sqcap^{i}_{1,1} \otimes \bd \Box^1 \otimes \Box^{(n+2)-(i+1)} \hookrightarrow \Box^{i} \otimes \bd \Box^1 \otimes \Box^{(n+2)-(i+1)}$. Moreover, the image in $K^n$ of $\Box^1 \otimes \{0,...,0\} \otimes \bd \Box^1 \otimes \Box^{(n+2)-(i+1)}$ is degenerate in the first dimension. Thus this map is a pushout of the anodyne map $\widehat{\sqcap}^{i}_{1,1} \otimes \bd \Box^1 \otimes \Box^{(n+2)-(i+1)} \hookrightarrow \widehat{\Box}^{i}_{1,1} \otimes \bd \Box^1 \otimes \Box^{(n+2)-(i+1)}$.
\end{proof}

\begin{lemma}\label{K-anodyne}
For $n \geq 0$, the inclusion $K^n_0 \hookrightarrow K^n$ is anodyne.
\end{lemma}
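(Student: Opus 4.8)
The plan is to factor the inclusion $K^n_0 \hookrightarrow K^n$ as a composite of two maps, each of which is shown to be anodyne by realizing it as a pushout of a known anodyne map or as a composite of (inner) open box fillings with degenerate critical edges. The key observation is that $K^n_0$ is (isomorphic to) $\Box^{n+1}$ sitting as the $(1,0)$-face of $K^n$, and that $K^n$ differs from $\Box^{n+2}$ only in that certain faces and edges have been collapsed; crucially, all of the collapsed cells lie ``away from'' the $(1,1)$-face direction in a way that makes the relevant critical edges degenerate in $K^n$.

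\textbf{Key steps.} First I would introduce the intermediate subcomplex $D = K^n_0 \cup \overline{K}^n$, the union of the $(1,0)$-face with the image of $\Box^1 \otimes \bd\Box^{n+1}$. The inclusion $K^n_0 \hookrightarrow D$ is a pushout of $\overline{K}^n_0 \hookrightarrow \overline{K}^n$ along the map $\overline{K}^n_0 \hookrightarrow K^n_0$, since $K^n_0 \cap \overline{K}^n = \overline{K}^n_0$ by construction; hence this inclusion is anodyne by \cref{K-boundary-anodyne}. Second, I would show that $D \hookrightarrow K^n$ is anodyne: the only facet of $K^n$ not yet present in $D$ is the image of the $(1,1)$-face, so this inclusion is an open box filling $\widehat{\sqcap}^{n+2}_{1,1} \hookrightarrow K^n$ — more precisely, one checks using the cubical identities that $D$ contains exactly the $(1,0)$-face and all $(j,\varepsilon)$-faces for $j \geq 2$, i.e. it is the image in $K^n$ of $\sqcap^{n+2}_{1,1}$. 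The critical edge of this open box with respect to the $(1,1)$-face is the edge $\Box^1 \otimes \{1,\dots,1\}$, which is degenerate in $K^n$ by the third clause of \cref{K-def}. Therefore $D \hookrightarrow K^n$ is (a pushout of, if the quotient structure forces further identifications — which here it does not beyond the collapse of that critical edge) an inner open box inclusion, hence anodyne. Composing the two anodyne maps gives the result.

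\textbf{Main obstacle.} The delicate point is verifying that $D$ is precisely the image of $\sqcap^{n+2}_{1,1}$ in $K^n$ — that is, bookkeeping which faces of $\Box^{n+2}$ survive in $\overline{K}^n$ and how they interact with the $(2,0)$-face collapse (degeneracy in the first dimension) and the vertex-collapse of $\{1\}\otimes\{1\}\otimes\Box^n$. One must check that no ``extra'' identifications among the facets of $K^n$ cause $D \hookrightarrow K^n$ to fail to be an honest open box filling, and that the critical edge genuinely becomes degenerate rather than merely landing on a lower face. This is the same flavor of computation as in \cref{K-boundary-anodyne}, carried out one dimension up, and I expect it to be routine but requires care with the indexing. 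Once this identification is in hand, the two-step factorization and appeal to \cref{K-boundary-anodyne} finish the proof immediately.
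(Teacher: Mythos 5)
Your two-step factorization — first $K^n_0 \hookrightarrow K^n_0 \cup_{\overline{K}^n_0} \overline{K}^n$ as a pushout of $\overline{K}^n_0 \hookrightarrow \overline{K}^n$, then a $(1,1)$-open box filling — is exactly the paper's argument. However, you have misidentified the critical edge of the final open box and therefore cited the wrong reason for its degeneracy. The critical edge with respect to $\partial_{1,1}$ has the non-varying coordinates constant at $1-\varepsilon = 0$, so it is $\Box^1 \otimes \{0,\dots,0\}$, not $\Box^1 \otimes \{1,\dots,1\}$ (the latter is the critical edge for the $(1,0)$-face). Its degeneracy in $K^n$ comes not from the third clause of \cref{K-def} but from the first: this edge lies inside the $(2,0)$-face $\Box^1 \otimes \{0\} \otimes \Box^n$, and since that entire face is degenerate in the first dimension, the edge is degenerate. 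So the conclusion you want is still true, but the justification you give for it is wrong; be careful with the sign convention $f_j = \mathrm{const}_{1-\varepsilon}$ in the definition of the critical edge. Everything else, including the identification of $D$ with the image of $\sqcap^{n+2}_{1,1}$, is correct and matches the paper.
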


\begin{proof}
Consider the following diagram:

\centerline{
\xymatrix{
\overline{K}^n_0 \ar[r]^-{\sim} \ar[d] & \overline{K}^n \ar[d] \ar@/^1.5pc/[ddr]\\
K^n_0 \ar[r]^-{\sim} \ar@/_1.5pc/[drr] & K^n_0 \cup_{\overline{K}^n_0} \overline{K}^n \ar[dr] \pushoutcorner \\
&& K^n \\
}
}

The inclusion $K^n_0 \hookrightarrow K^n_0 \cup_{\overline{K}^n_0} \overline{K}^n$ is anodyne as a pushout of an anodyne map. The inclusion $K^n_0 \cup_{\overline{K}^n_0} \overline{K}^n \hookrightarrow K^n$ is a $(1,1)$-open box filling; as the critical edge is the degenerate edge ${\Box^1 \otimes \{0,...,0\}}$, this is an inner open box filling.
\end{proof}

\begin{lemma}\label{lift-replace-degen-face}
Let $X \to Y$ be a fibration between cubical quasicategories. Let $x : \Box^n \to X$, for $n \geq 0$, and $\varepsilon \in \{0, 1\}$. Consider all diagrams of the form:

\centerline{
\xymatrix{
\bd \Box^{n+1} \ar[r] \ar[d] & X \ar[d] \\
\Box^{n+1} \ar[r] & Y \\ 
}
}

for which the $(1,\varepsilon)$-face of the boundary $\Box^{n+1} \to X$ is $x$. A lift exists in every such diagram if and only if a lift exists in every such diagram for which the $(1,1-\varepsilon)$-face of $\bd \Box^{n+1} \to X$ is a degeneracy of a vertex.
\end{lemma}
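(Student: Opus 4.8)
The plan is to carry out the argument for $\varepsilon = 0$ and reduce the case $\varepsilon = 1$ to it by applying the Quillen self-equivalence $(-)^{\coop}$ of \cref{op-Quillen-equiv}: this functor preserves cubical quasicategories and fibrations between them, and since $(\partial_{1,\varepsilon})^{\coop} = \partial_{1,1-\varepsilon}$ it interchanges the roles of the $(1,0)$- and $(1,1)$-faces while preserving degeneracies of vertices, so it converts the $\varepsilon = 0$ statement into the $\varepsilon = 1$ statement. The ``only if'' direction is trivial, as the diagrams in which the $(1,1-\varepsilon)$-face of $\bd\Box^{n+1}\to X$ is a degeneracy of a vertex form a subclass of all the diagrams considered. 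For the ``if'' direction, fix $p\colon X\to Y$, $x\colon\Box^n\to X$, and suppose a lift exists in every diagram whose boundary $\bd\Box^{n+1}\to X$ has $(1,0)$-face $x$ and $(1,1)$-face a degeneracy of a vertex; we must build a lift in an arbitrary such diagram $a\colon\bd\Box^{n+1}\to X$, $b\colon\Box^{n+1}\to Y$ with $a\partial_{1,0} = x$ and $pa = b|_{\bd\Box^{n+1}}$.

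The key structural facts about $K^n$, all read off from its description as a quotient of $\Box^{n+2} = \Box^1\otimes\Box^1\otimes\Box^n$, are: $K^n_0$ is isomorphic to $\Box^{n+1}$ with no further identifications, and under this isomorphism $\overline{K}^n_0 \cong \bd\Box^{n+1}$; $K^n_1$ is isomorphic to the quotient of $\Box^{n+1}$ collapsing its $(1,1)$-face to a vertex, and $\overline{K}^n_1$ is the corresponding quotient of $\bd\Box^{n+1}$; the $(1,0)$-faces of $K^n_0$ and $K^n_1$ become identified in $K^n$, both being the image of the dimension-$1$-degenerate $(2,0)$-face $\Box^1\otimes\{0\}\otimes\Box^n$; and the critical edges of $K^n$ with respect to its $(1,1)$- and $(1,0)$-faces are, respectively, the edges $\Box^1\otimes\{0,\dots,0\}$ and $\Box^1\otimes\{1,\dots,1\}$, both degenerate in $K^n$ (the first lying in the degenerate $(2,0)$-face, the second by the defining collapse of $K^n$). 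Consequently $K^n_0\cup_{\overline{K}^n_0}\overline{K}^n\hookrightarrow K^n$ and $\overline{K}^n\cup_{\overline{K}^n_1}K^n_1\hookrightarrow K^n$ are a $(1,1)$- and a $(1,0)$-inner open box filling of $K^n$, hence anodyne (the former is exactly the step used in the proof of \cref{K-anodyne}).

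Now proceed as follows. Since $X$ is a cubical quasicategory, hence fibrant by \cref{fibrant-objects-unmarked}, and $\overline{K}^n_0\hookrightarrow\overline{K}^n$ is anodyne by \cref{K-boundary-anodyne}, extend $a$, viewed as a map $\overline{K}^n_0\cong\bd\Box^{n+1}\to X$, to $\overline{a}\colon\overline{K}^n\to X$. The maps $b\colon K^n_0\cong\Box^{n+1}\to Y$ and $p\overline{a}\colon\overline{K}^n\to Y$ agree on $\overline{K}^n_0$, so they glue to a map out of $K^n_0\cup_{\overline{K}^n_0}\overline{K}^n$, which includes anodynely into $K^n$; using that $Y$ is fibrant, extend it to $B\colon K^n\to Y$. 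Let $a_1 := \overline{a}|_{\overline{K}^n_1}\colon\overline{K}^n_1\to X$. Because the $(1,0)$-faces of $K^n_0$ and $K^n_1$ are identified in $K^n$, the ``$(1,0)$-face'' of $a_1$ is still $x$, while its ``$(1,1)$-face'' is a degeneracy of a vertex since $\overline{K}^n_1$ is a quotient of $\bd\Box^{n+1}$ collapsing the $(1,1)$-face; moreover $pa_1 = B|_{\overline{K}^n_1}$. Precomposing the lifting square $(a_1, B|_{K^n_1})$ with the quotient $\Box^{n+1}\twoheadrightarrow K^n_1$ produces one of the special diagrams covered by the hypothesis — the downstairs map $\Box^{n+1}\to Y$ automatically has its $(1,1)$-face a degeneracy of a vertex, so it factors through $K^n_1$ — and the resulting lift is constant on the $(1,1)$-face, hence descends to $h_1\colon K^n_1\to X$ with $h_1|_{\overline{K}^n_1} = a_1$ and $ph_1 = B|_{K^n_1}$.

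Finally, $\overline{a}$ and $h_1$ agree on $\overline{K}^n_1 = \overline{K}^n\cap K^n_1$, so they glue to a map out of $\overline{K}^n\cup_{\overline{K}^n_1}K^n_1$, which includes anodynely into $K^n$; this glued map is compatible with $B\colon K^n\to Y$, so lifting against the fibration $p$ yields $h\colon K^n\to X$ over $B$ extending both $\overline{a}$ and $h_1$. Then $h|_{K^n_0}\colon\Box^{n+1}\to X$ restricts on the boundary to $\overline{a}|_{\overline{K}^n_0} = a$ and satisfies $p\,(h|_{K^n_0}) = B|_{K^n_0} = b$, so it is the required lift. The main obstacle is the combinatorial verification of the structural facts about $K^n$ listed in the second paragraph — especially the identification of the $(1,0)$-faces of $K^n_0$ and $K^n_1$ inside $K^n$, which is what allows the pinned face $x$ to be transported between the two lifting problems and is the real content of the lemma. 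Once these facts are in place the remainder is a formal chase with anodyne extensions into fibrant objects and lifts along $p$.
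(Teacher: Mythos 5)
Your proof is correct and runs along essentially the same line as the paper's. Both arguments work inside $K^n$ and hinge on exactly the same three facts: the anodyne map $\overline{K}^n_0 \hookrightarrow \overline{K}^n$ of \cref{K-boundary-anodyne}, the anodyne map $K^n_0 \cup_{\overline{K}^n_0} \overline{K}^n \hookrightarrow K^n$ from \cref{K-anodyne}, and the observation that $K^n_1 \cup_{\overline{K}^n_1} \overline{K}^n \hookrightarrow K^n$ is a $(1,0)$-inner open box filling; both then identify $\overline{K}^n_1 \hookrightarrow K^n_1$ as the one lifting problem where the hypothesis (degenerate $(1,1)$-face, pinned $(1,0)$-face $x$) applies, using the identification of the $(1,0)$-faces of $K^n_0$ and $K^n_1$ inside $K^n$ to transport the pinned face. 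The only real difference is presentational: the paper packages the extension of the original square to $\overline{K}^n \to X$, $K^n \to Y$ in one stroke by observing that $(\overline{K}^n_0 \hookrightarrow K^n_0) \to (\overline{K}^n \hookrightarrow K^n)$ is a trivial cofibration in the injective (Reedy) model structure on $\cSet^{\to}$ and $X \to Y$ is injective fibrant, whereas you build the same data sequentially from the two anodyne inclusions, and then assemble the final lift by gluing $\overline{a}$ with $h_1$ and extending along the anodyne $(1,0)$-open box filling rather than phrasing it as a reduction of lifting problems. Your opening reduction of $\varepsilon=1$ to $\varepsilon=0$ via $(-)^\coop$ is also a correct and slightly more explicit spelling-out of the paper's ``by duality.''
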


\begin{proof}
Fix $x$ and a diagram of the form depicted above; we will obtain a lift in the given diagram under the assumption that a lift exists for all such diagrams in which the $(1,1-\varepsilon)$-face of $\bd \Box^{n+1} \to X$ is a degeneracy of a vertex. By duality, it suffices to consider the case $\varepsilon = 0$. 

By \cref{K-boundary-anodyne,K-anodyne}, we have an injective trivial cofibration from $\overline{K}^n_0 \hookrightarrow K^n_0$ to $\overline{K}^n \hookrightarrow K^n$, regarding these maps as objects in the morphism category $\cSet^{\to}$. (Note that the injective model structure on $\cSet^{\to}$ coincides with the Reedy model structure by \cref{Box-Reedy}.) Furthermore, the map $X \to Y$ is injective fibrant, as a fibration between fibrant objects. Therefore, identifying $\bd \Box^{n+1} \hookrightarrow \Box^{n+1}$ with $\overline{K}^n_0 \hookrightarrow K^n_0$, the given diagram factors as:

\centerline{
\xymatrix{
\overline{K}^n_0 \ar@{^(->}[d] \ar@{^(->}[r] & \overline{K}^n \ar[r] \ar@{^(->}[d] & X \ar[d] \\
K^n_0 \ar@{^(->}[r] & K^n \ar[r] & Y \\
}
}

Thus, to obtain a lift in the original diagram, it suffices to obtain a lift in the right-hand diagram above. For this, observe that the inclusion $K^n_{1} \cup_{\overline{K}^n_{1}} \overline{K}^n \hookrightarrow K^n$ is a $(1,0)$-inner open box filling, whose critical edge is the degenerate edge $\Box^1 \otimes \{1,...,1\}$; thus this map is anodyne. Therefore, it suffices to obtain a lift in the diagram:

\centerline{
\xymatrix{
\overline{K}^n \ar@{^(->}[d] \ar[rr] && X \ar[d] \\
K^n_{1} \cup_{\overline{K}^n_{1}} \overline{K}^n \ar[r] & K^n \ar[r] & Y \\ 
}
}

For this, in turn, it suffices to obtain a lift in the diagram:

\centerline{
\xymatrix{
\overline{K}^n_1 \ar@{^(->}[d] \ar[r] & X \ar[d] \\
K^n_{1} \ar[r] & Y \\ 
}
}

The boundary $\bd \Box^{n+1} \to \overline{K}^n_1 \to X$ has $x$ as its $(1,0)$-face; this follows from the fact that the image in $K^n$ of the $(2,0)$-face is degenerate in the first dimension. Similarly, the $(1,1)$-face of this boundary is a degeneracy of a vertex, as it is precisely the image of $\{1\} \otimes \{1\} \otimes \Box^n$. Thus this diagram admits a lift by assumption.
\end{proof}

\begin{corollary}\label{lift-replace-Sigma}
For $n \geq 0$, a fibration between cubical quasicategories has the right lifting property against $\bd \Box^{n+1} \hookrightarrow \Box^{n+1}$ if and only if it has the right lifting property against $\Sigma \bd \Box^n \hookrightarrow \Sigma \Box^n$.
\end{corollary}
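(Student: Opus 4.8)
The plan is to reduce the lifting property against $\bd \Box^{n+1} \hookrightarrow \Box^{n+1}$ to the lifting property against $\Sigma \bd \Box^n \hookrightarrow \Sigma \Box^n$ in two moves. First I would observe that, by \cref{lift-replace-degen-face} applied with $\varepsilon = 1$, a fibration $f \colon X \to Y$ between cubical quasicategories has the right lifting property against all boundary inclusions $\bd \Box^{n+1} \hookrightarrow \Box^{n+1}$ if and only if it has the right lifting property against those ``constrained'' boundary filling problems in which the $(1,0)$-face of the boundary $\bd \Box^{n+1} \to X$ is a degeneracy of a vertex, say $x_0 \sigma_1 \cdots$ in the appropriate dimension. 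Applying the lemma again, this time with $\varepsilon = 0$ to the constrained problems, one further reduces to the boundary filling problems in which \emph{both} the $(1,0)$- and $(1,1)$-faces of $\bd \Box^{n+1} \to X$ are degeneracies of vertices $x_0$ and $x_1$, respectively; the two successive reductions are compatible since fixing the $(1,0)$-face to be degenerate does not interfere with subsequently constraining the $(1,1)$-face.

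The second move is to identify the class of such doubly-constrained boundary filling problems with lifting problems against $\Sigma \bd \Box^n \hookrightarrow \Sigma \Box^n$. Recall from \cref{Sigma-def} that $\Sigma \Box^n$ is the quotient of $\Box^n \otimes \Box^1 \cong \Box^{n+1}$ obtained by collapsing the two faces $\Box^n \otimes \{0\}$ and $\Box^n \otimes \{1\}$ to the basepoints $0$ and $1$; under the identification $\Box^{n+1} \cong \Box^1 \otimes \Box^n$ (after a suitable reindexing of coordinates, or equivalently using $\Sigma_L$ and then invoking \cref{Map-involutions} / duality), $\Sigma \Box^n$ is exactly the quotient of $\Box^{n+1}$ in which the $(1,0)$- and $(1,1)$-faces are collapsed to vertices, and $\Sigma \bd \Box^n$ is the corresponding quotient of the subcomplex $\sqcap$-complement $\Box^1 \otimes \bd \Box^n \cup \bd \Box^1 \otimes \Box^n$... more precisely of $\Box^1 \otimes \bd \Box^n$ together with the two collapsed end faces, i.e.\ of the boundary $\bd \Box^{n+1}$ itself. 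Thus a map $\Sigma \bd \Box^n \to X$ is precisely a map $\bd \Box^{n+1} \to X$ whose two $(1,\varepsilon)$-faces are degeneracies of the chosen basepoints, and a lift extending it to $\Sigma \Box^n$ is precisely a filler $\Box^{n+1} \to X$ respecting those degeneracies. A lift in the square with $\Sigma \bd \Box^n \hookrightarrow \Sigma \Box^n$ on the left and $f$ on the right therefore corresponds exactly to a lift in the constrained boundary square; combining with the first move gives the equivalence.

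The main obstacle I expect is bookkeeping: getting the coordinate conventions right so that ``$(1,\varepsilon)$-face'' in \cref{lift-replace-degen-face} (which concerns $\bd \Box^{n+1} \hookrightarrow \Box^{n+1}$ with a distinguished first coordinate) matches the last coordinate used in $\Sigma = \Sigma_R$ (which collapses $X \otimes \{\varepsilon\}$). This is a genuine but harmless asymmetry — one either works with $\Sigma_L$ throughout and transports the conclusion via \cref{Map-involutions}, or one notes that the geometric product identity $\Box^1 \otimes \Box^n \cong \Box^{n+1} \cong \Box^n \otimes \Box^1$ together with the explicit description of $\Sigma X$ as a quotient of $X \otimes \Box^1$ from \cref{geometric-product-description} lets one check directly that $\Sigma \Box^n$ (resp.\ $\Sigma \bd \Box^n$) is the stated quotient of $\Box^{n+1}$ (resp.\ $\bd \Box^{n+1}$). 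Once the identification of $\Sigma \bd \Box^n \hookrightarrow \Sigma \Box^n$ with the doubly-collapsed boundary inclusion is pinned down, the corollary follows formally from \cref{lift-replace-degen-face}, applied twice as described, with no further computation.
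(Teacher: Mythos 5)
Your proposal is correct and matches the paper's proof: both establish the equivalence by applying \cref{lift-replace-degen-face} twice (once for each $\varepsilon$) to constrain both $(1,\varepsilon)$-faces of the boundary to degeneracies of vertices, then identify the doubly-constrained filling problems with lifting against $\Sigma\bd\Box^n \hookrightarrow \Sigma\Box^n$. One caution about your phrase ``suitable reindexing of coordinates'': the geometric product on $\cSet$ is genuinely non-symmetric, so there is no relabeling automorphism of $\Box^{n+1}$ exchanging $\Box^n \otimes \Box^1$ with $\Box^1 \otimes \Box^n$; the correct move is your alternative route of comparing with $\Sigma_L$ and transporting along the Quillen self-equivalence $(-)^\rmco$ of \cref{op-Quillen-equiv}, which interchanges $\Sigma_L$ and $\Sigma_R$ while fixing $\bd\Box^{n+1} \hookrightarrow \Box^{n+1}$. (The paper's own proof quietly elides this same indexing point; it also handles the ``only if'' direction by the shorter observation that $\Sigma\bd\Box^n \hookrightarrow \Sigma\Box^n$ is a pushout of $\bd\Box^{n+1} \hookrightarrow \Box^{n+1}$, whereas your double reduction yields both directions at once.)
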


\begin{proof}
The forward implication follows from the fact that $\Sigma \bd \Box^n \hookrightarrow \Sigma \Box^{n+1}$ is a pushout of $\bd \Box^{n+1} \hookrightarrow \Box^{n+1}$. For the reverse implication, observe that $\Sigma \bd \Box^n$ (resp. $\Sigma \Box^n$) is precisely the quotient of $\bd \Box^{n+1}$ (resp. $\Box^{n+1}$) in which the $(1,0)$ and $(1,1)$-faces are degeneracies of vertices. The result then follows from applying \cref{lift-replace-degen-face} twice, once with $\varepsilon = 0$ and once with $\varepsilon = 1$.
\end{proof}

\begin{proof}[Proof of \cref{fundamental-theorem}]
First let $f \colon X \to Y$ be a categorical equivalence between cubical quasicategories. That $\Ho f$ is an equivalence of categories follows from \cref{tau-ho-equiv,straightforward-properties} \ref{tau-nerve-Quillen-adj}. That each map $\Map_{X}(x_{0},x_{1}) \to \Map_{Y}(fx_{0},fx_{1})$ is a homotopy equivalence follows from \cref{Sigma-Map-Quillen-adj}.

Now let $f \colon X \to Y$ be a map between cubical quasicategories inducing an equivalence on homotopy categories and homotopy equivalences on all mapping spaces. We will show that $f$ is a categorical equivalence. By factoring an arbitrary map as a composite of a trivial cofibration with a fibration and applying the implication which we have already proven, we may assume $f$ is a fibration. By \cref{Sigma-Map-Quillen-adj}, this implies that $f$ induces fibrations on all cubical mapping spaces. Thus we wish to show that, given a fibration of cubical quasicategories $f \colon X \to Y$ such that $\Ho f$ is an equivalence of categories and each map $\Map_{X}(x_0,x_1) \to \Map_{Y}(fx_0,fx_1)$ is a trivial fibration, $f$ is a trivial fibration.

 We begin by showing that $f$ has the right lifting property with respect to the map $\varnothing \to \Box^0$ -- in other words, that $f$ is surjective on vertices. To this end, let $y$ be a vertex of $Y$. Then since $\Ho f$ is essentially surjective, there is a vertex $x \colon \Box^0 \to X$ such that $fx \cong y$ in $\Ho Y$. Thus we have a commuting diagram in $\cSet$:
 
\centerline{
\xymatrix{
\Box^0 \ar[d]_{0} \ar[r] & X \ar[d]^{f} \\
K \ar[r] & Y
}
}

Since $f$ is a fibration, this diagram has a lift; the restriction of this lift to the endpoint $1 \colon \Box^0 \to K$ gives a vertex $x' \colon \Box^{0} \to X$ with $fx' = y$.

To complete the proof, we must show that $f$ has the right lifting property with respect to all boundary inclusions $\bd \Box^{n+1} \hookrightarrow \Box^{n+1}$ for $n \geq 0$. By \cref{lift-replace-Sigma}, it suffices to show that $f$ has the right lifting property with respect to all maps $\Sigma \bd \Box^{n} \hookrightarrow \Sigma \Box^{n}$. But by \cref{Sigma-Map-adjoint}, this is equivalent to our assumption that $f$ induces trivial fibrations on all mapping spaces.
\end{proof}

The following result shows that, in verifying the conditions of \cref{fundamental-theorem} for a map $f \colon X \to Y$, it suffices to show that $\Ho \, X$ is essentially surjective and that $f$ induces homotopy equivalences on all mapping spaces.

\begin{proposition}\label{fundamental-theorem-essentially-surjective}
Let $f \colon X \to Y$ be a map between cubical quasicategories. If $f$ induces homotopy equivalences on all mapping spaces, then $\Ho \, X \to \Ho \, Y$ is fully faithful.
\end{proposition}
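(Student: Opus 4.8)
The plan is to show that for any two vertices $x_0, x_1 \colon \Box^0 \to X$, the map $\Ho \, X$ induces a bijection
\[ \Ho X(x_0, x_1) \to \Ho Y(fx_0, fx_1)\text{.} \]
The key observation is that the hom-sets of $\Ho X$ can be recovered from the mapping spaces: by \cref{weqIffHeq} and the definition of $\Ho X$ as $\Ho (X^\natural)$, together with \cref{tau-ho-equiv}, the set $\Ho X(x_0, x_1)$ is the quotient of $X_1(x_0, x_1)$ by the relation $\sim_X$. I would first identify $X_1(x_0, x_1)$ with $\Map_X(x_0, x_1)_0$, the set of $0$-cubes of the mapping space, which is immediate from the explicit description of $\Map_X(x_0, x_1)$ given right after \cref{cubical-mapping-space}. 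Under this identification, the relation $\sim_X$ on edges corresponds to the relation $\sim_0$ (equivalently $\sim$) on vertices of $\Map_X(x_0, x_1)$: a $2$-cube of $X$ of the form witnessing $f \sim_X g$ is exactly a $1$-cube of $\Map_X(x_0,x_1)$ between the corresponding vertices, and since $\Map_X(x_0, x_1)$ is a cubical Kan complex by \cref{Map-right-Quillen}, we may apply \cref{EqRel} (via the natural marking, or directly) to conclude $\sim_0$ and $\sim$ coincide there. Hence $\Ho X(x_0, x_1) \cong \pi_0 \Map_X(x_0, x_1)$, naturally in the pair $(x_0, x_1)$ and with respect to maps of cubical quasicategories.

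With this identification in hand, the statement reduces to the following: if $f_* \colon \Map_X(x_0, x_1) \to \Map_Y(fx_0, fx_1)$ is a homotopy equivalence in the Grothendieck model structure, then it induces a bijection on $\pi_0$. This is a general fact about the Grothendieck model structure: $\pi_0$ is (up to natural isomorphism) the composite of the localization functor to the homotopy category with the set-of-connected-components functor, and homotopy equivalences between fibrant objects become isomorphisms in the homotopy category; alternatively, one can note directly that a homotopy equivalence of cubical sets induces a bijection on $\pi_0$ since $\pi_0$ sends the cylinder $\Box^1 \otimes -$ to an isomorphism (the two endpoint inclusions $\Box^0 \to \Box^1$ become equal after $\pi_0$). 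I would cite the analogue of \cref{we-characterizations} or simply argue via the cylinder functor $\Box^1 \otimes -$ on $\cSet$ that $\pi_0$ inverts homotopy equivalences.

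Putting these together: for all $x_0, x_1$ the map $\Ho X(x_0, x_1) \to \Ho Y(fx_0, fx_1)$ is identified with $\pi_0(f_*) \colon \pi_0 \Map_X(x_0,x_1) \to \pi_0 \Map_Y(fx_0, fx_1)$, which is a bijection by hypothesis. This is precisely the statement that $\Ho f$ is fully faithful.

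I expect the only real subtlety to be the careful bookkeeping in the identification $\Ho X(x_0, x_1) \cong \pi_0 \Map_X(x_0, x_1)$ — specifically checking that the relation $\sim_X$ on $1$-cubes of $X$ (defined by a $2$-cube with a prescribed pair of degenerate faces) matches the connected-components relation on $0$-cubes of $\Map_X(x_0, x_1)$ (whose $1$-cubes are $2$-cubes of $X$ with a \emph{different} prescribed pair of degenerate faces). Both descriptions give "$2$-cube of $X$ with two parallel degenerate boundary edges", but one must confirm the two conventions agree up to the symmetry/transitivity already available in a cubical quasicategory; this is where \cref{EqRel} applied to $\Map_X(x_0,x_1)$, or an explicit $3$-cube argument as in Section~\ref{sec:marked-cubical-quasicat-theory}, does the work. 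Everything else is formal.
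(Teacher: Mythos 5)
Your proof is correct, but it takes a genuinely different route from the paper's. The paper proves the statement by first factoring $f$ as a trivial cofibration followed by a fibration, then invoking \cref{fundamental-theorem} for the trivial-cofibration part, reducing to the case where $f$ is a fibration; by \cref{Sigma-Map-Quillen-adj} the induced maps on mapping spaces are then trivial fibrations, and the paper argues fullness directly from surjectivity on vertices of $\Map_X(x_0,x_1) \to \Map_Y(fx_0,fx_1)$ and faithfulness by lifting a $1$-cube of $\Map_Y$ (witnessing $fp \sim_Y fq$, obtained via \cref{either-dir-comp}) across the trivial fibration. Your approach instead establishes a natural identification $\Ho X(x_0,x_1) \cong \pi_0\Map_X(x_0,x_1)$ and then reduces the statement to the general fact that a homotopy equivalence between cubical Kan complexes induces a bijection on $\pi_0$. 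This is more conceptual and avoids both the factorization step and the explicit lifting arguments; it makes the proposition a corollary of the mapping-space-level invariance rather than a hands-on diagram chase. The one point you flag — that $\sim_X$ uses $2$-cubes with degenerate $(1,\varepsilon)$-faces while $1$-cubes of $\Map_X(x_0,x_1)$ are $2$-cubes with degenerate $(2,\varepsilon)$-faces — is genuinely the subtle step, and it is correctly resolvable: by \cref{either-dir-comp}, both kinds of square exist precisely when the two edges become equal in $\Ho X$, so the two quotients agree. You should state that argument explicitly rather than only gesture at \cref{EqRel}, since \cref{EqRel} alone addresses only the $\sim_0$-versus-$\sim$ distinction and not the transposition; the transposition really requires \cref{either-dir-comp} (or a direct $3$-cube). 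With that made precise your argument is complete, and arguably packages the underlying mechanism more cleanly than the paper's.
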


\begin{proof}
Factoring an arbitrary map as a trivial cofibration followed by a fibration and applying \cref{fundamental-theorem}, we see that it suffices to consider the case where $f$ is a fibration. By \cref{Sigma-Map-Quillen-adj}, this implies that each map $\Map_{X}(x_0,x_1) \to \Map_{Y}(fx_0,fx_1)$ is a trivial fibration.

For $x_0, x_1 \colon \Box^0 \to X$, $\Map_{X}(x_0,x_1) \to \Map_{Y}(fx_0,fx_1)$ is surjective on vertices, implying that every edge of $Y$ from $fx_0$ to $fx_1$ is the image under $f$ of some edge of $X$ from $x_0$ to $x_1$. Thus $\Ho \, f$ is full. To see that it is faithful, let $p, q \colon \Box^1 \to X$ be a pair of edges from $x_0$ to $x_1$, such that the morphisms in $\Ho \, Y(fx_0,fx_1)$ corresponding to $fp$ and $fq$ are equal. Applying \cref{either-dir-comp}, this implies that there is a $2$-cube in $Y$ of the form:

\centerline{
\xymatrix{
fx_0 \ar[d]_{fp} \ar@{=}[r] & fx_0 \ar[d]^{fq} \\
fx_1 \ar@{=}[r] & fx_1 
}
}

This $2$-cube corresponds to an edge from $fp$ to $fq$ in $\Map_{Y}(fx_0,fx_1)$; thus we have a commuting diagram:

\centerline{
\xymatrix{
\bd \Box^1 \ar[rr]^-{(p,q)} \ar@{^(->}[d] && \Map_{X}(x_0,x_1) \ar[d] \\
\Box^1 \ar[rr] && \Map_{Y}(fx_0,fx_1) \\
}
}

Since $\Map_{X}(x_0,x_1) \to \Map_{Y}(fx_0,fx_1)$ is a trivial fibration, this diagram has a lift, implying that $p = q$ in $\Ho \, X(x_0,x_1)$. Thus we see that $\Ho \, f$ is faithful.
\end{proof}

The Quillen equivalences $T \adjoint U$ and $Q_{W,\varepsilon} \adjoint \int\limits_{W,\varepsilon}$ relate the cubical homotopy category and mapping space constructions to their simplicial analogues. Before examining these relationships, we recall the corresponding constructions in $\sSet$, developed in \cite{joyal:theory-of-quasi-cats} and \cite{lurie:htt}.

\begin{definition}
Let $X \in \sSet$ be a quasicategory. The \emph{homotopy category} of $X$, denoted $\Ho X$, is defined as follows:

\begin{itemize}
  \item the objects of $\Ho X$ are the $0$-simplices of $X$;
  \item the morphisms from $x$ to $y$ in $\Ho X$ are the equivalence classes of edges $X_1(x,y)/\sim_X$, where $\sim$ denotes the equivalence relation defined by $f \sim g$ if and only if there exists a 2-simplex in $X$ of the form:

\centerline{
\xymatrix{
& y \ar@{=}[dr] \\
x \ar[ur]^{f} \ar[rr]^{g} && y \\
}
}  
  
  \item the identity map on $x \in X_0$ is given by $x \sigma_0$;
  \item the composition of $f \colon x \to y$ and $g \colon y \to z$ is given by filling the horn
  
\centerline{
\xymatrix{
& y \ar[dr]^{g} \\
x \ar[ur]^{f} \ar@{.>}[rr]^{gf} && z\
}
}
\end{itemize}
\end{definition}

\begin{definition}
  Let $x_0$ and $x_1$ be $0$-simplices in a simplicial set $X$. The \emph{mapping space} from $x_0$ to $x_1$ is the simplicial set $\Hom_X(x_0, x_1)$ given by the following pullback.

\centerline{
\xymatrix{
\Hom_{X}(x_0,x_1) \drpullback \ar[d] \ar[rr] && X^{\Delta^1} \ar[d] \\
\Delta^0 \ar[rr]^{(x_0,x_1)} && X^{\bd \Delta^1} \\
}
}
\end{definition}

As in the cubical case, the simplicial mapping space admits the following concrete description:

  \[ \Hom_X(x_0, x_1)_n = \left\{  \Delta^{n} \times \Delta^1 \overset{s}{\to} X \ \big| \ s  \circ (\Delta^n \times \bd_{1-\varepsilon}) = x_\varepsilon  \right\}\text{,} \]
 with simplicial operations induced by those of $X$.

From this description we can see that, in contrast to the cubical case, the simplices of $\Hom_X(x_0,x_1)$ are not simplices of $X$. Thus it is often preferable to work with the \emph{left} and \emph{right mapping spaces} in a simplicial set, defined below.

\begin{definition}
Let $X \in \sSet, x_0, x_1 \colon \Delta^0 \to X$. The \emph{left mapping space} $\Hom^L_X(x_0,x_1)$ is defined by:

  \[ \Hom^L_X(x_0, x_1)_n = \left\{  \Delta^{n+1} \overset{s}{\to} X \ \big| s|_{\Delta^{\{0\}}} = x_0, \ s  \bd_{0} = x_1  \right\}\text{,} \]
  
  with simplicial operations induced by those of $X$, meaning that the face map $\bd_{i}$ of $\Hom^L_X(x_0,x_1)$ corresponds to the face map $\bd_{i+1}$ of $X$, and similarly for degeneracies.
  
  Similarly, the \emph{right mapping space} $\Hom^R_X(x_0,x_1)$ is defined by:

  \[ \Hom^R_X(x_0, x_1)_n = \left\{  \Delta^{n+1} \overset{s}{\to} X \ \big|   s  \bd_{n+1} = x_0, \ s|_{\Delta^{\{n+1\}}} = x_1 \right\}\text{,} \]
  
  with simplicial operations induced by those of $X$, meaning that the face map $\bd_{i}$ of $\Hom^R_X(x_0,x_1)$ corresponds to the face map $\bd_{i}$ of $X$, and similarly for degeneracies.
\end{definition}

A routine calculation shows:

\begin{lemma}\label{simplicial-hom-op}
For $X \in \sSet, x_0, x_1 \colon \Box^0 \to X$, we have a natural isomorphism $\Hom^{L}_{X}(x_0,x_1)^\op \cong \Hom^{R}_{X^\op}(x_1,x_0)$. \qed
\end{lemma}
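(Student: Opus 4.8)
The statement to prove is \cref{simplicial-hom-op}: for $X \in \sSet$ and $x_0, x_1 \colon \Delta^0 \to X$, there is a natural isomorphism $\Hom^L_X(x_0, x_1)^\op \cong \Hom^R_{X^\op}(x_1, x_0)$.

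The plan is to verify this directly by comparing the explicit descriptions of both sides degreewise, using the combinatorics of the opposite construction on simplicial sets. First I would recall that for a simplicial set $Y$, the opposite $Y^\op$ is obtained by precomposing with the order-reversing involution $(-)^\op \colon \Delta \to \Delta$, which on $[n] = \{0 < 1 < \dots < n\}$ sends $i \mapsto n - i$; concretely $(Y^\op)_n = Y_n$, the face map $\bd_i$ of $Y^\op$ corresponds to the face map $\bd_{n-i}$ of $Y$, and the degeneracy $\sigma_i$ of $Y^\op$ corresponds to $\sigma_{n-i}$ of $Y$. In particular, for an $(n+1)$-simplex, the vertex $\Delta^{\{0\}}$ and the face $\bd_0$ in $Y^\op$ become the vertex $\Delta^{\{n+1\}}$ and the face $\bd_{n+1}$ in $Y$, respectively.

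Next I would unwind the definitions. By definition $\Hom^L_X(x_0, x_1)_n$ consists of maps $s \colon \Delta^{n+1} \to X$ with $s|_{\Delta^{\{0\}}} = x_0$ and $s\bd_0 = x_1$, with the simplicial operator $\bd_i$ (resp.\ $\sigma_i$) acting via $\bd_{i+1}$ (resp.\ $\sigma_{i+1}$) of $X$. Applying the opposite functor to this simplicial set replaces the operator $\bd_i$ in degree $n$ by $\bd_{n-i}$, which then acts via $\bd_{(n-i)+1} = \bd_{n+1-i}$ of $X$. On the other side, $\Hom^R_{X^\op}(x_1, x_0)_n$ consists of maps $t \colon \Delta^{n+1} \to X^\op$ with $t\bd_{n+1} = x_1$ and $t|_{\Delta^{\{n+1\}}} = x_0$, and its operator $\bd_i$ acts via $\bd_i$ of $X^\op$, i.e.\ via $\bd_{(n+1)-i}$ of $X$. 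A map $\Delta^{n+1} \to X^\op$ is the same as a map $\Delta^{n+1} \to X$ after precomposing with the vertex-reversing isomorphism $\Delta^{n+1} \xrightarrow{\sim} \Delta^{n+1}$ (i.e.\ $(\Delta^{n+1})^\op \cong \Delta^{n+1}$); under this identification, $t\bd_{n+1} = x_1$ becomes $s\bd_0 = x_1$ (since the last face of the reversed simplex is the $0$-th face of $s$), $t|_{\Delta^{\{n+1\}}} = x_0$ becomes $s|_{\Delta^{\{0\}}} = x_0$, and the operator action $\bd_{(n+1)-i}$ of $X$ matches what we computed for $\Hom^L_X(x_0,x_1)^\op$ above. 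This gives a bijection on each degree, compatible with all face and degeneracy maps, hence an isomorphism of simplicial sets; naturality in $X$ is immediate since every step is natural.

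The main obstacle — really the only subtlety — is bookkeeping of indices: one must be careful that the ``shift by one'' in the definition of $\Hom^L$ and $\Hom^R$ interacts correctly with the index reversal $i \mapsto n - i$ in degree $n$ versus $i \mapsto (n+1) - i$ on the ambient $(n+1)$-simplex, and that the roles of $x_0$ and $x_1$ (and of $\Hom^L$ versus $\Hom^R$) genuinely swap. Since the paper describes this as ``a routine calculation,'' I would present the bijection cleanly and check compatibility with the generating operators $\bd_i, \sigma_i$, leaving the remaining verifications to the reader. Indeed:

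\begin{proof}
Recall that for a simplicial set $Y$, the opposite $Y^\op$ satisfies $(Y^\op)_n = Y_n$, with the face operator $\bd_i$ of $Y^\op$ acting as $\bd_{n-i}$ of $Y$ on $n$-simplices and the degeneracy operator $\sigma_i$ of $Y^\op$ acting as $\sigma_{n-i}$ of $Y$. There is a canonical isomorphism $(\Delta^{n+1})^\op \cong \Delta^{n+1}$, induced by the order-reversing bijection on vertices, so that a map $\Delta^{n+1} \to X^\op$ corresponds naturally to a map $\Delta^{n+1} \to X$; under this correspondence the face $\bd_{n+1}$ of the source and the vertex $\Delta^{\{n+1\}}$ correspond to $\bd_0$ and $\Delta^{\{0\}}$, respectively.

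Now an $n$-simplex of $\Hom^L_X(x_0,x_1)^\op$ is an $n$-simplex of $\Hom^L_X(x_0,x_1)$, i.e.\ a map $s \colon \Delta^{n+1} \to X$ with $s|_{\Delta^{\{0\}}} = x_0$ and $s\bd_0 = x_1$. Via the correspondence above, this is the same datum as a map $t \colon \Delta^{n+1} \to X^\op$ with $t|_{\Delta^{\{n+1\}}} = x_0$ and $t\bd_{n+1} = x_1$, that is, an $n$-simplex of $\Hom^R_{X^\op}(x_1,x_0)$. This assignment is a bijection in each degree.

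It remains to check compatibility with the simplicial operators. On $\Hom^L_X(x_0,x_1)$ the operator $\bd_i$ (for $0 \le i \le n$) acts via $\bd_{i+1}$ of $X$; hence on $\Hom^L_X(x_0,x_1)^\op$ it acts, in degree $n$, via the operator $\bd_{n-i}$ of $\Hom^L_X(x_0,x_1)$, i.e.\ via $\bd_{(n-i)+1} = \bd_{n+1-i}$ of $X$. On $\Hom^R_{X^\op}(x_1,x_0)$ the operator $\bd_i$ acts via $\bd_i$ of $X^\op$, i.e.\ via $\bd_{(n+1)-i}$ of $X$ on the $(n+1)$-simplex. These agree. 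The same computation applies to the degeneracy operators. Therefore the degreewise bijections assemble into an isomorphism of simplicial sets, and it is natural in $X$ since every step is.
\end{proof}
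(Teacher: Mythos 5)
Your proof is correct: the degreewise identification, the verification that the boundary conditions on $(n+1)$-simplices swap $\Hom^L$/$\Hom^R$, $x_0$/$x_1$ and pass through $(-)^\op$, and the index bookkeeping $\bd_{n+1-i}$ matching on both sides (and likewise for degeneracies) are all right. The paper marks this lemma as a routine calculation and supplies no proof, so your write-up simply fills in exactly the verification the authors left to the reader; there is no divergence in approach.
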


\begin{lemma}\label{quasicat-U-Ho}
We have the following natural isomorphisms relating the homotopy categories of quasicategories and cubical quasicategories:

\begin{enumerate}
\item \label{Ho-cat-U} For a quasicategory $X$, $\Ho \, X \cong \Ho \, UX$;
\item \label{Ho-cat-int-1} For a cubical quasicategory $X$ and $W \in \{L,R\}$, $\Ho \, X \cong \Ho \, \int\limits_{W,1} X$;
\item \label{Ho-cat-int-0} For a cubical quasicategory $X$ and $W \in \{L,R\}$, $\Ho \, X \cong (\Ho \, \int\limits_{W,0} X)^\op$.
\end{enumerate}
\end{lemma}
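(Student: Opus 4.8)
The plan is to reduce all three statements to a single structural fact comparing the cubical and simplicial \emph{fundamental category} functors. Recall that $\tau_1 \colon \cSet \to \Cat$ is left adjoint to $N_\Box$, and let $\tau_1^\Delta \colon \sSet \to \Cat$ denote the left adjoint of the ordinary nerve $N \colon \Cat \to \sSet$; the adjunction $\tau_1^\Delta \dashv N$ is a Quillen pair between the Joyal model structure and the canonical model structure on $\Cat$ (a standard result of Joyal, entirely analogous to \cref{straightforward-properties}\ref{tau-nerve-Quillen-adj}). The first step is to establish a natural isomorphism $N_\Box \cong U \circ N$: for $\catC \in \Cat$ one computes $(UN\catC)_n = \sSet((\Delta^1)^n, N\catC) = \Cat(\tau_1^\Delta((\Delta^1)^n), \catC) = \Cat([1]^n, \catC) = N_\Box(\catC)_n$, where $\tau_1^\Delta((\Delta^1)^n) \cong [1]^n$ since $(\Delta^1)^n = N([1]^n)$ (the nerve preserves products) and $\tau_1^\Delta \circ N = \id$; naturality in $[1]^n$ holds because the generating maps of $\Box$ are poset maps. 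Passing to left adjoints gives $\tau_1 \cong \tau_1^\Delta \circ T$. Two further inputs: for a quasicategory $Y$ the simplicial homotopy category $\Ho Y$ (as defined in \cref{sec:fund-thm}) agrees with $\tau_1^\Delta Y$ by Joyal's theorem \cite{joyal:theory-of-quasi-cats}; and for a cubical quasicategory $X$ the cubical homotopy category $\Ho X$ (that is, $\Ho X^\natural$) agrees with $\tau_1 X$ by \cref{tau-ho-equiv} together with \cref{natural-marked-cubical-quasicat}. Finally, $\tau_1$ and $\tau_1^\Delta$ each send weak (categorical) equivalences to equivalences of categories (Ken Brown applied to \cref{straightforward-properties}\ref{tau-nerve-Quillen-adj} and to $\tau_1^\Delta \dashv N$, all objects being cofibrant), and an equivalence of categories that is bijective on objects is an isomorphism of categories — which is how each equivalence produced below will be upgraded to an honest isomorphism and the isomorphisms then composed.

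For \ref{Ho-cat-U}: if $X$ is a quasicategory then $UX$ is a cubical quasicategory ($U$ is right Quillen, hence preserves fibrant objects), so $\Ho UX \cong \tau_1 UX \cong \tau_1^\Delta(TUX)$. The counit $TUX \to X$ is the derived counit at a fibrant object of the Quillen equivalence $T \dashv U$ (all cubical sets are cofibrant, so \cref{QuillenEquivCreate-original} applies), hence a weak categorical equivalence; it is moreover a bijection on $0$-simplices (as $T$ and $U$ preserve $0$-cubes). Applying $\tau_1^\Delta$ therefore yields a natural isomorphism $\tau_1^\Delta(TUX) \cong \tau_1^\Delta X = \Ho X$.

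For \ref{Ho-cat-int-1} it suffices to treat $W = L$, writing $\int = \int_{L,1}$; the case $W = R$ then follows from $\int_{R,1} \cong \int \circ (-)^\rmco$ (\cref{Q-involutions}) and $\Ho(X^\rmco) \cong \Ho X$ (\cref{Ho-involutions}). For a cubical quasicategory $X$, $\int X$ is a quasicategory ($\int$ is right Quillen by \cref{QAdj}), so $\Ho\int X \cong \tau_1^\Delta(\int X)$. The counit $Q\int X \to X$ is anodyne by \cref{Counit}, hence a weak categorical equivalence and a bijection on $0$-cubes (its source is the subcomplex of $X$ built from $(0,n)$-cones, which contains all vertices), so $\tau_1 X \cong \tau_1(Q\int X) \cong \tau_1^\Delta(TQ\int X)$; and $\overline{F}\colon TQ(\int X) \to \int X$ of \cref{FTrans} is a weak categorical equivalence by \cref{FWE} and a bijection on $0$-simplices, so $\tau_1^\Delta(TQ\int X) \cong \tau_1^\Delta(\int X)$. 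Chaining these with $\Ho X \cong \tau_1 X$ and $\Ho\int X \cong \tau_1^\Delta\int X$ proves \ref{Ho-cat-int-1}. For \ref{Ho-cat-int-0}: by \cref{Q-involutions} we have $\int_{L,0} \cong \int \circ (-)^\coop$ and $\int_{R,0} \cong \int \circ (-)^\op$, so applying \ref{Ho-cat-int-1} (for $W = L$ and $W = R$ respectively) and \cref{Ho-involutions} gives $\Ho(\int_{W,0} X) \cong \Ho(X^\coop) \cong (\Ho X)^\op$ and $\Ho(\int_{W,0} X) \cong \Ho(X^\op) \cong (\Ho X)^\op$.

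The only genuinely external ingredient is Joyal's identification $\Ho Y \cong \tau_1^\Delta Y$ for quasicategories $Y$; everything else is assembled from results already available in the excerpt. Consequently the main obstacle is bookkeeping rather than conceptual: one must verify the natural isomorphism $N_\Box \cong U \circ N$ by the short direct computation above, and then carefully check at each stage that the relevant comparison map is a bijection on objects, so that the equivalences of categories produced by $\tau_1$ and $\tau_1^\Delta$ are honest isomorphisms that can be composed into the asserted natural isomorphisms.
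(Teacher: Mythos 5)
Your proof is correct, but it takes a genuinely different route from the paper's. The paper argues directly and combinatorially: for part (i) it observes that $X$ and $UX$ have the \emph{same} sets of vertices and edges (since $(UX)_0 = X_0$, $(UX)_1 = X_1$), so the two homotopy categories have the same objects and the same generating morphisms, and then checks that the equivalence relations $\sim$ defining the hom-sets coincide via \cref{either-dir-comp} and its simplicial analogue; part (ii) is handled similarly, and part (iii) is reduced to (ii) via \cref{Q-involutions,Ho-involutions}. Your argument instead factors everything through the fundamental-category functors $\tau_1$ and $\tau_1^\Delta$, establishes $N_\Box \cong U \circ N$ hence $\tau_1 \cong \tau_1^\Delta \circ T$, identifies $\Ho$ with $\tau_1^{(\Delta)}$ on fibrant objects, and then upgrades the equivalences of categories produced by Ken Brown to isomorphisms by checking each comparison map is a bijection on $0$-(co)simplices. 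This is valid and perhaps more conceptually uniform, but it is considerably heavier: part (i) requires the Quillen equivalence $T \dashv U$ (\cref{T-Quillen-equivalence}), part (ii) requires the Quillen equivalence $Q \dashv \int$ (\cref{Q-Quillen-equivalence}), which in turn rests on the entire cone machinery of \cref{sec:cones}, and both need Joyal's external identification $\Ho \cong \tau_1^\Delta$ for quasicategories. The paper's argument is elementary and self-contained by comparison. (One small slip: in your part (iii) you cite ``\ref{Ho-cat-int-1} for $W = L$ and $W = R$ respectively,'' but with the decompositions $\int_{L,0} = \int_{L,1} \circ (-)^\coop$ and $\int_{R,0} = \int_{L,1} \circ (-)^\op$ from \cref{Q-involutions}, both invocations should be $W = L$; this is immaterial since both cases of (ii) are available, but worth noting.)
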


\begin{proof}
For \ref{Ho-cat-U}, first note that $X$ and $UX$ have the same edges and vertices. The equivalence relations defining the morphisms of $\Ho X$ and $\Ho UX$ coincide by a simple argument involving \cref{either-dir-comp} and its simplicial analogue. A similar argument proves \ref{Ho-cat-int-1}, and \ref{Ho-cat-int-0} then follows from \cref{Q-involutions,Ho-involutions}. 
\end{proof}

\begin{lemma}\label{quasicat-U-Map}
For $X \in \sSet, x_0, x_1 \colon \Delta^0 \to X$, and $W \in \{L,R\}$, we have a natural isomorphism $U\Map^{W}_{X}(x_0,x_1) \cong \Hom_{UX}(x_0,x_1)$. 
\end{lemma}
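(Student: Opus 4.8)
The plan is to compare the concrete descriptions of the cubical set $U \Map^W_X(x_0,x_1)$ and the simplicial set $\Hom_{UX}(x_0,x_1)$ dimension by dimension, exhibiting a natural bijection on $n$-cubes that is compatible with all cubical operators. Recall that $(UZ)_n = \sSet((\Delta^1)^n, Z)$ for a simplicial set $Z$; thus an $n$-cube of $U \Map^W_X(x_0,x_1)$ is a map $(\Delta^1)^n \to \Map^W_X(x_0,x_1)$. Using the pullback defining $\Map^W$ (with $\ihom_R$ or $\ihom_L$ according to $W$) together with the adjunction $T \adjoint U$ and the natural isomorphism $T((\Delta^1)^n \text{-shaped cube}) = (\Delta^1)^n$, I would unwind this to a map $(\Delta^1)^{n+1} \to X$ whose two appropriate faces in the final coordinate are constant at $x_0$ and $x_1$. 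On the other hand, an $n$-simplex of $\Hom_{UX}(x_0,x_1)$ is a map $\Delta^n \times \Delta^1 \to UX$, i.e. a map $(\Delta^1)^n \times \Delta^1 \to X$ with the analogous boundary conditions, and $(\Delta^1)^n \times \Delta^1 = (\Delta^1)^{n+1}$. So up to reindexing the coordinate directions, the two sets of $n$-cells are described by the same data.

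Concretely, I would first do the case $W = R$. Here $\Map^R_X(x_0,x_1)_k = \{ \Box^{k+1} \xrightarrow{s} X \mid s\partial_{1,\varepsilon} = x_\varepsilon\}$, with the operator $\partial_{i,\varepsilon}$ (resp. $\sigma_i$, $\gamma_{i,\varepsilon}$) acting as $\partial_{1+i,\varepsilon}$ (resp. $\sigma_{1+i}$, $\gamma_{1+i,\varepsilon}$) on $X$. Applying $U$ and using $T \adjoint U$, an $n$-cube of $U\Map^R_X(x_0,x_1)$ is a map $(\Delta^1)^n \to \Map^R_X(x_0,x_1)$, which by the pullback and the isomorphism $T((\Box^1)^n \otimes \Box^1) \cong (\Delta^1)^n \times \Delta^1$ (from \cref{T-prod}, plus $T\Box^1 = \Delta^1$) corresponds bijectively to a map $(\Delta^1)^n \times \Delta^1 \to X$ sending $(\Delta^1)^n \times \{\varepsilon\}$ to $x_\varepsilon$; this is exactly the data of an $n$-simplex of $\Hom_{UX}(x_0,x_1)$, since $(\Hom_{UX}(x_0,x_1))_n = \sSet(\Delta^n \times \Delta^1, UX) = \sSet((\Delta^1)^n \times \Delta^1, X)$. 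The only thing to check is that this bijection commutes with the simplicial structure maps, which amounts to matching, for each face/degeneracy $\phi \colon [m] \to [n]$ of $\Delta$, the induced map $(\Delta^1)^m \to (\Delta^1)^n$ on the first $n$ coordinates — this is routine once one notes that the $U$-functor on cubical sets is defined by precomposition with $(\Delta^1)^\bullet$ and that the ``extra'' interval coordinate is carried along untouched. The case $W = L$ is handled identically using $\ihom_L$, $- \otimes \Box^1$, and the face/degeneracy relabelling that shifts indices by the appropriate amount; alternatively one may deduce it from the $W = R$ case via \cref{Map-involutions}, \cref{simplicial-hom-op}, and the relation $(-)^\op \circ U \cong U \circ (-)^\op$.

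The one genuine subtlety — and the step I expect to take the most care — is bookkeeping the \emph{ordering conventions}: the definition of $\Hom^R$ uses the face $\bd_{n+1}$ and the vertex $\Delta^{\{n+1\}}$ for $x_0, x_1$, whereas the cubical $\Map^R$ singles out the first coordinate $\partial_{1,\varepsilon}$, and triangulation $T$ subdivides a cube in a specific way (governed by the poset maps $[n] \to [1]^n$ as in \cref{left-positive-Q}). So I would fix once and for all the identification $T((\Box^1)^n \otimes \Box^1) \cong (\Delta^1)^n \times \Delta^1$ via \cref{T-prod}, track which $\Delta^1$-factor carries the $x_0, x_1$ boundary data, and verify that the natural transformations on both sides (cubical operators on the left, simplicial operators on the right) are intertwined under this identification. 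Since both functors $U\Map^W$ and $\Hom_{U(-)}$ are right adjoints (hence limit-preserving) and natural in $X$, it suffices to verify the isomorphism and its naturality on representables, where everything reduces to the combinatorics of $\Box$, $\Delta$, and the nerve — a finite check. This completes the proof.
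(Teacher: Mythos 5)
Your approach is correct but takes a genuinely different route from the paper's. The paper does \emph{not} unwind cells: it observes that $\Hom \colon \bd\Delta^1 \downarrow \sSet \to \sSet$ has a left adjoint $\Sigma$ (simplicial suspension, defined by a pushout analogous to \cref{Sigma-def}), so that one has a square of adjunctions relating $\Sigma \adjoint \Hom$, $\Sigma_W \adjoint \Map^W$, and $T \adjoint U$ at both levels; the desired natural isomorphism of right adjoints is then equivalent to the natural isomorphism $T\Sigma_W \cong \Sigma T$ of left adjoints, which falls out immediately because $T$ preserves pushouts (as a left adjoint) and sends geometric products to cartesian products (\cref{T-prod}). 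This completely sidesteps the ``ordering conventions'' problem you rightly flag as the most delicate step of your argument: the paper never has to track which coordinate carries the basepoint data, because applying $T$ to the defining pushout square of $\Sigma_W$ visibly yields the defining pushout square of $\Sigma$. Your direct, dimension-by-dimension comparison of cells does work (both sides have $n$-cells described as maps $(\Delta^1)^{n+1} \to X$ with the appropriate faces constant, and the structure-map intertwining is a finite, if tedious, check), and your closing remark about reducing to representables via cocontinuity of the left adjoints is essentially rediscovering the paper's strategy. But your sketch already illustrates the risk: you write $\Map^R_X(x_0,x_1)_k = \{\Box^{k+1} \to X \mid s\partial_{1,\varepsilon} = x_\varepsilon\}$ with structure maps shifted by $1$, which is the paper's description of $\Map^L$, not $\Map^R$ (compare the displayed formulas after \cref{cubical-mapping-space}). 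That slip is harmless here since the two cases are exchanged by the involutions of \cref{Map-involutions}, but it is exactly the kind of convention error the adjunction-square argument is designed to avoid, so you would do well to adopt it.
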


\begin{proof}
Observe that the simplicial mapping space construction defines a functor $\Hom  \colon \bd {\Delta^1 \downarrow \sSet} \to \sSet$. An argument similar to the proof of \cref{Sigma-Map-adjoint} shows that this functor has a left adjoint $\Sigma \colon \sSet \to \bd \Delta^1 \downarrow \sSet$, given by the following pushout diagram:

\centerline{
\xymatrix{
X \sqcup X \ar[d] \ar[r] & \bd \Delta^1 \ar[d] \\
\Delta^1 \times X \ar[r] & \Sigma X \pushoutcorner \\
}
}

Thus we have the following square of adjunctions:

\centerline{
\xymatrix{
\bd \Box^1 \downarrow \cSet \ar@<1ex>[r] \ar@<1ex>[d] & \bd \Delta^1 \downarrow \sSet \ar@<1ex>[l] \ar@<1ex>[d] \\
\cSet \ar@<1ex>[u] \ar@<1ex>[r] & \sSet \ar@<1ex>[u] \ar@<1ex>[l] \\
}
}

We wish to show that the square of right adjoints commutes (up to natural isomorphism); for this, it suffices to show that the square of left adjoints commutes, i.e. that $T \Sigma_W \cong \Sigma T$. To see this, we may apply $T$ to the pushout square which defines the (left or right) suspension of a cubical set. Using \cref{T-prod} and the fact that $T$ preserves pushouts, we obtain a natural isomorphism $T \Sigma_{W} X \cong \Sigma T X$ for $X \in \cSet$.
\end{proof}

\begin{lemma}\label{int-Map}
For $X \in \cSet$, $x_0,x_1 \colon \Box^0 \to X$, we have the following natural isomorphisms:

\begin{multicols}{2}
\begin{itemize}
\item $\int\limits_{L,0} \Map^{L}_{X}(x_0,x_1) \cong \Hom^{R}_{\int\limits_{L,0} X}(x_1,x_0)$;
\item $\int\limits_{L,1} \Map^{L}_{X}(x_0,x_1) \cong \Hom^{R}_{\int\limits_{L,1} X}(x_0,x_1)$;
\item $\int\limits_{R,0} \Map^{R}_{X}(x_0,x_1) \cong \Hom^{R}_{\int\limits_{R,0} X}(x_1,x_0)$;
\item $\int\limits_{R,1} \Map^{R}_{X}(x_0,x_1) \cong \Hom^{R}_{\int\limits_{R,1} X}(x_0,x_1)$.
\end{itemize}
\end{multicols}
\end{lemma}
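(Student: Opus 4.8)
The strategy is to deduce all four isomorphisms from the single case $\int_{L,1}\Map^L_X(x_0,x_1)\cong\Hom^R_{\int_{L,1}X}(x_0,x_1)$ together with the involution formulas already established, namely \cref{Map-involutions}, \cref{Q-involutions}, \cref{simplicial-hom-op}, and the observation that $(-)^\rmco,(-)^\coop,(-)^\op$ are involutions fixing the relevant basepoint data. So the real content is the first isomorphism, and the other three are bookkeeping: for instance, applying $\int_{R,1}=\int_{L,1}\circ(-)^\rmco$ and $\Map^R_X(x_0,x_1)^\rmco\cong\Map^L_{X^\rmco}(x_0,x_1)$ gives $\int_{R,1}\Map^R_X(x_0,x_1)\cong\int_{L,1}\Map^L_{X^\rmco}(x_0,x_1)\cong\Hom^R_{\int_{L,1}X^\rmco}(x_0,x_1)=\Hom^R_{\int_{R,1}X}(x_0,x_1)$, and similarly for the two $\varepsilon=0$ cases using that $(-)^\coop$ and $(-)^\op$ swap the order of the basepoints on both the cubical and simplicial sides.

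For the key case, the plan is to argue at the level of left adjoints, as in the proof of \cref{quasicat-U-Map}. The mapping space functor $\Map^L\colon \bd\Box^1\downarrow\cSet\to\cSet$ has left adjoint $\Sigma_L$ (by the $\ihom_R$-analogue of \cref{Sigma-Map-adjoint}), and $\Hom^R\colon \bd\Delta^1\downarrow\sSet\to\sSet$ has a left adjoint $\Sigma^R$ given by an analogous pushout of $\Delta^1$ with the basepoint inclusions, picking out the $0^{\mathrm{th}}$-vertex-first orientation so that $\Hom^R$ is its right adjoint. This produces a square of adjunctions with $\Sigma_L\colon\cSet\to\bd\Box^1\downarrow\cSet$ and $Q_{L,1}\colon\sSet\to\cSet$ going one way and $\Sigma^R$, $Q_{L,1}$ on the slice categories going the other; the claimed natural isomorphism of right adjoints $\int_{L,1}\circ\Map^L\cong\Hom^R\circ\int_{L,1}$ is equivalent to commutativity of the square of left adjoints, i.e.\ to a natural isomorphism $Q_{L,1}\,\Sigma^R X\cong \Sigma_L\,Q_{L,1} X$ in $\bd\Box^1\downarrow\cSet$ for $X\in\sSet$.

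Both sides preserve colimits, so it suffices to check this on representables $\Delta^n$. Unwinding definitions, $\Sigma^R\Delta^n$ is a quotient of $\Delta^{n+1}$ (collapsing the two extreme faces to the basepoints), so $Q_{L,1}\Sigma^R\Delta^n$ is a quotient of $Q^{n+1}=C^{0,n+1}$; on the other side $\Sigma_L Q_{L,1}\Delta^n=\Sigma_L Q^n$ is a quotient of $\Box^1\otimes Q^n$, which by the description of cones in \cref{ConeDesc} and the fact that $Q^n$ is itself a quotient of $\Box^n$ is again a quotient of $\Box^{1+n}$. The plan is to identify both quotients of $\Box^{n+1}$ explicitly using \cref{ConeDesc} (for $Q^{n+1}$ as a quotient of $\Box^{n+1}$) and \cref{geometric-product-description} (for $\Box^1\otimes Q^n$), track how the suspension collapses behave under the cosimplicial structure maps of \cref{left-positive-Q}, and observe that the resulting equivalence relations on $\Box^{n+1}_k$ coincide — both amount to identifying cubes that agree up to and including their first $\mathrm{const}_1$ coordinate, with the extra suspension identification folding in exactly the $\Box^1$-factor. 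Naturality in $n$ then follows from \cref{FCos}-style diagram chasing, comparing the structure maps of $Q_{L,1}\Sigma^R(-)$ and $\Sigma_L Q_{L,1}(-)$ on the generating face and degeneracy maps of $\Delta$ by precomposing with the (epimorphic) quotient maps from $\Box^{n+1}$.

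\textbf{Main obstacle.} The delicate point is pinning down the two equivalence relations on $\Box^{n+1}$ and checking they agree — in particular, making sure the orientation conventions line up: the "right" mapping space $\Hom^R$ uses the $n{+}1^{\mathrm{st}}$ vertex as the source of the collapsed edge while $\Map^L$ (via $\ihom_R$) reads off the $1^{\mathrm{st}}$ face, and the cosimplicial object $Q_{L,1}$ of \cref{left-positive-Q} reverses the indexing of faces. Getting a compatible choice of left suspension $\Sigma^R$ on the simplicial side (as opposed to the "left" simplicial suspension $\Sigma^L$) so that the square genuinely commutes, rather than commuting up to the involution $(-)^\op$, is where care is needed; once the conventions are fixed, the verification is a routine computation with cubical identities of the same flavour as the proofs of \cref{ConeDesc} and \cref{FCos}.
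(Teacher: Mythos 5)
Your reduction to a single case via the involution formulas matches the paper's strategy (the paper anchors on $\int_{R,1}$, you on $\int_{L,1}$; purely cosmetic). The underlying combinatorial content of the key case is also the same — both arguments rest on the observation that the cubical suspension of $Q^n$ is the quotient of $Q^{n+1}$ collapsing one further face, the image of $\bd_{n+1}$ under the cosimplicial structure of \cref{left-positive-Q}. The packaging differs, though: you propose a square of left adjoints, mirroring \cref{quasicat-U-Map}, which requires first constructing the left adjoint of $\Hom^R$ on the simplicial side, verifying that it is a left adjoint, and checking on representables that $Q_{L,1}$ intertwines it with $\Sigma_L$. The paper avoids this scaffolding entirely: it computes the $n$-simplices of both sides directly, unwinding the pushout defining $\Sigma Q^n_{R,1}$ (from the adjunctions $Q_{R,1}\dashv\int_{R,1}$ and $\Sigma\dashv\Map$) and the pushout defining $Q^{n+1}_{R,1}=C_{R,1}Q^n_{R,1}$ from \cref{all-cones-def}, identifying both with the set of maps $Q^n_{R,1}\otimes\Box^1\to X$ carrying $Q^n_{R,1}\otimes\{\varepsilon\}$ to $x_\varepsilon$. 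Your route would succeed, just with more setup. Two small imprecisions to fix: the left adjoint of $\Hom^R$ is not a pushout of $\Delta^1\times(-)$ — that construction gives the left adjoint of the full mapping space $\Hom$, not of $\Hom^R$; and on $\Delta^n$ it produces a quotient of $\Delta^{n+1}$ that collapses only the single face $\bd_{n+1}$ to one basepoint and merely tags the opposite vertex $\Delta^{\{n+1\}}$ as the other, rather than ``collapsing the two extreme faces.''
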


\begin{proof}
It suffices to prove the identity for $\int\limits_{R,1}$; the others then follow from \cref{Q-involutions,Map-involutions}. Observe that maps $\Delta^n \to \int\limits_{R,1} \Map^R_{X}(x_0,x_1)$ correspond to maps $\Sigma^{R} Q_{R,1}^{n} \to X$ mapping the basepoints $0 \mapsto x_0, 1 \mapsto x_1$. By the universal property of the pushout, these correspond to commuting diagrams of the form:

\centerline{
\xymatrix{
Q^{n}_{R,1} \sqcup Q^{n}_{R,1} \ar[r] \ar@{^(->}[d] & \bd \Box^1 \ar[d] \\
Q^{n}_{R,1} \otimes \Box^1 \ar[r] & X \\
}
}

In other words, these are maps $Q^{n}_{R,1} \otimes \Box^1 \to X$ such that for $\varepsilon \in \{0,1\}$, the subcomplex $Q^{n}_{R,1} \otimes \{\varepsilon\}$ is mapped to $x_{\varepsilon}$.

On the other hand, maps $s \colon \Delta^{n+1} \to \int\limits_{R,1} X$, i.e. $Q^{n+1}_{R,1} \to X$, which map the terminal vertex to $x_1$ correspond to commuting diagrams of the form:

\centerline{
\xymatrix{
Q^{n}_{R,1} \ar[r] \ar@{^(->}[d]_{Q^{n}_{R,1} \otimes \bd_{1,1}} & \Box^0 \ar[d]^{x_1} \\
Q^{n}_{R,1} \otimes \Box^1 \ar[r] & X \\
}
}

In other words, these are maps $Q^{n}_{R,1} \to X$ such that $Q^{n}_{R,1} \otimes \{1\}$ is mapped to $x_1$. By \cref{Q-cosimp-ob}, the condition $s \bd_{n+1} = x_0$ corresponds to the condition that $Q^{n}_{R,1} \otimes \{0\}$ is mapped to $x_0$.
\end{proof}

\begin{remark}
One may observe that applying a functor $\int\limits_{W,\varepsilon}$ to a compatible cubical mapping space always produces a simplicial right mapping space, regardless of the values of $W$ and $\varepsilon$. \cref{simplicial-hom-op} shows that the alternative definitions of $Q_{W,\varepsilon}$ discussed in \cref{monad-op} would instead produce formulas relating cubical mapping spaces to simplicial left mapping spaces.
\end{remark}

These results allow us to transfer \cref{fundamental-theorem} along the Quillen equivalence $T \adjoint U$, obtaining a new proof of the analogous result for the Joyal model structure on $\sSet$ that can be found, e.g., in \cite{rezk:quasicats}.

\begin{theorem}[{\cite[Props. 34.2 and 43.2]{rezk:quasicats}}]\label{simplicial-fundamental-theorem}
Let $f \colon X \to Y$ be a map between quasicategories. Then $f$ is a categorical equivalence if and only if the following two conditions are satisfied:

\begin{itemize}
\item $\Ho f \colon \Ho X \to \Ho Y$ is an equivalence of categories;
\item for all pairs of vertices $x_{0}, x_{1} \colon \Delta^0 \to X$, the induced map $\Hom_{X}(x_{0},x_{1}) \to \Hom_{Y}(fx_{0},fx_{1})$ is a homotopy equivalence in the Quillen model structure.
\end{itemize}

\begin{proof}
By \cref{QuillenEquivCreate-original,T-Quillen-equivalence}, $f$ is a categorical equivalence if and only if $Uf \colon UX \to UY$ is a categorical equivalence. Similarly, by \cref{QuillenEquivCreate-original,Quillen-Grothendieck-equiv-T}, each map $\Hom_{X}(x_{0},x_{1}) \to \Hom_{Y}(fx_{0},fx_{1})$ is a homotopy equivalence if and only if $U\Hom_{X}(x_{0},x_{1}) \to U\Hom_{Y}(fx_{0},fx_{1})$ is a homotopy equivalence. The stated result thus follows from \cref{fundamental-theorem}, together with \cref{quasicat-U-Ho,quasicat-U-Map}.
\end{proof}
\end{theorem}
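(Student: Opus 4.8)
The plan is to deduce this statement from its cubical counterpart \cref{fundamental-theorem} by transporting everything along the triangulation adjunction $T \dashv U$. The two clauses of the theorem live in two different homotopy theories --- categorical equivalences belong to the Joyal model structure, while homotopy equivalences of mapping spaces belong to the Quillen (Kan--Quillen) model structure --- and for each of them we already have a Quillen equivalence with a cubical model structure: $T \dashv U$ between the Joyal and cubical Joyal model structures by \cref{T-Quillen-equivalence}, and $T \dashv U$ between the Quillen and Grothendieck model structures by \cref{Quillen-Grothendieck-equiv-T}.

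First I would record the three facts that make the transfer possible. Since $U$ is right Quillen for both adjunctions, it sends quasicategories to cubical quasicategories and Kan complexes to cubical Kan complexes; in particular $UX$ and $UY$ are cubical quasicategories and each $U\Hom_X(x_0,x_1)$ is a cubical Kan complex. Moreover, being the right adjoint of a Quillen equivalence, $U$ preserves and reflects weak equivalences between fibrant objects in each case --- reflection is part of \cref{QuillenEquivCreate-original}, and preservation is Ken Brown's lemma for right Quillen functors. Finally, by \cref{quasicat-U-Ho} there is a natural isomorphism $\Ho X \cong \Ho UX$, and by \cref{quasicat-U-Map} a natural isomorphism $U\Hom_X(x_0,x_1) \cong \Map_{UX}(x_0,x_1)$ of bi-pointed objects.

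With these in hand the argument is short. The map $f$ is a categorical equivalence of quasicategories if and only if it is a weak equivalence between fibrant objects in the Joyal model structure, which by preservation and reflection of weak equivalences between fibrant objects holds if and only if $Uf \colon UX \to UY$ is a weak equivalence between fibrant objects in the cubical Joyal model structure, i.e.\ a categorical equivalence of cubical quasicategories. Likewise, for fixed $x_0, x_1$, the map $\Hom_X(x_0,x_1) \to \Hom_Y(fx_0,fx_1)$ is a homotopy equivalence in the Quillen model structure --- equivalently a weak equivalence between Kan complexes --- if and only if its image under $U$ is a weak equivalence between cubical Kan complexes in the Grothendieck model structure, i.e.\ a homotopy equivalence there; and under the isomorphism of \cref{quasicat-U-Map} this image is exactly the induced map $\Map_{UX}(x_0,x_1) \to \Map_{UY}(Ufx_0,Ufx_1)$. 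Applying \cref{fundamental-theorem} to $Uf$ --- whose first hypothesis matches ``$\Ho f$ is an equivalence'' via \cref{quasicat-U-Ho}, and whose second matches the mapping-space hypothesis via the previous sentence --- then gives the claim.

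I do not expect a serious obstacle here: the genuine content has already been established in \cref{fundamental-theorem} and in the two Quillen equivalences $T \dashv U$. The only points demanding care are bookkeeping ones: ensuring that in each clause one argues between fibrant objects of the appropriate model structure, so that ``homotopy equivalence'' and ``weak equivalence'' coincide there and $U$ may be invoked both to preserve and to reflect them; and checking that the isomorphisms $\Ho X \cong \Ho UX$ and $U\Hom_X(x_0,x_1) \cong \Map_{UX}(x_0,x_1)$ are sufficiently natural to identify the induced maps, not merely the objects. Both are covered by the cited lemmas, so the proof amounts to assembling them.
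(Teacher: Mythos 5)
Your proposal is correct and follows essentially the same route as the paper: transport $f$ along $U$ using the two Quillen equivalences of \cref{T-Quillen-equivalence} and \cref{Quillen-Grothendieck-equiv-T} (invoking \cref{QuillenEquivCreate-original} for preservation and reflection of weak equivalences between fibrant objects), identify homotopy categories and mapping spaces via \cref{quasicat-U-Ho} and \cref{quasicat-U-Map}, and conclude by applying \cref{fundamental-theorem} to $Uf$. The only difference is that you spell out the bookkeeping about fibrant objects and naturality somewhat more explicitly than the paper does.
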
 

\begin{remark}
One can similarly prove \cref{fundamental-theorem} from \cref{simplicial-fundamental-theorem}, using \cref{Q-Quillen-equivalence,int-Map}, and the natural trivial cofibration $\Hom^{R}_{X}(x_0,x_1) \hookrightarrow \Hom_{X}(x_0,x_1)$ for quasicategories $X$.
\end{remark}

\appendix

\section{Verification of identities on $\theta$}\label{appendix:calculations}

Here we prove that the construction $\theta^{m,n}$ of \cref{ThetaDef} satisfies the identities of \cref{theta-construction}. Fix a cubical quasicategory $X$, $m \geq 2$, and $n \geq 0$, and assume that we have defined $\theta^{m',n'}$ satisfying all necessary identities for all pairs $m' \leq m, n' \leq n$ for which at least one of these inequalities is strict. Then we may define $\theta^{m,n}$ by the case analysis of \cref{ThetaDef}. We will show that a function $\theta^{m,n}$ defined in this way satisfies all identities of \cref{theta-construction}, starting with the identities involving faces.

Throughout these proofs we will conduct a number of case analyses, many of which will involve the standard forms of cones; for these computations we will often reduce the number of cases to be considered using \cref{sa1}.

\begin{proposition}\label{AB}
$\theta^{m,n}$ satisfies \ref{ThetaFace0} and \ref{ThetaFace0Id}; that is, for $x \colon C^{m,n} \to X$ and $i \leq n$, $\theta^{m,n}(x)\partial_{i,0} = \theta^{m,n-1}(x\partial_{i,0})$, while $\theta^{m,n}(x)\partial_{n+1,0} = x$.
\end{proposition}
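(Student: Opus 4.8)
The plan is to verify both identities by a case analysis following the six cases in the definition of $\theta^{m,n}$ (\cref{ThetaDef}), using the corresponding identities for the already-constructed $\theta^{m',n'}$ with $m' < m$ or $n' < n$ as the induction hypothesis. Before diving in, I would record the general principle that $x\partial_{i,0}$ is always an $(m, n-1)$-cone for $i \leq n$ by \cref{ConeFaceDeg} \ref{Low0FaceOfCone}, so $\theta^{m,n-1}(x\partial_{i,0})$ makes sense, and that $\theta^{m,n}(x)$ is an $(m,n+1)$-cone so that $\theta^{m,n}(x)\partial_{n+1,0}$ is an $(m,n)$-cone by the same lemma; this makes both sides of each identity well-typed. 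The key mechanical input throughout is the cubical identities for composing face maps, degeneracies, and connections, together with \cref{FaceCond} when faces of the form $x\partial_{i,1}$ appear.

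For \ref{ThetaFace0Id}, i.e.\ $\theta^{m,n}(x)\partial_{n+1,0} = x$, I would go case by case. In cases (1)--(3), $\theta^{m,n}(x)$ is $\theta^{m-1,n}(z)\sigma_{a_p+1}$, $\theta^{m,n-1}(z)\gamma_{b_q,0}$, or $\theta^{m-1,n}(z)\gamma_{b_q+1,\varepsilon}$ respectively, and I would push $\partial_{n+1,0}$ past the rightmost structure map using the cubical identities (noting $a_p, b_q+1 \geq n+2$ or $b_q \leq n-1$, so $\partial_{n+1,0}$ commutes appropriately), reducing to $\theta^{m',n'}(z)\partial_{n+1,0}$ or $\theta^{m',n'}(z)\partial_{n,0}$ composed with the appropriate structure map, then apply \ref{ThetaFace0Id} inductively and reconstitute the standard form of $x$. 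Cases (4) and (5) are immediate from the connection identity $\gamma_{j,\varepsilon}\partial_{j,\varepsilon} = \id$ (with $j = n+1$ or $j = n$ respectively), and case (6) holds by the defining property of the lift $\widetilde{\theta}^{m,n}(x)$ from \cref{theta-lift}.

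For \ref{ThetaFace0}, i.e.\ $\theta^{m,n}(x)\partial_{i,0} = \theta^{m,n-1}(x\partial_{i,0})$ for $i \leq n$, I would again proceed by cases on $x$. The main subtlety is that the standard form of $x\partial_{i,0}$ must be related to that of $x$, and one must check that $x\partial_{i,0}$ falls under a compatible case of \cref{ThetaDef}; this is where \cref{theta-cases-degenerate}, \cref{Theta-T}, and \cref{sa1} are used to control which case applies. In cases (1)--(3) I would commute $\partial_{i,0}$ (with $i \leq n$) past the rightmost structure map of $\theta^{m,n}(x)$, apply \ref{ThetaFace0} inductively to the resulting $\theta^{m',n'}(z)\partial_{i,0}$, and then recognize the result as $\theta^{m,n-1}$ applied to the corresponding face of $x$. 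In case (4), where $x$ is an $(m-1,n+1)$-cone and $\theta^{m,n}(x) = x\gamma_{n+1,0}$, I would use $\gamma_{n+1,0}\partial_{i,0} = \partial_{i,0}\gamma_{n,0}$ (valid since $i \leq n < n+1$) and then note $x\partial_{i,0}$ is an $(m-1,n)$-cone, so $\theta^{m,n-1}(x\partial_{i,0}) = x\partial_{i,0}\gamma_{n,0}$ by \ref{ThetaConeWLOG}. Case (5) is similar using $\theta$-on-$\theta$ (\ref{ThetaTheta}) and case (4)-type reasoning. Case (6): this is the hard part, since the lift $\widetilde{\theta}^{m,n}(x)$ only satisfies \ref{ThetaFace0}--\ref{ThetaFace1} by fiat from \cref{theta-lift}, so \ref{ThetaFace0} holds directly \emph{provided} $x\partial_{i,0}$ is handled consistently — but here $\theta^{m,n}(x)\partial_{i,0} = \theta^{m,n-1}(x\partial_{i,0})$ is exactly part of the construction in \cref{theta-lift} via the map $y \colon B^{m,n,n} \to X$, so it is automatic. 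The main obstacle overall is the bookkeeping in cases (1)--(3) of \ref{ThetaFace0}: correctly tracking how the standard form of $x$ changes under $\partial_{i,0}$ and confirming the case of $\theta^{m,n-1}$ that applies to $x\partial_{i,0}$ matches, which requires careful but routine application of \cref{sa1} and the cubical identities.
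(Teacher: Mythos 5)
Your proposal is correct and follows essentially the same approach as the paper: a case analysis over the six cases of \cref{ThetaDef}, commuting $\partial_{i,0}$ past the rightmost structure map using the cubical identities in cases (1)--(3) before invoking the inductive hypothesis, using $\gamma_{j,0}\partial_{j,0}=\id$ together with \ref{ThetaConeWLOG} in case (4), \ref{ThetaTheta} in case (5), and the defining property of the lift from \cref{theta-lift} in case (6). You also correctly identify that the main labor lies in the standard-form bookkeeping in cases (1)--(3), which is where the paper's computations concentrate.
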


\begin{proof}
We will prove this via a case analysis, based on the six cases of \cref{ThetaDef}. First, let $x = z\sigma_{a_{p}}$ in standard form, for $a_{p} \geq n+1$. By the induction hypotheses, for $m' < m$ or $m' = m$ and $n' < n$, $\theta^{m',n'}$ satisfies all the identities of \cref{theta-construction} (in future computations we will often use this assumption without comment). So for $i \leq n$ we have:

\begin{align*}
\theta^{m,n}(x)\partial_{i,0} & = \theta^{m-1,n}(z)\sigma_{a_{p}+1}\partial_{i,0} \\
& = \theta^{m-1,n}(z)\partial_{i,0}\sigma_{a_{p}} \\
& = \theta^{m-1,n-1}(z\partial_{i,0})\sigma_{a_{p}} \\
& = \theta^{m,n-1}(z\partial_{i,0}\sigma_{a_{p}-1}) \\
& = \theta^{m,n-1}(z\sigma_{a_{p}}\partial_{i,0}) \\
& = \theta^{m,n-1}(x\partial_{i,0}) \\
\end{align*}

And for $i = n+1$ we have:

\begin{align*}
\theta^{m,n}(x)\partial_{n+1,0} & = \theta^{m-1,n}(z)\sigma_{a_{p}+1}\partial_{n+1,0} \\
& = \theta^{m-1,n}(z)\partial_{n+1,0}\sigma_{a_{p}} \\
& = z\sigma_{a_{p}} \\
& = x
\end{align*}

Now suppose that the standard form of $x$ is $z\gamma_{b_{q},0}$, where $b_{q} \leq n-1$. Note that we must have $b_{q} \geq 1$, so this case can only occur when $n \geq 2$. Now for $i \leq b_{q} - 1$ we have:

\begin{align*}
\theta^{m,n}(x)\partial_{i,0} & = \theta^{m,n-1}(z)\gamma_{b_{q},0}\partial_{i,0} \\
& = \theta^{m,n-1}(z)\partial_{i,0}\gamma_{b_{q}-1,0} \\
& = \theta^{m,n-2}(z\partial_{i,0})\gamma_{b_{q}-1,0} \\
& = \theta^{m,n-1}(z\partial_{i,0}\gamma_{b_{q}-1,0}) \\
& = \theta^{m,n-1}(z\gamma_{b_{q},0}\partial_{i,0}) \\
& = \theta^{m,n-1}(x\partial_{i,0}) \\
\end{align*}
 
For $i = b_{q}$ or $i = b_{q} + 1$ we have:

\begin{align*}
\theta^{m,n}(x)\partial_{i,0} & = \theta^{m,n-1}(z)\gamma_{b_{q},0}\partial_{i,0} \\
& = \theta^{m,n-1}(z) \\
& = \theta^{m,n-1}(z\gamma_{b_{q},0}\partial_{i,0}) \\
& = \theta^{m,n-1}(x\partial_{i,0}) \\
\end{align*}
 
For $b_{q} + 2 \leq i \leq n$ we have:

\begin{align*}
\theta^{m,n}(x)\partial_{i,0} & = \theta^{m,n-1}(z)\gamma_{b_{q},0}\partial_{i,0} \\
& = \theta^{m,n-1}(z)\partial_{i-1,0}\gamma_{b_{q},0} \\
& = \theta^{m,n-2}(z\partial_{i-1,0})\gamma_{b_{q},0} \\
& = \theta^{m,n-1}(z\partial_{i-1,0}\gamma_{b_{q},0}) \\
& = \theta^{m,n-1}(z\gamma_{b_q,0}\partial_{i,0}) \\
& = \theta^{m,n-1}(x\partial_{i,0}) \\
\end{align*}

And for $i = n + 1$ we have:

\begin{align*}
\theta^{m,n}(x)\partial_{n+1,0} & = \theta^{m,n-1}(z)\gamma_{b_{q},0}\partial_{n+1,0} \\
& = \theta^{m,n-1}(z)\partial_{n,0}\gamma_{b_{q},0} \\
& = z\gamma_{b_{q},0} \\
& = x
\end{align*}

Next we consider the case where the standard form of $x$ is $z\gamma_{b_{q},\varepsilon}, b_{q} \geq  n + 1$. Then for $1 \leq i \leq n$ we have:

\begin{align*}
\theta^{m,n}(x)\partial_{i,0} & = \theta^{m-1,n}(z)\gamma_{b_{q}+1,\varepsilon}\partial_{i,0} \\
& = \theta^{m-1,n}(z)\partial_{i,0}\gamma_{b_{q},\varepsilon} \\
& = \theta^{m-1,n-1}(z\partial_{i,0})\gamma_{b_{q},\varepsilon} \\
& = \theta^{m,n-1}(z\partial_{i,0}\gamma_{b_{q}-1,\varepsilon}) \\
& = \theta^{m,n-1}(z\gamma_{b_{q},\varepsilon}\partial_{i,0}) \\
& = \theta^{m,n-1}(x\partial_{i,0}) \\
\end{align*}

And for $i = n + 1$ we have:

\begin{align*}
\theta^{m,n}(x)\partial_{n+1,0} & = \theta^{m-1,n}(z)\gamma_{b_{q}+1,\varepsilon}\partial_{n+1,0} \\
& = \theta^{m-1,n}(z)\partial_{n+1,0}\gamma_{b_{q},\varepsilon} \\
& = z\gamma_{b_{q},\varepsilon} \\
& = x
\end{align*}

Next, we consider case (4) of \cref{ThetaDef}: let $x$ be an $(m-1,n+1)$-cone not falling under any of cases (1)-(3). By \cref{ConeFaceDeg} \ref{Low0FaceOfCone}, every face $x\partial_{i,0}$ for $i \leq n$ is an $(m-1,n)$-cone, and therefore $\theta^{m,n-1}(x\partial_{i,0}) = x\partial_{i,0}\gamma_{n,0}$ by the induction hypothesis. Now, for $i \leq n$, we can compute:

\begin{align*}
\theta^{m,n}(x)\partial_{i,0} & = x\gamma_{n+1,0}\partial_{i,0} \\
& = x\partial_{i,0}\gamma_{n,0} \\
& = \theta^{m,n-1}(x\partial_{i,0}) \\
\end{align*}

And $\theta^{m,n}(x)\partial_{n+1,0} = x\gamma_{n+1,0}\partial_{n+1,0} = x$.

Next, we consider case (5): consider an $(m,n)$-cone $\theta^{m,n-1}(x')$ not falling under any of cases (1) through (4). Then for $1 \leq i \leq n - 1$ we have:

\begin{align*}
\theta^{m,n}(\theta^{m,n-1}(x'))\partial_{i,0} & = \theta^{m,n-1}(x')\gamma_{n,0}\partial_{i,0} \\
& = \theta^{m,n-1}(x')\partial_{i,0}\gamma_{n-1,0} \\
& = \theta^{m,n-2}(x'\partial_{i,0})\gamma_{n-1,0} \\
& = \theta^{m,n-1}(\theta^{m,n-2}(x'\partial_{i,0})) \\
& = \theta^{m,n-1}(\theta^{m,n-1}(x')\partial_{i,0}) \\
\end{align*}

For $i = n$ we have:

\begin{align*}
\theta^{m,n}(\theta^{m,n-1}(x))\partial_{n,0} & = \theta^{m,n-1}(x')\gamma_{n,0}\partial_{n,0} \\
& = \theta^{m,n-1}(x') \\
& = \theta^{m,n-1}(\theta^{m,n-1}(x')\partial_{n,0}) \\
\end{align*}

And for $i = n + 1$ we have $\theta^{m,n}(\theta^{m,n-1}(x'))\partial_{n+1,0} = \theta^{m,n-1}(x')\gamma_{n,0}\partial_{n+1,0} = \theta^{m,n-1}(x')$.

Finally, we consider case (6); in this case the identities hold by \cref{theta-lift}.
\end{proof}

\begin{proposition}\label{IdC}
$\theta^{m,n}$ satisfies \ref{ThetaFace1}; that is, for $x \colon C^{m,n} \to X$ and $i \geq n + 2$, we have $\theta^{m,n}(x)\partial_{i,1} = \theta^{m-1,n}(x\partial_{i-1,1})$. 
\end{proposition}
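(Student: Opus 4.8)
The plan is to mirror the case analysis of \cref{AB} exactly, running through the six cases of \cref{ThetaDef} and using the induction hypothesis that $\theta^{m',n'}$ satisfies all the identities of \cref{theta-construction} whenever $m' \le m$, $n' \le n$ with at least one inequality strict. In each case one writes $x$ (or its relevant standard form) explicitly, applies the definition of $\theta^{m,n}$ for that case, and then computes the $(i,1)$-face using the cubical identities, reducing along the way to an application of \ref{ThetaFace1} (and occasionally \ref{ThetaFace0}, \ref{ThetaFace0Id}) at a strictly smaller index. As with \cref{AB}, I would invoke \cref{sa1} to prune the degenerate cases: a degenerate cone falls under one of cases (1)--(4), so in case (5) and case (6) one only needs the non-degenerate subcases.

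Concretely: in case (1), $x = z\sigma_{a_p}$ with $a_p \ge n+1$, so $\theta^{m,n}(x) = \theta^{m-1,n}(z)\sigma_{a_p+1}$; for $i \ge n+2$ one commutes $\bd_{i,1}$ past $\sigma_{a_p+1}$ — splitting on whether $i \le a_p+1$ or $i > a_p+1$, using $\sigma_j\partial_{i,\varepsilon} = \partial_{i-1,\varepsilon}\sigma_j$, $\id$, or $\partial_{i,\varepsilon}\sigma_{j-1}$ accordingly — then applies \ref{ThetaFace1} to $\theta^{m-1,n}(z)$ and \ref{ThetaDegen} to reassemble $\theta^{m-1,n}(x\bd_{i-1,1})$. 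Cases (2) and (3), where $x$ ends with $\gamma_{b_q,0}$ ($b_q \le n-1$) or $\gamma_{b_q,\varepsilon}$ ($b_q \ge n+1$), are handled the same way, commuting $\bd_{i,1}$ past the relevant connection via the $\gamma$--$\partial$ cubical identities and then using \ref{ThetaFace1} together with \ref{ThetaLowCon} or \ref{ThetaHighCon}. Case (4), $x$ an $(m-1,n+1)$-cone with $\theta^{m,n}(x) = x\gamma_{n+1,0}$, is a direct computation: $x\gamma_{n+1,0}\bd_{i,1} = x\bd_{i-1,1}\gamma_{n+1,0}$ for $i \ge n+2$, and since $x\bd_{i-1,1}$ is an $(m-2,n+1)$-cone by \cref{ConeFaceDeg}, the induction hypothesis (\ref{ThetaConeWLOG} for $\theta^{m-1,n}$) identifies this with $\theta^{m-1,n}(x\bd_{i-1,1})$. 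Case (5), $x = \theta^{m,n-1}(x')$ with $\theta^{m,n}(x) = x\gamma_{n,0}$, uses $\bd_{i,1}$ past $\gamma_{n,0}$ (here $i \ge n+2 > n$, so $\gamma_{n,0}\bd_{i,1} = \bd_{i-1,1}\gamma_{n,0}$), then \ref{ThetaFace1} for $\theta^{m,n-1}$ followed by \ref{ThetaTheta} at level $(m-1,n)$ to recognize $\theta^{m-1,n}(\theta^{m-1,n-1}(x'\bd_{i-1,1}))$; one also needs \ref{ThetaFace1} to see $x\bd_{i-1,1} = \theta^{m,n-1}(x')\bd_{i-1,1} = \theta^{m-1,n-1}(x'\bd_{i-2,1})$, with a small reindexing check. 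Case (6) is immediate from \cref{theta-lift}, since the lift $\widetilde{\theta}^{m,n}(x)$ was constructed to satisfy \ref{ThetaFace1}.

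I expect the main obstacle to be case (5): one must carefully track that $\theta^{m,n-1}(x')$ is genuinely not covered by the earlier cases (so that \cref{theta-cases-degenerate} and \cref{Theta-T} apply to keep the recursion well-founded), and that the various composites of $\theta$ at bit-shifted indices line up — in particular that applying \ref{ThetaFace1} to $\theta^{m,n-1}(x')$ and then \ref{ThetaTheta} produces exactly $\theta^{m-1,n}(\theta^{m,n-1}(x')\bd_{i-1,1})$ after the index juggling. Everything else is a mechanical unwinding of the cubical identities of the same flavour as in \cref{AB}, so, as noted in the paper, these computations are relegated to the appendix; the proof above is a representative sketch of the bookkeeping, and the full verification proceeds case by case as indicated.
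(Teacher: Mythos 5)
Your overall strategy---running through the six clauses of \cref{ThetaDef}, commuting $\partial_{i,1}$ past the outermost structure map of $\theta^{m,n}(x)$, and reducing to the induction hypotheses---is exactly the paper's, and cases (1)--(3), (5), (6) are fine as sketched. However, your case (4) contains a real error: you assert $x\gamma_{n+1,0}\partial_{i,1} = x\partial_{i-1,1}\gamma_{n+1,0}$ for all $i \geq n+2$. That $\gamma$--$\partial$ commutation is only the relevant clause of the cubical identities when $n+1 < i-1$, i.e.\ for $i \geq n+3$. For $i = n+2$ one has $n+1 = i-1$ with $\varepsilon = 1 = 1-\varepsilon'$, so the identity instead collapses the connection into a degeneracy: $\gamma_{n+1,0}\partial_{n+2,1} = \partial_{n+1,1}\sigma_{n+1}$. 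Thus $\theta^{m,n}(x)\partial_{n+2,1} = x\partial_{n+1,1}\sigma_{n+1}$, whereas what you need to produce is $\theta^{m-1,n}(x\partial_{n+1,1}) = x\partial_{n+1,1}\gamma_{n+1,0}$ (by \ref{ThetaConeWLOG}). These are \emph{not} equal as compositions in $\Box$: a $\sigma_{n+1}$-degeneracy is not a $\gamma_{n+1,0}$-connection of an arbitrary cube, so your ``direct computation'' does not close the $i=n+2$ subcase.

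The missing step is a separate argument exploiting the shape of $x\partial_{n+1,1}$. Since $x$ is an $(m-1,n+1)$-cone, \cref{FaceCond} forces $x\partial_{n+1,1}$ to be a string of degeneracies $\sigma_{n+1}\cdots$ applied to a lower-dimensional cube; on such a cube, pushing $\gamma_{n+1,0}$ through the degeneracies with the identity $\sigma_{n+1}\gamma_{n+1,0}=\sigma_{n+1}\sigma_{n+1}$ shows that $x\partial_{n+1,1}\sigma_{n+1} = x\partial_{n+1,1}\gamma_{n+1,0}$ after all. This is exactly the extra computation the paper inserts before applying \ref{ThetaConeWLOG} in case (4). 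Incidentally, your worry about case (5) is misplaced---the chain via \ref{ThetaFace1} for $\theta^{m,n-1}$ and \ref{ThetaTheta} at level $(m-1,n)$ goes through exactly as you describe---so the only genuine gap in your plan is the $i = n+2$ subcase of case (4).
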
 

\begin{proof}
Throughout the proof, we fix $i \geq n + 2$. First we consider case (1) of \cref{ThetaDef}. Suppose that the standard form of $x$ is $z\sigma_{a_{p}}$, for some $a_{p} \geq n+1$. Here we must consider various cases based on a comparison of $i$ with $a_{p}$. First suppose that $i \leq a_{p}$; note that this implies $a_{p} \geq n + 2$. Then we have: 

\begin{align*}
\theta^{m,n}(x)\partial_{i,1} & = \theta^{m-1,n}(z)\sigma_{a_{p}+1}\partial_{i,1} \\
& = \theta^{m-1,n}(z)\partial_{i,1}\sigma_{a_{p}} \\
& = \theta^{m-2,n}(z\partial_{i-1,1})\sigma_{a_{p}} \\
& = \theta^{m-1,n}(z\partial_{i-1,1}\sigma_{a_{p}-1}) \\
& = \theta^{m-1,n}(z\sigma_{a_{p}}\partial_{i-1,1}) \\
& = \theta^{m-1,n}(x\partial_{i-1,1}) \\
\end{align*}

To obtain the fourth equality, we have used \ref{ThetaDegen} and the fact that $a_{p} - 1 \geq n + 1$.

Next suppose that $i = a_{p} + 1$; then we have:

\begin{align*}
\theta^{m,n}(x)\partial_{a_{p}+1,1} & = \theta^{m-1,n}(z)\sigma_{a_{p}+1}\partial_{a_{p}+1,1} \\
& = \theta^{m-1,n}(z) \\
& = \theta^{m-1,n}(z\sigma_{a_{p}}\partial_{a_{p},1}) \\
& = \theta^{m-1,n}(x\partial_{a_{p},1}) \\
\end{align*}

Finally, suppose $i \geq a_{p} + 2$; note that this implies $i \geq n + 3$. Then we have:

\begin{align*}
\theta^{m,n}(x)\partial_{i,1} & = \theta^{m-1,n}(z)\sigma_{a_{p}+1}\partial_{i,1} \\
& = \theta^{m-1,n}(z)\partial_{i-1,1}\sigma_{a_{p}+1} \\
& = \theta^{m-2,n}(z\partial_{i-2,1})\sigma_{a_{p}+1} \\
& = \theta^{m-1,n}(z\partial_{i-2,1}\sigma_{a_{p}}) \\
& = \theta^{m-1,n}(z\sigma_{a_{p}}\partial_{i-1,1}) \\
& = \theta^{m-1,n}(x\partial_{i-1,1}) \\
\end{align*}

Next we consider case (2): suppose that $x = z\gamma_{b_{q},0}$ in standard form, where $b_{q} \leq n-1$. Then $i \geq b_{q}+3$, and we have:

\begin{align*}
\theta^{m,n}(x)\partial_{i,1} & = \theta^{m,n-1}(z)\gamma_{b_{q},0}\partial_{i,1} \\
& = \theta^{m,n-1}(z)\partial_{i-1,1}\gamma_{b_{q},0} \\
& = \theta^{m-1,n-1}(z\partial_{i-2,1})\gamma_{b_{q},0} \\
& = \theta^{m-1,n}(z\partial_{i-2,1}\gamma_{b_{q},0}) \\
& = \theta^{m-1,n}(z\gamma_{b_{q},0}\partial_{i-1,1}) \\
& = \theta^{m-1,n}(x\partial_{i-1,1}) \\
\end{align*}

Next we consider case (3): suppose that $x = z\gamma_{b_{q},\varepsilon}$ in standard form, where $b_{q} \geq n + 1$. Once again, we must perform a case analysis. First suppose that $i \leq b_{q}$, implying $b_{q} \geq n + 2$. Then we can compute:

\begin{align*}
\theta^{m,n}(x)\partial_{i,1} & = \theta^{m-1,n}(z)\gamma_{b_{q}+1,\varepsilon}\partial_{i,1} \\
& = \theta^{m-1,n}(z)\partial_{i,1}\gamma_{b_{q},\varepsilon} \\
& = \theta^{m-2,n}(z\partial_{i-1,1})\gamma_{b_{q},\varepsilon} \\
& = \theta^{m-1,n}(z\partial_{i-1,1}\gamma_{b_{q}-1,\varepsilon}) \\
& = \theta^{m-1,n}(z\gamma_{b_{q},\varepsilon}\partial_{i-1,1}) \\
& = \theta^{m-1,n}(x\partial_{i-1,1}) \\
\end{align*}

Next suppose that $i = b_{q} + 1$ or $b_{q} + 2$, and $\varepsilon = 0$. Then we have:

\begin{align*}
\theta^{m,n}(x)\partial_{i,1} & = \theta^{m-1,n}(z)\gamma_{b_{q}+1,0}\partial_{i,1} \\
& = \theta^{m-1,n}(z)\partial_{b_{q}+1,1}\sigma_{b_{q}+1} \\
& = \theta^{m-2,n}(z\partial_{b_{q},1})\sigma_{b_{q}+1} \\
& = \theta^{m-1,n}(z\partial_{b_{q},1}\sigma_{b_{q}}) \\
& = \theta^{m-1,n}(z\gamma_{b_{q},0}\partial_{i-1,1}) \\
& = \theta^{m-1,n}(x\partial_{i-1,1}) \\
\end{align*}
 
To obtain the third equality, we used \ref{ThetaFace1} for $\theta^{m-1,n}$ and the assumption that $b_{q} \geq n + 1$. Next suppose that $i = b_{q} + 1$ or $b_{q} + 2$, and $\varepsilon = 1$. Then we have:

\begin{align*}
\theta^{m,n}(x)\partial_{i,1} & = \theta^{m-1,n}(z)\gamma_{b_{q}+1,1}\partial_{i,1} \\
& = \theta^{m-1,n}(z) \\
& = \theta^{m-1,n}(z \gamma_{b_{q},1} \bd_{i-1,1}) \\
& = \theta^{m-1,n}(x \bd_{i-1,1}) \\ 
\end{align*}

Finally, suppose $i \geq b_{q} + 3$, implying $i \geq n + 4$. Then we have:

\begin{align*}
\theta^{m,n}(x)\partial_{i,1} & = \theta^{m-1,n}(z)\gamma_{b_{q}+1,\varepsilon}\partial_{i,1} \\
& = \theta^{m-1,n}(z)\partial_{i-1,1}\gamma_{b_{q}+1,\varepsilon} \\
& = \theta^{m-2,n}(z\partial_{i-2,1})\gamma_{b_{q}+1,\varepsilon} \\
& = \theta^{m-1,n}(z\partial_{i-2,1}\gamma_{b_{q},\varepsilon}) \\
& = \theta^{m-1,n}(z\gamma_{b_{q},\varepsilon}\partial_{i-1,1}) \\
& = \theta^{m-1,n}(x\partial_{i-1,1}) \\
\end{align*} 
 
Next we consider case (4): let $x$ be an $(m-1,n+1)$-cone not covered under any of cases (1) through (3). Then $x\partial_{i-1,1}$ is an $(m-2,n+1)$-cone by \cref{ConeFaceDeg} \ref{1FaceOfCone}, so $\theta^{m-1,n}(x\partial_{i-1,1}) = x\partial_{i-1,1}\gamma_{n+1,0}$ by\ref{ThetaConeWLOG} for $\theta^{m-1,n}$. Furthermore, note that by \cref{FaceCond}, $x\partial_{n+1,1} = x\partial_{m+n+1,0}...\partial_{n+1,1}\sigma_{n+1}...\sigma_{m+n}$. Using the cubical identities, we can rewrite this as $x\partial_{m+n+1,0}...\partial_{n+1,1}\sigma_{n+1}...\sigma_{n+1}$. Then for $i = n+2$, we can compute:

\begin{align*}
\theta^{m,n}(x)\partial_{n+2,1} & = x\gamma_{n+1,0}\partial_{n+2,1} \\
& = x\partial_{n+1,1}\sigma_{n+1} \\
& = x\partial_{m+n+1,0}...\partial_{n+1,1}\sigma_{n+1}...\sigma_{n+1}\sigma_{n+1} \\
& = x\partial_{m+n+1,0}...\partial_{n+1,1}\sigma_{n+1}...\sigma_{n+1}\gamma_{n+1,0} \\
& = x\partial_{n+1,1}\gamma_{n+1,0} \\
& = \theta^{m-1,n}(x\partial_{n+1,1}) \\
\end{align*}
 
While for $i \geq n + 3$, we have:

\begin{align*}
\theta^{m,n}(x)\partial_{i,1} & = x\gamma_{n+1,0}\partial_{i,1} \\
& = x\partial_{i-1,1}\gamma_{n+1,0} \\
& = \theta^{m-1,n}(x\partial_{i-1,1}) \\
\end{align*} 
 
Next we consider case (5). Let $x' \colon C^{m,n-1} \to X$, and consider $\theta^{m,n}(\theta^{m,n-1}(x'))$. Then we can compute:

\begin{align*}
\theta^{m,n}(\theta^{m,n-1}(x'))\partial_{i,1} & = \theta^{m,n-1}(x')\gamma_{n,0}\partial_{i,1} \\
& = \theta^{m,n-1}(x')\partial_{i-1,1}\gamma_{n,0} \\
& = \theta^{m-1,n-1}(x'\partial_{i-2,1})\gamma_{n,0} \\
& = \theta^{m-1,n}(\theta^{m-1,n-1}(x'\partial_{i-2,1})) \\
& = \theta^{m-1,n}(\theta^{m,n-1}(x')\partial_{i-1,1}) \\
\end{align*} 

Finally, in case (6), the identity holds by \cref{theta-lift}.
\end{proof}

Next we consider the identities involving degeneracies and connections.

\begin{proposition}\label{CDE}
$\theta^{m,n}$ satisfies \ref{ThetaDegen}, \ref{ThetaLowCon}, and \ref{ThetaHighCon}. That is:
\begin{itemize}
\item for $x \colon C^{m-1,n} \to X$ and $i \geq n + 1$, $\theta^{m,n}(x\sigma_{i}) = \theta^{m-1,n}(x)\sigma_{i+1}$;
\item for $x \colon C^{m,n-1} \to X$ and $i \leq n - 1$,  $\theta^{m,n}(x\gamma_{i,0}) = \theta^{m,n-1}(x)\gamma_{i,0}$;
\item for $x \colon C^{m-1,n} \to X$ and $i \geq n + 1$, $\theta^{m,n}(x\gamma_{i,\varepsilon}) = \theta^{m-1,n}(x)\gamma_{i+1,\varepsilon}$.
\end{itemize}
\end{proposition}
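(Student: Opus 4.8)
The plan is to verify each of the three identities \ref{ThetaDegen}, \ref{ThetaLowCon}, and \ref{ThetaHighCon} by the same six-case analysis on the standard form of the argument cube that was used in \cref{AB,IdC}, relying throughout on the induction hypothesis that $\theta^{m',n'}$ satisfies all eight identities whenever $m' \le m$, $n' \le n$ with at least one inequality strict. For \ref{ThetaLowCon}, the argument $x\gamma_{i,0}$ with $i \le n-1$ is a degenerate $(m,n)$-cone, so by \cref{theta-cases-degenerate} it falls under one of cases (1)--(4) of \cref{ThetaDef}; in fact, when $i \leq n-1$, writing $x$ in standard form and appending $\gamma_{i,0}$, the rightmost connection of $x\gamma_{i,0}$ is a negative connection with index $\le n-1$, so one is almost always in case (2), and the identity $\theta^{m,n}(x\gamma_{i,0}) = \theta^{m,n-1}(x)\gamma_{i,0}$ follows essentially by unwinding that definition --- modulo commuting $\gamma_{i,0}$ past the various structure maps using the cubical identities and the inductive forms of $\theta$. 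The cases where $x$ itself is degenerate or a lower cone (so that $x\gamma_{i,0}$ lands in cases (1), (3), or (4)) require separate but entirely parallel computations.

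For \ref{ThetaDegen} and \ref{ThetaHighCon}, the arguments $x\sigma_i$ and $x\gamma_{i,\varepsilon}$ with $i \ge n+1$ are again degenerate $(m,n)$-cones, hence covered by cases (1)--(4) of \cref{ThetaDef}. Here case (1) (standard form ending in a degeneracy of index $\ge n+1$) and case (3) (ending in a connection of index $\ge n+1$) are the generic situations, and in those cases the claimed identities are close to being the definitions themselves: e.g.\ if the standard form of $x$ is $z$ and $x\sigma_i$ has standard form $z\sigma_i$ with $i \ge n+1$, then $\theta^{m,n}(x\sigma_i) = \theta^{m-1,n}(z)\sigma_{i+1}$ directly, and one must then match this against $\theta^{m-1,n}(x)\sigma_{i+1}$, which may itself require expanding $\theta^{m-1,n}(x)$ according to its own case. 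The bookkeeping is to track how appending $\sigma_i$ or $\gamma_{i,\varepsilon}$ to a standard form either (a) extends the degeneracy/connection block, landing us in case (1) or (3), or (b) interacts with an existing trailing connection via the cubical identities, possibly changing which case applies. In the subcases where $x$ is an $(m-1,n+1)$-cone (case (4)) or an image $\theta^{m,n-1}(x')$ (case (5)), one uses \ref{ThetaConeWLOG}, \ref{ThetaTheta}, and the already-established \ref{ThetaFace0}--\ref{ThetaFace1} to reduce to formulas of the form $x\gamma_{n+1,0}$ or $x\gamma_{n,0}$ and then commutes the extra structure map through with cubical identities.

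The main obstacle is purely combinatorial rather than conceptual: there is no single clean argument, only a long tree of cases indexed by (i) which of the six cases of \cref{ThetaDef} applies to the argument cube, (ii) which case applies to $x$ itself after stripping the appended structure map, and (iii) the position of the appended index relative to the indices in the standard form, all of which must be reconciled using the cubical identities for $\sigma_j\sigma_i$, $\sigma_j\gamma_{i,\varepsilon}$, and $\gamma_{j,\varepsilon'}\gamma_{i,\varepsilon}$. One should use \cref{sa1,cn} aggressively to prune impossible combinations of indices (e.g.\ a degenerate cone cannot have a trailing degeneracy of index $\le n$), which is what keeps the case list finite and manageable. Because these verifications are long and mechanical, following exactly the template already set by \cref{AB,IdC}, the remaining identities \ref{ThetaTheta} and \ref{ThetaConeWLOG} will be handled in the subsequent propositions \cref{IdF,IdG} by the same method, and the full chain \cref{AB,IdC,CDE,IdF,IdG} completes the proof of \cref{theta-exists} as asserted there.
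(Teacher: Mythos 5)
Your plan matches the paper's proof: each argument cube ($x\sigma_i$, $x\gamma_{i,0}$, $x\gamma_{i,\varepsilon}$) is degenerate, hence falls in cases (1)--(4) of \cref{ThetaDef} by \cref{theta-cases-degenerate}; one then case-splits on the trailing map of the standard form of $x$, compares indices, and resolves each subcase via the definition, the induction hypothesis, and the cubical identities, pruning impossible index combinations with \cref{sa1,cn}. Two slips are worth noting, though neither changes the approach. First, for \ref{ThetaLowCon} you claim $x\gamma_{i,0}$ is ``almost always in case (2)'' because its rightmost connection has low index: this fails whenever $x$'s standard form ends in a degeneracy $\sigma_{a_p}$ (forced to have $a_p\ge n$ by \cref{sa1}), since then $x\gamma_{i,0}=z\gamma_{i,0}\sigma_{a_p+1}$ reassociates to end with $\sigma_{a_p+1}$ and lands in case (1); likewise a trailing connection of index $\ge n$ puts it in case (3), and a trailing $\gamma_{n-1,0}$ puts it in case (4) via \cref{cn} --- the paper's proof treats all of these. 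Second, the ``e.g.'' you give for \ref{ThetaDegen} is vacuous: if $z$ denotes the standard form of $x$ then $z=x$ as cubes, so $\theta^{m-1,n}(z)\sigma_{i+1}=\theta^{m-1,n}(x)\sigma_{i+1}$ identically and there is nothing to ``match against'' or expand; the substantive subcase is $a_p\ge i$, where the cubical identity $\sigma_{a_p}\sigma_i=\sigma_i\sigma_{a_p+1}$ shifts the trailing degeneracy so that $x\sigma_i$'s standard form ends in $\sigma_{a_p+1}$ rather than $\sigma_i$, and one must then compute.
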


\begin{proof}
For each identity, we will perform a case analysis based on the standard form of $x$. For \ref{ThetaDegen}, consider an $(m,n)$-cone $x\sigma_{i}$, where $i \geq n + 1$ and the standard form of $x$ is $y\gamma_{b_{1},\varepsilon_1}...\gamma_{b_{q},\varepsilon_q}\sigma_{a_{1}}...\sigma_{a_{p}}$. If the string of degeneracy maps in the standard form of $x$ is empty, or $a_{p} < i$, then the standard form of $x\sigma_{i}$ ends with $\sigma_{i}$, so $\theta^{m,n}(x\sigma_{i}) = \theta^{m-1,n}(x)\sigma_{i+1}$ by definition. So suppose that $a_{p} \geq i$. Then:

\begin{align*}
\theta^{m,n}(x\sigma_{i}) & = \theta^{m,n}(y\gamma_{b_{1},\varepsilon_1}...\gamma_{b_{q},\varepsilon_q}\sigma_{a_{1}}...\sigma_{a_{p}}\sigma_{i}) \\
& = \theta^{m,n}(y\gamma_{b_{1},\varepsilon_1}...\gamma_{b_{q},\varepsilon_q}\sigma_{a_{1}}...\sigma_{a_{p-1}}\sigma_{i}\sigma_{a_{p}+1}) \\
\end{align*}

By assumption, all the indices $a_{1},...,a_{p-1}$, are less than $a_{p}$. Rearranging the expression on the right-hand side of the equation into standard form using the cubical identities will not increase any of these indices by more than 1, so the rightmost map in the standard form of $x\sigma_{i}$, i.e. the degeneracy map with the highest index, is $\sigma_{a_{p}+1}$. Therefore, we can compute:

\begin{align*}
\theta^{m,n}(y\gamma_{b_{1},\varepsilon_1}...\gamma_{b_{q},\varepsilon_q}\sigma_{a_{1}}...\sigma_{a_{p-1}}\sigma_{i}\sigma_{a_{p}+1}) & = \theta^{m-1,n}(y\gamma_{b_{1},\varepsilon_1}...\gamma_{b_{q},\varepsilon_q}\sigma_{a_{1}}...\sigma_{a_{p-1}}\sigma_{i})\sigma_{a_{p}+2} \\
& = \theta^{m-2,n}(y\gamma_{b_{1},\varepsilon_1}...\gamma_{b_{q},\varepsilon_q}\sigma_{a_{1}}...\sigma_{a_{p-1}})\sigma_{i+1}\sigma_{a_{p}+2} \\
& = \theta^{m-2,n}(y\gamma_{b_{1},\varepsilon_1}...\gamma_{b_{q},\varepsilon_q}\sigma_{a_{1}}...\sigma_{a_{p-1}})\sigma_{a_{p}+1}\sigma_{i+1} \\
& = \theta^{m-1,n}(y\gamma_{b_{1},\varepsilon_1}...\gamma_{b_{q},\varepsilon_q}\sigma_{a_{1}}...\sigma_{a_{p-1}}\sigma_{a_{p}})\sigma_{i+1} \\
& = \theta^{m-1,n}(x)\sigma_{i+1} \\
\end{align*}

So $\theta^{m,n}$ satisfies \ref{ThetaDegen}.

Next we will verify \ref{ThetaHighCon}. Consider an $(m,n)$-cone $x\gamma_{i,\varepsilon}$, where $i \geq n + 1$ and the standard form of $x$ is as above. If this standard form contains no degeneracy maps, and either $b_{q} < i$, $b_q = i$ while $\varepsilon_q \neq \varepsilon$, or $x$ is non-degenerate, then the standard form of $x\gamma_{i,\varepsilon}$ ends with $\gamma_{i,\varepsilon}$, so the identity holds by definition. The remaining possibilities for the standard form of $x$ can be divided into various cases. First, suppose that the string of degeneracy maps in the standard form of $x$ is non-empty, i.e. $x = z\sigma_{a_{p}}$ in standard form. By \cref{ConeFaceDeg}, $x = x\gamma_{i,\varepsilon}\partial_{i,\varepsilon}$ is an $(m-1,n)$-cone, so $a_{p} \geq n+1$ by \cref{sa1} \ref{sa1-degen}. Now we must break this into further cases based on a comparison between $i$ and $a_{p}$. If $i < a_{p}$ then, using the cubical identities, \ref{ThetaDegen} for $\theta^{m,n}$, and \ref{ThetaHighCon} for $\theta^{m-1,n}$, we can compute:

\begin{align*}
\theta^{m,n}(x\gamma_{i,\varepsilon}) & = \theta^{m,n}(z\sigma_{a_{p}}\gamma_{i,\varepsilon}) \\
& = \theta^{m,n}(z\gamma_{i,\varepsilon}\sigma_{a_{p}+1}) \\
& = \theta^{m-1,n}(z\gamma_{i,\varepsilon})\sigma_{a_{p}+2} \\
& = \theta^{m-2,n}(z)\gamma_{i+1,\varepsilon}\sigma_{a_{p}+2} \\
& = \theta^{m-2,n}(z)\sigma_{a_{p}+1}\gamma_{i+1,\varepsilon} \\
& = \theta^{m-1,n}(z\sigma_{a_{p}})\gamma_{i+1,\varepsilon} \\
& = \theta^{m-1,n}(x)\gamma_{i+1,\varepsilon} \\
\end{align*}

Next we consider the case $i = a_{p}$:

\begin{align*}
\theta^{m,n}(x\gamma_{a_{p},\varepsilon}) & = \theta^{m,n}(z\sigma_{a_{p}}\gamma_{a_{p},\varepsilon}) \\
& = \theta^{m,n}(z\sigma_{a_{p}}\sigma_{a_{p}+1}) \\
& = \theta^{m-1,n}(z\sigma_{a_{p}})\sigma_{a_{p}+2} \\
& = \theta^{m-2,n}(z)\sigma_{a_{p}+1}\sigma_{a_{p}+2} \\
& = \theta^{m-2,n}(z)\sigma_{a_{p}+1}\gamma_{a_{p}+1,\varepsilon} \\
& = \theta^{m-1,n}(z\sigma_{a_{p}})\gamma_{a_{p}+1,\varepsilon} \\
& = \theta^{m-1,n}(x)\gamma_{a_{p}+1,\varepsilon} \\
\end{align*}

Now we consider the case $i > a_{p}$. Note that this implies $i \geq n+2$, so $i-1 \geq n+1$. Thus we can compute:

\begin{align*}
\theta^{m,n}(x\gamma_{i,\varepsilon}) & = \theta^{m,n}(z\sigma_{a_{p}}\gamma_{i,\varepsilon}) \\
& = \theta^{m,n}(z\gamma_{i-1,\varepsilon}\sigma_{a_{p}}) \\
& = \theta^{m-1,n}(z\gamma_{i-1,\varepsilon})\sigma_{a_{p}+1} \\
& = \theta^{m-2,n}(z)\gamma_{i,\varepsilon}\sigma_{a_{p}+1} \\
& = \theta^{m-2,n}(z)\sigma_{a_{p}+1}\gamma_{i+1,\varepsilon} \\
& = \theta^{m-1,n}(z\sigma_{a_{p}})\gamma_{i+1,\varepsilon} \\
& = \theta^{m-1,n}(x)\gamma_{i+1,\varepsilon} \\
\end{align*}

Next we will verify \ref{ThetaHighCon} in the case where the standard form of $x$ contains no degeneracy maps, and either $i < b_{q}$ or $i = b_{q}$ and $\varepsilon = \varepsilon_q$. In this case we can compute:

\begin{align*}
\theta^{m,n}(x\gamma_{i,\varepsilon}) & = \theta^{m,n}(y\gamma_{b_{1},\varepsilon_1}...\gamma_{b_{q},\varepsilon_q}\gamma_{i,\varepsilon}) \\
& = \theta^{m,n}(y\gamma_{b_{1},\varepsilon_1}...\gamma_{b_{q-1},\varepsilon_{q-1}}\gamma_{i,\varepsilon}\gamma_{b_{q}+1,\varepsilon_{q}}) \\
\end{align*}

Similarly to what we saw when verifying \ref{ThetaDegen}, after we have rearranged the expression on the right-hand side of this equation into standard form, the rightmost map in the expression will still be $\gamma_{b_{q}+1,\varepsilon_{q}}$. Thus we can apply the definition of $\theta^{m,n}$ to compute:

\begin{align*}
\theta^{m,n}(y\gamma_{b_{1},\varepsilon}...\gamma_{b_{q-1},\varepsilon_{q-1}}\gamma_{i,\varepsilon}\gamma_{b_{q}+1,\varepsilon_q}) & = \theta^{m-1,n}(y\gamma_{b_{1}\varepsilon_1}...\gamma_{b_{q-1},\varepsilon_{q-1}}\gamma_{i,\varepsilon_i})\gamma_{b_{q}+2,\varepsilon_{q}} \\
& = \theta^{m-2,n}(y\gamma_{b_{1},\varepsilon_1}...\gamma_{b_{q-1},\varepsilon_{q-1}})\gamma_{i+1,\varepsilon}\gamma_{b_{q}+2,\varepsilon_q} \\
& = \theta^{m-2,n}(y\gamma_{b_{1},\varepsilon_1}...\gamma_{b_{q-1},\varepsilon_{q-1}})\gamma_{b_{q}+1,\varepsilon_q}\gamma_{i+1,\varepsilon} \\
& = \theta^{m-1,n}(y\gamma_{b_{1},\varepsilon_1}...\gamma_{b_{q},\varepsilon_q})\gamma_{i+1,\varepsilon} \\
& = \theta^{m-1,n}(x)\gamma_{i+1,\varepsilon} \\
\end{align*}

Thus $\theta^{m,n}$ satisfies \ref{ThetaHighCon}.

Finally we will verify \ref{ThetaLowCon}. Consider an $(m,n)$-cone $x\gamma_{i,0}$, where $i \leq n - 1$ and the standard form of $x$ is as above. Once again, we must consider several possible cases based on the standard form of $x$. As with \ref{ThetaHighCon}, if the standard form of $x$ contains no degeneracy maps, and either $b_{q} < i$, $b_{q} = i$ while $\varepsilon_{q} = 1$, or $x$ is non-degenerate, then $\gamma_{i,0}$ is the rightmost map in the standard form of $x\gamma_{i,0}$, and the identity holds by definition. Once again, the remaining cases will require computation.

As above, we begin with the case where the string of degeneracy maps in the standard form of $x$ is non-empty. By \cref{ConeFaceDeg} \ref{Low0FaceOfCone}, $x = x\gamma_{i,0}\partial_{i,0}$ is an $(m,n-1)$-cone, so $a_{p} \geq n$ by \cref{sa1}. Then, using the cubical identities, \ref{ThetaDegen} for $\theta^{m,n}$, and \ref{ThetaLowCon} for $\theta^{m-1,n}$, we can compute:

\begin{align*}
\theta^{m,n}(x\gamma_{i,0}) & = \theta^{m,n}(z\sigma_{a_{p}}\gamma_{i,0}) \\
& = \theta^{m,n}(z\gamma_{i,0}\sigma_{a_{p}+1}) \\
& = \theta^{m-1,n}(z\gamma_{i,0})\sigma_{a_{p}+2} \\
& = \theta^{m-1,n-1}(z)\gamma_{i,0}\sigma_{a_{p}+2} \\
& = \theta^{m-1,n-1}(z)\sigma_{a_{p}+1}\gamma_{i,0} \\
& = \theta^{m,n-1}(z\sigma_{a_{p}})\gamma_{i,0} \\
& = \theta^{m,n-1}(x)\gamma_{i,0}
\end{align*}

Next we consider the cases in which the standard form of $x$ contains no degeneracy maps; first, suppose that $b_{q} \geq n$. Then, using the cubical identities, \ref{ThetaHighCon} for $\theta^{m,n}$, and \ref{ThetaLowCon} for $\theta^{m-1,n}$, we can compute:

\begin{align*}
\theta^{m,n}(x\gamma_{i,0}) & = \theta^{m,n}(z\gamma_{b_{q},\varepsilon_q}\gamma_{i,0}) \\
& = \theta^{m,n}(z\gamma_{i,0}\gamma_{b_{q}+1,\varepsilon_q}) \\
& = \theta^{m-1,n}(z\gamma_{i,0})\gamma_{b_{q}+2,\varepsilon_q} \\
& = \theta^{m-1,n-1}(z)\gamma_{i,0}\gamma_{b_{q}+2,\varepsilon_q} \\
& = \theta^{m-1,n-1}(z)\gamma_{b_{q}+1,\varepsilon_q}\gamma_{i,0} \\
& = \theta^{m,n-1}(z\gamma_{b_{q},\varepsilon_q})\gamma_{i,0} \\
& = \theta^{m,n-1}(x)\gamma_{i,0} \\
\end{align*} 

Next we consider the case $b_{q} = n - 1$. Note that $x = x \gamma_{i,0} \bd_{i,0}$ is an $(m,n-1)$-cone by \cref{ConeFaceDeg} \ref{Low0FaceOfCone}; thus $\varepsilon_q = 0$ by \cref{sa1} \ref{sa1-con}. Here we can compute:

\begin{align*}
x\gamma_{i,0} & = y\gamma_{b_{1},\varepsilon_1}...\gamma_{b_{q-1},\varepsilon_{q-1}}\gamma_{n-1,0}\gamma_{i,0} \\
& = y\gamma_{b_{1},\varepsilon_1}...\gamma_{b_{q-1},\varepsilon_{q-1}}\gamma_{i,0}\gamma_{n,0} \\
\end{align*}

As in previous cases, after rearranging this expression into standard form, the rightmost map will still be $\gamma_{n,0}$. Thus $x\gamma_{i,0}$ belongs to case (4) by \cref{cn}, so:

\begin{align*}
\theta^{m,n}(x\gamma_{i,0}) & = x\gamma_{i,0}\gamma_{n+1,0} \\
& = x\gamma_{n,0}\gamma_{i,0} \\
\end{align*}

By \cref{ConeFaceDeg} \ref{Low0FaceOfCone}, $x = x\gamma_{i,0}\partial_{i,0}$ is an $(m,n-1)$-cone, so the fact that $b_{q} = n - 1$ implies that $x$ also belongs to case (4). Thus $x\gamma_{n,0} = \theta^{m,n-1}(x)$, so \ref{ThetaLowCon} is satisfied in this case. 

Finally, we consider the case $i \leq b_{q} \leq n - 2$. Once again, we have $\varepsilon_{q} = 0$ by \cref{sa1} \ref{sa1-con}. Now we can compute:

\begin{align*}
\theta^{m,n}(x\gamma_{i,0}) & = \theta^{m,n}(y\gamma_{b_{1},\varepsilon_1}...\gamma_{b_{q},0}\gamma_{i,0}) \\
& = \theta^{m,n}(y\gamma_{b_{1},\varepsilon_1}...\gamma_{b_{q-1},\varepsilon_{q-1}}\gamma_{i,0}\gamma_{b_{q}+1,0}) \\
\end{align*}

As in previous computations, once the expression on the right-hand side of the equation has been rearranged into standard form, its rightmost map will still be $\gamma_{b_{q}+1,0}$. By assumption, $b_{q}+1 \leq n - 1$, so using the cubical identities, the definition of $\theta^{m,n}$, and \ref{ThetaLowCon} for $\theta^{m,n-1}$, we can compute:

\begin{align*}
\theta^{m,n}(y\gamma_{b_{1},\varepsilon_1}...\gamma_{b_{q-1},\varepsilon_{q-1}}\gamma_{i,0}\gamma_{b_{q}+1,0}) & = \theta^{m,n-1}(y\gamma_{b_{1},\varepsilon_1}...\gamma_{b_{q-1},\varepsilon_{q-1}}\gamma_{i,0})\gamma_{b_{q}+1,0} \\
& = \theta^{m,n-2}(y\gamma_{b_{1},\varepsilon_1}...\gamma_{b_{q-1},\varepsilon_{q-1}})\gamma_{i,0}\gamma_{b_{q}+1,0} \\
& = \theta^{m,n-2}(y\gamma_{b_{1},\varepsilon_1}...\gamma_{b_{q-1},\varepsilon_{q-1}})\gamma_{b_{q},0}\gamma_{i,0} \\
& = \theta^{m,n-1}(y\gamma_{b_{1},\varepsilon_1}...\gamma_{b_{q},0})\gamma_{i,0} \\
& = \theta^{m,n-1}(x)\gamma_{i,0} \\
\end{align*}

Thus $\theta^{m,n}$ satisfies \ref{ThetaLowCon}.
\end{proof}

\begin{proposition}\label{IdF}
$\theta^{m,n}$ satisfies \ref{ThetaTheta}. That is, for $x \colon C^{m,n-1} \to X$, $\theta^{m,n}(\theta^{m,n-1}(x)) = \theta^{m,n-1}(x)\gamma_{n,0}$.
\end{proposition}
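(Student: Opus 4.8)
The plan is to argue by cases according to which of the six clauses of \cref{ThetaDef} is used to define $\theta^{m,n-1}(x)$; these six cases are mutually exclusive and exhaustive. Write $y := \theta^{m,n-1}(x)$, so the claim is that $\theta^{m,n}(y) = y\gamma_{n,0}$. Since $y$ lies in the image of $\theta^{m,n-1}$, clause (6) of \cref{ThetaDef} never governs $\theta^{m,n}(y)$, so if none of clauses (1)--(4) governs $\theta^{m,n}(y)$ then clause (5) does and $\theta^{m,n}(y) = y\gamma_{n,0}$ by definition. By \cref{Theta-T} applied in dimension $(m,n-1)$ --- where all the identities of \cref{theta-construction} hold for $\theta^{m,n-1}$ by the induction hypothesis of \cref{ThetaDef} --- this is precisely what happens whenever $x$ is governed by clause (6), so it remains to treat the cases where $x$ is governed by one of clauses (1)--(5).

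If $x$ is governed by clause (1), (2), or (3) then $x$ is degenerate, and \cref{sa1} pins down the trailing operator in its standard form, so that the relevant clause (in dimension $(m,n-1)$) rewrites $y$ as $\theta^{m-1,n-1}(z)\sigma_{a+1}$ with $a \geq n$, as $\theta^{m,n-2}(z)\gamma_{b,0}$ with $b \leq n-2$, or as $\theta^{m-1,n-1}(z)\gamma_{b+1,\varepsilon}$ with $b \geq n$, respectively. In each case $y$ is thereby presented as a cone with a single trailing degeneracy or connection of index $\geq n+1$ (resp.\ $\leq n-1$), so the already-established identities \ref{ThetaDegen}, \ref{ThetaLowCon}, \ref{ThetaHighCon} for $\theta^{m,n}$ (\cref{CDE}) peel that operator off and express $\theta^{m,n}(y)$ in terms of $\theta^{m-1,n}(\theta^{m-1,n-1}(z))$ or $\theta^{m,n-1}(\theta^{m,n-2}(z))$; applying \ref{ThetaTheta} for the strictly smaller index $(m-1,n-1)$ or $(m,n-2)$ --- valid by induction --- reduces this to $\theta^{m-1,n-1}(z)\gamma_{n,0}$ (resp.\ $\theta^{m,n-2}(z)\gamma_{n-1,0}$) followed by the peeled operator, and a single application of the cubical identity $\sigma_j\gamma_{i,\varepsilon} = \gamma_{i,\varepsilon}\sigma_{j+1}$ or $\gamma_{j,\varepsilon'}\gamma_{i,\varepsilon} = \gamma_{i,\varepsilon}\gamma_{j+1,\varepsilon'}$ (both for $j > i$) identifies the result with $y\gamma_{n,0}$. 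If $x$ is governed by clause (4) then $x$ is an $(m-1,n)$-cone, so \ref{ThetaConeWLOG} for $\theta^{m,n-1}$ gives $y = x\gamma_{n,0}$; by \cref{ConeFaceDeg} \ref{Low0ConOfCone} this is an $(m-1,n+1)$-cone and so is governed by clause (4) in dimension $(m,n)$, whence $\theta^{m,n}(y) = y\gamma_{n+1,0} = x\gamma_{n,0}\gamma_{n+1,0}$, which equals $x\gamma_{n,0}\gamma_{n,0} = y\gamma_{n,0}$ by the cubical identity $\gamma_{n,0}\gamma_{n,0} = \gamma_{n,0}\gamma_{n+1,0}$. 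If $x$ is governed by clause (5) then $x = \theta^{m,n-2}(x')$ and $x$ is non-degenerate (otherwise it would be governed by one of clauses (1)--(4)), so $y = x\gamma_{n-1,0}$ is in standard form, is governed by clause (2) in dimension $(m,n)$, and $\theta^{m,n}(y) = y\gamma_{n-1,0} = y\gamma_{n,0}$, again by $\gamma_{n-1,0}\gamma_{n-1,0} = \gamma_{n-1,0}\gamma_{n,0}$.

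The main obstacle is bookkeeping rather than insight: one must check that the case split on which clause of \cref{ThetaDef} governs $x$ in dimension $(m,n-1)$ is genuinely exhaustive --- the delicate point being that a degenerate $(m,n-1)$-cone governed by clause (4) necessarily has standard form ending in $\gamma_{n-1,0}$, hence, by \cref{cn}, is an $(m-1,n)$-cone, so that clause (4) is uniformly covered by the $(m-1,n)$-cone argument --- and one must, within each clause, correctly determine which clause of \cref{ThetaDef} governs $\theta^{m,n}(y)$ and then resolve the resulting discrepancies of the shape $y\gamma_{n+1,0}$ versus $y\gamma_{n,0}$, or $\gamma_{n,0}\sigma_{a+2}$ versus $\sigma_{a+1}\gamma_{n,0}$, by the cubical identities. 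In keeping with the style of this appendix, these verifications are carried out clause by clause.
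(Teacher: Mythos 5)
Your proof is correct and takes essentially the same route as the paper: case analysis on which of the six clauses of \cref{ThetaDef} governs $x$ at level $(m,n-1)$, using the already-proved identities \ref{ThetaDegen}--\ref{ThetaHighCon} for $\theta^{m,n}$ together with \ref{ThetaTheta} at the strictly smaller index to peel off the trailing operator in cases (1)--(3), a direct standard-form computation in cases (4)--(5), and \cref{Theta-T} for case (6). One place where the written argument skips ahead of the paper's: in your case (4) you assert ``is an $(m-1,n+1)$-cone and so is governed by clause (4) in dimension $(m,n)$,'' but being an $(m-1,n+1)$-cone does not by itself exclude clauses (1)--(3); the paper closes this by showing (via \cref{sa1} and the standard-form constraint) that the standard form of $y = x\gamma_{n,0}$ ends in $\gamma_{n,0}$, which no other clause of \cref{ThetaDef} covers --- you flag this as bookkeeping but do not carry it out in the case itself.
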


\begin{proof}
We proceed by a case analysis on $x$, based on the cases of \cref{ThetaDef}. In our computations, we will freely use the identities for $\theta^{m,n}$ which we have already proven. First suppose that $x = z\sigma_{a_{p}}$ in standard form, for some $a_{p} \geq n$. Then we can compute:

\begin{align*}
\theta^{m,n}(\theta^{m,n-1}(x)) & = \theta^{m,n}(\theta^{m,n-1}(z\sigma_{a_{p}})) \\
& = \theta^{m,n}(\theta^{m-1,n-1}(z)\sigma_{a_{p}+1}) \\
& = \theta^{m-1,n}(\theta^{m-1,n-1}(z))\sigma_{a_{p}+2} \\
& = \theta^{m-1,n-1}(z)\gamma_{n,0}\sigma_{a_{p}+2} \\
& = \theta^{m-1,n-1}(z)\sigma_{a_{p}+1}\gamma_{n,0} \\
& = \theta^{m,n-1}(z\sigma_{a_{p}})\gamma_{n,0} \\
& = \theta^{m,n-1}(x)\gamma_{n,0} \\
\end{align*}

Next let the standard form of $x$ be $z\gamma_{b_{q},0}$ where $b_{q} \leq n-2$. Then we can compute:

\begin{align*}
\theta^{m,n}(\theta^{m,n-1}(x)) & = \theta^{m,n}(\theta^{m,n-1}(z\gamma_{b_{q},0})) \\
& = \theta^{m,n}(\theta^{m,n-2}(z)\gamma_{b_{q},0}) \\
& = \theta^{m,n-1}(\theta^{m,n-2}(z))\gamma_{b_{q},0} \\
& = \theta^{m,n-2}(z)\gamma_{n-1,0}\gamma_{b_{q},0} \\
& = \theta^{m,n-2}(z)\gamma_{b_{q},0}\gamma_{n,0} \\
& = \theta^{m,n-1}(z\gamma_{b_{q},0})\gamma_{n,0} \\
& = \theta^{m,n-1}(x)\gamma_{n,0} \\
\end{align*}

Now let the standard form of $x$ be $z\gamma_{b_{q},\varepsilon}$ where $b_{q} \geq n$. Then we can compute:

\begin{align*}
\theta^{m,n}(\theta^{m,n-1}(x)) & = \theta^{m,n}(\theta^{m,n-1}(z\gamma_{b_{q},\varepsilon})) \\
& = \theta^{m,n}(\theta^{m-1,n-1}(z)\gamma_{b_{q}+1,\varepsilon}) \\
& = \theta^{m-1,n}(\theta^{m-1,n-1}(z))\gamma_{b_{q}+2,\varepsilon} \\
& = \theta^{m-1,n-1}(z)\gamma_{n,0}\gamma_{b_{q}+2,\varepsilon} \\
& = \theta^{m-1,n-1}(z)\gamma_{b_{q}+1,\varepsilon}\gamma_{n,0} \\
& = \theta^{m,n-1}(z\gamma_{b_{q},\varepsilon})\gamma_{n,0} \\
& = \theta^{m,n-1}(x)\gamma_{n,0} \\
\end{align*}

Next, we consider case (4): suppose that $x$ is an $(m-1,n)$-cone not falling under any of cases (1) through (3) (when considered as an $(m,n-1)$-cone). Then $\theta^{m,n-1}(x) = x\gamma_{n,0}$. The assumption that $x$ does not belong to any of cases (1) through (3), together with \cref{sa1}, implies that either it is non-degenerate, or its standard form ends with $\gamma_{n-1,0}$. Either way, the standard form of $x\gamma_{n,0}$ ends with $\gamma_{n,0}$, so it falls under case (4) by \cref{cn}. Thus we can compute:

\begin{align*}
\theta^{m,n}(\theta^{m,n-1}(x)) & = \theta^{m,n}(x\gamma_{n,0}) \\
& = x\gamma_{n,0}\gamma_{n+1,0} \\
& = x\gamma_{n,0}\gamma_{n,0} \\
& = \theta^{m,n-1}(x)\gamma_{n,0} \\
\end{align*}

Next we consider case (5): suppose that $x = \theta^{m,n-2}(x')$ for some $x' \colon C^{m,n-2} \to X$. Then we can compute:

\begin{align*}
\theta^{m,n}(\theta^{m,n-1}(x)) & = \theta^{m,n}(\theta^{m,n-1}(\theta^{m,n-2}(x'))) \\
& = \theta^{m,n}(\theta^{m,n-2}(x')\gamma_{n-1,0}) \\
& = \theta^{m,n-1}(\theta^{m,n-2}(x'))\gamma_{n-1,0} \\
& = \theta^{m,n-2}(x')\gamma_{n-1,0}\gamma_{n-1,0} \\
& = \theta^{m,n-2}(x')\gamma_{n-1,0}\gamma_{n,0} \\ 
& = \theta^{m,n-1}(\theta^{m,n-2}(x'))\gamma_{n,0} \\
& = \theta^{m,n-1}(x)\gamma_{n,0} \\
\end{align*}

Finally, suppose $x$ falls under case (6). Then by \cref{Theta-T}, $\theta^{m,n-1}(x)$ falls under case (5), so $\theta^{m,n}(\theta^{m,n-1}(x)) = \theta^{m,n-1}(x)\gamma_{n,0}$ by definition.
\end{proof}

\begin{proposition}\label{IdG}
$\theta^{m,n}$ satisfies \ref{ThetaConeWLOG}. That is, for $x \colon C^{m-1,n+1} \to X$, $\theta^{m,n}(x) = x\gamma_{n+1,0}$.
\end{proposition}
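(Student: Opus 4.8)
The plan is to argue by the same kind of case analysis, following the six cases of \cref{ThetaDef}, that is used in \cref{AB,IdC,CDE,IdF}. First note that since $x \colon C^{m-1,n+1} \to X$ is in particular an $(m,n)$-cone by \cref{ConeWLOG}, the value $\theta^{m,n}(x)$ is defined, and $x$ must fall under one of cases (1)--(4), never (5) or (6): case~(4) explicitly covers every $(m-1,n+1)$-cone not already handled by cases (1)--(3), and it has priority over the later cases. If $x$ falls under case~(4), then $\theta^{m,n}(x) = x\gamma_{n+1,0}$ holds by definition and there is nothing to prove.

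The remaining work is to treat the possibility that $x$, although an $(m-1,n+1)$-cone, falls under case (1), (2), or (3), i.e.\ that its standard form ends with an operator whose index lands in the range distinguishing that case. In each such case $\theta^{m,n}(x)$ is defined in terms of $\theta^{m-1,n}(z)$ or $\theta^{m,n-1}(z)$, where $z$ is obtained by deleting the rightmost operator from the standard form of $x$. The key preliminary step is to recognize $z$ as a cone of the right type so that the inductively available instances of \ref{ThetaConeWLOG} (for the pairs $(m-1,n)$ and $(m,n-1)$, with \cref{theta-base-case} anchoring the induction when $m=2$) apply to it. I would do this by writing $z$ as an appropriate face of $x$ and invoking \cref{ConeFaceDeg}: in case~(1), \cref{sa1} \ref{sa1-degen} forces $a_p \geq n+2$, so $z = x\partial_{a_p,0}$ is an $(m-2,n+1)$-cone by \cref{ConeFaceDeg} \ref{High0FaceOfCone}; in case~(2), $b_q \leq n-1$, so $z = x\partial_{b_q,0}$ is an $(m-1,n)$-cone by \cref{ConeFaceDeg} \ref{Low0FaceOfCone}; and in case~(3), taking the face $z = x\partial_{b_q+1,\varepsilon}$ (rather than the more obvious $x\partial_{b_q,\varepsilon}$) and using $b_q+1 \geq n+2$, \cref{ConeFaceDeg} \ref{High0FaceOfCone} (for $\varepsilon=0$) or \ref{1FaceOfCone} (for $\varepsilon=1$) shows $z$ is an $(m-2,n+1)$-cone. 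Then \ref{ThetaConeWLOG} at the smaller index gives $\theta^{m-1,n}(z) = z\gamma_{n+1,0}$, resp.\ $\theta^{m,n-1}(z) = z\gamma_{n,0}$.

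Substituting back into \cref{ThetaDef}, the identity $\theta^{m,n}(x) = x\gamma_{n+1,0}$ collapses in each case to a single cubical-identity computation: in case~(1) one uses $\sigma_{a_p}\gamma_{n+1,0} = \gamma_{n+1,0}\sigma_{a_p+1}$ (legal since $a_p > n+1$); in case~(2), $\gamma_{b_q,0}\gamma_{n+1,0} = \gamma_{n,0}\gamma_{b_q,0}$ (the connection identity, using $n > b_q$); and in case~(3), $\gamma_{b_q,\varepsilon}\gamma_{n+1,0} = \gamma_{n+1,0}\gamma_{b_q+1,\varepsilon}$, which for $b_q \geq n+2$ is the generic connection identity and for the boundary value $b_q = n+1$ (which by \cref{sa1} \ref{sa1-con} forces $\varepsilon=0$) is the special identity $\gamma_{n+1,0}\gamma_{n+1,0} = \gamma_{n+1,0}\gamma_{n+2,0}$. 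The main obstacle here is not any individual computation but the bookkeeping around these boundary sub-cases: one must check that the standard form of $x$ really ends where the case hypothesis claims, that the \emph{correct} face of $x$ is chosen so that $z$ lands in exactly the cone class required to invoke \ref{ThetaConeWLOG}, and that the potentially problematic sub-cases ($a_p = n+1$ in case~(1), or $b_q = n+1$ with $\varepsilon = 1$ in case~(3)) are genuinely ruled out by \cref{sa1}, since otherwise the naive computation would yield a false cubical identity.
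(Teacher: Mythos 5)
Your proposal is correct and follows essentially the same route as the paper: the same case split along \cref{ThetaDef}, the same identification of $z$ as an explicit face of $x$, the same appeals to \cref{sa1} and \cref{ConeFaceDeg} to recognize $z$'s cone type and to rule out the boundary sub-cases, and the same reduction to single cubical identities. Your observation that case~(4) absorbs everything not handled by (1)--(3) so that (5) and (6) never occur, and your note that in case~(3) one must take the face $\partial_{b_q+1,\varepsilon}$ (so the index is uniformly $\geq n+2$ and \cref{ConeFaceDeg} yields the $(m-2,n+1)$-cone rather than the weaker $(m-1,n)$-cone conclusion), are exactly the points the paper's proof relies on, stated or implicit.
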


\begin{proof}
As in previous proofs, we proceed via case analysis on $x$, based on the cases of \cref{ThetaDef}. First suppose that $x$ is an $(m-1,n+1)$-cone whose standard form is $z\sigma_{a_{p}}$. By \cref{sa1} \ref{sa1-degen}, $a_{p} \geq n + 2$. Therefore, by \cref{ConeFaceDeg} \ref{High0FaceOfCone}, $x\partial_{a_{p},0} = z$ is an $(m-2,n+1)$-cone, so $\theta^{m-1,n}(z) = z\gamma_{n+1,0}$ by \ref{ThetaConeWLOG} for $\theta^{m-1,n}$. Thus we can compute:

\begin{align*}
\theta^{m,n}(x) & = \theta^{m-1,n}(z)\sigma_{a_{p}+1} \\
& = z\gamma_{n+1,0}\sigma_{a_{p}+1} \\
& = z\sigma_{a_{p}}\gamma_{n+1,0} \\
& = x\gamma_{n+1,0} \\
\end{align*}

Now let $x$ be an $(m-1,n+1)$-cone whose standard form is $z\gamma_{b_{q},0}, b_{q} \leq n - 1$. Then by \cref{ConeFaceDeg} \ref{Low0FaceOfCone}, $x\partial_{b_{q},0} = z$ is an $(m-1,n)$-cone. So by \ref{ThetaConeWLOG} for $\theta^{m,n-1}$, we have $\theta^{m,n-1}(z) = z\gamma_{n,0}$. Thus we can compute:

\begin{align*}
\theta^{m,n}(x) & = \theta^{m,n-1}(z)\gamma_{b_{q},0} \\
& = z\gamma_{n,0}\gamma_{b_{q},0}  \\
& = z\gamma_{b_{q},0}\gamma_{n+1,0} \\
& = x\gamma_{n+1,0} \\
\end{align*}

Next let $x$ be an $(m-1,n+1)$-cone whose standard form is $z\gamma_{b_{q},\varepsilon}$, where $b_{q} \geq n + 1$. (Note that if $b_{q} = n + 1$, then we may assume $\varepsilon = 0$ by \cref{sa1} \ref{sa1-con}.) Then by \cref{ConeFaceDeg}, $x\partial_{b_{q}+1,\varepsilon} = z$ is an $(m-2,n+1)$-cone, so $\theta^{m-1,n}(z) = z\gamma_{n+1,0}$ by \ref{ThetaConeWLOG} for $\theta^{m-1,n}$. Thus we can compute:

\begin{align*}
\theta^{m,n}(x) & = \theta^{m-1,n}(z)\gamma_{b_{q}+1,\varepsilon} \\
& = z\gamma_{n+1,0}\gamma_{b_{q}+1,\varepsilon} \\
& = z\gamma_{b_{q},\varepsilon}\gamma_{n+1,0} \\
& = x\gamma_{n+1,0} \\
\end{align*}

Finally, case (4) consists of all $(m-1,n+1)$-cones not falling under any of the previous cases, and in this case \ref{ThetaConeWLOG} holds by definition.
\end{proof}

\bibliographystyle{amsalphaurlmod}
\bibliography{general-bibliography-2}

\end{document}